\newtheorem{thm}{Theorem}[section]
\newtheorem{lem}[thm]{Lemma}
\newtheorem{prop}[thm]{Proposition}
\newtheorem{cor}[thm]{Corollary}
\newtheorem{defn}[thm]{Definition}
\newtheorem{ques}[thm]{Question}
\theoremstyle{remark}
\newtheorem{ex}[thm]{Example}
\newtheorem{rem}[thm]{Remark}
\numberwithin{figure}{chapter}
\numberwithin{equation}{section}
\DeclareMathOperator{\Hom}{{\rm Hom}}
\newcommand{ \lk }{ \mbox{lk} }
\newcommand{ \rk }{ \mbox{rk} }
\newcommand{ \imm }{ \mbox{Imm} }
\newcommand{ \emb }{ \mbox{Emb} }
\def\C{\mathbb C}
\def\Q{\mathbb Q}
\def\R{\mathbb R}
\def\Z{\mathbb Z}
\newcommand{\calO}{{\mathcal O}}
\newcommand{\labelpar}{\label}
\title{On certain complex surface singularities}
\author{Gerg\H{o} Pint\'{e}r}
\begin{document}


\thispagestyle{empty}

\begin{center} {\Large \textsc{E\"otv\"os Lor\'and University} \\
\textsc{Institute of Mathematics}} \end{center}

\vspace{5mm}



\vspace{3mm}

\begin{center} {\Large Ph.D. thesis}  \end{center}

\vspace{2mm}

\begin{center}{\huge \textbf{On certain complex surface singularities}}\end{center}


\vspace{5mm}

\begin{center}{\Large Gerg\H{o} Pint\'{e}r} \end{center}

\vspace{5mm}

\begin{center} {\Large Advisor: Andr\'as N\'emethi
\\   Professor, Doctor of Sciences}
\end{center}

\vspace{5mm}

\begin{small}
\begin{center}
{\large
Doctoral School: Mathematics
\\ Director: Istv\'{a}n Farag\'{o}
\\ Professor, Doctor of the Hungarian Academy of Sciences \\}                                                   
\end{center}

\vspace{2mm}

\begin{center}
{\large
Doctoral Program: Pure Mathematics
\\ Director: Andr\'as Sz\H{u}cs
\\ Professor, corresponding member of the Hungarian Academy of Sciences \\}
\end{center}
\end{small}

\vspace{5mm}

\begin{center}
{\large
E\"otv\"os Lor\'and University and \\
Alfr\'ed R\'enyi Institute of Mathematics\\}
\end{center}

\vspace{5mm}

\begin{center} {\large Budapest} \end{center}


\begin{center} {\large 2018} \end{center}
\newpage

\frontmatter

\chapter*{Acknowledgement}


First of all, I would like to thank my advisor, Andr\'{a}s N\'{e}methi
for the support and work of several years. He introduced me to his
research area `complex singularity theory'. His deep knowledge,
intuition and ambitious plans guided me in our joint work, he also helped me a lot to elaborate on the details. I am very
grateful to him for the careful reading and correction of this thesis. I
hope we can continue working together.

I am very grateful to my topology professor Andr\'{a}s Sz\H{u}cs for
the lot of help, and the conversations we had related to his articles. I always rely on the knowledge I gained from him when it comes to topological topics. 

I would like to thank Tam\'{a}s Terpai for the consultations. I always
asked him when I had to understand something in topology quickly but
thoroughly.

I would like to thank David Mond and Juan J. Nu\~{n}o Ballesteros for
the opportunity to visit them and for the insights they gave me to their research areas.
These visits and the consultations served as a main
inspiration to write this thesis.

Special thanks goes to my friend and colleague J\'{o}zsef Bodn\'{a}r
for his companionship and several inspiring discussions on various
topics in mathematics. J\'{o}zsi was always disposed to discuss the
problems which many times led back to the basics. The many hours we spent
together provided me deep understanding. Moreover I would like to
thank him for the lot of help regarding the form and content of this
thesis.

I would like to thank Andr\'{a}s Stipsicz, L\'{a}szl\'{o} Feh\'{e}r and R\'{o}bert Sz\H{o}ke
for the useful courses and consultations.

I would to thank my colleagues and friends the support and interest
while I was working on this thesis. I am very grateful to Gabriella
Keszthelyi for her companionship and her pieces of advice. I would like to
thank Bertalan P\'{e}csi for the corrections and pieces of advice regarding
this thesis, and Bence Csajb\'{o}k and Szilveszter Kocsis for the many
discussions.

I have really enjoyed the company of my fellow students, friends and
colleagues at the R\'{e}nyi Institute: \'{A}d\'{a}m Gyenge, Tam\'{a}s
L\'{a}szl\'{o}, Benedek Skublics, Baldur Sigur{\dh}son, J\'{a}nos Nagy
and many others.

I would like to thank my friend Kinga Farkas, without whose support 
this thesis would not have been born.

Last but not least, I am very grateful to my family and friends for
their support and company.

The author was supported by the doctoral school of the Faculty of
Science of E\"{o}tv\"{o}s Lor\'{a}nd University between 2010 and 2013,
by the European Research Council grant \mbox{`LDTBud'} of Andr\'{a}s
Stipsicz in low-dimensional topology from 2013 to 2015 and by the
young researcher program of the Alfr\'{e}d R\'{e}nyi Institute of
Mathematics from 2015 to 2018.

\vspace{1cm}

Budapest, March 2018

\tableofcontents

\chapter*{Introduction}

\section*{Preliminaries and main results} This thesis contains the results of two joint papers with Andr\'{a}s N\'{e}methi \cite{NP, NP2} and the background related to them. 
N\'{e}methi's program aims to create bridges between different areas of the singularity theory, e.g. to compare the topological and analytic/algebraic invariants of complex singularities. In our case the bridge is created between immersion theory (differential topology) and two rather distinct areas of local complex singularity theory (`Milnor fibration package' and `analytic stability package'.)

The main objects of our study are holomorphic germs $ \Phi: ( \C^2, 0) \to ( \C^3, 0) $. These are triples of locally convergent complex power series  in two variables. We are mostly interested in two classes of them. The first class consists of the holomorphic germs singular only at the origin, i.e. $ \Phi|_{ \C^2 \setminus \{ 0 \}} $ is an immersion. The second class is a subset of the first one, it contains the so-called finitely $ \mathscr{A}$-determined (or $ \mathscr{A}$-finite) germs. 
By Mather--Gaffney criterion \cite{Wall} these are the germs whose restrictions $ \Phi|_{ \C^2 \setminus \{ 0 \}} $ are stable immersions.

If $\Phi: ( \C^2, 0) \to ( \C^3, 0) $ is singular only at the origin, one can associate with $\Phi$ an immersion from the $3$-sphere $S^3$ to the $5$-sphere $ S^5$ at the level of links. This immersion is the restriction of $ \Phi $ to a suitably chosen $3$-sphere around the origin of $ \C^2 $. If additionally $ \Phi $ is finitely $ \mathscr{A}$-determined, the associated immersion is stable, i.e. it has only regular simple and double values, with transverse intersection of the branches at each double value. 

In our case the associated immersion plays a similar role as the link in the case of isolated singularities. 
As it is a $ \mathcal{C}^{ \infty } $ map, it allows more flexible deformations than the holomorphic germ $ \Phi $. Up to regular homotopy (which means deformation through immersions) the immersions $ f: S^3 \looparrowright S^5 $ are completely classified by the integer valued invariant $ \Omega (f) $, the so-called Smale invariant of the immersion $f $. 
Stephen Smale published his invariant in 1959 \cite{smale}, and already two years later David Mumford in his seminal article \cite{mumford} asked for the analytic/algebraic characterization of the Smale invariant of the immersion associated with a holomorphic germ $ \Phi $.
The main result of \cite{NP} provides a complete answer to the question of Mumford, see Chapter~\ref{ch:ass}. We identify the Smale invariant of the associated immersion with an analytic invariant of $ \Phi $, namely, with the number of the cross caps of a stabilization of $ \Phi $. This result implies various consequences both in singularity theory and immersion theory.

The image of a finitely $ \mathscr{A}$-determined germ $\Phi: ( \C^2, 0) \to ( \C^3, 0) $ can also be defined as the zero set of certain germ $ f: ( \C^3, 0) \to ( \C, 0) $, and provides a non-isolated hypersurface singularity $ (X, 0):= (f^{-1}(0), 0)= (\mbox{im} (\Phi), 0) \subset ( \C^3, 0)$.  Indeed, $df$ vanishes along the set of the double values of $ \Phi $. A very restricted class of non-isolated singularities occurs in this way. The normalization of $ (X, 0) $ is smooth, in fact, its normalization map is $ \Phi $, and all the transverse curves corresponding to the components of the singular locus of $ (X, 0) $ have type $ A_1$.

The Milnor fibre of $ (X, 0) = f^{-1} (0) \subset ( \C^3, 0) $ is defined as the $ \delta$-level set $ f^{-1} ( \delta ) $ of $f$ intersected with a ball $ B^{6}_{ \epsilon } $ with sufficiently small radius $ \epsilon $, $ 0 < \delta \ll \epsilon $. It plays a central role in the study of local singularities. In the case of isolated singularities, the boundary of its Milnor fibre is diffeomorphic to the link of $ (X, 0) $. Andr\'{a}s N\'{e}methi and \'{A}gnes Szil\'{a}rd in \cite{NSz} present a general algorithm, which provides the boundary of the Milnor fibre for any non-isolated hypersurface singularity $ (X, 0) \subset ( \C^3, 0) $, although it is rather technical and in concrete examples is rather computational. In \cite{NP2} we present an independent algorithm providing the Milnor fibre boundary for our restricted class of non-isolated singularities, see Chapter~\ref{ch:bound}. This algorithm uses directly the geometry of $ (X, 0) $. Namely, it produces the Milnor fibre boundary as a surgery of $ S^3 $ along the double point locus of the immersion associated with $ \Phi $. Technically the algorithm provides a plumbing graph of the Milnor fibre boundary by modifying a good embedded resolution graph of the double point locus of $ \Phi$.

\section*{Summary and organization of the thesis}

\subsection*{Immersions associated with holomorphic germs} The main purpose of Chapter~\ref{ch:germ} is to introduce Mond invariants $ C( \Phi ) $ and $ T ( \Phi ) $ of holomorphic germs $ \Phi: ( \C^2, 0) \to ( \C^3, 0)$. $ C ( \Phi )$ is the number of cross cap points, $ T ( \Phi ) $ is the number of triple values of a stabilization of $ \Phi $, but each can be calculated as the codimension of a suitable ideal of the local ring as well, without stabilizing $ \Phi $. Chapter~\ref{ch:germ} also serves as a collection of relations of these invariants with other concepts, which will appear in the subsequent chapters.

First we introduce the notion of germs and the finiteness property of them. We follow \cite{dejong, mond-ballesteros}. The associated smooth map $ S^{2n-1} \to S^{2p-1} $ is defined for finite germs $  ( \C^{ n}, 0) \to ( \C^{p}, 0) $, it slightly generalizes the notion of the associated immersion introduced in \cite{NP}.

The theory of stability and finite determinacy of germs were devised by e.g. H. Whitney, J. N. Mather and C. T. C. Wall \cite{Wall, matherII, matherIII, matherV, matherVI}. We introduce both concepts with respect to $ \mathscr{A}$-equivalence (left-right equivalence).
 The discussion of the stability also serves as a background for the singular Seifert surfaces, which are defined in Chapter~\ref{ch:imm} as stable smooth maps. 
 We review theorems of Mather and Gaffney characterizing the stability and finite $ \mathscr{A}$-determinacy in terms of the $ \mathscr{A}$ and $ \mathscr{A}_e$ codimensions of a germ. 
 The only stable multigerms of a map $ \C^2 \to \C^3 $ are regular simple points, regular double values with transverse intersection of the branches, regular triple values with regular intersection of the branches and simple Whitney umbrella (cross cap) points. The triple values and the Whitney umbrellas are isolated points.

The discussion of the Fitting ideals has two purposes. The defining equation $ f: ( \C^3, 0) \to ( \C, 0) $ of the image of a finite germ $ \Phi: ( \C^2, 0) \to ( \C^3, 0) $ can be determined using Fitting ideals. Moreover, by this process one can calculate the equations of the multiple point spaces of $ \Phi $ in the target as well. In particular, the number of the triple values of a stabilization can be determined with the help of Fitting ideals.

We introduce $ C( \Phi ) $ as the codimension of the ideal in the local ring $ \mathcal{O}_{( \C^2, 0)}$ generated by the determinants of the $2 \times 2$ minors of the Jacobian matrix of $ \Phi : ( \C^2, 0) \to ( \C^3, 0) $  \cite{Mond0, Mond2}. Similarly, $ T( \Phi ) $ is the codimension of the second Fitting ideal associated with $ \Phi $ in $ \mathcal{O}_{( \C^3, 0)}$. If $ \Phi $ is finitely $ \mathscr{A}$-determined, then both $ C( \Phi ) $ and $ T( \Phi ) $ are finite, and any stabilization of $ \Phi $ has $ C( \Phi ) $ cross caps and $ T( \Phi ) $ triple values.  The finiteness of $ C( \Phi ) $ is equivalent with the fact that $ \Phi $ is singular only at the origin, which also means that the associated map $ S^3 \to S^5 $ is an immersion. Finite $ \mathscr{A}$-determinacy is equivalent with the fact that the associated map $ S^3 \to S^5 $ is a stable immersion. Mond \cite{Mond2} introduced a third invariant $ N( \Phi ) $ for corank--$1$ germs $ \Phi : ( \C^2, 0) \to ( \C^3, 0) $, such that the finite $ \mathscr{A}$-determinacy of $ \Phi$ is equivalent with the finiteness of the three invariant $ C( \Phi ) $, $ T( \Phi ) $ and $ N( \Phi ) $. These invariants appear in the formulas expressing the image Milnor number (which is the second Betti number of the image of a stabilization of $ \Phi $), and in the formulas comparing the Milnor numbers of the four double point spaces as well \cite{Mondvan, mararmulti, nunodouble}. The structure of the double point spaces plays an important role in Chapter~\ref{ch:bound}. The end of Chapter~\ref{ch:germ} is a short review about a reinterpretation of Mond's invariants provided by W. L. Marar and J. J. Nu\~{n}o-Ballesteros \cite{slicing}. 

Chapter~\ref{ch:imm} provides an introduction to the Hirsch-Smale theory \cite{hirsch, smale}, which  transforms regular homotopy problems (differential topology) to homotopy theory (algebraic topology). The main purpose is to review the Hughes--Melvin definition of the integer valued Smale invariant of immersions $ S^3 \looparrowright \R^5 $ \cite{HM}, and the Ekholm--Sz\H{u}cs formulas expressing the Smale invariant in terms of the properties of singular Seifert surfaces \cite{ESz}.

Smale's theorem is the generalization of the Whitney--Graustein theorem about plane curve immersions $ S^1 \looparrowright \R^2 $, and it is a special case of the Hirsch theorem, in fact, a special case of the $h$-principle of Gromov. We review Hirsch theorem and the construction of the Smale invariant $ \Omega(f) $ of an immersion $f: S^n \looparrowright \R^q $. $ \Omega (f) $ is an element of an Abelian group depending on the dimensions $n$ and $q$, and two immersions $ S^n \looparrowright \R^q $ are regular homotopic if and only if their Smale invariants are equal. The possibility of the `sphere eversion' is a consequence of Smale's theorem as well.

J. F. Hughes and P. M. Melvin \cite{HM} proved that there are embeddings $ S^3 \hookrightarrow \R^5 $ wich are not regular homotopic to each other. They also express the Smale invariant of an embedding $ S^3 \hookrightarrow \R^5 $ with the signature of a Seifert surface, which is a $4$-manifold in $ \R^5 $ whose boundary is the image of the embedding. T. Ekholm and A. Sz\H{u}cs \cite{ESz} generalized this result to arbitrary immersions $ S^3 \looparrowright \R^5 $ using singular Seifert surfaces. One contribution of their formulas is the invariant $ L(f) $ of stable immersions $f:  S^3 \looparrowright \R^5 $ introduced by Ekholm in \cite{ekholm3}. It has several slightly different definitions \cite{ekholm3, ekholm4, ESz, saeki}. We review these definitions and the proof of their equivalence.


The end of Chapter~\ref{ch:imm} is an outline of various related results, we mention here two of them. S. Kinjo defined immersions $ S^4 \looparrowright \R^4 $ associated with plumbing graphs of type $A$ and $D$. The point is that their Smale invariants agree with the Smale invariants of the immersions associated with the coverings $ \Phi: ( \C^2, 0) \to ( \C^3, 0) $ of the singularities of type $A$ and $D$, see below in the introduction. 
The $ \Z_2 $ valued cobordism invariant called `total twist' of immersions $ M^3 \looparrowright \R^5 $ \cite{hughes} is used later, in Chapter~\ref{ch:bound}, to conclude that $ C( \Phi ) $ and the number of the non-trivially covered double point curve components of a finitely determined germ $ \Phi : ( \C^2, 0) \to ( \C^3, 0) $ have the same parity.
 


Chapter~\ref{ch:ass} includes our results published in \cite{NP}. The main theorem answers the question of Mumford mentioned above. Namely, $ \Omega ( \Phi|_{S^3} ) = - C ( \Phi ) $ holds for holomorphic germs $ \Phi : ( \C^2, 0) \to ( \C^3, 0) $ singular only at the origin, where $ \Omega (\Phi|_{S^3} ) $ is the Smale invariant of the immersion $ \Phi|_{S^3}:  S^3 \looparrowright S^5 $ associated with $ \Phi $. We refer to this result as the `main formula'.
There are several consequences of this result.

Using the main formula we can provide explicit (`algebraic') representatives of each regular homotopy class of immersions $ S^3 \looparrowright S^5 $. This answers a question of Smale \cite{smale}. In  contrast with the known $ \mathcal{C}^{ \infty} $ constructions, these realizations are very simple polynomial maps, and the computation of the Smale invariant via $ C ( \Phi ) $ is extremely simple.

The non-standard embeddings $ S^3 \hookrightarrow \R^5 $ of Hughes and Melvin \cite{HM} cannot be realized as the associated immersion of some $ \Phi: ( \C^2, 0) \to ( \C^3, 0) $. Although this fact follows from a deep result of Mumford \cite{mumford}, using our main formula we provide a new proof for it. Moreover, the `topological vanishing' $ \Omega ( \Phi|_{S^3} )=0 $ implies that $ \Phi $ is the regular germ via our formula.

We prove the formula $ \Omega ( \Phi|_{S^3} ) = - C ( \Phi ) $ in two steps. We introduce the newly defined `complex Smale invariant' $ \Omega_{ \C } ( \Phi ) $ of $ \Phi$. It turns out that $ \Omega_{ \C } ( \Phi )= C ( \Phi ) $ and $ \Omega_{ \C } ( \Phi ) = - \Omega ( \Phi|_{S^3} ) $. We use $ C( \Phi ) $ as the number of cross caps of a stabilization of $ \Phi $, and we do not use its algebraic definition as the codimension of the ramification ideal. However, as a by-product of the calculation we provide a new proof for the theorem of Mond declaring the equivalence of the two definitions of $ C $ in the case of corank--$1$ germs.

Note that the integer valued Smale invariant is well defined only up to sign. To determine the correct sign of the main formula, we fix generators of the corresponding infinite cyclic groups.
The Hughes--Melvin and Ekholm--Sz\H{u}cs formulas carry the sign ambiguity as well. We determine their correct sign with the help of the main formula and calculations of concrete examples. As a by-product of this procedure, we express the contributions of the Ekholm--Sz\H{u}cs formulas in terms of $ C( \Phi ) $ and $ T ( \Phi ) $ for a special singular Seifert surface, which is created from a holomorphic stabilization of $ \Phi $  by stabilizing the complex Whitney umbrella points in $ \mathcal{C}^{ \infty }$ sense. In particular, $ L ( \Phi|_{S^3} )= C ( \Phi ) - 3 T ( \Phi ) $ holds for finitely determined germs $ \Phi: ( \C^2, 0) \to ( \C^3, 0) $ and their associated stable immersions $ \Phi|_{S^3}: S^3 \looparrowright S^5 $.

It follows from our results that the analytic invariants $ C( \Phi ) $ and $ T ( \Phi ) $ are $ \mathcal{C}^{ \infty }$ invariants as well, moreover, $ C ( \Phi ) - 3 T ( \Phi ) $ is a topological invariant of $ \Phi $.

\subsection*{Boundary of the Milnor fibre}
Chapter~\ref{ch:iso} contains the definitions and some properties of the Milnor fibre and the resolution of surface singularities $ (X, 0) = (f^{-1} (0), 0) \subset ( \C^3, 0) $. The plumbing construction and the embedded resolution of plane curve singularities are also summarised in Chapter~\ref{ch:iso}.
 
The Milnor fibre of an isolated hypersurface surface singularity is well studied
and it is rather well understood. It has the homotopy type of a bouquet
of 2--spheres, it is an oriented  smooth 4--manifold whose boundary
is diffeomorphic with the link of the singular germ and also with the boundary
of any resolution of the germ \cite{MBook, Five, Egri}.
This boundary is a  plumbed
3--manifold and one can take as a plumbing graph any of the resolution graphs.
It is the basic bridge between the Milnor fibre and the resolution (both of them being complex analytic fillings of it), and this connection  produces several
nice formulas connecting the invariants of these fillings. Here primarily
we think about formulas of Laufer \cite{Laufer77b} or Durfee \cite{Du} and their generalizations, see e.g. \cite{Wa}.

For non--isolated hypersurface singularities in $(\C^3,0)$ the situation is
more complicated. First of all, the link of the germ is not smooth,
hence the boundary of the Milnor fibre cannot be isomorphic with it.
Moreover, a (any) resolution is in fact the resolution of the normalization
(which might contain considerable less information than what one needs in order to
recover the Milnor fibre $F$, or the Milnor fibre boundary $\partial F$),
see e.g. \cite{Si,NSz}. For example (see our case),
it can happen that the normalization is smooth, while $\partial F$ is rather complicated.
However, the boundary of the Milnor fibre is still a
plumbed 3--manifold, and one expects that its plumbing graph
 codifies considerable information about the germ.
 $\partial F$  can be obtained by surgery of two pieces: one of
them is the boundary of the resolution of the normalization, the other one is related to the transverse singularities associated with the singular curves of the hypersurface singularity \cite{Si,NSz,MP}. In particular, the boundary
of the Milnor fibre plays the same crucial role as in the isolated singularity case (in fact, it is the unique object in this case, which might fulfil this role):
it is the first step in the description of the Milnor fibre, and it is the bridge in the direction of the resolution and the transverse types of the components of the singular locus.

\cite{NSz} presents a
general algorithm, which provides the boundary of the Milnor fibre $\partial F$
for any non--isolated hypersurface singularity in
$(f^{-1}(0),0)\subset (\C^3,0)$.
However, this algorithm uses (some information from)
the embedded resolution of this pair, hence it is rather technical and
in concrete examples is rather computational. Therefore, for particular families
of singularities it is preferable to find more direct description of the plumbing graph
 of $\partial F$ directly from the peculiar intrinsic geometry of the germ.
For  several examples in the literature see e.g.
\cite{NSz} (homogeneous singularities, cylinders of plane curves,
$f=zf'(x,y)$, $f=f'(x^ay^b,z)$), \cite{Baldur} ($f=g(x,y) + zh(x,y)$);
or for other classes consult also
\cite{MP2} and \cite{dBM}.

Chapter~\ref{ch:bound} contains the results of \cite{NP2}. It provides an explicit construction producing
the plumbing graph for the boundary of the Milnor fiber of a non-isolated
hypersurface singularity $(X, 0)$ given by the image of a finitely determined complex
analytic map germ $ \Phi : ( \C^2, 0) \to ( \C^3, 0) $. One of
the main ingredients is the link $L = \{L_i \}_i $
in $S^3 $ of the reduced double point curve
$ ( D, 0) = \Phi^{-1} ( \Sigma , 0) \subset ( \C^2, 0) $
with irreducible components $ \{ D_i \}_i$, where $( \Sigma , 0)$ is
the reduced singular locus. It is equipped with a pairing of the components
$L_i \leftrightarrow L_{ \sigma (i)} $
induced by the pairing of the double points. Then, the Milnor fiber
boundary is constructed as a surgery of $S^3$ along $L$ such that whenever $i \neq \sigma (i)$
the tubular neighbourhoods of the paired components have to be glued together,
while in the case $i = \sigma(i)$ a special $3$-manifold $Y := S^1 \times S^1 \times I/ \sim $ with torus
boundary is glued along $L_i $. 
The description of the gluing maps uses the newly
defined invariants `vertical indices' associated with the irreducible components
of $ \Sigma $. Their relation with $ C ( \Phi ) $ is clarified whenever $ \Phi $ is a corank--$1$ germ and $ T ( \Phi ) $ vanishes. As a result, the explicit plumbing graph of the Milnor fiber boundary
is constructed from a good embedded resolution graph of $D$. The algorithm is
also illustrated on several examples.


\section*{The main examples}

\subsection*{Finitely determined germs} D. Mond's list of simple germs \cite[Table 1]{Mond2} contains finitely $ \mathscr{A}$-determined holomorphic germs $ \Phi: ( \C^2, 0) \to ( \C^3, 0) $ with $ \rk (d \Phi_0) = 1 $. These germs are organized in four families $ S_{k-1}$, $B_k$, $C_k$, $H_k$, and there are two sporadic elements, the Whitney umbrella (cross cap) and $ F_4$.

The associated immersions of $ S_{k-1} $ provide representatives of all regular homotopy classes with negative Smale invariant, see Section~\ref{s:ex}. 

The Whitney umbrella is stable as a holomorphic germ $ (\C^2, 0) \to ( \C^3, 0) $, see Example~\ref{ex:Whstab}, but not in the $ \mathcal{C}^{\infty} $ sense, as a real germ $ ( \R^4, 0) \to ( \R^6, 0) $. We present a $ \mathcal{C}^{\infty} $ stabilization of the complex Whitney umbrella in Section~\ref{s:calc} and we calculate directly the contributions of the Ekholm--Sz\H{u}cs formula.

In Section~\ref{s:milnex} we present a plumbing graph of the Milnor fibre boundary of the image of $ \Phi $ for all members of Mond's list. 

The germs $ \Phi $ of type $H_k$ are the only germs in the list with $ T ( \Phi ) \neq 0 $. In Section~\ref{s:Fitting} we calculate $ T ( \Phi ) $ and the equation of $ (\mbox{im} ( \Phi ), 0) \subset ( \C^3, 0) $ using Fitting ideals.

$ \Phi (s, t) = (s^2, t^2, s^3 + t^3+ st) $ is a corank--$2$ germ from \cite{marar}, that is, $ \rk (d \Phi_0) = 0 $. We present the Milnor fibre boundary of its image, see Section~\ref{s:milnex}.

\subsection*{Quotient singularities} The simple singularities $ (X, 0) \subset ( \C^3, 0) $ are the quotient singularities of type $A$-$D$-$E$, that is, $ (X, 0) \cong ( \C^2, 0) / G $ for a certain finite subgroup $ G \subset GL(2, \C )$. The covering map of a quotient singularity is a germ $ \Phi: ( \C^2, 0) \to ( \C^3, 0) $ whose image is $ (X, 0) $. The components of $ \Phi $ are the generators of the $G$-invariant algebra $ \mathcal{O}_{( \C^2, 0)}^G $ \cite{invariant}.

These germs $ \Phi $ are singular only at the origin, but they are not finitely $ \mathscr{A}$-determined germs, i.e. $ \Phi|_{ \C^2 \setminus \{ 0\} } $ is a nonstable immersion. In fact, if $ |G| >2 $, then every point of $ (X, 0) $ is at least triple value of $ \Phi $, and for $ |G|=2$ the transversality of the branches does not hold.

In Subsection~\ref{ex:stabilization} we present $ C( \Phi ) $ of these germs $ \Phi $, hence the Smale invariant of the associated immersions $ S^3 \to S^3/G \hookrightarrow S^5 $ follows from the main formula, see Section~\ref{s:ex}.

In Section~\ref{s:calc} we present a holomorphic stabilization of the covering germ $ \Phi $ of the $ A_1 $ singularity, and we calculate $ L ( \Phi|_{ S^3} ) $ directly. Note that its associated immersion $ S^3 \to \R \mathbb{P}^3 \hookrightarrow S^5 $ is regular homotopic with the immersions of the same structure (that is, a composition of a covering with an embedding) studied in several articles \cite{Mimm, EkTak, ekholm3, kinjo}, see the discussion in Subsection~\ref{ss:implumb}.

\section*{Notations and terminology}

\begin{itemize}

\item $ \Z $, $ \Q $, $ \R $, $ \C $: the set of integers, rational, real, complex numbers.

\item $ A \simeq B $ means that the (oriented) smooth manifold $ A $ and $ B $ are (oriented) diffeomorphic. 

\item $ A \cong B $ means that the algebraic structures or bundles $ A $ and $ B $ are isomorphic, or denotes the analytic equivalence of the analytic spaces/germs $ A $ and $ B$.

\item $ A \langle a, b \rangle $ means that the algebraic structure (module, vector space) $ A $ is generated by $ a $ and $b$. $\langle a, b \ | c, d \rangle $ denotes the group presented by generators $ a$ and $b $ and relations $c$ and $d$.

\item $ (a, b )  $ denotes the ideal in the ring/algebra $ R $ generated by the elements $ a, b \in R $ (or denotes the pair of arbitrary elements $ a $ and $b $ as well).

\item $ \dim A $ denotes the dimension of the vector space or manifold $ A$ over $ \R $. Dimension over another field $ \mathbb{F} $ is denoted by $ \dim_{ \mathbb{F} } $. The rank of a matrix/linear map $M$ is denoted by $ \rk (M) $. 

\item $ S^n $, resp. $ B^{n+1} $ denotes the (oriented) diffeomorphism type of the unit sphere, resp. the unit ball in $ \R^{n+1} $. The $2$-disc is denoted by $ D^2$. When it is important, we distinguish  the embedded and the abstract spheres and balls in the notation.

\item $[X, Y] $ denotes the set of homotopy classes of continuous maps between the topological spaces $ X$ and $Y$.

\item $ X \vee Y $ denotes the bouquet (wedge, one-point union) of the (pointed) topological spaces $ X $ and $Y$.

\item $ H_n (X, G ) $, resp. $ H^n ( X, G ) $ denotes the homology groups, resp. cohomology groups of the topological space $ X $ with coefficient group $ G $.

\item $ \pi_n (X) $ is the $n$-th homotopy group of the topological space $ X$.

\item $ d g $ ($dg_x$) denotes the differential/tangent map/Jacobian matrix of the function/map/germ $ g $ (at the point $x$).

\item We call a smooth (or analytic) germ $ g: (X, x) \to (Y, y ) $ regular (resp. singular), if the rank of $ dg_{x} $ is equal (resp. is less than) the minimum of the dimensions of $ X $ and $ Y$. 

\item $ GL (n, \mathbb{F} ) $ denotes the set of $n \times n $ matrices over $ \mathbb{F} $ with determinant $ \neq 0 $. In the real case $ GL^{+} (n, \mathbb{R} ) $ is the set of $n \times n $ matrices with determinant $ >0$. 

\item $ O(n) $ ($SO(n)$), resp. $U(n)$ ($SU(n) $) denotes the set of real ortogonal matrices (with determinant $ +1 $), resp. complex unitary matrices (with determinant $ +1$).

\item $ V_m( \mathbb{F}^q) $ (where $\mathbb{F}$ denotes $ \R$ or $ \C $) is the Stiefel manifold that consists of linearly independent $m$-frames of $ \mathbb{F}^q$.

\end{itemize}

\mainmatter

\chapter{Deformation of complex map germs}\label{ch:germ}

\section{Finite complex germs and their restrictions at the level of links}\labelpar{s:cone}

\subsection{Finite germs}  
The basic objects of the thesis are certain types of \emph{holomorphic map germs} $ \Phi: (\C^2, 0) \to ( \C^3, 0) $. The concept of the germ is an important tool in the study of the local behaviour of the maps. 

Two subsets $ X_1 $ and $X_2$ of the topological space $X$ \emph{have the same germ} at $x_0 \in X_1 \cap X_2$, if there is a neighbourhood $ U $ of $x_0$ such that $ U \cap X_1 = U \cap X_2 $. The equivalence classes of this equivalence relation are called the \emph{germs of spaces} at the point $x_0$ \cite[Definition 3.4.1.]{dejong}. The germs along a subset $ S_0$ can be defined similarly.

Let $ (X, x_0)$ and $( Y, y_0) $ be two germs of topological spaces. A \emph{germ of a continuous map} $ f: (X, x_0) \to (Y, y_0 ) $ is defined as an equivalence class of maps $ f: U \to W$, with $ f(x_0)=y_0$, where $U$ and $W$ are representatives of $ (X, x_0) $ and $ (Y, y_0)$ respectively. Two maps $f_1: U_1 \to W $ and $f_2: U_2 \to W $ are equivalent (they define the same germ) if they agree on an open neighbourhood $ V \subset U_1 \cap U_2 $ of $ p$ \cite[Definition 3.4.6.]{dejong}.
Changing $ x_0 $ to a finite subset $S_0 \subset X$ we get the notion of \emph{multi-germ}. \cite{mond-ballesteros, Wall}

We also can define smooth (resp. analytic, holomorphic) germs of maps, as germs of smooth (resp. analytic, holomorphic) maps. We study holomorphic germs $ ( \C^n, 0) \to ( \C^p, 0) $ throughout this chapter, and germs of complex analytic spaces are studied in Chapter~\ref{ch:iso}.

The notion of germ is particularly interesting in the complex analytic category, because of uniqueness of analytic continuation: if $ U_1 $ and $ U_2 $ are open sets in $ \C^n $ with $U_1 \cap U_2 $ connected, and $f_i : U_i \to \C^p $ are complex analytic maps which coincide on some open $ V \subset U_1 \cap U_2$, they coincide on all of $ U_1 \cap U_2$ \cite{mond-ballesteros}. 

The holomorphic germs $ ( \C^n, 0) \to ( \C^p, 0) $ can be identified with the $p$-tuples of locally convergent power series in $n$ variables with constant terms $0 $, i.e. with the elements of $ \mathfrak{m}_{ ( \C^n, 0)} \cdot \mathcal{O}_{ (\C^n, 0)}^p $, where $ \mathfrak{m}_{ ( \C^n, 0)} $ is the unique maximal ideal in the local ring $ \mathcal{O}_{ (\C^n, 0)} $. 


A holomorphic germ $ \Phi: (\C^n, 0) \to ( \C^p, 0) $ is called \emph{finite}, if $ \Phi^{-1} (0)$ is a finite set, that is, $ \Phi^{-1} (0) = \{0 \} $ for a small enough representative of $ \Phi $ \cite[Definition 3.4.7., Theorem 3.4.24]{dejong}.

 





Finiteness can be characterized by the local algebras. The germ $ \Phi: (\C^n, 0) \to ( \C^p, 0) $ induces an algebra homomorphism $ \Phi^*: \mathcal{O}_{ ( \C^p, 0) } \to \mathcal{O}_{ ( \C^n, 0) } $. In this way $ \mathcal{O}_{ ( \C^n, 0) } $ becomes a module over $ \mathcal{O}_{ ( \C^p, 0) } $, sometimes it is denoted by $ \Phi_* \mathcal{O}_{ ( \C^n, 0) } $ as well. 
\begin{thm}[{\cite[Theorem 3.4.24.]{dejong}}]\label{th:dejongfin}
 The following facts are equivalent:
 
(a) $ \Phi $ is finite.

(b) $ \mathcal{O}_{ ( \C^n, 0) } $ is a finitely generated $ \mathcal{O}_{ ( \C^p, 0) } $-module.

(c) $ \mathcal{O}_{ ( \C^n, 0) }/(\Phi^*\mathfrak{m}_{ ( \C^p, 0) }) $ is a finite dimensional $ \C$-vector space.
\end{thm}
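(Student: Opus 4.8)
The plan is to establish the cycle of implications $(b)\Rightarrow(c)\Rightarrow(a)\Rightarrow(b)$, so as to isolate the genuinely analytic content in a single step. Throughout set $\mathcal{O}:=\mathcal{O}_{(\C^n,0)}$, $R:=\mathcal{O}_{(\C^p,0)}$, $\mathfrak{m}:=\mathfrak{m}_{(\C^n,0)}$, and let $I:=(\Phi^*\mathfrak{m}_{(\C^p,0)})$ be the ideal of $\mathcal{O}$ generated by $\Phi^*\mathfrak{m}_{(\C^p,0)}$; thus $\mathcal{O}/I$ is exactly the vector space in $(c)$, and as a module $I=\mathfrak{m}_R\cdot\mathcal{O}$. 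Note $I\subseteq\mathfrak{m}$ because $\Phi(0)=0$, and $I=(\Phi^*y_1,\dots,\Phi^*y_p)$ cuts out the germ $\Phi^{-1}(0)$. For $(b)\Rightarrow(c)$ there is nothing to do beyond base change: if $\mathcal{O}$ is generated by finitely many elements over $R$, their residues generate $\mathcal{O}/\mathfrak{m}_R\mathcal{O}=\mathcal{O}/I$ over $R/\mathfrak{m}_R=\C$, so $\dim_\C\mathcal{O}/I<\infty$.

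For $(c)\Rightarrow(a)$ I would argue purely algebraically. Since $I\subseteq\mathfrak{m}$, the ring $\mathcal{O}/I$ is local with maximal ideal $\overline{\mathfrak{m}}:=\mathfrak{m}/I$, and finite dimensionality forces the descending chain of $\C$-subspaces $(\mathfrak{m}^k+I)/I$ to stabilize, say $\mathfrak{m}^k+I=\mathfrak{m}^{k+1}+I$. In $\mathcal{O}/I$ this reads $\overline{\mathfrak{m}}^{\,k}=\overline{\mathfrak{m}}\cdot\overline{\mathfrak{m}}^{\,k}$, so Nakayama's lemma (applied to the finitely generated module $\overline{\mathfrak{m}}^{\,k}$ over the Noetherian local ring $\mathcal{O}/I$) gives $\overline{\mathfrak{m}}^{\,k}=0$, i.e. $\mathfrak{m}^k\subseteq I$. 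Consequently the germ $\Phi^{-1}(0)=V(I)$ satisfies $V(I)\subseteq V(\mathfrak{m}^k)=\{0\}$, which is precisely finiteness.

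The heart of the statement is $(a)\Rightarrow(b)$, namely the \emph{Weierstrass finiteness theorem}, and this is the step I expect to be the main obstacle: it is where convergence enters, since $\mathcal{O}$ is not finitely generated as a $\C$-algebra and integrality of the coordinate functions alone is insufficient. The plan is an induction on $n$ driven by the Weierstrass preparation and division theorems. Because $\Phi^{-1}(0)=\{0\}$, the restriction of $\Phi$ to a generic line through the origin has an isolated zero there; after a linear change of coordinates placing that line along the $x_n$-axis, a suitable analytic combination of the components $\Phi^*y_j$ becomes $x_n$-regular. Weierstrass preparation then rewrites it as a unit times a Weierstrass polynomial $W\in\mathcal{O}_{(\C^{n-1},0)}[x_n]$ of some degree $d$ with $(W)\subseteq I$, and Weierstrass division exhibits $\mathcal{O}/(W)$ as a free $\mathcal{O}_{(\C^{n-1},0)}$-module with basis $1,x_n,\dots,x_n^{d-1}$. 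Stacking this reduction into a tower and applying the inductive hypothesis to the finite germ in the remaining variables yields module-finiteness of $\mathcal{O}$ over $R$, which is $(b)$.

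The delicate bookkeeping, which I regard as the genuine difficulty, is twofold: securing the genericity of coordinates that guarantees $x_n$-regularity, and verifying that the lower-dimensional germ produced at each stage is again finite, so that the geometric hypothesis $\Phi^{-1}(0)=\{0\}$ is correctly fed back into the induction. An alternative organization would deduce $(c)$ from $(a)$ directly via the R\"uckert Nullstellensatz, which yields $\sqrt{I}=\mathfrak{m}$ and hence $\mathfrak{m}^N\subseteq I$ and $\dim_\C\mathcal{O}/I<\infty$, and then prove $(c)\Rightarrow(b)$ by lifting a $\C$-basis of $\mathcal{O}/I$ and showing the $R$-submodule it generates is everything. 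That route, however, replaces the Weierstrass finiteness theorem by a completeness argument closing up the submodule in the $\mathfrak{m}_R$-adic topology, so the analytic difficulty is merely relocated rather than removed; for that reason I prefer to invoke the Weierstrass finiteness theorem, already available from the cited development, as the engine of $(a)\Rightarrow(b)$.
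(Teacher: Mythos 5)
You should first note that the paper contains no proof of Theorem~\ref{th:dejongfin}: it is imported verbatim from \cite[Theorem 3.4.24]{dejong} as background, so your argument has to stand on its own. Two thirds of it do. The implications (b)$\Rightarrow$(c) and (c)$\Rightarrow$(a) are complete and correct: base change for the first, and for the second the observation that $I\subseteq\mathfrak{m}_{(\C^n,0)}$ makes $\mathcal{O}_{(\C^n,0)}/I$ local, so that the stabilization $\overline{\mathfrak{m}}^{\,k}=\overline{\mathfrak{m}}\cdot\overline{\mathfrak{m}}^{\,k}$ plus Nakayama gives $\mathfrak{m}_{(\C^n,0)}^k\subseteq I$ and $V(I)\subseteq\{0\}$. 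Incidentally, one worry you raise is vacuous: since $\Phi^{-1}(0)=\{0\}$, no line through the origin lies in $V(I)$, so for \emph{any} choice of coordinates some single component $\Phi^*y_j$ is already $x_n$-regular; no genericity and no combination of components is needed.

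The genuine gap is in (a)$\Rightarrow$(b), exactly at the sentence ``stacking this reduction into a tower and applying the inductive hypothesis to the finite germ in the remaining variables.'' No such germ is produced by your construction. After the Weierstrass step you know $W\in I$ and that $\mathcal{O}_{(\C^n,0)}/(W)$ is finite free over $\mathcal{O}_{(\C^{n-1},0)}$; but $\Phi^*$ maps $R=\mathcal{O}_{(\C^p,0)}$ into $\mathcal{O}_{(\C^n,0)}$, not into $\mathcal{O}_{(\C^{n-1},0)}$, and $\Phi$ induces no map germ $(\C^{n-1},0)\to(\C^p,0)$ to which the inductive hypothesis could apply. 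To restart the induction you would need an ideal of $\mathcal{O}_{(\C^{n-1},0)}$ with zero set $\{0\}$ --- say $I\cap\mathcal{O}_{(\C^{n-1},0)}$, or resultants of $W$ against the remaining components --- and proving that such eliminated equations cut out the projection of $V(I)$ is precisely the elimination content of the R\"uckert Nullstellensatz, i.e.\ of the theorem you are trying to prove. This is why the standard treatments route through (c) twice: (a)$\Rightarrow$(c) is a weak Nullstellensatz ($\mathfrak{m}^N\subseteq I$), and (c)$\Rightarrow$(b) is the Weierstrass finiteness theorem, proved by an induction that \emph{does} close because it is strengthened from $\mathcal{O}_{(\C^n,0)}$ to arbitrary quotients $\mathcal{O}_{(\C^n,0)}/J$: there, after one Weierstrass step, the relevant fibre algebra over $\mathcal{O}_{(\C^{n-1},0)}$ is a quotient of $\C\{x_n\}/(x_n^d)$, visibly finite dimensional. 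Your fallback --- citing the Weierstrass finiteness theorem as the engine of (a)$\Rightarrow$(b) --- suffers from the same problem in your organization: that theorem's hypothesis is finite dimensionality of the fibre algebra, i.e.\ statement (c), so invoking it presupposes (a)$\Rightarrow$(c), which the cycle (b)$\Rightarrow$(c)$\Rightarrow$(a)$\Rightarrow$(b) never establishes. The repair is to prove the four implications (b)$\Rightarrow$(c), (c)$\Rightarrow$(a), (a)$\Rightarrow$(c), (c)$\Rightarrow$(b), with the last two carrying the analytic content.
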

Note that $ (\Phi^*\mathfrak{m}_{ ( \C^p, 0) }) = \mathfrak{m}_{ ( \C^p, 0) } \cdot \mathcal{O}_{ ( \C^n, 0) } $ denotes the ideal generated by the coordinate functions of $ \Phi $.


A finite germ $ \Phi: ( \C^n, 0) \to ( \C^p, 0) $ is \emph{generically $1$ to $1$} to its image, if there is a hypersurface $ (Y, 0)= f^{-1}(0) \subset ( \C^p, 0) $ (where $f: ( \C^p, 0) \to ( \C, 0)$) such that $ \Phi $ induces a bijection between $ \Phi^{-1} ( ( \C^p, 0) \setminus (Y, 0)) $ and $ \mbox{im} ( \Phi ) \setminus (Y, 0) $.

\subsection{Restriction of $ \Phi $ at the level of links}\labelpar{ss:link}
Here we present the discussion from \cite[Section 2.1.]{NP} slightly generalized. This topic is closely related to the link of complex analytic spacegerms, which is discussed in Chapter~\ref{ch:iso}.

If $(X,0)$ is a complex analytic germ with an isolated singularity $0\in X$ then its \emph{link}
 $K$ can be defined as follows. Set a real analytic map $\rho:X\to [0,\infty)$ such that
 $\rho^{-1}(0)=\{0\}$. Then, for $\epsilon>0$ sufficiently small, $K:=\rho ^{-1}(\epsilon)$
 is an oriented manifold, whose isotopy class (in $X\setminus \{0\}$)
 is independent of all the choices, cf.
  Lemma (2.2) and Proposition (2.5) of \cite{looijenga}.
 E.g., if $(X,0)$ is a subset of $(\C^N,0)$, then one can take the restriction of $\rho(z)=|z|^2$ (the norm of $z$). In this way, the link of $(\C^N,0)$ is the sphere $S^{2N-1}_\epsilon$.
 Nevertheless, the general definition  is very convenient
 even if $(X,0)=(\C^N,0)$.

Let  $ \Phi: ( \C^n, 0)  \to (\C^p, 0) $ be a finite holomorphic germ ($n<p$). Define $\rho:(\C^n,0)\to [0,\infty)$ by $\rho(z)=|\Phi(z)|^2$. Since  $\Phi^{-1}(0)=\{0\}$,  $\rho^{-1}(0)=\{0\}$ too.

\begin{lem}\label{cor:epsilon}
There exists an $ \epsilon_0 > 0 $ sufficiently small such that
$\mathfrak{B}_\epsilon:=\Phi^{-1} ( \{z:|z|\leq \epsilon\} )$  is a
non-metric $ \mathcal{C}^{\infty}$ closed ball around the origin of $\C^n$ for
any $ \epsilon < \epsilon_0 $. Its boundary,
$\Phi^{-1}(S^{2p-1}_{\epsilon})$ is canonically diffeomorphic to $ S^{2n-1}$. In fact, for $\tilde{\epsilon}$ with
$0<\tilde{\epsilon}\ll \epsilon$, any standard metric sphere
$S^{2n-1}_{\tilde{\epsilon}}$ sits in $\mathfrak{B}_\epsilon$, and it is isotopic with
$ \Phi^{-1} (  S^{2p-1}_{ \epsilon} ) $ in  $\mathfrak{B}_\epsilon\setminus \{0\}$.
\end{lem}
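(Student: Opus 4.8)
The plan is to reduce the entire statement to the study of the level and sublevel sets of the single nonnegative real-analytic function $\rho=|\Phi|^2=\sum_j|\Phi_j|^2$. The point is that $\Phi^{-1}(S^{2p-1}_\epsilon)=\rho^{-1}(\epsilon^2)$ and $\mathfrak{B}_\epsilon=\rho^{-1}([0,\epsilon^2])$, and, $\Phi$ being finite, $\rho^{-1}(0)=\{0\}$. Thus it suffices to show that for small $\epsilon$ the value $\epsilon^2$ is regular for $\rho$ (equivalently $\Phi$ is transverse to $S^{2p-1}_\epsilon$), that the punctured sublevel set is a smooth collar over $\rho^{-1}(\epsilon^2)$, and that this level is isotopic in $\mathfrak{B}_\epsilon\setminus\{0\}$ to a standard metric sphere. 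This is precisely the local conical structure of the real-analytic $\rho$ with an isolated zero, so I would follow the Milnor--Looijenga scheme referenced above.

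The first and decisive step is to prove that $\rho$ has no critical point on $\rho^{-1}((0,\epsilon_0^2])$ for some $\epsilon_0>0$. Writing $\partial\rho/\partial z_k=\sum_j\overline{\Phi_j}\,\partial\Phi_j/\partial z_k$, critical points form a real-analytic set; if they accumulated at $0$ the curve selection lemma would yield a real-analytic arc $\gamma(t)$ with $\gamma(0)=0$, $\gamma(t)\neq 0$ for $t>0$, and $\mathrm{grad}\,\rho(\gamma(t))=0$. Then $\frac{d}{dt}\rho(\gamma(t))=\langle\mathrm{grad}\,\rho,\dot\gamma\rangle=0$, so $\rho\circ\gamma$ is constant equal to $\rho(0)=0$, forcing $\gamma(t)\in\rho^{-1}(0)=\{0\}$, a contradiction. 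I expect \emph{this} to be the genuine obstacle, as it is exactly where the analyticity of $\Phi$ enters (through the curve selection lemma, equivalently through a {\L}ojasiewicz gradient inequality); everything afterward is soft differential topology. It gives at once that every $\epsilon^2<\epsilon_0^2$ is a regular value, so $\rho^{-1}(\epsilon^2)=\Phi^{-1}(S^{2p-1}_\epsilon)$ is a closed smooth $(2n-1)$-manifold and $\Phi$ is transverse to $S^{2p-1}_\epsilon$.

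Next I would build the collar. On $\rho^{-1}((0,\epsilon_0^2])$ the field $\mathrm{grad}\,\rho$ is nowhere zero, so normalising it to a field $v$ with $d\rho(v)\equiv 1$ and integrating its flow trivialises $\rho$ and gives diffeomorphisms $\rho^{-1}([\eta,\epsilon^2])\cong\rho^{-1}(\epsilon^2)\times[\eta,\epsilon^2]$. Controlling the flow near the origin by the {\L}ojasiewicz comparison $c|z|^{2N}\le\rho\le C|z|^2$ shows that $\mathfrak{B}_\epsilon$ is a compact smooth $2n$-manifold with boundary $\rho^{-1}(\epsilon^2)$, presented topologically as the cone over this boundary; the identification of the cone as the \emph{standard} ball is deferred to the isotopy below so as to avoid circularity.

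Finally, for $0<\tilde\epsilon\ll\epsilon$, continuity of $\Phi$ with $\Phi(0)=0$ gives $|\Phi(z)|<\epsilon$ on $|z|=\tilde\epsilon$, so $S^{2n-1}_{\tilde\epsilon}$ lies in the interior of $\mathfrak{B}_\epsilon$. On the compact annular region $R=\{\,\tilde\epsilon^2\le|z|^2\,\}\cap\mathfrak{B}_\epsilon$ I would construct a vector field $w$ with $dr(w)>0$ and $d\rho(w)>0$ simultaneously, where $r=|z|^2$; such $w$ exists at each point precisely because $\mathrm{grad}\,r$ and $\mathrm{grad}\,\rho$ are nowhere antiparallel there. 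This last fact is again the curve selection lemma: along an arc where $\mathrm{grad}\,\rho=-\lambda\,\mathrm{grad}\,r$ with $\lambda\ge 0$ one gets $\frac{d}{dt}\rho(\gamma)=-\lambda\frac{d}{dt}|\gamma|^2$, contradicting that both $\rho\circ\gamma$ and $|\gamma|^2$ are strictly increasing for small $t>0$. A partition of unity globalises $w$, and its flow carries $S^{2n-1}_{\tilde\epsilon}$ diffeomorphically onto $\rho^{-1}(\epsilon^2)$ through embedded spheres in $\mathfrak{B}_\epsilon\setminus\{0\}$, which is the asserted isotopy. Gluing the standard metric ball bounded by $S^{2n-1}_{\tilde\epsilon}$ to the resulting collar exhibits $\mathfrak{B}_\epsilon$ as a smooth (non-metric) $\mathcal{C}^\infty$ ball and fixes the canonical diffeomorphism $\Phi^{-1}(S^{2p-1}_\epsilon)\cong S^{2n-1}$.
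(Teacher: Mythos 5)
Your proposal is correct and takes essentially the same route as the paper: the paper disposes of this lemma by applying the general theory of links of real-analytic germs to $\rho(z)=|\Phi(z)|^2$ (noting that finiteness of $\Phi$ gives $\rho^{-1}(0)=\{0\}$ and citing Lemma (2.2) and Proposition (2.5) of \cite{looijenga} for the conical structure and independence-of-choices), which is exactly the reduction you make. The only difference is that where the paper cites, you re-prove the cited facts — regularity of small levels via the curve selection lemma, the gradient-flow collar, and the two-function vector-field isotopy between $\rho^{-1}(\epsilon^2)$ and a small metric sphere — which is the standard Milnor--Looijenga argument and is carried out correctly.
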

In the sequel $ \Phi^{-1} (  S^{2p-1}_{ \epsilon } ) $ and $S^{2n-1}_{\tilde{\epsilon}}\subset \C^n$
will be identified. When it is important to differentiate them we will use the notation
$ \mathfrak{S}=  \mathfrak{S}^{2n-1}:=  \Phi^{-1} (  S^{2p-1}_{ \epsilon} ) $.
We write also $S^{2n-1}=S^{2n-1}_{\tilde{\epsilon}}$ and $S^{2p-1}=S^{2p-1}_\epsilon$.

\begin{defn}\label{de:linkmap} The restriction $ \Phi |_{ \mathfrak{S} } : \mathfrak{S} \to S^{2p-1} $ is the (smooth) map associated with  $ \Phi $ at the level of links.
\end{defn} 


\section{Finite determinacy and stability} 

\subsection{Equvivalence of the germs} Several equivalence relations of the germs are studied in the literature. We refer mainly to Wall's survey paper \cite{Wall} and the book of Mond and Nu\~{n}o-Ballesteros \cite{mond-ballesteros} (which is still unpublished). The concept of stability and finite determinacy makes sense after fixing a certain equivalence relation. Geometrically $ \mathscr{A} $-equivalence is the most reasonable, that corresponds to the coordinate changes in the source and in the target of a germ. We use only $ \mathscr{A} $-equivalence in this thesis. However, the results and classification theorems  connected with $ \mathscr{A} $-equivalence require the study of some of the other equivalences as well.

Most of the equivalences come from certain group actions on the set of germs, the equivalence classes are the orbits of the action. The group $ \mathscr{L}$ (resp. $ \mathscr{R}$) consists of the germs of local automorphism of the target (resp. the source), and acts on the germs by composing from the left (resp. from the right). The corresponding equivalence relation is the left (resp. right) equivalence of the germs. The direct product of these groups is denoted by $ \mathscr{A}$, its left-right action induces $ \mathscr{A}$-equivalence. The type of the local automorphisms depends on the category of the germs: these are local homeomorphisms for continuous germs, local diffeomorphisms for smooth germs, local analytic equivalences, biholomorphisms for real analytic, holomorphic germs. 
There are two other groups which are often studied, $ \mathscr{C} $ and $ \mathscr{K}$, the last one is contact equivalence. 

In this thesis we use only $ \mathscr{A}$-equivalence, but in several different categories. 
For the study of holomorphic germs we usually use complex analytic $\mathscr{A}$-equivalence, but some of our results can be interpreted in other categories as well: as it will turn out, some analytic invariants of the germs are invariant under topological (or smooth) $\mathscr{A}$-action as well, see Subsection~\ref{ss:cveg}. 
The flexibility to use the smooth category is provided by 
the study of the link, since $ \Phi|_{ \mathfrak{S}} $ is a smooth map between smooth manifolds (spheres). 

The singular Seifert surfaces discussed in Subsection~\ref{s:ss} are defined as stable smooth maps. Thus we need $\mathscr{A}$-equivalence of global maps as well:
that corresponds to the action of global homeomorphisms (diffeomorphism, biholomorphism) of the source and the target.  Moreover, the stability with respect to this equivalence (global $ \mathscr{A}$-equivalence) can be reduced to the local stability of the germs of the map, \cite{mond-ballesteros, matherV}.

The following general concepts can be found in \cite{Wall, mond-ballesteros}, here we present the main theorems to provide a stable background to our study. Most of them hold for $ \mathscr{L}$, $ \mathscr{R}$, $ \mathscr{C}$, $ \mathscr{K}$ equivalences too, in smooth and analytic categories as well. But they are meaningless in the continuous category, because there is no Taylor-expansion and the codimension of the orbits cannot be defined.

Note that most of the notions and invariants introduced in this chapter are $ \mathscr{A} $-invariants. That is, if $ \Phi_1 $ and $ \Phi_2 $ are $ \mathscr{A} $-equivalent germs (as complex analytic germs), then $ \Phi_1 $ is $ \mathscr{A} $-stable (resp. finitely $ \mathscr{A} $-determined) if and only if $ \Phi_2 $ is $ \mathscr{A} $-stable (resp. finitely $ \mathscr{A} $-determined), $ C ( \Phi_1) = C ( \Phi_2) $, 
$ T ( \Phi_1) = T ( \Phi_2) $, and similarly the invariants $ N$, $ \mu_I $, $ J$, $ \mu ( D^2/S_2) $ (and the Milnor number of other double point spaces) agree for $ \Phi_1 $ and $ \Phi_2 $.

\subsection{Unfolding and stability} $ \mathscr{A} $-stability means that any small perturbation of a germ is $ \mathscr{A} $-equivalent with the germ itself. To make this precise, we need the notion of unfoldings.

\begin{defn}

(a) An $r$-parameter unfolding of $ \Phi: ( \C^n, 0) \to ( \C^p, 0) $ is a germ 
\[ \tilde{ \Phi} : (\C^n \times \C^r, 0) \to ( \C^p \times \C^r, 0) \mbox{, }  \]
\[ \tilde{ \Phi} (u, v) = ( \tilde{\Phi}_v (u) , v) \mbox{,}
\] 
such that $ \tilde{\Phi}_0 = \Phi $.

(b) Two unfoldings $ \tilde{ \Phi}_1 $ and $ \tilde{ \Phi}_2 $ of $ \Phi $ are $ \mathscr{A}$-equivalent unfoldings, if there are  diffeomorphism germs $ \phi: (\C^n \times \C^r, 0) \to (\C^n \times \C^r, 0) $ and $ \psi: ( \C^p \times \C^r, 0) \to ( \C^p \times \C^r, 0) $, which are unfoldings of the identity of $(\C^n , 0) $ and $ ( \C^p , 0)$, and
\[ \tilde{ \Phi}_2 = \psi \circ \tilde{ \Phi}_1 \circ \phi \mbox{.}
\]

(c) An unfolding of $ \Phi $ is trivial if it is equivalent with the constant unfolding, $ \tilde{ \Phi} (u, v) = ( \Phi (u), v) $.

\end{defn}

If the unfolding $ \tilde{ \Phi} $ is fixed, then we will use the notation $ \Phi_v $ instead of $ \tilde{ \Phi }_v $ for the deformation of $ \Phi $ corresponding to the parameter value $v$.

An unfolding is \emph{versal} if it induces all the unfoldings up to $ \mathscr{A}$-equivalence:

\begin{defn}

(a) The pull-back of an $r$-parameter unfolding $ \tilde{ \Phi } $ of $ \Phi $ with a germ $ h: ( \C^q, 0) \to ( \C^r, 0) $ is the $q$-parameter unfolding $h^* ( \tilde{ \Phi}): ( \C^n \times \C^q, 0) \to ( \C^p \times \C^q, 0) $,
  $ h^* ( \tilde{ \Phi})(u, v)= (\tilde{\Phi}_{h(v)} (u), v) $.

(b)
An $r$-parameter unfolding $ \tilde{ \Phi } $ of $ \Phi $ is called versal unfolding, if any unfolding (with arbitrary number of parameters) is equivalent with $ h^* ( \tilde{ \Phi}) $ for a certain germ $h$.
\end{defn} 

\begin{defn}
A germ $ \Phi: ( \C^n, 0) \to ( \C^p, 0) $ is stable (more precisely, $ \mathscr{A}$-stable) if any unfolding of $ \Phi $ is trivial.
\end{defn}

The definitions of the unfolding and trivial unfolding generalize to multi-germs, replacing the base point $0$ in the source with a finite set $ S$. Then the stability of a multi-germ can be defined in the same way. Moreover the stability of the multi-germs can be reduced to the stability of its germs. For details we refer to \cite{mond-ballesteros}. Here we present only the simplest case, which is used many times in this thesis. First we need a definition.

\begin{defn}
The subspaces $ E_i $ of the vector space $ E $ (of finite dimension)  \emph{meet in general position} (or \emph{have regular intersection}) if $ \Sigma_i \textrm{\emph{ codim}} E_i = \textrm{\emph{ codim}} \bigcap_i E_i $.
\end{defn}

Note that for two subspaces regular intersection means transversality.

\begin{prop}
A multigerm of regular germs (i.e., each branch is the germ of an immersion) is stable if and only if the intersection of the branches is regular.
\end{prop}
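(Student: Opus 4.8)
\section*{Proof proposal}

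The plan is to route the equivalence through the infinitesimal stability criterion of Mather reviewed above: the multigerm $f$, with branches $\Phi_i\colon(\C^n,x_i)\to(\C^p,0)$ over the finite source set $S=\{x_1,\dots,x_k\}$, is $\mathscr{A}$-stable if and only if it is infinitesimally stable, i.e.\ $\theta(f)=tf(\theta_S)+wf(\theta_p)$. Here $\theta(f)=\bigoplus_i\theta(\Phi_i)$ is the module of vector fields along $f$, $tf$ is induced by source fields $\theta_S=\bigoplus_i\theta_{n,x_i}$ and $wf$ by target fields $\theta_p$. Writing $V_i:=\im(d\Phi_i)_{x_i}\subset\C^p$ for the tangent space of the $i$-th image branch, I first record that the branches meet in general position exactly when their tangent spaces $V_i$ do; so the goal becomes the equivalence of infinitesimal stability with $\operatorname{codim}\bigcap_i V_i=\sum_i\operatorname{codim}V_i$.

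The decisive reduction exploits that each branch is an immersion. Since $d\Phi_i$ is injective, $\Phi_i^*$ is surjective on cotangent spaces, so $\Phi_i^*\mathfrak{m}_p+\mathfrak{m}_n^2=\mathfrak{m}_n$, and Nakayama upgrades this to $\Phi_i^*\mathfrak{m}_p\cdot\mathcal{O}_n=\mathfrak{m}_n$; consequently $\mathfrak{m}_p\,\theta(f)=\mathfrak{m}_n\,\theta(f)$. On the other hand, the preparation theorem guarantees that $Q:=\theta(f)/tf(\theta_S)$ is a finitely generated $\mathcal{O}_p$-module, and $wf$ is $\mathcal{O}_p$-linear. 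Hence Nakayama reduces infinitesimal stability ($wf$ surjective onto $Q$) to surjectivity modulo $\mathfrak{m}_p Q$, that is, to the statement modulo $tf(\theta_S)+\mathfrak{m}_p\theta(f)=tf(\theta_S)+\mathfrak{m}_n\theta(f)$.

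I would then compute this residual condition as pure linear algebra. Modulo $\mathfrak{m}_n\theta(f)$ one has $\theta(f)\equiv\bigoplus_i\C^p$, the tangent map reduces to $tf(\theta_S)\equiv\bigoplus_i V_i$ (its image at the base point is $V_i$), and $wf(\eta)\equiv(v,\dots,v)$ with $v=\eta(0)$, i.e.\ the diagonal $\Delta(\C^p)$. Thus infinitesimal stability is equivalent to $\bigoplus_i\C^p=\bigoplus_i V_i+\Delta(\C^p)$, equivalently to surjectivity of the linear map $\phi\colon\C^p\to\bigoplus_i\C^p/V_i$, $v\mapsto(v+V_i)_i$. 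Since $\ker\phi=\bigcap_i V_i$ while the target has dimension $\sum_i\operatorname{codim}V_i$, surjectivity of $\phi$ holds precisely when $\operatorname{codim}\bigcap_i V_i=\sum_i\operatorname{codim}V_i$, which is the definition of regular intersection. This closes both implications at once.

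The main obstacle is the Nakayama step of the second paragraph, namely securing the two ingredients that let the a priori infinite-dimensional module equation collapse to finite-dimensional linear algebra: the finite generation of $Q=\theta(f)/tf(\theta_S)$ over $\mathcal{O}_p$ (the preparation theorem) and the immersion-specific identity $\mathfrak{m}_p\theta(f)=\mathfrak{m}_n\theta(f)$. Once these are in place, the computation modulo $\mathfrak{m}_n\theta(f)$ and the identification of $\phi$-surjectivity with the codimension equality are routine, so I would spend the bulk of the write-up justifying the reduction rather than the concluding linear algebra.
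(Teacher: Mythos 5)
Your proposal is correct, but there is nothing in the paper to compare it against: the thesis states this proposition without proof, deferring to the Mond--Nu\~{n}o-Ballesteros book \cite{mond-ballesteros}, so your write-up supplies an argument the paper deliberately omits. What you give is in fact the standard proof from that literature, and it is sound: Mather's infinitesimal criterion (the multigerm version of Theorem~\ref{th:gaf1}), the immersion identity $\Phi_i^*\mathfrak{m}_p\cdot\mathcal{O}_n=\mathfrak{m}_n$ (hence $\mathfrak{m}_p\,\theta(f)=\mathfrak{m}_n\,\theta(f)$), the preparation/Nakayama collapse to the fibre $\theta(f)/\mathfrak{m}_n\theta(f)\cong\bigoplus_i\C^p$, and the final identification of infinitesimal stability with surjectivity of $v\mapsto(v+V_i)_i$, whose kernel--cokernel count is exactly the general-position condition of the paper's preceding definition. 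Two small points to tighten in a full write-up: (i) the multigerm form of the infinitesimal stability criterion is not literally stated in the thesis (Theorem~\ref{th:gaf1} is for monogerms), so it should be cited explicitly; (ii) your invocation of the preparation theorem to get $Q=\theta(f)/tf(\theta_S)$ finite over $\mathcal{O}_p$ itself requires knowing $\dim_\C Q/\mathfrak{m}_pQ<\infty$, which comes from the same immersion identity (or, more directly, from the fact that immersion germs are finite maps, so $\mathcal{O}_{n,x_i}$ is already a finite $\mathcal{O}_p$-module by Theorem~\ref{th:dejongfin}); as written the two ingredients look independent when the first partly rests on the second.
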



\begin{rem}\labelpar{re:globstab}
By Theorem~4 in \cite{matherV} (see also in \cite{mond-ballesteros}) the stability of \emph{proper} smooth global maps is equivalent with the stability of its multi-germs. This fact shows that we do not need to introduce the notion of `global stability'. We call a smooth map stable if all its multi-germs of are stable. 
\end{rem}

\subsection{Finite determinacy and codimension} 
\begin{defn}
A holomorphic germ $ \Phi: ( \C^n, 0)  \to (\C^p, 0) $ is finitely $ \mathscr{A} $-determined if there is an integer $k$ such that whenever the $k$-jet ($k$-th Taylor polynomial) of a germ equals with the $k$-jet of $ \Phi $, then the germ is $ \mathscr{A} $-equivalent with $ \Phi$.
\end{defn}

Next we define the $ \mathscr{A}$-codimension of $ \Phi$, this is the codimension of the $ \mathscr{A}$-orbit of $ \Phi $ in the space of the germs $( \C^n, 0) \to ( \C^p, 0) $. We define the $ \mathscr{A}_e$-codimension as well \cite[pg. 485.]{Wall}, \cite{mond-ballesteros}. Stability and finite determinacy can be characterized by these invariants.

Let $ \theta_{ ( \C^n, 0)} $ denote the set of the \emph{germs of vector fields} on $  ( \C^n, 0) $, these are the germs of holomorphic sections of the tangent bundle $ T \C^n \to \C^n $. $ \theta ( \Phi ) $ is the space of \emph{germ of vector fields along $ \Phi $}, these are the germs of holomorphic sections of the pull-back tangent bundle $ \Phi^* T \C^p \to \C^n $. Let $\theta_0(\Phi) \subset  \theta(\Phi)$ denote the vector fields along $ \Phi $ which are $0$ at the origin, clearly 
$ \theta_0(\Phi) = \mathfrak{m}_{ ( \C^n, 0)} \cdot \theta(\Phi) $. Note that $ \theta_{ ( \C^n, 0)} $, $ \theta(\Phi) $ and $ \theta_0(\Phi) $ are $ \mathcal{O}_{( \C^n, 0)} $-modules, in particular they are $ \C$-vector spaces. 

Furthermore, the following identifications hold by introducing local coordinates.
\[ \theta ( \Phi ) \cong \mathcal{O}_{ ( \C^n, 0)}^p \mbox{ , }
 \theta_{ ( \C^n, 0)} \cong \mathcal{O}_{ ( \C^n, 0)}^n \mbox{. }
\]

Define the $ \C$-linear maps
\[ t \Phi: \theta_{ ( \C^n, 0)} \to \theta ( \Phi ) \mbox{ , } 
t \Phi (w) = d \Phi \circ w \mbox{, and}
\]
\[ \omega \Phi: \theta_{ ( \C^p, 0)} \to \theta ( \Phi ) \mbox{ , } 
\omega \Phi (w) = w \circ \Phi \mbox{.}
\]
In local coordinates $ t \Phi $ is the multiplication by the Jacobian of $ \Phi $, while $ \omega \Phi $ is the substitution of the components of $ \Phi $.

The tangent space $T \mathscr{A} \Phi $ of the $ \mathscr{A}$-orbit of $ \Phi $ is defined as
\[  T \mathscr{A} \Phi = t \Phi ( \mathfrak{m}_{ ( \C^n, 0)} \cdot \theta_{ ( \C^n, 0)} ) +
\omega \Phi ( \mathfrak{m}_{ ( \C^n, 0)} \cdot \theta_{ ( \C^p, 0)})  \mbox{,}
\]
it is a subspace of $\theta_0 ( \Phi ) $ \cite[pg. 485.]{Wall}, \cite{mond-ballesteros}.
\begin{defn}
The $ \mathscr{A}$-codimension of $ \Phi $ is $ \dim_{ \C } ( \theta_0( \Phi ) / T \mathscr{A} \Phi ) $.
\end{defn}

The \emph{extended tangent space} $T \mathscr{A}_e \Phi $ is defined as
\[ T \mathscr{A}_e \Phi = t \Phi (  \theta_{ ( \C^n, 0)} ) +
\omega \Phi (  \theta_{ ( \C^p, 0)}) \mbox{,}
\]
it is a subspace of $\theta ( \Phi ) $ \cite{Wall}, \cite{mond-ballesteros}.
\begin{defn}
The $ \mathscr{A}_e$-codimension of $ \Phi $ is $ \dim_{ \C } ( \theta( \Phi ) / T \mathscr{A}_e \Phi ) $.
\end{defn}

To motivate these definitions note that $ \theta ( \Phi ) $ is equal to the `set of one-parameter infinitesimal deformations of $ \Phi $', that is 
\[ \theta (\Phi)= \left\{ \frac{\partial}{\partial t} \Phi_t|_{t=0} \ | \
(\Phi_t, t) \mbox{ is a one-parameter unfolding of }  \Phi_0 = \Phi \right\} \mbox{,}\]
and $\theta_0(\Phi)$ contains the elements of $ \theta(\Phi) $ correspond to the unfoldings with $ \Phi_t (0) = 0$. $\theta_0(\Phi)$ can be considered as the tangent space of the space of the germs $( \C^n, 0) \to ( \C^p, 0) $ at $ \Phi $.

$T \mathscr{A} \Phi $ consists of the vector fields along $ \Phi $ which correspond to trivial infinitesimal deformations and are $0$ at the origin, ie.
\[ T \mathscr{A} \Phi = 
\left\{ \frac{\partial}{\partial t} ( \psi_t \circ \Phi \circ \phi^{-1}_t )|_{t=0} \ | \
 \phi_t(0)=0 \mbox{ , } \psi_t(0)=0 
\right\} \mbox{,}
\] 
where $ \psi_t $ and $ \phi_t $ are one-parameter families of germs of biholomorphisms in the source and in the target with $ \phi_0 = \mbox{id}  $, $ \psi_0 = \mbox{id} $. Without the extra conditions $ \phi_t(0)=0 $, $ \psi_t(0)=0 $ the 
formula provides $ T \mathscr{A}_e \Phi $.

The extended codimension is a more natural concept in many cases, see e.g. Theorem~\ref{th:gaf1} below. 
However $ T \mathscr{A}_e \Phi $ does not correspond to any group action or equivalence of the germs, since it allows the origin to move.

Stability and finite determinacy can be determined by the following characterization theorems by Mather, see Theorems~1.2., 3.4. and Proposition~4.5.2. in \cite{Wall}, or \cite{matherII, matherIII, mond-ballesteros}.

\begin{thm}\labelpar{th:gaf1}
Let $ \Phi: ( \C^n , 0) \to ( \C^p, 0) $ a holomorphic germ. Then the following are equivalent:

(a) $ \Phi $ is stable.

(b) The $ \mathscr{A}_e$-codimension of $ \Phi $ is $0$.
\end{thm}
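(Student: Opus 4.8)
The plan is to recognise that condition (b), the vanishing of the $\mathscr{A}_e$-codimension, is exactly the statement of \emph{infinitesimal stability}, namely the equality $\theta(\Phi) = T\mathscr{A}_e\Phi$, and then to establish the two implications separately. The direction (a)$\Rightarrow$(b) is a mechanical differentiation of the triviality condition; the direction (b)$\Rightarrow$(a) is the analytic core of the theorem and rests on the preparation theorem together with an integration (flow) argument. Throughout I shall use the identification, recalled in the text, of $\theta(\Phi)$ with the space of one-parameter infinitesimal deformations of $\Phi$, and the description of $T\mathscr{A}_e\Phi$ as the trivial such deformations when the origin is allowed to move.

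For (a)$\Rightarrow$(b): let $\dot\Phi\in\theta(\Phi)$ be arbitrary and write $\dot\Phi=\frac{\partial}{\partial t}\Phi_t|_{t=0}$ for a one-parameter unfolding $(\Phi_t,t)$ of $\Phi$. Stability forces this unfolding to be trivial, so there are one-parameter families $\phi_t$, $\psi_t$ of germs of biholomorphisms of source and target, with $\phi_0=\psi_0=\mathrm{id}$, such that $\Phi_t=\psi_t\circ\Phi\circ\phi_t^{-1}$. Differentiating this identity at $t=0$ and invoking the description of $T\mathscr{A}_e\Phi$ without the origin-fixing conditions, I obtain $\dot\Phi=t\Phi(w)+\omega\Phi(v)$ with $w=\frac{\partial}{\partial t}\phi_t|_{t=0}\in\theta_{(\C^n,0)}$ and $v=\frac{\partial}{\partial t}\psi_t|_{t=0}\in\theta_{(\C^p,0)}$. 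Since $\dot\Phi$ was arbitrary, $\theta(\Phi)=T\mathscr{A}_e\Phi$, i.e. the $\mathscr{A}_e$-codimension is $0$.

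For (b)$\Rightarrow$(a): assume $\theta(\Phi)=T\mathscr{A}_e\Phi$ and let $\tilde\Phi(u,v)=(\Phi_v(u),v)$ be an arbitrary $r$-parameter unfolding. I will show it is trivial in two moves. First, I propagate infinitesimal stability from the central fibre to the whole unfolding: the equality at the origin over $\C$ is upgraded, via the Malgrange preparation theorem, to the module-theoretic statement that relative to the base directions each parameter-velocity field $\frac{\partial}{\partial v_j}\tilde\Phi$ can be written as $\frac{\partial}{\partial v_j}\tilde\Phi=t\tilde\Phi(\xi_j)+\omega\tilde\Phi(\eta_j)$, where $\xi_j$, $\eta_j$ are germs of vector fields on source and target that are themselves unfoldings of vector fields over the parameter space (with $v$-component $\partial/\partial v_j$). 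Second, I integrate: viewing $\xi_j$, $\eta_j$ as holomorphic, hence integrable, time-dependent vector fields along the parameter directions and solving the associated flow equations, I produce families of biholomorphisms $\phi_v$, $\psi_v$ unfolding the identity and satisfying $\tilde\Phi=\psi\circ(\Phi\times\mathrm{id})\circ\phi^{-1}$, exhibiting $\tilde\Phi$ as trivial.

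The main obstacle is the first move of the reverse implication, namely transporting the numerical, finite-dimensional vanishing of the $\mathscr{A}_e$-codimension over $\C$ into a statement about generators of the $\mathcal{O}_{(\C^p\times\C^r,0)}$-module $\theta(\tilde\Phi)$ over the extended parameter space. This is precisely where the preparation theorem is indispensable: it converts finite $\C$-dimensionality of the relevant quotient into finite generation of $\theta(\Phi)$ as a module over the target local ring, after which a Nakayama-type openness argument guarantees that the generating relation persists for small $v$. Once the homological equation $\frac{\partial}{\partial v_j}\tilde\Phi=t\tilde\Phi(\xi_j)+\omega\tilde\Phi(\eta_j)$ is available with liftable $\xi_j$, $\eta_j$, the integration is the standard Thom--Levine flow argument and presents no difficulty in the holomorphic setting. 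For the detailed execution I would follow \cite{matherII, matherIII, Wall, mond-ballesteros}.
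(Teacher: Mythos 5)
Your proposal is correct and follows exactly the route the paper itself points to: the thesis states this result without proof, as Mather's characterization theorem, citing Wall's survey and \cite{matherII, matherIII, mond-ballesteros}. Your sketch --- differentiating the triviality of unfoldings for (a)$\Rightarrow$(b), and for (b)$\Rightarrow$(a) lifting infinitesimal stability to an arbitrary unfolding via the preparation theorem and then integrating the resulting vector fields (Thom--Levine) --- is a faithful outline of precisely that standard argument from the cited sources.
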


Property (b) is called `infinitezimal stability'. By Theorem~\ref{th:gaf1} this is eqiuvalent with stability. In the sense of Remark~\ref{re:globstab} `global stability' is also equivalent with them.

\begin{thm}\labelpar{th:gaf2}
Let $ \Phi: ( \C^n , 0) \to ( \C^p, 0) $ be a holomorphic germ. Then the following are equivalent:

(a) $ \Phi $  is finitely $ \mathscr{A}$-determined.

(b) The $ \mathscr{A}_e$-codimension of $ \Phi $ is finite.

(c) The $ \mathscr{A}$-codimension of $ \Phi $ is finite.

(d) $ \Phi $ admits a versal unfolding.

\end{thm}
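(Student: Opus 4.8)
The plan is to make (b) and (c) the computational hub of the argument, to tie them together by elementary linear algebra, and then to reach (a) through Mather's infinitesimal determinacy theorem and (d) through Mather's versality theorem; the single genuinely hard point will be the passage from finite codimension to finite determinacy.

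\emph{Equivalence of (b) and (c).} The two tangent spaces differ only by a finite-dimensional piece: passing from $T\mathscr{A}\Phi$ to $T\mathscr{A}_e\Phi$ amounts to adjoining the images under $t\Phi$ and $\omega\Phi$ of the constant (value-at-the-origin) vector fields on source and target, so $T\mathscr{A}_e\Phi / T\mathscr{A}\Phi$ has dimension at most $n+p$. On the ambient side one has $\theta(\Phi)/\theta_0(\Phi)\cong\C^p$, since $\theta_0(\Phi)=\mathfrak{m}_{(\C^n,0)}\theta(\Phi)$ and $\theta(\Phi)\cong\mathcal{O}_{(\C^n,0)}^p$. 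A short diagram chase in the resulting ladder of subspaces then shows that $\dim_\C\big(\theta_0(\Phi)/T\mathscr{A}\Phi\big)$ and $\dim_\C\big(\theta(\Phi)/T\mathscr{A}_e\Phi\big)$ are finite together, which is exactly (b)$\Leftrightarrow$(c). This step is routine.

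\emph{Equivalence of (b) and (d).} Assuming finite $\mathscr{A}_e$-codimension, I would choose $\dot\Phi_1,\dots,\dot\Phi_r\in\theta(\Phi)$ projecting to a basis of $\theta(\Phi)/T\mathscr{A}_e\Phi$ and form the unfolding
\[
\tilde{\Phi}(x,v)=\Big(\Phi(x)+\sum_{i=1}^{r} v_i\,\dot\Phi_i(x),\ v\Big).
\]
Mather's versality theorem identifies versality with \emph{infinitesimal versality}, i.e. with the condition that the $\dot\Phi_i$ span $\theta(\Phi)$ modulo $T\mathscr{A}_e\Phi$; its proof rests on the Malgrange preparation theorem. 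Conversely, any versal unfolding is infinitesimally versal, and infinitesimal versality bounds the $\mathscr{A}_e$-codimension by the number of unfolding parameters, giving (d)$\Rightarrow$(b).

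\emph{The direction (c)$\Rightarrow$(a), and the main obstacle.} The crux is to upgrade the finite $\C$-codimension to a module containment
\[
\mathfrak{m}_{(\C^n,0)}^{\,k+1}\,\theta(\Phi)\subseteq T\mathscr{A}\Phi\qquad\text{for some }k.
\]
Here lies the essential difficulty: unlike for $\mathscr{R}$- or $\mathscr{K}$-equivalence, the space $T\mathscr{A}\Phi$ is \emph{not} an $\mathcal{O}_{(\C^n,0)}$-module, because the summand $\omega\Phi(\theta_{(\C^p,0)})$ is only a module over the smaller ring $\Phi^*\mathcal{O}_{(\C^p,0)}$; consequently the naive Nakayama argument that would instantly yield such a containment is unavailable. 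The repair is the Malgrange--Mather preparation theorem applied to $\theta(\Phi)$ as a module over $\mathcal{O}_{(\C^p,0)}$ via $\Phi^*$: finiteness of the $\C$-codimension forces the relevant quotient to be finitely generated over the target ring, and this converts the codimension bound into the displayed inclusion. Once it holds, Mather's homotopy argument finishes the proof: given any germ with the same $k$-jet as $\Phi$, one connects it to $\Phi$ by a path, solves at each time the infinitesimal equation inside $T\mathscr{A}$, and integrates the resulting time-dependent families of source and target biholomorphisms to produce an $\mathscr{A}$-equivalence, so $\Phi$ is $k$-determined. For the easy converse (a)$\Rightarrow$(c), if $\Phi$ is $k$-determined then every $\Phi+\eta$ with $\eta\in\mathfrak{m}_{(\C^n,0)}^{\,k+1}\theta(\Phi)$ shares the $k$-jet of $\Phi$ and hence lies in its $\mathscr{A}$-orbit; Mather's identification of the orbit's tangent space then gives $\mathfrak{m}_{(\C^n,0)}^{\,k+1}\theta(\Phi)\subseteq T\mathscr{A}\Phi$, so $\theta_0(\Phi)/T\mathscr{A}\Phi$ is a quotient of the finite-dimensional space $\theta_0(\Phi)/\mathfrak{m}_{(\C^n,0)}^{\,k+1}\theta(\Phi)$, whence finite $\mathscr{A}$-codimension. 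Combining the four implications closes the cycle of equivalences.
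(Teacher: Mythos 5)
The paper does not actually prove Theorem~\ref{th:gaf2}: it quotes it as one of Mather's characterization theorems, with references to Theorems~1.2, 3.4 and Proposition~4.5.2 of Wall's survey and to Mather's papers. So there is no proof in the paper to compare against; what you have written is a reconstruction of the standard argument from that literature, and in outline it is the right one. Your reduction of (b)$\Leftrightarrow$(c) to the two finite-dimensional quotients $\theta(\Phi)/\theta_0(\Phi)\cong\C^p$ and $T\mathscr{A}_e\Phi/T\mathscr{A}\Phi$ (spanned by the images of the constant vector fields, hence of dimension at most $n+p$) is correct, your treatment of (b)$\Leftrightarrow$(d) via infinitesimal versality is Mather's, and you correctly isolate the genuine difficulty in (c)$\Rightarrow$(a): $T\mathscr{A}\Phi$ is not an $\mathcal{O}_{(\C^n,0)}$-module, so Nakayama is unavailable and the Malgrange--Mather preparation theorem must be used to convert finite codimension into an inclusion $\mathfrak{m}_{(\C^n,0)}^{\,k+1}\theta(\Phi)\subseteq T\mathscr{A}\Phi$.

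Two points in your sketch are glibber than the classical proof, though neither damages the equivalence. First, from the inclusion $\mathfrak{m}_{(\C^n,0)}^{\,k+1}\theta(\Phi)\subseteq T\mathscr{A}\Phi$ the homotopy method does not give $k$-determinacy: to integrate, the infinitesimal equation must be solvable in $T\mathscr{A}\Phi_t$ at \emph{every} germ $\Phi_t$ on the path, not just at $\Phi$, and the uniform control needed costs a degree (the classical statement yields roughly $(2k+1)$-determinacy); since only \emph{finite} determinacy is claimed, this is harmless here. Second, in your easy direction (a)$\Rightarrow$(c), the jet-orbit argument literally yields only $\mathfrak{m}_{(\C^n,0)}^{\,k+1}\theta(\Phi)\subseteq T\mathscr{A}\Phi+\mathfrak{m}_{(\C^n,0)}^{\,l}\theta(\Phi)$ for every $l$; concluding the honest inclusion (or even just finite codimension) requires knowing that $T\mathscr{A}\Phi$ is $\mathfrak{m}$-adically closed in the relevant sense, which is again a preparation-theorem statement and not automatic, precisely because of the non-module structure you flagged in the other direction. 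So the preparation theorem is needed on both sides of the codimension--determinacy bridge, not only in (c)$\Rightarrow$(a).
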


The minimal number of parameters of a versal unfolding is exactly the $ \mathscr{A}_e$-codimension of $ \Phi $. A versal unfolding with minimal numer of parameters is called \emph{miniversal} unfolding. This is unique up to $ \mathscr{A}$-equivalence of unfoldings. All the other versal unfoldings can be obtained as a trivial unfolding of the miniversal unfolding up to $ \mathscr{A}$-equivalence of unfoldings.

Finally, by Mather-Gaffney criterion \cite[Theorem 2.1.]{Wall} finite determinacy is equivalent with isolated instability, which means each multi-germ of $ \Phi|_{ \C^n \setminus \{ 0 \} } $ is stable in a sufficiantly small representative of $ \Phi $.

\begin{thm}[Mather-Gaffney criterion \cite{mararmulti, Wall, mond-ballesteros}]\labelpar{th:gaf3}
A finite holomorphic germ $ \Phi: ( \C^n , 0) \to ( \C^p, 0) $ ($ n < p $) is finitely determined if and only if $ \Phi $ has isolated instability at $ 0$.
\end{thm}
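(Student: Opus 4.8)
The plan is to reduce the statement to a property of a single coherent sheaf on the target and then invoke the standard dichotomy between finite support and Artinian (finite-dimensional) stalk. By Theorem~\ref{th:gaf2}, finite determinacy is equivalent to finiteness of the $\mathscr{A}_e$-codimension $\dim_\C(\theta(\Phi)/T\mathscr{A}_e\Phi)$, so it suffices to prove that this dimension is finite if and only if $\Phi$ has isolated instability at $0$. The central device is to sheafify the normal module. Writing $N\mathscr{A}_e\Phi := \theta(\Phi)/T\mathscr{A}_e\Phi$, I would first note that $\theta(\Phi)/t\Phi(\theta_{(\C^n,0)})$ is a finitely generated $\mathcal{O}_{(\C^n,0)}$-module, being a quotient of $\mathcal{O}_{(\C^n,0)}^p$, whose support is the critical set of $\Phi$.

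Next I would push this module forward to the target. Since $\Phi$ is finite (Theorem~\ref{th:dejongfin}), the finite coherence theorem (Grauert's direct image theorem for finite holomorphic maps) guarantees that $\Phi_*\bigl(\theta(\Phi)/t\Phi(\theta_{(\C^n,0)})\bigr)$ is a coherent $\mathcal{O}_{(\C^p,0)}$-module. The remaining relation, killing the image of $\omega\Phi(\theta_{(\C^p,0)})$, is the cokernel of a map of coherent $\mathcal{O}_{(\C^p,0)}$-modules, so the full normal module acquires the structure of a coherent sheaf $\mathcal{N}$ on $(\C^p,0)$. Because $\Phi^{-1}(0)=\{0\}$, the stalk $\mathcal{N}_0$ has $\C$-dimension equal to the $\mathscr{A}_e$-codimension of the mono-germ $\Phi$, while the stalk at a nearby point $y\neq 0$ computes the $\mathscr{A}_e$-codimension of the multi-germ of $\Phi$ along the finite fibre $\Phi^{-1}(y)$.

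The bridge to instability is Theorem~\ref{th:gaf1} applied fibrewise: the multi-germ over $y$ is stable precisely when its $\mathscr{A}_e$-codimension vanishes, that is, when $\mathcal{N}_y=0$. Hence the support of $\mathcal{N}$ is exactly the instability locus of $\Phi$, and the hypothesis of isolated instability at $0$ translates into $\mathrm{supp}(\mathcal{N})\subseteq\{0\}$. The argument then closes with the coherent-sheaf dichotomy: for a coherent $\mathcal{O}_{(\C^p,0)}$-module, the stalk at $0$ is a finite-dimensional $\C$-vector space if and only if the germ of its support at $0$ is the single point $\{0\}$, equivalently it is annihilated by some power of the maximal ideal. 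Reading this equivalence through Theorem~\ref{th:gaf2} yields both implications at once.

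I expect the main obstacle to be the coherence step together with the correct fibrewise reading of the stalks: one must check that forming $N\mathscr{A}_e\Phi$ commutes with localization on the target, so that $\mathcal{N}_y$ genuinely records the $\mathscr{A}_e$-codimension of the multi-germ at $\Phi^{-1}(y)$ and not some larger or smaller module. This rests on the finite coherence theorem combined with the base-change and localization properties of the finite pushforward, which is the analytic heart of Mather's theory; once it is secured, the passage through Theorems~\ref{th:gaf1} and~\ref{th:gaf2} and the support dichotomy is essentially formal.
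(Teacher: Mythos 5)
The paper itself offers no proof of Theorem~\ref{th:gaf3}: it is quoted as a known background result and referred to Wall's survey, Marar--Mond, and the Mond--Nu\~{n}o-Ballesteros book, so there is no internal argument to compare yours against. What you have written is, in outline, precisely the standard proof from those references: sheafify the $\mathscr{A}_e$-normal module, push forward under the finite map $\Phi$ to get a coherent sheaf $\mathcal{N}$ on $(\C^p,0)$ whose stalks compute multi-germ $\mathscr{A}_e$-codimensions, identify $\mathrm{supp}(\mathcal{N})$ with the instability locus via the infinitesimal stability criterion, and close with the R\"uckert dichotomy (finite-dimensional stalk at $0$ $\Leftrightarrow$ support contained in $\{0\}$) read through Theorem~\ref{th:gaf2}. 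That is a correct and complete strategy, and your identification of the fibrewise stalk computation as the analytic heart of the matter is accurate: it rests on $(\Phi_*\mathcal{F})_y\cong\bigoplus_{x\in\Phi^{-1}(y)}\mathcal{F}_x$ for finite $\Phi$, which is exactly what makes the localization argument work.

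Two points deserve correction or explicit mention. First, your side remark that the support of $\theta(\Phi)/t\Phi(\theta_{(\C^n,0)})$ is the critical set of $\Phi$ is false in the relevant range $n<p$: since $\rk(d\Phi_x)\leq n<p$ everywhere, this quotient is nonzero at \emph{every} point (at an immersive point it is the normal module), so its support is all of $(\C^n,0)$. This does not damage the proof, because only the support of the full quotient $\mathcal{N}=\theta(\Phi)/\bigl(t\Phi(\theta_{(\C^n,0)})+\omega\Phi(\theta_{(\C^p,0)})\bigr)$ enters, and that support is the instability locus; but the remark as stated would mislead in these dimensions. Second, your fibrewise appeal to Theorem~\ref{th:gaf1} needs the \emph{multi-germ} version of Mather's criterion (stability of a multi-germ $\Leftrightarrow$ vanishing of its $\mathscr{A}_e$-codimension), whereas the paper states that theorem only for mono-germs; the multi-germ extension is standard and is in the cited sources, but it should be invoked explicitly, since for $n<p$ the instability away from the origin is typically a multi-germ phenomenon (non-transverse intersection of immersed branches) rather than a mono-germ one.
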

Note that there is a more general version of Mather-Gaffney criterion, which agrees with our version for finite germs mapping $ \Phi: ( \C^n , 0) \to ( \C^p, 0) $ with $ n < p $. Cf. \cite[Theorem 2.11.]{mararmulti}, \cite[Theorem 2.1.]{Wall} and \cite{mond-ballesteros}.

\begin{rem}
The concept of stability and finite determinacy can be defined similarly for $ \mathscr{L}$, $ \mathscr{R}$, $ \mathscr{C}$, $ \mathscr{K}$ equivalences too, and they determine important classes of germs. For example $ \mathscr{L}$-stable (resp. $ \mathscr{R}$-stable) germs are the germs of the immersions (resp. submersions), and a germ is finitely $ \mathscr{C}$-determined  if and only if it is finite. The codimension and the extended codimension can also be defined for the other groups, and the characterisation theorems \ref{th:gaf1}, \ref{th:gaf2}, \ref{th:gaf3} hold in the same form, if we replace $ \mathscr{A}$ with any of the other groups.

Furthermore, all the concepts and theorems \ref{th:gaf1}, \ref{th:gaf2} hold in smooth and real analytic category too. However  Theorem~\ref{th:gaf3} holds only for holomorphic germs. For real analytic germs the following criterion is useful.
\end{rem}

\begin{prop}[{\cite[Proposition 1.7.]{Wall}}]\labelpar{pr:real}
For real analytic germs the following are equivalent:

(a) The germ is finitely determined as a smooth germ.

(b) The germ is finitely determined as a real analytic germ.

(c) The complexification of the germ is finitely determined as a holomorphic germ.
\end{prop}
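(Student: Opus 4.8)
The plan is to route everything through the codimension criterion. By Theorem~\ref{th:gaf2}, which (as noted above) is valid in the smooth, real analytic and holomorphic categories alike, a germ is finitely $\mathscr{A}$-determined precisely when its $\mathscr{A}_e$-codimension is finite. Hence, writing $g$ for the given real analytic germ and $g_\C$ for its complexification, it suffices to prove that the three relevant $\mathscr{A}_e$-codimensions --- the smooth and the real analytic $\mathscr{A}_e$-codimension of $g$, and the holomorphic $\mathscr{A}_e$-codimension of $g_\C$ --- are simultaneously finite. I would denote by $N\mathscr{A}_e$ the corresponding normal module $\theta(\cdot)/T\mathscr{A}_e(\cdot)$ computed over the respective local ring of vector fields in each category, so that the task becomes a statement about the finiteness of three vector spaces.

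For the equivalence (b) $\Leftrightarrow$ (c) I would argue by complexification. The local ring of real analytic germs at $0\in\R^n$ complexifies to the local ring $\mathcal{O}_{(\C^n,0)}$ of $g_\C$, i.e. $\mathcal{O}_{(\C^n,0)}\cong \mathcal{O}^{\R}_n\otimes_\R\C$, since a convergent real power series extends holomorphically in the complexified variables. Under this base change the modules $\theta(g)$, $\theta_{(\R^n,0)}$ and $\theta_{(\R^p,0)}$ pass to $\theta(g_\C)$ and the holomorphic vector field modules, and the $\C$-linear maps $tg$, $\omega g$ pass to $t g_\C$, $\omega g_\C$. Tensoring with $\C$ is exact, so it commutes with the images and with the sum defining $T\mathscr{A}_e$, giving $N\mathscr{A}_e g\otimes_\R\C\cong N\mathscr{A}_e g_\C$. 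Taking dimensions yields $\dim_\R N\mathscr{A}_e g=\dim_\C N\mathscr{A}_e g_\C$; in particular one side is finite exactly when the other is, which is (b) $\Leftrightarrow$ (c).

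For (a) $\Leftrightarrow$ (b) I would compare the smooth and real analytic normal modules by base change along the inclusion $\mathcal{O}^\R_n\hookrightarrow \mathcal{E}_n$ of real analytic germs into $\mathcal{C}^\infty$ germs. The key input is Malgrange's theorem that $\mathcal{E}_n$ is flat over $\mathcal{O}^\R_n$; since the homomorphism is local and $\m_n\mathcal{E}_n\neq\mathcal{E}_n$, it is in fact faithfully flat. Flatness again lets the base change commute with the images of $tg$ and $\omega g$ and with their sum, so that the smooth normal module is the base change $N\mathscr{A}_e^{\,\mathcal{C}^\infty} g\cong N\mathscr{A}_e^{\,an} g\otimes_{\mathcal{O}^\R_n}\mathcal{E}_n$. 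Faithful flatness then guarantees that this module is finite-dimensional over $\R$ if and only if the analytic one is, and hence, via Theorem~\ref{th:gaf2}, that (a) $\Leftrightarrow$ (b).

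The main obstacle is this base-change step for (a) $\Leftrightarrow$ (b). The subtlety is that $T\mathscr{A}_e g$ is a sum of two pieces of \emph{different} module natures: $tg(\theta_{(\R^n,0)})$ is a module over the source ring, whereas $\omega g(\theta_{(\R^p,0)})$ is naturally a module over the target ring acting through $g^*$, and it is only the finite generation of $N\mathscr{A}_e g$ over the target ring via $g^*$ --- a consequence of the Malgrange preparation theorem, which also underpins Theorem~\ref{th:gaf2} itself --- that makes the two categories comparable. Checking that flatness of $\mathcal{E}_n$ over $\mathcal{O}^\R_n$ is compatible with this mixed module structure, and invoking Malgrange's flatness and preparation theorems correctly, is the technical heart of the argument; the complexification step (b) $\Leftrightarrow$ (c) is, by contrast, purely formal.
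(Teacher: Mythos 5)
Your reduction to finiteness of the $\mathscr{A}_e$-codimension is legitimate within the thesis' framework (the remark preceding Proposition~\ref{pr:real} grants Theorem~\ref{th:gaf2} in all three categories), and your step (b) $\Leftrightarrow$ (c) is essentially correct: $\mathcal{O}_{(\C^n,0)}\cong\mathcal{O}^{\R}_n\otimes_\R\C$, the functor $-\otimes_\R\C$ is exact, composition of real analytic germs commutes with complexification by uniqueness of analytic continuation, so $T\mathscr{A}_e g_\C=(T\mathscr{A}_e g)\otimes_\R\C$ and the two codimensions agree. Note, for the record, that the thesis gives no proof of this proposition at all --- it is quoted from Wall's survey \cite{Wall}, whose own argument runs through finite-jet determinacy criteria, not through base change.

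The genuine gap is in (a) $\Leftrightarrow$ (b), and it is precisely the point you flag and then defer. Flat base change along $\mathcal{O}^{\R}_n\hookrightarrow\mathcal{E}_n$ cannot be applied to $T\mathscr{A}_e g$ or to the normal space, because these are not modules over the source ring: $\omega g(\theta_{(\R^p,0)})$ is stable under multiplication by $g^*\mathcal{E}_p$ only, so the expression $N\mathscr{A}_e^{an}g\otimes_{\mathcal{O}^{\R}_n}\mathcal{E}_n$ is not even defined, and the claim that base change ``commutes with the image of $\omega g$'' is false. Concretely, for $n=p=1$ and $g(x)=x^2$, the smooth image $\omega g(\theta^{C^\infty}_{(\R,0)})$ consists of the \emph{even} smooth germs (Whitney's theorem), whereas the $\mathcal{E}_1$-span of the analytic image $\omega g(\theta^{an}_{(\R,0)})$ contains the constant section $1$ and hence is all of $\mathcal{E}_1$; in general $T\mathscr{A}_e^{C^\infty}g$ is \emph{strictly smaller} than the $\mathcal{E}_n$-span of $T\mathscr{A}_e^{an}g$, so Malgrange flatness (true as it is) produces no identification of the two normal spaces --- at best a surjection of the smooth normal space onto the wrong quotient, which yields neither implication. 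The standard repair, and Wall's actual route, avoids base change entirely: by the determinacy estimates, finite determinacy in each category is equivalent to a condition on a single finite jet of the germ (an inclusion of finite-dimensional jet-level spaces, of the shape $\mathfrak{m}^{r+1}\theta(f)\subset T\mathscr{A}f+\mathfrak{m}^{2r+2}\theta(f)$). Such a condition is computed identically from the Taylor polynomial whether $f$ is viewed as smooth or as real analytic, and it complexifies faithfully since an inclusion of real subspaces holds if and only if it holds after $\otimes_\R\C$; this gives all three equivalences at once. Some finite-dimensional reduction of this kind (or a genuine preparation-theorem argument over the target ring) is unavoidable --- it is the content of the proposition, not a formality.
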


\subsection{Examples}\label{ss:exfindetstab}

\begin{ex}[$ \C^n \to \C$] \label{ex:findetcnc}
A germ $ \Phi: (\C^n, 0) \to ( \C, 0) $ is stable if and only if it is regular (submersion) or it is Morse type, i.e. equivalent with $ (x_i)_i \mapsto \Sigma_{i=1}^n x_i^2 $.

A germ  $ \Phi: (\C^n, 0) \to ( \C, 0) $ with isolated singularity at $0$ (which means $ \Phi $ is submersion outside the origin) is finitely determined. See \cite[Theorem 9.1.3.]{dejong} or \cite{mond-ballesteros}.


\end{ex}

\begin{ex}[Whitney-umbrella]\label{ex:Whstab} The ($\mathscr{A}$-equivalence class of) the germ $ \Phi(s, t)= (s, t^2, st) $ is called \emph{Whitney umbrella} (or \emph{cross cap}, \emph{pinch point}), see Figure~\ref{fig:stabmulti}. Another often used form is 
$ \Psi (s, t) = (s, t^2, t(s+t^2))$, see Example~5.1. in \cite{slicing}, which is summarized in paragraph~\ref{ss:Slicing}, and $S_0$ from Mond's simple germ list in \cite{Mond2}. $ \Phi $ and $ \Psi $ are $ \mathscr{A}$-equivalent, since $ \Psi= \psi \circ \Phi \circ \phi $ holds with the germs of diffeomorphisms  $ \phi (s, t) = (s+t^2, t) $ and $ \psi (x, y, z) = (x-y, y, z) $.

The Whitney umbrella is stable as a holomorphic germ $ ( \C^2, 0) \to ( \C^3, 0) $, and also as real analytic or smooth germ $ ( \R^2, 0) \to (\R^3, 0) $, cf. \ref{th:gaf3}. Its stability is very important throughout the thesis, hence we present here a proof of this fact, by calculating the $ \mathscr{A}_e$-codimension of $ \Phi $ (and then referring to Theorem~\ref{th:gaf1}).

For an arbitrary germ of a vector field
\[w(x, y, z) = \left( \begin{array}{c} w_1(x, y, z) \\ w_2(x, y, z) \\ w_3(x, y, z) \\
\end{array} \right) 
\in \theta_{ ( \C^3, 0)} \mbox{,}
\]
one has
\[ \omega \Phi (w)(s, t) = \left( \begin{array}{c} w_1(s, t^2, st) \\ w_2(s, t^2, st) \\ w_3(s, t^2, st) \\
\end{array} \right) 
\in \theta ( \Phi ) \mbox{,}
\]
thus $ \omega \Phi (\theta_{ ( \C^3, 0)}) $ consists of germs of vector fields whose components are power series of $ s $, $ t^2 $ and $ st $.  It is clear that the only monomials which cannot appear in the components are the odd powers of $ t$. The Jacobian of $ \Phi $ is 
\[ d \Phi (s, t) = \left( \begin{array}{cc} 1 & 0 \\ 0 & 2t \\ t & s \\
\end{array} \right) \mbox{,}
\]
thus for a germ of a vector field
\[v(s, t) = \left( \begin{array}{c} v_1(s, t) \\ v_2 (s, t) \\
\end{array} \right) 
\in \theta_{ ( \C^2, 0)} \mbox{,}
\]
\[ t \Phi (v)(s, t) = d \Phi \cdot v = 
\left( \begin{array}{c} v_1 (s, t) \\ 2t v_2 (s, t) \\ t v_1 (s, t) + s v_2 (s, t) \\
\end{array} \right) 
\in \theta ( \Phi ) \mbox{.}
\]
Therefore $ t \Phi (\theta_{ ( \C^2, 0)}) $ contains the vector fields whose coordinate functions are odd powers of $t$. One can conclude that
\[ T \mathscr{A}_e \Phi = t \Phi (\theta_{ ( \C^2, 0)}) + \omega \Phi (\theta_{ ( \C^3, 0)}) =
\left( \begin{array}{c} \mathcal{O}_{ ( \C^2, 0)} \\ \mathcal{O}_{ ( \C^2, 0)} \\ \mathcal{O}_{ ( \C^2, 0)} \\
\end{array} \right) = \theta ( \Phi) \mbox{,}
\]
thus the $ \mathscr{A}_e$-codimension of $ \Phi $ is $0$, which means $ \Phi $ is stable.
\end{ex}

\begin{ex}[$ \C^2 \to \C^3 $]\label{ex:c23}
(a) A germ $ \Phi: (\C^2, 0) \to ( \C^3, 0) $ is stable if and only if it is 
\begin{enumerate}
\item an immersion (given by $ \Phi (s, t)= (s, t, 0) $ up to $ \mathscr{A}$-equivalence), or 
\item a Whitney umbrella.
\end{enumerate}
(b) The stable multi-germs of a map from $ \C^2 $ to $ \C^3 $ are the
\begin{enumerate}
\item regular single values (with $1$ preimage in which the germ of the map is an immersion),
\item regular double values with transverse intersection, 
\item regular triple values with regular intersection,
\item single Whitney umbrella points.
\end{enumerate}
(c) A germ $ \Phi: (\C^2, 0) \to ( \C^3, 0) $ is finitely $ \mathscr{A} $-determined if and only if (a small enough representative of) $ \Phi |_{ \C^2 \setminus \{ 0 \}} $ is a stable immersion with only double values. That is, all the multi-germs of $ \Phi |_{ \C^2 \setminus \{ 0 \}} $ have type (1) or (2) from the list of the stable multi-germs in part (b).

Part (c) follows from Theorem~\ref{th:gaf3} and from the fact, that isolated stable multi-germs ((3) and (4) from the list in (b)) can be omitted with the choice of a small enough representative of $ \Phi $. 
\end{ex}

\begin{ex}[$\R^2 \to \R^3$]\label{ex:r23}
The stable germs and multigerms of the real analytic or smooth maps from $ \R^2 $ to $ \R^3 $ are formally the same as the complex ones. That is, part (a) and (b) of \ref{ex:c23} hold word for word if we change $ \C $ to $ \R$. This follows from the fact that the calculation of the $\mathscr{A}_e$-codimension is a formal algebraic procedure, cf. Example~\ref{ex:Whstab}.

However part (c) of Example~\ref{ex:c23} is not valid for real germs, only in one direction: a finitely $ \mathscr{A} $-determined real analytic or smooth germ 
$ \Phi: (\R^2, 0) \to ( \R^3, 0) $ is stable immersion outside the origin. This follows directly from Proposition~\ref{pr:real} and part (c) of Example~\ref{ex:c23}. On the other hand, $ \Phi $ is an immersion outside the origin if and only if the Jacobian ideal of $ \Phi $ is concentrated to the origin. It can happen that the real zero set of the Jacobian ideal consists only the origin, but its complex zero set is bigger: such a germ is an immersion outside the origin as a real germ, but not as a complex germ, hence it is not finitely determined. Finitely determined real germs from $ \R^2 $ to $ \R^3 $ are studied for example in \cite{nunodoodle}.
\end{ex}

\begin{center}
\begin{figure}
 \includegraphics[width=14cm]{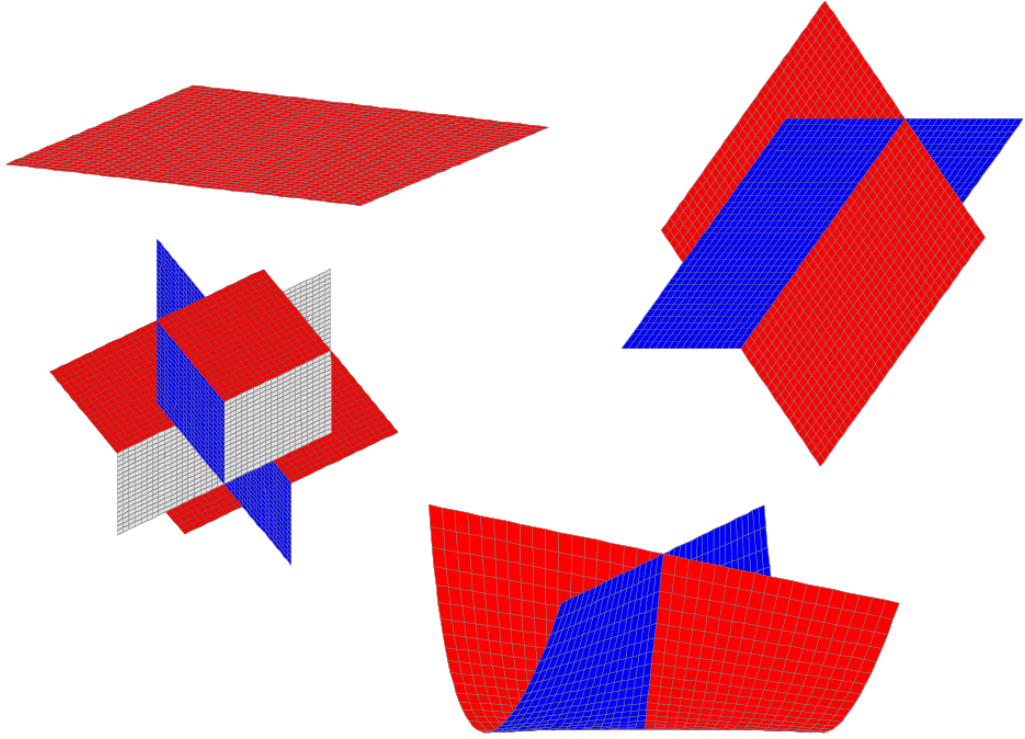}
 \caption{Stable multigerms of a maps $ \R^2 \to \R^3$: simple values, double values, isolated triple value and Whitney umbrella.}\label{fig:stabmulti}
\end{figure}
\end{center}

\begin{ex}[$ \R^4 \to \R^6 $]\label{ex:r46}
A holomorphic map from $ \C^2 $ to $ \C^3 $ can be regarded as a smooth (real analytic) map from $ \R^4 $ to $ \R^6 $. However the stability of the complex map does not imply the stability of the real map, because the complex Whitney umbrella is not stable as a germ from $ ( \R^4, 0) $ to $ ( \R^6, 0 ) $. The list of the stable multi-germs of a map from $ \R^4 $ to $ \R^6 $ consists of regular simple, double and triple values with regular intersection (cf. \ref{ex:c23}), and -- instead of the isolated  complex Whitney umbrella points -- it contains the \emph{fold points} (whose normal form is 
$ (s, t_1, t_2, u) \mapsto (s^2, st_1, st_2, t_1, t_2, u)$), which form a submanifold of dimension $1$.
In Section~\ref{s:calc} we present a real stabilization of the complex Whitney umbrella.
\end{ex}

\section{Fitting ideals}\label{s:Fitting}

\subsection{Presentation matrix} The equation of the image and the multiple point spaces in the target can be determined by Fitting ideals. Here we present the general procedure and some examples. For more details and the properties of these ideals we refer to \cite{mond-ballesteros, mondfitting}.

Let $  \Phi: (\C^n, 0) \to ( \C^p, 0) $ be a finite holomorphic germ and $ p=n+1$. Then $ \Phi $ has a well-defined image, $ (\Phi (\C^n, 0) $ is an analytic set in $ ( \C^p, 0) $. The ideal of the functions vanishing on the image of $ \Phi $ can be determined by Fitting ideal method.

By Theorem~\ref{th:dejongfin} $ \mathcal{O}_{ ( \C^n, 0) } $ is a finitely generated $ \mathcal{O}_{ ( \C^p, 0) } $-module, see also \cite{mond-ballesteros, mondfitting}, we denote it by $\Phi_* \mathcal{O}_{ ( \C^n, 0) }$. A choice of a set of generators induces an epimorphism $ \mathcal{O}_{ ( \C^p, 0) }^M \to \Phi_* \mathcal{O}_{ ( \C^n, 0) } $. Its kernel is also finitely generated, because $ \mathcal{O}_{ ( \C^p, 0) }$ is Noetherian.
That can be summarised in the exact sequence
\[
  \mathcal{O}_{ ( \C^p, 0) }^N \to \mathcal{O}_{ ( \C^p, 0) }^M \to \Phi_* \mathcal{O}_{ ( \C^n, 0) } \to 0 \mbox{.}
\]
We call the homomorphism $ \lambda: \mathcal{O}_{ ( \C^p, 0) }^N \to \Phi_* \mathcal{O}_{ ( \C^p, 0) }^M $ the presentation of $ \mathcal{O}_{ ( \C^n, 0) } $, its matrix is the presentation matrix of $ \Phi $ (denoted also by $ \lambda $).
Each column of $ \lambda $ expresses a relation between the generators, and they together generate all the relations.

 $ \lambda $ may be chosen injective, and this forces $N=M$ ($ \lambda $ is a square matrix) \cite{mond-ballesteros}.

\begin{defn}
 The $i$-th Fitting ideal $ \mathcal{F}_i ( \Phi_* \mathcal{O}_{ ( \C^n, 0) } ) $ of $ \Phi $ is the ideal in $ \mathcal{O}_{ ( \C^p, 0) } $ generated by the determinants of the $ (N -i) \times ( N -i) $ minors of the presentation matrix $ \lambda $.
\end{defn}
It can be shown that the Fitting ideals do not depend on the choice of the presentation $ \lambda$.

$ \mathcal{F}_i ( \Phi_* \mathcal{O}_{ ( \C^n, 0) } ) $ provides an analytic structure of the $(i+1)$-fold multiple value set of $ \Phi $. 
\begin{thm}
 The zero set of $ \mathcal{F}_i (\Phi_* \mathcal{O}_{ ( \C^n, 0) }) $ consists of points in $ (\C^p, 0) $ with at least $i+1$ preimages counting with multiplicity.
\end{thm}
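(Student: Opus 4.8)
The plan is to reduce the statement to one algebraic property of Fitting ideals and one structural fact about finite morphisms. First I would recall the square presentation established above, namely the exact sequence $\mathcal{O}_{(\C^p,0)}^N \xrightarrow{\lambda} \mathcal{O}_{(\C^p,0)}^N \to \Phi_*\mathcal{O}_{(\C^n,0)} \to 0$ with $\lambda$ chosen square (so $N=M$). Passing to a small enough representative and fixing a point $y$ near $0$ with maximal ideal $\mathfrak{m}_y \subset \mathcal{O}_{(\C^p,y)}$ and residue field $\C_y = \mathcal{O}_{(\C^p,y)}/\mathfrak{m}_y \cong \C$, I would reduce the presentation modulo $\mathfrak{m}_y$. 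Right-exactness of $\otimes\, \C_y$ gives a right-exact sequence of finite-dimensional $\C$-vector spaces $\C^N \xrightarrow{\lambda(y)} \C^N \to \Phi_*\mathcal{O}_{(\C^n,0)} \otimes \C_y \to 0$, where $\lambda(y)$ is the numerical matrix obtained by evaluating the entries of $\lambda$ at $y$. Hence $\dim_\C(\Phi_*\mathcal{O}_{(\C^n,0)} \otimes \C_y) = N - \rank \lambda(y)$.

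Next I would invoke the elementary characterisation of matrix rank by minors: $\rank \lambda(y) < N-i$ if and only if every $(N-i)\times(N-i)$ minor of $\lambda$ vanishes at $y$, that is, if and only if $y$ lies in the zero set of $\mathcal{F}_i(\Phi_*\mathcal{O}_{(\C^n,0)})$. Combining this with the dimension count of the previous step yields the clean reformulation that $y \in V(\mathcal{F}_i)$ if and only if $\dim_\C(\Phi_*\mathcal{O}_{(\C^n,0)} \otimes \C_y) \geq i+1$. This equivalence is independent of the chosen presentation, which is precisely the well-definedness of the Fitting ideals already remarked upon.

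It then remains to identify this fibre dimension with the number of preimages of $y$ counted with multiplicity. Here I would use the structure of finite holomorphic germs: on a small representative $\Phi^{-1}(y)$ is a finite set $\{x_1,\dots,x_k\}$, and finiteness gives a decomposition of the stalk of the direct image, $(\Phi_*\mathcal{O}_{(\C^n,0)})_y \cong \prod_{j} \mathcal{O}_{(\C^n,x_j)}$ as $\mathcal{O}_{(\C^p,y)}$-modules. Tensoring with $\C_y$ and noting that each local factor contributes $\dim_\C \mathcal{O}_{(\C^n,x_j)}/(\Phi^*\mathfrak{m}_y)$, the local mapping multiplicity of $\Phi$ at $x_j$ over $y$, I obtain $\dim_\C(\Phi_*\mathcal{O}_{(\C^n,0)} \otimes \C_y) = \sum_{j} \dim_\C \mathcal{O}_{(\C^n,x_j)}/(\Phi^*\mathfrak{m}_y)$, which is by definition the number of preimages of $y$ counted with multiplicity. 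Substituting into the equivalence of the previous paragraph completes the proof; the case $i=0$ recovers the image as those $y$ with at least one preimage, consistent with the Fitting-ideal description of the image discussed above.

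The main obstacle is the fibrewise decomposition of the stalk $(\Phi_*\mathcal{O}_{(\C^n,0)})_y$ into a product over the points of $\Phi^{-1}(y)$, together with the correct bookkeeping of multiplicities: this is where the finiteness hypothesis genuinely enters, since Theorem~\ref{th:dejongfin} is exactly what guarantees both that the fibres are finite and discrete and that $\Phi_*\mathcal{O}_{(\C^n,0)}$ is a finitely generated module admitting the square presentation used throughout. By contrast, the linear-algebra step and the minor criterion for rank are routine; the only care needed is that reduction modulo $\mathfrak{m}_y$ respects the module structure, so that the degeneration of $\lambda(y)$ faithfully records the jump in fibre dimension along the multiple-point locus.
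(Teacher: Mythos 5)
Your proof is correct, but note that the thesis itself offers no proof of this theorem to compare against: it is stated as a quoted fact, with the reader referred to Mond--Pellikaan \cite{mondfitting} and Mond--Nu\~{n}o-Ballesteros \cite{mond-ballesteros} for details. Your argument is essentially the standard one from that literature: evaluate the square presentation at a nearby point $y$, use right-exactness of $\otimes\,\C_y$ together with the minor criterion for rank to obtain that $y$ lies in the zero set of $\mathcal{F}_i(\Phi_*\mathcal{O}_{(\C^n,0)})$ if and only if $\dim_\C\bigl(\Phi_*\mathcal{O}_{(\C^n,0)}\otimes \C_y\bigr)\geq i+1$, and then use the finite-map decomposition $(\Phi_*\mathcal{O}_{(\C^n,0)})_y\cong \prod_{j}\mathcal{O}_{(\C^n,x_j)}$ to identify this fibre dimension with $\sum_j \dim_\C \mathcal{O}_{(\C^n,x_j)}/(\Phi^*\mathfrak{m}_y)$, the number of preimages counted with multiplicity. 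The one step you should make explicit is what is hidden in `passing to a small enough representative': the germ-level presentation extends to an exact sequence of coherent sheaves on a neighbourhood of $0$ (the direct image of a finite map is coherent, and exactness of a complex of coherent sheaves at a point propagates to nearby stalks), and this is precisely what licenses evaluating $\lambda$ at a point $y\neq 0$ and concluding that $\lambda(y)$ presents the fibre of $\Phi_*\mathcal{O}_{(\C^n,0)}$ there. With that remark added, your argument is complete and matches the proof in the cited sources.
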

Especially, for $i=0$ the Fitting ideal provides an analytic structure of the image of $ \Phi $. 
\begin{thm}[\cite{tei}]
 The zero set of $ \mathcal{F}_0 (\Phi_* \mathcal{O}_{ ( \C^n, 0) }) $ is the image of $ \Phi $. In other words 
 $ \Phi (\C^n, 0)= \det ( \lambda )^{-1} (0) $.
\end{thm}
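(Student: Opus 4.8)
The plan is to show, set-theoretically, that the vanishing locus of the single generator $\det(\lambda)$ of $\mathcal{F}_0(\Phi_*\mathcal{O}_{(\C^n,0)})$ coincides with the support of the module $\Phi_*\mathcal{O}_{(\C^n,0)}$, and then to identify this support with $\mathrm{im}(\Phi)$. First I would fix a small enough proper finite representative $\Phi:U\to V$ (with $U\subset\C^n$, $V\subset\C^p$ neighbourhoods of the origin) so that $\Phi_*\mathcal{O}_U$ is a coherent $\mathcal{O}_V$-module and the image $\Phi(U)$ is an analytic subset of $V$ by Remmert's finite mapping theorem; all statements about germs then follow by shrinking. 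As noted above, $\lambda$ may be taken injective with $N=M$, so $\mathcal{F}_0$ is generated by the single element $\det(\lambda)$, and thus $V(\mathcal{F}_0)=\det(\lambda)^{-1}(0)$.

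The key local computation relates the vanishing of $\det(\lambda)$ at a point $y\in V$ to the stalk $(\Phi_*\mathcal{O}_U)_y$. Localising the presentation $\mathcal{O}_V^M \xrightarrow{\lambda} \mathcal{O}_V^M \to \Phi_*\mathcal{O}_U \to 0$ at $y$ and tensoring with the residue field $\C\cong\mathcal{O}_{V,y}/\mathfrak{m}_y$, the right-exactness of $-\otimes\C$ yields an exact sequence $\C^M \xrightarrow{\lambda(y)} \C^M \to (\Phi_*\mathcal{O}_U)_y\otimes\C \to 0$, where $\lambda(y)$ is the numerical matrix obtained by evaluating the entries of $\lambda$ at $y$. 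Hence $(\Phi_*\mathcal{O}_U)_y/\mathfrak{m}_y(\Phi_*\mathcal{O}_U)_y \cong \coker(\lambda(y))$. Because $\lambda(y)$ is a \emph{square} matrix over $\C$, this cokernel is nonzero precisely when $\lambda(y)$ fails to be surjective, i.e. precisely when $\det(\lambda)(y)=0$. By Nakayama's lemma applied to the finitely generated $\mathcal{O}_{V,y}$-module $(\Phi_*\mathcal{O}_U)_y$, this quotient vanishes if and only if the stalk itself vanishes. Combining, $\det(\lambda)(y)=0$ if and only if $y\in\supp(\Phi_*\mathcal{O}_U)$.

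It remains to identify the support with the image. For the finite map $\Phi$ the fibre $\Phi^{-1}(y)$ is a finite set and the stalk decomposes as $(\Phi_*\mathcal{O}_U)_y\cong\bigoplus_{x\in\Phi^{-1}(y)}\mathcal{O}_{U,x}$; this is nonzero exactly when $\Phi^{-1}(y)$ is nonempty, that is, when $y\in\mathrm{im}(\Phi)$. Therefore $\supp(\Phi_*\mathcal{O}_U)=\mathrm{im}(\Phi)$, and together with the previous paragraph this gives $\det(\lambda)^{-1}(0)=\mathrm{im}(\Phi)$, as claimed.

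I expect the main obstacle to be the analytic bookkeeping rather than the algebra: one must choose the representative $U,V$ so that $\Phi$ is finite and proper, invoke coherence of $\Phi_*\mathcal{O}_U$ together with the analyticity of the image, and verify the stalk decomposition of $\Phi_*\mathcal{O}_U$ over the fibre. The purely commutative-algebraic heart (the cokernel computation and Nakayama) is routine; an alternative to the Nakayama step is the classical sandwich $(\mathrm{Ann}\,M)^M \subseteq \mathcal{F}_0(M) \subseteq \mathrm{Ann}\,M$, proved via Cramer's rule and the Cauchy--Binet formula, which shows $\sqrt{\mathcal{F}_0(M)}=\sqrt{\mathrm{Ann}(M)}$ and hence the equality of the two zero sets directly.
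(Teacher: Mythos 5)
You should first note that the thesis does not prove this statement at all: it is quoted as a classical result, with the proof delegated to the citation \cite{tei} (and to \cite{mondfitting, mond-ballesteros} for the general theory of Fitting ideals). So your proposal is not an alternative to an argument in the paper; it supplies the argument the paper omits, and it is correct. Your route is the standard one: shrink to a finite, proper representative so that $\Phi_*\mathcal{O}_U$ is coherent; use that $\lambda$ may be taken square and injective, so $\mathcal{F}_0$ is generated by $\det(\lambda)$; then the chain ``$\det(\lambda)(y)=0$ $\Leftrightarrow$ $\mathrm{coker}(\lambda(y))\neq 0$ $\Leftrightarrow$ (Nakayama) $(\Phi_*\mathcal{O}_U)_y\neq 0$ $\Leftrightarrow$ (finiteness) $\Phi^{-1}(y)\neq\emptyset$'' gives the set-theoretic equality, and analyticity of the image comes out for free. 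Two points deserve to be made explicit if you write this up. First, the presentation in the text is a presentation of the \emph{stalk at $0$}; the step ``localise at $y$'' uses the coherence fact that a complex of coherent sheaves which is exact at one stalk is exact on a whole neighbourhood, so the same matrix $\lambda$ presents all nearby stalks --- this is exactly what your phrase ``all statements about germs then follow by shrinking'' has to carry. Second, the decomposition $(\Phi_*\mathcal{O}_U)_y\cong\bigoplus_{x\in\Phi^{-1}(y)}\mathcal{O}_{U,x}$ is precisely where finiteness (properness with finite fibres) enters, and it also disposes of the case $\Phi^{-1}(y)=\emptyset$. Your alternative ending via the sandwich $(\mathrm{Ann}\,M)^{M}\subseteq\mathcal{F}_0(M)\subseteq\mathrm{Ann}\,M$ from Cramer's rule is equally valid and slightly cleaner: it shows $\mathcal{F}_0$ and $\mathrm{Ann}(\Phi_*\mathcal{O}_U)$ have the same radical, and since the zero set of the annihilator of a coherent sheaf is its support, the pointwise Nakayama computation can be bypassed entirely.
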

Although $ \det ( \lambda )$ is not necessarily a reduced equation, its geometrical meaning is very natural, as we will see in Example~\ref{ex:A}.

A presentation can be found by the following procedure. Let $ (x_i)_{i=1}^n $ denote the coordinates of the point $x$ in the source,  and let $ (y_i)_{i=1}^{n+1}$ denote the coordinates in the target. 
$ \Phi(x) = ( \Phi_1 (x), \Phi_2 (x), \dots , \Phi_{n+1} (x)) $, that is, $ \Phi_j = \Phi^* (y_j) \in \mathcal{O}_{ ( \C^n, 0) } $.
\begin{enumerate}
\item Define $ \bar{ \Phi }: ( \C^n, 0) \to ( \C^n, 0) $ as $ \bar{\Phi}(x) = ( \Phi_1 (x), \Phi_2 (x), \dots , \Phi_{n} (x)) $. Suppose that $ \bar{ \Phi } $ is finite. This always can be obtained by a linear change of the coordinates. 
\item Find generators $ g_1, \dots , g_N $ of $ \bar{ \Phi }_* \mathcal{O}_{ ( \C^n, 0) } $, i.e. $ (g_i)_{i=1}^N $ generates 
$ \mathcal{O}_{ ( \C^n, 0) } $ as a module over $ \mathcal{O}_{ ( \C^n, 0) } $ via $ \bar{\Phi} $. By Nakayama lemma that happens if and only if the classes of $ g_i $ form a basis of the $ \C$-vector space 
$ \mathcal{O}_{ ( \C^n, 0) }/( \bar{ \Phi }^* \mathfrak{m}_{(\C^n, 0)})
 = \mathcal{O}_{ ( \C^n, 0) }/ ( \Phi_1, \dots, \Phi_n ) $, which is finite dimensional since $ \bar{ \Phi } $ is finite. Note that one of the generators must be a unit, we take $ g_1=1$.
 \item Find the coefficients $ \tilde{\lambda}_{ij} \in \mathcal{O}_{ ( \C^n, 0) } $ (in the target) to express 
$ g_j \Phi_{n+1} = \Sigma_{i=1}^N \bar{\Phi}^* (\tilde{\lambda}_{ij}) g_i  $ for all $i, j=1, \dots, N $.
\item Define $ \lambda_{ij} = \tilde{\lambda}_{ij}- \delta_{ij} y_{n+1} $, where $ \delta_{ij} $ is $0$ for $ i \neq j$ and $1$ for $i=j$. Then $ \lambda = (\lambda_{ij})_{i, j=1}^N $ is a presentation matrix of $ \Phi_* \mathcal{O}_{ ( \C^n, 0) } $.
\end{enumerate}

\subsection{Examples}

\begin{ex}[$H_k$, $k\geq 1$]\label{ex:Hk} In this case $ \Phi(s, t) = (s, t^3, st+t^{3k-1} )$. This is a simple germ from Mond's list \cite{Mond2}, in fact, the first one, for which finding the equation of the image is not trivial. Here we provide it as the determinant of a presentation matrix. We use coordinates $ (s, t) $ in the source and $ (x, y, z) $ in the target.

$ \bar{\Phi}(s, t) = (s, t^3) $ is also finite, since $ \bar{\Phi}^{-1}(0, 0) = (0, 0) $. 
$ \mathcal{O}_2 / ( s, t^3 ) = \C^3 \langle 1, t, t^2 \rangle $, thus the module $ \bar{ \Phi }_* ( \mathcal{O}_2) $ is generated by $ (1, t, t^2) $. Then
\[ 1 \cdot (st+t^{3k-1}) = 0 \cdot 1 + s \cdot t + (t^3)^{k-1} \cdot t^2 =
0 \cdot 1 + x \cdot t + y^{k-1} \cdot t^2 \mbox{,}\]
\[ t \cdot (st+t^{3k-1}) = (t^3)^k \cdot 1 + 0 \cdot t + s \cdot t^2 =
y^k \cdot 1 + 0 \cdot t + x \cdot t^2
\mbox{,}\]
\[ t^2 \cdot (st+t^{3k-1}) = st^3 \cdot 1 + (t^3)^k \cdot t + 0 \cdot t^2 =
xy \cdot 1 + y^k \cdot t + 0 \cdot t^2 \mbox{.}\]
Thus 
\[ \lambda = 
\left( \begin{array}{ccc}
-z & y^k & xy \\
x & -z & y^k \\
y^{k-1} & x & -z \\
\end{array} \right)
\]
is a presentation matrix of $ \Phi_* \mathcal{O}_{ ( \C^2, 0) } $. 

The equation of the image is $ f(x, y, z) = \det ( \lambda ) = y^{3k-1} - z^3 + x^3 y + 3 zxy^k =0 $. $ \mathcal{F}_0 (\Phi_* \mathcal{O}_{ ( \C^2, 0) }) $ is the ideal generated by $ f $. The determinants of the $2 \times 2$ minors of $ \lambda $ generate the first Fitting ideal, $ \mathcal{F}_1 (\Phi_* \mathcal{O}_{ ( \C^2, 0) }) $. Its zero set consists of the double values of $ \Phi $. Finally, $ \mathcal{F}_2 (\Phi_* \mathcal{O}_{ ( \C^2, 0) }) $ is generated by the $ 1 \times 1 $ minors of $ \lambda $, thus 
$ \mathcal{F}_2 (\Phi_* \mathcal{O}_{ ( \C^2, 0) }) = ( x, z , y^{k-1} ) $. 
Its zero set is a fat point: the origin with multiplicity 
$ \dim_{ \C } ( \mathcal{O}_{ ( \C^3, 0) }) / \mathcal{F}_2 (\Phi_* \mathcal{O}_{ ( \C^2, 0) })) = k-1$. This corresponds to the fact that any stable deformation of $ \Phi $ has $ k-1$ triple values, cf. Subsection~\ref{ss:CTN}.
\end{ex}

\begin{ex}[Singularities of type $ A $]\label{ex:A} These are quotient  singularities
 of the form $ (X,0) = (\C^2, 0)/ \Z_k $, where $ \Z_k=\{\xi\in \C\,|\, \xi^k=1\}$
  denotes the cyclic group of order $k$, and the action is $ \xi* (s, t) = ( \xi s, \xi^{-1} t )$
  for $ \xi \in \Z_k $. $(X,0)$ is the image of a map $\Phi$,
 whose components  are the generators of the invariant algebra $\mathcal{O}_{ ( \C^2, 0) }^{\Z_k}$,
  see \cite[page 95]{invariant}, namely $ \Phi (s, t) = (s^k, t^k, st ) $. For $k=2$
  \[ \lambda = 
\left( \begin{array}{cccc}
-z & 0 & 0 & xy \\
0 & -z & y & 0 \\
0 & x & -z & 0 \\
1 & 0 & 0 & -z \\
\end{array} \right)
\]
is a presentation matrix of $ \Phi_* \mathcal{O}_{ ( \C^2, 0) } $. $ f(x, y, z) = \det ( \lambda ) = (z^2-xy)^2 $ is a not reduced equation of the image of $ \Phi $: it reflects to the fact that $ \Phi $ is a double cover of its image outside the origin. $ \mathcal{F}_1 (\Phi_* \mathcal{O}_{ ( \C^2, 0) }) $, the ideal generated by the determinants of the $ 3 \times 3 $ minors of $ \lambda $ equals to the ideal generated by $ z^2 - xy$, corresponding to the fact that the whole image of $ \Phi $ consists of double values.  
$ \mathcal{F}_2 (\Phi_* \mathcal{O}_{ ( \C^2, 0) }) = ( x, y, z ) = \mathfrak{m}_{ ( \C^3, 0) } $ is a codimension $ 1 $ ideal in $ \mathcal{O}_{ ( \C^3, 0) }$, that means any stable deformation of $ \Phi $ has $ 1$ triple value, see \ref{ss:CTN} and Figure~\ref{fig:A1fig}.

For arbitrary $ k$ the equation given by $ \det ( \lambda) $ is the $k$-th power a reduced germ, reflecting to the $k$-fold covering outside the origin.
\end{ex}

\begin{ex}[Cuspidal edge] Consider $ \Phi(s, t) = (s, t^2, t^3) $. The presentation  matrix is
\[ \lambda = 
\left( \begin{array}{cc}
-z & y^2  \\
y & -z \\
\end{array} \right) \mbox{.}
\]
$ \mathcal{F}_1 (\Phi_* \mathcal{O}_{ ( \C^2, 0) }) =(y, z)$. However $ \Phi $ does not have ordinary double values, the cuspidal edge $ \{ y=0, \ z=0 \} $ lies in the closure of the double values of a stable deformation of $ \Phi $.
\end{ex}

\section{Invariants of a stabilization}

\subsection{Stabilization}\label{ss:stab} A useful method to find invariants of the $\mathscr{A}$-equivalent classes of complex germs is to consider a stabilization of the germ, and count the stable multigerms that appear. In a lucky situation their number does not depend on the choice of the stabilization and it can be calculated by algebraic methods, that is, without stabilizing the germ. This is the procedure how the invariants $ C$ and $T$ can be introduced. 

A $1$-parameter unfolding is a \emph{stabilization} (stable deformation) of the germ $ \Phi: ( \C^n , 0) \to ( \C^p, 0) $, if $ \Phi_v $ is stable for all parameter values $ v \neq 0 $. However, $ \Phi_v $ is not a germ: to make the definition correct, we have to fix a representative.

Consider a $1$-parameter unfolding $ \tilde{ \Phi} (u, v) = ( \Phi_v (u), v) $, and fix small enough neighbourhoods $ W \simeq B^{2p}_{\epsilon} \subset \C^p $ and $ V \simeq D^2_{ \delta} \subset \C $. Let $ U = \tilde{ \Phi}^{-1} (W \times V) $ and 
$ U_v = \{ u \in \C^n \ | \ \Phi(u, v) \in W \} $ for each $v \in V$. Then $ \tilde{ \Phi} $ is a stabilization of $ \Phi $, if for every parameter value $ v \in V $ each multi-germ of the map $ \Phi_v: U_v \to W $ is stable. For a fixed parameter value $v \in V$, we call the map $ \Phi_v $ also a stabilization of $ \Phi $ -- hopefully without any confusion.

Stabilization always exists, if $ (n, p) $ are `nice dimensions' in the sense of Mather \cite{matherVI, mond-ballesteros, Wall}, ie. if stable maps are dense (in the $\mathcal{C}^{ \infty}$ sense). Note that in these dimensions stable maps are sometimes called \emph{generic}.

Take $p=n+1$. If $ \Phi $ is finitely determined, then it admits a versal unfolding $ \tilde{\Phi} $ with $r$ parameters.  Thus each stabilization of $ \Phi $ is equivalent with $ h^* ( \tilde{\Phi} ) $ with some germ $ h: (\C, 0) \to ( \C^r, 0) $. With the minimal number of parameters (i.e. $r$ equals to the $ \mathscr{A}_e$-codimension of $ \Phi $) the versal unfolding in unique, up to isomorphism. 

Let us fix a small enough representative of the miniversal unfolding. The \emph{bifurcation set} consists of the points $ v $ in the parameter space $ ( \C^r, 0 ) $ for which $ \Phi_v $ is not stable. The bifurcation set is an analytic subvariety, see \cite{mond-ballesteros}, indeed proper subvariety if $(n, n+1)$ are nice dimensions, which implies that the complement of it is connected in $ ( \C^r, 0 ) $. Thus if $ \Phi_{v_0} $ and $ \Phi_{v_1} $ are stable, then there is real path $ v: [0, 1] \to \C^r $ connecting $v_0$ and $v_1$ such that $ \Phi_{v_t} $ is stable for all $t$. In addition, the projection to the parameter space produces a locally trivial fibration of the image of the versal unfolding $ \tilde{ \Phi} $ over the complement of the bifurcation set. The fibres are the images of the stabilizations corresponding to different parameter values. See for details \cite{Mondvan, mararmulti, Sivan}. 

The image of a stabilization is called the \emph{disentanglement} of $ \Phi $. 
\begin{prop}[{\cite[Lemma 1.3.]{Mondvan}}]\label{pr:topdis}
The topology of the disentanglement of a finitely $ \mathscr{A}$-determined germ $ \Phi: ( \C^n , 0) \to ( \C^{n+1}, 0) $ is independent of the chosen stabilization.
\end{prop}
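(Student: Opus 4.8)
The plan is to reduce every stabilization to a slice of the essentially unique miniversal unfolding, and then to move freely between such slices inside the connected complement of the bifurcation set, invoking the local triviality of the fibration already recorded above. First I would fix the miniversal unfolding $\tilde{\Phi}: (\C^n \times \C^r, 0) \to (\C^{n+1} \times \C^r, 0)$, whose existence follows from finite $\mathscr{A}$-determinacy by Theorem~\ref{th:gaf2}(d), with $r$ equal to the $\mathscr{A}_e$-codimension of $\Phi$. Let $\tilde{\Phi}^{(0)}$ and $\tilde{\Phi}^{(1)}$ be two stabilizations. By versality each is $\mathscr{A}$-equivalent, as an unfolding, to a pullback $h_j^*(\tilde{\Phi})$ for suitable germs $h_j: (\C, 0) \to (\C^r, 0)$, $j = 0, 1$. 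Since an $\mathscr{A}$-equivalence of unfoldings restricts, at each fixed parameter value, to an $\mathscr{A}$-equivalence of the corresponding slice maps, and such an equivalence is by definition a pair of biholomorphisms of source and target, it carries the image of one slice homeomorphically onto the image of the other. Hence, for a small $t \neq 0$, the disentanglement produced by $\tilde{\Phi}^{(j)}$ is homeomorphic to the image of the slice $\tilde{\Phi}_{v_j}$ of the miniversal unfolding at the point $v_j := h_j(t)$.

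Next I would locate $v_0$ and $v_1$ relative to the bifurcation set $B \subset (\C^r, 0)$. Because $\tilde{\Phi}^{(j)}_t$ is stable for $t \neq 0$ and stability is an $\mathscr{A}$-invariant property (Theorem~\ref{th:gaf1}), the slice $\tilde{\Phi}_{v_j}$ is stable too, so $v_j$ lies in the complement $(\C^r, 0) \setminus B$. As recalled above, $B$ is a proper analytic subvariety, so its complement is connected, hence path-connected; I would choose a real path $\gamma: [0, 1] \to (\C^r, 0) \setminus B$ from $v_0$ to $v_1$. Over this complement the projection of the image of $\tilde{\Phi}$ to the parameter space is a locally trivial fibration whose fibre over a stable value is exactly the corresponding disentanglement. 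Pulling this fibration back along $\gamma$ and using that a locally trivial fibration over $[0, 1]$ is globally trivial, I obtain a homeomorphism between the fibres over $v_0$ and $v_1$. Composing with the two slice identifications from the previous step yields a homeomorphism of the two disentanglements, which is the claim.

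The main obstacle is not the connectedness or the path argument, which are formal once the cited facts are in place, but the bookkeeping of representatives needed to identify the \emph{local} image of a stabilization $\tilde{\Phi}^{(j)}_t$ (defined on the shrinking neighbourhoods $U_v \subset \C^n$ mapping into a fixed ball $W$) with the \emph{global} fibre of the versal fibration over $v_j$. One must verify that, for $t$ small enough, the polydisc $V$ and the balls $W$ can be nested compatibly so that the two notions of ``image of the stabilization'' agree up to homeomorphism; equivalently, that no topology of the disentanglement escapes to $\partial W$ as $t \to 0$. This is the standard local-to-global matching of Thom--Mather theory, and it is precisely where the properness of the chosen representatives and the analyticity of $B$ (ensuring local triviality persists up to a neighbourhood of the central fibre) are used.
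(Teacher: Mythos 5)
Your proposal is correct and follows essentially the same route as the paper: the thesis does not prove this proposition independently but cites Mond's Lemma~1.3, and its preceding discussion in Subsection~\ref{ss:stab} contains exactly your argument --- pull every stabilization back from the miniversal unfolding, use that the bifurcation set is a proper analytic subvariety (in the nice dimensions) with connected complement, and invoke the locally trivial fibration of the image of the versal unfolding over that complement, whose fibres are the disentanglements. Your closing remark on the local-to-global matching of representatives is a reasonable flag of the one technical point the paper likewise leaves to the cited references.
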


This theorem implies the well-definedness of the number of isolated stable multi-germs in each type. Especially, in the $(2, 3) $ nice dimensions:
\begin{thm}\label{th:whtrip} Let $ \Phi: ( \C^2, 0) \to ( \C^3, 0) $ be a finitely determined germ. Then the number of Whitney umbrellas and triple values are independent of the stabilization of $ \Phi$.
\end{thm}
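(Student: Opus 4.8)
The plan is to deduce the statement from the local triviality of the image of the miniversal unfolding over the complement of the bifurcation set, which was recalled just above, combined with the connectedness of that complement. First I would reduce to a single family. Since the number of stable multi-germs of each fixed type is an $\mathscr{A}$-invariant, and since every stabilization of a finitely determined $\Phi$ is $\mathscr{A}$-equivalent to the pull-back $h^*(\tilde{\Phi})$ of the (essentially unique) miniversal unfolding $\tilde{\Phi}$ by a path $h$ whose generic value avoids the bifurcation set $B$, it is enough to compare two stable fibres $\Phi_{v_0}$ and $\Phi_{v_1}$ of one and the same $\tilde{\Phi}$, taken over parameter values $v_0, v_1 \in (\mathbb{C}^r, 0) \setminus B$. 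Thus the problem becomes: the numbers of triple values and of Whitney umbrellas are the same for any two stable fibres of $\tilde{\Phi}$.

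Next I would use connectedness together with triviality. In the nice dimensions $(2,3)$ the set $B$ is a proper analytic subvariety, so $(\mathbb{C}^r, 0) \setminus B$ is connected; hence it suffices to show that each of these two numbers is locally constant on $(\mathbb{C}^r, 0) \setminus B$. This is where Proposition~\ref{pr:topdis} enters, more precisely the locally trivial fibration of the image of $\tilde{\Phi}$ over $(\mathbb{C}^r, 0) \setminus B$ recalled before it: over a small contractible neighbourhood of any stable value the total image is a product, and the trivialising homeomorphism carries each fibre onto a fixed one while preserving the natural stratification of the image by local multigerm type.

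I would then identify the relevant strata. By the classification of stable multigerms in part (b) of Example~\ref{ex:c23}, the only $0$-dimensional strata of the image of a stable map $\mathbb{C}^2 \to \mathbb{C}^3$ are the triple values (type (3)) and the Whitney umbrella points (type (4)), and these two types are distinguished by the local structure of the image there (three transverse sheets versus a pinch point). Since a stratification-preserving homeomorphism respects both the dimension and the local type of a stratum, it sends triple values to triple values and umbrellas to umbrellas, so both counts agree on nearby fibres; propagating along $(\mathbb{C}^r, 0) \setminus B$ then gives the equality for $v_0$ and $v_1$.

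The main obstacle is the passage from the bare topological statement of Proposition~\ref{pr:topdis} to a \emph{stratified} local triviality that separates the two kinds of isolated points. Concretely one must ensure two things: first, that the trivialisation can be chosen stratum-preserving, which is supplied by Thom's first isotopy lemma applied to the Whitney-stratified family of stable maps; and second, properness, so that as the parameter moves no triple point or umbrella escapes through the boundary of the fixed representative $W \times V$. Granting these, the count is a locally constant, hence constant, function on the connected set $(\mathbb{C}^r, 0) \setminus B$, which is exactly the asserted independence of the stabilization.
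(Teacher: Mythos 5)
Your proposal is correct and follows essentially the same route as the paper: the paper deduces the statement from the connectedness of the complement of the bifurcation set of the miniversal unfolding together with the local triviality of the fibration of the image over that complement (summarized in Proposition~\ref{pr:topdis}), which are exactly the ingredients you assemble. One remark: your ``main obstacle'' is milder than you fear, since regular points, double values, triple values and Whitney umbrella points of the image have pairwise non-homeomorphic local topological structure, so any homeomorphism between disentanglements automatically preserves these point types and no stratified refinement of the trivialization is strictly required.
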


\subsection{The invariants $C$ and $T$}\label{ss:CTN} Here we summarize the main properties of Mond's invariants $C$ and $T$. For  proofs and details we refer to \cite{Mond0, Mond1, Mond2, mondfitting, mond-ballesteros}.

Let $ \Phi: ( \C^2, 0) \to ( \C^3, 0) $ be a holomorphic germ. 
Let $ M_j: Hom ( \C^2 , \C^{3} ) \to \C $
 denote the determinants of the three $ 2 \times 2 $ minors ($ j=1, 2, 3 $).
 Let $ J(\Phi) $ be the ideal of the local ring $ \mathcal{O}_{( \C^2, 0)} $
 generated by the elements $ M_j\circ d\Phi$, where $d\Phi$ is the Jacobian
 matrix of $ \Phi $.
$ J( \Phi ) $ is called the \emph{Jacobian ideal} or the \emph{ramification ideal} of $ \Phi $. Define
\begin{equation}
C ( \Phi )= \dim_{ \C } \frac{ \mathcal{O}_{ ( \C^2, 0)}}{J ( \Phi) } \mbox{ and }
T ( \Phi )= \dim_{ \C } \frac{ \mathcal{O}_{ ( \C^3, 0)}}{ \mathcal{F}_2 (\Phi_* \mathcal{O}_{ ( \C^2, 0) }) } \mbox{.}
\end{equation}

\begin{thm}\label{th:CT} Let $ \Phi: ( \C^2, 0) \to ( \C^3, 0) $ be a finitely determined germ. Then $ C( \Phi ) $ and $ T( \Phi ) $ are finite and any stabilization of 
$ \Phi$ has $ C( \Phi ) $ Whitney umbrella points and $ T ( \Phi ) $ triple values.
\end{thm}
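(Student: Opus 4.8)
The plan is to handle the two assertions (finiteness and the enumeration) in parallel, each via an algebraic \emph{conservation of number} argument applied to the relevant ideal.

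First I would settle finiteness. The zero locus of the ramification ideal $J(\Phi)$ is precisely the set of points where $\rank(d\Phi) < 2$, i.e. the non-immersive points of $\Phi$. By the Mather--Gaffney criterion (Theorem~\ref{th:gaf3}) together with Example~\ref{ex:c23}(c), a finitely $\mathscr{A}$-determined germ is an immersion on a punctured neighbourhood of the origin, so this locus is $\{0\}$. The analytic Nullstellensatz then gives $\mathfrak{m}_{(\C^2,0)}^N \subset J(\Phi)$ for some $N$, whence $C(\Phi)=\dim_\C \mathcal{O}_{(\C^2,0)}/J(\Phi)<\infty$. Identically, the zero locus of $\mathcal{F}_2(\Phi_*\mathcal{O}_{(\C^2,0)})$ is the set of (at least) triple values of $\Phi$; since off the origin the only multi-germs are immersions and transverse double points, this locus is contained in $\{0\}$, so $\mathcal{F}_2$ has finite codimension and $T(\Phi)<\infty$.

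For the count of Whitney umbrellas, fix a stabilization $\tilde\Phi$ and form the \emph{relative} ramification ideal generated in $\mathcal{O}_{(\C^2\times\C^r,0)}$ by the $2\times 2$ minors of the relative Jacobian. It cuts out a scheme $\Sigma$ in the source-times-parameter space whose fibre over each parameter value is the singular scheme of the corresponding $\Phi_v$. Being defined by the maximal minors of a $3\times 2$ matrix, $\Sigma$ is a determinantal variety of the expected codimension $2$, hence Cohen--Macaulay of dimension $r$ by Eagon--Northcott. After shrinking, the projection $\pi\colon\Sigma\to(\C^r,0)$ has finite (zero-dimensional) fibres, since singular points of finitely determined germs are isolated; so by \emph{miracle flatness}---a finite map from a Cohen--Macaulay space to a smooth space of the same dimension is flat---$\pi$ is flat and its fibre degree is locally constant. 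At $v=0$ this degree equals $\dim_\C \mathcal{O}_{(\C^2,0)}/J(\Phi)=C(\Phi)$, the whole fibre sitting at the origin. At a stable value $v$ the singular scheme is a disjoint union of Whitney umbrellas, and in the normal form $(s,t^2,st)$ of Example~\ref{ex:Whstab} one computes $J=(s,t)=\mathfrak{m}$, so each umbrella is a reduced point contributing $1$. Hence the number of Whitney umbrellas equals $C(\Phi)$, consistently with Theorem~\ref{th:whtrip}. The argument for triple values is formally identical, carried out in the \emph{target}: one forms the relative second Fitting ideal of $\tilde\Phi_*\mathcal{O}$ in $\mathcal{O}_{(\C^3\times\C^r,0)}$, obtaining the relative triple-point scheme $\Delta$, and concludes by flatness of $\Delta\to(\C^r,0)$ that the fibre degree is constant, equal to $T(\Phi)$ at $v=0$. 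At a stable value each ordinary triple value is a reduced point of $\Delta$ (in Example~\ref{ex:A} one indeed sees $\mathcal{F}_2=\mathfrak{m}_{(\C^3,0)}$), so the number of triple values equals $T(\Phi)$.

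The main obstacle is the Cohen--Macaulayness of the two schemes, which is exactly what drives the flatness and hence the conservation of number. For $\Sigma$ this is the clean determinantal situation (Eagon--Northcott), but for the Fitting-ideal scheme $\Delta$ the presentation matrix $\lambda$ is far from generic and its expected determinantal codimension does not match the geometric codimension $3$ of the triple-point locus; the required Cohen--Macaulayness is precisely the content of the Mond--Pellikaan theory of Fitting ideals of finite maps, which must be invoked here. A secondary point to verify carefully is that at a stable parameter value every multi-germ is one of the generic (reduced) models of Example~\ref{ex:c23}(b), so that each Whitney umbrella and each triple value contributes exactly $1$ to the corresponding degree.
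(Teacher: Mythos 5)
Your proposal is correct and follows essentially the same route as the paper's own treatment: the paper likewise rests the whole theorem on the Cohen--Macaulay property of $J(\Phi)$ and $\mathcal{F}_2(\Phi_*\mathcal{O}_{(\C^2,0)})$, deduces from it the conservation of the codimensions under deformation, and concludes with the local checks that a Whitney umbrella (resp.\ an ordinary triple value) contributes exactly $1$ while regular points contribute $0$. Your write-up simply makes the underlying mechanism explicit (Eagon--Northcott and miracle flatness on the source side, Mond--Pellikaan on the target side, plus the Nullstellensatz argument for finiteness), all of which the paper delegates to the references \cite{Mond2, mondfitting, mond-ballesteros}.
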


The key tool for the proof of \ref{th:CT} is the so-called \emph{Cohen-Macaulay property} of the ideals $ J ( \Phi) $ and 
$ \mathcal{F}_2 (\Phi_* \mathcal{O}_{ ( \C^2, 0) }) $ in the case of finitely determined germs
$ ( \C^2, 0) \to ( \C^3, 0) $. Cohen-Macaulay property implies a good behaviour under deformation, in our cases that means the conservation of the codimensions. E.g. 
\[ \dim_{ \C } \frac{ \mathcal{O}_{ ( \C^2, 0)}}{J ( \Phi) } =
\Sigma_{x \in U_v} \dim_{ \C } \frac{ \mathcal{O}_{ ( \C^2, x)}}{J ( \Phi_{v, x} ) } \]
holds for a (not necessarily stable) perturbation $ \Phi_v: U_v \to W $ of $ \Phi $. Here $ \Phi_{v, x} $ denotes the germ of 
$ \Phi_v $ at $ x$. Then it is enough to check that 
$ \dim_{ \C } (\mathcal{O}_{ ( \C^2, x)}/ J ( \Phi_{v, x} )) = 1 $ if $ \Phi_{v, x} $ is $ \mathscr{A}$-equivalent with a Whitney-umbrella and $ 0 $ for regular germs, hence the sum counts the Whitney umbrellas of a stabilization. Similar discussion holds for $ \mathcal{F}_2 (\Phi_* \mathcal{O}_{ ( \C^2, 0) }) $. For details we refer to \cite{Mond2, mondfitting, mond-ballesteros}.

In Remark~\ref{re:other} we give an independent proof for the equality of $ C( \Phi) $ and the number of Whitney umbrella points of a stabilization in the case of corank--1 map germs, cf. Example~\ref{ex:cor1}.

In \cite{Mond2} Mond introduced a third invariant $ N( \Phi ) $ for corank--$1$ germs, which `measures, in some sense, the non-transverse self-intersection concentrated at the origin'. Geometric interpretation of $ N( \Phi ) $ is not yet known for the author of the present thesis. Later $ N$ was replaced with more intuitive invariants, see theorems \ref{th:N} and \ref{th:N2}. Here we do not present the definition of $ N $.

We summarize some equivalent characterizations of finite determinacy of germs $  ( \C^2, 0) \to ( \C^3, 0) $. See \cite{Mond2, nunodoodle}.
\begin{thm}\label{th:fin-stab} Let $ \Phi: ( \C^2, 0) \to ( \C^3, 0) $ be a holomorphic germ. Then the following are equivalent:

(a) $ \Phi $ is finitely $ \mathscr{A}$-determined.

(b) $ \Phi|_{ \C^2 \setminus \{0 \} } $ is a stable immersion, cf. Theorem~\ref{th:gaf2}, Example~\ref{ex:c23}.

(c) The associated map of spheres $ \Phi|_{ \mathfrak{S}^3}: \mathfrak{S}^3 \to S^5 $ (cf. Definition~\ref{de:linkmap}) is a stable immersion, that is, it has only  single values and double values with transverse intersection.

\end{thm}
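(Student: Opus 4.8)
The plan is to establish the two equivalences (a)$\Leftrightarrow$(b) and (b)$\Leftrightarrow$(c) separately, since the first is essentially algebraic (Mather--Gaffney plus the classification of stable germs) while the second is geometric (the \emph{conical structure} of $\Phi$ near the origin). First I would observe that finiteness of $\Phi$ is not an extra hypothesis but a consequence of each condition: if $\Phi|_{\C^2\setminus\{0\}}$ is an immersion then $\Phi^{-1}(0)$ cannot contain a curve through $0$ (on such a curve $\Phi$ would vanish identically and $d\Phi$ would drop rank), so $\Phi^{-1}(0)=\{0\}$ and $\Phi$ is finite; likewise (a) forces finiteness. Hence under any of (a), (b), (c) the associated link map of Definition~\ref{de:linkmap} is defined via Lemma~\ref{cor:epsilon}, so (c) makes sense.

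For (a)$\Leftrightarrow$(b) I would reason as follows. Assume (a). By the Mather--Gaffney criterion (Theorem~\ref{th:gaf3}) $\Phi$ has \emph{isolated instability}, so on a small representative every multigerm of $\Phi|_{\C^2\setminus\{0\}}$ is stable. It remains to see that $\Phi$ is an immersion off the origin: a non-immersion point $x\neq 0$ carries a stable singular monogerm, which by Example~\ref{ex:c23}(a) must be a Whitney umbrella, and Whitney umbrellas have isolated singular point. Were the singular locus $V(J(\Phi))$ positive-dimensional through $0$, a generic point on it would carry a non-isolated, hence unstable, singular germ, contradicting isolated instability; thus $V(J(\Phi))=\{0\}$ as a germ and $\Phi|_{\C^2\setminus\{0\}}$ is a stable immersion, giving (b). Conversely, (b) says every multigerm away from $0$ is stable, i.e. $\Phi$ has isolated instability and is finite, so Theorem~\ref{th:gaf3} yields (a). For later use one also records, as in Example~\ref{ex:c23}(c), that the isolated stable triple values of type~(3) can be shrunk away, so a small representative has only single and transverse double values.

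The substantive step is (b)$\Leftrightarrow$(c), and its engine is the conical structure underlying Lemma~\ref{cor:epsilon}. I would fix a Milnor-type vector field whose flow carries the shells $\Phi^{-1}(S^5_\rho)$, $0<\rho\le\epsilon$, onto one another, conjugating the restriction of $\Phi$ on each shell to the link map $\Phi|_{\mathfrak{S}^3}$ up to isotopy. For (b)$\Rightarrow$(c): restricting the immersion $\Phi|_{\C^2\setminus\{0\}}$ to the $3$-submanifold $\mathfrak{S}^3\subset\C^2\setminus\{0\}$ yields an immersion into $S^5_\epsilon$; the double-point curve $D=\overline{\{x:\#\Phi^{-1}\Phi(x)\ge 2\}}$ is cone-like and meets $\mathfrak{S}^3$ transversally in a link $L$, and the transversality of the two branches of $\Phi$ along $D$ in $\C^3$ restricts to transversality of the two sheets of $\Phi|_{\mathfrak{S}^3}$ along $\Phi(L)$ in $S^5_\epsilon$, while triple points are excluded on dimensional grounds. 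Hence $\Phi|_{\mathfrak{S}^3}$ is a stable immersion $S^3\looparrowright S^5$, which is (c). For (c)$\Rightarrow$(b): if $\Phi|_{\mathfrak{S}^3}$ is an immersion with only transverse double points, then $\Phi$ is an immersion with transverse double points on a collar of $\mathfrak{S}^3$, and propagating this along the shells by the conical flow shows $\Phi|_{\C^2\setminus\{0\}}$ is a stable immersion, i.e. (b).

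The main obstacle I expect is precisely the transversality transfer in (b)$\Leftrightarrow$(c): one must verify that $D$ and the level spheres $\Phi^{-1}(S^5_\rho)$ are mutually transverse for all small $\rho$ (a Bertini/curve-selection argument ensuring $D$ is genuinely cone-like and that $D\cap\mathfrak{S}^3$ is a link rather than a tangential or singular intersection), and that the self-transversality of the immersion survives cutting with the sphere. This is where the geometry of the germ, rather than formal algebra, does the work; once the conical picture is set up rigorously, the immersion and double-point statements propagate between a shell and the link sphere without further difficulty.
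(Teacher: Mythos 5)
Your decomposition --- (a)$\Leftrightarrow$(b) via Mather--Gaffney plus the classification of stable multigerms, then (b)$\Leftrightarrow$(c) via the conical geometry of the germ --- is the natural one and matches the paper, which proves (a)$\Leftrightarrow$(b) exactly this way in Example~\ref{ex:c23}(c) and otherwise treats the theorem as a summary of results from Mond and Marar--Nu\~{n}o-Ballesteros. Your (a)$\Leftrightarrow$(b) is sound, and so is (b)$\Rightarrow$(c): there the only nontrivial input is the one you flag, namely that the double-point curves $D$ and $\Sigma$ are transverse to all sufficiently small Milnor spheres (a curve-selection fact); granted this, your transfer of transversality (complex branches transverse in $\C^3$ implies the two $3$-dimensional sheets transverse in $S^5$, because failure of complex transversality of two $2$-planes in $\C^3$ means they coincide) is correct.

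The genuine gap is in (c)$\Rightarrow$(b). The ``conical flow'' cannot do the propagation you ask of it: a Milnor-type vector field gives a flow on the source carrying the shells $\Phi^{-1}(S^5_\rho)$ onto one another, but it does not intertwine with $\Phi$ --- the restriction of $\Phi$ to an inner shell is not the link map composed with diffeomorphisms --- so neither immersivity, nor transversality of double points, nor absence of triple values is transported inward along the flow. Indeed, in the smooth category the implication argued this way is false: a smooth map can restrict to a stable immersion on the boundary sphere while having a cross-cap or a tangential double point in the interior, and no flow argument excludes this. What makes (c)$\Rightarrow$(b) true is complex analyticity. The loci responsible for a failure of (b) --- the zero set of the ramification ideal $J(\Phi)$, the closure of the set of triple points, and the closure of the set of double points with tangent branches --- are analytic germs at the origin. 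If any of them were positive dimensional it would contain an analytic curve through $0$, and every such curve meets $\mathfrak{S}^3=\Phi^{-1}(S^5_\epsilon)$, since $|\Phi|^2$ restricted to the curve is nonconstant and attains all small positive values (the same fact underlying Lemma~\ref{cor:epsilon}). Running your linear algebra in reverse, a point of any of these loci on $\mathfrak{S}^3$ destroys stability of the link map: if the complex rank of $d\Phi_x$ is at most $1$, its restriction to the $3$-dimensional $T_x\mathfrak{S}^3$ already has kernel, and if the two branch tangent planes coincide, the two sheets are tangent in $S^5$; a curve of triple values meeting $S^5_\epsilon$ gives a triple value of the link map (all preimages lie on $\mathfrak{S}^3$). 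This contradicts (c), so all three loci have germ $\{0\}$, a small representative of $\Phi|_{\C^2\setminus\{0\}}$ has only stable multigerms, and (b) follows (equivalently (a), via Theorem~\ref{th:gaf3}). Replacing your flow step by this analytic-locus/curve-selection argument closes the gap.
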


\begin{thm} Let $ \Phi: ( \C^2, 0) \to ( \C^3, 0) $ be a corank--$1$ germ. Then $ \Phi $ is finitely $ \mathscr{A}$-determined if and only if
 $ C( \Phi ) $, $ T( \Phi ) $ and $ N( \Phi ) $ are finite.
\end{thm}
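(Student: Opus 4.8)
The plan is to deduce the equivalence from the Mather--Gaffney criterion (Theorem~\ref{th:gaf3}): $\Phi$ is finitely $\mathscr{A}$-determined if and only if it has isolated instability, i.e. every multi-germ of $\Phi|_{\C^2\setminus\{0\}}$ is stable in a sufficiently small representative. By the classification of stable multi-germs in dimension $(2,3)$ (Example~\ref{ex:c23}), the possible instabilities of $\Phi$ away from the origin are of exactly three geometric kinds: (i) points where $\Phi$ fails to be an immersion (the ramification locus beyond isolated cross-caps); (ii) double values at which the two branches meet non-transversally; (iii) triple values that are non-isolated or at which the three branches fail to meet in general position. The strategy is to match each of the three finiteness conditions with the concentration of one of these instability loci at the origin, so that all three together become equivalent to isolated instability.

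First I would treat type (i). Since $C(\Phi)=\dim_\C \mathcal{O}_{(\C^2,0)}/J(\Phi)$ with $J(\Phi)$ the ramification ideal, finiteness of $C(\Phi)$ is equivalent to the singular locus $V(J(\Phi))$ being zero-dimensional, hence concentrated at $0$; this is precisely the statement that $\Phi|_{\C^2\setminus\{0\}}$ is an immersion with only isolated cross-caps. Thus $C(\Phi)<\infty$ removes instabilities of type (i). Assuming $\Phi$ is then an immersion off the origin, the double-point curve and the triple-point scheme are well defined, and $T(\Phi)=\dim_\C \mathcal{O}_{(\C^3,0)}/\mathcal{F}_2(\Phi_*\mathcal{O}_{(\C^2,0)})$ is finite exactly when the zero set of $\mathcal{F}_2$ is zero-dimensional, i.e. the triple values are isolated and, by the Fitting-ideal description, of the generic regular type away from $0$; this disposes of type (iii).

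The heart of the argument is the remaining type (ii), governed by $N(\Phi)$. For a corank--$1$ germ one writes $\Phi$ in the normal form $(x,y)\mapsto(x,f(x,y),g(x,y))$ and realises the double values through the double-point space; $N(\Phi)$ is the codimension of the ideal measuring the failure of the immersed double-point curve to have transverse branches. I would show that $N(\Phi)<\infty$ is equivalent to this non-transversality locus being concentrated at the origin, so that together with $C$ and $T$ all three instability loci are zero-dimensional, giving isolated instability and hence finite determinacy by Theorem~\ref{th:gaf3}. For the converse, finite determinacy forces $C(\Phi),T(\Phi)<\infty$ by Theorem~\ref{th:CT}, while isolated instability forces transverse double values off the origin, whence $N(\Phi)<\infty$.

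The main obstacle I anticipate is the precise bookkeeping for type (ii): since $N$ is defined only as the codimension of an auxiliary ideal attached to the double-point space, I must verify that its support is exactly the non-transverse double-point locus, and that the three loci are jointly exhaustive, so that no instability escapes the three invariants. This rests on the Cohen--Macaulay property of the multiple-point spaces of corank--$1$ germs (the Marar--Mond machinery), which guarantees that zero-dimensionality of the relevant scheme is detected by finiteness of the corresponding codimension. Establishing this conservation-of-codimension statement, rather than the combinatorial matching of loci, is the technical core of the proof.
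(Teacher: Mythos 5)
First, a point of calibration: the thesis contains no proof of this statement to compare against. It is quoted from Mond's paper \cite{Mond2}; in Subsection~\ref{ss:CTN} the author explicitly declines even to define $N(\Phi)$ and remarks that its geometric interpretation is unknown to him. Judged on its own merits, your architecture --- the Mather--Gaffney criterion (Theorem~\ref{th:gaf3}) together with a matching of the three invariants to the three possible positive-dimensional instability loci --- is exactly the strategy of Mond's original proof, and your treatment of $C$ and $T$ is essentially sound: by the R\"uckert Nullstellensatz, finiteness of the codimension of an ideal in $\mathcal{O}_{(\C^2,0)}$ or $\mathcal{O}_{(\C^3,0)}$ is equivalent to its zero set being concentrated at the origin, which for the ramification ideal gives Theorem~\ref{th:Csum} and for $\mathcal{F}_2(\Phi_*\mathcal{O}_{(\C^2,0)})$ gives the absence of points with at least three preimages off the origin. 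Two slips there: if $C(\Phi)<\infty$ then $\Phi|_{\C^2\setminus\{0\}}$ is an immersion, hence has \emph{no} cross-caps at all off the origin (cross-caps are singular points; they appear only in stabilizations), so ``immersion with only isolated cross-caps'' is incoherent; and your list of instabilities should explicitly include quadruple and higher values (four immersed sheets in $\C^3$ can never meet in general position, cf.\ Example~\ref{ex:c23}) --- they are caught by the support of $\mathcal{F}_2$, but your trichotomy as stated does not name them.

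The genuine gap is at what you yourself call the heart of the argument. You ``define'' $N(\Phi)$ as the codimension of ``the ideal measuring the failure of the immersed double-point curve to have transverse branches''. That is not Mond's definition but a paraphrase of the statement to be proven: read literally it makes your type-(ii) equivalence a tautology, while Mond's $N$ is a specific algebraic construction attached to the double point scheme $D^2(\Phi)/S_2$ of a corank--$1$ germ (cf.\ Theorem~\ref{th:N}, where $N=2\mu(D^2(\Phi)/S_2)$ for quasihomogeneous finitely determined germs). Identifying the support of that ideal with the locus of non-transverse double values \emph{is} the content of the $N$-half of the theorem, and your proposal neither states the definition nor performs the identification. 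Moreover, the tool you invoke does not fill this hole: the Cohen--Macaulay property of the multiple-point schemes yields conservation of codimension under deformation (what is needed for counting statements like Theorem~\ref{th:CT}), whereas ``finite codimension iff zero-dimensional support'' is just the Nullstellensatz and says nothing about \emph{what} the support is. If you want a route that stays inside the thesis, you could combine the two parts of Theorem~\ref{th:N2}: part (a) gives finite determinacy iff $C$, $T$ and the tacnode number $J$ are finite, and part (b), $J=\delta(\gamma_0)+N(\Phi)/2-1$ with $\delta(\gamma_0)<\infty$, converts finiteness of $J$ into finiteness of $N$; but those statements are themselves quoted without proof, and they presuppose $\Phi$ finite and generically one-to-one, hypotheses a complete argument for an arbitrary corank--$1$ germ would first have to dispose of.
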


On the other hand the finiteness of any of the three invariants has also a geometrical interpretation. E.g. the following holds for $C$. (Recall that a stabilization is defined in a sufficiently small neighbourhood of the origin.)
\begin{thm}\label{th:Csum} Let $ \Phi: ( \C^2, 0) \to ( \C^3, 0) $ be a holomorphic germ. Then the following are equivalent:

(a) $ C ( \Phi ) $ is finite.



(b) $ \Phi $ is singular only at the origin, that is, 
$ \rk (d \Phi_x ) < 2 $ implies $ x=0$. 
In other words $ \Phi|_{ \C^2 \setminus \{0 \} } $ is an immersion.

(c) The associated map of spheres $ \Phi|_{ \mathfrak{S}^3}: \mathfrak{S}^3 \to S^5 $ is an immersion.

If these claimes hold, then any stabilization of $ \Phi $ has $ C( \Phi ) $ Whitney umbrella points. Cf. \cite{Mond1, Mond2}.

\end{thm}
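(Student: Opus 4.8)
The plan is to prove the two equivalences (a)$\Leftrightarrow$(b) and (b)$\Leftrightarrow$(c), and then to read off the Whitney-umbrella count from a conservation-of-multiplicity argument for the ramification ideal $J(\Phi)$. First I would identify the zero locus of $J(\Phi)$: its generators $M_j\circ d\Phi$ are exactly the three maximal minors of the $3\times 2$ Jacobian $d\Phi$, so $x\in V(J(\Phi))$ precisely when $\rk(d\Phi_x)<2$; that is, $V(J(\Phi))$ is the singular locus of $\Phi$. Now $C(\Phi)=\dim_{\C}\mathcal{O}_{(\C^2,0)}/J(\Phi)$ is finite if and only if $J(\Phi)$ is $\mathfrak{m}$-primary, i.e. $\sqrt{J(\Phi)}=\mathfrak{m}_{(\C^2,0)}$; by standard local algebra (an ideal of $\mathcal{O}_{(\C^2,0)}$ has finite colength iff its zero-germ is $\{0\}$) this is equivalent to $V(J(\Phi))=\{0\}$ as a germ, which is exactly statement (b). This settles (a)$\Leftrightarrow$(b).

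For (b)$\Rightarrow$(c) the argument is immediate: on $\C^2\setminus\{0\}$ the complex rank of $d\Phi_x$ is $2$, hence the real differential has rank $4$ and is injective, so its restriction to the $3$-dimensional subspace $T_x\mathfrak{S}^3\subset T_x(\C^2\setminus\{0\})$ is still injective and $\Phi|_{\mathfrak{S}^3}$ is an immersion. For (c)$\Rightarrow$(b) I would argue by contraposition. If (b) fails then, by the first step, $\Sigma:=V(J(\Phi))$ is a positive-dimensional analytic germ, hence contains a curve through $0$. Choosing a representative on which $\mathfrak{B}_\epsilon$ (Lemma~\ref{cor:epsilon}) sits relatively compactly, this curve is connected, passes through the interior point $0$, and is non-compact, so it must cross the boundary $\mathfrak{S}^3=\partial\mathfrak{B}_\epsilon$ at some point $x_0$. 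There $\rk(d\Phi_{x_0})\le 1$ over $\C$, so the real kernel of $d\Phi_{x_0}$ has real dimension $\ge 2$; since $\dim_{\R}T_{x_0}\mathfrak{S}^3=3$ and $2+3>4$, this kernel meets $T_{x_0}\mathfrak{S}^3$ nontrivially, so $\Phi|_{\mathfrak{S}^3}$ is not an immersion at $x_0$, contradicting (c).

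Finally, for the cross-cap count I would invoke the Cohen--Macaulay property of $J(\Phi)$ and the resulting conservation of multiplicity described after Theorem~\ref{th:CT}: for a stabilization $\Phi_v:U_v\to W$ of $\Phi$ (equivalently, a deformation whose only singular points are Whitney umbrellas) one has
\[
C(\Phi)=\dim_{\C}\frac{\mathcal{O}_{(\C^2,0)}}{J(\Phi)}=\sum_{x\in U_v}\dim_{\C}\frac{\mathcal{O}_{(\C^2,x)}}{J(\Phi_{v,x})}.
\]
A local term vanishes at an immersion germ (some minor is a unit, so $J(\Phi_{v,x})=\mathcal{O}_{(\C^2,x)}$) and equals $1$ at a Whitney umbrella (as computed in Example~\ref{ex:Whstab}, where $J=\mathfrak{m}$), so the right-hand side counts exactly the Whitney umbrellas of the stabilization.

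The main obstacle is this last step: the conservation of multiplicity is not formal but rests on the Cohen--Macaulayness of the ramification ideal for germs singular only at the origin, which forces flatness of the critical scheme in the family and hence invariance of the total multiplicity. Within the equivalences, the only delicate point is confirming that the singular curve genuinely meets the chosen sphere $\mathfrak{S}^3$, for which I rely on the non-compactness of analytic curve germs together with the relative position of $\mathfrak{S}^3$ furnished by Lemma~\ref{cor:epsilon}.
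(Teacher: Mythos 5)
Your proposal is correct and is essentially the argument the thesis itself endorses: the theorem is stated there with a citation to Mond, and the cross-cap count rests on exactly the Cohen--Macaulay conservation-of-multiplicity principle you invoke (the paper spells it out in the discussion following Theorem~\ref{th:CT}), while your (a)$\Leftrightarrow$(b)$\Leftrightarrow$(c) steps supply the standard Nullstellensatz, linear-algebra and non-compactness-of-curve-germ facts that the paper leaves implicit. The only point worth stating explicitly is that the Cohen--Macaulayness of $J(\Phi)$ holds, as you assert, whenever $V(J(\Phi))$ has the expected codimension two --- i.e.\ whenever $C(\Phi)<\infty$ --- and not only for finitely determined germs, a level of generality the paper itself relies on (cf.\ the remark in Example~\ref{ex:E}).
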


If $\Phi^{-1}(0) \neq \{ 0 \}$, then $\Phi^{-1}(0)$ has positive dimension, and along $ \Phi^{-1}(0) $ the rank of $d\Phi $ is $<2$. Hence corollary below follows from part (a) and (d) of Theorem~\ref{th:Csum}. 
\begin{cor}
If $ C ( \Phi ) $ is finite, then $ \Phi $ is a finite germ.
\end{cor}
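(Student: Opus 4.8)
The plan is to argue by contraposition, using as the only substantial input the equivalence, established in Theorem~\ref{th:Csum}, between the finiteness of $C(\Phi)$ and $\Phi$ being singular only at the origin. First I would assume $C(\Phi)$ is finite; then by the equivalence of (a) and (b) in Theorem~\ref{th:Csum}, a sufficiently small representative of $\Phi$ satisfies $\rk(d\Phi_x) = 2$ for every $x \neq 0$. It therefore suffices to deduce $\Phi^{-1}(0) = \{0\}$, which is precisely the definition of finiteness, and this is what I would establish by contradiction.

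Suppose instead that $\Phi$ is not finite, so that $\Phi^{-1}(0) \neq \{0\}$ for every small representative. The zero locus $\Phi^{-1}(0)$ is the germ of a complex analytic set, being the common vanishing locus of the three coordinate functions of $\Phi$. If $0$ were an isolated point of it, then after shrinking it would reduce to $\{0\}$; hence non-finiteness forces $\Phi^{-1}(0)$ to have positive dimension at the origin, and in particular to contain the germ of an analytic curve $C$ through $0$.

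The next step is to inspect $d\Phi$ along $C$. Since $\Phi$ vanishes identically on $C$, at any smooth point $x$ of $C$ the one-dimensional tangent line $T_x C$ lies in $\Ker d\Phi_x$, whence $\rk(d\Phi_x) \leq 1 < 2$. Choosing such a smooth point $x \neq 0$ arbitrarily close to the origin then produces a singular point of $\Phi$ away from $0$, contradicting the first step. Consequently $\Phi^{-1}(0) = \{0\}$ and $\Phi$ is finite.

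The only delicate ingredient is the passage from ``$0$ is not isolated in $\Phi^{-1}(0)$'' to ``$\Phi^{-1}(0)$ contains a positive-dimensional analytic germ admitting a smooth point through which a tangent direction survives in the kernel of $d\Phi$''. This rests on standard facts of local analytic geometry (a germ of analytic set is either $\{0\}$ or of positive dimension, and a positive-dimensional analytic germ has smooth points arbitrarily near the origin), and I expect it to be the main, though essentially routine, obstacle; everything else is an immediate application of Theorem~\ref{th:Csum}.
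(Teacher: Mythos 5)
Your proof is correct and follows essentially the same route as the paper: the paper also argues that if $\Phi^{-1}(0)\neq\{0\}$ then it has positive dimension, that the rank of $d\Phi$ drops below $2$ along this set, and then invokes the equivalence (a)$\Leftrightarrow$(b) of Theorem~\ref{th:Csum}. Your write-up merely makes explicit the (correct) reason for the rank drop, namely that the tangent line of the curve at a smooth point lies in $\Ker d\Phi_x$.
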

Germs with finite $ C $, or equivalently, germs singular only at the origin are our basic objects in Chapter~\ref{ch:ass}. 

\begin{rem}\label{rem:whhigh} $ C( \Phi ) $ can be defined also in higher dimensions, for germs $ \Phi: ( \C^n, 0) \to ( \C^{2n-1}, 0) $, where the higher dimensional Whitney umbrellas are isolated points. These germs are studied in \cite{jorge}.
\end{rem}

\begin{prop}\label{pr:trip}
If  $T(\Phi)$ is finite, then
any stabilization of $ \Phi $ has the same number of triple points, and this number is $T(\Phi)$ (cf. \cite{mondfitting,Mond1}).
\end{prop}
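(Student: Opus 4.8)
The plan is to mirror the argument sketched after Theorem~\ref{th:CT} for the Jacobian ideal, now applied to the second Fitting ideal. The whole proof rests on a \emph{conservation of multiplicity} statement for the triple-point scheme, which is itself a consequence of the Cohen--Macaulay property of $\mathcal{F}_2(\Phi_*\mathcal{O}_{(\C^2,0)})$ (see \cite{mondfitting, Mond1}).

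First I would fix a stabilization $\Phi_v: U_v \to W$ together with its unfolding $\tilde\Phi$. Since Fitting ideals commute with base change, the relative ideal $\mathcal{F}_2(\tilde\Phi_*\mathcal{O})$ defines a subscheme $Z \subset W \times V$ whose fibre $Z_v$ over a parameter value $v$ is exactly the triple-point scheme of $\Phi_v$ in the target. The essential structural input is that, when $T(\Phi)$ is finite, $\mathcal{F}_2$ cuts out a subscheme of the expected codimension (a fat point supported at the origin, of dimension $0$ in $\C^3$) and the associated module is Cohen--Macaulay; this is Mond's determinantal analysis of the multiple-point spaces.

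From the Cohen--Macaulay property I would conclude that the projection $Z \to V$ is flat with zero-dimensional fibres, so the total colength of the fibres is constant in $v$. Comparing the fibre over $0$ with a fibre over $v \neq 0$ gives
\[
T(\Phi) = \dim_{\C}\frac{\mathcal{O}_{(\C^3,0)}}{\mathcal{F}_2(\Phi_*\mathcal{O})}
= \sum_{y}\dim_{\C}\frac{\mathcal{O}_{(\C^3,y)}}{\mathcal{F}_2((\Phi_v)_*\mathcal{O})},
\]
the sum running over the finitely many points $y$ in the support of $Z_v$. It remains to evaluate the local terms for a stable $\Phi_v$: by Example~\ref{ex:c23}(b) the only multi-germs meeting the triple locus are the transverse triple values, while single values, double values and Whitney umbrellas have at most two sheets and so make $\mathcal{F}_2$ locally the unit ideal, contributing $0$. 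A short computation with the local presentation matrix of three transverse immersed sheets shows that each ordinary triple value contributes local colength exactly $1$. Hence the right-hand sum counts the triple values of $\Phi_v$, which therefore equals $T(\Phi)$ and, being a fixed intrinsic number, is independent of the chosen stabilization.

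I expect the Cohen--Macaulay step to be the main obstacle: one must verify that $\mathcal{F}_2$ has the expected dimension and the Cohen--Macaulay property, from which flatness of $Z \to V$ (hence conservation of multiplicity) follows by miracle flatness over the smooth parameter space. By contrast, the local colength-$1$ computation at a transverse triple value is routine in suitable coordinates, and the vanishing of the contributions at non-triple points is immediate from the sheet count.
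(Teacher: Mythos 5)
Your proposal is correct and follows essentially the same route as the paper: the paper sketches exactly this argument after Theorem~\ref{th:CT} (conservation of multiplicity from the Cohen--Macaulay property of $\mathcal{F}_2(\Phi_*\mathcal{O}_{(\C^2,0)})$, then local colength $1$ at ordinary triple values and $0$ at all other stable multi-germs, with the remark that the discussion given for $J(\Phi)$ ``holds similarly'' for $\mathcal{F}_2$), and otherwise defers to \cite{mondfitting,Mond1}. The additional details you supply (base change for Fitting ideals, flatness of the triple-point scheme over the parameter space) are precisely the ingredients used in those references, so there is no genuine divergence.
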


By Theorem~\ref{th:Csum}, in the case of finite germs the failure of the finiteness of $C( \Phi ) $ corresponds to the existense of a \emph{cuspidal edge}, that is, a line of singular points. In general any of the following three properties implies the failure of the finite determinacy of a germ $ \Phi: ( \C^2, 0) \to ( \C^3, 0)$ \cite{nunodoodle}: (a) there is a line of singular points, (b) there is a line of triple points, (c) there is a line of non-transverse self-intersection.


\subsection{Examples}\label{ex:stabilization}

\begin{ex}[Finitely determined corank--$1$ germs]\label{ex:cor1}
Let $ \Phi: ( \C^2, 0) \to ( \C^3, 0) $ be a finitely determined germ with $ \rk (d \Phi_0) = 1$. Such a germ is $ \mathscr{A} $-equivalent with a germ of the form $ \Phi(s, t)= (s, p(s, t), q(s, t))$ \cite{Mond0}. Then the Jacobian ideal is generated by $ \partial_t p $ and $ \partial_t q $, thus its codimension $ C ( \Phi ) $ is the intersection multiplicity of the plane curves  $ \partial_t p = 0 $ and $ \partial_t q = 0 $ at the origin. 

Several concrete examples can be found e.g. in the list of Mond contaning the simple germs \cite{Mond2}. For example the family 
$ S_{k-1} $ given by $ \Phi (s, t) = (s, t^2, t^3 + s^k t ) $ has $ C( \Phi ) = k $ Whitney umbrellas in a stabilization, see Figure~\ref{fig:s1fig} for $k=2$. For $k=1 $ the germ itself is $ \mathscr{A}$-equivalent with the Whitney umbrella.
\end{ex}

\begin{ex}[$\Sigma^{1,0}$ type germs]\label{ex:Sig10} Assume that $\Phi(s, t) = ( s, t^2, t d(s,t) ) $, where $d(s,t) = g(s, t^2) $ for  some germ $g$, such that $d(s,t)$  is not divisible by $t$. The ($ \mathscr{A}$-equivalence classes of) these germs are often labeled by the Boardman symbol $\Sigma^{1,0}$, this notation refers to the classification of the germs up to their $2$-jet, see \cite{nunodoodle, Mond0}.
They are exactly the finitely determined corank--$1$ map germs with no triple points in their stabilization, that is, with 
$T ( \Phi )=0 $, see \cite{Mond0,nunodoodle} for details.
\end{ex}

\begin{ex}[A corank--$2$ germ]\label{ex:cor2} There are also finitely determined germs $ \Phi: ( \C^2, 0) \to ( \C^3, 0) $ with $ \rk (d \Phi_0) = 0$. Several examples can be found in \cite{marar}. 
A concrete one is
$ \Phi (s, t) = (s^2, t^2, s^3 + t^3+ st) $. For this germ $ C( \Phi ) = 3$ and $ T( \Phi ) = 1$.
\end{ex}

\begin{ex}[Cuspidal edge]\label{ex:cuspedge} Let $ \Phi(s, t) = (s, t^2, t^3 ) $ (cf. Example~\ref{ex:Sig10}). Then $ C( \Phi ) = \infty$, $ \Phi $ is not finitely determined, indeed it is not an immersion outside the origin.
\end{ex} 

\begin{ex}[Singularities of type $A$]\label{ex:A2}
As in Example~\ref{ex:A}, consider the $k$-fold cover of the $A_{k-1}$ singularity, $ \Phi (s, t) = (s^k, t^k, st ) $. Then 
$ J ( \Phi ) = ( s^k, t^k, s^{k-1} t^{k-1}) $ and $ C( \Phi ) = k^2-1 $, cf. Figure~\ref{fig:A1fig} for $k=2$. Outside the origin $ \Phi $ is an immersion, but not a stable immersion. In fact, the whole image of $ \Phi $ consists of at least $2$-fold multiple values, thus the intersection of the branches is not transverse. Thus $ \Phi $ is not finitely determined. For $ k=2 $, $T( \Phi ) =1 $, cf. Example~\ref{ex:A}. However $ T( \Phi )= \infty $ for $k \geq 3 $, since the whole image consists of at least $3$-fold multiple values.
\end{ex}

The covering maps of the quotient singularities of type $D$ and $E_6$, $ E_7$ and $E_8$ are also regular outside the origin, thus $ C( \Phi ) $ is finite for them. However they are not finitely determined, from the same reason that was explained for type $A$. These germs provide us interesting examples later in Chapter~\ref{ch:ass}. We present here a possible computation of $C ( \Phi ) $ for these germs.

\begin{ex}[Singularities of type $ D $]\label{ex:D} These are the quotient singularities of
 form $  (\C^2, 0)/ D_n $ where $ D_n $ denotes the binary dihedral group, \cite[page 89]{invariant}.
$ \Phi (s, t) = ( s^2 t^2, s^{2n} + t^{2n} , st (s^{2n} - t^{2n})) $ \cite[page 95]{invariant}.
By a computation
$ J( \Phi) = ( st (s^{2n}-t^{2n}) , s^2 t^2 (s^{2n} + t^{2n}) , (s^{2n} - t^{2n})^2 - 4n s^{2n} t^{2n}) $.
In singularity theory the quotient is the $D_{n-2}$--singularity.

A possible
 computation of $ \dim_{ \C } \left( \mathcal{O}_{(\C^2, 0)} / J( \Phi) \right) $ is based on the following facts.

\begin{lem}\label{lem:inter}
(a) Take $ f_1 , f_2, h \in \mathcal{O}_{\C^2, 0} $ such that $ f_1 f_2$ and $h $ are coprimes.
Then one has the  following exact sequence:
\[
 0 \to \mathcal{O}_{(\C^2, 0)} /( f_2 , h )  \to  \mathcal{O}_{(\C^2, 0)}
 /( f_1 f_2 , h )  \to  \mathcal{O}_{(\C^2, 0)} /( f_1 , h ) \to 0 \mbox{ .}
\]

(b) Take $ f_1 , f_2, g, h \in \mathcal{O}_{(\C^2, 0)} $ such that the ideal
$(f_1f_2, g,h) $ has finite codimension, and  $ h = f_1 h' $ for some $h'\in \mathcal{O}_{(\C^2, 0)} $.
Then one has the following exact sequence:
\[
 0 \to\mathcal{O}_{\C^2, 0} /( f_2 , g, h' )  \to  \mathcal{O}_{(\C^2, 0)} /
 ( f_1 f_2 ,g,  h )  \to  \mathcal{O}_{(\C^2, 0)} /( f_1 , g )  \to 0 \mbox{ .}\]
\end{lem}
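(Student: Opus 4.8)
I would derive both sequences from a single, completely general observation and then spend all the real effort on two colon-ideal computations. Write $R=\mathcal{O}_{(\C^2,0)}$, which is a regular local ring and hence a unique factorization domain. For \emph{any} ideal $I\subset R$ and any element $f_1\in R$ there is a tautological short exact sequence
\[
0 \to R/(I:f_1) \xrightarrow{\ \cdot f_1\ } R/I \to R/(I+(f_1)) \to 0 ,
\]
where $(I:f_1)=\{x\in R : f_1 x\in I\}$. Indeed, multiplication by $f_1$ is well defined and injective on $R/(I:f_1)$ by the very definition of the colon ideal, its image is $(I+(f_1))/I$, and the right map is the obvious surjection with kernel $(I+(f_1))/I$. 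No hypotheses are needed for this. Thus the entire content of the lemma is the identification of $I+(f_1)$ and $(I:f_1)$ in the two situations, and the given hypotheses enter \emph{only} in computing the colon ideal.

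\textbf{Part (a).} I would take $I=(f_1f_2,h)$. Then $I+(f_1)=(f_1,f_1f_2,h)=(f_1,h)$, matching the right-hand term, and the claim is that $(I:f_1)=(f_2,h)$. The inclusion $\supseteq$ is immediate since $f_1f_2\in I$ and $f_1h\in(h)\subseteq I$. For $\subseteq$, suppose $f_1x=cf_1f_2+dh$; then $f_1(x-cf_2)=dh\in(h)$, so $h\mid f_1(x-cf_2)$. Because $f_1f_2$ and $h$ are coprime, so are $f_1$ and $h$, and Euclid's lemma in the UFD $R$ gives $h\mid(x-cf_2)$, i.e.\ $x\in(f_2,h)$. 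Feeding $I=(f_1f_2,h)$, $(I:f_1)=(f_2,h)$, $I+(f_1)=(f_1,h)$ into the tautological sequence yields exactly (a).

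\textbf{Part (b).} Here I would use $h=f_1h'$ to rewrite $I=(f_1f_2,g,h)=f_1\cdot(f_2,h')+(g)$. Then $I+(f_1)=(f_1,g)$ (since $f_1f_2,h\in(f_1)$), matching the right-hand term, and the claim becomes $(I:f_1)=(f_2,g,h')$. Again $\supseteq$ is trivial. For $\subseteq$, if $f_1x\in I$ write $f_1x=f_1w+gz$ with $w\in(f_2,h')$; then $f_1(x-w)=gz$, so $g\mid f_1(x-w)$. The finite-codimension hypothesis is precisely what forces $\gcd(f_1,g)=1$: if some prime $p$ divided both $f_1$ and $g$, then $I\subseteq(p)$ (as $f_1f_2,h\in(f_1)\subseteq(p)$ and $g\in(p)$), whence $R/I$ surjects onto the infinite-dimensional $R/(p)$, contradicting finiteness. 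With $\gcd(f_1,g)=1$, Euclid gives $g\mid(x-w)$, so $x\in(f_2,g,h')$.

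\textbf{Expected main obstacle.} The two tautological-sequence reductions and the computations of $I+(f_1)$ are routine; the delicate points are the reverse colon inclusions, both of which amount to showing that $f_1$ is a nonzerodivisor modulo the relevant element. In (a) this rides on the stated coprimality, while in (b) the subtlety is that coprimality of $f_1$ and $g$ is \emph{not} assumed but must be extracted from the finite-codimension hypothesis. I would therefore present the implication ``finite codimension $\Rightarrow$ $\gcd(f_1,g)=1$'' as the crux, taking care that it uses the UFD structure of $R$ and the fact that a proper principal ideal $(p)$ cuts out a positive-dimensional germ.
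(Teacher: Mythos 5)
Your proof is correct and complete. For comparison: the paper does not actually write out an argument at all --- it disposes of part (a) by citing it as the well-known additivity property of the local intersection number of plane curves (pointing to Fulton's book), and declares that ``the proof of part (b) is similar.'' Your tautological sequence $0 \to R/(I:f_1) \to R/I \to R/(I+(f_1)) \to 0$ combined with the two colon-ideal computations is precisely the standard argument that underlies that citation, so the mathematical route is essentially the same, just made self-contained. What your write-up adds beyond the paper is the explicit handling of the one genuinely non-routine point in (b): there, coprimality of $f_1$ and $g$ is \emph{not} a hypothesis and must be extracted from the finite-codimension assumption (your observation that a common prime $p$ would give $I\subseteq (p)$, so $R/I$ would surject onto the infinite-dimensional $R/(p)$). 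The paper's ``(b) is similar'' silently passes over exactly this step, so your proposal is a useful completion rather than a divergence.
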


\begin{proof}
Part (a) is well-known as the additivity property of the local intersection number of plane curves,
see e.g. \cite{fulton}. The proof of part (b) is similar.
\end{proof}

 Using these lemmas the codimension of $ J( \Phi) $ of the $D_{n-2} $ singularity can be calculated, and it is $ C( \Phi)= 4n^2 + 12n -1 $.
\end{ex}

\begin{ex}[Weighted homogeneous germs and $E_6$, $E_7$ and $E_8$]\label{ex:E} 
Assume that the three components of $\Phi$ are weighted homogeneous of weights
$w_1$ and $w_2$ and degree $d_1$, $d_2$ and $d_3$. Then, cf. \cite{Mondwh}, 
$C(\Phi)=\{d_1d_2+d_2d_3+d_3d_1-(w_1+w_2)(d_1+d_2+d_3-w_1-w_2)-w_1w_2\}/w_1w_2.$
Mond proved this identity for finitely $ \mathscr{A}$-determined germs, but the same proof works for
germs with finite $ C ( \Phi ) $.

For example, if $\Phi: (\C^2,0)\to (\C^2,0)/G\hookrightarrow (\C^3,0)$ is as in Example
\ref{ex:ade}, then all three components are homogeneous ($w_1=w_2=1$). In the case of
$A_{k-1}$ and $D_{n+2}$ the degrees are $(k,k,2)$ and $(4,2n,2n+2)$ respectively.
Hence the values $C(\Phi)$ from Examples \ref{ex:A2} and \ref{ex:D} follow in this way as well.

For $E_6,\ E_7$ and $E_8$ singularities the degrees are
$(6,8,12)$, $(8,12,18)$ and $(12, 20,30)$ respectively, see \cite[4.5.3--4.5.5]{invariant},
hence the corresponding values
$ C( \Phi) $ are $167$, $383$, $1079$.
\end{ex}

\newpage

\begin{center}
\begin{figure}
 \includegraphics[width=13cm]{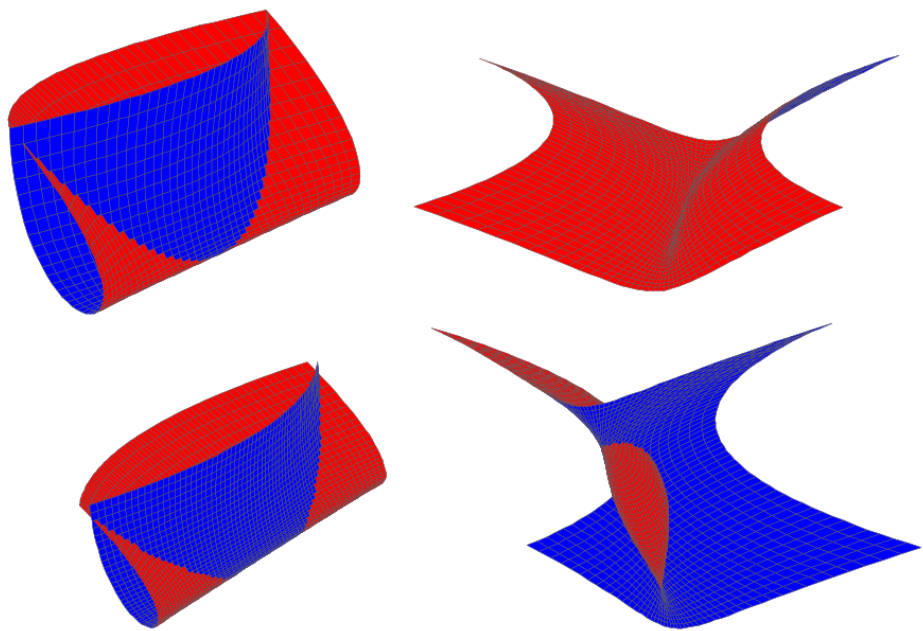}
 \caption{The real germs $ S_1^{ \mp } $, $ \Phi^{ \mp } (s, t)=(s, t^2, t^3 { \mp } s^2t)$, and their stabilizations 
 $ \Phi^{ \mp }_v(s, t)=(s, t^2, t^3 \mp s^2t { \pm } vt)$, $v \geq 0$, with two Whitney umbrellas.} \label{fig:s1fig}
\end{figure}
\end{center}

\begin{center}
\begin{figure}
 \includegraphics[width=10cm]{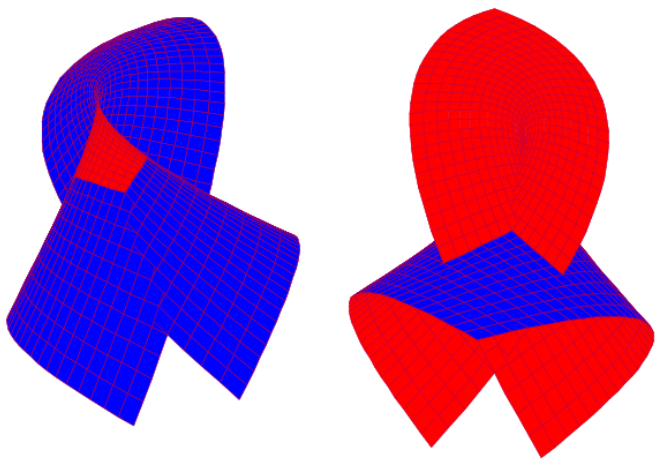}
 \caption{The real version of $A_1$ is the double covering $ \Phi (s, t)=(s^2, t^2, st) $ of the cone $ \{ xy=z^2, \ x, y \geq 0 \} $. The picture shows its stabilization $ \Phi_v (s, t)= (s(s-v), t(t-v), st)$ with the three Whitney umbrellas and one triple value.}\label{fig:A1fig}
\end{figure}
\end{center}

\subsection{Disentanglement and the image Milnor number} \label{ss:disent}
Recall for finitely determined germs from $ ( \C^n, 0) $ to $( \C^{n+1}, 0) $ the topology of the disentanglement (that is, the image of a stabilization) does not depend on the choice of the stabilization, cf. Proposition~\ref{pr:topdis}.

Note that the disentanglement is a stratified space, each stratum is a connected `isosingular locus' of stable multi-germs.

In many cases the disentanglement  plays the role of the Milnor fibre of isolated singularities, for example it has the following property, analogously with Theorem~\ref{th:milnfiso}. (In fact, the proof uses \ref{th:milnfiso}, see \cite{Mondvan}).
\begin{thm}[{\cite[Theorem 1.4.]{Mondvan}}]
The disentanglement of a finitely determined germ $ \Phi: ( \C^n , 0) \to ( \C^{n+1}, 0) $ is homotopy equivalent with a bouquet of $n$-spheres, ie. with $ \bigvee_{i=1}^{ \mu_I( \Phi)} S^n $.
\end{thm}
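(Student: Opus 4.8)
The plan is to show that the disentanglement $D := \mathrm{im}(\Phi_t)\cap W$ of a good generic stabilization is an $(n-1)$-connected space carrying the homotopy type of an $n$-dimensional CW-complex, and then to invoke Whitehead's theorem to upgrade this to an honest homotopy equivalence with a wedge of $n$-spheres. By Proposition~\ref{pr:topdis} the homotopy type is independent of the chosen stabilization, so I may work throughout with any convenient good representative.

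First I would fix such a representative and record the \emph{dimension bound}. Since $W$ is a ball, hence Stein, and $D$ is a closed analytic subvariety of $W$ of pure complex dimension $n$, the space $D$ is itself Stein of complex dimension $n$. By the Andreotti--Frankel theorem, in the form valid for singular Stein spaces (Hamm, and others), $D$ then has the homotopy type of a CW-complex of real dimension at most $n$; in particular $\tilde H_i(D)=0$ for all $i>n$.

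The heart of the argument is the \emph{connectivity estimate}: that $D$ is $(n-1)$-connected. I would prove this by induction on $n$ via generic hyperplane sections. For a generic hyperplane $H\subset\C^{n+1}$, the preimage $\Phi_t^{-1}(H)$ is a smooth hypersurface $\cong\C^{n-1}$ and the slice $D\cap H=\mathrm{im}(\Phi_t|_{\Phi_t^{-1}(H)})$ is again the disentanglement of a finitely determined multigerm $(\C^{n-1},S)\to(\C^{n},0)$ one dimension lower (this is the generic slicing of the image); hence by the inductive hypothesis $D\cap H$ is $(n-2)$-connected. A Lefschetz-type hyperplane theorem for possibly singular Stein spaces, in the Hamm--L\^{e} formulation, then shows that $D$ is obtained from $D\cap H$, up to homotopy, by attaching cells of dimension $\geq n$, whence $D$ is $(n-1)$-connected. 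The base case $n=1$ is the classical fact that the disentanglement of a plane curve germ is connected and homotopy equivalent to a bouquet of circles; this is exactly the plane-curve instance of Milnor's bouquet theorem, Theorem~\ref{th:milnfiso}, which anchors the induction.

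Combining the two estimates, $D$ is $(n-1)$-connected with reduced homology concentrated in degree $n$; moreover $H_n(D)$ is free abelian, being a subgroup of the free cellular chain group in top degree. Setting $\mu_I(\Phi):=\mathrm{rank}\,H_n(D)$, the Hurewicz theorem gives $\pi_n(D)\cong H_n(D)\cong\Z^{\mu_I(\Phi)}$, and a choice of generators defines a map $\bigvee_{i=1}^{\mu_I(\Phi)}S^n\to D$ inducing an isomorphism on all reduced homology groups. As both spaces are simply connected CW-complexes, Whitehead's theorem promotes this to a homotopy equivalence, which is the claim. I expect the connectivity estimate to be the main obstacle: one must verify that a generic plane section of the disentanglement is genuinely the disentanglement of a finitely determined germ of the expected type, so that the induction closes, and one must have the singular Lefschetz hyperplane theorem available in the precise relative form guaranteeing that the attached cells have dimension at least $n$.
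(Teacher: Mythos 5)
Your dimension bound and the Hurewicz--Whitehead endgame are sound, but the connectivity estimate --- which you correctly single out as the main obstacle --- has a genuine gap, and the tools you propose cannot close it. The Lefschetz hyperplane theorem for Stein spaces which are local complete intersections of dimension $n$ (Hamm--L\^{e}, Goresky--MacPherson) gives that the pair $(D, D\cap H)$ is $(n-1)$-connected; via the long exact sequence this means $\pi_i(D\cap H)\to\pi_i(D)$ is an isomorphism for $i\le n-2$ and an \emph{epimorphism} for $i=n-1$, and nothing more. Feeding in the inductive hypothesis that $D\cap H$ is a bouquet of $(n-1)$-spheres therefore only yields that $D$ is $(n-2)$-connected and that $\pi_{n-1}(D)$ is some quotient of a free group; it does not yield $\pi_{n-1}(D)=0$, which is precisely the assertion to be proved. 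The failure is logical, not technical: the Stein surface $X=\C^{*}\times\C^{*}$ satisfies every input of your inductive step --- it is Stein of dimension $2$, has the homotopy type of a $2$-complex, and its generic hyperplane section is a connected smooth affine curve, hence a bouquet of circles --- yet $\pi_1(X)\cong\Z^2\neq 0$. So no argument of the shape ``Stein $+$ dimension bound $+$ slice is a bouquet $+$ Lefschetz'' can establish $(n-1)$-connectivity; some special geometric feature of disentanglements must enter in the top degree, and your proposal never uses one.

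For comparison: the thesis does not prove this theorem but quotes it from \cite[Theorem 1.4]{Mondvan}, remarking only that the proof uses Theorem~\ref{th:milnfiso}. The missing ingredient there is exactly the one your scheme lacks: the disentanglement is the \emph{special fibre} of a deformation $g_t$ of the equation $g_0$ of the contractible central fibre $X\cap B^6_{\epsilon}$, and all critical points of $g_t$ lie on that fibre; Mond, following Siersma's analysis of special fibres \cite{Sivan} (see also \cite{dejongvanstraten}), compares the disentanglement with the Milnor fibre of $g_0$, whose $(n-1)$-connectivity is supplied by Milnor's bouquet theorem (Theorem~\ref{th:milnfiso}). The top-degree connectivity is thus inherited from the smooth Milnor fibre through the deformation, not extracted from hyperplane slices. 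Two smaller points: your inductive hypothesis does not literally apply to $D\cap H$, which is the image of a \emph{global} stable map of an $(n-1)$-manifold rather than the disentanglement of a single finitely determined (multi)germ, so even the well-founded part of the induction needs a reformulated statement; and your base case is misattributed --- the disentanglement of a parametrized plane curve is the nodal image of the perturbed parametrization (a disc with $\delta$ pairs of points identified), not the Milnor fibre of the curve, although that case is elementary and true.
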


\begin{defn}
The number of the spheres in the bouquet (that is, the $n$-th Betti number of the disentanglement) $ \mu_I ( \Phi ) $ is called the image Milnor number of $ \Phi $.
\end{defn}

\begin{thm}[{\cite[3.2.-3.4.]{mararmulti}}, \cite{Mondvan}]
For finitely determined germs $ \Phi: ( \C^2, 0) \to ( \C^3, 0) $
\[ \mu_I ( \Phi ) = \frac{1}{2} (4 T( \Phi)- C(\Phi) - \mu (D ( \Phi )) +1 ) \]
holds, where $\mu (D ( \Phi ))$ is the Milnor number of the double point curve $ D( \Phi ) \subset \C^2 $, see paragraph~\ref{ss:double}.
\end{thm}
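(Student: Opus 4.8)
The plan is to reduce the whole statement to a computation of Euler characteristics. By \cite{Mondvan} the disentanglement $X_v=\mathrm{im}(\Phi_v)$ of a stabilization $\Phi_v$ (defined on a representative $U_v$) is homotopy equivalent to a wedge of $\mu_I(\Phi)$ copies of $S^2$, so that $\chi(X_v)=1+\mu_I(\Phi)$. Hence it suffices to compute $\chi(X_v)$ in terms of $C(\Phi)$, $T(\Phi)$ and $\mu(D(\Phi))$. By Theorem~\ref{th:CT} the stable map $\Phi_v$ has exactly $C(\Phi)$ cross caps and $T(\Phi)$ triple values, and together with the list of stable multi-germs in Example~\ref{ex:c23}(b) this gives a stratification of $X_v$ into four pieces: the open mono-germ locus, the ordinary double-value curve $\Sigma_v$ (double values with transverse branches), the $T(\Phi)$ isolated triple values, and the $C(\Phi)$ isolated cross caps.

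The first main step is to use that $\Phi_v\colon U_v\to X_v$ is the normalization of its image. The source $U_v$ is a smooth Milnor ball, so $\chi(U_v)=1$, and over each stratum $\Phi_v$ restricts to a finite covering of the expected degree: degree $1$ over the mono-germ locus, degree $2$ over the generic part of $\Sigma_v$, and with $3$ (resp. $1$) preimages over each triple value (resp. cross cap). Additivity of the (compactly supported) Euler characteristic over this stratification then yields a single linear relation between $\chi(X_v)$, the Euler characteristic of the source double-point curve $D_v=\Phi_v^{-1}(\Sigma_v)\subset U_v$ (related to $\chi(\Sigma_v)$ through the generically $2:1$ map $D_v\to\Sigma_v$), and the integers $C(\Phi)$ and $T(\Phi)$, with all coefficients dictated by these local multiplicities.

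The remaining ingredient is $\chi(D_v)$. The curve $D_v$ is the deformation of the plane-curve germ $(D(\Phi),0)$ induced by the stabilization; the triple values force it to acquire exactly $3\,T(\Phi)$ ordinary nodes (three nodes per triple value, one on each of the three local sheets meeting there), while it remains smooth at the cross caps. Conservation of the Milnor number for plane curves then gives
\[
\chi(D_v)=1-\mu(D(\Phi))+3\,T(\Phi).
\]
Substituting this into the relation from the previous step, together with $\chi(X_v)=1+\mu_I(\Phi)$, produces a linear identity among $\mu_I(\Phi)$, $C(\Phi)$, $T(\Phi)$ and $\mu(D(\Phi))$, which is the asserted formula.

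The hard part will be the precise multiplicity and sign bookkeeping, and I would isolate three delicate points. First, one must verify the exact local contribution of each cross cap and each triple value both to $\chi(X_v)$ and to the singularities of $D_v$; this is where the reduced model of the double-point locus must be reconciled with its natural non-reduced (Fitting-ideal) scheme structure, and where one must be careful whether ``$D(\Phi)$'' denotes the double-point set in the source $\C^2$ or the double-point space in $\C^2\times\C^2$, since the two differ precisely over the triple values. Second, the conservation-of-Milnor-number step must be justified for the specific deformation of $(D(\Phi),0)$ cut out by the stabilization, not merely for an arbitrary smoothing. Third, the compactly supported versus ordinary Euler characteristic conventions must be tracked consistently so that the final coefficients of $C$, $T$ and $\mu(D)$ come out correctly. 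A systematic way to organize all of this, and the route taken in \cite{mararmulti}, is to pass to the alternating multiple-point spaces $D^k(\Phi)$ and the associated image-computing spectral sequence, which makes the contributions of the double and triple loci into homological terms rather than ad hoc Euler-characteristic counts.
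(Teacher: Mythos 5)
Your overall strategy is sound, and in fact the thesis offers no proof to compare against: the theorem is quoted there from Marar--Mond and Mond, whose argument (via multiple point spaces and the image-computing spectral sequence) is exactly the route you mention in your closing sentence. All your intermediate claims are correct: $\chi(X_v)=1+\mu_I(\Phi)$; the covering degrees $1,2,3,1$ over the four strata; the fact that $D_v:=D(\Phi_v)$ has exactly $3T(\Phi)$ nodes (one at each of the three source preimages of each triple value) and is smooth at the cross caps; and $\chi(D_v)=1-\mu(D(\Phi))+3T(\Phi)$ by conservation of the Milnor number in the flat family $\{D(\Phi_v)\}_v$. The gap is your assertion, made without doing the arithmetic, that the resulting linear identity ``is the asserted formula''. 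It is not. Writing $X_1$ for the simple-value stratum and $X_2$ for the stratum of ordinary double values, additivity of $\chi_c$ over the strata of source and target gives
\[
1=\chi(U_v)=\chi_c(X_1)+2\chi_c(X_2)+3T+C \quad\mbox{and}\quad
\chi(X_v)=\chi_c(X_1)+\chi_c(X_2)+T+C,
\]
while $\chi(D_v)=2\chi_c(X_2)+3T+C$. Eliminating $\chi_c(X_1)$ and $\chi_c(X_2)$ and inserting $\chi(D_v)=1-\mu(D(\Phi))+3T$ yields
\[
\mu_I(\Phi)=\chi(X_v)-1=\tfrac12\bigl(\mu(D(\Phi))+C(\Phi)-4T(\Phi)-1\bigr),
\]
which is the \emph{negative} of the displayed statement.

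This discrepancy is not a flaw in your method but in the statement as printed, which carries a global sign error relative to the Marar--Mond/Mond formula; your computation, carried to completion, proves the corrected identity and refutes the printed one. Indeed, test $S_1$, $\Phi(s,t)=(s,t^2,t^3+s^2t)$: here $C(\Phi)=2$, $T(\Phi)=0$, $D(\Phi)=\{s^2+t^2=0\}$ is a node, so $\mu(D(\Phi))=1$, while $\mu_I(\Phi)=1$ (it equals the $\mathscr{A}_e$-codimension by Theorem~\ref{th:Mondconj}, the germ being weighted homogeneous); the printed right-hand side gives $\tfrac12(0-2-1+1)=-1$, which is absurd since $\mu_I\geq 0$. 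Similarly for $H_k$ one has $C=2$, $T=k-1$, $\mu(D)=6k-5$, $\mu_I=k$, and the printed formula gives $-k$. So you should present your argument as a proof of $\mu_I=\tfrac12(\mu(D)+C-4T-1)$ and note the misprint, rather than claim to reach the stated identity --- and the ``delicate points'' you flagged are indeed all fine: $D$ is the reduced double-point curve in the source, the family $D(\Phi_v)$ is flat so conservation of $\mu$ applies, and $\chi_c=\chi$ for complex analytic sets, so none of them can absorb the sign.
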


\begin{thm}[\cite{dejongvanstraten, Mondvan}]\label{th:Mondconj}
For a finitely determined germ $ \Phi: ( \C^2, 0) \to ( \C^3, 0) $, the image Milnor number $ \mu_I ( \Phi ) $ is bigger or equal than the $ \mathscr{A}_e $-codimension of $ \Phi $. If $ \Phi $ is weighted homogeneous, then equality holds.
\end{thm}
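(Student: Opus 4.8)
The plan is to establish the two parts in parallel, keeping in mind the precise analogy with the classical bound $\mu \geq \tau$ for an isolated complete intersection singularity: here $\mathscr{A}_e\text{-codim}(\Phi)$ is the analogue of the Tjurina number $\tau$, since by Theorem~\ref{th:gaf2} it is the dimension of the base of the miniversal unfolding, while $\mu_I(\Phi)$ is the analogue of the Milnor number $\mu$, being the rank of the vanishing cohomology of the disentanglement. The crux is to realize both integers inside a single deformation-theoretic picture attached to the image $(X,0)=(\mathrm{im}(\Phi),0)$, so that a comparison map becomes available.

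For the equality in the weighted homogeneous case I would exploit the $\C^*$-action. Choose a weighted homogeneous miniversal unfolding; by homogeneity the module $\theta(\Phi)/T\mathscr{A}_e\Phi$, whose dimension is $\mathscr{A}_e\text{-codim}(\Phi)$, is graded, and its dimension is the number of its monomial generators. On the other side the disentanglement of a weighted homogeneous germ inherits a compatible $\C^*$-action, and its vanishing cohomology can be computed through the image-computing spectral sequence of Goryunov--Mond from the reduced cohomology of the multiple point spaces $D^k(\Phi)$, all of which carry compatible $\C^*$-weights. The decisive ingredient is an Euler-type relation: weighted homogeneity places the generating vector field of the $\C^*$-action inside $T\mathscr{A}_e\Phi$, and this is exactly what matches the graded pieces of the deformation module with those of the vanishing cohomology, giving $\mu_I(\Phi)=\mathscr{A}_e\text{-codim}(\Phi)$. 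In practice the identity can be verified by inserting the weighted homogeneous formulas for $C(\Phi)$ (Example~\ref{ex:E}), $T(\Phi)$ and $\mu(D(\Phi))$ into the formula of \cite{mararmulti} and comparing with the weighted count of $\mathscr{A}_e\text{-codim}(\Phi)$.

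For the general inequality I would pass to the deformation theory of the image developed by de Jong--van Straten. The germ $\Phi$ determines the non-normal surface $(X,0)$ together with its double-point divisor, and their theory yields an isomorphism between $\mathscr{A}_e$-infinitesimal deformations of $\Phi$ and the admissible first-order deformations of this pair, so the miniversal base again has dimension $\mathscr{A}_e\text{-codim}(\Phi)$. A stabilization of $\Phi$ realizes a one-parameter smoothing of $(X,0)$ lying in this base, and the disentanglement is its Milnor fibre; the resulting comparison (period) map sends the vanishing cohomology, of rank $\mu_I(\Phi)$, onto the tangent space of the base, which forces $\mu_I(\Phi)\geq \mathscr{A}_e\text{-codim}(\Phi)$. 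In the weighted homogeneous situation the base is smooth and conical and this map is an isomorphism, recovering the equality above as a consistency check.

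The step I expect to be the main obstacle is the precise comparison of the two deformation theories at play: the left--right theory of the \emph{map}, which records parametrized deformations, against the flat deformation theory of the \emph{image} as a non-isolated, non-normal hypersurface. Establishing the tangent-space isomorphism rigorously, and proving that the comparison map out of the vanishing cohomology is surjective (rather than merely defined), is the technical heart of the argument; this is precisely what the disentanglement machinery of de Jong--van Straten supplies, and it is also where the restriction to source dimension $n=2$ is used.
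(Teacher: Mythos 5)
The paper does not prove this statement at all: it is imported verbatim from \cite{dejongvanstraten} and \cite{Mondvan}, and your sketch reconstructs exactly the strategy of those cited works --- de Jong--van Straten's identification of $\mathscr{A}_e$-deformations of $\Phi$ with admissible deformations of the image, plus the comparison of the disentanglement's vanishing cohomology with the versal base, for the inequality; and Mond's graded/Euler-relation argument (equivalently, the weighted-homogeneous formulas for $C$, $T$, $\mu(D)$ and $\mu_I$) for the equality. So this is essentially the same approach as the paper's source of the theorem; the steps you defer (the tangent-space isomorphism and the surjectivity of the comparison map) are precisely the technical content of those references rather than gaps you could be expected to close independently.
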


For finitely determined germs $ \Phi: ( \C^n, 0) \to ( \C^{n+1}, 0) $, $n \geq 3 $, the statement of Theorem~\ref{th:Mondconj} is Mond's conjecture.

We use the disentanglement as a \emph{complex singular Seifert surface} of the immersion 
$ \Phi|_{ \mathfrak{S}^3}: \mathfrak{S}^3 \looparrowright S^5 $. See Section~\ref{s:eszcomp}.


\section{Further invariants}

\subsection{The double point structure}\label{ss:double} 

The possible algebraic descriptions of the multiple point spaces of complex germs
are studied in general in \cite{nunomulti, mararmulti, Mond2, mond-ballesteros}. Here we summarize the double point structure of germs $ ( \C^2, 0) \to ( \C^3, 0)$. The Milnor numbers of the four double point curves are related to each other through the invariants $C$, $T$ and $N$. Indeed, $N$ can be  replaced by one of this Milnor numbers. In this paragraph we refer mainly to \cite{nunodouble, mararmulti}.

The double point spaces of a finite germ $ \Phi: ( \C^2, 0) \to ( \C^3, 0) $ can be summarized in the following diagram (see \cite{nunodouble, mararmulti}).

\begin{equation}\labelpar{eq:double}
\begin{array}{ccc}
D^2( \Phi) & \to & D^2( \Phi) / S_2 \\
\downarrow &     & \downarrow \\
D( \Phi ) &  \to  &  \Phi ( D ( \Phi )) \\       
\end{array}
\end{equation}

$ D^2( \Phi ) \subset ( \C^2, 0) \times ( \C^2, 0) $ is the \emph{lifting of the double point space} of $ \Phi $. A general algorithm to determine it can be found in \cite{nunodouble, Mond2}, here we sketch it in our case. We use the following notation: the coordinates of the point $ (x, x') \in \C^k \times \C^k $ are $x_i $, $x_i'$ ($i=1, 2$), 
The coordinate functions of 
$ \Phi $ are $ \Phi_i $, $i=1, 2, 3$. 

Let $ I_k $ denote the ideal of the diagonal in $ ( \C^k, 0) \times ( \C^k, 0) $, that is $ I_k = (x_i - x_i')_{i=1, \dots, k}$. 
Then $ ( \Phi \times \Phi)^* ( I_3)  \subset I_2 $, thus there are $ \alpha_{ij} \in \mathcal{O}_{ (\C^2 \times \C^2, 0)} $ such that
\[ \Phi_i(x) - \Phi_i (x') = \Sigma_{j=1}^2 \alpha_{ij} (x, x') (x_j-x_j')
\]
holds for $i=1, 2, 3$. Then $ D^2( \Phi ) \in ( \C^2, 0) \times ( \C^2, 0) $ is the zero set of the ideal
\begin{equation}\labelpar{eq:doubid} ( \Phi \times \Phi)^* ( I_3) + R_2( \alpha ) \mbox{,} \end{equation}
where $ R_2( \alpha )  \subset \mathcal{O}_{ (\C^2 \times \C^2, 0)} $ is the ideal generated by the determinants of the $ 2 \times 2$ minors of the matrix $ \alpha = ( \alpha_{ij}) $.

Note that $ \alpha(x, x) = d \Phi (x) $. Hence $ D^2( \Phi ) $ contains the ordinary double points $(x, x')$, that is $ x \neq x' $ and $ \Phi (x) = \Phi ( x')$, as well as the points $(x,x)$ of the diagonal, where $x$ is a singular point of $ \Phi $.

There are several other methods to determine $ D^2( \Phi ) $ which work in special cases. For instance 
$ ( \Phi \times \Phi)^* ( I_3) : I_2 $ provides the same ideal as (\ref{eq:doubid}) for finitely determined germs, see \cite{nunomulti}. 

$D( \Phi ) \subset (\C^2, 0) $ is the \emph{double point space} in the source. It is the image of the projection 
$p_1: ( \C^2, 0) \times ( \C^2, 0) \to ( \C^2, 0) $ restricted to $ D^2 ( \Phi ) $. The ideal of $ D( \Phi ) $ can be given as the $0$-th Fitting ideal of the projection $ p_1: D^2 ( \Phi ) \to ( \C^2, 0) $. 

Finite determinacy can be characterized by the help of the curves $D^2 $ and $D$ as follows.

\begin{thm}[{\cite[Theorems 2.4., 3.4., 3.5.]{nunodouble}}]\label{th:findoub} Let $ \Phi: ( \C^2, 0) \to ( \C^3, 0) $ a finite and generically $1$ to $1$ complex germ. The following are equivalent:

(a) $ \Phi $ is finitely $ \mathscr{A} $-determined.

(b) $ D^2 ( \Phi ) $ is a germ of reduced curve and $ p_1: D^2 ( \Phi ) \to ( \C^2, 0) $ is generically $1$ to $1$.

(c) $D( \Phi ) $ is a germ of reduced curve.

(d) The Milnor number $ \mu ( D( \Phi )) $ of  $D( \Phi ) $ is finite. (See Subsection~\ref{ss:milnfibration} for definition).
\end{thm}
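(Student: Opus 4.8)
My plan is to establish the four conditions as equivalent by running the cycle (a) $\Rightarrow$ (b) $\Rightarrow$ (c) $\Rightarrow$ (d) $\Rightarrow$ (a), anchoring the two genuinely geometric steps on the Mather--Gaffney criterion (Theorem~\ref{th:gaf3}) together with the classification of stable multi-germs in the $(2,3)$ nice dimensions (Example~\ref{ex:c23}). The organizing principle is that the only $1$-dimensional instabilities a finite, generically $1$ to $1$ germ can carry are a line of singular (non-immersive) points, a line of non-transverse double values, or a line of triple values, and that each of these leaves a non-reduced imprint on the double point scheme $D^2(\Phi)$ and hence on its image $D(\Phi)$. Finite determinacy, by Mather--Gaffney, is exactly the absence of all three.

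For (a) $\Rightarrow$ (b) I would argue as follows. If $\Phi$ is finitely $\mathscr{A}$-determined, then by Theorem~\ref{th:fin-stab} and Example~\ref{ex:c23}(c) the restriction $\Phi|_{\C^2\setminus\{0\}}$ is a stable immersion with only transverse double values. Reading off the defining ideal (\ref{eq:doubid}), away from the diagonal $D^2(\Phi)$ consists of the ordinary pairs $(x,x')$ with $x\neq x'$ and $\Phi(x)=\Phi(x')$; transversality of the two branches makes $D^2(\Phi)$ smooth, hence reduced, there, and since a generic such $x$ has a unique partner $x'$, the projection $p_1$ is generically $1$ to $1$. On the diagonal $\alpha(x,x)=d\Phi(x)$, so the diagonal part of $D^2(\Phi)$ is supported on the singular locus $\{\rk d\Phi<2\}=\{0\}$; thus the diagonal point over $0$ is the only point of $D^2(\Phi)$ that is not an ordinary double point, and $D^2(\Phi)$ is a reduced curve with an isolated singularity there.

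For (b) $\Rightarrow$ (c) I would use that $D(\Phi)=p_1(D^2(\Phi))$ is cut out by the $0$-th Fitting ideal of the finite map $p_1\colon D^2(\Phi)\to(\C^2,0)$: since $D^2(\Phi)$ is a reduced curve and $p_1$ is finite and generically $1$ to $1$, the push-forward $(p_1)_*\mathcal{O}_{D^2(\Phi)}$ is generically free of rank one over $\mathcal{O}_{(\C^2,0)}$, so its $0$-th Fitting ideal cuts out the reduced image curve and $D(\Phi)$ is reduced. The implication (c) $\Rightarrow$ (d) is then the standard plane-curve fact: a reduced plane curve germ has an isolated singularity, so its defining equation has finite Milnor number $\mu(D(\Phi))$ (while a non-reduced germ $g=h^2u$ gives $(g_x,g_y)\subset(h)$ of infinite codimension).

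The remaining step (d) $\Rightarrow$ (a) is the crux, and I would prove it by contraposition. If $\Phi$ is not finitely determined, then by Theorem~\ref{th:gaf3} its instability is not isolated, so the instability locus contains a curve through $0$, each point of which is of one of the three unstable $1$-dimensional types above by Example~\ref{ex:c23}(b). In each case a local computation of (\ref{eq:doubid}) exhibits an embedded or multiple structure: for the cuspidal edge $\Phi(s,t)=(s,t^2,t^3)$, for instance, the divided-difference equations collapse onto the diagonal with local model $\mathcal{O}_{D^2(\Phi)}\cong\C\{s,t\}/(t^2)$, whose $0$-th Fitting image is the double line $\{t^2=0\}$, so $D(\Phi)$ is non-reduced and $\mu(D(\Phi))=\infty$. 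The main obstacle will be carrying out this case analysis uniformly across all three instability types rather than on a single example; I expect the cleanest route is to first invoke the Cohen--Macaulayness of $D^2(\Phi)$ for finite, generically $1$ to $1$ germs, which forces it to be a purely $1$-dimensional curve with no embedded points, so that reducedness reduces to generic reducedness via Serre's $R_0$ criterion, and the generic point of each instability curve is visibly non-reduced. This closes the cycle.
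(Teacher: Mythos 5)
A preliminary remark: the thesis itself contains no proof of this theorem --- it is quoted, with attribution, from Marar--Nu\~{n}o-Ballesteros--Pe\~{n}afort-Sanchis \cite{nunodouble} --- so there is no internal argument to measure you against. Judged against the method of that cited paper, your plan is essentially a reconstruction of it: Mather--Gaffney (Theorem~\ref{th:gaf3}) and the classification of stable multigerms in the nice dimensions $(2,3)$ (Example~\ref{ex:c23}) for the geometric steps, and Cohen--Macaulayness of $D^2(\Phi)$ together with Fitting ideals for the algebraic ones. In outline this is the right proof.

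There is, however, one genuine flaw in the stated plan. Your organizing principle, that each one-dimensional instability ``leaves a non-reduced imprint on the double point scheme $D^2(\Phi)$ and hence on its image $D(\Phi)$,'' is false for a curve of triple values. Take $\Phi(s,t)=(s,t^3,st)$: it is finite and generically $1$ to $1$, its image is $\{z^3=x^3y\}$, and $\{x=z=0\}$ is a whole curve of triple values whose three local sheets are pairwise transverse (two distinct planes in $\C^3$ are automatically transverse). Computing the ideal (\ref{eq:doubid}) gives $D^2(\Phi)=\{s=s'=0,\ t^2+tt'+t'^2=0\}$, a \emph{reduced} curve with two smooth branches; the defect lies entirely in $p_1$, which maps both branches onto $\{s=0\}$, so that $\mathcal{F}_0\bigl((p_1)_*\mathcal{O}_{D^2(\Phi)}\bigr)=(s^2)$ and only $D(\Phi)$ is non-reduced. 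So in your (d)$\Rightarrow$(a) case analysis the three instability types split into two distinct mechanisms --- non-reducedness of $D^2(\Phi)$ (singular curves, tangential double curves) versus failure of generic injectivity of $p_1$ (triple curves) --- which are precisely the two hypotheses that statement (b) keeps separate; the analysis cannot be run uniformly through non-reducedness of $D^2(\Phi)$.

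Two further points need tightening. The Cohen--Macaulay/Serre argument you defer to the last step is already needed earlier: in (a)$\Rightarrow$(b), reducedness of $D^2(\Phi)$ off the diagonal point $(0,0)$ does not exclude an embedded component at that point, so $S_1$ must be invoked there as well; and in (b)$\Rightarrow$(c), passing from ``generically reduced image'' to ``reduced image'' uses that $\mathcal{F}_0\bigl((p_1)_*\mathcal{O}_{D^2(\Phi)}\bigr)$ is \emph{principal} (Mond--Pellikaan \cite{mondfitting}, which again rests on $D^2(\Phi)$ being Cohen--Macaulay), since only for a principal ideal of $\mathcal{O}_{(\C^2,0)}$ does generic reducedness coincide with squarefreeness, hence with reducedness. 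Finally, be aware that Cohen--Macaulayness of $D^2(\Phi)$ for finite, generically $1$ to $1$ germs of \emph{arbitrary corank} is not folklore: it is the technical core of the very paper \cite{nunodouble} this theorem is quoted from (for corank $1$ it goes back to Marar--Mond \cite{mararmulti}), so your plan is complete only modulo that input.
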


Note that if $ p_1: D^2 ( \Phi ) \to ( \C^2, 0) $ is generically $1$ to $1$, then it is an embedding for a choice of a small enough representative. $ \mu ( D( \Phi )) $ is called the \emph{Mond number} of $ \Phi $.

If $ \Phi $ is finitely determined, there is a useful method to determine the reduced equation $ d: ( \C^2, 0) \to ( \C, 0) $ of $ D( \Phi) $. Let $ f= \det \lambda: ( \C^3, 0) \to ( \C, 0 ) $ be the equation of the image of $ \Phi $. Then 
$ d= \Phi^*( \partial_i f) / M_i $ is the same for all $ i=1, 2, 3$, where $M_i$ is the determinant of the $2 \times 2$ minor of $d \Phi $ removed the $i$-th row, and $ \partial_i f $ is the $i$th partial derivative of $f$. See \cite{Piene, Bruce}.

$ D^2( \Phi) /S_2 $ is the quotient of $ D^2 ( \Phi ) $ by the $ S_2$-action $ (x, x') \mapsto (x', x) $ on $ \C^2 \times \C^2 $. 
$ D^2( \Phi) /S_2 $ embeds as a germ of analytic subspace of  $ (\C^2, 0) \times (\C^2, 0)/ S_2 $. If $ \Phi $ is a corank--1 germ, then $ D^2 ( \Phi ) $ and also $ D^2( \Phi) /S_2 $ can be embedded to $ \C^3 $.

The \emph{space of double values} $ \Phi ( D ( \Phi )) \subset ( \C^3, 0) $ is the zero set of the first Fitting ideal $ \mathcal{F}_1 (\Phi_* ( \mathcal{O}_{ ( \C^2, 0)}))$.

The four double point spaces can be defined also for maps, specially for a stabilization $ \Phi_v: U_v \to W $ of $ \Phi$. Then 
$ D^2( \Phi_v)$ is a smooth curve in $ U_v \times U_v $, and the projection $ p_1 : D^2( \Phi_v) \to U_v$ is a stable immersion with transverse double values in the triple points of $ \Phi_v $. $ D^2( \Phi_v)/ S_2$ is also a smooth curve, the factorization by $ S_2 $ is a double branched covering, ramifies at the (lifting of the) Whitney umbrella points. 
$ \Phi_v \circ p_1: D^2( \Phi_v) \to W $ is an immersion with triple values in the triple values of $ \Phi_v$.

If $ \Phi $ is a corank--$1$ finitely determined germ, then all the curves $ D^2( \Phi)$, $ D( \Phi)$, $ D^2( \Phi)/S_2$, $ \Phi(D^2( \Phi))$ are isolated complete intersection (ICIS) (cf. Chapter~\ref{ch:iso}). However the Milnor number of these curves can be defined also for corank--$2$ germs, see \cite{nunodouble}. The Milnor numbers relate to each other by the following formulas, according to the description of the double point curves of a stabilization.
\begin{thm}[{\cite[Theorem 4.3.]{nunodouble}}] If $ \Phi: (\C^2, 0) \to ( \C^3, 0) $ is a finitely determined germ, then
\[ \mu( D( \Phi ))= \mu ( D^2 ( \Phi)) + 6T( \Phi) \mbox{,}\]
\[ \mu( D^2( \Phi ))= 2 \mu ( D^2 ( \Phi)/S_2) + C( \Phi ) -1 \mbox{,}\]
\[ \mu( D( \Phi ))= 2 \mu (\Phi ( D ( \Phi))) + C( \Phi )- 2T ( \Phi) -1 \mbox{.}\]
\end{thm}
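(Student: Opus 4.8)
\emph{Setup.} By Theorem~\ref{th:findoub} the four germs in diagram~(\ref{eq:double}) are reduced curves, so each has a well-defined Milnor number, and for a reduced curve germ $X$ with (connected) Milnor fibre $F_X$ one has $\mu(X)=1-\chi(F_X)$, where $\chi$ is the Euler characteristic. My plan is to compute every $\chi$ through a single stabilization $\Phi_v$ and the description of the double point curves of a stabilization recalled just before the statement. The point is that $D^2(\Phi_v)$ and $D^2(\Phi_v)/S_2$ are \emph{smooth}, hence they are literally the Milnor fibres of $D^2(\Phi)$ and $D^2(\Phi)/S_2$, giving $\mu(D^2(\Phi))=1-\chi(D^2(\Phi_v))$ and $\mu(D^2(\Phi)/S_2)=1-\chi(D^2(\Phi_v)/S_2)$. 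By contrast $D(\Phi_v)=p_1(D^2(\Phi_v))$ and $\Phi_v(D(\Phi_v))$ are only immersed (nodal) nearby curves, not smoothings: by conservation of the total Milnor number, the Milnor fibre of $D(\Phi)$ (resp.\ of $\Phi(D(\Phi))$) is obtained by smoothing the isolated singularities of $D(\Phi_v)$ (resp.\ of $\Phi_v(D(\Phi_v))$), which drops $\chi$ by the local Milnor number of each. The numbers of Whitney umbrellas and triple values occurring are $C(\Phi)$ and $T(\Phi)$ by Theorem~\ref{th:whtrip} and Theorem~\ref{th:CT}, and all contributions are concentrated at these stable multigerms.

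\emph{The three local models.} The whole proof reduces to reading off the behaviour of the three maps $\pi\colon D^2\to D^2/S_2$, $p_1\colon D^2\to D$ and $\Phi\circ p_1\colon D^2\to \Phi(D)$ near a Whitney umbrella and near a triple value. Near a cross cap the four curves are all smooth, and $\pi$ as well as $\Phi\circ p_1$ are simple branched double covers (one branch point each), while $p_1$ is a local isomorphism; so each Whitney umbrella is a single branch point of $\pi$ and of $\Phi\circ p_1$. Near a triple value $D^2(\Phi_v)$ is smooth with six arcs (the ordered pairs of preimages), $p_1$ identifies them in pairs to produce three transverse nodes of $D(\Phi_v)$, and $\Phi\circ p_1$ is an unramified $2$-to-$1$ map onto an ordinary space triple point of $\Phi_v(D(\Phi_v))$ whose three local branches are three lines in general position in $\C^3$. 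I would record here the two Milnor numbers I need: a node has $\mu=1$, and the ordinary triple point of three general-position lines in $\C^3$ has $\mu=2$ (its $\delta$-invariant is $2$, from the two value-matching conditions on the three branches).

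\emph{Assembling the formulas.} For~(\ref{eq:double}) row two, applying Riemann--Hurwitz to the branched double cover $\pi$ of smooth curves gives $\chi(D^2(\Phi_v))=2\chi(D^2(\Phi_v)/S_2)-C(\Phi)$, which via $\mu=1-\chi$ yields $\mu(D^2(\Phi))=2\mu(D^2(\Phi)/S_2)+C(\Phi)-1$. For the first formula I would combine $\chi(D(\Phi_v))=\chi(D^2(\Phi_v))-3T(\Phi)$ (normalizing the $3T(\Phi)$ nodes, two points to one) with the smoothing step $\chi(F_{D(\Phi)})=\chi(D(\Phi_v))-3T(\Phi)$ (each node has $\mu=1$); these give $\mu(D(\Phi))=\mu(D^2(\Phi))+6T(\Phi)$. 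For the third formula I would compute $\chi(D^2(\Phi_v))$ by Euler-characteristic additivity over the strata of $\Phi_v(D(\Phi_v))$: the map $\Phi\circ p_1$ is a genuine $2$-to-$1$ cover off the special points, has one preimage over each of the $C(\Phi)$ cross-cap images, and six preimages over each of the $T(\Phi)$ triple points, giving $\chi(D^2(\Phi_v))=2\chi(\Phi_v(D(\Phi_v)))-C(\Phi)+4T(\Phi)$; together with the smoothing $\chi(F_{\Phi(D(\Phi))})=\chi(\Phi_v(D(\Phi_v)))-2T(\Phi)$ (each triple point has $\mu=2$) this produces $\mu(D^2(\Phi))=2\mu(\Phi(D(\Phi)))+C(\Phi)-8T(\Phi)-1$, and combining with the first formula gives $\mu(D(\Phi))=2\mu(\Phi(D(\Phi)))+C(\Phi)-2T(\Phi)-1$.

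\emph{Main obstacle.} The routine part is the Euler-characteristic bookkeeping; the delicate part is pinning down the local models exactly. In particular one must justify that the three double-value branches of $\Phi(D(\Phi))$ at a triple point are in \emph{general position} in $\C^3$ (so that the local Milnor number is $2$ and not the planar value $4$), identify correctly the six preimage arcs and the ramification of $\Phi\circ p_1$ there, and verify that the four curves are genuinely smooth near a cross cap so that the only branching of $\pi$ and of $\Phi\circ p_1$ is the $C(\Phi)$ Whitney umbrellas. The conservation-of-Milnor-number step (that the disentanglement deformation of each curve, after smoothing its nodes and triple points, realizes the Milnor fibre) also needs the reducedness from Theorem~\ref{th:findoub} and connectedness of the Milnor fibres to license $\mu=1-\chi$.
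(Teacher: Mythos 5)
You should know at the outset that the thesis contains no proof of this statement: it is quoted directly from \cite[Theorem 4.3]{nunodouble}, with only the one-line gloss that the formulas follow ``according to the description of the double point curves of a stabilization''. Your proposal is, in effect, a reconstruction of the argument of that cited source, and it is correct. I checked the three local models and the bookkeeping: near a cross cap all four curves are indeed smooth, $p_1$ is a local isomorphism and both $D^2(\Phi_v)\to D^2(\Phi_v)/S_2$ and $\Phi_v\circ p_1$ have a simple branch point there; near a triple value $D^2(\Phi_v)$ consists of six disjoint arcs (the ordered pairs of sheets, on which the involution acts freely), $D(\Phi_v)$ acquires three nodes, and $\Phi_v(D(\Phi_v))$ an ordinary space triple point. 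The covering relations $\chi(D^2)=2\chi(D^2/S_2)-C$, $\chi(D)=\chi(D^2)-3T$ and $\chi(D^2)=2\chi(\Phi_v(D(\Phi_v)))-C+4T$ are right, as is the genuinely delicate point that the image triple point is the union of three coordinate axes in $\C^3$, so $\delta=2$, $r=3$ and $\mu=2\delta-r+1=2$, not the planar value $4$. With these, your arithmetic reproduces all three identities exactly.

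The one place where I would push back is that your phrase ``conservation of the total Milnor number'' compresses the actual mathematical content of the theorem into something that sounds routine. Two nontrivial inputs are hidden there. First, for the curves of a stabilization to be deformations of the curves of $\Phi$ at all, the families $v\mapsto D^2(\Phi_v)$, $D^2(\Phi_v)/S_2$, $D(\Phi_v)$, $\Phi_v(D(\Phi_v))$ must be flat; this rests on the Cohen--Macaulay property of the multiple point schemes and of the Fitting-ideal structure on the image curve (Marar--Mond \cite{mararmulti} in corank $1$, Mond--Pellikaan \cite{mondfitting} for the image, and for corank $2$ this is precisely a main point of \cite{nunodouble} itself). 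Second, your identity $\mu=1-\chi(F)$ for an arbitrary smoothing is not Milnor--Hamm fibration theory once $\Phi$ has corank $2$: as the thesis notes in Subsection~\ref{ss:double}, the curves in diagram~(\ref{eq:double}) are ICIS only in the corank-$1$ case, so in general you must invoke the Buchweitz--Greuel theorem that for a reduced curve germ $\mu=2\delta-r+1$ equals $b_1$ of \emph{any} smoothing, and that this smoothing is connected. Relatedly, smoothing the triple points of $\Phi_v(D(\Phi_v))$ requires knowing the coordinate-axes germ is smoothable (it is: a reduced curve in $\C^3$ is codimension-two Cohen--Macaulay, hence determinantal by Hilbert--Burch and smoothable by a generic perturbation of its presentation matrix). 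None of this breaks your argument --- it is the argument of \cite{nunodouble} --- but these are the theorems carrying the weight, and a complete write-up must cite them rather than treat the conservation step as Euler-characteristic hygiene.
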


\begin{thm}[\cite{slicing, mararmulti}]\label{th:N} If $ \Phi: (\C^2, 0) \to ( \C^3, 0) $ is a finitely determined quasihomogeneous corank--$1$ germ, then 
$ N( \Phi ) = 2 \mu ( D^2 ( \Phi)/S_2) $, and
\[ \mu( D ( \Phi )) = 6 T ( \Phi ) + C( \Phi) + N( \Phi ) -1  \mbox{.} \]
\end{thm}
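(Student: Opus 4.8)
The plan is to split the statement into its two assertions and to note that, once the first is established, the second is purely formal. Combining the first two relations of \cite[Theorem 4.3.]{nunodouble}, namely $\mu(D(\Phi)) = \mu(D^2(\Phi)) + 6T(\Phi)$ and $\mu(D^2(\Phi)) = 2\mu(D^2(\Phi)/S_2) + C(\Phi) - 1$, one obtains
\[ \mu(D(\Phi)) = 2\mu(D^2(\Phi)/S_2) + C(\Phi) + 6T(\Phi) - 1 . \]
Substituting the first assertion $N(\Phi) = 2\mu(D^2(\Phi)/S_2)$ into this identity yields $\mu(D(\Phi)) = 6T(\Phi) + C(\Phi) + N(\Phi) - 1$ at once. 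Thus the whole theorem reduces to proving the single equality $N(\Phi) = 2\mu(D^2(\Phi)/S_2)$.

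For that equality I would start from Mond's algebraic definition of $N(\Phi)$ as the codimension of the module attached to a corank--$1$ germ written in the normal form $\Phi(s,t) = (s, p(s,t), q(s,t))$, and compare it with the local algebra governing $\mu(D^2(\Phi)/S_2)$. The key geometric input, recalled in \ref{ss:double}, is that for a corank--$1$ germ $D^2(\Phi)/S_2$ embeds as an ICIS in $\C^3$, so that $\mu(D^2(\Phi)/S_2)$ is the Milnor number of an explicit complete intersection curve; meanwhile the quotient map $D^2(\Phi) \to D^2(\Phi)/S_2$ is the double cover branched exactly at the liftings of the $C(\Phi)$ Whitney umbrella points. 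The factor $2$ in $N = 2\mu(D^2/S_2)$ should then emerge from this $S_2$-cover, consistently with the Riemann--Hurwitz-type relation $\mu(D^2(\Phi)) = 2\mu(D^2(\Phi)/S_2) + C(\Phi) - 1$ already encoded in the second relation of \cite{nunodouble}.

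Since $\Phi$ is quasihomogeneous, I would exploit that $p$, $q$ and the induced defining equations of $D^2(\Phi)/S_2$ are all weighted homogeneous, so that both sides of $N = 2\mu(D^2/S_2)$ admit closed expressions in the source weights $w_1, w_2$ and the degrees $d_2, d_3$ of the nontrivial components. Concretely, $\mu(D^2(\Phi)/S_2)$ is computed by the weighted-homogeneous ICIS Milnor number formula applied to the equations of the quotient curve, while Mond's $N$, being the codimension of a weighted-homogeneous ideal, is computed by the corresponding codimension-in-weights formula (compare the weighted-homogeneous expression for $C(\Phi)$ in Example~\ref{ex:E}). Matching the two expressions then becomes a finite algebraic identity in $w_1, w_2, d_2, d_3$.

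The main obstacle I expect is the bookkeeping that identifies Mond's $N$-module with the local algebra of $D^2(\Phi)/S_2$: one must write down $S_2$-invariant generators of the double-point ideal explicitly, pass to the quotient, and check that the symmetric part Mond uses to define $N$ is precisely twice the Milnor algebra of the quotient curve, rather than the antisymmetric part or the full double-point algebra. The quasihomogeneity is what makes this tractable, since it lets me replace the delicate module identification by an equality of numerical invariants; but verifying that no fixed-point contribution is lost in the quotient, i.e. that the only branching is at the Whitney umbrellas counted by $C(\Phi)$, will be the delicate point on which the argument hinges.
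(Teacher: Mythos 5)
Your attempt cannot be measured against a proof in the paper, because the paper contains none: Theorem~\ref{th:N} is quoted as a literature result of Marar--Mond \cite{mararmulti} and Marar--Nu\~{n}o-Ballesteros \cite{slicing} in a background chapter, and the thesis explicitly declines even to state the definition of $N(\Phi)$ (``Here we do not present the definition of $N$''). Judged on its own, the formal half of your argument is correct and is the right reduction: combining the first two relations of \cite[Theorem 4.3]{nunodouble}, reproduced in the paper immediately before the statement, with $N(\Phi)=2\mu(D^2(\Phi)/S_2)$ does yield $\mu(D(\Phi)) = 6T(\Phi)+C(\Phi)+N(\Phi)-1$, so the theorem indeed reduces to the single identity $N(\Phi)=2\mu(D^2(\Phi)/S_2)$.

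But that identity is the entire content of the theorem, and your proposal does not prove it. Concretely: Mond's definition of $N$ never appears in your argument (nor in the paper), so neither the ``module identification'' nor the ``finite algebraic identity in $w_1,w_2,d_2,d_3$'' you invoke can even be written down, let alone checked --- you produce no weighted-degree formula for $N$ and no defining equations for the ICIS curve $D^2(\Phi)/S_2$. Moreover, your plan misplaces both the delicate point and the role of quasihomogeneity. The branching statement you flag as the crux (that $D^2(\Phi)\to D^2(\Phi)/S_2$ ramifies exactly at the liftings of the $C(\Phi)$ cross-caps) holds for every finitely determined germ and is already encoded in the relation $\mu(D^2(\Phi)) = 2\mu(D^2(\Phi)/S_2)+C(\Phi)-1$ that you yourself quote; it costs nothing. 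What genuinely requires the quasihomogeneity hypothesis is the identification of $N$ with $2\mu(D^2(\Phi)/S_2)$: if this followed from an $S_2$-equivariant identification of modules, as your ``symmetric part'' discussion suggests, the hypothesis would be superfluous and the theorem would have been stated for all finitely determined corank--$1$ germs. The pattern in this circle of results (compare Theorem~\ref{th:Mondconj}, where weighted homogeneity is exactly the equality case between $\mu_I$ and the $\mathscr{A}_e$-codimension) is that quasihomogeneity is what upgrades an inequality between an analytically defined codimension and a Milnor-number-type invariant to an equality. A correct proof must therefore exhibit where the weighted degrees (equivalently, the Euler vector field) enter --- for instance by computing both $N$ and $\mu(D^2(\Phi)/S_2)$ as explicit expressions in the weights and degrees and matching them --- and that computation is precisely what is missing from your sketch.
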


\subsection{Slicing}\label{ss:Slicing} In \cite{slicing} there is an interesting reinterpretation of the invariants $C$, $T$ and $N$ for corank--$1$ germs 
$ \Phi: ( \C^2, 0) \to ( \C^3, 0 ) $. This is the nearest approach to our results explained in Chapter~\ref{ch:ass}, as both derive these invariants from the properties of a transverse object. In \cite{slicing} this object is the transverse slice of $ \Phi $ defined below, while in Chapter~\ref{ch:ass} that is the immersion $ \Phi|_{ \mathfrak{S}^3}: \mathfrak{S}^3 \looparrowright S^5 $ associated with $ \Phi $.

The idea is the following. After change of coordinates the germ $ \Phi $ can be considered as a $1$-parameter unfolding of a finitely determined plane curve, called transverse slice. Then invariants of $ \Phi $ can be obtained from the parameter space of the versal unfolding of the transverse slice.

Let $ \Phi: ( \C^2, 0) \to ( \C^3, 0 ) $ be a finite, generically $1$ to $1$, corank--$1$ holomorphic map germ. Up to $\mathscr{A}$-equivalence it can be written in a form $ \Phi(s, t) = (s, p(s, t), q(s, t))$, such that 
$ \{ x=0 \} \cap \Phi ( D ( \Phi )) = \{ 0 \} $ and $ \{ x=0 \} \cap C_0( \Phi ( D ( \Phi ))) = \{ 0 \} $, where $ C_0 ( \Phi ( D ( \Phi ))) $ denotes the Zariski tangent cone of $ \Phi ( D ( \Phi )) $.

Introduce the notation $ \gamma_s (t)=(p(s, t), q(s, t))$. Then $ \Phi (s, t) = ( s, \gamma_s (t) ) $ is a $1$-parameter unfolding of the curve $ \gamma_0: ( \C, 0) \to ( \C^2, 0 ) $, called \emph{transverse slice}. The transverse slice is a finitely $ \mathscr{A} $-determined germ of plane curve. Its finite determinacy follows from the transversality conditions described in the previous paragraph. 

Note that the transverse slice is not unique, its analytic type depends on the choice of the coordinates, but its topological type does not. Its $ \delta$-invariant \\
$ \delta ( \gamma_0 )= \dim_{ \C } (\mathcal{O}_{( \C, 0)} / \gamma_0^* \mathcal{O}_{( \C^2, 0)}) =\mu ( \gamma_0) /2 $ is equal to the multiplicity of $ \Phi ( D( \Phi)) $ at $0$.

\begin{prop}[{\cite[3.1.]{slicing}}]
$\Phi (s, t)$ is finitely $ \mathscr{A} $-determined if and only if  $\Phi (s, t) = ( s, \gamma_s (t) ) $ is a stabilization of $ \gamma_0 $.
\end{prop}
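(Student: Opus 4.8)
The plan is to test both conditions by the Mather--Gaffney criterion and then to translate the resulting statements about $\Phi$ into statements about the family $\gamma_s$ via the dictionary forced by the normal form $\Phi(s,t)=(s,\gamma_s(t))$. Recall from the discussion preceding the proposition that the slice hypotheses already make $\gamma_0$ a finitely $\mathscr{A}$-determined plane curve, hence an immersion away from the origin; the whole content therefore concerns the fibres $\gamma_s$ with $s\neq0$.

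First I would reformulate each side. By Theorem~\ref{th:gaf3} together with part (c) of Example~\ref{ex:c23}, $\Phi$ is finitely $\mathscr{A}$-determined if and only if, in a sufficiently small representative, $\Phi|_{\C^2\setminus\{0\}}$ is an immersion all of whose double values are transverse; equivalently, (i) $\Phi$ is singular only at the origin (cf.\ Theorem~\ref{th:Csum}) and (ii) the two sheets of $\im\Phi$ meet transversally along the double value curve $\Gamma:=\Phi(D(\Phi))$ away from $0$. On the other side, recalling that a stable multigerm of a map $\C\to\C^2$ is either an immersive monogerm or a transverse (ordinary) double point, $\Phi=(s,\gamma_s)$ is by definition a stabilization of $\gamma_0$ precisely when, for every small $s\neq0$, the plane curve $\gamma_s$ is an immersion with only transverse double points.

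Next I would set up the dictionary. Writing $\Phi(s,t)=(s,p,q)$, the Jacobian shows that $(s_0,t_0)$ is a singular point of $\Phi$ exactly when $\partial_t p(s_0,t_0)=\partial_t q(s_0,t_0)=0$, i.e.\ exactly when $\gamma_{s_0}$ fails to be an immersion at $t_0$; and since the first coordinate of $\Phi$ is $s$, a pair $(s,t),(s,t')$ with $t\neq t'$ is a double point of $\Phi$ if and only if $\gamma_s(t)=\gamma_s(t')$, a double point of $\gamma_s$. The condition $\{x=0\}\cap\Gamma=\{0\}$ guarantees that in a small representative every singular point and every double point of $\Phi$ other than the origin lies over some $s\neq0$, so the dictionary identifies the instability locus of $\Phi|_{\C^2\setminus\{0\}}$ with the locus of unstable behaviour among the fibres $\gamma_s$, $s\neq0$. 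Using that $\gamma_0$ is an immersion off $0$, condition (i) above is thereby equivalent to ``$\gamma_s$ is an immersion for every $s\neq0$''.

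It remains to match the transversality conditions, and this is the step I expect to be the main obstacle. Comparing tangent planes at a double point $(s,t),(s,t')$, one computes that the two sheets of $\im\Phi$ span $\C^3$ if and only if the three plane vectors $\gamma_s'(t)$, $\gamma_s'(t')$ and $\partial_s\gamma_s(t)-\partial_s\gamma_s(t')$ span $\C^2$; hence transversality of the double point of $\gamma_s$ (linear independence of the first two) at once forces transversality of $\Phi$. The delicate converse is that $\Phi$ cannot be transverse while $\gamma_s$ has a tangential double point, and here I would exploit the second slice hypothesis. A tangential double point of $\gamma_s$ at which $\Phi$ is still transverse is exactly a point where $\Gamma$ is tangent to the hyperplane $\{x=s\}$, i.e.\ an isolated critical point of $x|_\Gamma$ away from the origin; but $\{x=0\}\cap C_0(\Gamma)=\{0\}$ means every branch of $\Gamma$ is transverse to $\{x=0\}$ at $0$, so $x|_\Gamma$ is immersive on each branch there and, after shrinking the representative, has no critical point in a punctured neighbourhood of $0$. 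This excludes such isolated tangencies and gives the converse. Assembling the two correspondences, $\Phi|_{\C^2\setminus\{0\}}$ is a stable immersion with transverse double values if and only if every $\gamma_s$ with $s\neq0$ is an immersion with transverse double points, which by the reformulations above is precisely the asserted equivalence.
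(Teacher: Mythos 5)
The thesis does not actually prove this proposition: it is quoted from Marar--Nu\~{n}o-Ballesteros \cite[3.1.]{slicing} as background, so there is no in-paper argument to compare yours against; your proposal has to stand on its own, and it does. You correctly reduce both sides via Mather--Gaffney, and you correctly isolate the one genuinely delicate point: at a double value, transversality of the two sheets of $\im \Phi$ is pointwise \emph{strictly weaker} than transversality of the corresponding double point of $\gamma_s$, because the sheets can still span $\C^3$ through the vector $\partial_s\gamma_s(t)-\partial_s\gamma_s(t')$. Your resolution is the right one: under transverse sheets, a tangential double point of $\gamma_s$ forces the tangent line of $\Gamma=\Phi(D(\Phi))$ at that point to be vertical (contained in $\{x=\mathrm{const}\}$), while the hypothesis $\{x=0\}\cap C_0(\Gamma)=\{0\}$ makes the order of $x$ along each branch of $\Gamma$ equal to the multiplicity of that branch, so $x$ restricted to $\Gamma$ has no critical points in a punctured neighbourhood of the origin; hence no such tangencies occur, and this is exactly where the coordinate-choice hypotheses of the setup are consumed. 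Two small points deserve tightening. First, your reformulation ``(i) and (ii)'' is literally silent about triple values: you need ``only double values'' from Example~\ref{ex:c23}(c) on the $\Phi$-side and the absence of triple points in the definition of a stable plane curve on the $\gamma_s$-side; these do match under your dictionary, since the first coordinate of $\Phi$ preserves the number of preimages, but it should be said. Second, deducing that singular points of $\Phi$ off the origin lie over $s\neq 0$ from $\{x=0\}\cap\Gamma=\{0\}$ tacitly uses the inclusion of the singular locus of $\Phi$ in $D(\Phi)$ (true with the scheme-theoretic definition of the double point space recalled in Subsection~\ref{ss:double}); alternatively, and more directly, it follows from the finite determinacy of $\gamma_0$, which you invoke anyway one sentence later.
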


Consider the miniversal deformation $( a, \gamma_{ a} (t))$ of $ \gamma $ where $a=(a_1, \dots, a_r ) \in \C^r $. Then $\Phi (s, t) = ( s, \gamma_s (t) ) $ can be pulled back from the versal deformation by a curve 
$ {\bf a}: (\C, 0)  \to ( \C^r, 0) $. If $ \Phi $ is finitely determined, then $ { \bf a} $ intersects the bifurcation set
\[ \mathcal{B} = \{ a \in \C^r \ | \ \gamma_a \mbox{ is not stable} \} \]
only at the origin.

$ \mathcal{B} $ is a hypersurface in $ \C^r$, its reduced equation decomposes as $g=g_1 g_2 g_3 $ corresponding to $\mathscr{A}_e$-codimension--$1$ multi-germs of plane curves. $\gamma_a= (t \mapsto \gamma_a (t))$ is a stable plane curve (that is, self-transverse immersion) if $ g(a) \neq 0$. For $ g_1 (a) = 0 $ the curve $ \gamma_a $ has a cusp, for $ g_2 (a) = 0 $ the curve $ \gamma_a $ has a triple point, for $ g_3 (a) = 0 $ the curve $ \gamma_a $ has a \emph{tacnode}, that is, self-tangent double point. In other words, the three components of $ \mathcal{B} $ correspond to the three \emph{Reidemeister moves} of a $1$-parameter deformation of a plane curve.

\begin{thm}[{\cite[3.7--3.10.]{slicing}}] Let the finitely determined corank--$1$ germ \\ $ \Phi : ( \C^2, 0) \to ( \C^3, 0) $ be the stabilization of its transverse slice $ \gamma_0$ pulled back from the miniversal unfolding by the curve $ { \bf a}: ( \C, 0) \to ( \C^r, 0) $.
Then
\[ C ( \Phi ) = \textrm{ord}( g_1 \circ { \bf a} ) \mbox{ and } 
T ( \Phi ) = \textrm{ord}( g_2 \circ { \bf a} ) \mbox{.} \]
\end{thm}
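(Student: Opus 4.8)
The plan is to pass from the germ $\Phi$ to a generic perturbation of the path ${\bf a}$ and to translate the two relevant Reidemeister events of the slice family directly into the cross-cap and triple-point schemes of $\Phi$, using the conservation of $C$ and $T$ from Theorem~\ref{th:CT}. First I record the dictionary. Writing $\Phi(s,t)=(s,\gamma_s(t))$ with $\gamma_s=(p(s,\cdot),q(s,\cdot))$, the cross-cap (Whitney umbrella) locus is $\{\partial_t p=\partial_t q=0\}$, and by Example~\ref{ex:cor1} we have $C(\Phi)=\dim_{\C}\mathcal{O}_{(\C^2,0)}/(\partial_t p,\partial_t q)$. A point $(s_0,t_0)$ lies in this locus exactly when $\partial_t\gamma_{s_0}(t_0)=0$, i.e.\ when the slice $\gamma_{s_0}$ is singular (has a cusp). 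On the other hand a triple value of $\Phi$ has three preimages sharing the same first coordinate $s$, hence it is a triple point of a single slice $\gamma_s$. Thus cross-caps of $\Phi$ correspond to cusps of the slices and triple values of $\Phi$ correspond to triple points of the slices, and the $s$-values at which these occur are precisely the zeros of $g_1\circ{\bf a}$ and $g_2\circ{\bf a}$ respectively.

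Next I would choose a small generic perturbation ${\bf a}_v$ of ${\bf a}$. Since the singular locus of $\mathcal{B}=\{g_1g_2g_3=0\}$ (the pairwise intersections of the three components together with the singular points of each component) has codimension at least $2$, a generic ${\bf a}_v$ close to ${\bf a}$ meets each hypersurface $\{g_i=0\}$ transversally, at smooth points, at distinct parameter values, and avoids the strata where two events coincide. By the standard identification of the intersection multiplicity of a curve with a hypersurface, the number of points at which ${\bf a}_v$ crosses $\{g_1=0\}$ (resp.\ $\{g_2=0\}$) near the origin equals $\textrm{ord}(g_1\circ{\bf a})$ (resp.\ $\textrm{ord}(g_2\circ{\bf a})$). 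Let $\Phi_v(s,t)=(s,\gamma_{{\bf a}_v(s)}(t))$ be the corresponding perturbation of $\Phi$.

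For the cross-cap count I would invoke conservation: by the Cohen--Macaulay property used in the proof of Theorem~\ref{th:CT}, the length of the cross-cap scheme is conserved under an arbitrary perturbation, so
\[
C(\Phi)=\sum_{x}\dim_{\C}\frac{\mathcal{O}_{(\C^2,x)}}{J(\Phi_{v,x})}.
\]
At each transverse crossing of $\{g_1=0\}$ the slice $\gamma_{{\bf a}_v(s)}$ acquires a single ordinary cusp, which is a simple Whitney umbrella of $\Phi_v$ contributing $1$ to the sum; away from these crossings the slices are immersed and contribute $0$. Hence the sum equals the number of $g_1$-crossings, giving $C(\Phi)=\textrm{ord}(g_1\circ{\bf a})$. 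The same argument with the second Fitting ideal $\mathcal{F}_2$ (again conserved by Cohen--Macaulayness, cf.\ Theorem~\ref{th:CT}) shows that each transverse crossing of $\{g_2=0\}$ produces one ordinary triple value of $\Phi_v$, each contributing $1$ to $T$, so $T(\Phi)=\textrm{ord}(g_2\circ{\bf a})$.

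The main obstacle is that $\Phi_v$ need not be a genuine \emph{stabilization}: whenever ${\bf a}_v$ crosses the tacnode component $\{g_3=0\}$ the corresponding slice has a self-tangency, so $\Phi_v$ acquires a non-transverse, hence unstable, double point. This is exactly why the argument must rest on the conservation of the codimensions of $J(\Phi)$ and $\mathcal{F}_2$ under \emph{arbitrary} perturbations rather than on realizing a stabilization inside the slice family. The remaining care is to check that the $g_1$- and $g_2$-crossings contribute with multiplicity one (ordinary cusp versus simple cross-cap, and ordinary triple point of the plane curve versus transverse triple value in $\C^3$, using that the three branch-tangents in the slice are distinct), and that the $g_3$-crossings contribute nothing to either $C$ or $T$ since they leave $\partial_t\gamma$ and the triple-point scheme untouched; these are the local normal-form verifications that make the dictionary multiplicity-preserving.
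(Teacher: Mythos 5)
Your proposal is correct and follows essentially the same route as the paper's own discussion accompanying this theorem: perturb $ {\bf a} $, identify the transverse crossings of $ \{ g_1 = 0 \} $ and $ \{ g_2 = 0 \} $ with cusps and triple points of the slices, hence with Whitney umbrellas and triple values of the induced one-parameter unfolding of $ \Phi $, and count these by $ C( \Phi ) $ and $ T( \Phi ) $. Your explicit appeal to the Cohen--Macaulay conservation of the colengths of $ J( \Phi ) $ and $ \mathcal{F}_2 $ under \emph{arbitrary} (not necessarily stable) perturbations is exactly the right way to handle the tacnode crossings of $ \{ g_3 = 0 \} $, a subtlety the paper leaves implicit by relying on the conservation statement recorded in Subsection~\ref{ss:CTN}.
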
 

The order of $ g_i \circ { \bf a} $ is the intersection multiplicity of the curves $ g_i $ and $ { \bf a} $, that is, the number of the transverse intersection points of $ \{ g_i =0 \} $ and a perturbation of $ { \bf a}$. In each intersection point $ a $ the corresponding curve $ \gamma_a $ has a cusp (resp. triple point). On the other hand, since $ { \bf a} $ describes a $1$-parameter stable unfolding of $ \gamma_0 $, which is $ \Phi $, a $1$-parameter perturbation of $ { \bf a } $ is a $ 2$-parameter unfolding of $ \gamma_0 $, that is, a $1$-parameter unfolding of $ \Phi $. The cusps and the triple points of $ \gamma_{ a} $ correspond to the Whitney umbrellas and the triple values of the unfolding of $ \Phi $.

The third invariant
\[ J ( \Phi ) := \mbox{ord}( g_3 \circ { \bf a} ) \]
is the number of values $ a \in \C^r $ of a perturbation of $ { \bf a }$, such that $ \gamma_a $ has a tacnode.  

The following relations hold to the new invariants.
\begin{thm}[{\cite[3.8. and p. 1388]{slicing}}]\label{th:N2} Let the finite, generically $1$ to $1$, corank--$1$ germ $ \Phi : ( \C^2, 0) \to ( \C^3, 0) $ be the $1$-parameter unfolding of its transverse slice $ \gamma_0$ pulled back from the miniversal unfolding by the curve $ { \bf a}: ( \C, 0) \to ( \C^r, 0) $. Then

(a) $ \Phi $ is finitely determined if and only if $ C( \Phi )$, $ T( \Phi )$ and $ J( \Phi )$ are finite (defined as 
$ \mbox{ord}( g_i \circ { \bf a} ) $).

(b) $ J( \Phi )= \delta ( \gamma_0 ) + N( \Phi )/2 -1 $.

\end{thm}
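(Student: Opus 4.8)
The plan is to handle the two parts by different tools: part (a) follows formally from the transverse slice characterisation of finite determinacy together with additivity of the order of a composite, while part (b) rests on an Euler characteristic computation for the double point curve $D^2(\Phi)/S_2$ interpreted as a branched cover of the slice line.

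\textbf{Part (a).} By their very definition $C(\Phi),T(\Phi),J(\Phi)$ are the orders $\mathrm{ord}(g_i\circ\mathbf{a})$, and $\mathrm{ord}(g_i\circ\mathbf{a})$ is finite exactly when $\mathbf{a}$ is not contained in the component $\{g_i=0\}$ of the bifurcation set $\mathcal{B}$. Since $\mathbf{a}$ is an irreducible curve germ and $\mathcal{B}=\{g_1g_2g_3=0\}$, the relation $\mathrm{ord}(g\circ\mathbf{a})=\sum_i\mathrm{ord}(g_i\circ\mathbf{a})$ shows that all three orders are finite iff $\mathrm{ord}(g\circ\mathbf{a})$ is finite, iff $\mathbf{a}\not\subset\mathcal{B}$, iff $\mathbf{a}$ meets $\mathcal{B}$ only at the origin. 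First I would spell out this chain of equivalences and then invoke the earlier Proposition (3.1 of \cite{slicing}): $\mathbf{a}$ meeting $\mathcal{B}$ only at $0$ is the statement that $\Phi(s,t)=(s,\gamma_s(t))$ is a stabilisation of $\gamma_0$, which is equivalent to finite $\mathscr{A}$-determinacy of $\Phi$. No new estimates are required here.

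\textbf{Part (b).} Assume now $\Phi$ is finitely determined. The key observation I would use is that, as $\Phi$ has corank $1$ with first coordinate $s$, every double point of $\Phi$ lies over a single $s$-value, so $D^2(\Phi)/S_2$ fibres over the slice line $(\C_s,0)$ with fibre over $s$ the set of double points of $\gamma_{\mathbf{a}(s)}$. The transversality conditions defining the transverse slice make the projection $\pi:D^2(\Phi)/S_2\to(\C_s,0)$ finite of degree equal to the multiplicity of $\Phi(D(\Phi))$ at $0$, namely $\delta(\gamma_0)$. Passing to a stabilisation $\Phi_v$, I would identify the Milnor fibre $F$ of the ICIS $D^2(\Phi)/S_2$ with the smooth curve $D^2(\Phi_v)/S_2$, so that $\pi$ becomes a degree-$\delta(\gamma_0)$ branched cover of a disc $D_s$ and $\chi(F)=1-\mu(D^2(\Phi)/S_2)$. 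The heart of the argument is then to locate the ramification of $\pi$ via local models of the three transitions met by $\mathbf{a}_v$: over a cusp value ($g_1=0$) a single node collides with the cusp point, but in $D^2(\Phi_v)/S_2$ this is a regular point of $\pi$ (the ramification there sits in the double cover $D^2\to D^2/S_2$, branched at the $C(\Phi)$ Whitney umbrellas, not in $\pi$); over a triple value ($g_2=0$) the three double points share an image but remain three \emph{distinct} unordered pairs, so $\pi$ is unramified; over a tacnode value ($g_3=0$) two double points merge into a self-tangency, and the local model $u\mapsto -u^2$ exhibits a simple branch point. Hence $\pi$ is ramified exactly over the $J(\Phi)$ tacnode values, each with defect $1$, and Riemann--Hurwitz over the disc gives $\chi(F)=\delta(\gamma_0)\cdot\chi(D_s)-J(\Phi)=\delta(\gamma_0)-J(\Phi)$. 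Combining with $\chi(F)=1-\mu(D^2(\Phi)/S_2)=1-N(\Phi)/2$, using the identification $\mu(D^2(\Phi)/S_2)=N(\Phi)/2$ from Theorem~\ref{th:N}, yields $J(\Phi)=\delta(\gamma_0)+N(\Phi)/2-1$.

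The main obstacle is exactly the ramification analysis distinguishing the three strata: rigorously verifying that tacnodes are the \emph{only} source of ramification of $\pi$, that cusps ramify only the $S_2$-cover, and that triple points ramify nothing. This needs the explicit Reidemeister normal forms together with a careful check that the identification of $D^2(\Phi_v)/S_2$ with the Milnor fibre of $D^2(\Phi)/S_2$ is compatible with the projection $\pi$; it is precisely the transversality built into the transverse slice that guarantees $\pi$ is finite of the correct degree $\delta(\gamma_0)$ and that $F$ is connected, so that $\chi(F)=1-\mu(D^2(\Phi)/S_2)$ holds.
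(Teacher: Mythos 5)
First, a structural remark: this thesis contains no proof of Theorem~\ref{th:N2} at all --- it is quoted verbatim from \cite{slicing} (Theorem 3.8 and p.~1388) inside the survey Subsection~\ref{ss:Slicing}. So your proposal cannot be checked against an internal argument; it can only be measured against the surrounding material of the thesis and the cited source.

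On its own terms, most of your proposal is sound. Part (a) is exactly the natural formal argument: additivity of $\mathrm{ord}$ over the factorization $g=g_1g_2g_3$, irreducibility of the image of $\mathbf{a}$, and the quoted Proposition 3.1 of \cite{slicing} translating ``$\mathbf{a}$ meets $\mathcal{B}$ only at $0$'' into finite determinacy. For part (b), your three local models are correct and are the heart of the matter: at a cusp transition the point of $D^2(\Phi_v)/S_2$ over the cross-cap is an unramified point of $\pi$ (in the model $\gamma_s(t)=(t^2,t^3+st)$ one gets $D^2(\Phi_v)/S_2$ parametrized by $u=t^2$ with $s=-u$, while the branching sits in the double cover $D^2\to D^2/S_2$); at a triple value the three unordered pairs stay disjoint; at a tacnode the two surface sheets remain \emph{transverse} in $\C^3$ --- a point you should make explicit, since it is what guarantees that a generic perturbation $\mathbf{a}_v$ of $\mathbf{a}$ yields a genuine stabilization $\Phi_v$ compatible with the projection to $s$ --- and the double-value curve acquires a simple tangency with the slice, i.e.\ a simple branch point of $\pi$. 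Riemann--Hurwitz then gives $1-\mu(D^2(\Phi)/S_2)=\delta(\gamma_0)-J(\Phi)$, and this is even consistent with the formula $\mu(D^2(\Phi))=2\mu(D^2(\Phi)/S_2)+C(\Phi)-1$ of \cite{nunodouble} quoted in the thesis. Two technical points need explicit citation rather than assertion: the identification of $D^2(\Phi_v)/S_2$ with the Milnor fibre of the ICIS $D^2(\Phi)/S_2$ rests on flatness of the multiple point schemes in the unfolding, i.e.\ on the Cohen--Macaulay results of \cite{mararmulti}; and the degree count $\deg\pi=\delta(\gamma_0)$ uses the transverse-slice conditions exactly as you say.

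The genuine gap is your very last step. You convert $\mu(D^2(\Phi)/S_2)$ into $N(\Phi)/2$ by invoking Theorem~\ref{th:N}; but Theorem~\ref{th:N}, as stated in this thesis (and with the hypotheses attributed there to \cite{slicing,mararmulti}), requires $\Phi$ to be \emph{quasihomogeneous}, whereas Theorem~\ref{th:N2}(b) carries no such hypothesis. Since Mond's invariant $N$ is never defined in this thesis ($N$ enters only through Theorem~\ref{th:N}), your argument as written proves (b) only for quasihomogeneous finitely determined germs; what it proves unconditionally is the variant identity $J(\Phi)=\delta(\gamma_0)+\mu(D^2(\Phi)/S_2)-1$. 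To close the gap you must either establish $N(\Phi)=2\mu(D^2(\Phi)/S_2)$ for all finitely determined corank-$1$ germs by going back to Mond's actual definition of $N$, or restrict the statement. A smaller discrepancy of the same kind: you prove (b) under the assumption of finite determinacy, while the theorem is stated for finite, generically $1$-to-$1$ germs; this is harmless in substance (by part (a) the quantities are otherwise infinite), but it should be acknowledged.
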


\chapter{Immersions of spheres to Euclidean spaces}\label{ch:imm}

\section{Hirsch--Smale theory}\label{s:hirsch-smale} 

\subsection{Hirsch theorem}\label{ss:hirsch} This chapter contains the basic theorems of Hirsch and Smale, and the formulae of Hughes--Melvin \cite{HM} and Ekholm--Sz\H{u}cs \cite{ESz} corresponding to the immersions of the $3$-sphere to $ \R^5 $. The last section contains other results and applications, which are related to our study presented in Chapter~\ref{ch:ass}.

Hirsch--Smale theory \cite{hirsch, smale} transforms regular homotopy problems (differential topology) to homotopy theory (algebraic topology).

Let $ M^m $ and $ Q^q $ be smooth manifolds of dimension $m$ and $q$ respectively. A smooth map $ f: M \to Q $ is an \emph{immersion}, if at every point $x \in M $ the differential $ df_x: T_x M \to T_{f(x)} Q $ is injective (monomorphism). In other words, immersions are `local embeddings'. Two immersions $ f, g: M \to Q $ are \emph{regular homotopic} if they can be deformed to each other through immersions, that is, if there exists a smooth map $ H: M \times [0, 1] \to Q $ such that $H_0= f$, $H_1 = g $ and $H_t: M \to Q $ is an immersion for all $t \in [0, 1] $. 
Regular homotopy is an equivalence relation on the set of immersions from $ M $ to $ Q $.
An injective immersion of a compact manifold $M$ is called \emph{embedding}.

We use the following notations. $ f: M \looparrowright Q $ denotes an immersion from $M$ to $Q$, and $ f: M \hookrightarrow Q $ denotes an embedding. $ \imm ( M, Q ) $ is the set of regular homotopy classes of the immersions from $M$ to $Q$, and its subset $ \emb( M, Q ) $ consists of regular homotopy classes which admit embedding representatives.

The differential of an immersion is a fibrewise injective linear bundle map $ df: TM \to TQ $, say \emph{bundle monomorphism}. We call two bundle monomorphisms homotopic if they are homotopic through bundle monomorphisms.

\begin{thm}[Hirsch \cite{hirsch}]\label{th:hirsch1} If $ m < q $, 
the differential $f \mapsto df $ induces a bijection between $ \imm(M^m, Q^q) $ and the homotopy classes of the bundle monomorphisms.
\end{thm}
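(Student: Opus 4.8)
The statement is the $\pi_0$-level of the \emph{h-principle for immersions}. Write $\mathrm{Mon}(TM,TQ)$ for the space of bundle monomorphisms (fibrewise injective bundle maps $TM\to TQ$, each covering some $f\colon M\to Q$); the differential defines $d\colon \imm(M,Q)\to\pi_0\,\mathrm{Mon}(TM,TQ)$, and the claim is that this is a bijection. The plan is to prove the sharper statement that $f\mapsto df$ is a weak homotopy equivalence from the space $\mathrm{Imm}(M,Q)$ of immersions to $\mathrm{Mon}(TM,TQ)$, and then read off the assertion on $\pi_0$. First I would record the reformulation of the target: after choosing Riemannian metrics and local trivialisations, a bundle monomorphism is locally an injective linear map $\R^m\to\R^q$, and the space of such maps deformation retracts onto the Stiefel manifold $V_m(\R^q)$. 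Hence $\mathrm{Mon}(TM,TQ)$ is the space of sections of a fibre bundle over $M$ with fibre homotopy equivalent to $V_m(\R^q)$, and its $\pi_0$ is purely obstruction-theoretic data.

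The bijection splits into \emph{surjectivity} (every bundle monomorphism is homotopic, through bundle monomorphisms, to $dg$ for some immersion $g$) and \emph{injectivity} (if $df_0$ and $df_1$ are homotopic through bundle monomorphisms, then $f_0$ and $f_1$ are regularly homotopic). Both are instances of a single \emph{relative, parametric} extension principle: given a compact parameter space $K$, a $K$-family of bundle monomorphisms over $M$, and immersions realising it over a neighbourhood of a closed subset $A\subset M$, the family can be covered by a family of immersions rel $A$. I would prove this by induction over the handles of a handle decomposition of $M$ (equivalently, over the skeleta of a smooth triangulation). The inductive step reduces, after a collar and compression argument, to the \emph{local model}: extending a formal immersion over a single handle $D^k\times D^{m-k}$ to a genuine immersion matching prescribed data on the attaching region, in families over $K$.

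The heart of the argument, and the main obstacle, is this local extension over a disc, where the hypothesis $m<q$ is indispensable. One must show that over $(D^m,\partial D^m)$ the inclusion of genuine immersions with prescribed boundary values into formal immersions is a weak homotopy equivalence: every formal immersion is regularly homotopic rel boundary to a genuine one, and two genuine immersions that are formally homotopic are regularly homotopic rel boundary. This is exactly Smale's analysis of immersions of discs into $\R^q$; the positive codimension $q-m\ge 1$ supplies room in the normal directions to correct the tangential frame field by a regular homotopy supported in the interior, realising any prescribed formal adjustment. Keeping the nondegeneracy condition $\rk(df)=m$ open throughout---so that the interpolating maps remain immersions---is the technical crux, and the construction breaks down without extra codimension. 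Carrying this out \emph{parametrically} over $K$, which is what yields injectivity and not merely existence, requires the local solution sheaf to be microflexible; this is the key structural input.

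Finally, a more conceptual route would be to invoke Gromov's h-principle for open, $\mathrm{Diff}$-invariant differential relations. The immersion relation $\mathcal R\subset J^1(M,Q)$ is manifestly open and invariant under the diffeomorphism action, and for $m<q$ it is \emph{ample} in the convex-integration sense, so Gromov's theorem yields the full parametric h-principle directly and the stated bijection is its $\pi_0$-shadow. I would present the handle induction as the self-contained proof and offer the convex-integration argument as the structural explanation of why positive codimension is the decisive hypothesis.
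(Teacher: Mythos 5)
The paper never proves this statement: it is quoted as background from Hirsch's 1959 paper, and the text immediately specialises to $Q=\R^q$ (Theorem~\ref{th:hirsch2}). So the only meaningful comparison is with the literature, and there your plan is indeed the standard one --- Hirsch's induction over a handle decomposition resting on Smale's theorem for discs, with the relative/parametric form of the extension principle supplying both surjectivity and injectivity, or alternatively Gromov's h-principle for open, $\mathrm{Diff}$-invariant, ample relations. As an architecture this is correct, and delegating the disc case to Smale and the flexibility machinery to Gromov is a reasonable division of labour for a theorem of this vintage.

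However, your opening ``reformulation of the target'' contains a genuine error, and it is exactly the error the paper itself warns against. A bundle monomorphism $TM \to TQ$ covers an \emph{arbitrary} map $M \to Q$, so $\mathrm{Mon}(TM,TQ)$ is the section space of a bundle over $M$ whose fibre at $x$ is $\{(y,A)\,:\, y \in Q,\ A\colon T_xM \to T_yQ \ \mbox{injective linear}\}$; this fibre is the total space of a $V_m(\R^q)$-bundle over $Q$, not a space homotopy equivalent to $V_m(\R^q)$ itself. The two agree only when $Q$ is contractible, e.g.\ $Q=\R^q$, which is precisely the content of part (a) of Theorem~\ref{th:hirsch2}. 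The paper's Example~\ref{ex:s1s2} is the counterexample to your identification: for $M=S^1$, $Q=S^2$, homotopy classes of bundle monomorphisms $TS^1 \to TS^2$ are in bijection with $[S^1, SO(3)] \cong \Z_2$, and this is the correct answer for $\imm(S^1,S^2)$, whereas homotopy classes of sections of $\mbox{MONO}(TS^1, \mathcal{E}^2) \to S^1$ --- your proposed model for the target --- form $\Z$. Your handle induction does not actually use the faulty identification, so the main proof plan survives intact, but the reformulation as written would compute $\pi_0$ of the target incorrectly for every non-contractible $Q$ and must be replaced by the fibred description above.
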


If $ Q= \R^q $, a more concrete identification can be done. For an immersion $ f: M \looparrowright \R^q $ the bundle $ \mathcal{E}^q:= \{ f^* T \R^q \to M \}$ is trivial, moreover for different immersions the corresponding bundles are naturally isomorphic. Let 
$ \mbox{HOM} (TM, \mathcal{E}^q) \to M $ be the bundle of bundle homomorphisms from $ TM$ to $ \mathcal{E}^q $ (over the identity  map of $M$), and its subbundle $ \mbox{MONO} (TM, \mathcal{E}^q) \to M $ consists of bundle monomorphisms. $ \mbox{HOM} (TM, \mathcal{E}^q) \to M $ is a vector bundle with fibre $ \Hom (\R^m, \R^q ) $. $ \mbox{MONO} (TM, \mathcal{E}^q) \to M $ is a locally trivial bundle whose fibre is the \emph{Stiefel-manifold} $ V_m( \R^q) \subset \Hom (\R^m, \R^q ) $ consists of monomorphisms from $ \R^m $ to $ \R^q $, or in other words, the linearly independent $m$-frames of $ \R^q$. (The \emph{compact Stiefel manifold} consisting of orthonormal $m$-frames of $ \R^q$ is homotopy equivalent with $ V_m( \R^q) $ via its natural embedding. We do not distinguish them in the notation.)

\begin{thm}[Hirsch \cite{hirsch}]\label{th:hirsch2}
(a) If $ m < q $, 
the differential $f \mapsto df $ induces a bijection between $ \imm(M^m, \R^q) $ and the homotopy classes of the continuous sections of $  \mbox{MONO} (TM, \mathcal{E}^q) \to M $.

(b) If additionally $M$ is parallelizable (that is, $ TM $ is trivial), then a chosen trivialization of $ TM $  provides a bijection between $ \imm (M, \R^q) $ and $ [M, V_m ( \R^q)]$.
\end{thm}

Part (a) of Theorem~\ref{th:hirsch2} does not hold in general for $ \imm( M, Q) $, not even if all the immersions from $M$ to $Q$ are homotopic and 
$ f^*TQ \to M $ is a trivial bundle, cf. Example~\ref{ex:s1s2}.

\subsection{Smale invariant}\label{ss:Smaleorig} Although in the case of spheres the tangent bundle is not trivial in general, the homotopy classes of the sections in part (a) of Theorem~\ref{th:hirsch2} can be replaced by homotopy classes of continuous maps, as in part (b).

\begin{thm}[Smale \cite{smale}]\label{th:smale1} There is a bijection
\[ \imm( S^n, \R^q) \leftrightarrow \pi_n ( V_n ( \R^q)) \mbox{.}
\]
\end{thm}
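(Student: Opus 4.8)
The plan is to combine Hirsch's Theorem~\ref{th:hirsch2}(a) with an obstruction-theoretic analysis of the sections of a fibre bundle over a sphere. By part (a) of Theorem~\ref{th:hirsch2}, the differential $f \mapsto df$ identifies $\imm(S^n, \R^q)$ with the set of homotopy classes of continuous sections of the bundle $\xi := \mathrm{MONO}(TS^n, \mathcal{E}^q) \to S^n$, whose fibre is the Stiefel manifold $V_n(\R^q)$ and whose structure group is $O(n)$ acting on frames by precomposition. Since $n < q$, the fibre $V_n(\R^q)$ is $(q-n-1)$-connected, hence in particular path-connected; and $\xi$ always admits a distinguished section $s_0 := d\iota$, where $\iota : S^n \hookrightarrow \R^{n+1} \subset \R^q$ is the standard embedding. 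This $s_0$ serves as a basepoint, so that the target of the sought bijection is the based set $\pi_n(V_n(\R^q))$ rather than a mere torsor over it.

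For $n = 1$ the bundle $TS^1$ is trivial and Theorem~\ref{th:hirsch2}(b) applies directly, giving $\imm(S^1, \R^q) \leftrightarrow [S^1, V_1(\R^q)] = \pi_1(V_1(\R^q))$, where the free and based sets agree since $V_1(\R^q) = S^{q-1}$ is connected (this already recovers Whitney--Graustein for $q=2$). So assume $n \geq 2$. I would write $S^n = D_- \cup_{S^{n-1}} D_+$ as the union of its two closed hemispheres along the equator. Each $D_\pm$ is contractible, so $\xi$ restricts to a trivial bundle over it, and every section over $D_-$ is homotopic, through sections, to the restriction of $s_0$ (the fibre being connected). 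Using the homotopy extension property for the cofibration $D_- \hookrightarrow S^n$, I extend such a homotopy to all of $S^n$, thereby arranging that an arbitrary section agrees with $s_0$ over $D_-$. After trivializing $\xi|_{D_+}$, the remaining data is a map $\sigma_+ : D_+ \to V_n(\R^q)$ whose restriction to $\partial D_+ = S^{n-1}$ is the fixed map $s_0|_{S^{n-1}}$.

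Two such extensions $\sigma_+, \sigma_+'$ agreeing on the equator can be glued across it to produce a map $S^n \to V_n(\R^q)$, that is, an element of $\pi_n(V_n(\R^q))$; this assignment is a free and transitive action of $\pi_n(V_n(\R^q))$ on the homotopy classes of extensions rel boundary, and the orbit is nonempty because $s_0$ itself provides one extension, whose class is the neutral element. Transporting this identification back through the correspondence of the first paragraph yields the desired bijection $\imm(S^n, \R^q) \leftrightarrow \pi_n(V_n(\R^q))$.

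The main obstacle is to verify that this bijection is genuinely well defined on homotopy classes and independent of the chosen hemisphere trivializations: a priori the difference classes of two sections agreeing on the $(n-1)$-skeleton lie in $H^n(S^n; \pi_n(V_n(\R^q)))$ with coefficients in the local system determined by the clutching function $S^{n-1} \to O(n)$ of $TS^n$ acting on $\pi_n(V_n(\R^q))$. The key point that makes the answer the untwisted group $\pi_n(V_n(\R^q))$ is that $S^n$ is simply connected for $n \geq 2$, so this local system is trivial and $H^n(S^n; \pi_n(V_n(\R^q))) = \pi_n(V_n(\R^q))$; I would spell this out carefully to confirm that reparametrizing the trivializations, and the choice of homotopy made over $D_-$, do not affect the resulting class.
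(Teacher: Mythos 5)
Your route --- Hirsch's Theorem~\ref{th:hirsch2}(a) combined with a hemisphere decomposition and the torsor structure on rel-boundary extension classes --- is a legitimate way to deduce Smale's theorem, and it differs from the paper, which does not prove the statement at all: it only constructs the invariant $\Omega(f)$ (by the same normalization over a hemisphere) and cites Smale for bijectivity (Theorem~\ref{th:smale2}). However, your argument has a genuine gap at the last step, and the care you invest in the final paragraph is aimed at the wrong indeterminacy. The issue is not the local system over the base: since $\pi_1(S^n)=0$ for $n\geq 2$, that is vacuously trivial, as you say. The issue is the passage from homotopy classes \emph{rel} $D_-$ to free homotopy classes of sections. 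Your torsor argument identifies $\pi_n(V_n(\R^q))$ with the set of sections agreeing with $s_0$ on $D_-$ modulo homotopies \emph{fixed} on $D_-$; but under Hirsch's theorem $\imm(S^n,\R^q)$ corresponds to sections modulo arbitrary homotopies through sections. The natural map from the first set to the second is surjective (this is your normalization step), but injectivity is precisely the question: a homotopy of sections may drag the section over $D_-$ around a loop in the fibre and return, and the net effect on the rel-$D_-$ class is the action of $\pi_1$ of the \emph{fibre} $V_n(\R^q)$ on $\pi_n$ of the fibre. In the model case of the trivial bundle this is the classical statement that $[S^n,F]$ is the quotient $\pi_n(F)/\pi_1(F)$; simple connectivity of the base does nothing to remove it. (The same conflation appears in your $n=1$ case: free and based classes in $[S^1,S^{q-1}]$ agree because $\pi_1(S^1)$ is abelian, not because $S^{q-1}$ is connected.)

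This matters exactly in the most interesting range of the theorem. For $q\geq n+2$ the Stiefel manifold is simply connected, so your argument closes; but for $q=n+1$ one has $V_n(\R^{n+1})\simeq SO(n+1)$ with $\pi_1\cong\Z_2$ for $n\geq 2$ --- for instance immersions $S^3\looparrowright \R^4$, where $\pi_3(SO(4))\cong\Z\oplus\Z$ --- and a priori your construction only produces a surjection $\pi_n(V_n(\R^q))\to \imm(S^n,\R^q)$. The missing ingredient is that $V_n(\R^q)=SO(q)/SO(q-n)$ is the quotient of a connected Lie group by a closed connected subgroup, hence homotopically simple: $\pi_1$ acts trivially on every $\pi_n$, so based and free homotopy classes of maps $S^n\to V_n(\R^q)$ coincide. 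This is exactly Remark~\ref{re:lie} of the paper. Invoke it (or the equivalent H-space argument) at the point where you transport the rel-$D_-$ classification back to honest homotopy classes of sections, and your proof becomes complete.
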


The bijection is induced by the map $ f \mapsto \Omega (f) $, where $ \Omega (f) \in \pi_n ( V_n ( \R^q)) $ is called the \emph{Smale invariant} of the immersion $f: S^n \looparrowright \R^q $. (The relation between the bijections in Theorem~\ref{th:smale1} and in Theorem~\ref{th:hirsch2}(b) is clarified in Remark~\ref{re:parasmale}.)

Here we present the construction of the Smale invariant, cf. \cite[p. 159]{hughes}, \cite[3.2.]{ekholm3}. Let $S^n$ be the unit sphere in $ \R^{n+1} $, that is 
$ S^n = \{ x=(x_0, \dots, x_n) \in \R^{n+1} \ | \ \Sigma_{i=0}^n x_i^2 =1 \} $, and define $ S^n_+ = \{ x \in S^n \ | \ x_0 \geq 0 \} $ and $ S^n_- = \{ x \in S^n \ | \ x_0 \leq 0 \} $. Let $ U_{\pm} $ be an open neighbourhood of $ S^n_{ \pm} $ in $ S^n$. Fix a trivialization $b_+$ of the tangent bundle $TU_+$.

By a regular homotopy it can be achieved that $ f: S^n \looparrowright \R^q $ agrees with the standard embedding $ \iota: S^n \subset \R^{n+1} \subset \R^q $ on $ U_- $. We define a continuous map $ \omega(f): S^n \to V_n ( \R^q)$ as
\[ \omega(f)(x)=
\left\{ \begin{array}{ccc}
df_x (b_+ (x)) & \mbox{if} & x \in S^n_+ \mbox{,} \\
d \iota_{\tau (x)} (b_+ (\tau (x))) & \mbox{if} & x \in S^n_- \mbox{,} \\
\end{array}
\right. 
\]
where $ \tau(x_0, x_1, \dots, x_n) = (-x_0, x_1 \dots, x_n) $ is the reflection in the hyperplane $ \{ x_0=0 \} \subset \R^{n+1}$ restricted to $ S^n \subset \R^{n+1}$. Then $ \Omega(f):= [ \omega(f)] \in \pi_n (V_n ( \R^q ))$.

\begin{rem}\label{re:lie}
 If $ G $ is a connected Lie group, or a quotient of it by a
 closed connected subgroup, then $ \pi_n (G) $ can be identified with the homotopy classes
 of the continuous maps $ f: S^n \to G $ without any base point. This fact can be applied to 
 $ V_n( \R^q) = SO(q)/SO(n-q)$, cf. \cite{steenrod, husemoller}. 
 Furthermore, for Lie
 groups, the group operation of $\pi_n(G)$ agrees with that induced by
 the pointwise multiplication in $ G$; cf. \cite[p. 88 and 89]{steenrod}.
\end{rem}

Then Theorem~\ref{th:smale1} can be reformulated as follows.

\begin{thm}[Smale \cite{smale}]\label{th:smale2} For an immersion $ f: S^n \looparrowright \R^q $, the Smale invariant $ \Omega (f) \in \pi_n ( V_n ( \R^q))$ is independent of the choices, it is determined by $f$. Furthermore, two immersions $f$ and $g$ are regular homotopic if and only if $ \Omega (f) = \Omega (g)$, and each element of $ \pi_n ( V_n ( \R^q)) $ arises as $ \Omega(f) $ of some immersion $f$.
\end{thm}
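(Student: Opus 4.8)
The plan is to deduce this reformulation from Hirsch's theorem (Theorem~\ref{th:hirsch2}) together with a direct analysis of the $\omega$-construction given above. By part~(a) of Theorem~\ref{th:hirsch2} the derivative $f\mapsto df$ induces a bijection between $\imm(S^n,\R^q)$ and the set of homotopy classes of continuous sections of $\mbox{MONO}(TS^n,\mathcal{E}^q)\to S^n$, a bundle with fibre $V_n(\R^q)$. The entire content of the theorem is thus to identify these homotopy classes of sections with $\pi_n(V_n(\R^q))$, and to show that this identification is realised concretely by $f\mapsto\Omega(f)=[\omega(f)]$. Once this is in place, well-definedness, completeness as a regular-homotopy invariant, and surjectivity all follow at once, because regular homotopies correspond under Hirsch to homotopies of sections.

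First I would check that $\Omega(f)$ does not depend on the choices entering its construction. The preliminary normalisation, regularly homotoping $f$ so that it agrees with $\iota$ on $U_-$, is possible because $V_n(\R^q)$ is $(q-n-1)$-connected, hence path-connected whenever $n<q$; this permits deforming the frame field $df|_{U_-}$ over the disc $S^n_-$ to the standard one $d\iota|_{U_-}$. Independence of the trivialisation $b_+$ follows from the contractibility of $U_+$: two trivialisations of $TU_+$ differ by a null-homotopic map $U_+\to GL(n,\R)$, and the connecting homotopy of trivialisations induces a homotopy of $\omega(f)$ (the same homotopy acts on the lower hemisphere as well, through $b_+(\tau(x))$). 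Finally, two admissible normalising regular homotopies differ by a regular homotopy, which again yields a homotopy of the associated maps into $V_n(\R^q)$.

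Next I would establish the bijection with $\pi_n(V_n(\R^q))$ using the hemisphere decomposition $S^n=S^n_+\cup S^n_-$. Over each contractible hemisphere the bundle is trivial, so a section restricts to a map $S^n_\pm\to V_n(\R^q)$. The normalisation pins the restriction over $S^n_-$ to the standard one, since there $\omega(f)(x)=d\iota_{\tau(x)}(b_+(\tau(x)))$ is independent of $f$ and hence equal to $\omega(\iota)$; moreover on the equator $f=\iota$, so $\omega(f)$ and $\omega(\iota)$ also agree along $S^{n-1}$. Thus $\omega(f)$ and $\omega(\iota)$ differ only over the interior of $S^n_+$, and their difference defines a class in $\pi_n(V_n(\R^q))$ (basepoint issues are handled by Remark~\ref{re:lie}, as $V_n(\R^q)=SO(q)/SO(q-n)$). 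Varying $f$ over all immersions normalised on $U_-$ sweeps this difference over all of $\pi_n(V_n(\R^q))$ by modifying the upper-hemisphere frame field, giving surjectivity; and two normalised immersions with the same class yield homotopic sections, hence are regularly homotopic by Theorem~\ref{th:hirsch2}, giving injectivity.

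The main obstacle is the non-parallelisability of $S^n$ for $n\neq 1,3,7$, which prevents a direct appeal to part~(b) of Theorem~\ref{th:hirsch2} and forces one to work with sections of the genuinely twisted bundle $\mbox{MONO}(TS^n,\mathcal{E}^q)$. The delicate point is to verify that the clutching of $TS^n$ across the equator, encoded by a class in $\pi_{n-1}(SO(n))$, introduces no extra identifications when passing from sections to $\pi_n(V_n(\R^q))$. This is precisely what the reflection $\tau$ in the definition of $\omega(f)$ controls: by forcing the lower-hemisphere data to be the $\tau$-transport of the standard upper-hemisphere frame, the matching along the equator becomes the identity, so the two hemisphere contributions glue cleanly and all the information localises into a single absolute homotopy class in $\pi_n(V_n(\R^q))$. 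Carrying out this gluing carefully, and confirming that the resulting assignment is both well-defined and exhaustive, is the crux of the argument.
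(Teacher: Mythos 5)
The paper does not actually prove this theorem: it is quoted as Smale's classification result, and Theorem~\ref{th:smale2} is obtained from Theorem~\ref{th:smale1} merely by the reformulation permitted by Remark~\ref{re:lie}. Your proposal must therefore stand on its own, and it has a genuine gap exactly at the step you yourself label the crux. Via part (a) of Theorem~\ref{th:hirsch2} you correctly reduce everything to showing that homotopy classes of sections of $\mbox{MONO}(TS^n,\mathcal{E}^q)\to S^n$ are in bijection with $\pi_n(V_n(\R^q))$, realized by $f\mapsto[\omega(f)]$; but this identification is then only asserted (``carrying out this gluing carefully \dots is the crux of the argument''), and the mechanism you offer for it is not the right one. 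The reflection $\tau$ is only a bookkeeping device that lets you express the (standard) lower-hemisphere data in the upper-hemisphere trivialization $b_+$; it does not rule out extra identifications among section classes. What actually rules them out is: (i) every section can be deformed, using contractibility of $S^n_-$ and the homotopy extension property for sections, to one agreeing with $d\iota$ over $S^n_-$, so that relative difference classes over the disc $S^n_+$ give a surjection from $\pi_n(V_n(\R^q))$ onto section classes; and (ii) two such normalized sections that are freely section-homotopic are already homotopic rel $S^n_-$, because the only possible indeterminacy is the action of $\pi_1(V_n(\R^q))$ on $\pi_n(V_n(\R^q))$, and this action is trivial since $V_n(\R^q)=SO(q)/SO(q-n)$ is a homogeneous space of a connected Lie group --- which is precisely what Remark~\ref{re:lie} supplies. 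Point (ii) is not pedantry: in codimension one $V_n(\R^{n+1})\cong SO(n+1)$ has $\pi_1\cong\Z_2$ (for instance the sphere-eversion case $n=2$, $q=3$), so the free-versus-based distinction is exactly what your gluing argument has to confront.

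Two smaller gaps of the same nature. First, normalizing the immersion itself --- making $f$ \emph{equal} to $\iota$ on $U_-$, not merely making $df$ homotopic to $d\iota$ there --- does not follow from path-connectivity of $V_n(\R^q)$ alone; it needs a relative form of Hirsch's theorem (an immersed disc $D^n\looparrowright\R^q$, $n<q$, is regularly homotopic to the standard one by a regular homotopy extending over all of $S^n$), which the paper never states. Second, in your well-definedness step, the regular homotopy between two admissible normalizations $f_1$ and $f_2$ need not keep the immersion standard on $U_-$ at intermediate times, so it does not immediately yield a homotopy between $\omega(f_1)$ and $\omega(f_2)$; closing this loop again requires the rel-$S^n_-$ statement of point (ii). In short, your skeleton is the standard deduction of Smale's theorem from Hirsch's, but the theorem's real content is concentrated in the section-classification step, and that step is missing from the proposal.
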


\begin{rem} $ \imm ( S^n , \R^q ) $ is a group with respect to the operation `connected sum'. The connected sum $ f \# g $ of the immersions $ f$ and $g $ is the induced immersion from 
$ (S^n \setminus \mbox{int}(B^n)) \cup_{S^{n-1}} (S^n \setminus \mbox{int}(B^n) ) \simeq S^n $ to $ \R^q $ \cite[p. 164]{hughes}, \cite[3.4.]{ekholm3}.
The Smale invariant provides a group ismorphism $ \imm ( S^n, \R^q) \cong \pi_n ( V_n ( \R^q))$. In particular, in $ \imm ( S^n , \R^q ) $ the identity element is the standard embedding, thus its Smale invariant is the identity element of $ \pi_n ( V_n ( \R^q))$. See \cite{hughes, ekholm3}.
\end{rem}

\begin{rem}\label{re:parasmale} In some dimensions $ S^n $ is parallelizable, for instance if $n=1 $ or $3$. Then by part (b) of Theorem~\ref{th:hirsch2} there is a bijection $ \Theta: \imm ( S^n, \R^q) \to \pi_n ( V_n ( \R^q))$, but it depends on the choice of the trivialization of $TS^n $. Hence it does not agree with the Smale invariant in general. The relation between them is 
$ \Omega(f) = \Theta(f) - \Theta( \iota ) $, where $ \iota$ is the standard embedding. Cf. \cite{ekholm3} and Example~\ref{ex:s1r2}.
\end{rem} 

\begin{ques}\label{qu:Smale}
Smale asked in \cite{smale} to characterize the regular homotopy classes in terms of the geometry of the immersion, and give explicit representatives of each regular homotopy class. 
\end{ques}
Several results appeared in this topic, see for instance examples~\ref{ex:s2r3}, \ref{ex:snr2n} and \cite{hughes, ekholm3, kinjo}. Our results also fit to this program, cf. \cite{NP} or Chapter~\ref{ch:ass}.

\subsection{Examples}\label{ss:exsmale}

\begin{ex}[Eversion of the Sphere]\label{ex:s2r3} The Smale invariant $ \Omega (f) $ of an immersion $f: S^2 \looparrowright \R^3 $ sits in $ \pi_2 (V_2 ( \R^3 )) = \pi_2 (SO(3)) = 0 $. Thus any immersion $f: S^2 \looparrowright \R^3 $ is regular homotopic with the standard embedding $ \iota_{\mbox{st}}: S^2 \hookrightarrow \R^3 $. For instance the standard embedding composed with the hyperplane reflection $(x, y, z) \to (-x, y, z) $ is regular homotopic with the standard embedding. A regular homotopy between them is an \emph{eversion of the sphere}.
\end{ex}

\begin{ex}[$S^n \looparrowright \R^q$, $2n < q$] The Stiefel-manifold $ V_n ( \R^q ) $ is $ (q-n-1)$-connected, cf \cite{husemoller, steenrod}. Hence for $ 2n < q $ any immersion from $ S^n $ to $ \R^q $ is regular homotopic with the standard embedding.
\end{ex}

\begin{ex}[$S^n \looparrowright \R^{2n}$]\label{ex:snr2n} For an immersion $f: S^n \looparrowright \R^{2n} $ 
\[ \Omega(f) \in \pi_n (V_n ( \R^{2n}))=
\left\{ \begin{array}{ccc}
\Z & \mbox{if} & n \mbox{ is even or } n=1 \mbox{, } \\
\Z_2 & \mbox{if} &  n \mbox{ is odd and } n \neq 1 \mbox{, } \\
\end{array}
\right. 
\]
cf. \cite{husemoller, steenrod}. By a regular homotopy $f$ can be deformed to a stable immersion, which has self-transverse double values. If $n$ is even, it is possible to associate a sign to each double value, and $ \Omega (f) $ is the algebraic number of the double values. For odd $n$ ($n \neq 1$) the Smale invariant agrees with the algebraic number of double values modulo $2$. The exceptional case $n=1$ is discussed in the next example.
\end{ex}

\begin{ex}[$S^1 \looparrowright \R^2$]\label{ex:s1r2} The Smale invariant of a plane curve immersion is an element of $ \pi_1 ( V_1 (\R^2))= \pi_1 (S^1) = \Z $. By the result of Whitney \cite{Whitney}, the \emph{winding number} is a complete regular homotopy invariant. For an immersion $ f: S^1 \looparrowright \R^2 $ the winding number $w(f) $ is the degree of $ f'/|f'| : S^1 \to S^1$. Thus the winding number is the invariant according to part (b) of Theorem~\ref{th:hirsch2}, but it is not the Smale invariant, cf. Remark~\ref{re:parasmale}. However, by fixing a base point and a support line with a convention for the starting of the curve at the base point, it is possible to associate a sign to each double value of the immersion, cf. \cite{Whitney}. Then $ \Omega (f) \in \Z $ is the algebraic number of the double values, but its sign depends on the choice of the convention. By Remark~\ref{re:parasmale} $ \Omega (f) =  \pm w(f) \pm 1 $. The sign ambiguity can be avoided by fixing the conventions.
\end{ex}

The examples \ref{ex:s1s2} and \ref{ex:snsq} are results of a discussion with Tam\'{a}s Terpai.

\begin{ex}[$S^1 \looparrowright S^2$]\label{ex:s1s2} By Hirsch's Theorem~\ref{th:hirsch1} 
$ \imm(S^1, S^2)$ is in bijection with the bundle monomorphisms $ TS^1 \to TS^2 $ up to homotopy. Such a monomorphism associates to each point of $ S^1 $ a nonzero tangent vector of $ S^2 $, hence up to homotopy it can be considered as a map from $S^1 $ to $ SO(3) $. Thus $ \imm(S^1, S^2) $ is in bijection with $ \pi_1(SO(3)) \cong \Z_2 $. Even though $ f^*TS^2 \to S^1 $ is a trivial bundle for all $ f: S^1 \looparrowright S^2 $ immersions, part (a) of Theorem~\ref{th:hirsch2} cannot be applied. Indeed,  the sections of $ \mbox{MONO}(TS^1, \mathcal{E}^2)  \to S^1 $ up to homotopy are in bijection with $ \Z$.
\end{ex}

\begin{ex}[$S^n \looparrowright S^q$]\label{ex:snsq}
The inclusion $ \R^q \subset S^q= \R^q \cup \{ P \} $ (where $P$ is any point of $S^q$) induces a map from $ \imm ( S^n, \R^q ) $ to $ \imm ( S^n, S^q ) $. This map is surjective, since every regular homotopy classes can be represented by an immersion avoiding the infinity. If $ q \geq n+2 $, then the map is also injective, because the regular homotopy also can be chosen such that it avoids infinity. Hence $ \imm ( S^n, S^q ) $ can be identified with $ \imm ( S^n, \R^q ) $ for $ q \geq n+2 $.

The map from $ \imm ( S^n, \R^{n+1} ) $ to $ \imm ( S^n, S^{n+1} ) $ is surjective but not injective in general. Let $f$ and $g$ be immersions from $ S^n $ to $ \R^{n+1} \subset S^{n+1} $, and let $ H: S^n \times I \to S^{n+1} $ be a regular homotopy between $f$ and $g $ through the infinity. It can be assumed that infinity is a regular value of $ H$, and the algebraic number of its preimages is $\sharp H^{-1} ( P )=d $. Then $g$ is regular homotopic with $ f \# d \cdot \tau $ in $ \R^{n+1}$, where $ \tau: S^n \looparrowright \R^{n+1} $ is the hyperplane reflection of $ \R^{n+1} $ restricted to $ S^n \subset \R^{n+1} $, cf.
Example~\ref{ex:s2r3}, and $ \# $ denotes the connected sum of two immersions, cf. \cite{hughes, ekholm3}. 

The normal degree $ D(f) $, i.e. the degree of the Gauss map of $f $ is a regular homotopy invariant for immersions $f: S^n \looparrowright \R^{n+1} $ with the properties $ D( \tau) = (-1)^n $ and $ D(f \# g) = D(f) + D(g) -1 $, see \cite{Mimm}. Hence for odd $n$ a regular homotopy through $ P \in S^{n+1} $ changes the normal degree with a multiple of $2$, and so changes the regular homotopy class of the immersion $f: S^{n} \looparrowright \R^{n+1}$. 
Note that $ D(f) $ is the winding number $w(f) $ for $f: S^1 \looparrowright \R^2 $, cf. examples~\ref{ex:s1r2} and \ref{ex:s1s2}. Compare also with Hughes formula~(\ref{eq:hughes2}) for immersions from $ S^3 $ to $ \R^4 $, where $ D(f)-1$ is the first component of $ \Omega (f) $, and Example~\ref{ex:s2r3}, in which case $ D( \tau ) = 1 $. 
\end{ex}

\section{Immersions of $S^3$ to $ \R^5$}\label{s:singseif} 

\subsection{Immersions and embeddings}\label{ss:s3r5}
Hughes and Melvin \cite{HM} proved 
that there are embeddings of $ S^3 $ to $ \R^5 $ which are not regular homotopic to each other, thus it is impossible to express the Smale invariant in terms of the double point set (like e.g. in Example~\ref{ex:snr2n}). They expressed the Smale invariant of an embedding with the help of a Seifert surface.

For an immersion $ f: S^3 \looparrowright \R^5 $ the Smale invariant is in $ \pi_3(V_3 ( \R^5)) $, and 
$ \pi_3(V_3 ( \R^5)) \cong \pi_3 (SO(5)) \cong \pi_3 (SO) \cong \Z $. The isomorphisms are induced by the $SO(2)$-fibration $ SO(5) \to V_3( \R^5 ) $ and the standard embedding $ SO(5) \hookrightarrow SO $, cf. \cite{husemoller, steenrod}. The isomorphism of $\pi_3 (SO) $ and $ \Z $ is a priori non--canonical, but they can be identified with a choice of a generator in $\pi_3 (SO) $. It is described in Section~\ref{s:signconv}. 

\begin{thm}[Hughes, Melvin \cite{HM}]\label{th:HMemb}
$ \emb (S^3, \R^5) \cong 24 \cdot \Z \subset \imm( S^3, \R^5 ) \cong \Z $.
\end{thm}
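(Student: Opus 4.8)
The plan is to combine the two ingredients already set up in this section: the identification $\imm(S^3,\R^5) \cong \pi_3(V_3(\R^5)) \cong \pi_3(SO) \cong \Z$ via the Smale invariant $\Omega$, and the computation of $\Omega$ of an embedding through a Seifert surface. First I would record that connected sum makes $\imm(S^3,\R^5)$ a group isomorphic to $\Z$ with the standard embedding $\iota$ as $0$, and that the connected sum of two embeddings is again (regularly homotopic to) an embedding. Hence $\emb(S^3,\R^5)$ is a subgroup of $\Z$, necessarily of the form $N\cdot\Z$ for some $N\geq 0$. It then remains to prove the divisibility $\Omega(f)\in 24\Z$ for every embedding $f$, and to realise the generator $24$.

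For the divisibility, given $f:S^3\hookrightarrow \R^5\subset S^5$ I would choose a Seifert surface, a compact oriented smooth $4$--manifold $W\subset S^5$ with $\partial W = f(S^3)$, whose existence follows from Alexander duality and obstruction theory, and cap it off with the standard disc to form a closed oriented manifold $\hat{W}=W\cup_{S^3}D^4$ with $\sigma(\hat{W})=\sigma(W)$. Since $W$ is an orientable hypersurface in $S^5$ its normal bundle is trivial, so $TW\oplus\varepsilon^1\cong\varepsilon^5$ is stably trivial; together with the trivial tangent bundle of $D^4$ this exhibits $T\hat{W}$ as stably trivial off $S^3$, so that $\hat{W}$ is spin and its stable tangent bundle is determined by a clutching map $\delta\in\pi_3(SO)$ comparing the two framings over $S^3$. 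The key step, following Hughes--Melvin, is the identification $\Omega(f)=\pm\delta$: unwinding the definition of the Smale invariant in Subsection~\ref{ss:Smaleorig} in terms of trivialisations over the two hemispheres, the comparison of $df$ with $d\iota$ over $S^3$ is exactly this clutching obstruction, and $\iota$ (bounding $D^4$, so $\delta=0$) gives $\Omega(\iota)=0$.

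Once this is in hand the divisibility is forced by two classical theorems. Because the generator of $\pi_3(SO)$ corresponds to a bundle over $S^4$ with first Pontryagin number $\pm 2$, collapsing $W$ (over which $T\hat{W}$ is trivialised) shows $\langle p_1(\hat{W}),[\hat{W}]\rangle=\pm 2\delta=\pm 2\,\Omega(f)$. Hirzebruch's signature theorem then gives $\sigma(\hat{W})=\tfrac{1}{3}\langle p_1(\hat{W}),[\hat{W}]\rangle=\tfrac{2}{3}\Omega(f)$, while Rokhlin's theorem gives $16\mid\sigma(\hat{W})$ since $\hat{W}$ is closed spin. Writing $\sigma(\hat{W})=16m$ yields $\Omega(f)=\tfrac{3}{2}\sigma(\hat{W})=24m$, so $\Omega(f)\in 24\Z$ and $24\mid N$.

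Finally, to see $N=24$ rather than a larger multiple, I would exhibit one embedding realising the generator, namely construct a spin Seifert surface $W\subset\R^5$ with $\partial W=S^3$ and $\sigma(W)=16$; by the formula above its associated embedding has $\Omega=24$, and connected sums then realise all of $24\Z$. This explicit realisation requires genuine geometric input (Hughes--Melvin build such an embedding by hand). I expect the main obstacle of the argument to be precisely the two places where the homotopy-theoretic and differential-geometric pictures are glued together: first the identification of the Smale invariant with the Seifert-surface clutching obstruction $\delta$, and second the hands-on construction of an embedded signature--$16$ Seifert surface. By contrast, the divisibility itself, once $\Omega(f)=\tfrac{3}{2}\sigma(\hat{W})$ is established, is immediate from Hirzebruch and Rokhlin.
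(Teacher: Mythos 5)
First, a point of reference: the paper does not prove this theorem at all --- it is quoted from Hughes--Melvin \cite{HM}, with a remark that an alternative proof appears in \cite{szucstwo}. So your proposal can only be compared with the cited argument, and in fact it is a faithful reconstruction of it. Your divisibility half is the genuine Hughes--Melvin/Sz\H{u}cs argument and is correct in outline: Seifert surface $W$ (which is automatically spin, since its normal bundle in $S^5$ is trivial), capping off to $\hat W=W\cup_{S^3}D^4$, the clutching element $\delta\in\pi_3(SO)$, $\langle p_1(\hat W),[\hat W]\rangle=\pm 2\delta$, Hirzebruch, and Rokhlin, giving $\Omega(f)=\pm\tfrac{3}{2}\sigma(\hat W)\in 24\Z$. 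Two caveats: the identification $\Omega(f)=\pm\delta$ is asserted in one sentence, but it is precisely the technical heart of \cite{HM} --- it is where their reformulated definition of the Smale invariant (recalled in the paper as (\ref{eq:Smale}) and Proposition~\ref{prop:Smale}) earns its keep, via a careful comparison of the $W$--induced and disc--induced stable framings of $TS^3$; and your subgroup claim needs closure of $\emb$ under inverses as well as connected sum, which requires the reflection trick ($f\circ\tau$ is an embedding with $\Omega(f\circ\tau)=-\Omega(f)$), not just ambient connected sum.

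The genuine gap is the realization half. What you prove is only the inclusion $\emb(S^3,\R^5)\subseteq 24\cdot\Z$; the theorem also asserts $24\cdot\Z\subseteq\emb(S^3,\R^5)$, and for that you must actually produce an embedding with $\Omega=\pm 24$, equivalently an embedded compact oriented $4$--manifold in $\R^5$ with boundary an embedded $3$--sphere and signature $\pm 16$. You name this as needed input and defer it entirely to Hughes--Melvin, but nothing in your argument supplies it or even argues its existence; note that it cannot be obtained by soft means, since (as your own divisibility computation shows) no such surface can be capped off inside $\R^5$, and generic spin $4$--manifolds of signature $16$ need not embed in $\R^5$. So as a standalone proof the proposal establishes strictly less than the statement. (For what it is worth, the thesis itself produces generators of $24\cdot\Z$ by holomorphic means --- e.g.\ the $A_4$ covering germ with $C(\Phi)=24$, cf.\ Example~\ref{ex:ade} --- but that argument relies on Theorem~\ref{TH:MAIN} together with the full Hughes--Melvin theorem, so it cannot be used to fill your gap without circularity.)
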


Furthermore, $ \Omega(f) $ of an embedding $ f: S^3 \hookrightarrow \R^5 $ can be expressed by the \emph{signature} of a \emph{Seifert surface}.

\begin{thm}[Hughes, Melvin \cite{HM}]\label{th:HM}
 Let $ f: { S}^3 \hookrightarrow \R^5 $ be an embedding and $ \tilde{f}: M^4 \hookrightarrow \R^5 $ be a Seifert surface of $f$, i.e. $ M^4 $ is a compact oriented $4$-manifold with boundary
  $ \partial M^4 = { S}^3 $ and $ \tilde{f} $ is an embedding such that $ \tilde{f}|_{ \partial M^4} = f $.
 Let $\sigma(M^4)$ be the signature of  $M^4 $. Then
\begin{equation}\label{eq:hm}
\Omega (f) = \pm \frac{3}{2} \sigma (M^4) \mbox{ .} 
\end{equation}
\end{thm}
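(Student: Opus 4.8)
The plan is to convert the regular-homotopy invariant $\Omega(f)$ into a characteristic number of a closed $4$-manifold and then apply the Hirzebruch signature theorem. First I would use that $\Omega(f)$ lives in $\pi_3(V_3(\R^5)) \cong \pi_3(SO(5)) \cong \pi_3(SO) \cong \Z$, so it suffices to match integers. The geometric input is the Seifert surface: since $M^4 \hookrightarrow \R^5$ has codimension one and is oriented, its normal line bundle is trivial, so $TM \oplus \epsilon^1 \cong \tilde f^{\,*} T\R^5 = \epsilon^5$. This equips the stable tangent bundle of $M$ with a canonical trivialization $\tau_M$ coming from the ambient embedding. On the boundary $\partial M = S^3$ there is a second, standard stable trivialization $\tau_{\mathrm{st}}$ inherited from $S^3 = \partial D^4 \subset \R^4 \subset \R^5$.

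Next I would form the closed oriented $4$-manifold $W := M \cup_{S^3} D^4$ by capping the boundary with the standard disc. By Novikov additivity $\sigma(W) = \sigma(M)$, since $D^4$ carries no middle-dimensional homology. The stable tangent bundle $TW$ is trivialized over $M$ by $\tau_M$ and over $D^4$ by $\tau_{\mathrm{st}}$, so $TW$ is obtained by clutching these two trivializations along the equatorial $S^3$. The clutching function is the difference $\delta := \tau_M\,\tau_{\mathrm{st}}^{-1} \colon S^3 \to SO$, an element of $\pi_3(SO) \cong \Z$. Because the bundle is flat on each piece, its first Pontryagin number is concentrated in the collar of $S^3$ and is computed by the clutching formula $\langle p_1(TW), [W]\rangle = \pm 2\,[\delta]$, the factor $2$ being the Pontryagin number of the generator of $\pi_3(SO)$ realized over $S^4$.

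I would then invoke the signature theorem in the form $\langle p_1(TW),[W]\rangle = 3\,\sigma(W)$ for closed oriented $4$-manifolds. Combining the three identities yields $3\,\sigma(M) = 3\,\sigma(W) = \pm 2\,[\delta]$, whence $[\delta] = \pm\tfrac{3}{2}\,\sigma(M)$. The proof is completed by the identification $[\delta] = \pm\,\Omega(f)$: tracing the construction of Subsection~\ref{ss:Smaleorig}, the Smale invariant compares $df$ over the two hemispheres against the standard embedding, and this comparison, after incorporating the normal data and passing to the stable range $\pi_3(V_3(\R^5)) \cong \pi_3(SO)$, is precisely the difference of tangential framings recorded by $\delta$.

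The main obstacle is exactly this last identification $[\delta] = \pm\Omega(f)$. The Smale invariant is packaged from the extrinsic differential $df$ together with the (parallelizable) tangent bundle of $S^3$, whereas $\delta$ is packaged from the intrinsic tangent bundle of the Seifert surface together with its ambient trivialization; reconciling the two requires careful bundle-theoretic bookkeeping of the splitting $\epsilon^5 = df(TS^3)\oplus\nu_f$ on the source side against $\epsilon^5 = TM \oplus \nu_M$ on the Seifert side, while tracking the isomorphisms $\pi_3(V_3(\R^5)) \to \pi_3(SO(5)) \to \pi_3(SO)$. Getting these compatibilities right also fixes the overall sign, which — as noted after the statement — is only pinned down once generators of the relevant infinite cyclic groups are chosen, so I would leave the sign as $\pm$ here.
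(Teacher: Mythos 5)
Your proposal is correct, but note first that the thesis does not actually prove Theorem~\ref{th:HM}: the result is quoted from \cite{HM} (with \cite{szucstwo} mentioned as an alternative proof), the text only reviews the alternative definition (\ref{eq:Smale}) of the Smale invariant that Hughes and Melvin introduced for their proof, and the missing sign is pinned down only much later (Theorem~\ref{thm:new}) by computing holomorphic examples. What you wrote is therefore a reconstruction of the classical argument that the paper delegates to the literature, and its ingredients are all sound: capping off and Novikov additivity give $\sigma(W)=\sigma(M^4)$; the clutching computation $\langle p_1(TW),[W]\rangle=\pm 2[\delta]$ is right (the generator of $\pi_3(SO)\cong \widetilde{KO}(S^4)$ has $p_1=\pm 2$, realized e.g.\ by the quaternionic Hopf bundle); and the signature theorem finishes the numerics. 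As a bonus, this route shows that $\sigma(M^4)$ must be even, and combined with Rokhlin's theorem it explains where the $24$ of Theorem~\ref{th:HMemb} comes from.

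The step you single out as the main obstacle, $[\delta]=\pm\Omega(f)$, is genuine but closes most cleanly if you use definition (\ref{eq:Smale}) together with Proposition~\ref{prop:Smale} --- exactly the machinery the paper reviews for this purpose --- rather than tracing the hemisphere construction of Subsection~\ref{ss:Smaleorig}. Extend $f$ to an immersion $F$ of a tubular neighbourhood $U\simeq S^3\times D^2$ of the standard $S^3\subset \R^5$ by sending the two disc directions to the frame consisting of the outward normal of $S^3$ in $M^4$ (pushed forward by $d\tilde{f}$) and the trivializing section of the normal line bundle of $M^4$. Then $dF|_{S^3}\colon S^3\to GL^+(5,\R)$, read in the standard trivializations of source and target, is literally the transition matrix between $\tau_{\mathrm{st}}$ and $\tau_M$, so $[dF|_{S^3}]=[\delta]$, and Proposition~\ref{prop:Smale} identifies this class with $\Omega(f)$ up to the fixed isomorphisms. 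Leaving the overall $\pm$ unresolved is the right call: the relative sign between the chosen generator of $\pi_3(SO)$ and the Smale invariant is precisely what the thesis settles only in Theorem~\ref{thm:new}.
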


Theeorems~\ref{th:HMemb} and \ref{th:HM} can be summarized on the following diagram:
\begin{equation}\labelpar{HMsum}
\begin{array}{ccc}
f & \mapsto & \Omega(f) \\
\imm(S^3, \R^5) & \to & \Z \\
\bigcup &     & \bigcup \\
\emb(S^3, \R^5) & \to & 24 \cdot \Z \\
f & \mapsto & \Omega(f)= \pm \frac{3}{2} \sigma (M^4) \\
\end{array} 
\end{equation}

\begin{rem}
Hughes and Melvin proved a more general result for embeddings from $ S^n $ to $ S^{n+2} $. An alternative proof for Theorems~\ref{th:HMemb} and \ref{th:HM} can be found in \cite{szucstwo}.
\end{rem}

\begin{rem}
The sign ambiguity of the formula (\ref{eq:hm}) is caused by the sign ambiguity of the Smale invariant as a $ \Z$-invariant, more precisely, by the identification of the target of $ \Omega $ with $ \Z $, and by the nature of the proof of (\ref{eq:hm}) as well. One of the goals of Chapter~\ref{ch:ass} is to indicate the correct sign in this and other Smale invariant formulae in this section, whenever the Smale invariant is replaced by the sign-refined Smale invariant, see Subsection~\ref{ss:absor}.
\end{rem}

\vspace{2mm}

For the proof of Theorems~\ref{th:HMemb} and \ref{th:HM} Hughes and Melvin introduced an alternative definition of the Smale invariant of immersions $f: S^3 \looparrowright \R^5 $. We review their construction here, since we use this definition of the Smale invariant throughout Chapter~\ref{ch:ass}. Note that Sz\H{u}cs used another construction in \cite{szucstwo}. 

Let $ U $ be a tubular neighbourhood of the standard $ S^3 \subset \R^5 $, and let $ F: U \looparrowright \R^5 $ be an orientation preserving immersion extending $f $, i.e. $ F|_{S^3} = f $. Let $ TU $
be the tangent bundle of $U$. It inherits a global trivialization from
the natural trivialization of $T\R^5 $. In particular,  there is a map (the Jacobian matrix)
\[ dF|_U : U \to GL^+ (5, \R ).
\]
Its homotopy class is the Smale invariant of $f$:
\begin{equation}\label{eq:Smale}
\Omega(f) = [ dF|_{S^3} ] \in \pi_3 (SO(5))
\end{equation}
via the  homotopy equivalence induced by the inclusion $ SO(5) \subset GL^+ (5, \R ) $. (The base point is irrelevant in $ \pi_3 (SO(5)) $, cf. Remark~\ref{re:lie}).

\begin{prop}\labelpar{prop:Smale} $ \Omega(f) $ does not depend on the choice of
$U$ and $ F $, it
depends only on the regular homotopy class of $ f $, and
$ \Omega: {\rm Imm} (S^3, \R^{5} ) \to \pi_3 (SO(5) ) $ is a bijection.
\end{prop}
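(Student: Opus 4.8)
The plan is to reduce all three assertions to Hirsch's Theorem~\ref{th:hirsch2}(b) by means of a single frame computation, exploiting that $S^3$ is parallelizable. First I would fix once and for all a trivialization $\tau$ of $TS^3$ and a trivialization $n$ of the (trivial, rank $2$) normal bundle $N$ of the standard $S^3\subset\R^5$. Over $S^3$ one has $T\R^5|_{S^3}=TS^3\oplus N$, so $\tau$ and $n$ together determine a map $G:S^3\to GL^+(5,\R)$, where $G_x$ is the matrix, in the ambient standard frame of $\R^5$, of the adapted positively oriented frame $(\tau_x,n_x)$. Next I consider the fibre bundle $p:GL^+(5,\R)\to V_3(\R^5)$ sending a matrix to its first three columns; its fibre is the space of completions of a fixed $3$-frame to a positive basis, which deformation retracts to $SO(2)\simeq S^1$. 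Hence, by the long exact sequence of $p$ together with $\pi_2(S^1)=\pi_3(S^1)=0$, the induced map $p_*:\pi_3(GL^+(5,\R))\to\pi_3(V_3(\R^5))$ is an isomorphism (this is the same $SO(2)$-fibration as in Subsection~\ref{ss:s3r5}, and $\pi_3(GL^+(5,\R))=\pi_3(SO(5))$).

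The key step is the following identity. Every $f$ extends to an orientation preserving immersion $F$ of a tubular neighbourhood $U$ (the normal bundle, an oriented rank $2$ bundle over $S^3$, is trivial), and for any such $F$ I set $\tilde A(x):=dF_x\circ G_x\in GL^+(5,\R)$. The first three columns of $\tilde A(x)$ are $dF_x(\tau_x e_j)=df_x(\tau_x e_j)$ for $j=1,2,3$, since $F$ extends $f$ and $\tau_x e_j\in T_xS^3$; therefore $p\circ\tilde A$ is exactly the Hirsch classifying map $\Theta_\tau(f):x\mapsto df_x\circ\tau_x$ of Theorem~\ref{th:hirsch2}(b) (the bijection $\Theta$ of Remark~\ref{re:parasmale}). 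Because $dF|_{S^3}=\tilde A\cdot G^{-1}$ pointwise and $GL^+(5,\R)\simeq SO(5)$ is a Lie group, whose $\pi_3$ group operation is induced by pointwise multiplication (Remark~\ref{re:lie}), I obtain
\begin{equation*}
\Omega(f)=[dF|_{S^3}]=[\tilde A]+[G^{-1}]=(p_*)^{-1}\big(\Theta_\tau(f)\big)+[G^{-1}],
\end{equation*}
using $p_*[\tilde A]=[p\circ\tilde A]=\Theta_\tau(f)$ and the fact that $p_*$ is an isomorphism.

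From this one formula all the assertions follow simultaneously. The right-hand side involves $F$ only through $\Theta_\tau(f)$, which by Hirsch depends on neither the extension $F$ nor on $U$ but only on the regular homotopy class of $f$; this yields at once the independence of the choice of $F$ and the regular homotopy invariance of $\Omega$. Independence of $U$ is also immediate directly, since $dF|_{S^3}$ is unchanged under shrinking $U$ and any two tubular neighbourhoods admit a common smaller one. Finally, $f\mapsto\Theta_\tau(f)$ is a bijection onto $[S^3,V_3(\R^5)]=\pi_3(V_3(\R^5))$ by Theorem~\ref{th:hirsch2}(b) (here free and based homotopy classes coincide, as $V_3(\R^5)$ is simply connected), $(p_*)^{-1}$ is a bijection, and translation by the fixed class $[G^{-1}]$ is a bijection of $\pi_3(SO(5))$; hence $\Omega:\mathrm{Imm}(S^3,\R^5)\to\pi_3(SO(5))$ is a bijection.

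The main obstacle is the bookkeeping of frames underlying the key identity: one must set up the splitting $T\R^5|_{S^3}=TS^3\oplus N$ and the adapted frame $G$ carefully, verify that $p$ is genuinely a locally trivial fibration with fibre $\simeq S^1$ so that $p_*$ is an isomorphism on $\pi_3$, and invoke Remark~\ref{re:lie} correctly so that right multiplication by $G^{-1}$ really acts as the translation $[\,\cdot\,]\mapsto[\,\cdot\,]+[G^{-1}]$ on $\pi_3$. The remaining ingredients---existence of the extension $F$, the connectivity of $V_3(\R^5)$, and the retraction $GL^+(5,\R)\simeq SO(5)$---are standard and only need to be cited.
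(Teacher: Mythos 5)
Your proof is correct, but it follows a genuinely different route from the paper's. The paper's own argument is essentially a citation: it invokes Smale's theorem (Theorem~\ref{th:smale1}, \cite{smale}) for the bijection $\imm(S^3,\R^5)\leftrightarrow \pi_3(V_3(\R^5))$, and then quotes Hughes--Melvin \cite{HM} for the facts that the Jacobian definition (\ref{eq:Smale}) is independent of the extension $F$ and agrees with Smale's original hemisphere-based invariant through the natural isomorphism $\pi_3(SO(5))\to \pi_3(V_3(\R^5))$; no computation is given. You instead bypass the Hughes--Melvin comparison entirely and reduce everything to Hirsch's Theorem~\ref{th:hirsch2}(b): the frame identity $dF|_{S^3}=\tilde A\cdot G^{-1}$, combined with Remark~\ref{re:lie} and the isomorphism $p_*\colon \pi_3(GL^+(5,\R))\to\pi_3(V_3(\R^5))$ coming from the fibration with fibre $\simeq S^1$, yields $\Omega(f)=(p_*)^{-1}[\Theta_\tau(f)]+[G^{-1}]$, from which well-definedness, regular homotopy invariance and bijectivity all drop out at once. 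Each approach buys something: the paper's is short and ties the new definition directly to Smale's original invariant (which is what makes statements such as the connected-sum/group-structure remark immediate), while yours is self-contained modulo Hirsch, and as a byproduct it actually proves the relation $\Omega(f)=\Theta(f)-\Theta(\iota)$ of Remark~\ref{re:parasmale} --- note that $(p_*)^{-1}[\Theta_\tau(\iota)]=[G]$, so your constant $[G^{-1}]=-[G]$ is exactly the correction term $-\Theta(\iota)$, which also explains transparently why the standard embedding has invariant $0$. The only points worth stating a little more carefully are the existence and orientation bookkeeping for the extension $F$ (normal framing of $f$ plus compactness gives an immersion of $S^3\times D^2_\varepsilon\cong U$, with the framing chosen so that $\det dF>0$ and $\det G>0$), but these are exactly the standard ingredients you flagged.
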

Indeed, Smale proved that his original invariant gives a bijection between
$ \imm (S^3, \R^5 ) $ and $ \pi_3 (V_3 ( \R^5 )) $, cf. Subsection~\ref{ss:Smaleorig} and \cite{smale}.
Hughes and Melvin proved that their alternative definition (\ref{eq:Smale})
of the Smale invariant does not depend on the choice of $ F $ and agrees with the original Smale invariant through the natural
group isomorphism $ \pi_3 (SO(5)) \to \pi_3 (V_3 ( \R^{5} )) $.



\subsection{Ekholm's Vassiliev invariant $L$}\label{ss:L} In analogy with Arnold's invariants for plane curves $ S^1 \looparrowright \R^2 $, Ekholm \cite{ekholm3} introduced the invariants $J$ and $L$ (denoted by $ \lk$ originally) of stable immersions from $ S^3 $ to $ \R^5 $. Here we describe $L$ in detail. It appears in Subsection~\ref{s:ss} as a contribution in the Ekholm--Sz\H{u}cs formulas, and for immersions associated with holomorphic germs we express $ L$ in terms of $ C$ and $ T$, see Subsection~\ref{ss:LSt}.

A stable immersion of $S^3$ to $ \R^5$ has only regular simple and double values with transverse intersection of the branches at the double values. By \cite[Proposition 5.2.2.]{ekholm3} there are two types of codimension--$1$ immersions, i.e. immersions which can appear in a stable regular homotopy between two stable immersions: 

(a) immersion with one triple value (with regular intersection of the branches) and 

(b) immersion with self tangency at one point (that is, $ f(x)=f(y)=q \in S^5 $ is a double value of $f: S^3 \looparrowright \R^5 $, and $ \dim ( df_x (T_x S^3 ) + df_y (T_y S^3 ))=4 $ holds instead of transversality).

Ekholm's invariants $J$ and $L$ are constant along regular homotopies through stable immersions. $ J$ changes by $ \pm 1$ when the regular homotopy steps through an immersion with self tangency and does not change under triple point moves. $ L$ changes by $ \pm 3 $ under triple point moves and does not change when stepping through immersion with self tangency \cite[Theorem 2]{ekholm3}. $J(f)$ is equal to the number of the double point curve components of $f$ in the target (that is, the number of components of $ f(\gamma) $, see below).

The invariant $L(f)$ of a stable immersion $ f: S^3 \looparrowright \R^5 $ measures the linking of a shifted copy of the double values with the image of $f$. There are different versions for the definition, see below.  Here we review these definitions and their equivalence in the simplest case, for immersions $S^3 \looparrowright \R^5$, although originally they were introduced in \cite{ekholm3, ekholm4, ESz, saeki} for different levels of generalizations (for other manifolds, higher dimensions).

Let $ f: S^3 \looparrowright \R^5 $ be a stable immersion, and let $ \gamma \subset S^3 $ be the double point locus of $f$, i.e. $ \gamma = \{ p \in S^3 \ | \ \exists p' \in S^3: \ p \neq p' \mbox{ and } f(p)=f(p') \} $. $ \gamma $ is a closed $1$-manifold, and $ f|_{\gamma}: \gamma \to f( \gamma ) $ is a $2$-fold covering. $ \gamma $ is endowed with an involution $ \iota: \gamma \to \gamma $ such that $ f( p )= f ( \iota (p) ) $ for all $ p \in \gamma $. 

The first definition is from \cite[6.2.]{ekholm3}. Let $v$ be a normal vector field of $ \gamma \subset S^3 $ such that $[ \tilde{\gamma} ]  $ is $0$ in $ H_1 ( S^3 \setminus \gamma , \Z) $. Such a vector field $v$ is unique up to homotopy, and for instance each component of a Seifert framing provides such a vector field.
If $ \tilde{ \gamma } \subset S^3$ is the result of pushing $ \gamma $ slightly along $v$, then the linking number $ \lk_{S^3}( \gamma, \tilde{ \gamma} )$ equals to $ 0 $. Let $ q = f(p) = f ( \iota (p)) $ be a double value of $f$. Then $ w(q) = df_p (v(p)) + df_{ \iota(p)} (v (\iota (p) ) $ defines a normal vector field $w$ along $ f( \gamma ) $. Let $ \widetilde{f(\gamma)} \subset \R^5 $ be the result of pushing $ f( \gamma) $ slightly along $w$, then $ \widetilde{f(\gamma)} $ and $ f( S^3 ) $ are disjoint. The invariant is the linking number
\begin{equation}\label{eq:L1} 
L_1 (f) := \lk_{ \R^5 } ( \widetilde{f(\gamma)}, f( S^3) )
\end{equation}
(or equivalently, $L_1 (f)= [ \widetilde{f(\gamma)} ] \in 
H_1(\R^5 \setminus f(S^3), \Z) \cong \Z $).
Note that Ekholm used an other notation: $ L_1 (f) $ is denoted by $ \lk (f) $, and $ L(f) $ is defined as $ \lfloor \lk (f) /3 \rfloor $ in \cite[2.2., 6.2.]{ekholm3}.

The second definition is \cite[Definition 11.]{ESz}, \cite[Definition 2.2.]{saeki}. The normal bundle $ \nu (f) $ of $f$ is trivial, since the oriented rank--$2$ vector bundles over $S^3 $ are classified by $ \pi_2 (SO(2)) =0$. Any two trivializations are homotopic, since their difference represents an element in $ \pi_3 (SO(2)) =0$. Let $ (v_1, v_2) $ be the homotopically unique normal framing of $f$, and in a double value $ q = f(p) = f ( \iota (p)) $ define 
$ u(q) = v_1(p) + v_1 (\iota (p))  $. $u$ is a normal vector field along $f( \gamma)$, and let $ \overline{f(\gamma)} \subset \R^5 $ be the result of pushing $ f( \gamma) $ slightly along $u$. Then $ \overline{f(\gamma)} $ and $ f( S^3 ) $ are disjoint. The invariant is the linking number (or equivalently, the homology class) 
\begin{equation}\label{eq:L2} 
L_2 (f) := \lk_{ \R^5 } ( \overline{f(\gamma)}, f( S^3) ) = [ \overline{f(\gamma)} ] \in 
H_1(\R^5 \setminus f(S^3), \Z) \cong \Z \mbox{.}
\end{equation}
Note that the framing $(v_1, v_2)$ can be replaced by an arbitrary nonzero normal vector field $v$ of $f$, since it can be extended to a framing whose first component is $v$.

The third definition is in \cite[Definition 4.]{ESz}, see also \cite[4.5., 4.6.]{ekholm4}. Let $v$ be a nonzero normal vector field of $f$ over $ \gamma $, that is, a nowhere zero section of $ \nu (f) |_{ \gamma} $. Let $ [v] $ be the homology class represented by $v$ in $ H_1 ( E_0( \nu (f)), \Z ) \cong \Z $, where $ E_0( \nu (f)) $ denotes the total space of the bundle of nonzero normal vectors of $f$. Let $u_v (q) = v(p) + v( \iota (p) ) $ be the value of the vector field $u_v$ along $ f( \gamma ) $ at the point $ q= f(p) = f ( \iota (p) )$. Let $ \overline{f(\gamma)}^{(v)} $ be the result of pushing $ f( \gamma) $ slightly along $u_v$, then $ \overline{f(\gamma)}^{(v)} $ and $ f( S^3 ) $ are disjoint. The invariant is
\begin{equation}\label{eq:Lv} 
L_v (f) := \lk_{ \R^5 } ( \overline{f(\gamma)}^{(v)}, f( S^3) ) -[v]= [ \overline{f(\gamma)}^{(v)} ] -[v] \mbox{,}
\end{equation}
where $ [ \overline{f(\gamma)}^{(v)} ] \in 
H_1(\R^5 \setminus f(S^3), \Z) \cong \Z $.

By \cite[Lemma 4.15.]{ekholm4} $ L_v (f) $ is well-defined, that is, $ L_v (f) $ does not depend on the choice of the normal field $v$. Moreover, if $ v $ is the restriction of a (global) normal vector field of $f $ to $ \gamma $, then $[v]=0$. Indeed, the restriction of the normal field of $f$ to a Seifert surface $H$ of $ \gamma $ results a surface $ \overline{H} \subset E_0( \nu (f)) $, 
whose boundary is the image of $ v : \gamma \to E_0( \nu (f))$. Hence $ L_v (f) = L_2 (f) $.


The invariants $L_1 $, $L_2 $ (and so $L_v$) are equal to each other. This fact follows from the basic property of $L_1$ and $L_2$ proved in \cite{ekholm3, ekholm4}, see Proposition~\ref{pr:Leq} below.
The proof  of this proposition is a result of a discussion with Andr\'{a}s Sz\H{u}cs.

\begin{prop}\label{pr:Leq}
(a) The three definitions are equivalent, i.e. $ \pm L_1 (f) = L_2 (f) = L_v (f) $.
Let us denote $L_1(f)$ by $ L(f) $ .

(b) $ L (f) $ is an invariant of stable immersions. It changes by $ \pm 3 $ under triple point moves and does not change under self tangency moves. In other words: if $f$ and $g$ are regular homotopic stable immersions, $h: S^3 \times [0, 1] \to \R^5 $ is a stable regular homotopy between them, then $\pm (L (f) -L (g))$ is equal to the algebraic number of triple values of the map $ H: S^3 \times [0, 1] \to \R^5 \times [0, 1]$, $H(x, t)=(h(x, t), t)$.
\end{prop}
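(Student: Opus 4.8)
The plan is to separate one genuinely new comparison from the bookkeeping that can be imported from Ekholm's papers \cite{ekholm3, ekholm4}. For part (a) I would first dispose of the equality $L_2(f) = L_v(f)$, which is essentially formal, and then concentrate on $\pm L_1(f) = L_v(f)$, the only step requiring real work. For part (b) I would establish invariance along stable regular homotopies directly from the linking-number definition, borrow the local model for the triple-point move from \cite[Theorem 2]{ekholm3}, and then package the outcome as a signed count of the triple values of the track map $H$.

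First I would show $L_2 = L_v$. The field used in $(\ref{eq:L2})$ is the global normal framing component $v_1$ of $f$ restricted to $\gamma$; this is one admissible choice of $v$ in $(\ref{eq:Lv})$, for which $u_v = u$. Moreover $[v_1|_\gamma] = 0$ in $H_1(E_0(\nu(f)), \Z)$, because $v_1$ extends over a Seifert surface of $\gamma$ to a surface $\overline{H} \subset E_0(\nu(f))$ whose boundary is the image of $v_1|_\gamma$, as already observed above. Hence $L_v(f) = \lk_{\R^5}(\overline{f(\gamma)}, f(S^3)) - 0 = L_2(f)$ for this choice, and since $L_v$ is independent of $v$ by \cite[Lemma 4.15.]{ekholm4}, the equality $L_2 = L_v$ holds unconditionally.

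The heart of (a) is $\pm L_1(f) = L_v(f)$. Both quantities are linking numbers in $\R^5$ of a curve obtained by pushing $f(\gamma)$ off $f(S^3)$; the differences are the pushing field and, for $L_v$, the correction term $[v]$. In $(\ref{eq:L1})$ the field $w(q) = df_p(v(p)) + df_{\iota(p)}(v(\iota(p)))$ is built from a field $v$ \emph{tangent} to $S^3$ and normal to $\gamma$, whereas in $(\ref{eq:Lv})$ the field $u_v$ is built from a field \emph{normal} to $f$. I would reduce the comparison to a local statement along each arc of the double-value curve, using that near a single regular branch the normal bundle of $f(\gamma)$ in $\R^5$ splits, up to homotopy, as the image under $df$ of the normal bundle of $\gamma$ in $S^3$ together with the normal bundle of $f$. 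Under this splitting $w$ and $u_v$ are interchanged by a fibrewise rotation, so the two push-offs represent the same generator of $H_1(\R^5 \setminus f(S^3), \Z) \cong \Z$ up to an orientation sign and up to a correction measuring the framing defect of $v$. The Seifert condition $\lk_{S^3}(\gamma, \tilde\gamma) = 0$ imposed in the definition of $L_1$ is exactly what forces this correction to coincide with the class $[v]$ subtracted in $(\ref{eq:Lv})$, yielding $\pm L_1 = L_v$. \textbf{This bookkeeping of framings and orientations is the main obstacle}: one must track simultaneously the internal framing of $\gamma$ in $S^3$, the normal framing of $f$, and the way the involution $\iota$ symmetrizes them at a double value, and then verify that the two normalizing conditions (vanishing self-linking in $S^3$ versus $[v]=0$) are transported into one another.

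Finally, for (b), the linking number in $(\ref{eq:L1})$ is an integer that varies continuously, hence is locally constant, along any path of stable immersions, since for such a path both $\gamma$ and its push-off deform smoothly while staying disjoint from $f(S^3)$; thus $L$ is an invariant of stable immersions. The two elementary ways a stable regular homotopy can fail to pass through stable immersions are the triple-point move and the self-tangency move (cf. \cite[Proposition 5.2.2.]{ekholm3}); I would import from \cite[Theorem 2]{ekholm3} the local computation showing that $L$ jumps by $\pm 3$ across a triple-point move and is unchanged across a self-tangency. To globalize, I would take $h$ generic so that the track $H(x,t) = (h(x,t), t)$ has only isolated triple values, each occurring at an instant where $h_t$ undergoes a triple-point move; summing the signed jumps over these instants recovers $L(f) - L(g)$, which with the sign and normalization conventions of the statement is the asserted algebraic number of triple values of $H$.
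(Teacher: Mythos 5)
Your reduction of $L_2 = L_v$ and your treatment of part (b) are fine and coincide with what the paper does: both are imported from Ekholm (\cite[Lemma 4.15.]{ekholm4} for the independence of $[v]$, and \cite[Lemma 6.2.1.]{ekholm3}, \cite[Theorem 1.]{ekholm4} for the behaviour under the two codimension--one moves). The genuine gap is in what you yourself call the heart of (a), namely $\pm L_1(f) = L_v(f)$. Your argument there is a sketch that defers exactly the difficulty it identifies: the assertion that $w$ and $u_v$ are ``interchanged by a fibrewise rotation'' is unjustified (the two fields lie in different rank--two subbundles of the normal bundle of $f(\gamma)$ in $\R^5$, each built as a sum over the two branches, and on a non--trivially covered component of $f(\gamma)$ the branches are permuted as one travels around the loop, so even the existence of a global homotopy between the two push--off fields needs an argument); and the claim that the Seifert normalization $\lk_{S^3}(\gamma,\tilde{\gamma})=0$ is ``transported into'' the correction term $[v]$ is precisely the statement to be proved, not an observation. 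The difference of two push--off classes in $H_1(\R^5\setminus f(S^3),\Z)$ is governed by the homotopy class of a path of admissible fields joining them, and computing that class is the entire content of the comparison; nothing in your proposal pins it down. So the central step of (a) is missing as written.

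The paper avoids this local framing bookkeeping altogether with a soft global argument, which is worth knowing: granting part (b) for both invariants, $L_1$ and $L_2$ change in the same way (up to a universal sign) under triple--point moves and are unchanged under self--tangency moves, hence $L_1 \pm L_2$ is a regular homotopy invariant; both invariants are additive under connected sum (\cite[Lemma 5.2., Proposition 5.4.]{ekholm4}, \cite[6.5.]{ekholm3}), so $L_1 \pm L_2$ defines a homomorphism $\imm(S^3,\R^5)\cong \Z \to \Z$; both vanish on embeddings, and by Hughes--Melvin $\emb(S^3,\R^5)$ is the index--$24$ subgroup $24\cdot\Z$, so this homomorphism kills $24\cdot\Z$ and is therefore identically zero. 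This yields $\pm L_1 = L_2$ for all stable immersions with no analysis of normal fields at all; indeed, the sign ambiguity left in the statement itself reflects that the direct geometric identification you attempt was never carried out, and need not be.
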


\begin{proof}
Part (b) is proved independently for $L_1$ \cite[Lemma 6.2.1.]{ekholm3} and for $L_2=L_v$ \cite[Theorem 1.]{ekholm4}. Using them we prove part (a) as follows.
 
Since $ L_1 $ and $L_2 $ changes in the same (or opposite) way along a regular homotopy, $L_1 \pm L_2$ is a regular homotopy invariant. Moreover
$ L_1$ and $L_2 $ are additive under connected sum, see \cite[Lemma 5.2., Proposition 5.4.]{ekholm4}, \cite[6.5.]{ekholm3}. It follows that $L_1 \pm L_2$ defines a homomorphism from $ \imm (S^3, \R^5 ) $ to $ \Z$. If $f: S^3 \hookrightarrow \R^5 $ is an embedding, then $ L_1 (f) = L_2 (f) = 0 $, hence $ L_1 \pm L_2 $ is $0$ on the $24$-index subgroup $ \emb (S^3, \R^5 )$ of $ \imm ( S^3, \R^5 ) \cong \Z$. It follows that $L_1 \pm L_2 $ is $0$ for any stable immersion. 
\end{proof}

\begin{rem}\label{re:notstabL}
$L$ can be defined also for nonstable immersions which do not have triple values. Any immersion $f$ admits a small perturbation by regular homotopy to a stable immersion $ \tilde{f}$, and if $f$ does not have triple values, then any two stable perturbations can be joined with a regular homotopy without stepping through triple point. Thus $L(f)$ can be defined as $L( \tilde{f}) $ of any small stable perturbation $ \tilde{f}$ of $f$.
\end{rem}

\begin{rem}\label{re:St}
Ekholm also defined the invariant $ \mbox{St} (f) = (\Omega (f) + L(f))/3 $ for stable immersions $f: S^3 \looparrowright \R^5$, which is the analogue of Arnold's `strangeness' for plane curve immersions \cite[2.2., 6.5.]{ekholm3}. $ \mbox{St}$ changes by $ \pm 1$ under triple point moves, it is additive with respect to connected sum, and changes sign under precomposing an immersion with a reflection in the source. For immersions associated with holomorphic germs we express $ L$ and $ \mbox{St} $ in terms of $ C$ and $ T$, see Subsection~\ref{ss:LSt}.
\end{rem}

\subsection{Ekholm--Sz\H{u}cs formulas}\label{s:ss}

Ekholm and Sz\H{u}cs generalized the Hughes--Melvin formula \ref{th:HM} for immersions
via generic \emph{singular Seifert surfaces},
in two different ways:   mapped either in $\R^5$ or in $\R^6_+$ \cite{ESz},
 see also
 \cite{Esz2,saeki}.


 If $M^4$ is a compact oriented $4$-manifold and $ g: M^4 \to \R^5 $ is a stable $ \mathcal{C}^{\infty} $ map, then
  $g$ has isolated \emph{$ \Sigma^{1, 1} $--points (cusps)}. These are the singular points of $g$ restricted to its singular locus $ \Sigma^1 \subset M^4 $. Each cusp point is endowed with a well--defined sign.
  Let $\#\Sigma^{1,1}(g)$ be their algebraic number (cf.  \cite{ESz}).

\begin{thm}[Ekholm, Sz\H{u}cs \cite{ESz}] Let $ f: { S}^3 \looparrowright \R^5 $ be an immersion
and $ M^4 $ be a compact oriented $4$-manifold with boundary ${  S}^3 $. Let
$ \tilde{f}: M^4 \to \R^5 $ be a generic map such that
$ \tilde{f}|_{ \partial M^4} $ is regular homotopic to $ f $ and $ \tilde{f}$ has no singular points
near the boundary. Then 
\begin{equation}\label{eq:cuspos}
\Omega (f) = \pm \frac{1}{2} (3 \sigma (M^4) + \# \Sigma^{1, 1} (\tilde{f}))  \mbox{ .} \end{equation}
\end{thm}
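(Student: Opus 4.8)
The plan is to compute $\Omega(f)$ as a \emph{relative first Pontryagin number} of the stable framing of $TM^4$ induced by $\tilde f$, and then to evaluate this number in two independent ways. First I would record that the Hughes--Melvin description (\ref{eq:Smale}) presents $\Omega(f)$ as the homotopy class of the Jacobian over the boundary, and that away from its singular set the differential $d\tilde f$ produces an isomorphism $TM\oplus\underline{\R}\cong\tilde f^*T\R^5\cong\underline{\R^5}$, i.e. a stable trivialization of $TM$. The immersion hypothesis near $\partial M$ guarantees this trivialization restricts on $\partial M=S^3$ to the Smale framing of $f$, so the class $\Omega(f)\in\pi_3(SO(5))\cong\Z$ is exactly the boundary defect of the framing.

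The key identity is the Hirzebruch signature theorem in the relative (framed-boundary) setting: evaluating the relative Pontryagin class $\bar p_1$ built from this framing gives $\langle \bar p_1,[M,\partial M]\rangle = 3\sigma(M^4) - 2\,\Omega(f)$, up to the overall sign and the normalization of the generator of $\pi_3(SO)\cong\Z$ (a boundary framing of class $\Omega$ shifts $p_1$ by $2\Omega$, since the generator realizes $p_1=\pm 2$). Thus it remains to compute the left-hand side geometrically. The point is that $\bar p_1$ is the obstruction to extending the framing across the locus where $d\tilde f$ drops rank, so it localizes on the singular set.

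The technical heart is the local analysis of this localization over the two Morin strata. Over the fold surface $\Sigma^1=\Sigma^{1,0}$ the kernel line field of $d\tilde f$ extends and the standard fold normal form lets one repair the framing, so folds contribute nothing; the obstruction is concentrated at the isolated cusps $\Sigma^{1,1}$, where the kernel direction becomes tangent to $\Sigma^1$. A direct computation in the $A_2$ normal form shows each cusp contributes $\pm 1$, giving $\langle \bar p_1,[M,\partial M]\rangle = \pm\,\#\Sigma^{1,1}(\tilde f)$. Combining with the signature identity yields $\Omega(f)=\tfrac12\big(3\sigma(M^4)\pm\#\Sigma^{1,1}(\tilde f)\big)$, which is (\ref{eq:cuspos}). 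As a consistency check, gluing two fillings of the same $f$ along $S^3$ cancels $\Omega$ and reduces well-definedness to the closed identity $3\sigma(W)\pm\#\Sigma^{1,1}(g)=0$ for generic $g\colon W^4\to\R^5$, which one verifies on the generator $\C P^2$ of the (rational) oriented bordism group, where $\sigma=1$ forces exactly three cusps; and when $\tilde f$ is an honest embedded Seifert surface there are no cusps and the formula collapses to the Hughes--Melvin Theorem~\ref{th:HM}.

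The main obstacle is precisely the cusp localization in the third step: establishing rigorously that the relative Pontryagin obstruction vanishes along the fold surface and equals $\pm 1$ at each $A_2$ point requires careful orientation and sign bookkeeping in the Morin normal forms, together with the constant in the relative signature theorem. The universal factors $3$ and $\tfrac12$, and above all the overall sign, are not pinned down by the homotopy-theoretic argument alone; following the strategy indicated in Chapter~\ref{ch:ass}, I would fix them by matching against Hughes--Melvin on embeddings and by direct evaluation on an explicit example realizing a generator of $\imm(S^3,\R^5)\cong\Z$.
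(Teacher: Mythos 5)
A preliminary point: the thesis never proves this statement. Formula (\ref{eq:cuspos}) is quoted from Ekholm--Sz\H{u}cs \cite{ESz} as an external result; the only original contribution of the thesis concerning it is the removal of the sign ambiguity (Theorem \ref{thm:new}), obtained in Sections \ref{s:eszcomp} and \ref{s:calc} by evaluating the already-known formulas on two explicit holomorphic examples (the cross cap and the $A_1$ covering) and solving for the signs. So there is no internal proof to compare yours with; your proposal has to be measured against the actual Ekholm--Sz\H{u}cs argument, i.e.\ you are reproving a cited theorem from scratch.

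Judged on that basis, your skeleton is the right one and is consistent with the formalism the thesis itself reviews in the $\R^4$ setting: the Hirzebruch defect of the stable Smale framing equals $-2\Omega(f)$ (this is what (\ref{eq:hughes2}) encodes after composing with $\R^4\subset\R^5$), the relative signature theorem converts this into $\langle \bar p_1,[M,\partial M]\rangle = 3\sigma(M^4)-2\Omega(f)$, and your consistency checks are sound: Novikov additivity reduces well-definedness to the closed identity $\#\Sigma^{1,1}(g)=\mp 3\sigma(W)$, which it suffices to verify on $\C P^2$ since both sides are characteristic numbers, and the formula degenerates to Hughes--Melvin (Theorem \ref{th:HM}) for embedded Seifert surfaces. (Deferring the overall sign to examples is not a defect, since the statement carries a $\pm$.) The genuine gap is the step you yourself flag as the technical heart: the claim that the stable trivialization induced by $d\tilde f$ on $M^4\setminus\Sigma^1$ extends, after modification, across the fold surface with zero contribution, and that each cusp contributes exactly $\pm 1$ with sign equal to the sign of the cusp. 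This is not a routine verification. The trivialization is a priori defined only off a codimension-two surface, so ``the relative $\bar p_1$ of the framing'' is not even well defined until the fold analysis (kernel line field, normal form, orientation conventions) has been carried out; and this localization lemma is precisely the content of the Ekholm--Sz\H{u}cs theorem, just as the analogous localization at $\Sigma^{2,0}$ points in (\ref{eq:umb}) is the content of the results of \cite{tak, EkTak}, which the thesis likewise only cites. As written, your argument reduces the theorem to that unproved lemma; to close it you would need the explicit $A_2$ normal-form computation and the fold-vanishing argument, with the orientation bookkeeping done in full.
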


The second formula uses stable $ \mathcal{C}^{\infty} $ maps $ g: M^4 \to \R^6 $ defined on  compact oriented $4$-manifolds $M^4$.
It involves two topological invariants associated with such a map.
Next we review their definitions. They will be computed for two concrete holomorphic  maps
in order to identify the missing sign in Section~\ref{s:eszcomp}.

 If $g$ is as above, then it
 has isolated triple values (three local sheets of $ M^4$ intersecting in general position).
 Such a point is endowed with a well--defined sign \cite[2.3]{ESz}).
\begin{defn}[\cite{ESz}]
 $t(g) $ denotes the algebraic number of the triple values of $g$.
\end{defn}

Next, assume that $ \partial{M^4} = { S}^3$ and
$g: (M^4, \partial M^4) \to (\R^6_+, \partial \R^6_+ )$
is generic, it is  nonsingular near the boundary, and  $ \tilde{f}^{-1}(
\partial \R^6_+ )= \partial M^4 $.
 Here  $\R^6_+ $ is the closed half--space of $\R^6$.
 The set of  double values of $ g$ is an immersed oriented $2$-manifold,  denoted by $D(g)$.
 Its oriented boundary consists of two parts, the intersection
  of $D(g)\cap \partial \R^6_+$, and the other,
  disjoint with $\partial \R^6_+$, is the set of singular values $\Sigma(g)$ of $ g $.
 Let $ \Sigma'(g) $ be a copy of $ \Sigma(g) $ shifted slightly along the outward normal vector field
 of $ \Sigma(g) $ in $D(g)$. Then $ \Sigma'(g) \cap g(M^4) = \emptyset $.

\begin{defn}[\cite{ESz}]\label{d:l}
 $ l(g) $ denotes the linking number of $g(M^4) $ and $ \Sigma'(g) $ in $ (\R^6_+, \partial \R^6_+ ) $.
\end{defn}

\begin{thm}[Ekholm, Sz\H{u}cs \cite{ESz}]\label{th:ESz}
 Let $ f: { S}^3 \looparrowright \R^5 $ be an immersion and $ M^4 $ be a compact oriented $4$-manifold with boundary $ \partial M^4 = { S}^3 $. Let $ \tilde{f}: (M^4, \partial M^4) \to (\R^6_+, \partial \R^6_+ )$ be a generic map nonsingular near the boundary, such that $ \tilde{f}^{-1}( \partial \R^6_+ )= \partial M^4 $ and $ \tilde{f}|_{\partial M^4} $ is regular homotopic to $ f $. Then
 \begin{equation}\label{eq:hurk}
 \Omega (f) = \pm \frac{1}{2} (3 \sigma (M^4) + 3 t(\tilde{f}) - 3 l(\tilde{f}) +
 L(\tilde{f}|_{\partial M^4} )) . \end{equation}
\end{thm}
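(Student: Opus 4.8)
The plan is to reduce the second formula (\ref{eq:hurk}) to the first one (\ref{eq:cuspos}), which I may assume, by projecting a generic map into $\R^6_+$ down to $\R^5$ and comparing the two singularity counts. Write $\R^6 = \R^5 \times \R$ with $\R^6_+ = \R^5 \times [0,\infty)$ and $\partial\R^6_+ = \R^5 \times \{0\}$, and let $\pi\colon \R^6 \to \R^5$ be the projection forgetting the last coordinate. Given a generic $g = \tilde f\colon (M^4,\partial M^4)\to(\R^6_+,\partial\R^6_+)$ satisfying the hypotheses, I would choose the projection direction generic with respect to $g$ so that $\bar g := \pi\circ g\colon M^4\to\R^5$ is again a generic (stable) map.

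Since $g$ is nonsingular near $\partial M^4$ and sends $\partial M^4$ into the hyperplane $\partial\R^6_+$, the restriction $\bar g|_{\partial M^4}$ equals $g|_{\partial M^4}$ and is thus regular homotopic to $f$, while $\bar g$ stays nonsingular near the boundary. Hence $\bar g$ is an admissible singular Seifert surface for the first formula, yielding $\Omega(f) = \pm\frac12\bigl(3\sigma(M^4) + \#\Sigma^{1,1}(\bar g)\bigr)$. Because this holds for the very $M^4$ that appears in (\ref{eq:hurk}), it remains only to prove the purely geometric identity
\[ \#\Sigma^{1,1}(\bar g) = 3t(g) - 3l(g) + L\bigl(g|_{\partial M^4}\bigr) \]
with compatible signs; combined with the independence already guaranteed by the first formula, this identity automatically makes the right-hand side of (\ref{eq:hurk}) independent of all choices.

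The heart of the argument, and the step I expect to be the main obstacle, is this singularity-count identity. A point of $M^4$ is a cusp of $\bar g$ precisely when it is a suitably degenerate point of the apparent contour of $g$, i.e. where the projection direction is tangent to the image of $g$ to second order, and the task is to account for all such points in terms of the three global features of $g$. I would install a local normal form at each feature: at a transverse triple value of $g$ the three local sheets project so as to create cusps, contributing the term $3t(g)$, the factor $3$ reflecting the three pairs among the three sheets and being consistent with the fact that Ekholm's invariant $L$ jumps by $\pm 3$ across a triple-point move (Proposition~\ref{pr:Leq}(b)); the interior double-value surface $D(g)$ contributes through its tangencies to the projection direction, and comparing the shifted singular locus $\Sigma'(g)$ of Definition~\ref{d:l} with $g(M^4)$ should identify this contribution with $-3l(g)$; finally the double-point curve $\gamma\subset \partial M^4 = S^3$ contributes $L(g|_{\partial M^4})$, matching the linking definition (\ref{eq:L1}) of Ekholm's invariant of the boundary immersion.

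Summing these contributions and substituting into (\ref{eq:cuspos}) gives (\ref{eq:hurk}). The residual work is entirely bookkeeping: fixing coherent orientations and signs at each singularity stratum, checking that a generic projection creates no spurious cusps beyond those enumerated above, and confirming that the overall sign agrees with that of the first formula once a generator of $\pi_3(SO(5))$ is fixed as in Section~\ref{s:signconv}. I expect the orientation and sign conventions along the double-value surface $D(g)$ and the precise matching of the boundary contribution with $L$ to be the genuinely delicate parts.
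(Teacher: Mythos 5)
You should first note that the thesis contains no proof of this statement: it is quoted verbatim as a theorem of Ekholm and Sz\H{u}cs from \cite{ESz}, and the thesis' only original contribution concerning (\ref{eq:hurk}) is the determination of the correct sign (Theorem~\ref{thm:new}), obtained in Sections~\ref{s:eszcomp} and \ref{s:calc} by evaluating every term of the formula on two explicit holomorphic examples (the cross cap and $A_1$). Your proposal therefore has to stand on its own as a proof of the cited theorem, and as such it has two genuine gaps.

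The first gap is in the reduction step. You cannot conclude that $\bar g=\pi\circ g$ is nonsingular near the boundary. Near $\partial M^4$ the projection direction is not free: it must stay transverse to $\partial \R^6_+$ so that $\bar g|_{\partial M^4}$ remains (up to a linear isomorphism of $\R^5$) equal to $g|_{\partial M^4}$, and for \emph{any} such fixed direction the locus of boundary points $x$ with $e_6\in dg_x(T_xM^4)$ is generically a curve in $\partial M^4$: the tangency condition has codimension $2$, while $\partial M^4$ is $3$--dimensional. In the natural case where the collar of $g$ is vertical (${\rm im}(g)$ near the boundary equal to $f(S^3)\times[0,\epsilon)$), the projection is singular along the \emph{entire} boundary. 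This can be repaired, but only by modifying $g$ rel $\partial M^4$: re-collar so that the collar direction is $\nu+e_6$, where $\nu$ is a nowhere zero normal field of $f$ in $\R^5$ (it exists since the normal bundle of $f$ is trivial, cf. Subsection~\ref{ss:L}); then $e_6\in dg_x(T_xM^4)$ would force $\nu(x)\in df_x(T_xS^3)$, which is impossible. None of this appears in your argument, and since the modification takes place in a region containing double values of $g$, one must also check that $t(g)$, $l(g)$ and the boundary restriction are unchanged.

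The second, more serious gap is that the identity $\#\Sigma^{1,1}(\bar g) = 3t(g)-3l(g)+L(g|_{\partial M^4})$ -- which after the reduction \emph{is} the theorem -- is only asserted, and the local mechanism you offer for it is wrong. Cusps of $\bar g$ occur along the closure of the locus where the vertical direction is tangent to ${\rm im}(g)$; for a generic direction this locus is disjoint from the triple values of $g$, whose three sheets are pairwise transverse and individually non-vertical. Hence a triple value of $g$ contributes no singular point of $\bar g$ at all: it simply projects to a triple value of $\bar g$, which $\#\Sigma^{1,1}$ does not count. The terms $3t(\tilde f)$, $-3l(\tilde f)$ and $L(\tilde f|_{\partial M^4})$ therefore cannot be produced by local counts at the features of $g$; they can only emerge from a global linking/intersection-theoretic argument relating the cusp locus of the projection to the double-value surface $D(g)$, its boundary $\Sigma(g)$, and the double points of $f$. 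That global argument is exactly the content of \cite{ESz} (and is what consistency checks such as the jump of $L$ by $\pm 3$ under triple point moves, Proposition~\ref{pr:Leq}, merely reflect); the ``bookkeeping'' you defer is in fact the theorem.
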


\section{Outline of other related results}

\subsection{Immersions of $ S^3 $ to $ \R^4 $}\label{ss:s3r4}

Here we review a formula for the Smale invariant for immersions of $ S^3 $ to $ \R^4 $ \cite{hughes, EkTak}. For some connections with our cases see Subsection~\ref{ss:implumb} and Example~\ref{ex:quot}.

The Smale invariant of an immersion $f: S^3 \looparrowright \R^4 $ is an element of 
$ \pi_3 ( V_3 ( \R^4 )) = \pi_3 (SO(4)) \cong \Z \oplus \Z $. The description of the generators of $ \pi_3 (SO(4)) \cong \Z \oplus \Z $ (usually denoted by $ \sigma $ and $ \rho $) can be found in \cite{steenrod} or \cite{EkTak}, and also in Section~\ref{s:signconv} of this thesis (where the first generator is denoted by $L$ instead of $ \sigma $).

The standard embedding $ i: \R^4 \hookrightarrow \R^5 $ induces an embedding $ \iota: SO(4) \hookrightarrow SO(5) $, and so a homomorphism $ \pi_3 ( \iota ): \pi_3 ( SO(4)) \to \pi_3 (SO(5)) $. This homomorphism is $  \pi_3 ( \iota ): \Z \oplus \Z \to \Z $, $ (a, b) \mapsto a+2b $ (cf. \cite{steenrod, EkTak} and Section~\ref{s:signconv}). Thus for an immersion $ f:  S^3 \looparrowright \R^4 $ with $ \Omega(f) = (a, b) $, the Smale invariant of the composition $ i \circ f: S^3 \looparrowright \R^5 $ is $ \Omega ( i \circ f) = a+2b $.

Hughes \cite{hughes} expressed the Smale invariant in terms of an immersed Seifert surface as follows. 
\begin{thm}[{\cite[Theorem 3.1.]{hughes}}]\label{th:hughes} 
Let $f: S^3 \looparrowright \R^4 $ be an immersion which admits an immersed Seifert surface, i.e. there is a compact oriented $4$-manifold $ M^4 $ with boundary $ \partial M^4 \simeq S^3 $ and an immersion $ F: M^4 \looparrowright \R^4 $ such that $ F|_{ \partial M^4} = f $. Then
\begin{equation}\labelpar{eq:hughes}
\Omega(f) = \left( \chi (M^4 ) -1, \frac{3 \sigma ( M^4)-2( \chi(M^4)-1)}{4}  \right) \in \Z \oplus \Z \mbox{,}
\end{equation}
where $ \chi (M^4) $ is the Euler characteristic, and $ \sigma (M^4) $ is the signature of $ M^4 $.
\end{thm}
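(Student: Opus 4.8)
The plan is to compute the two coordinates of $\Omega(f)\in\pi_3(SO(4))\cong\Z\oplus\Z$ separately, using the immersed Seifert surface to produce an explicit representing map $S^3\to SO(4)$. First I would adapt the Hughes-type definition of the Smale invariant \eqref{eq:Smale}: since $F:M^4\looparrowright\R^4$ is an immersion between manifolds of equal dimension, its differential $dF:TM^4\to F^*T\R^4=\underline{\R^4}$ is a bundle isomorphism, i.e.\ a trivialization $\tau$ of $TM^4$. Restricting $\tau$ to the boundary $S^3=\partial M^4$ and comparing it with the framing inherited from the standard disk $D^4\subset\R^4$ (which bounds the standard embedding $\iota$, for which $\Omega(\iota)=0$) produces a map $g:S^3\to SO(4)$ whose class is $\Omega(f)$. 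I would fix the basis $(\sigma,\rho)$ of $\pi_3(SO(4))$ together with the orientation and sign conventions of Section~\ref{s:signconv}, and record the two detecting homomorphisms $\pi_3(SO(4))\to\Z$, one a relative Euler number and the other a Pontryagin-type number; these jointly determine the class.

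For the first coordinate I would use the normal (Gauss map) degree $D(f)$ of $f:S^3\looparrowright\R^4$, which by Example~\ref{ex:snsq} accounts for the $\sigma$-component, namely the first component of $\Omega(f)$ equals $D(f)-1$. The remaining point is the classical identity $D(f)=\chi(M^4)$: extending a generic normal-type vector field across $X:=M^4\cup_{S^3}D^4$ and summing its indices by Poincar\'e--Hopf, the total index realizes the degree of the Gauss map of the boundary immersion and equals $\chi(M^4)$ up to the standard reference $\chi(D^4)=1$. This yields the first coordinate $\chi(M^4)-1$, the normalization being confirmed by $M^4=D^4$, where $\chi=1$ gives $0$.

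For the second coordinate I would pass to $\R^5$. By the discussion in Section~\ref{ss:s3r4} the inclusion $\R^4\hookrightarrow\R^5$ induces $\pi_3(\iota):(a,b)\mapsto a+2b$, so it suffices to compute $\Omega(i\circ f)$. Composing $F$ with $\R^4\hookrightarrow\R^5$ gives a map $M^4\to\R^5$ which is again an immersion, hence a singular Seifert surface with no cusps: $\#\Sigma^{1,1}=0$. The Ekholm--Sz\H{u}cs formula \eqref{eq:cuspos} then gives $\Omega(i\circ f)=\pm\tfrac{3}{2}\sigma(M^4)$, with the sign fixed by Section~\ref{s:signconv}. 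Hence $a+2b=\tfrac{3}{2}\sigma(M^4)$, and substituting $a=\chi(M^4)-1$ solves to
\[
b=\frac{3\sigma(M^4)-2(\chi(M^4)-1)}{4},
\]
which is the asserted formula \eqref{eq:hughes}.

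The main obstacle is the bookkeeping of signs and the precise basis of $\pi_3(SO(4))$. One must verify that the two detecting homomorphisms are exactly the normal degree and the signature-linked Pontryagin number in the chosen basis $(\sigma,\rho)$, that $\pi_3(\iota)$ acts as $(a,b)\mapsto a+2b$, and that the factor $\tfrac{3}{2}$ and the denominator $4$ emerge consistently from the Hirzebruch relation $p_1=3\sigma$ on the capped manifold together with the clutching description of $e$ and $p_1$. As a consistency check the solution must be an integer, i.e.\ $4\mid 3\sigma(M^4)-2(\chi(M^4)-1)$; this is forced because $\Omega(f)$ genuinely lands in $\Z\oplus\Z$, and it provides a useful verification of the orientation conventions. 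Keeping all these signs coherent --- rather than any single computation --- is the delicate part.
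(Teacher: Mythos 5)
The thesis itself contains no proof of this statement: it is quoted directly from Hughes \cite[Theorem 3.1]{hughes}, and the surrounding text only records the reformulation (\ref{eq:hughes2}) and its compatibility with (\ref{eq:hm}). So your proposal has to be measured against Hughes's original argument, which is a clutching-function computation: the framing of $TM^4$ given by $dF$, compared over $\partial M^4$ with the standard framing bounded by $D^4$, is a clutching function for the tangent bundle of the closed manifold $X = M^4\cup_{S^3}D^4$; the two coordinates in the basis $(\sigma,\rho)$ of $\pi_3(SO(4))$ are detected by the Euler number and by $p_1$ (equivalently by the stable image $a+2b$); Gauss--Bonnet and the Hirzebruch signature theorem give $e(TX)=\chi(M^4)+1$ and $p_1(TX)=3\sigma(M^4)$, and subtracting the values $e(TS^4)=2$, $p_1(TS^4)=0$ of the standard reference yields both coordinates at once. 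Your route is genuinely different and in outline viable: the Morse/Poincar\'e--Hopf identity $D(f)=\chi(M^4)$ is correct (a generic height function on $M^4$ has all its critical points on the boundary, since $F$ is a codimension-zero immersion), and applying (\ref{eq:cuspos}) to a small generic perturbation of $i\circ F$, which is still an immersion and hence cusp-free, does give $a+2b=\tfrac{3}{2}\sigma(M^4)$.

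There is, however, one genuine gap. Your first step, ``the first component of $\Omega(f)$ equals $D(f)-1$,'' is not proved but quoted from Example~\ref{ex:snsq}; that example in turn records this fact only as a restatement of formula (\ref{eq:hughes2}), which in the thesis is obtained from (\ref{eq:hughes}) --- the very identity you are proving --- together with $D(f)=\chi(M^4)$ and $H(f)=-3\sigma(M^4)$, and is attributed to Ekholm--Takase in general. As written, the step is circular. You can repair it in two ways: either invoke Ekholm--Takase's independent proof of (\ref{eq:hughes2}), but then the whole theorem follows in one line from $D(f)=\chi(M^4)$ and $H(f)=-3\sigma(M^4)$ and your $\R^5$ detour becomes redundant; or prove the identification directly, e.g. by the clutching argument above, where the $\sigma$-coordinate is the relative Euler number $e(TX)-e(TS^4)=\chi(M^4)-1$ --- which is essentially Hughes's proof. (If you take the Ekholm--Sz\H{u}cs route, you should also check that the proof of (\ref{eq:cuspos}) in \cite{ESz} does not itself rest on Hughes's theorem, or the second coordinate inherits the same circularity.) A smaller inaccuracy: the sign in (\ref{eq:cuspos}) is not fixed ``by Section~\ref{s:signconv}'' alone; in the thesis it is pinned down only by the explicit computations of Sections~\ref{s:eszcomp}--\ref{s:calc} (Theorem~\ref{thm:new}), so your argument determines the second coordinate only up to the sign ambiguity that the statement itself resolves by convention.
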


An immersion $f: S^3 \looparrowright \R^4 $ induces a stable framing of $ S^3 $ via the bundle isomorphism $ \mathcal{E}^1 \oplus T S^3 \cong f^* T \R^4 $, where $ \mathcal{E}^1 $ denotes the trivial line bundle. Indeed, the Smale invariant measures the difference of this stable framing and the standard one (induced by the standard embedding). By \cite{KirbyMelvin} the homotopy class of the stable framing is characterized by two integers, the normal degree (also called the degree) $D(f) $ of $f$ and the \emph{Hirzebruch defect} (also called signature defect) $H(f)$. See for definitions \cite{KirbyMelvin, Du}. $D(f) $ is described in detail in Example~\ref{ex:snsq}. Here we review some properties of $H(f) $ which can be used as an alternative definition in our cases. When the stable framing extends to a framing of a compact oriented $4$-manifold $M^4 $ with boundary $ \partial M^4 \simeq S^3 $, then $ D(f)= \chi (M^4 ) $ and $ H(f) = - 3 \sigma (M^4 )$, thus (\ref{eq:hughes}) can be written in the form
\begin{equation}\labelpar{eq:hughes2}
\Omega(f) = \left( D(f) -1, \frac{-H(f)-2( D(f)-1)}{4}  \right) \in \Z \oplus \Z \mbox{.}
\end{equation}
As it is proved in \cite{EkTak}, the Smale invariant formula (\ref{eq:hughes2}) holds for all immersions $ f: S^3 \looparrowright \R^4 $.

Note that the formulas (\ref{eq:hughes}) and (\ref{eq:hughes2}) are compatible with (\ref{eq:hm}). Indeed, by (\ref{eq:hughes}) and (\ref{eq:hughes2}) the Smale invariant of 
$ i \circ f: S^3 \looparrowright \R^5 $ is $ \Omega( i \circ f ) = - \frac{1}{2} H(f) =  \frac{3}{2} \sigma (M^4 ) $, where $M^4 $ is an immersed Seifert surface. See \cite{EkTak, szucstwo}.

For an arbitrary immersion $ f: S^3 \looparrowright \R^4 $ the Hirzebruch defect $H(f) $ can be expressed in terms of a singular Seifert surface, i.e., a stable map $ F: M^4 \to \R^4 $ from a compact oriented $4$-manifold with boundary $ \partial M^4 \simeq S^3 $, such that $ F|_{\partial M^4 } $ is regular homotopic with $f $ and $ F$ is regular near the boundary. Such a map $F$ has isolated $ \Sigma^{2,0} $-points, also called \emph{umbilic points}, i.e. the points where the rank of $dF$ is $2$, each of them can be equipped with a sign. Then
\begin{equation}\labelpar{eq:umb}
H(f) = -3 \sigma (M^4 ) - \sharp \Sigma^{2,0}(F) \mbox{,}
\end{equation}
where $ \sharp \Sigma^{2,0}(F)$ is the algebraic number of the $ \Sigma^{2,0} $-points. See \cite{tak, EkTak}. In \cite[Theorem 2.7]{EkTak} the formula~(\ref{eq:umb}) is generalized for non-stable maps with isolated umbilic points. In this case each umbilic point can be equipped with an index, and if $ \sharp \Sigma^{2,0}(F) $ denotes the sum of the indices, (\ref{eq:umb}) holds in the same form.

\subsection{Immersions associated with plumbing graphs}\label{ss:implumb} Ekholm and Takase \cite{EkTak} and Kinjo \cite{kinjo} defined immersions associated with plumbing graphs, and calculated their Smale invariant by using the formulae (\ref{eq:hughes2}) and (\ref{eq:umb}). Here we summarize their results.

Let $ \Gamma $ be a plumbing graph with all genera $0$ and even Euler numbers, see Subsection~\ref{ss:plumbingcalc} or \cite{neumann1}. We denote by $M^3( \Gamma) $ (resp. $M^4( \Gamma) $) the $3$-manifold (resp. the $4$-manifold with boundary) associated with $ \Gamma $. Note that $ \partial M^4( \Gamma) =M^3 ( \Gamma)$. Then $ \Gamma $ determines an immersion 
$ f_{ \Gamma }: M^4( \Gamma ) \looparrowright \R^4  $, and also 
$ f_{ \Gamma }|_{M^3 ( \Gamma)}:  M^3( \Gamma ) \looparrowright \R^4$ in the following way \cite{EkTak, kinjo}.

If the algebraic number of the double values of a stable immersion $ S^2 \looparrowright \R^4 $ is $ n$, then the Euler number of its normal bundle is $(-2n)$. Therefore, for a vertex $v \in \Gamma $ with genus $g_v=0$ and Euler number $e_v=-2n$ one associates a stable immersion $ S^2 \looparrowright \R^4 $ with $n$ double values, and its tubular neighbourhood is the image of an immersion from the disc bundle over $ S^2 $ with Euler number $(-2n)$. Plumbing these immersions for each vertex according to $ \Gamma $ provides the immersion $  f_{ \Gamma } $.

If $ M^3( \Gamma ) $ is the quotient of $ S^3 $ by an action of a finite group $ G$, then the composition of the factorization
$ S^3 \to S^3/G = M^3( \Gamma ) $ and the immersion $ f_{ \Gamma }|_{M^3 ( \Gamma)}: M^3 ( \Gamma) \looparrowright \R^4 $ defines an immersion $ g_{ \Gamma }: S^3 \looparrowright \R^4 $.

In \cite{EkTak} $ \Gamma $ is the graph with one vertex and Euler number $ 2n $, and so $ M^3( \Gamma ) $ is a lens space. The Smale invariant of the corresponding immersion $ g_n: = g_{ \Gamma }: S^3 \looparrowright \R^4 $ is
\begin{equation}\labelpar{eq:ektaksmale}
\Omega (g_n)= (4n-1, (n-1)^2) \in \Z \oplus \Z \mbox{.}
\end{equation}

In \cite{kinjo} $ \Gamma $ is the weighted Dynkin diagram $ A_{n-1} $ (resp. $D_{n+2}$)  with Euler number $ 2 $ of each vertex. So $ M^3( \Gamma )=L(n, 1) $ is a lens space (resp. $ M^3( \Gamma )=S^3/D_n $, where $ D_n $ is the binary dihedral group or also called dicyclic group). The Smale invariant of the corresponding immersion $ a_n: = g_{ A_{n-1} }: S^3 \looparrowright \R^4 $ (resp. $ d_n: = g_{ D_{n+2} }: S^3 \looparrowright \R^4 $) is
\begin{equation}\labelpar{eq:kinjosmalea}
\Omega (a_n)= (n^2-1, 0) \in \Z \oplus \Z \mbox{, resp.}
\end{equation}
\begin{equation}\labelpar{eq:kinjosmaled}
\Omega (d_n)= (4n^2+12n-1, 0) \in \Z \oplus \Z \mbox{.}
\end{equation}


For any plumbing graph $ \Gamma $, the composition $ i \circ f_{ \Gamma }|_{M^3 ( \Gamma)}:  M^3 ( \Gamma) \looparrowright \R^5 $ (where $i: \R^4 \hookrightarrow \R^5 $ is the standard embedding) can be deformed to an embedding by a regular homotopy in $ \R^5 $. Indeed, the composition $ S^2 \looparrowright \R^4 \hookrightarrow \R^5 $ can be deformed to a embedding by a regular homotopy, and this induces a regular homotopy of the tubular neighbourhood. Thus the immersions $ i \circ a_n$ and $ i \circ d_n $ have the same structure as the immersions associated with the covering maps $ ( \C^2, 0) \to (X, 0) \subset ( \C^3, 0) $ of the $A_{n-1}$ and $D_{n+2}$ singularities, namely $ S^3 \to S^3/G \hookrightarrow \R^5 $. 

The immersions associated with the covering maps of the $A_{n-1}$ and $D_{n+2}$ quotient singularities have Smale invariant $ (-(n^2-1))$, resp. $ (-(4n^2+12n-1))$, see Example~\ref{ex:quot}.  Therefore they are regular homotopic either with the immersion  $i \circ a_n $ (resp. $i \circ d_n  $) or $i \circ a_n \circ \tau $ (resp. $i \circ d_n \circ \tau $) from $ S^3 $ to $ \R^5 $, where $i: \R^4 \hookrightarrow \R^5 $ is the standard embedding, and $ \tau: \R^4 \to \R^4 $ is the reflection $ \tau (x, y, z, w) = (x, y, z, -w)$. Note that precomposing with $ \tau $ changes the sign of the Smale invariant. But there are other differences between the two types of immersions as well, which may change the sign. In our construction $ S^3/G $ is the lens space $ L(n, n-1) $, which is diffeomorphic to $ L(n, 1)$ with reversed orientation. Indeed, the plumbing graph of $ S^3/G $ in our case is $ A_{n-1}$ (resp. $D_{n+1}$) with Euler numbers $(-2)$. Moreover the author of this thesis does not know the relation of the sign conventions for the Smale invariant used in \cite{kinjo} and here, see Section~\ref{s:signconv}. It would be interesting to see a direct relation between these immersions, which applied to the $ E_6 $, $ E_7$, $E_8$ graphs may provide the Smale invariant of the immersions associated with these graphs.

The immersion  $ i \circ g_1 =i \circ a_2: S^3 \to \R \mathbb{P}^3 \hookrightarrow \R^5 $ was studied in several papers, e.g. \cite{Mimm, ekholm3}, and it has many connections with our work. Ekholm's invariant $L(i \circ g_1)$ is well-defined by Remark~\ref{re:notstabL}, as $L$ of a small stable perturbation of $i \circ g_1 $ with regular homotopy. 
A regular homotopy deforming $i \circ g_1$ to a stable immersion is given in \cite[Section 4]{EkTak}, 
and by the calculation from that article $ L (i \circ g_1) =0 $ follows. Another stabilizing regular homotpy of $ i \circ g_1 $ (or $ i \circ g_1 \circ \tau $)  is given in Subsection~\ref{ss:A} via the stabilization of the double covering map of the $A_1$ singularity, and $L(i \circ g_1)=0$ follows too. Note that the number of the double curve components (Ekholm's $J$ invariant) is different in the two cases: the stable immersion given in \cite{EkTak} has one double curve component, while the one given in Subsection~\ref{ss:A} has $3$. The immersion associated with the corank--$2$ germ of Example~\ref{ex:cor2} can also be considered as a stable version of $ i \circ g_1 ( \circ \tau )$ with $5$ double curve components, see Subsection~\ref{ss:2cor}. Indeed, $ C( \Phi ) =3$ and $ T( \Phi )= 1$ implies $ \Omega(\Phi|_{S^3} )= -3 $ and $ L ( \Phi|_{S^3} ) = 0 $, cf. \ref{ss:LSt}, \ref{TH:MAIN}.

\subsection{Immersions of $3$-manifolds}\label{ss:m3r45}

This subsection contains a summary of the immersions of arbitrary $3$-manifolds. We mention here some results about their characterization up to regular homotopy according to \cite{Wu, saeki}, and up to cobordism according to \cite{hughes, tak}.

Since every closed oriented $3$-manifold is parallelizable, a choice of a trivialization $\tau=(v_1, v_2, v_3) $ of the tangent bundle $ TM^3$ determines a bijection between $ \imm (M^3, \R^q )$ and $ [M^3, V_3 ( \R^q) ]$, which associates with an immersion $ f: M^3 \looparrowright \R^q $ the map 
$ p \mapsto df_p (\tau) $, cf. Theorem~\ref{th:hirsch2}. If $q=4$, 
\[ [M^3, V_3 ( \R^4) ] \cong [M^3, SO(4) ] \cong [M^3, S^3 \times SO(3) ]  
\cong [M^3, S^3 ] \times [M^3, SO(3) ]\] 
holds.
Note that the degree (resp. the degree and the induced homomorphism between the fundamental groups) completely characterises the maps from $M^3 $ to $S^3 $ (resp. the maps from $M^3 $ to $SO(3) $) up to homotopy.

Consider an immersion $ f: M^3 \looparrowright \R^4 $. By \cite{Wu} the corresponding invariant in $ \Z \times \Z \times H^1 (M^3, \Z_2 ) $ can be identified as follows. 

The orientation determines a normal framing $u$ of the immersion $f$. The first integer is the normal degree $D(f)$ of $f$, i.e. the degree of the Gauss map $M^3 \to S^3$, $ p \mapsto u(p)$, cf. Example~\ref{ex:snsq} and (\ref{eq:hughes}).


 In $ \R^4 \cong \mathbb{H} $ any vector $v$ determines a basis $ (v, iv, jv, kv)$, where $ (i, j, k) $ is the canonical basis of the pure imaginary quaternions. $ df_p (\tau) $ and $ ( i u(p), j u (p), k u (p) ) $ are two bases of $ u^{\bot} \subset \R^4 $, and the transition matrix between them defines a map $ M^3 \to GL^+(3, \R) $, which composed with the Gram-Schmidt process $ GL^+(3, \R) \to SO(3) $ provides the map $ \beta_f: M^3 \to SO(3)$. The degree $b_f$ of $ \beta_f $ is the second integer. The third invariant is the cohomology class $h_f$ induced by the homomorphism $ \pi_1 ( \beta_f): \pi_1 ( M^3) \to \pi_1 (SO(3)) \cong \Z_2 $.  
 
 \begin{thm}[{\cite[Theorem 1]{Wu}}] The map $ \imm (M^3, \R^4 ) \to \Z \times \Z \times H^1 (M^3, \Z_2 )$ induced by the correspondence $ f \mapsto (D(f), b_f, h_f) $ is a bijection.
 \end{thm}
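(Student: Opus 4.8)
The plan is to reduce the classification to a purely homotopy-theoretic computation by Hirsch--Smale theory and then to run obstruction theory. Since $M^3$ is closed and oriented it is parallelizable; fix once and for all a trivialization $\tau=(v_1,v_2,v_3)$ of $TM^3$. As $m=3<q=4$, part (b) of Theorem~\ref{th:hirsch2} gives a bijection $\imm(M^3,\R^4)\leftrightarrow[M^3,V_3(\R^4)]$, the class of $f$ being carried to $p\mapsto df_p(\tau)$ (after orthonormalization). A positively oriented orthonormal $3$-frame in $\R^4$ completes uniquely to a positively oriented orthonormal basis, so $V_3(\R^4)\cong SO(4)$ and the whole problem becomes the computation of $[M^3,SO(4)]$ together with the identification of the resulting homotopy invariants with $D(f)$, $b_f$ and $h_f$.

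Next I would split $SO(4)$ quaternionically. With $\R^4=\mathbb{H}$ and $\mu(p,q)(x)=pxq^{-1}$ for unit quaternions, $\mu\colon S^3\times S^3\to SO(4)$ is a double cover and $\mu(p,q)=\mu(pq^{-1},1)\circ\mu(q,q)$, where $\mu(q,q)\colon x\mapsto qxq^{-1}$ lies in $SO(3)$ acting on the imaginary quaternions $1^{\perp}$. This yields a diffeomorphism $SO(4)\cong S^3\times SO(3)$, $A\mapsto(A(1),\mu(q,q))$, hence $[M^3,SO(4)]\cong[M^3,S^3]\times[M^3,SO(3)]$. By Hopf's classical degree theorem $[M^3,S^3]\cong\Z$ via the degree, and under the splitting the $S^3$-coordinate $A(1)$ of $df(\tau)$ is exactly the unit normal field $p\mapsto u(p)$ completing the frame; thus this factor records the normal degree $D(f)$ of Example~\ref{ex:snsq}. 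The $SO(3)$-coordinate is by construction the comparison of $df(\tau)$ with the reference basis $(iu,ju,ku)$ of $u^{\perp}$, i.e. the map $\beta_f$ itself.

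It then remains to classify $[M^3,SO(3)]$ and to extract $(b_f,h_f)$. For this I would use obstruction theory with $\pi_1(SO(3))=\Z_2$, $\pi_2(SO(3))=0$ and $\pi_3(SO(3))=\Z$. Restricting to the $1$-skeleton produces the induced homomorphism $\pi_1(M^3)\to\Z_2$, i.e. the class $h_f\in H^1(M^3,\Z_2)$; since $\pi_2(SO(3))=0$ the extension over the $2$-skeleton is determined up to homotopy, and the homotopy classes of extensions over the top cell form a torsor under $H^3(M^3,\Z)\cong\Z$ (coefficients $\pi_3(SO(3))\cong\Z$), whose coordinate is the degree $b_f$ of $\beta_f$. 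Matching these with the geometric definitions preceding the statement, and using the Hirsch bijection of Theorem~\ref{th:hirsch2} again for injectivity (the triple $(D(f),b_f,h_f)$ determines the homotopy class of $df(\tau)$ in $[M^3,SO(4)]$, hence the regular homotopy class of $f$), would finish the proof.

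The hard part is this last step: pinning down the size of $[M^3,SO(3)]$, equivalently the indeterminacy of the torsor action of $H^3(M^3,\Z)$ over a fixed $h_f$. The delicate point is the nonzero Postnikov $k$-invariant of $SO(3)$, lying in $H^4(K(\Z_2,1),\Z)\cong\Z_2$, which couples the parity of the degree to $h_f$: for instance the universal cover $S^3\to SO(3)$ has oriented degree $2$, so a map trivial on $\pi_1$ realizes only even oriented degrees. One therefore has to choose the normalization of $b_f$ so that, together with $h_f$, it ranges bijectively over $\Z\times H^1(M^3,\Z_2)$; establishing this normalization and the simple transitivity of the resulting action is the crux, after which surjectivity onto the full product and hence the asserted bijection follow.
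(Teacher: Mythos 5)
You should know at the outset that the paper does not prove this statement at all: it is quoted directly from Wu's paper, and the only argument present in the thesis is the setup paragraph preceding it --- the Hirsch--Smale bijection $\imm(M^3,\R^4)\leftrightarrow [M^3,V_3(\R^4)]$ via a trivialization of $TM^3$, the splitting $[M^3,SO(4)]\cong [M^3,S^3]\times[M^3,SO(3)]$, and the \emph{unproved} assertion that the degree (resp.\ the degree together with the induced map on $\pi_1$) classifies maps to $S^3$ (resp.\ to $SO(3)$). Your first two paragraphs reproduce exactly this setup, so your route is the intended one; the entire mathematical content of the theorem is the classification of $[M^3,SO(3)]$ and the identification of its image under $(\deg,h)$, which is precisely where your proposal stops.

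That last step is a genuine gap, and it is not a routine one, because the difficulty you flag is real and, with the definitions exactly as given in the thesis, it obstructs surjectivity. Mod $2$, the degree of any $g\colon M^3\to SO(3)=\R\mathbb{P}^3$ equals $\langle (g^*w)^3,[M^3]\rangle$, since the mod $2$ fundamental cohomology class of $\R\mathbb{P}^3$ is $w^3$; hence $b_f\equiv\langle h_f^3,[M^3]\rangle \pmod 2$ for every immersion, and in particular for $M^3=S^3$ the integer $b_f$ is always even. So the map $f\mapsto (D(f),b_f,h_f)$, with $b_f$ literally the degree of $\beta_f$, is injective but never surjective onto $\Z\times\Z\times H^1(M^3,\Z_2)$: its image is cut out by this congruence. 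The clean way to finish (and to justify injectivity, which you only assert) is the exact sequence $0\to[M^3,S^3]\to[M^3,SO(3)]\to H^1(M^3;\Z_2)\to 0$ coming from the fibration $S^3\to SO(3)\to K(\Z_2,1)$ (surjectivity on the right uses $[M^3,BS^3]=0$, as $BS^3$ is $3$--connected and $\dim M^3=3$), together with additivity of the degree under the pointwise product in $SO(3)$ (the rational fundamental class of $SO(3)$ is primitive): two maps with equal $(\deg,h)$ differ by $p\circ\tilde g$ with $2\deg\tilde g=0$, hence are homotopic by Hopf's theorem. One must then renormalize the second integer coset by coset --- e.g.\ replace $b_f$ by $\bigl(b_f-\varepsilon(h_f)\bigr)/2$, where $\varepsilon(h_f)\in\{0,1\}$ is the parity $\langle h_f^3,[M^3]\rangle$ --- to obtain an honest bijection with $\Z\times\Z\times H^1(M^3,\Z_2)$. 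Since you explicitly defer exactly this normalization and the transitivity argument, your proposal is an accurate reduction of the theorem to its crux rather than a proof of it; the consolation is that the issue you uncovered is a genuine imprecision in the statement as transcribed in the thesis, not a misunderstanding on your part.
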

 
 \vspace{2mm}
 
 In the $ q= 5$ case let us fix a generator $ g \in H^2 (V_3 (\R^5 ), \Z)$ such that $2g$ is the Euler-class of the $S^1$-bundle $ SO(5) \to V_3 ( \R^5) $. Then the \emph{Wu-invariant} of an immersion $ f: M^3 \looparrowright \R^5 $ with respect to the trivialization $\tau= (v_1, v_2, v_3)$ of $TM^3 $ is the element 
 $ c(f) := (df(\tau))^*(g) \in H^2(M^3, \Z )$, cf. \cite[Definition 3.3.]{saeki}. The normal Euler class of $f$ is equal to $ 2c(f) $ by \cite[Theorem 2]{Wu}, see also \cite[Theorem 3.1., Remark 3.2]{saeki}.
 
 Now we restrict the discussion to the regular homotopy classes of immersions with trivial normal bundle, denoted by $ \imm (M^3, \R^5)_0$. The normal Euler class of such an immersion $ f$ is $0$, hence $c(f) $ is an order--$2$ element in $ H^2(M^3, \Z)$. Let $\Gamma_2 (M^3) \subset H^2(M^3, \Z) $ denote the set of the order--$2$ elements. Let $M^3_o= M^3 \setminus B^3 $ denote the non-closed manifold obtained from $M^3$ by removing a $3$-ball.
 
\begin{thm}

(a) \cite[pg. 5]{saeki} $ f \mapsto c(f) $ induces a bijection between $ \imm ( M^3_o, \R^5)_0 $ and $\Gamma_2 (M^3)$.

(b) \cite[pg. 5, 9]{saeki} There is an integer valued regular homotopy invariant $i(f) $ such that the correspondence $ f \mapsto (c(f), i(f))$ induces
 a bijection between $ \imm ( M^3, \R^5)_0  $ and $ \Gamma_2 (M^3) \times \Z$.

(c) \cite[Theorem 3.8.]{saeki} For every element $C \in \Gamma_2( M^3) $ there exists an embedding $ g: M^3 \hookrightarrow \R^5 $ with $c(g)= C$.

\end{thm}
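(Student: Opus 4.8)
The plan is to reduce all three parts to homotopy theory via the Hirsch--Smale correspondence and then to compute the relevant sets of homotopy classes by obstruction theory. Since every closed oriented $3$-manifold is parallelizable, fixing the trivialization $\tau$ of $TM^3$ (and its restriction to $M^3_o$) turns Theorem~\ref{th:hirsch2}(b) into bijections $\imm(M^3,\R^5)\cong[M^3,V]$ and $\imm(M^3_o,\R^5)\cong[M^3_o,V]$, where $V:=V_3(\R^5)$. I first record the homotopy of $V$. It is $1$-connected (the Stiefel manifold is $(5-3-1)=1$-connected); from the fibration $SO(2)\to SO(5)\to V$, together with $\pi_2(SO(5))=0$ and $\pi_1(SO(2))=\Z\twoheadrightarrow\pi_1(SO(5))=\Z_2$, one gets $\pi_2(V)\cong 2\Z\cong\Z$, while $\pi_3(V)\cong\pi_3(SO(5))\cong\Z$ as already used in Subsection~\ref{ss:s3r5}. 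Thus $H^2(V;\Z)\cong\Z$ with generator $g$ dual to $\pi_2(V)$, and the rank--$2$ normal bundle $\nu$ over $V$ (sphere bundle $SO(5)\to V$) has $e(\nu)=2g$. Under $f\mapsto df(\tau)$ the invariant $c(f)=(df(\tau))^*g$ is the pullback of $g$, while the normal bundle of $f$ is $(df(\tau))^*\nu$ with Euler class $2c(f)$; hence triviality of the normal bundle is equivalent to $2c(f)=0$.

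For \emph{part (a)}, note that $M^3_o=M^3\setminus B^3$ deformation retracts onto the $2$--skeleton of $M^3$, so it has the homotopy type of a $2$--complex. The map $V\to P_2V=K(\Z,2)$ is a $3$--equivalence (iso on $\pi_{\le 2}$, onto on $\pi_3$), hence induces a bijection $[M^3_o,V]\xrightarrow{\ \cong\ }[M^3_o,K(\Z,2)]=H^2(M^3_o;\Z)$ given precisely by $f\mapsto c(f)$. Imposing trivial normal bundle restricts the target to $\{x:2x=0\}=\Gamma_2(M^3_o)$. Finally the long exact sequence of the pair $(M^3,M^3_o)$, together with the excision isomorphism $H^*(M^3,M^3_o)\cong H^*(D^3,S^2)$ (concentrated in degree $3$), shows that restriction $H^2(M^3;\Z)\to H^2(M^3_o;\Z)$ is an isomorphism, whence $\Gamma_2(M^3_o)\cong\Gamma_2(M^3)$. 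Composing these identifications proves that $f\mapsto c(f)$ is a bijection $\imm(M^3_o,\R^5)_0\to\Gamma_2(M^3)$.

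For \emph{part (b)}, $M^3$ is genuinely $3$--dimensional, so I pass to the next Postnikov stage: $V\to P_3V$ is a $4$--equivalence, giving $[M^3,V]=[M^3,P_3V]$, and $P_3V$ is the total space of a principal fibration $K(\Z,3)\to P_3V\xrightarrow{p} K(\Z,2)$ with $k$--invariant $k\in H^4(K(\Z,2);\Z)\cong\Z$. The composite $p_*$ is again $f\mapsto c(f)\in H^2(M^3;\Z)$; the obstruction to lifting a class $c$ lies in $H^4(M^3;\Z)=0$, so every $c$ is realized and the fibre of $p_*$ over $c$ is the orbit set of the action of $[M^3,K(\Z,3)]=H^3(M^3;\Z)\cong\Z$. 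The only indeterminacy of this action is the image of a map $H^1(M^3;\Z)\to H^3(M^3;\Z)$, $\alpha\mapsto(\text{const})\cdot(\alpha\cup c)$, coming from the quadratic $k$--invariant. For $c\in\Gamma_2(M^3)$ one has $2(\alpha\cup c)=\alpha\cup 2c=0$ in the torsion--free group $H^3(M^3;\Z)\cong\Z$, so $\alpha\cup c=0$ and the action is free and transitive. Hence over each $c\in\Gamma_2(M^3)$ the fibre is exactly $\Z$; calling this coordinate $i(f)$ (normalised by a choice of base immersion for each $c$, and reducing to the Smale invariant when $M^3=S^3$), and using again that trivial normal bundle forces $c(f)\in\Gamma_2(M^3)$, one obtains the bijection $(c,i)\colon\imm(M^3,\R^5)_0\to\Gamma_2(M^3)\times\Z$.

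For \emph{part (c)}, every closed oriented $3$--manifold bounds, hence embeds in $\R^5$ with a Seifert surface $W^4\subset\R^5$. Splitting $\nu_{M\subset\R^5}=\nu_{M\subset W}\oplus\nu_{W\subset\R^5}|_M$ into two trivial oriented line bundles (the outward normal of $\partial W$ and the normal of the Seifert surface) shows every embedding has trivial normal bundle, so $c(g)\in\Gamma_2(M^3)$. It remains to hit every $C\in\Gamma_2(M^3)$. By Poincar\'e duality $C$ corresponds to a $2$--torsion class in $H_1(M^3;\Z)$, represented by an embedded $1$--manifold $\gamma$ with $2[\gamma]=0$; I would modify a fixed embedding inside a tubular neighbourhood $\gamma\times D^2$ by a twist that shifts $c$ by $C$, the relation $2[\gamma]=0$ guaranteeing that the twist closes up to an embedding rather than merely an immersion. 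The main obstacle is exactly this last step: parts (a) and (b) are essentially formal once the homotopy of $V$ is in hand, but realising a prescribed Wu invariant by an honest \emph{embedding} in the borderline codimension $2$ --- keeping the deformation embedded while changing $c$ --- is the genuinely geometric point, and is where the $2$--torsion constraint must be used with care.
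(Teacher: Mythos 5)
First, a point of context: the thesis itself contains no proof of this statement --- it is quoted verbatim as background from \cite{saeki} (the three parts are attributed to specific pages and to Theorem~3.8 of that paper), so your proposal can only be measured against the cited source and on its own merits, not against an internal argument of the thesis.

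Your treatment of parts (a) and (b) is essentially correct and follows the same route as the source: reduce to homotopy theory via Theorem~\ref{th:hirsch2}(b) (legitimate, since $M^3$ and $M^3_o$ are parallelizable and $3<5$), compute $\pi_2(V_3(\R^5))\cong\Z$, $\pi_3(V_3(\R^5))\cong\Z$ from the fibration $SO(2)\to SO(5)\to V_3(\R^5)$, identify $c(f)=(df(\tau))^*g$ with the pullback of the fundamental class of the second Postnikov stage, and use that the normal Euler class is $2c(f)$ so that normal-bundle triviality is exactly the condition $c(f)\in\Gamma_2$. For (a) the spine argument plus the excision isomorphism $H^2(M^3;\Z)\cong H^2(M^3_o;\Z)$ is fine; for (b) the key computation --- that the holonomy indeterminacy of the Postnikov lifting problem is a multiple of $\alpha\cup c$, which is $2$-torsion and hence zero in $H^3(M^3;\Z)\cong\Z$ once $2c=0$ --- is the right mechanism and makes the $H^3$-action on each fibre free and transitive. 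One caveat: your $i(f)$ is a coordinate obtained by choosing a base immersion in each $c$-class, whereas the invariant $i(f)$ of \cite{saeki} recalled in the thesis is defined intrinsically by the singular Seifert surface formulas (\ref{eq:Mcuspos}) and (\ref{eq:Mhurk}) and is independent of such choices; your construction satisfies the literal existence statement of (b), but it is strictly weaker than what the cited invariant provides (in particular it cannot by itself feed into part (d) of the theorem as stated in the thesis, where $i(f)$ must be computable).

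Part (c) contains a genuine gap, which you yourself flag. Representing the Poincar\'e dual of $C$ by an embedded curve $\gamma$ and ``twisting a fixed embedding inside $\gamma\times D^2$'' is not a construction: the natural normal twist along $\gamma$ (rotating the image in the two normal directions as one traverses $\gamma$) changes the Wu invariant of an \emph{immersion} --- indeed this is implicitly how surjectivity onto $H^2(M^3;\Z)$ is achieved in your part (b) --- but it destroys injectivity, and nothing in your sketch explains how the hypothesis $2[\gamma]=0$ is used to resolve the resulting self-intersections while keeping the prescribed value of $c$. Since realizing a prescribed $2$-torsion Wu class by an honest embedding in the borderline codimension $2$ is precisely the content of \cite[Theorem 3.8]{saeki}, and is the only genuinely geometric (non-formal) assertion of the theorem, your proposal proves (a) and (b) but leaves (c) unestablished.
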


The invariant $i(f)$ can be defined via a singular Seifert surface, it is the analogue of the Smale invariant via formulae (\ref{eq:cuspos}) and (\ref{eq:hurk}).

Introduce the notation $ \alpha (M^3) := \dim_{\Z_2} (\tau H_1 (M^3, \Z) \otimes \Z_2) $ where $ \tau H_1 (M^3, \Z)$ is the torsion subgroup of $ H_1 (M^3, \Z) $. Any immersion $f: M^3 \looparrowright \R^5 $ admits a normal framing $\nu$, however it is not unique up to homotopy. Define $ L_{ \nu}(g) $ as (\ref{eq:L2}) for a stable immersion $g: M^3 \looparrowright \R^5 $ with a fixed normal framing $ \nu$.

 Let $ V^4 $ be a compact oriented $4$-manifold with boundary $M^3 $.
 Let
$ \tilde{f}: V^4 \to \R^5 $ be a generic map such that
$ \tilde{f}|_{ \partial V^4} $ is regular homotopic to $ f: M^3 \looparrowright \R^5 $ and $ \tilde{f}$ has no singular points
near the boundary. Define 
\begin{equation}\label{eq:Mcuspos}
i_a (f) =  \frac{3}{2} ( \sigma (V^4)- \alpha (M^3)) + \frac{1}{2} \# \Sigma^{1, 1} (\tilde{f})  \mbox{ .} 
\end{equation}

Let $ \bar{f}: (V^4, \partial V^4) \to (\R^6_+, \partial \R^6_+ )$ be a generic map nonsingular near the boundary, such that $ \bar{f}^{-1}( \partial \R^6_+ )= \partial V^4 $ and $ \bar{f}|_{\partial V^4} $ is regular homotopic to $ f $. Define
 \begin{equation}\label{eq:Mhurk}
 i_b (f) =  \frac{3}{2} ( \sigma (V^4) - \alpha (M^3)) +  \frac{1}{2} (3 t(\bar{f}) - 3 l(\bar{f}) +
 L_{\nu}(\bar{f}|_{\partial V^4} )) . \end{equation}

\begin{thm}[Lemmas 5.5., 5.7., theorems 5.6., 5.8. in \cite{saeki}] 

(a) $ i_a (f) $ is an integer, and it does not depend on the choice of $ V^4 $ and $ \tilde{f}$. It depends only on the regular homotopy class of $ f$.

(b) $ i_b (f) $ is an integer, and it does not depend on the choice of $ V^4 $, $ \bar{f}$ and $ \nu$. It depends only on the regular homotopy class of $ f$.

(c) $ i_a (f) =  i_b (f) $. 

(d) Two immersions $ f, g: M^3 \looparrowright \R^5 $ with $ c(f)=c(g) $ are regular homotopic if and only if $ i(f) =i(g)$, where $i(f):= i_a(f)=i_b (f)$.
\end{thm}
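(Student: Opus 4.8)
The plan is to follow the cobordism-theoretic strategy underlying the Ekholm--Sz\H{u}cs formulas (\ref{eq:cuspos}) and (\ref{eq:hurk}), which are precisely the special case $M^3 = S^3$ (where $\alpha(S^3)=0$), and to carry the correction term $\alpha(M^3)$ through every step.

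For part (a) I would first treat integrality: one must show that $3\sigma(V^4) - 3\alpha(M^3) + \#\Sigma^{1,1}(\tilde f)$ is even, which reduces to a mod-$2$ congruence between the algebraic number of cusps of a generic map to $\R^5$ and the signature defect $\sigma(V^4) - \alpha(M^3)$. This congruence is a Wu-type characteristic class computation in which the torsion linking form of $M^3$ (whose mod-$2$ rank is $\alpha(M^3)$) measures exactly how $\sigma(V^4) \bmod 2$ can vary with the filling $V^4$. For independence of the choices, given two data $(V_1, \tilde f_1)$ and $(V_2, \tilde f_2)$ I would glue along the common boundary to form the closed oriented $4$-manifold $W = V_1 \cup_{M^3} (-V_2)$, interpolating the two boundary maps (both regular homotopic to $f$) across a collar to obtain a single generic map $G \colon W \to \R^5$. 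Novikov additivity gives $\sigma(W) = \sigma(V_1) - \sigma(V_2)$ and the cusps add with sign, $\#\Sigma^{1,1}(G) = \#\Sigma^{1,1}(\tilde f_1) - \#\Sigma^{1,1}(\tilde f_2)$; the closed-manifold cusp identity $3\sigma(W) + \#\Sigma^{1,1}(G) = 0$ (the Thom-polynomial relation for stable maps $W^4 \to \R^5$) then forces $i_a(f; V_1, \tilde f_1) = i_a(f; V_2, \tilde f_2)$, the $\alpha(M^3)$ contributions cancelling since the boundary is unchanged. Regular homotopy invariance follows because a regular homotopy of $f$ can be absorbed into the collar without altering $W$ or $G$ up to the relevant cobordism.

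Part (b) proceeds by the same template in the half-space: gluing two choices $(V_1, \bar f_1)$ and $(V_2, \bar f_2)$ yields a generic map of a closed $4$-manifold into $\R^6$, and the governing identity is now the closed-manifold formula expressing $3\sigma(W)$ in terms of the algebraic number of triple values and the double-point-set linking data; integrality again invokes the same signature-defect congruence. The one extra point is the dependence on the normal framing $\nu$ entering $L_\nu$: I would show that changing $\nu$ alters $L_\nu(\bar f|_{\partial V^4})$ and the linking $l(\bar f)$ by compensating amounts, so that the combination $3t - 3l + L_\nu$ is framing-independent. For part (c), the equality $i_a = i_b$, I would use a single filling $V^4$ and compare its image in $\R^5$ with its image in $\R^6_+$, exactly as in the passage between (\ref{eq:cuspos}) and (\ref{eq:hurk}): a generic homotopy (or the graph construction pushing $\R^5 \hookrightarrow \partial\R^6_+$ into $\R^6_+$) converts $\tfrac12\#\Sigma^{1,1}$ into $\tfrac12(3t - 3l + L_\nu)$, the identical $\tfrac32(\sigma - \alpha)$ terms on the two sides dropping out. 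Part (d) is then immediate from the bijection $\imm(M^3,\R^5)_0 \cong \Gamma_2(M^3) \times \Z$ recorded just above: since $c(f)$ and the regular-homotopy invariant $i(f) := i_a(f) = i_b(f)$ are precisely the two coordinates of this bijection, two immersions with $c(f)=c(g)$ are regular homotopic if and only if $i(f)=i(g)$.

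The main obstacle is the pair of closed-manifold singularity identities ($3\sigma + \#\Sigma^{1,1}=0$ for $\R^5$ and its $\R^6$ analogue for triple points and linking) together with the precise role of $\alpha(M^3)$. These Thom-polynomial computations, and the verification that the torsion linking form of $M^3$ supplies exactly the correction needed both for integrality and for the cancellation in the gluing argument, are the technical heart of the proof; everything else is additivity of signatures and singularity counts under gluing, which is routine once the sign conventions are fixed.
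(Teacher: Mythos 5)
A preliminary remark: the thesis does not prove this theorem at all --- it is quoted, in the survey Subsection~\ref{ss:m3r45}, from Saeki--Sz\H{u}cs--Takase \cite{saeki} (their Lemmas 5.5, 5.7 and Theorems 5.6, 5.8), so your proposal can only be measured against the arguments of that paper and of \cite{ESz}, not against a proof given here. At the level of strategy your plan is the standard one and matches theirs: glue two fillings along $M^3$ into a closed oriented $W^4$, use Novikov additivity of the signature together with closed-manifold Thom-polynomial identities ($\#\Sigma^{1,1}=-3\sigma$ for generic maps $W^4\to\R^5$, and its $\R^6$ counterpart for triple values and linking), observe that the $\alpha(M^3)$ terms cancel in differences, and pass between the two formulas by pushing $\R^5=\partial\R^6_+$ into $\R^6_+$.

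Three steps, however, contain genuine gaps. First, the gluing: the track of a regular homotopy between $\tilde f_1|_{M^3}$ and $\tilde f_2|_{M^3}$, viewed as a map of the collar $M^3\times I$ into $\R^5$, is never generic --- wherever the homotopy is stationary in $t$ (in particular near the two ends, where it must match the given maps) \emph{every} point is a singular point of a $4$-to-$5$ map --- so after a generic perturbation the collar carries cusps of its own, and your identity $\#\Sigma^{1,1}(G)=\#\Sigma^{1,1}(\tilde f_1)-\#\Sigma^{1,1}(\tilde f_2)$ requires proving that their algebraic number vanishes; this is where the real work lies. Second, the $\nu$-independence in (b) cannot be proved by the compensation you propose: $t(\bar f)$ and $l(\bar f)$ are defined purely from the double-value and singular-value sets of $\bar f$ and do not involve $\nu$, so nothing in $3t-3l$ can move when $\nu$ does. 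The correct mechanism is that $L_\nu$ is itself framing-independent: a change of framing by $\delta\in H^1(M^3,\Z)$ changes $L_\nu$ by the evaluation of $\delta$ on the class of the double point locus $\gamma\subset M^3$, and this class vanishes by Herbert's formula because the normal Euler class of $f$ is zero for $f\in \imm(M^3,\R^5)_0$. Third, part (d) as you argue it is circular: the bijection $\imm(M^3,\R^5)_0\cong\Gamma_2(M^3)\times\Z$ quoted just above this theorem involves an \emph{abstract} invariant $i$, and you have not identified your $i_a$ with that coordinate --- a constant function is also a regular homotopy invariant and satisfies everything you use. What is needed is an additivity statement such as $i_a(f\# g)=i_a(f)+\Omega(g)$ for immersions $g:S^3\looparrowright\R^5$, combined with the fact that connected sum with $S^3$-immersions acts simply transitively on each fiber of $c$. (Two smaller points: the integrality claim, i.e.\ the precise role of $\alpha(M^3)$, is only gestured at; and regular homotopy invariance in (a) needs no collar argument at all, since admissibility of $(V^4,\tilde f)$ only requires $\tilde f|_{\partial V^4}$ to be \emph{regular homotopic} to $f$.)
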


\vspace{2mm}

Cobordism of immersions is an equivalence relation between immersions of different manifolds. Two immersions $ f: M^n \looparrowright \R^q $ and $ g: N^n \looparrowright \R^q $ of closed (resp. closed, oriented) manifolds $ M^n $ and $ N^n $ are \emph{cobordant} (resp. \emph{oriented cobordant}) if there is a compact manifold $ V^{n+1} $ with boundary $ \partial V^{n+1} \simeq M^n \sqcup N^n $ (resp. a compact oriented manifold $ V^{n+1} $ with oriented boundary $ \partial V^{n+1} \simeq M^n \sqcup - N^n $, where $-N^n $ denotes $N^n $ with opposite orientation) and an immersion $ F: V^{n+1} \looparrowright \R^q \times [0, 1] $ such that $ F|_{M^n} = f $ and $ F|_{N^n}= g$. Let $ \imm (n, k) $ (reps. 
$ \imm^{SO} (n, k) $) denotes the cobordism classes of the immersions of $n$-manifolds to $ \R^{n+k} $ (resp. the oriented cobordism classes of the immersions of oriented $n$-manifolds to $ \R^{n+k} $). $ \imm (n, k) $ and
$ \imm^{SO} (n, k) $ form groups with respect to the disjoint union, which agrees with the connected sum up to (oriented) cobordism.

$ \imm^{SO} (3, 1) \cong \pi^s(3) \cong \Z_{24}$, where $ \pi^s(3) $ denotes the third stable homotopy group of spheres  \cite[Lemma 1.4.]{szucstwo}, \cite{tak}. 
Each class in $ \imm^{SO} (3, 1) $ can be represented by an immersion of $ S^3 $ via the following diagram, cf. \cite[Lemma 1.8.]{szucstwo}, \cite[pg. 41, 43]{tak}.
\begin{equation}
\begin{array}{ccccc}
\imm(S^3, \R^4) & \to & \imm( S^3, \R^5) & \stackrel{J}{\rightarrow} & \imm^{SO}(3, 1) \\
\|             &       &  \|             &      &  \| \\
\pi_3(SO(4))   &  \to  &   \pi_3(SO(5))   & \stackrel{J}{\rightarrow} & \pi^s(3) \\
\|             &       &  \|             &      &  \| \\
\Z \oplus \Z    &  \to &   \Z           &    \to  &  \Z_{24} \\
(a, b)          &  \mapsto  &  a+2b     &  \mapsto & a+2b \ (\mbox{mod } 24) \\    
\end{array} 
\end{equation}
The homomorphism on the right of the diagram is called \emph{$J$-homomorphism}, its kernel is $ \emb( S^3, \R^5)$, cf. Theorem~\ref{th:HM}, \cite{HM, szucstwo}. There are also geometric formulae to determine the oriented cobordism class of an immersion $ M^3 \looparrowright \R^4 $, see \cite[Theorem 3.1., Remark 3.6.]{tak}.

By \cite[Theorem 4.1.]{hughes} $ \imm(3, 2) \cong \imm^{SO}(3, 2) \cong \Z_2 $, and the cobordism class of a stable immersion of $ M^3 $ to $ \R^5 $ is completely determined by its \emph{total twist}, which is the parity of the number of non-trivially covered double point curve components. In particular, the total twist of a stable immersion $ S^3 \looparrowright \R^5 $ is a regular homotopy invariant, it defines a homomorphism $ \imm (S^3, \R^5 ) \to \Z_2$. From this fact we conclude a correspondence between analytic invariants in Subsection~\ref{ss:compllinks}.

\subsection{Regular homotopy class of the embedded link}

We mention here two results about a very similar topic to the material discussed in Chapter~\ref{ch:ass}. The articles \cite{Esz2} and  \cite{katanaga} study links of isolated hypersurface singularities (cf. Chapter~\ref{ch:iso}) up to regular homotopy.

Although \cite{Esz2} contains a generalization of the Hughes-Melvin theorem \ref{th:HMemb} and of  singular Seifert surface formulae (\ref{eq:cuspos}) and (\ref{eq:hurk}) for immersions and embeddings of homotopy $(4k-1)$-spheres, here we only concentrate to \cite[Section 5.1.]{Esz2}. Consider the Brieskorn equations
$f_k(z)= z_1^{6k-1} + z_2^3 + z_3^2 + z_4^2 + z_5^2 $, which determine isolated singularites $ (X_k, 0) = f_k^{-1}(0) \subset ( \C^5, 0) $. Their links $ K_k = X \cap S^9_{ \epsilon} \simeq \Sigma_k^7 $ are \emph{homotopy spheres} (also called exotic spheres): $ \Sigma_k^7 $ is homeomorphic, but not diffeomorphic to $ S^7 $ for all $ k \neq 28n $. 
$ \Sigma_{28}^7 $ is diffeomorphic to $ S^7 $, and $ \Sigma_k^7 $ is diffeomorphic to $ \Sigma_l^7 $ if and only if $ k \equiv l \ (\mbox{mod } 28) $.

By \cite{Esz2}, the embeddings of $ \Sigma_{28n+k}^7 $ into $ S^9 $ are not regular homotopic for different values  of $n$, and they represent all the regular homotopy classes of the immersions which contain embeddings, i.e. $ \emb ( \Sigma_k^7 , S^9 ) \subset \imm ( \Sigma_k^7 , S^9 )$. In particular, $k=28n$ provide embeddings $ S^7 \hookrightarrow S^9 $ which are not regular homotopic to the standard embedding, each appears as the link of a complex hypersuface singularity. In contrast with this fact, nonstandard embeddings $ S^3 \hookrightarrow \R^5 $ cannot be realized as links of suface singularities in $ \C^3 $, cf. Theorem~\ref{TH:EMBINTRO}.

\vspace{2mm}

A similar question is studied in \cite{katanaga} about the link of $ (X_k, 0) = f_k^{-1}(0) \subset ( \C^4, 0) $, where 
$ f_k(x, y, z, w) = x^2 + y^2 + z^2 + w^k $. Its link depends on the parity of $k$, that is
\[
K_k = X_k \cap S^7_{ \epsilon} \simeq
\left\{ \begin{array}{ccc}
S^5 & \mbox{if} & k=2d+1 \mbox{,} \\
S^3 \times S^2 & \mbox{if} & k=2d \mbox{.} \\
\end{array}  \right. 
\]
The question is that in the case of links with different equations but with the same diffeomorphism type do they represent different regular homotopy classes? 

For $ k$ odd, $ \imm( S^5, S^7) = \pi_5 (SO(7))= 0$ (see (\ref{eq:Smale})) by Bott periodicity \cite{bottper}. Thus the question is irrelevant in this case.

In the other case $ k=2d $ it turns out that the question is not well-posed: precomposing an immersion $ f: S^3 \times S^2 \looparrowright S^7 $ with a self-diffeomorphism of $ S^3 \times S^2 $ can change the regular homotopy type of $f$. In other words the regular homotopy class of the inclusion $ K_{2d} \subset S^7 $ depends on the choice of the identification of $ K_{2d} $ with $ S^3 \times S^2 $.

A well-defined notion is the \emph{image regular homotopy type}: the immersions $f$ and $g$ $S^3 \times S^2 \looparrowright S^7$ are image regular homotopic if there is a self diffeomorphism $ \phi$ of $ S^3 \times S^2 $ such that $ g$ is regular homotopic with $ f \circ \phi $. By \cite{katanaga} there are two image regular homotopy classes of immersions $S^3 \times S^2 \looparrowright S^7$, and the image regular homotopy class of the inclusion $ K_{2d} \subset S^7 $ depends on the parity of $d$.

\chapter{Immersions associated with holomorphic germs}\label{ch:ass}

\section{Summary of the results} 

\subsection{} This chapter includes the results of the paper \cite{NP}. 
Our main goal is to analyse the \emph{complex analytic} realizations of the elements of the groups $ \imm(S^3, S^5) $ and $ \emb(S^3, S^5)$. Recall that $ \imm(S^3, S^5) \cong \pi_3 (SO(5)) \cong \Z$ via the Smale invariant. We use (\ref{eq:Smale}) for its definition throughout Chapter~\ref{ch:ass}.

Let $ \Phi: ( \C^2, 0) \to ( \C^3, 0) $ be a holomorphic germ. We assume that $ \Phi $ is singular only at the origin, that is  $ \{z\,:\, \rk (d \Phi_z)< 2\} \subset \{0\}$
in a small representative of $(\C^2,0)$. Such a germ, at the level of links of the spacegerms
$(\C^2,0)$ and $(\C^3,0)$, provides
an immersion $ f: S^3 \looparrowright S^5 $ , cf. Definition~\ref{de:linkmap} and Theorem~\ref{th:Csum}.
If an element of ${\imm}(S^3,S^5)$, or ${\emb}(S^3, S^5)$ respectively,
can be realized (up to regular homotopy)
by such an immersion,  we call it {\it holomorphic}. The corresponding subsets
will be denoted by ${\rm Imm}_{hol}(S^3,S^5)$ and  ${\rm Emb}_{hol}(S^3, S^5)$ respectively.

As we will see, ${\rm Imm}_{hol}(S^3, S^5)$ is not symmetric with respect to a sign change of $\Z$,
hence, in order to identify the subset ${\rm Imm}_{hol}(S^3,S^5)$ without any sign-ambiguity,
we will fix a `canonical' generator of  $ \pi_3 (SO(5)) $. This will be  done via the
ismorphisms  $ \pi_3 (U(3))\to \pi_3(SO(6))\to \pi_3(SO(5))$ and by fixing  a canonical generator in $ \pi_3 (U(3))$ (see \ref{ss:sign}).
Sometimes, to emphasize that we work with the Smale invariant with this fixed sign convention,
we refer to it as the {\it sign--refined Smale invariant}.
Our second goal is to determine the correct signs (compatibly with the above
choice of generators) in the Smale invariant formulas \ref{th:HM} and (\ref{eq:cuspos}), (\ref{eq:hurk}), which were stated only up to a
sign--ambiguity.

\subsection{The set ${\rm Imm}_{hol}(S^3,S^5)$}
One expects that the analytic geometry of  holomorphic
realization imposes some rigidity restrictions, and also provides some further connections with the
 properties of complex analytic spaces.
Mumford already in 1961 in his seminal article \cite{mumford} asked for the
characterization of the Smale invariant of a holomorphic (algebraic) immersion in terms of
the analytic/algebraic geometry. This chapter provides a complete answer to his question.
A more precise formulation of our guiding questions is:

\begin{ques} \label{question}

(a) Which are the regular homotopy classes ${\rm Imm}_{hol}(S^3,S^5)$ and  ${\rm Emb}_{hol}(S^3, S^5)$
 represented by holomorphic  germs?

(b) How can a certain  regular homotopy class be identified via complex singularity theory,
that is, via algebraic or analytic invariants of the involved analytic spaces?
Furthermore, if some $\Phi$ realizes some Smale invariant (e.g., if its Smale invariant is zero), then what kind of specific analytic properties $\Phi$  must have?
\end{ques}
The main results  of this chapter provide the following answer in the case of immersions.

\begin{thm}\label{TH:MAIN}

(a) ${\rm Imm}_{hol}(S^3,S^5)$ is identified via the sign--refined
Smale invariant $\Omega(f)$ by the set of non--positive integers.

(b) If the immersion $f$ is induced by the holomorphic germ $\Phi$,
then $ \Omega(f) = -C( \Phi)$, where  $C( \Phi) $ is
the number of complex Whitney umbrella points
of a stabilization of $ \Phi $. $ C( \Phi ) $ can be calculated in an algebraic way,
as the codimension of the ideal generated by the determinants of the $2\times 2$-minors
of the Jacobian matrix of $\Phi$. Cf. Subsection~\ref{ss:CTN}.
\end{thm}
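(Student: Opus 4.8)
The plan is to factor the main formula through a newly defined \emph{complex Smale invariant}. Since $\Phi$ is singular only at the origin, for $z\neq 0$ the two columns of the complex Jacobian $d\Phi_z$ are $\C$-linearly independent, so $z\mapsto d\Phi_z$ restricts along the link to a map $\mathfrak{S}^3\to V_2(\C^3)$ into the complex Stiefel manifold (cf. Definition~\ref{de:linkmap}). I set
\[
\Omega_{\C}(\Phi):=[\,d\Phi|_{\mathfrak{S}^3}\,]\in\pi_3(V_2(\C^3)).
\]
From the fibration $U(1)\to U(3)\to V_2(\C^3)$ together with $\pi_2(U(1))=\pi_3(U(1))=0$ one gets $\pi_3(V_2(\C^3))\cong\pi_3(U(3))\cong\Z$, so $\Omega_{\C}(\Phi)$ is an integer. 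The whole proof of part (b) then splits into the two identities $\Omega_{\C}(\Phi)=C(\Phi)$ and $\Omega_{\C}(\Phi)=-\Omega(\Phi|_{\mathfrak{S}^3})$, after which part (a) follows by exhibiting explicit realizations.

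For $\Omega_{\C}(\Phi)=C(\Phi)$, with $C(\Phi)$ read as the number of cross caps of a stabilization, I would localize the invariant. A small perturbation fixed near $\mathfrak{S}^3$ leaves $d\Phi|_{\mathfrak{S}^3}$ unchanged up to homotopy, so $\Omega_{\C}(\Phi)=[\,d\Phi_v|_{\mathfrak{S}^3}\,]$ for a stabilization $\Phi_v$; additivity of the degree then writes $\Omega_{\C}(\Phi)$ as a sum of one local index per singular point of $d\Phi_v$, i.e.\ per cross cap. It remains to compute the index of a single Whitney umbrella $\Phi(s,t)=(s,t^2,st)$: the frame $\{(1,0,t),(0,2t,s)\}$ degenerates only at the origin, and contracting the first vector to $(1,0,0)$ identifies $d\Phi|_{\mathfrak{S}^3}$ with the linear isomorphism $(s,t)\mapsto(2t,s)\colon\mathfrak{S}^3\to\C^2\setminus\{0\}$, whose degree is $+1$. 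Hence $\Omega_{\C}(\Phi)$ equals the cross--cap count $C(\Phi)$ (finite by Theorem~\ref{th:Csum}). Running the same frame computation directly on a general corank--$1$ germ $\Phi=(s,p,q)$ shows $\Omega_{\C}(\Phi)=\deg(p_t,q_t)=\dim_{\C}\calO_{(\C^2,0)}/(p_t,q_t)=\dim_{\C}\calO_{(\C^2,0)}/J(\Phi)$, which combined with the cross--cap count re--proves Mond's theorem that the two descriptions of $C$ agree.

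For $\Omega_{\C}(\Phi)=-\Omega(\Phi|_{\mathfrak{S}^3})$ I would compare the holomorphic frame with the Hughes--Melvin Jacobian frame (\ref{eq:Smale}). Realification sends the complex class through $U(3)\hookrightarrow SO(6)$, and the fibration $SO(5)\to SO(6)\to S^5$, with $\pi_3(S^5)=\pi_4(S^5)=0$, identifies $\pi_3(SO(6))\cong\pi_3(SO(5))$; this realizes the composite $\pi_3(U(3))\to\pi_3(SO(6))\cong\pi_3(SO(5))$, along which $\Omega_{\C}(\Phi)$ is carried to $\pm\Omega(\Phi|_{\mathfrak{S}^3})$. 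The main obstacle is pinning down the sign. The generator of $\pi_3(U(3))$ normalised so that $\Omega_{\C}\geq 0$ is detected by $c_2=+1$ (and $c_1=0$), while the canonical generator of $\pi_3(SO(5))$ is detected by $\tfrac12 p_1=+1$; since the realification of a complex bundle satisfies $p_1=c_1^2-2c_2$, the distinguished class maps to $p_1=-2$, so the induced map carries the chosen generators with a sign $-1$. Making this compatible with the orientation of the link immersion and with the generators fixed in the sign--refinement is the delicate point, and it is exactly what forces the minus sign.

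Granting the two identities, part (b) reads off as $\Omega(\Phi|_{\mathfrak{S}^3})=-\Omega_{\C}(\Phi)=-C(\Phi)$. For part (a), one always has $C(\Phi)\geq 0$, with $C(\Phi)=0$ precisely for regular germs (the standard embedding, $\Omega=0$), so every holomorphic immersion has non--positive Smale invariant; conversely the family $S_{k-1}$ of Example~\ref{ex:cor1} has $C(\Phi)=k$ for each $k\geq 1$, realizing $\Omega=-k$. Hence ${\rm Imm}_{hol}(S^3,S^5)$ is identified via the sign--refined Smale invariant with the set of non--positive integers.
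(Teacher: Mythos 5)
Your plan coincides with the paper's own: define $\Omega_{\C}(\Phi)=[\,d\Phi|_{\mathfrak{S}^3}\,]\in\pi_3(V_2(\C^3))\cong\Z$, prove the two identities $\Omega_{\C}(\Phi)=C(\Phi)$ and $\Omega_{\C}(\Phi)=-\Omega(\Phi|_{\mathfrak{S}^3})$, and deduce part (a) from $C(\Phi)\geq 0$ together with the family $S_{k-1}$. Your argument for the first identity is in substance the paper's proof of Theorem~\ref{th:Comp}: homotopy invariance under small perturbation, localization of the class at the degeneracy points of $d\Phi_v$ (which the paper formalizes as the linking-number homomorphism $\lk\colon \pi_3(V_2(\C^3))\to\Z$ against $\mathcal{D}=\{\rk <2\}\subset \Hom(\C^2,\C^3)$), local index $+1$ at each cross cap, and the corank--$1$ frame computation that re-proves Mond's theorem (Proposition~\ref{pr:otherproof}, Remark~\ref{re:other}). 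The genuine gap is the second identity. You assert that along $\pi_3(U(3))\to\pi_3(SO(6))\cong\pi_3(SO(5))$ the class $\Omega_{\C}(\Phi)$ ``is carried to $\pm\Omega(\Phi|_{\mathfrak{S}^3})$'' and treat only the sign as the remaining difficulty; but the unsigned statement is itself the main geometric content, and it does not follow from those group isomorphisms. $\Omega_{\C}$ is built from the complex Jacobian over the link, whereas $\Omega(f)$ is defined, via (\ref{eq:Smale}), from an extension of $f$ to a five-dimensional tubular neighbourhood of the standard $S^3$ inside $\R^5$; a priori these are classes of different maps on different spaces. The paper bridges them in Proposition~\ref{pr:main} by the common extension $F(s,t,r)=\Phi(s,t)+r\cdot N_{\Phi}(s,t)$ on a six-dimensional neighbourhood of $\mathfrak{S}^3$: one checks that $[\,d\tilde{F}|_{\mathfrak{S}^3}\,]$ represents $\pi_3(\tau)(\Omega_{\C}(\Phi))$, and then that it also represents $\pi_3(j)(\Omega(f))$, using that $d\tilde{F}$ carries the outward normal of $\mathfrak{S}^5$ to a normal-like vector field along $S^5$ and that the framing corrections are nullhomotopic because they extend over the contractible sets $\mathfrak{S}^5\setminus\{Q\}$ and $S^5\setminus\{P\}$. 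Some argument of this kind is indispensable; without it the chain $\Omega(f)=-\Omega_{\C}(\Phi)=-C(\Phi)$ is simply not established.

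Your sign determination, where you do sketch one, takes a genuinely different route from the paper's, but it too is incomplete. The paper fixes the generators $[u]\in\pi_3(U)$ and $[L]\in\pi_3(SO)$ by explicit quaternionic maps and computes $\pi_3(\tau)([u])=[R]=[L]-[\rho]=-[L]$ (Propositions~\ref{pr:hus} and \ref{pr:homo}). You instead invoke $p_1=c_1^2-2c_2$ for realified bundles, which is correct and would yield the minus sign, but only after verifying two normalizations that you leave open: that the cross-cap class (your positive generator of $\pi_3(U(3))$) clutches a bundle over $S^4$ with $c_2=+1$ rather than $-1$ in the conventions compatible with the paper's $[u]$, and that the paper's canonical generator $[L]$ of $\pi_3(SO(5))$ satisfies $p_1/2=+1$. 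You yourself flag this compatibility as ``the delicate point'' and do not resolve it; since Theorem~\ref{TH:MAIN} is precisely a statement about the sign-refined Smale invariant, i.e.\ about these particular generators, that verification is part of the proof rather than a remark.
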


The main tool of the proof of Theorem~\ref{TH:MAIN} is the concept of  \emph{complex Smale invariant}
of the germ $\Phi$. We introduce it in Section~\ref{s:co} and then we prove that it
agrees with $ C( \Phi) $. Next,  in Section~\ref{s:proof}
we identify the complex Smale invariant of a germ $ \Phi $ with the (classical) Smale invariant of the
 link of $ \Phi $. The proof of the part (b) of Theorem~\ref{TH:MAIN} is then ready up to sign. In \ref{ss:sign} we fix explicit generators of the groups $ \pi_3 (U) $ and $ \pi_3 (SO) $
 and calculate the homomorphism between them.
With this convention the complex Smale invariant of $ \Phi $ is equal to $ C( \Phi) $ and is opposite to
the sign--refined Smale invariant.

Part (b) of Theorem~\ref{TH:MAIN} implies that the sign--refined Smale invariant of a complex
analytic realization
is always non--positive. The proof of  part (a) is then completed by Example~\ref{ex:1}, which provides analytic  representatives for all non--positive $\Omega(f)$.

Note that in the present literature  the known ($ \mathcal{C}^{\infty}$)
 realizations of certain Smale invariants  $\Omega(f)$
 are rather involved (similarly, as the  computation of $\Omega(f)$ for any concrete $f$), see e.g. \cite{hughes, ekholm3}.
Here we provide very simple polynomial maps realizing all  non--positive Smale invariants.
Furthermore, the computation of $C(\Phi)$ for any $\Phi$ is extremely simple.

Moreover, precomposing the above complex realizations with the $ \mathcal{C}^{\infty}$ reflection
$ (s, t) \mapsto (s, \bar{t})$, we get explicit representatives for all positive Smale numbers
 as well, compare \cite[Lemma 3.4.2.]{ekholm3}. 
  In this way Theorem~\ref{TH:MAIN} together with Example~\ref{ex:1} provides an answer to Smale's question~\ref{qu:Smale}.
  
  \subsection{The set  ${\rm Emb}_{hol}(S^3,S^5)$.}
Recall that ${\rm Emb}_{hol}(S^3,S^5)$ consists of regular homotopy classes (that is,
sign--refined Smale invariants in $\Z$)
represented by holomorphic germs $\Phi$ whose induced  immersions $S^3\looparrowright S^5$
might not be embeddings, but are regular homotopic with embeddings.

A more restrictive subset consists of those regular homotopy classes (Smale invariants),
which can be represented by holomorphic gems, whose restrictions off origin are embeddings. Cf. Hughes--Melvin theorem~\ref{th:HMemb}.

\begin{thm}\label{TH:EMBINTRO}
(a)  ${\rm Emb}_{hol}(S^3,S^5)= (24\cdot \Z)\cap\Z_{\leq 0}$.

(b) Assume that the immersion $f$ is the restriction at links level of a
holomorphic germ $\Phi$ as above, $f=\Phi|_{S^3}$. Then the following facts are equivalent:

\begin{enumerate}
\item $ \rk (d\Phi_0)=2$ (hence $\Phi$ is not singular),
\item $\Omega (f)=0$,
\item $f:S^3\hookrightarrow S^5$ is an embedding,
\item $f: S^3 \hookrightarrow S^5 $ is the trivial embedding.
\end{enumerate}
\end{thm}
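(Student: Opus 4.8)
The plan is to deduce part (a) by intersecting two descriptions already at hand, and to prove part (b) via the cycle $(1)\Rightarrow(4)\Rightarrow(3)\Rightarrow(1)$ together with the algebraic equivalence $(1)\Leftrightarrow(2)$. For part (a), a class belongs to ${\rm Emb}_{hol}(S^3,S^5)$ precisely when it is holomorphically realizable \emph{and} admits an embedding representative. By Theorem~\ref{TH:MAIN}(a) the holomorphically realizable classes are exactly $\Z_{\le 0}$; and since $5=3+2$, Example~\ref{ex:snsq} identifies $\imm(S^3,\R^5)$ with $\imm(S^3,S^5)$ compatibly with embeddings, so the Hughes--Melvin computation $\emb(S^3,\R^5)\cong 24\cdot\Z$ (Theorem~\ref{th:HMemb}) yields $\emb(S^3,S^5)\cong 24\cdot\Z$. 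Intersecting the two gives ${\rm Emb}_{hol}(S^3,S^5)=\Z_{\le 0}\cap 24\cdot\Z=(24\cdot\Z)\cap\Z_{\le 0}$.

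For part (b), the equivalence $(1)\Leftrightarrow(2)$ is purely algebraic: by Theorem~\ref{TH:MAIN}(b) one has $\Omega(f)=-C(\Phi)$, and $C(\Phi)=\dim_{\C}\mathcal{O}_{(\C^2,0)}/J(\Phi)$ vanishes iff the Jacobian ideal $J(\Phi)$ contains a unit, i.e. iff some $2\times 2$ minor of $d\Phi_0$ is nonzero, i.e. iff $\rk(d\Phi_0)=2$. For $(1)\Rightarrow(4)$, full rank at the origin makes $\Phi$ an immersion germ, so by the holomorphic rank theorem $\Phi$ is $\mathscr{A}$-equivalent to the linear inclusion $(s,t)\mapsto(s,t,0)$, whose associated link map is the trivial embedding $S^3\hookrightarrow S^5$; as $\mathscr{A}$-equivalences restrict to diffeomorphisms of the link spheres isotopic to the identity, they preserve the regular homotopy class of $f$, so $f$ is the trivial embedding. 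The implication $(4)\Rightarrow(3)$ is immediate.

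The remaining and central step is $(3)\Rightarrow(1)$, which I would prove by contraposition. Assume $\rk(d\Phi_0)<2$. If $\Phi$ is not finitely $\mathscr{A}$-determined, then by Theorem~\ref{th:fin-stab} the link map $f$ is not even a stable immersion, hence cannot be an embedding. If $\Phi$ is finitely determined, then by Theorem~\ref{th:findoub} the double point space $D^2(\Phi)$ is a reduced curve; since $0$ is a singular point of $\Phi$, the diagonal point $(0,0)$ lies on $D^2(\Phi)$, so $D^2(\Phi)$ is a genuine $1$-dimensional germ and its projection $D(\Phi)\subset(\C^2,0)$ is a curve through the origin. This curve meets the link sphere $\mathfrak{S}^3=\Phi^{-1}(S^5_\epsilon)$; because this sphere is $\Phi$-saturated, every $x\in D(\Phi)\cap\mathfrak{S}^3$ admits a second preimage $x'\neq x$ with $\Phi(x')=\Phi(x)$ again lying on $\mathfrak{S}^3$, so $f$ has genuine double values and is not an embedding. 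This closes the cycle $(1)\Leftrightarrow(2)$, $(1)\Rightarrow(4)\Rightarrow(3)\Rightarrow(1)$.

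The hard part is exactly this last implication: one must exclude the a priori conceivable ``singular yet embedded'' scenario, in which an embedding would carry a strictly negative multiple of $24$ as its Smale invariant. Its content is the structural fact that non-immersivity at the origin cannot be hidden---it necessarily spawns a double point curve through $0$ (the analytic analogue of the cross-cap's double-point line) which survives on the link sphere. As a consistency check one may also invoke $L(\Phi|_{\mathfrak{S}^3})=C(\Phi)-3T(\Phi)$: an embedding has empty double locus, forcing $L=0$ and hence $C(\Phi)=3T(\Phi)$, a relation that the geometric argument above sharpens to $C(\Phi)=0$.
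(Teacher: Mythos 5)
Your proof is correct, and for the central implication $(3)\Rightarrow(1)$ it takes a route genuinely different from the paper's. Part (a), the easy implications, and $(2)\Leftrightarrow(1)$ match the paper (the paper passes from $C(\Phi)=0$ to $\rk (d\Phi_0)=2$ via Mond's Theorem~\ref{th:CT}, since in Theorem~\ref{TH:MAIN} $C(\Phi)$ is stated as the cross-cap count; you use the algebraic codimension description directly, which is equivalent). For $(3)\Rightarrow(1)$ the paper gives three proofs: (A) Mumford's theorem (an embedded link $S^3$ forces the image $(X,0)$ to be smooth, so the normalization $\Phi$ is an isomorphism); (B) a stabilization $\Phi_\lambda$: the closure of its double point locus is an analytic curve in the disc with boundary in $\mathfrak{S}^3$, so if $f$ is an embedding this curve is compact, hence empty, whence $\Phi_\lambda$ has no cross caps or triple points and $C(\Phi)=0$; (C) Ekholm--Sz\H{u}cs, using ${\rm im}(\Phi_\lambda)$ as an embedded Seifert surface of signature zero to get $\Omega(f)=0$. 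Your argument is a static variant of (B): you never deform, instead using Theorem~\ref{th:findoub} to see that the singularity at $0$ places $(0,0)$ on the reduced curve $D^2(\Phi)$, so $D(\Phi)$ is a genuine curve through the origin which must reach the $\Phi$-saturated sphere $\mathfrak{S}^3$ and produce honest double values of $f$ there. This buys you freedom from deformation theory and from the compactness-of-analytic-curves argument, at the price of a case split on finite determinacy: your non-finitely-determined case, handled by Theorem~\ref{th:fin-stab} together with the vacuous stability of embeddings, is needed precisely because Theorem~\ref{th:findoub} requires finite determinacy, whereas the paper's proof (B) is uniform, since stabilizations exist in the nice dimensions $(2,3)$ regardless of finite determinacy (and (B) moreover outputs $C(\Phi)=0$ directly). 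Two details worth pinning down in your Case 2, both immediate but load-bearing: that ``reduced curve'' in Theorem~\ref{th:findoub} means pure $1$-dimensional, so non-emptiness at $(0,0)$ indeed gives dimension one there (alternatively invoke the general lower bound $\dim D^2(\Phi)\geq 2n-p=1$ for non-empty double point spaces); and that every point of $D(\Phi)\cap\mathfrak{S}^3$ is an honest double point, because diagonal points of $D^2(\Phi)$ correspond to singular points of $\Phi$, of which there are none off the origin.
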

Again, we wish to emphasize that the previous
construction of the generator of $24\cdot \Z={\rm Emb}(S^3,S^5)$ (that is, of a smooth embedding
with $\Omega(f)=\pm 24$) is complicated, it is more existential than constructive \cite{HM}.
On the other hand, by our complex realizations, for any given $\Omega(f)\in 24\cdot \Z$
we provide several easily defined germs, which are immersions, and  are regular homotopic with embeddings.
Moreover, part (b) says that it is impossible to find holomorphic representatives $\Phi$ such that
$\Phi|_{S^3}$ is already embedding (except for $\Omega(f)=0$).

The essential  parts of Theorem \ref{TH:EMBINTRO}(b) are the implications
(2) $\Rightarrow$ (1) and  (3) $\Rightarrow$ (1), which conclude an analytic statement
from topological ones. The proof (2) $\Rightarrow$ (1) is based on Theorem \ref{TH:MAIN}, which
recovers the vanishing of the analytic invariant $C(\Phi)$ from the `topological vanishing' $\Omega(f)=0$.

A possible proof of
($\Phi|_{S^3}$ embedding) $\Rightarrow$ ($ \rk (d\Phi_0)=2$)
is based on a  deep theorem of Mumford,  which says that if the link of a complex normal surface
singularity is $S^3$ then the germ should be non--singular \cite{mumford}.
We will provide two other  possible proofs too: one of them is based on Mond's Theorem
\ref{th:CT}, the other on a theorem of Ekholm--Sz\H{u}cs \ref{th:ESz}.

\subsection{} The literature of singular analytic germs $\Phi:(\C^2,0)\to (\C^3,0)$ is huge with several deep and interesting results and invariants, see e.g. the articles of
D. Mond and V. Goryunov \cite{Gor1,Gor2,Gor3,Mond1,Mond2,Mondwh}
and the references therein, or Chapter~\ref{ch:germ} of this thesis. In singularity classifications
finitely determined or finite codimensional germs are central (with respect to some
equivalence relation). For germs $\Phi:(\C^2,0)\to (\C^3,0)$
Mond proved that the finite $\mathscr{A}$-determinacy is equivalent with the finiteness
 of three invariants $C(\Phi)$,
$T(\Phi)$ and $N(\Phi)$, see \cite{Mond2} or Subsection~\ref{ss:CTN}.
This is more restrictive than our assumption $ \{z\,:\, \rk (d \Phi_z)< 2\} \subset \{0\}$,
which requires the finiteness of $C(\Phi)$ only, cf. Theorem~\ref{th:Csum}.

However, it is advantageous   to consider this larger class, since there are
many key families of
germs  with infinite right-left codimension, but with finite $C(\Phi)$, and they
produce interesting connections with other areas as well (see e.g. the next example).

\begin{ex}\label{ex:ade} Consider a simple hypersurface singularity $(X,0)\subset (\C^3,0)$
(that is, of type A--D--E). They are quotient singularities, that is $(X,0)\simeq(\C^2,0)/G$
for certain finite subgroup $G\subset GL(2,\C)$, cf. examples \ref{ex:A}, \ref{ex:A2}, \ref{ex:D}, \ref{ex:E}. 
Let $K$ be the link of $(X,0)$ (e.g.,
it is a lens space for A-type), and consider
the regular $G$--covering $S^3\to K$.
This composed with the inclusion $K\hookrightarrow S^5$ provides an {\it immersion}
$S^3\looparrowright S^5$. Hence, the universal cover of each  A--D--E singularity
automatically provides an element
of ${\rm Imm}_{hol}(S^3,S^5)$, which usually have infinite right--left codimension.
The corresponding Smale invariants are given in Section~\ref{s:ex}. E.g.,
$-\Omega(A_{n-1})=n^2-1$, hence $A_4$ represents (up to regular homotopy) a generator of
$24\cdot \Z={\rm Emb}(S^3,S^5)$.

Recently Kinjo, using the plumbing graphs of the links of
A--D singularities and $\mathcal{C}^{\infty}$--techniques, constructed
immersions with the same Smale invariants
as our $-C(\Phi)$ up to sign. See \cite{kinjo} or the discussion in Subsection~\ref{ss:implumb}. Hence, the natural complex analytic
maps $(\C^2,0)\to (X,0)\subset (\C^3,0)$  provide analytic realizations of the $\mathcal{C}^{\infty}$
constructions of \cite{kinjo}, and emphasize their distinguished nature.
\end{ex}

\subsection{Smale invariants and the geometry of Seifert surfaces.}\label{ss:LSt}
In Section \ref{s:singseif} we reviewed three major topological  theorems,
which recover the classical Smale invariant in terms of the geometry of their (singular)
Seifert surfaces, namely the Hughes--Melvin Theorem~\ref{th:HM}, and two
 theorems of Ekholm--Sz\H{u}cs \cite{ESz}, (\ref{eq:cuspos}) and (\ref{eq:hurk}).
 All of them carry the sign ambiguity of the Smale invariant (which sometimes
is also caused by the nature of their proofs).

Section \ref{s:eszcomp} has two goals. First, we will indicate  the correct sign
in all these formulae, whenever the Smale invariant is replaced by the sign--refined
Smale invariant. Moreover, we also determine the Seifert type invariants
in terms of $C(\Phi)$ and $T(\Phi)$, whenever the immersion is induced by a holomorphic
germ $\Phi$.

When $f$ is a stable immersion, the invariant $L(f) $ of stable immersions introduced by Ekholm (cf. Subsection~\ref{ss:L}) is also expressed in terms of $C(\Phi)$ and $T(\Phi)$, namely $L(f)=
C(\Phi)-3T(\Phi)$. In other words, Ekholm's `strangeness' invariant $ \mbox{St} (f) $ is equal to $ -T( \Phi )$, see Remark~\ref{re:St}.

From this point of view our results can be considered as \emph{complex singular Seifert surface} formulae expressing the invariants of an immersion. The disentanglement (see \ref{ss:disent}) plays the role the singular Seifert surface.

\subsection{$ {C}^{\infty}$-characterisation of $C( \Phi)$ and $T( \Phi)$.}\labelpar{ss:cveg}
The formulae connecting the holomorphic invariants $ C( \Phi) $ and $ T( \Phi) $ with $ \mathcal{C}^{\infty}$-invariants $ \Omega(f) $ and $ L(f)$ have the following consequence.

\begin{thm}\label{thm:ct}
 Assume that the analytic germs $ \Phi $ and $ \Phi' :( \C^2, 0) \to ( \C^3, 0)$ are
 $\mathcal{C}^{\infty}$ $\mathscr{A}$-equivalent (that is, $ \Phi'= \Lambda \circ \Phi \circ \psi $ holds for some germs of orientation preserving diffeomorphisms $ \psi: (\R^4, 0) \to ( \R^4, 0) $ and $ \Lambda: (\R^6, 0) \to (\R^6, 0) $). Then

 (a) $ C( \Phi) = C( \Phi') $.

 (b) If additionally $ \Phi $ is finitely $ \mathscr{A}$-determined as a holomorphic germ, then $ T( \Phi) < \infty $ and $T( \Phi) = T ( \Phi')$.
\end{thm}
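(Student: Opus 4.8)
The plan is to deduce both statements from the two ``bridge'' formulas established earlier, namely $\Omega(\Phi|_{S^3}) = -C(\Phi)$ of Theorem~\ref{TH:MAIN} and $L(\Phi|_{S^3}) = C(\Phi) - 3T(\Phi)$ of Subsection~\ref{ss:LSt}, together with the observation that the associated immersion depends, up to regular homotopy, only on the $\mathcal{C}^{\infty}$ $\mathscr{A}$-equivalence class of the germ. The point is that one cannot argue directly with the cross-cap count, since the complex Whitney umbrella is not stable as a real germ (Example~\ref{ex:r46}); this is exactly why the detour through the topological invariants $\Omega$ and $L$ is forced.

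For part (a), first observe that $\Phi' = \Lambda \circ \Phi \circ \psi$ is again singular only at the origin, since $\rk(d\Phi'_x) = \rk(d\Phi_{\psi(x)})$ and $\psi(0)=0$; hence $C(\Phi')<\infty$ by Theorem~\ref{th:Csum}. To compare the two associated immersions I would connect each germ diffeomorphism to the identity through orientation preserving germ diffeomorphisms: the rescaling $\psi_s(x) = s^{-1}\psi(sx)$ deforms $\psi$ to its linear part $d\psi_0 \in GL^+(4,\R)$, which is then joined to the identity inside the connected group $GL^+(4,\R)$, and likewise for $\Lambda$ in $GL^+(6,\R)$. The resulting family $\Phi_t = \Lambda_t \circ \Phi \circ \psi_t$ consists of germs each singular only at the origin, so the link-level construction of Lemma~\ref{cor:epsilon} and Definition~\ref{de:linkmap} produces a regular homotopy from $\Phi'|_{S^3}$ to $\Phi|_{S^3}$. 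As $\Omega$ is a complete regular homotopy invariant, $\Omega(\Phi'|_{S^3}) = \Omega(\Phi|_{S^3})$, and Theorem~\ref{TH:MAIN} gives $C(\Phi') = C(\Phi)$.

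For part (b), suppose in addition that $\Phi$ is finitely $\mathscr{A}$-determined, so that $\Phi|_{S^3}$ is a stable immersion (Theorem~\ref{th:fin-stab}). Each $\Phi_t$ in the family above is smoothly $\mathscr{A}$-equivalent to $\Phi$, and since stability of an immersion is invariant under composition with diffeomorphisms (Remark~\ref{re:globstab}), every $\Phi_t|_{S^3}$ is again stable; in particular $\Phi'|_{S^3}$ is stable, whence $\Phi'$ is finitely determined by Theorem~\ref{th:fin-stab} and $T(\Phi')<\infty$. Thus the homotopy $t \mapsto \Phi_t|_{S^3}$ runs entirely inside the space of stable immersions, along which $L$ is locally constant (it changes only at triple-point degenerations, cf. Proposition~\ref{pr:Leq}); hence $L(\Phi'|_{S^3}) = L(\Phi|_{S^3})$. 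Combining $C(\Phi')-3T(\Phi') = L(\Phi'|_{S^3}) = L(\Phi|_{S^3}) = C(\Phi)-3T(\Phi)$ with $C(\Phi')=C(\Phi)$ from part (a) yields $T(\Phi')=T(\Phi)$.

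The main obstacle I anticipate is the rigorous verification that the smooth germ equivalence induces a regular homotopy of associated immersions through (stable) immersions: one must check that the construction of Lemma~\ref{cor:epsilon} varies continuously in the family $\Phi_t$, and absorb the fact that postcomposition by $\Lambda_t$ a priori sends the round target sphere $S^5_\epsilon$ to a non-round sphere $\Lambda_t^{-1}(S^5_\epsilon)$ --- this is handled by the independence-of-choices part of Lemma~\ref{cor:epsilon} applied in the target. The orientation preserving hypothesis is indispensable precisely here: an orientation reversing diffeomorphism would reverse the sign of $\Omega$ and alter $L$ (Ekholm's strangeness changes sign under a source reflection, cf. Remark~\ref{re:St}), so both equalities would fail.
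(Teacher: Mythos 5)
Your overall strategy coincides with the paper's: both parts are reduced to the two bridge formulas, $\Omega(\Phi|_{S^3})=-C(\Phi)$ (Theorem~\ref{TH:MAIN}) for (a) and $L(f)=C(\Phi)-3T(\Phi)$ (Corollary~\ref{cor:Lfug}, Remark~\ref{rem:T}(a)) for (b), which is exactly how the paper argues in Corollary~\ref{cor:C} and Remark~\ref{rem:T}(b). The difference is how the equalities $\Omega(f)=\Omega(f')$ and $L(f)=L(f')$ are obtained. The paper never leaves the holomorphic category: Lemma~\ref{cor:epsilon} is applied only to the two analytic germs $\Phi$ and $\Phi'$, the diffeomorphism $\psi$ is extended to $\Psi(x_1,\dots,x_6)=(\psi(x_1,\dots,x_4),x_5,x_6)$, and the Smale invariants are compared at the level of Jacobians: in the chain-rule factorization of $d(\Lambda\circ\tilde F\circ\Psi)$ the factors $d\Lambda\circ\tilde F\circ\Psi$ and $d\Psi$ extend over the contractible ball $\mathfrak{B}'^4_{\epsilon}$, hence restrict to nullhomotopic maps $\mathfrak{S}'^3\to GL^+(6,\R)$, and Remark~\ref{re:lie} then gives equality of the homotopy classes representing $\pi_3(j)(\Omega(f'))$ and $\pi_3(j)(\Omega(f))$. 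You instead interpolate $\psi$ and $\Lambda$ to the identity and invoke regular homotopy invariance.

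That interpolation is where your proof has a genuine gap. The interpolating germs $\Phi_t=\Lambda_t\circ\Phi\circ\psi_t$ are only $\mathcal{C}^{\infty}$ (the Alexander rescaling of a $\mathcal{C}^{\infty}$ diffeomorphism is not analytic), whereas Lemma~\ref{cor:epsilon} and Definition~\ref{de:linkmap} are established only for finite \emph{holomorphic} germs: the independence-of-choices behind them rests on Looijenga's results quoted in Subsection~\ref{ss:link}, which require a \emph{real analytic} function $\rho$ with $\rho^{-1}(0)=\{0\}$. Hence your claim that the distorted spheres $\Lambda_t^{-1}(S^5_{\epsilon})$ are ``handled by the independence-of-choices part of Lemma~\ref{cor:epsilon} applied in the target'' is not justified: $|\Lambda_t\circ\Phi|^2$ is not real analytic, and for a merely smooth germ singular only at the origin it is in general false that all small level sets are spheres transverse to the map. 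What is actually missing is a parametrized transversality statement — a single $\epsilon_0$ such that for every $t$ and every $\epsilon<\epsilon_0$ the holomorphic $\Phi$ is transverse to $\Lambda_t^{-1}(S^5_{\epsilon})$, the preimages forming a smooth family of $3$--spheres. This can be proved (e.g.\ by comparing $\Lambda_t^{-1}(S^5_{\epsilon})$ at scale $\epsilon$ with the real-analytic ellipsoid family $\{|d(\Lambda_t)_0 z|=\epsilon\}$ and using openness of transversality), but it is real work, and it is precisely the work the paper's Jacobian argument is designed to avoid. The gap propagates to your part (b), since $L(f)=L(f')$ is deduced there from the not-yet-constructed regular homotopy through stable immersions; the paper instead asserts this equality directly for the two stable immersions related by the orientation preserving equivalence (Remark~\ref{rem:T}(b)). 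The rest of your argument — the rank computation showing $\Phi'$ is singular only at the origin, the use of Theorem~\ref{th:fin-stab} to obtain finite determinacy of $\Phi'$ and finiteness of $T(\Phi')$, the concluding arithmetic, and the remark that orientation preservation is indispensable — is correct.
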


Part (a) of Theorem~\ref{thm:ct} is proved as Corollary~\ref{cor:C}, while part (b) in Remark~\ref{rem:T}.


In fact, one might even ask for the topological analogue of Theorem \ref{thm:ct}:
 is it true that if
 $ \Phi'= \Lambda \circ \Phi \circ \psi $ holds for some germs of orientation preserving
 {\it homeomorphisms} $ \psi: (\R^4, 0) \to ( \R^4, 0) $ and
 $ \Lambda: (\R^6, 0) \to (\R^6, 0)$, then
$ C( \Phi) = C( \Phi') $ and $T( \Phi) = T ( \Phi')$?
 In Remark \ref{rem:T} we show the following.

 \begin{cor}\label{cor:NEW}
  In the presence of a topological left--right equivalence as above,
 if $ \Phi $ and $ \Phi'$ are finitely $ \mathscr{A}$-determined, 
 then $L(f)=L(f')$, hence $C(\Phi)-3T(\Phi)=C(\Phi')-3T(\Phi')$.
 \end{cor}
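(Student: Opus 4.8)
The plan is to deduce the statement from the single identity $L(f)=C(\Phi)-3T(\Phi)$ (see Subsection~\ref{ss:LSt}) together with the observation that its left-hand side is intrinsically topological. Since $\Phi$ and $\Phi'$ are finitely $\mathscr{A}$-determined, their link maps $f=\Phi|_{S^3}$ and $f'=\Phi'|_{S^3}$ are stable immersions (Theorem~\ref{th:fin-stab}), so both $L(f)=C(\Phi)-3T(\Phi)$ and $L(f')=C(\Phi')-3T(\Phi')$ hold. Thus the concluding equality $C(\Phi)-3T(\Phi)=C(\Phi')-3T(\Phi')$ is merely a restatement of $L(f)=L(f')$, and the entire content of the corollary is the \emph{topological invariance of $L$}: if $\Phi'=\Lambda\circ\Phi\circ\psi$ with $\psi,\Lambda$ germs of orientation preserving homeomorphisms of $(\R^4,0)$ and $(\R^6,0)$, then $L(f)=L(f')$.

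First I would pass from the germ level to the link level. An orientation preserving homeomorphism of $(\R^6,0)$ has local degree $+1$ at the fixed point $0$, so, after identifying the links of $0$ with standard spheres as in Lemma~\ref{cor:epsilon} and Definition~\ref{de:linkmap}, it restricts to an orientation preserving $\lambda\colon S^5\to S^5$; likewise $\psi$ induces an orientation preserving $\bar\psi\colon S^3\to S^3$. From $\Phi'=\Lambda\circ\Phi\circ\psi$ one reads off $f'\simeq\lambda\circ f\circ\bar\psi$ at the level of links. In particular $\lambda$ carries the image $f(S^3)$ onto $f'(S^3)$ and the double value locus $f(\gamma)$ onto $f'(\gamma')$, while $\bar\psi$ carries the source double point set $\gamma$ onto $\gamma'$ compatibly with the covering involutions $\iota$.

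The core step is that $L$ depends only on this matched topological data. I would use the homological description $L(f)=[\widetilde{f(\gamma)}]\in H_1(S^5\setminus f(S^3),\Z)\cong\Z$ of (\ref{eq:L1}). An orientation preserving homeomorphism $\lambda$ sending $f(S^3)$ to $f'(S^3)$ induces an isomorphism $H_1(S^5\setminus f(S^3),\Z)\to H_1(S^5\setminus f'(S^3),\Z)$ carrying the canonical linking generator to the canonical generator, since each is pinned down by the orientations of source and target, which $\lambda$ and $\bar\psi$ preserve. It then remains to verify that $\lambda(\widetilde{f(\gamma)})$ is homologous in $S^5\setminus f'(S^3)$ to the smoothly defined push-off $\widetilde{f'(\gamma')}$. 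This is the only delicate point, and I expect it to be \textbf{the main obstacle}: the push-off is built from a smooth normal framing, whereas $\lambda$ is a mere homeomorphism and need not respect smooth directions.

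To overcome it I would exploit the fact that the homology class of the push-off is already determined by topological input: the Seifert-type framing condition on the source field $v$ (that $\tilde\gamma$ be null-homologous in $S^3\setminus\gamma$, a condition preserved by the orientation preserving $\bar\psi$) together with the local model of a transverse double crossing of two three-dimensional sheets along $f(\gamma)$ (preserved by $\lambda$). The equivalence $L_1=L_2=L_v$ of Proposition~\ref{pr:Leq}, and especially the freedom in definition (\ref{eq:Lv}) to use an arbitrary nonzero normal field, provide exactly the flexibility to replace the smooth framing by a topologically transported one without changing the value of $L$. Granting this, $\lambda_*[\widetilde{f(\gamma)}]=[\widetilde{f'(\gamma')}]$, whence $L(f)=L(f')$, and the asserted identity $C(\Phi)-3T(\Phi)=C(\Phi')-3T(\Phi')$ follows immediately.
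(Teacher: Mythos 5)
Your proposal takes essentially the same route as the paper: Remark~\ref{rem:T} likewise reduces the corollary to the identity $L(f)=C(\Phi)-3T(\Phi)$ for finitely $\mathscr{A}$-determined germs (its part (a), resting on Theorem~\ref{thm:L} and Proposition~\ref{l:gen}) combined with the invariance $L(f)=L(f')$ under topological left--right equivalence of stable immersions, which the paper asserts in part (c) in a single sentence. Your discussion of the delicate point --- that the smooth push-off construction must be shown to be homologically determined by topological data --- actually supplies more detail than the paper itself records, so the proposal matches, and if anything exceeds, the paper's own level of justification.
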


 \noindent
 The full extension of Theorem \ref{thm:ct} from $\mathcal{C}^{\infty}$ to topological category
 is obstructed by the following facts. Though we  identify the analytic invariant
 $C(\Phi)$ with the smooth Smale invariant $\Omega (f)$, it is not yet known if
 $\Omega(f)$ is stable with respect to  topological left--right equivalence.
 A possible way to prove this requires the extension of the formulae from Section
  \ref{s:ss} from smooth to more general Seifert surfaces, which exceeds the
  aims of the present thesis.
    (Also, we do not know how the
    topological left--right equivalence behaves with respect to analytic deformations
    used in Section \ref{s:eszcomp}.)

    \section{Distinguished generators and sign conventions}\label{s:signconv}
    
    \subsection{Smale invariant and orientation}\label{ss:absor}
    
   Recall that we use (\ref{eq:Smale}) for definition of the Smale invariant throughout Chapter~\ref{ch:ass}. We wish to emphasize the following facts regarding orientations of $S^3$ and $\R^5$
 and their effects on definition (\ref{eq:Smale}). (This might serve also as a small guide for the next sections.)

Let us think about $S^3$ as the subset of $\R^4$, the boundary of the 4--ball $B^4$ in $\R^4$,
or via embedding $\R^4\subset \R^5$, as a subset of $\R^5$. We do not wish to fix any orientation on it
as the orientation of $\partial B^4$
(that would depend on the convention how one defines the orientation of the boundary of an oriented
manifold --- called, say, `boundary convention').

Note that in the definition (\ref{eq:Smale})
of the Smale invariant, not the orientation of $S^3$ is used, but the orientation
of the tubular neighbourhood $U\subset \R^5$ and the orientation of the target $\R^5$. Moreover,
$\Omega(f)$ is unsensitive to the orientation change simultaneously in both $\R^5$.
In this way we get an element $\Omega(f)\in[S^3,SO(5)]$, which is independent of the orientation of
$\R^5$ and does not use any orientation of $S^3$. Furthermore, if we define
(this will done in \ref{ss:sign}) a generator $[L]$ in $[S^3,SO(5)]$, using again only
the embedding $S^3\subset \R^5$ (and no other orientation data), then $\Omega(f)$ identifies with
an element of $\Z$, such that its definition is independent of any orientations of $S^3$ and
$\R^5$, hence also of the `boundary convention'.

All our discussions are in this spirit (except sections \ref{s:ss} and \ref{s:eszcomp}, where
oriented Seifert surfaces are treated): we run orientation and `boundary convention' free
definitions and statements (associated with  $S^3$, regarded as a subset of  $\R^5$, and immersions
$S^3\looparrowright \R^5$).

 However, if we fix a `boundary convention', then $S^3$ (in $\R^5$) will get an orientation (as $\partial
 B^4$). Then, for any {\it oriented abstract $S^3$}, let us denote it by ${\bf S}^3$, and immersion
 ${\bf S}^3\looparrowright \R^5$, we can define the Smale invariant $\Omega^a(f)\in \Z$
 (here `a' refers to the `abstract' ${\bf S}^3$) by identifying ${\bf S}^3$ with the embedded
 $S^3\subset \R^5$ by an orientation preserving diffeomorphism and taking  $\Omega(
 S^3\to {\bf S}^3\looparrowright \R^5)$. This $\Omega^a(f)$
 depends on the `boundary convention', since the identification
 $S^3\to {\bf S}^3$ depends on it: changing the convention we change $\Omega^a(f)$ by a sign.

 This point of view should be adapted when ${\bf S}^3$ will be the (oriented) boundary of an oriented
 Seifert surface. But till Section \ref{s:eszcomp} we will focus on the first version,
 $\Omega(f)$.

 \vspace{2mm}

Next, in the definition of $\Omega(f)$,
one can replace $\R^5$ by $S^5$, where $S^5$ is the boundary of the ball in $\R^6$, and
$S^3$ is embedded naturally in $S^5$, cf. Example~\ref{ex:snsq}.
By taking a generic
point $P\in S^5$ we identify $S^5\setminus \{P\}$ with $\R^5$, and
$U$ will be replaced by a tubular neighbourhood of  $S^3$ in $S^5$. Then the definition (\ref{eq:Smale}) of $\Omega(f)$ can be repeated for any immersion $S^3\looparrowright S^5$ (where
$S^3\subset S^5$) providing an element $[S^3,SO(5)]$, which becomes an integer once
 a generator $[L]$ is constructed from the embedding $S^3\subset S^5$. Again, this Smale invariant
$\Omega(f)$ will be independent of the orientations of $S^3$ and $S^5$, hence of the
`boundary convention' as well.

For immersions defined in Subsection \ref{ss:link}, $\mathfrak{S}^3$ evidently sits naturally in
$\C^2=\R^4$ (hence also in a certain $\mathfrak{S}^5=S^5\subset \C^3=\R^6$, cf. \ref{ss:6.1}).
This together with Definition
(\ref{eq:Smale})  provide $\Omega(f)$ (which becomes an integer once $[L]$ will be constructed
in \ref{ss:sign}).

\subsection{Switch complex to real}\labelpar{ss:comprel}
There is a natural map $ \tau $ from $ \Hom (\C^3, \C^3) $ to $ \Hom (\R^6, \R^6) $, which replaces any entry
$ M_{ij} =  a+ bi $ of a matrix $ M \in \Hom (\C^3, \C^3) $ by the  real $ 2 \times 2 $--matrix
$\begin{pmatrix}a&-b\\b&a\end{pmatrix}$. $ \tau (U(3)) \subset SO(6) $ holds.
A map $ F: \C^3 \to \C^3 $ can be regarded as a map $ \tilde{F}: \R^6 \to \R^6 $: if we
denote by $ z_j = x_j + i y_j $  ($ j= 1, 2, 3 $) the coordinates of $ \C^3 $, then for the components of $ F $ and $ \tilde{F} $ one has
\[ F_j(z_1, z_2, z_3) = \tilde{F}_{2j-1} (x_1, y_1, x_2, y_2, x_3, y_3) + i \tilde{F}_{2j} (x_1, y_1, x_2, y_2, x_3, y_3). \]
 Then $ \tau ( d_{\C} F ) = d_{\R} \tilde{F} $ holds for the complex Jacobian of $F$ and the real Jacobian of $\tilde{F}$.

Let $ j: SO(5) \hookrightarrow SO(6) $ denote the inclusion. It is well--known (see e.g.
\cite{husemoller}) that
\begin{equation}\label{eq:pi3}
\pi_3 (j): \pi_3 (SO(5)) \to \pi_3 (SO(6)) \ \ \mbox{is an isomorphism}.
\end{equation}
\begin{prop}\label{pr:hom}
The homomorphism $ \pi_3 ( \tau ): \pi_3 (U(3)) \to \pi_3(SO(6)) $ is an isomorphism too.
\end{prop}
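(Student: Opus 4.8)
We need to show $\pi_3(\tau): \pi_3(U(3)) \to \pi_3(SO(6))$ is an isomorphism, where $\tau$ is the standard "complexification-to-real" map sending a complex $3\times 3$ matrix to its $6\times 6$ real form.

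**Key facts I'll need.** First, $\pi_3(U(3)) \cong \mathbb{Z}$ and $\pi_3(SO(6)) \cong \mathbb{Z}$ — both by Bott periodicity / stability, since $U(3)$ and $SO(6)$ are in the stable range for $\pi_3$. So we're proving that a homomorphism $\mathbb{Z}\to\mathbb{Z}$ is an isomorphism, which amounts to showing it's $\pm 1$ (i.e., surjective, or equivalently sends a generator to a generator). Let me sketch the standard approach.

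**The plan.** The cleanest route is to recognize $\tau$ as (essentially) the standard inclusion coming from forgetting complex structure, $U(n) \hookrightarrow SO(2n)$, and to compute the induced map on $\pi_3$ in the stable range. I would proceed as follows. First I'd record that the embedding $\tau: U(3)\to SO(6)$ is the realification map, and that it factors through the stable unitary and orthogonal groups; so on $\pi_3$ it agrees with the stable map $\pi_3(U)\to\pi_3(O)$ induced by realification (the "real reduction" in $K$-theory). Then I would invoke the known computation of this stabilized map. The composite $\pi_3(U)\xrightarrow{r_*}\pi_3(O)\xrightarrow{c_*}\pi_3(U)$, where $c_*$ is complexification, is multiplication by some known integer; in the relevant degree the realification map $\pi_3(U)\to\pi_3(O)$ is actually an isomorphism $\mathbb{Z}\to\mathbb{Z}$. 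This is a standard Bott-periodicity fact (see the references \cite{husemoller, steenrod} already cited in the paper).

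**Concrete realization / main obstacle.** Rather than quoting the $K$-theoretic statement abstractly, I'd want a hands-on verification of the index $\pm1$, since the \emph{sign} matters for the paper's sign conventions (this is exactly why the proposition is stated). The hard part is pinning down that the map is surjective (index exactly $1$), not merely injective. My plan for this is to exhibit an explicit generator of $\pi_3(U(3))$ and track its image. A natural generator lives in $\pi_3(U(2)) \subset \pi_3(U(3))$, represented by a map $S^3 = SU(2) \to U(2) \subset U(3)$, namely the standard identification $SU(2)\cong S^3$. Its realification lands in $SO(4)\subset SO(6)$, where I can compute the $\pi_3$-class using the well-understood generators $\sigma,\rho$ of $\pi_3(SO(4))\cong\mathbb{Z}\oplus\mathbb{Z}$ and the known homomorphism $\pi_3(SO(4))\to\pi_3(SO(5))$ given by $(a,b)\mapsto a+2b$ (which the paper records in Subsection~\ref{ss:s3r4}). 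The computation should show the image is a generator of $\pi_3(SO(6))\cong\pi_3(SO(5))$, giving index $\pm1$. The genuine obstacle is bookkeeping the identification $SU(2)\cong S^3$ against the chosen generator $[L]$ of $\pi_3(SO)$ so that the sign comes out correctly and consistently with Subsection~\ref{ss:sign}; the isomorphism-up-to-sign part is routine from Bott periodicity, but fixing the sign requires care and is presumably done together with the generator choices in \ref{ss:sign}.
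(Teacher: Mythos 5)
Your proposal is correct, but its substantive content is not the argument the paper attaches to this proposition; it coincides instead with the paper's \emph{second} proof, given later in Subsection~\ref{ss:sign}. The paper's own proof of Proposition~\ref{pr:hom} is purely conceptual: after the same stable-range reduction you make, it invokes Bott periodicity in the form $O/U \simeq \Omega O$, so that $\pi_i(O/U)=\pi_{i+1}(O)=0$ for $i=3,4$, and then reads off the isomorphism $\pi_3(U)\to\pi_3(O)$ from the homotopy exact sequence of the fibration $U\to O\to O/U$ --- no generators, no signs. What you propose as the ``hands-on verification'' is exactly Proposition~\ref{pr:homo}: the generator of $\pi_3(U(2))=\pi_3(U(3))$ is the paper's $[u]$ (the identification $S^3\cong SU(2)$ by right quaternion multiplication, which is the complex-linear one), its realification is $R$, and the relation $\rho_q R_q=L_q$ gives $\pi_3(\tau)[u]=[R]=[L]-[\rho]=-[L]$, using $[\rho]=2[L]$ in $\pi_3(SO)$ (Proposition~\ref{pr:hus}(c)) --- which is the same information as the formula $(a,b)\mapsto a+2b$ from Subsection~\ref{ss:s3r4} that you quote. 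Note that once this computation is done, your earlier appeal to the ``standard Bott-periodicity fact'' that realification is an isomorphism on $\pi_3$ is not load-bearing: a generator going to a generator already proves the proposition (and beware that the relation $c_*r_*=2$ alone cannot distinguish index $1$ from index $2$, as you in effect acknowledge). The trade-off between the two routes is exactly as you suspect: the fibration argument is short, self-contained and sign-blind, while the quaternionic computation is what fixes the sign $-1$, which is what the chapter actually needs for the sign-refined Smale invariant.
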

\begin{proof} First, we provide a more conceptual  proof, which does not identify
distinguished generators.
Both sides are in the stable range (see \cite{husemoller}), hence
 we can switch to the homomorphism $ \pi_3 (U) \to \pi_3(O) $ induced by the embedding $ \tau: U \hookrightarrow O $. By (a proof of)
 Bott periodicity,
 the quotient $ O/U $ is homotopically equivalent to the loopspace $ \Omega O $ of $ O$, cf. \cite{bottper}.
 Hence $ \pi_i (O/U) = \pi_i ( \Omega O ) = \pi_{i+1} (O) = 0 $ for $i=3$ and 4.
  Then the  isomorphism follows from
 the homotopy exact sequence of the fibration $ O \to O/U $ with fibre $ U $.
\end{proof}
In \ref{ss:sign} we will give another, more computational  proof, where we will
be able to fix distinguished  generators for $ \pi_3 (U(3)) $ and $ \pi_3(SO(6)) $,
 and via these  generators  we
identify $ \pi_3 ( \tau ) $ with multiplication by $ -1 $.

\subsection{Conventions, identifications}\labelpar{ss:sign}
First, we identify $ \mathbb{H} $ and $ \R^4 $ and $ \C^2 $ in the obvious way:
 we identify the quaternion $ q = a + bi + cj + dk = z + wj  \in \mathbb{H} $ with $ (a, b, c, d) \in \R^4 $ and with the complex pair $ (z, w) \in \C^2 $, where $ z=a+bi $ and $ w= c+ di $. Also, we identify $ S^3 $ with the quaternions of unit length:
$ S^3 = \{ q= a + bi + cj + dk  \in \mathbb{H} \ | \ a^2 + b^2 + c^2 + d^2 = 1 \} $.

We define the following maps. Set
\[ u: S^3 \to U(2) \mbox{ , } u_q =
\left(
\begin{array}{cc}
z & - \bar{w} \\
w & \bar{z} \\
\end{array}
\right),
\]
where $ q= z + wj $. $ u_q $ is the (complex) matrix of the \emph{right}
(quaternionic) multiplication
with $ q $, that is, of the map $ \mathbb{H} \to \mathbb{H} $, $ p \mapsto pq $.
Note that the left multiplication by $ q $ is not a complex unitary transformation, in general.
Next, set
\[ L: S^3 \to SO(4) \mbox{ , } L_q =
\left(
\begin{array}{cccc}
a & -b & -c & -d \\
b & a & -d & c \\
c & d & a & -b \\
d & -c & b & a \\
\end{array}
\right),
\]
where $ q= a + bi + cj + dk $.
$ L_q $ is the (real) matrix of the \emph{left} multiplication with $ q $ (i.e., of
the map $ \mathbb{H} \to \mathbb{H} $, $ p \mapsto qp $).

Let $ R:  S^3 \to SO(4) $ be the map which assigns for a $q \in S^3 $ the (real) matrix $ R_q $ of the right multiplication with $ q$ (i.e., of the map $ \mathbb{H} \to \mathbb{H} $, $ p \mapsto pq $).

Let $ \rho:  S^3 \to SO(4) $ be the map which assigns for a $q \in S^3 $ the (real) matrix $ \rho_q $ of the conjugation with $ q$ (i.e., of the map $ \mathbb{H} \to \mathbb{H} $, $ p \mapsto qpq^{-1} $).

We use the same notation for the compositions of these maps with the inclusions $ SO(4) \hookrightarrow SO $ and $ U(2) \hookrightarrow U $. Note that these inclusions commute with $ \tau $.

\begin{prop}\label{pr:hus}\cite[Section 7, Subsection 12]{husemoller}

 (a) $ \pi_3 (U(2)) = \pi_3 (U) = \Z \langle [u] \rangle $.

 (b) $ \pi_3 (SO(4)) = \Z \langle [L] \rangle \oplus \Z \langle [\rho] \rangle $.

 (c) $ \pi_3 (SO) = \Z \langle [L] \rangle $ and $ [ \rho] = 2 [L] $ in $ \pi_3 (SO) $.
 \end{prop}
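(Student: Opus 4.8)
The plan is to prove all three parts by identifying the quaternionic maps $u$, $L$, $R$, $\rho$ with standard generators through the spin coverings, and then to pass to the stable range by Bott periodicity \cite{bottper}. Throughout I use Remark~\ref{re:lie}, so that $\pi_3$ of each of $U(2)$, $SO(4)$, $SO(3)$ consists of free (unbased) homotopy classes with group operation induced by pointwise multiplication; in particular, whenever a map $S^3\to G$ is written as a pointwise product of two maps, its class is the sum of their classes.

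For part (a) I would first observe that $\det u_q=|z|^2+|w|^2=|q|^2=1$, so $u$ in fact lands in $SU(2)$ and is precisely the standard diffeomorphism $S^3\xrightarrow{\sim}SU(2)$. Hence $[u]$ generates $\pi_3(SU(2))\cong\pi_3(S^3)=\Z$. The determinant fibration $SU(2)\to U(2)\xrightarrow{\det}U(1)$, together with $\pi_3(S^1)=\pi_2(S^1)=0$, gives $\pi_3(SU(2))\cong\pi_3(U(2))$, so $[u]$ generates $\pi_3(U(2))$. Finally the fibrations $U(n)\to U(n+1)\to S^{2n+1}$ with $\pi_3(S^{2n+1})=\pi_4(S^{2n+1})=0$ for $n\geq 2$ put $U(2)$ in the stable range for $\pi_3$, whence $\pi_3(U(2))=\pi_3(U)=\Z$.

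For part (b) the key tool is the spin double cover $\Psi\colon S^3\times S^3\to SO(4)$, $\Psi(p,q)(x)=px\bar q$, with kernel $\{(1,1),(-1,-1)\}$; being a covering it induces an isomorphism $\Psi_*\colon\pi_3(S^3\times S^3)\xrightarrow{\sim}\pi_3(SO(4))$. Writing $\pi_3(S^3\times S^3)=\Z a\oplus\Z b$ for the two factor classes, the definitions give $L_q=\Psi(q,1)$, $R_q=\Psi(1,\bar q)$ and $\rho_q=\Psi(q,q)$. Since quaternionic conjugation $q\mapsto\bar q$ is an orientation-reversing diffeomorphism of $S^3$, hence of degree $-1$, I obtain $[L]=\Psi_*a$, $[R]=-\Psi_*b$ and $[\rho]=\Psi_*(a+b)=[L]-[R]$. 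The change of basis expressing $(\Psi_*a,\Psi_*b)$ in terms of $([L],[\rho])$ has determinant $\pm1$, so $\{[L],[\rho]\}$ is a $\Z$-basis of $\pi_3(SO(4))\cong\Z\oplus\Z$, which is exactly part (b).

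For part (c), Bott periodicity gives $\pi_3(SO)=\Z$, and by (\ref{eq:pi3}) it suffices to compute images in $\pi_3(SO(5))$ through the homotopy exact sequence of $SO(4)\to SO(5)\to S^4$:
\begin{equation*}
\pi_4(S^4)\xrightarrow{\ \partial\ }\pi_3(SO(4))\xrightarrow{\ i_*\ }\pi_3(SO(5))\to\pi_3(S^4)=0.
\end{equation*}
Here $\partial$ is injective, since the preceding map out of the torsion group $\pi_4(SO(5))$ into $\pi_4(S^4)=\Z$ vanishes, and its image is generated by the clutching class of $TS^4$, namely $q\mapsto(x\mapsto qxq)=L_q\circ R_q$, whose homotopy class is $[L]+[R]=\Psi_*a-\Psi_*b$ (a primitive element, consistent with $\pi_3(SO(5))=\Z$). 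Thus $i_*$ is the quotient by $\langle\Psi_*a-\Psi_*b\rangle$, under which $i_*(\Psi_*a)=i_*(\Psi_*b)=:g$ is a generator of $\pi_3(SO)$. Then $i_*[L]=i_*(\Psi_*a)=g$ generates, while $i_*[\rho]=i_*(\Psi_*a+\Psi_*b)=2g=2[L]$, proving part (c); as a check, $\rho$ fixes $1\in\mathbb{H}$ and so factors through $SO(3)$ as the universal double cover $S^3\to SO(3)$, whose class generates $\pi_3(SO(3))=\Z$ and maps to twice the stable generator. The main obstacle is precisely this last identification: matching $\partial$ applied to the generator of $\pi_4(S^4)$ with the tangent-bundle clutching function $[L]+[R]$, for which I would invoke the standard computation of $TS^4$ as a clutched $SO(4)$-bundle \cite{steenrod,husemoller}; once the kernel lattice is known, the rest is a formal consequence of the coverings.
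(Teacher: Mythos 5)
Your proposal is correct. Note, however, that the thesis does not prove Proposition~\ref{pr:hus} at all: it is quoted directly from \cite[Section 7, Subsection 12]{husemoller} and used as a black box, so there is no internal argument to compare against --- what you have written is, in effect, the standard proof that the citation points to. Your route (the diffeomorphism $S^3\cong SU(2)$ plus the determinant fibration and stability for (a); the spin cover $\Psi\colon S^3\times S^3\to SO(4)$, $\Psi(p,q)x=px\bar q$, giving $[L]=\Psi_*a$, $[R]=-\Psi_*b$, $[\rho]=\Psi_*(a+b)$ for (b); and the exact sequence of $SO(4)\to SO(5)\to S^4$, whose boundary map sends the generator of $\pi_4(S^4)$ to the clutching class of $TS^4$, namely $q\mapsto(x\mapsto qxq)=L_qR_q$ of class $[L]+[R]=\Psi_*(a-b)$, for (c)) is sound, and each quaternionic identification checks out; the one nontrivial input, that $TS^4$ has clutching class $\Psi_*(a-b)$, is indeed a standard computation (cf. \cite{steenrod}, or via $e(TS^4)=2$, $p_1(TS^4)=0$). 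The real value your argument adds over the citation is that it pins down the generators and signs in exactly the conventions the thesis needs later: Proposition~\ref{pr:homo} rests on $[R]=[L]-[\rho]=-[L]$ in $\pi_3(SO)$, and your computation ($i_*[L]=g$, $i_*[R]=-g$, $i_*[\rho]=2g$) reproduces this, so your proof could replace the citation without disturbing anything downstream. Two trivial remarks: in (a) the vanishing needed from the long exact sequence is $\pi_4(U(1))=\pi_3(U(1))=0$, not $\pi_2(S^1)=0$ (harmless, since $\pi_k(S^1)=0$ for all $k\ge 2$); and in (c) injectivity of $\partial$ is not actually needed, since $\mathrm{im}\,\partial=\Z\,\partial(\iota_4)$ holds regardless, so the appeal to $\pi_4(SO(5))$ being torsion can be dropped.
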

 In the sequel, {\bf using these base choices $[u]$ and $[L]$ we identify
 the groups $\pi_3(U)$ and $\pi_3(SO)$ with $\Z$.}
 Now we can state the explicit version of Proposition~\ref{pr:hom}.
\begin{prop}\label{pr:homo}
$ \pi_3 (\tau) ([u]) = - [L] \in \pi_3 (SO) $ holds for $ [u] \in \pi_3 (U) $.
\end{prop}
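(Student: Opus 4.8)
The plan is to reduce the statement to a computation with the real right-multiplication map $R$, and then to trade $R$ for $L$ using the conjugation map $\rho$. First I would observe that $\tau$ is exactly the functor that sends the complex matrix of a $\C$-linear endomorphism of $\C^2$ to the real matrix of the same map regarded as an $\R$-linear endomorphism of $\R^4$. Under the identification $p=z'+w'j\leftrightarrow(z',w')$, left multiplication by $i$ is the standard complex structure on $\mathbb{H}=\C^2$, and right multiplication by $q$ commutes with it by associativity, so $u_q$ really is the complex matrix of the $\C$-linear map $p\mapsto pq$. Consequently $\tau(u_q)$ is the real matrix of that same map, i.e. $\tau(u_q)=R_q$ for all $q\in S^3$ (the realification coordinates being the same $(a,b,c,d)$ used to write $R_q$, and the inclusions $U(2)\hookrightarrow U$, $SO(4)\hookrightarrow SO$ commuting with $\tau$). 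Hence $\tau\circ u=R$ and $\pi_3(\tau)([u])=[R]\in\pi_3(SO)$.

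Next I would relate $[R]$ to $[L]$ through $\rho$. For a unit quaternion $q$ one has $q^{-1}=\bar q$ and $\rho_q(p)=qpq^{-1}=L_q\bigl(R_{q^{-1}}(p)\bigr)$, so as matrices $\rho_q=L_q\,R_{q^{-1}}$; since left and right multiplications commute, $\rho$ is the pointwise product of $L$ and $R\circ\mathrm{inv}$, where $\mathrm{inv}(q)=q^{-1}$. By Remark~\ref{re:lie}, for maps into the Lie group $SO$ the pointwise product induces addition in $\pi_3(SO)$ and base points are irrelevant, so
\[
[\rho]=[L]+[R\circ\mathrm{inv}]\in\pi_3(SO).
\]
The map $\mathrm{inv}$ agrees on $S^3$ with quaternionic conjugation $q\mapsto\bar q$, which is the restriction of the orthogonal transformation $\mathrm{diag}(1,-1,-1,-1)$ of $\R^4$; its degree equals its determinant $-1$. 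Precomposition with a degree $d$ self-map of $S^3$ multiplies classes in $\pi_3$ by $d$, so $[R\circ\mathrm{inv}]=-[R]$, giving $[\rho]=[L]-[R]$.

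Finally I would invoke Proposition~\ref{pr:hus}(c), namely $[\rho]=2[L]$ in $\pi_3(SO)$, to obtain $2[L]=[L]-[R]$, hence $[R]=-[L]$, and therefore $\pi_3(\tau)([u])=[R]=-[L]$, as claimed. I expect the main obstacle to be bookkeeping of signs and conventions in the two most delicate steps: verifying that $\tau(u_q)$ is \emph{literally} $R_q$ with the chosen coordinate ordering (rather than $R_q$ up to a basis permutation that could spoil the final sign), and confirming the degree $-1$ together with the base-point-free additivity in the precomposition argument. These are precisely the places where a stray sign would flip the conclusion from $-[L]$ to $[L]$.
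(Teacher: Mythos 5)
Your proof is correct and follows essentially the same route as the paper: both reduce the claim to computing $[R]$ via the identity $\tau\circ u=R$, and both then combine the quaternionic relations among $L$, $R$, $\rho$ with the pointwise-multiplication principle of Remark~\ref{re:lie} and the relation $[\rho]=2[L]$ from Proposition~\ref{pr:hus}(c) to conclude $[R]=-[L]$. The only divergence is in the middle step: the paper uses the pointwise identity $\rho_q R_q=L_q$ (since $q(pq)q^{-1}=qp$), which yields $[\rho]+[R]=[L]$ in one stroke, whereas you decompose $\rho_q=L_q R_{q^{-1}}$ and then need the additional (correct) observations that quaternionic inversion on $S^3$ is the restriction of $\mathrm{diag}(1,-1,-1,-1)$, hence has degree $-1$, and that precomposition by a degree $d$ map multiplies classes in $\pi_3$ by $d$, giving $[R\circ\mathrm{inv}]=-[R]$. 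Both detours land on the same equation $[\rho]=[L]-[R]$, so your argument is a valid, slightly longer variant of the paper's; the paper's choice of $\rho R=L$ simply avoids the degree computation and the attendant sign bookkeeping you rightly flag as the delicate point.
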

 \begin{proof}
From definitions $ \tau \circ u = R $ and $ \rho R= L $, thus $ \pi_3 (\tau) ([ u ] ) = [ R]  = [L]-[ \rho ] = -[L] $ by part (c) of Proposition~\ref{pr:hus}.
\end{proof}

\begin{rem}
 Let $ p: U(2) \to S^3 $ be the projection (choosing the first or the second column of
 the matrix). Then $ [u] \in \pi_3 (U(2)) $ is the unique generator for which
 $ \deg (u \circ p )=1 $.
\end{rem}

\begin{rem}
The conventions we use are not universal. For example, Kirby and Melvin in \cite{KirbyMelvin}
 have chosen the same generators of $  \pi_3 (U) $ and $  \pi_3 (SO)$ (these are $ [u] $ and $ [L]$ with our notations), but they identified $ \R^4 $ and $ \C^2 $ differently than us. Namely, they identified the quaternion $ q= a+bi + cj + dk = z + jw \in \mathbb{H} $ with $ (a, b, c, d) \in \R^4 $ and the complex pair $ (z,w) \in \C^2 $, where $ z=a+bi $ and $ w= c-di $. With that identification $ u_q $ becomes the (complex) matrix of the quaternionic \emph{left} multiplication with $ q $. In that identification $ \pi_3 ( \tau )[u] $ would be equal to $ -[R] $, since that is the homotopy class of the map $ S^3 \to SO(4) $ given by the composition $ \tau \circ u \circ \kappa  $, where $ \kappa $ is the reflection $ \kappa (z, w) = (z, \bar{w}) $.
\end{rem}

\section{The complex Smale invariant}\labelpar{s:co}
\subsection{} In this section we define
 the \emph{complex Smale invariant} $ \Omega_{\C} ( \Phi ) $
for a holomorphic germ $ \Phi: ( \C^2, 0)  \to (\C^3, 0) $,
 singular only at $ 0 $.  It will be the bridge between $ C ( \Phi ) $ and $\Omega(f)$.

\begin{defn}
Consider the map (with target the complex Stiefel variety $V_2(\C^3)$):
\[ d \Phi |_{ S^3} : S^{3} \to V_2 ( \C^3) \]
defined via the natural trivialization of the complex tangent bundles $T\C^2$
and $T\C^3$.
By definition, the complex Smale invariant of $ \Phi $ is the  homotopy class:
\[ \Omega_{\C} ( \Phi ) = [ d \Phi |_{ S^{3}}  ] \in \pi_{3} ( V_2 ( \C^{3}) ). \]
\end{defn}
\noindent
By the connectivity of the group of local coordinate transformations, $\Omega_\C(\Phi)$
is independent of the choice of local coordinates in $(\C^2,0)$ and $(\C^3,0)$.

\begin{rem}\labelpar{re:stab}
The projection $ U(3) \to  V_2 ( \C^{3}) $ induces an isomorphism between
$ \pi_{3} ( V_2 ( \C^{3})) $ and $ \pi_{3} ( U(3)) = \pi_{3} ( U) $ (see e.g.
\cite{husemoller}). Hence,
 if we choose a complex normal vector field $ N_{ \Phi } $ of $ \Phi $, then the map
\[ ( d \Phi , N_{ \Phi } )|_{S^{3}} : S^{3} \to GL(3, \C)
\]
represents $ \Omega_{ \C} ( \Phi ) $ in $ \pi_{3} ( GL(3, \C)) = \pi_{3} (U(3)) = \pi_{3} (U) $.
A canonical choice of $ N_{ \Phi } $ could be the complex conjugate of the cross product of the partial derivatives of $ \Phi $:
\[ N_{ \Phi } (s, t) = \overline{  \partial_s \Phi (s, t) \times \partial_t \Phi (s, t) } \mbox{ .} \]
\end{rem}

\begin{rem}
$ \pi_3 (U) \cong \Z $ and in \ref{ss:sign}  we
 identify them through a fixed isomorphism.
 In this way $ \Omega_{ \C} ( \Phi ) $ becomes  a well-defined integer without any sign--ambiguity.
\end{rem}

\begin{prop}\label{prop:u}
 Let $ \Phi : \C^2 \to \C^3 $ be the cross cap, i.e. $ \Phi (s, t) = (s^2, st, t) $. Then $ \Omega_{\C} ( \Phi ) = [u] $.
\end{prop}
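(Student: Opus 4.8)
The plan is to compute $d\Phi|_{S^3}$ explicitly as a map into the complex Stiefel variety $V_2(\C^3)$ and then to connect it, by a concrete homotopy of $2$-frames, to the frame that represents the generator $[u]$. First I would record the partial derivatives $\partial_s\Phi=(2s,t,0)$ and $\partial_t\Phi=(0,s,1)$, so that $\Omega_\C(\Phi)$ is the homotopy class of
\[
(s,t)\longmapsto \big(\,(2s,t,0),\ (0,s,1)\,\big)\in V_2(\C^3),\qquad (s,t)\in S^3 .
\]
These two vectors are linearly independent at every point of $S^3$, since the cross cap is singular only at the origin (cf.\ Theorem~\ref{th:Csum}); hence the formula genuinely defines a map $S^3\to V_2(\C^3)$.

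Next I would make the comparison target explicit. Identifying the source $\C^2$ with $\bH$ via $q=s+tj$ as in \ref{ss:sign}, we have
\[
u_q=\begin{pmatrix} s & -\bar t\\ t & \bar s\end{pmatrix}.
\]
Under the isomorphisms $\pi_3(U(2))\cong\pi_3(U(3))\cong\pi_3(V_2(\C^3))$ (the first because $n\ge 2$ lies in the stable range, the second being the projection isomorphism of Remark~\ref{re:stab}), the generator $[u]$ of Proposition~\ref{pr:hus} is carried to the class of the frame obtained by viewing $u_q$ as a $2$-frame in $\C^2\subset\C^3$, namely $(s,t)\mapsto\big((s,t,0),(-\bar t,\bar s,0)\big)$. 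It therefore suffices to homotope the frame $\big((2s,t,0),(0,s,1)\big)$ to $\big((s,t,0),(-\bar t,\bar s,0)\big)$ through frames. I would do this in two stages: first rescale the leading vector along $\big((2-\lambda)s,t,0\big)$ for $\lambda\in[0,1]$ while keeping $(0,s,1)$ fixed; then keep the first vector equal to $(s,t,0)$ and slide the second along the straight segment $(1-\lambda)(0,s,1)+\lambda(-\bar t,\bar s,0)$.

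The only genuinely delicate point, and thus the main (though mild) obstacle, is to verify that both stages stay inside $V_2(\C^3)$, i.e.\ that the two vectors never become proportional. In the first stage the vectors carry third coordinates $0$ and $1$, and the leading vector can vanish only if $s=t=0$, which is excluded on $S^3$. In the second stage the same third-coordinate argument handles all $\lambda<1$ (third coordinates $0$ versus $1-\lambda\neq 0$), while at the endpoint $\lambda=1$ proportionality of $(s,t,0)$ and $(-\bar t,\bar s,0)$ would force $-|t|^2=|s|^2$, impossible for $(s,t)\in S^3$. Concatenating the two homotopies shows $d\Phi|_{S^3}$ is freely homotopic to the image of $u$, whence $\Omega_\C(\Phi)=[u]$. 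As a cross-check one could instead append the normal field $N_\Phi$ of Remark~\ref{re:stab} and run the analogous argument inside $U(3)$, but the direct frame homotopy above is the shortest route.
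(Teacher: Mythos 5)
Your proof is correct, but it takes a genuinely different route from the paper's. The paper appends the canonical normal field $N_{\Phi}$ of Remark~\ref{re:stab} to get a map into $GL(3,\C)$, deforms it by an explicit homotopy into $GL(2,\C)\subset GL(3,\C)$, composes with the first-column projection to obtain $S^3\to\C^2\setminus\{0\}$, $(s,t)\mapsto(2s,t)$, and identifies the class by computing the degree (which is $1$) of the normalized map, invoking the characterization of $[u]$ as the unique generator of $\pi_3(U(2))$ whose column projection has degree $1$. You instead stay inside $V_2(\C^3)$ throughout: you push $[u]$ forward along $U(2)\hookrightarrow U(3)\to V_2(\C^3)$ to the explicit frame map $(s,t)\mapsto\bigl((s,t,0),(-\bar t,\bar s,0)\bigr)$ and then build a direct two-stage linear homotopy from $d\Phi|_{S^3}$ to it, with the independence checks you give (the free-versus-based homotopy issue is harmless here, cf.\ Remark~\ref{re:lie}, or simply because $V_2(\C^3)$ is $2$-connected). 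What your route buys is self-containedness: no normal-field computation, no homotopy inverse of the inclusion $U(2)\to U(3)$, and no degree theory. What it costs is two things. First, your argument hinges on the convention that the projection $U(3)\to V_2(\C^3)$ selects the \emph{first two} columns — this is indeed the paper's convention, implicit in Remark~\ref{re:stab}, but with the opposite choice the pushforward of $[u]$ would be a different frame map, so this is the one point where a convention slip would silently give the wrong generator, whereas the paper's degree characterization of $[u]$ is convention-robust. Second, the paper's computation is set up so that it generalizes verbatim to all germs of the form $\Phi(s,t)=(g_1(s,t),g_2(s,t),t)$, yielding Proposition~\ref{pr:otherproof}, i.e.\ $\Omega_{\C}(\Phi)=C(\Phi)$ for corank--$1$ germs as the intersection multiplicity of $\partial_s g_1$ and $\partial_s g_2$; your ad hoc homotopy is tailored to the cross cap and does not directly produce that extension.
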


\begin{proof}
The map $ d \Phi|_{S^3}: S^3 \to V_2 ( \C^3 ) $ represents $ \Omega_{\C} ( \Phi ) $.
We should compose this with  $ V_2 (\C^3 ) \to U(3) $, then
with the inverse of the inclusion $ U(2) \to U(3) $,  and finally with the projection $ U(2) \to S^3 $;
 and then
calculate the degree of the resulting map $ S^3 \to S^3 $. In fact,
along these compositions
we will use (the homotopically equivalent groups)
$ GL(2, \C ) $ and $ GL(3, \C ) $ instead of $ U(2)  $ and $ U(3) $.
Therefore, we will arrive in
$ \C^2 \setminus \{ 0 \} $ instead of $ S^3 $.
\[ d \Phi|_{S^3} : S^3 \to V_2 (\C^3) \mbox{ , }
(s, t) \mapsto
 \left( \begin{array}{cc}
 2s & 0 \\
 t & s \\
 0 & 1 \\
\end{array} \right).
\]
The first composition gives the map
\[
 S^3 \to GL(3, \C ) \mbox{ , }
 (s, t) \mapsto
 \left( \begin{array}{ccc}
 2s & 0 & N_1 \\
 t & s & N_2 \\
 0 & 1 & N_3 \\
\end{array} \right),
\]
where $ N_1 = \bar{t} $, $ N_2 = -2 \bar{s} $ and $N_3 = 2 \bar{s}^2 $ are the coordinates of the normal vector $ N_{\Phi} $ (see Remark~\ref{re:stab}).
This is modified by the homotopy
\[
 S^3 \times [0,1] \to GL(3, \C ) \mbox{ , }
 (s, t, h) \mapsto
 \left( \begin{array}{ccc}
 2s & 0 & N_1 \\
 t & h s & N_2 \\
 0 & 1 & h N_3 \\
\end{array} \right),
\]
which maps $ (s, t, 0) $ into $ GL(2, \C ) \subset GL(3, \C) $. [Note that the determinant
 is $ |t|^2 + 4|s|^2 + 4h^2 |s|^4 \neq 0 $, thus the image is indeed in $ GL(3, \C) $.]
Hence, we obtain  the map
\[
 S^3 \to GL(2, \C ) \mbox{ , }
 (s, t) \mapsto
 \left( \begin{array}{cc}
 2s & N_1 \\
 t & N_2 \\
\end{array} \right),
\]
which composes with the projection (first column) provides
$ S^3 \to \C^2 \setminus \{0 \}$, $(s, t) \mapsto (2s, t)$.
After a normalisation, the degree of the resulting map is $ 1 $.
\end{proof}

The proof of Proposition~\ref{prop:u} works for all germs of the form
 \begin{equation}\labelpar{eq:rank}
  \Phi (s, t) = ( g_1 (s, t), g_2 (s, t), t )
 \end{equation}
and implies that
 \[
  \Omega_{\C} ( \Phi ) = \deg \left( S^3 \to S^3 \mbox{ , } (s, t) \mapsto \frac{(\partial_s g_1 (s, t), \partial_s g_2 (s, t))}{|(\partial_s g_1(s, t), \partial_s g_2 (s, t))|} \right).
 \]
 This degree agrees with the intersection multiplicity of $ \partial_s g_1 $ and $ \partial_s g_2 $ in $(\C^2,0)$, hence we proved the following, cf. Example~\ref{ex:cor1}.
 \begin{prop}\label{pr:otherproof}
 \begin{equation}\labelpar{eq:other}
  \Omega_{\C} ( \Phi ) = \dim_{\C}\,  \frac{ \mathcal{O}_{(\C^2, 0) }}{ (
  \partial_s g_1, \partial_s g_2 ) } = C( \Phi)
  \end{equation}
  holds for corank--$1$ germs $ \Phi: ( \C^2, 0) \to (C^3, 0) $ singular only at the origin.
 \end{prop}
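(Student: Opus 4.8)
The plan is to combine three ingredients: the reduction of an arbitrary corank--$1$ germ to the normal form $(\ref{eq:rank})$, the explicit degree computation already carried out for the cross cap in Proposition~\ref{prop:u}, and the classical identification of the mapping degree of a finite holomorphic germ with the dimension of its local algebra.

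First I would fix the normal form. A corank--$1$ germ singular only at the origin is $\mathscr{A}$-equivalent to one of the shape $\Phi(s,t)=(g_1(s,t),g_2(s,t),t)$ (cf. Example~\ref{ex:cor1}), and since $\Omega_{\C}(\Phi)$ is independent of the choice of local coordinates it suffices to treat this form. Here the hypothesis that $\Phi$ be singular only at the origin translates, via Theorem~\ref{th:Csum}, into the assertion that the holomorphic functions $\partial_s g_1$ and $\partial_s g_2$ have an isolated common zero at $0$; in particular $(\partial_s g_1,\partial_s g_2)\neq 0$ everywhere on $S^3$.

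Next I would run the argument of Proposition~\ref{prop:u} in this generality. Appending the canonical complex normal field $N_\Phi=\overline{\partial_s\Phi\times\partial_t\Phi}$ to $d\Phi|_{S^3}$ gives a map $S^3\to GL(3,\C)$ representing $\Omega_{\C}(\Phi)$, which I would deform by the same one--parameter family as in the proof of \ref{prop:u} (scaling by $h\in[0,1]$ the entries coming from $\partial_t\Phi$ together with the lowest entry of the normal column). The only point to verify is that the deformation stays inside $GL(3,\C)$, and a direct expansion gives
\[
\det = h^2\,|\Delta|^2 + |\partial_s g_1|^2 + |\partial_s g_2|^2,\qquad \Delta:=\partial_s g_1\,\partial_t g_2-\partial_s g_2\,\partial_t g_1,
\]
which is strictly positive on $S^3$ precisely because $(\partial_s g_1,\partial_s g_2)$ does not vanish there. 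At $h=0$ the map factors through an embedded $GL(2,\C)\subset GL(3,\C)$; projecting to the first column and normalizing then yields
\[
\Omega_{\C}(\Phi)=\deg\Big(S^3\to S^3,\ (s,t)\mapsto \tfrac{(\partial_s g_1,\partial_s g_2)}{|(\partial_s g_1,\partial_s g_2)|}\Big).
\]

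It remains to read off this degree algebraically. The germ $(\partial_s g_1,\partial_s g_2)\colon(\C^2,0)\to(\C^2,0)$ is holomorphic with an isolated zero, so the degree of the associated self-map of $S^3$ equals its local multiplicity, which for a complex germ coincides with the intersection multiplicity of the curves $\{\partial_s g_1=0\}$ and $\{\partial_s g_2=0\}$, i.e. with $\dim_{\C}\mathcal{O}_{(\C^2,0)}/(\partial_s g_1,\partial_s g_2)$; this is the first equality. For the second, the three $2\times 2$ minors of $d\Phi$ are $\partial_s g_1$, $\partial_s g_2$ and $\Delta$, and since $\Delta\in(\partial_s g_1,\partial_s g_2)$ the ramification ideal satisfies $J(\Phi)=(\partial_s g_1,\partial_s g_2)$, whence $\dim_{\C}\mathcal{O}_{(\C^2,0)}/(\partial_s g_1,\partial_s g_2)=C(\Phi)$ by the definition of $C$ in Subsection~\ref{ss:CTN}. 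I expect the main obstacle to be the middle step---passing from the topological mapping degree on $S^3$ to the algebraic codimension---but this is precisely the standard theorem that a finite holomorphic germ has mapping degree equal to the dimension of its local algebra, so the only genuine computations left are the determinant identity above and the one--line minor calculation.
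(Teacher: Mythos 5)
Your proposal is correct and follows essentially the same route as the paper: the paper proves this proposition precisely by observing that the homotopy argument of Proposition~\ref{prop:u} generalizes verbatim to germs of the form $(\ref{eq:rank})$, yielding the degree formula for $\Omega_{\C}(\Phi)$, and then identifying that degree with the intersection multiplicity $\dim_{\C}\mathcal{O}_{(\C^2,0)}/(\partial_s g_1,\partial_s g_2)=C(\Phi)$ as in Example~\ref{ex:cor1}. Your only additions are welcome explicit details the paper leaves implicit, namely the determinant identity $h^2|\Delta|^2+|\partial_s g_1|^2+|\partial_s g_2|^2$ guaranteeing the deformation stays in $GL(3,\C)$, and the citation of the degree--equals--local-algebra-dimension theorem for the middle step.
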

  This identity $\Omega_{\C} ( \Phi )= C(\Phi)$ will be proved in the general case in Section~\ref{sec:5}.

\begin{rem}\label{re:other} Conversely, the identity $\Omega_{\C} ( \Phi )= C(\Phi)$ is proved in Section~\ref{sec:5}
 independently of Theorem~\ref{th:CT}, therefore (\ref{eq:other}) together with Theorem~\ref{th:Comp}
  give a new proof for Theorem~\ref{th:CT}
  in the case of germs which satisfy $ \rk (d \Phi_{0} )= 1 $.
  \end{rem}

\subsection{The identity $ \Omega_{ \C } ( \Phi ) = C ( \Phi ) $.}\label{sec:5}

Next we identify the complex Smale invariant with the number of cross caps.
\begin{thm}\label{th:Comp}
$ \Omega_{ \C } ( \Phi ) = C ( \Phi ) $.
\end{thm}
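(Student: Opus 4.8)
The plan is to compute $\Omega_{\C}(\Phi)$ by a localization argument that reduces an arbitrary germ to a disjoint collection of cross caps, for which the invariant is already known by Proposition~\ref{prop:u}; since the corank--$1$ case is settled by Proposition~\ref{pr:otherproof}, the new content is the corank--$2$ case. Write $\Hom(\C^2,\C^3)\cong\C^6$ for the space of $3\times2$ complex matrices, and let $\Sigma\subset\Hom(\C^2,\C^3)$ be the determinantal variety of matrices of rank $\le1$. It is a complex analytic set of complex codimension $2$ (real codimension $4$), smooth away from the single point $\{0\}$ (the rank--$0$ locus, of real codimension $12$), and $V_2(\C^3)=\Hom(\C^2,\C^3)\setminus\Sigma$. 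By definition the ramification ideal $J(\Phi)$ is the pull--back $(d\Phi)^*I_\Sigma$ of the ideal of $2\times2$ minors, so $C(\Phi)=\dim_{\C}\mathcal{O}_{(\C^2,0)}/J(\Phi)$ is exactly the local intersection multiplicity at $0$ of the holomorphic map $d\Phi\colon(\C^2,0)\to(\Hom(\C^2,\C^3),0)$ with $\Sigma$; finiteness of this multiplicity is the hypothesis that $\Phi$ is singular only at the origin, cf.\ Theorem~\ref{th:Csum}.

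First I would fix a small ball $B^4$ around $0\in\C^2$ with boundary $S^3$ (as in Lemma~\ref{cor:epsilon}), on which $d\Phi$ is non--singular, i.e.\ takes values in $V_2(\C^3)$. Then I would choose a small deformation $\Phi_v$ of $\Phi$ whose ramification locus $\{\,\rk(d\Phi_v)<2\,\}$ consists of finitely many isolated cross caps; this is achieved by Thom transversality, making the $1$--jet $j^1\Phi_v$ transverse to the corank--$1$ stratum $\Sigma\setminus\{0\}$ and (generically) avoiding the corank--$2$ point $\{0\}$, which is possible precisely because the latter has real codimension $12>4=\dim_{\R}B^4$. Note this requires only genericity of first--order data, not finite $\mathscr{A}$--determinacy. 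For $v$ small, $d\Phi_v|_{S^3}$ is homotopic to $d\Phi|_{S^3}$ inside $V_2(\C^3)$, so $\Omega_{\C}(\Phi)=[\,d\Phi_v|_{S^3}\,]$, and transversality forces each singular point $p_1,\dots,p_N$ of $\Phi_v$ to be of corank exactly $1$ and in general position, hence $\mathscr{A}$--equivalent to its stable normal form, the cross cap.

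The core step is additivity. Since $d\Phi_v$ is defined and non--singular on $B^4\setminus\{p_1,\dots,p_N\}$, deleting small balls $B^4_i$ around the $p_i$ produces an oriented cobordism in $V_2(\C^3)$ between $S^3$ and the small spheres $S^3_i=\partial B^4_i$. Because $V_2(\C^3)$ is $2$--connected with $\pi_3\cong H_3\cong\Z$ (Hurewicz), the homotopy class on the outer boundary is the sum of the classes on the inner spheres, giving $\Omega_{\C}(\Phi)=\sum_{i=1}^N\Omega_{\C}(\Phi_v,p_i)$, where $\Omega_{\C}(\Phi_v,p_i):=[\,d\Phi_v|_{S^3_i}\,]$. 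By the coordinate--independence of the complex Smale invariant noted right after its definition, together with Proposition~\ref{prop:u}, each local term equals $[u]=1$, whence $\Omega_{\C}(\Phi)=N$.

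Finally I would match $N$ with $C(\Phi)$. The colength of $J(\Phi)$ is the intersection multiplicity of $d\Phi$ with $\Sigma$, and this is conserved under deformation by the Cohen--Macaulay property of determinantal ideals (the displayed conservation formula of Subsection~\ref{ss:CTN}, valid for any not necessarily stable perturbation once $C(\Phi)<\infty$); since every $p_i$ is a cross cap, with local colength $1$, this yields $C(\Phi)=\sum_i\dim_{\C}\mathcal{O}_{(\C^2,p_i)}/J(\Phi_{v,p_i})=N$. Combining the two counts gives $\Omega_{\C}(\Phi)=N=C(\Phi)$. The main obstacle I anticipate is making the localization rigorous: verifying that a generic small perturbation has only cross--cap ramification points (Thom transversality plus the local normal form), and carrying out the cobordism/Hurewicz additivity in the non--contractible target $V_2(\C^3)$, while checking that the complex orientations make every local contribution --- topological and algebraic alike --- equal to $+1$ with no cancellation. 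The corank--$1$ degree computation of Proposition~\ref{pr:otherproof} serves throughout as a sign and consistency check.
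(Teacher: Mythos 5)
Your proof is correct, and at its core it is the same localization the paper performs: deform $\Phi$ so that its singular set becomes finitely many cross caps, observe that complex orientations force every local contribution to be $+1$, and feed in Proposition~\ref{prop:u} to identify that contribution with the generator $[u]$. The packaging differs in two ways. Where you obtain additivity from the cobordism $B^4\setminus\bigcup_i B^4_i$ together with the Hurewicz isomorphism $\pi_3(V_2(\C^3))\cong H_3(V_2(\C^3))\cong\Z$, the paper instead defines a linking-number homomorphism $\lk\colon \pi_3(V_2(\C^3))\to\Z$ (algebraic intersection with $\mathcal{D}$ of a generic extension to the ball), verifies by the cross-cap computation (Lemma~\ref{lem:cr}) that it agrees with $\Omega_{\C}$ on the generator, and concludes $\Omega_{\C}=\lk$; the two mechanisms are interchangeable, yours being slightly more explicit about why localization is legitimate, the paper's fixing the sign and the generator identification in one stroke. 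The more substantive difference is your final step: you take $C(\Phi)$ in its algebraic form and invoke the Cohen--Macaulay conservation of colength to equate it with the number $N$ of cross caps, whereas the paper deliberately uses $C(\Phi)$ as the geometric count of cross caps of a stabilization, precisely so that Theorem~\ref{th:Comp} is proved independently of Mond's Theorem~\ref{th:CT}; that independence is what lets Remark~\ref{re:other}, combined with Proposition~\ref{pr:otherproof}, extract a new proof of Mond's theorem for corank--$1$ germs. Your route is sound, but it consumes exactly the conservation statement the paper wants to re-derive, so this by-product would be lost. One further caution: the generic perturbation must be taken inside the holomorphic category (e.g.\ $\Phi+L$ for a generic linear map $L$, by parametric transversality), since a generic $\mathcal{C}^{\infty}$ perturbation of a map $\R^4\to\R^6$ has fold curves rather than isolated cross caps, cf.\ Example~\ref{ex:r46}; your appeal to ``Thom transversality'' should be read in that restricted sense.
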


\begin{proof} Consider the following diagram:
\[
\begin{array}{cccc}
d \Phi: & \C^2 & \longrightarrow & Hom ( \C^2 , \C^3 ) \\
 & \cup & & \cup \\
d \Phi |_{ \C^2 \setminus \{ 0 \}} : & \C^2 \setminus \{ 0 \} & \longrightarrow &  Hom ( \C^2 , \C^3 ) \setminus \mathcal{D}= V_2 ( \C^3 )\\
\end{array} \mbox{ ,}
\]
where $ \mathcal{D} = \{ M \in Hom ( \C^2 , \C^3 ) \ | \ \rk (M) < 2 \} $.
$ \mathcal{D} $ is an irreducible algebraic variety of complex codimension $2$, its Zariski open set
$ \mathcal{D}^1 = \{ M \in {\mathcal D} \ | \ \rk (M) =1 \} $ is smooth.

First we prove that $ \Omega_{ \C } ( \Phi ) $ is equal to the linking number of
 $ d \Phi |_{S^3} $ and $ \mathcal{D} $ in $  Hom ( \C^2 , \C^3 ) $.
This is  defined as follows.
If  $ g: S^3 \to Hom ( \C^2 , \C^3 ) \setminus \mathcal{D} $ is a smooth map, and $\tilde{g}$
is a smooth extension defined on the ball such that $\tilde{g}|_{S^3} = g $ and
$ \tilde{g} $ intersects $ \mathcal{D} $ transversally along $ \mathcal{D}^1 $, then
the linking number of $ g $ and $ \mathcal{D} $
 is the algebraic number of the intersection points of $ \tilde{g} $ and $ \mathcal{D} $.
By standard argument it is a homotopy invariant of maps
$S^3 \to Hom ( \C^2 , \C^3 ) \setminus \mathcal{D} $.

The linking number gives a group homomorphism
$\lk: \pi_3 (V_2 ( \C^3 )) \to \Z $.
Next lemma shows  that this homomorphism is surjective.
\begin{lem}\label{lem:cr} Let $ \Phi(s, t) = (s^2, st, t) $ be the cross cap. If $ g= d \Phi|_{S^3} $ and $ \tilde{g} = d \Phi|_{B^4} $, then
$ \tilde{g} (0) \in \mathcal{D} $ is the only intersection point and the intersection is transverse at that point.
\end{lem}
\begin{proof} This is a straightforward local computation left to the reader.
The transversality follows also from the conceptual fact that the cross cap is a stable map.
\end{proof}

The sign of the intersection multiplicity at the intersection point of two
complex submanifolds is always positive. For $ g $ described in \ref{lem:cr}
 the linking number of $ g(S^3) $ and $ \mathcal{D} $ is $1$.
 This shows not only that the
 homomorphism given by the linking number is surjective (hence an isomorphism too), but also that
 this isomorphism agrees with the chosen one in \ref{ss:sign}.
  This follows from the fact that the complex Smale invariant of the cross cap is
   exactly the chosen generator, see Proposition~\ref{prop:u}. Hence, the homomorphisms  $\Omega_\C$
   and $\lk$ coincide.

 Next, we show that
 $ \lk_{ Hom ( \C^2 , \C^3 ) } ( d \Phi |_{S^3}(S^3), \mathcal{D}) = C ( \Phi ) $.
 Take a generic perturbation $ \Phi_{ \epsilon} $ of $ \Phi $. $ d \Phi_{ \epsilon} |_{S^3} $ is homotopic to $  d \Phi |_{S^3} $, hence  their linking numbers are the same. $ \Phi_{ \epsilon} $ has only cross cap singularities, their number is $ C ( \Phi ) $. This means that $ d \Phi_{ \epsilon} |_{B^4} $ intersects transversally $ \mathcal{D} $ in $ C ( \Phi ) $ points.
 Intersection of  complex manifolds provides positive signs.
\end{proof}

\begin{cor}
$ \Omega_{ \C } ( \Phi ) \geq 0 $.
\end{cor}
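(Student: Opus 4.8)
The plan is to read off this inequality directly from Theorem~\ref{th:Comp}, which has just been established, so essentially no new work is required. Since that theorem gives $\Omega_{\C}(\Phi) = C(\Phi)$, and $C(\Phi) = \dim_{\C}(\mathcal{O}_{(\C^2,0)}/J(\Phi))$ is by definition the dimension of a complex vector space, it is a non-negative integer whenever it is finite. As $\Phi$ is assumed singular only at the origin, Theorem~\ref{th:Csum} guarantees that $C(\Phi)$ is finite, and therefore $\Omega_{\C}(\Phi) = C(\Phi) \geq 0$. This is the quickest route and is the one I would record as the main argument.

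It is worth also indicating the geometric source of the positivity, which is already visible inside the proof of Theorem~\ref{th:Comp}, since this makes clear that the inequality is not an accident of the algebraic definition of $C(\Phi)$. There $\Omega_{\C}(\Phi)$ is identified with the linking number $\lk_{\Hom(\C^2,\C^3)}(d\Phi|_{S^3}(S^3), \mathcal{D})$, computed as the algebraic number of transverse intersection points of a generic extension $d\Phi_{\epsilon}|_{B^4}$ with the discriminant variety $\mathcal{D}$ along its smooth locus $\mathcal{D}^1$. Each such intersection is an intersection of complex submanifolds inside the complex manifold $\Hom(\C^2,\C^3)$, and the local intersection multiplicity of two complex submanifolds meeting transversally is always $+1$ with respect to the canonical complex orientations. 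Hence every contributing sign is positive, the linking number is a sum of non-negative terms, and $\Omega_{\C}(\Phi) \geq 0$ follows.

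I do not anticipate any genuine obstacle here, precisely because the positivity of complex intersection numbers was already invoked to finish the proof of Theorem~\ref{th:Comp}. The only point I would flag explicitly is that the standing hypothesis that $\Phi$ is singular only at the origin (equivalently, the finiteness of $C(\Phi)$, cf.\ Theorem~\ref{th:Csum}) is exactly what excludes the degenerate case $C(\Phi) = \infty$, ensuring that $\Omega_{\C}(\Phi)$ is a bona fide non-negative integer rather than merely non-negative in some extended sense.
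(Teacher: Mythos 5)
Your proposal is correct and matches the paper's intent exactly: the corollary is stated there without proof as an immediate consequence of Theorem~\ref{th:Comp}, since $C(\Phi)=\dim_{\C}\bigl(\mathcal{O}_{(\C^2,0)}/J(\Phi)\bigr)$ is a non-negative (and, under the standing hypothesis, finite) integer. Your additional remark tracing the positivity back to complex intersection signs in the proof of Theorem~\ref{th:Comp} is accurate but not needed beyond that one-line deduction.
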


\section{The proof of Theorems~\ref{TH:MAIN} and \ref{TH:EMBINTRO}}\labelpar{s:proof}
\subsection{}\label{ss:6.1}

 Theorem \ref{TH:MAIN} follows from Theorem \ref{th:Comp}, Proposition \ref{pr:homo} and the next identity.
\begin{prop}\label{pr:main}
$ \pi_3 ( \tau ) ( \Omega_{ \C } ( \Phi ) ) = \pi_3 (j) (\Omega(f)) $.
\end{prop}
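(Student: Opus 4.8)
The plan is to push both sides of the identity into $\pi_3(SO(6))\cong\pi_3(GL^{+}(6,\R))$ and to exhibit an explicit homotopy of $\mathfrak{S}$-families of invertible real $6\times 6$ matrices joining the two representatives. For the left--hand side, Remark~\ref{re:stab} represents $\Omega_\C(\Phi)$ by the map $(d\Phi,N_\Phi)|_{\mathfrak{S}}\colon\mathfrak{S}\to GL(3,\C)$, where $N_\Phi$ is the complex normal field of $\Phi$. Applying $\tau$ and using the compatibility $\tau(d_{\C}\Phi)=d_{\R}\widetilde{\Phi}$ from Subsection~\ref{ss:comprel} (here $\widetilde{\Phi}\colon\R^4\to\R^6$ is the realification of $\Phi$), the class $\pi_3(\tau)(\Omega_\C(\Phi))$ is represented by the map $A\colon\mathfrak{S}\to GL^{+}(6,\R)$ whose first four columns form the real Jacobian $d_{\R}\widetilde{\Phi}$ and whose last two columns are the realifications of $N_\Phi$ and $iN_\Phi$. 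Since these last two are real--orthogonal to $\mathrm{im}(d_{\R}\widetilde{\Phi})$ and $A$ lies in the image of $\tau$, it is orientation preserving, so $A$ indeed takes values in $GL^{+}(6,\R)$.

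The geometric heart is to compare $A$ with the Hughes--Melvin data of $f=\widetilde{\Phi}|_{\mathfrak{S}}$. Write $\rho=|\Phi|^2$, so that $\mathfrak{S}=\rho^{-1}(\epsilon^2)$ and $f$ maps into $S^5_\epsilon$, cf. Lemma~\ref{cor:epsilon} and Definition~\ref{de:linkmap}. Split the source along $\mathfrak{S}$ as $T_p\R^4=T_p\mathfrak{S}\oplus\langle\nabla\rho_p\rangle$ and the target along $S^5_\epsilon$ as $T_q\R^6=T_qS^5_\epsilon\oplus\langle q\rangle$, with $q=\Phi(p)$. From $d\rho=2\langle d_{\R}\widetilde{\Phi}(\cdot),\Phi\rangle$ one gets, for $v\in T_p\mathfrak{S}$, that $\langle d_{\R}\widetilde{\Phi}_p(v),q\rangle=\tfrac12\,d\rho_p(v)=0$, so $d_{\R}\widetilde{\Phi}_p$ carries $T_p\mathfrak{S}$ into $T_qS^5_\epsilon$ and restricts there to $df_p$; moreover $\langle d_{\R}\widetilde{\Phi}_p(\nabla\rho_p),q\rangle=\tfrac12|\nabla\rho_p|^2>0$, so the outward radial source direction is sent to a vector with strictly positive radial target component. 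Hence, in these splittings, $A$ is block upper--triangular with corner block $df_p$ and a strictly positive scalar on the radial diagonal.

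With this structure in hand I would run the homotopy in three harmless steps, each staying inside $GL^{+}(6,\R)$. First, scale to zero the $T_qS^5_\epsilon$--component of $d_{\R}\widetilde{\Phi}_p(\nabla\rho_p)$ and normalise the positive radial scalar to $1$; invertibility is preserved precisely because that scalar never vanishes. After this the span of the first four columns is $df_p(T_p\mathfrak{S})\oplus\langle q\rangle$, so its orthogonal complement, spanned by the (rotated) normal frame, lies in $T_qS^5_\epsilon\ominus df_p(T_p\mathfrak{S})$, i.e. in the normal bundle of the immersion $f$ inside $S^5_\epsilon$. Second, reorder the columns so that the normal frame immediately follows $df_p$; the resulting $5\times 5$ block $(df_p\mid N_f)$ is a frame of $TS^5_\epsilon$ along $f$, while the remaining $\langle\nabla\rho\rangle\leftrightarrow\langle q\rangle$ block is the scalar $1$. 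Up to homotopy this presentation is $j$ applied to $dF|_{\mathfrak{S}}$ for a tubular--neighbourhood extension $F$ of $f$ built along the normal frame, which by (\ref{eq:Smale}) represents $\Omega(f)$; hence it represents $\pi_3(j)(\Omega(f))$. Third, the two completions of the $3$--frame $df_p(T_p\mathfrak{S})$ to a positively oriented basis of $\R^6$ — the one coming from $N_\Phi$ and the one coming from $N_f$ together with the radial direction — differ by a map of $\mathfrak{S}$ into the connected space of such completions, so they are homotopic. Concatenating the three steps yields $[A]=\pi_3(j)(\Omega(f))$, which is the assertion.

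The main obstacle is this final matching step together with the orientation bookkeeping: one must verify that the competing frame completions can be deformed through \emph{invertible} matrices without ever degenerating, that the extension $F$ in (\ref{eq:Smale}) can genuinely be chosen compatibly with the radial splitting and with the normal frame, and that every identification preserves orientation so that no spurious sign creeps in. Note that the honest sign discrepancy between the complex and the real distinguished generators is isolated entirely in Proposition~\ref{pr:homo}, and therefore does not interfere with the sign--free equality established here.
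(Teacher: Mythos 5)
You are in fact following the same route as the paper: your matrix family $A$ is exactly $d\tilde{F}|_{\mathfrak{S}^3}$ for the paper's common extension $F(s,t,r)=\Phi(s,t)+r\,N_{\Phi}(s,t)$ (cf. Remark~\ref{re:stab}), and your observation that $d_{\R}\widetilde{\Phi}$ carries $T_p\mathfrak{S}$ into $T_qS^5_\epsilon$ while sending the radial direction to a vector with positive radial component is the same transversality fact on which the paper's proof rests. The genuine gap is the step you yourself flag as ``the main obstacle'' and then do not carry out: identifying the reduced $5\times 5$ family $(df\mid N_f)$ with a representative of $\Omega(f)$ in the sense of (\ref{eq:Smale}). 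The Hughes--Melvin invariant is not read in your pointwise splittings $T_p\mathfrak{S}\oplus\R^2$ and $T_{f(p)}S^5_\epsilon$; it is the Jacobian of an immersion extending $f$ over a tubular neighbourhood, expressed in a \emph{global} trivialization of that neighbourhood and in the standard trivialization of the target chart. Passing from your pointwise frames to such global trivializations introduces transition functions, and one must prove that their restrictions to $\mathfrak{S}^3$ do not change the homotopy class. This is precisely the content of the paper's argument with the framings $\nu,\eta,\nu',\eta'$: the transitions are nullhomotopic because they extend over the contractible spaces $\mathfrak{S}^5\setminus\{Q\}$ and $S^5\setminus\{P\}$, and the group structure of $\pi_3$ (Remark~\ref{re:lie}) then allows one to discard them. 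Without this step you have only shown that both classes admit representatives of the same block shape, not that the classes coincide.

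Two of your intermediate homotopies are also incorrectly justified, though both are repairable. In your first step, positivity of the radial scalar does not by itself preserve invertibility while you scale away the tangential part of $d_{\R}\widetilde{\Phi}(\nabla\rho)$: the columns coming from $N_{\Phi}$ and $iN_{\Phi}$ are orthogonal to ${\rm im}(d_{\R}\widetilde{\Phi})$ but not to $q$, so in general they have nonzero radial components, and then the determinant genuinely depends on the entry you are deforming. (A $3\times 3$ block model with rows ordered tangent, tangent, radial: the columns $(1,0,0)$, $(0,s,1)$, $(0,0,1)$ give determinant $s$, which vanishes as $s\to 0$ even though the radial entry of the second column stays equal to $1$.) The fix is to first shear the radial entries of the normal columns to zero using the radial column --- possible precisely because its radial entry is positive --- and only then scale. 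In your third step, ``the space of completions is connected, hence the two families of completions are homotopic'' is a non sequitur: two maps from $S^3$ into a connected space need not be homotopic, and the space of completions of a $3$-frame to an oriented basis of $\R^6$ has nontrivial higher homotopy. What actually saves this step is that, after your first two moves, both completions consist of a positive radial vector together with a frame of the normal bundle $\nu(f)$ of $f$ in $S^5_\epsilon$, and two trivializations of an oriented plane bundle over $S^3$ are homotopic because $\pi_3(SO(2))=0$ --- the same fact invoked in Subsection~\ref{ss:L}.
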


\begin{proof}
By the definition of the Smale invariant, one has to extend $ f $ to a neighbourhood of
the standard embedding of $ \mathfrak{S}^3 $ in an $ \R^5 $  (cf. (\ref{eq:Smale})).
On the other hand $ \Phi $ extends $ f $ in the $ \C^2 $ direction. We will compare these two
extensions using a  common extension $ F: W \to \C^3 $, where $ W $ is a suitable neighbourhood of
$ \mathfrak{S}^3 $ in $ \C^3 $.

Let us consider a fixed $\epsilon$ which satisfy the properties of
 Corollary \ref{cor:epsilon}. We also write
$B^6_\epsilon =\{z:|z|\leq \epsilon\}\subset \C^3$, $S^5_\epsilon=\partial B^6_\epsilon$,
${\mathfrak B}^4_\epsilon:=\Phi^{-1}(B^6_\epsilon)$ for the $\mathcal{C}^{\infty}$ ball in $\C^2$, and  $\mathfrak{S}^3_\epsilon:= \Phi^{-1}(S^5_\epsilon)$ for its boundary. (Late we will drop some
of the $\epsilon$'s.)
For positive  $\epsilon_1$, $\epsilon_2$ sufficiently closed to $\epsilon$, $\epsilon_1<\epsilon<\epsilon_2$, and for $0 <\rho\ll\epsilon$ one defines
$ F(s, t, r) = \Phi (s, t) + r \cdot  N_{ \Phi } (s, t) $, where $(s,t,r)\in W:=\Phi^{-1}(z:\epsilon_1<|z|<\epsilon_2)\times D^2_\rho$, $D^2_\rho$ is
the $\rho$-disc in $\C$, and $  N_{ \Phi} $ is the complex normal vector of $ \Phi $, see Remark~\ref{re:stab} .
Since the normal bundle of $f$ in $S^5_\epsilon$
is trivial (and since the transversality is an open property), we get that $F^{-1}(S^5_\epsilon) $ is diffeomorphic to $\mathfrak{S}^3_\epsilon\times D^2_\rho$.
In fact, if $p:\C^2\times D^2_\rho\to D^2_\rho$ is the natural  projection,
then for any $r\in D^2_\rho$ we can define  $\mathfrak{S}^3_{\epsilon,r}:=
F^{-1}(S^5_\epsilon)\cap p^{-1}(r)$. Then each  $\mathfrak{S}^3_{\epsilon,r}$ is a $ \mathcal{C}^{\infty}$ 3-sphere, being the boundary of the $ \mathcal{C}^{\infty}$ 4-ball $\mathfrak{B}^4_{\epsilon,r}
\subset p^{-1}(r)$. Then $F^{-1}(S^5_\epsilon)=\cup_{r\in D^2_\rho} \mathfrak{S}^3_{\epsilon,r}$. Moreover, $\mathfrak{B}^6:=\cup_{r\in D^2_\rho} \mathfrak{B}^4_{\epsilon,r}\subset \C^2\times \C$ is a thickened tubular neighbourhood of
$\mathfrak{B}^4_\epsilon\subset \C^2\times 0$, homeomorphic to the real 6-ball.
Its corners can be smoothed, hence we think about it as a $ \mathcal{C}^{\infty}$ ball. Its boundary $\mathfrak{S}^5:=\partial\mathfrak{B}^6$ (diffeomorphic to the 5-sphere)
is the union of $F^{-1}(S^5_\epsilon)$ (diffeomorphic to
$S^3\times D^2$) and $\cup_{r\in \partial D^2_\rho} \mathfrak{B}^4_{\epsilon,r}$
(diffeomorphic to  $B^4\times S^1$).

 In a point $ (s, t, 0) \in \mathfrak{S}^3_\epsilon\times \{0\} $
 the differential of $ F $ is
\[ dF (s, t, 0)  = ( \partial_s F(s, t, 0) , \partial_t F(s, t, 0) , \partial_r F(s, t, 0) )
= ( \partial_s \Phi (s, t) , \partial_t \Phi (s, t) , N_{ \Phi } (s, t) ) \mbox{ .}
\]
Thus,  the homotopy class of $ dF|_{\mathfrak{S}_\epsilon^3} $ equals $ \Omega_{ \C } ( \Phi ) $  (cf.  \ref{re:stab}). Therefore, taking the \emph{real} function $ \tilde{F} : W \to \R^6 $
 (cf. \ref{ss:comprel}), its  real
Jacobian  satisfies $ [ d \tilde{F} |_{\mathfrak{S}_\epsilon ^3}]=
\pi_3 ( \tau ) ( \Omega_{ \C } ( \Phi ) ) $ .

On the other hand we show that $ [ d \tilde{F}|_{\mathfrak{S}^3}] = \pi_3 (j) (\Omega(f)) $.
In order to recover the Smale invariant $ \Omega (f) $ of
$ f=\Phi|_{\mathfrak{S}^3}: \mathfrak{S}^3 \looparrowright S^5 $,
first we need to fix a global coordinate system in a contractible neighbourhood of the source
$\mathfrak{S}^3$ in $\mathfrak{S}^5$
and also in $\R^5\simeq S^5_\epsilon\setminus \{\mbox{a point}\}$ containing ${\rm im}(f)$. Let us introduce
the `outward normal at the end' convention to orient  compatibly  a manifold and its boundary.
In this way we fix an orientation of $\mathfrak{S}^5 = \partial\mathfrak{B}^6 $ and $ S^5 = \partial B^6 $.
(According to \ref{ss:sign}, the output of the proof is independent of the convention choice.)

In the first case we introduce a coordinate system
in $\mathfrak{S}^5\setminus \{Q\}\simeq \R^5$ compatibly with the orientation,
 where $ Q \in \mathfrak{S^5} \setminus \mathfrak{S}^3 $ is an arbitrary point (e.g.
 $(0,0,\rho)$).
 Let $ \nu' $ denote the framing of $ T (\mathfrak{S^5} \setminus \{Q \})\simeq T\R^5$ induced by
 this coordinate system. We can extend the outward normal frame $ \nu_6 $ of $ \mathfrak{S}^3 $ in $ \C^2 $ to the rest of $ \mathfrak{S}^5 \setminus \{Q \}  $ (as the outward normal vector
 of $\mathfrak{S}^5$). This framing can be extended to
a neighbourhood $ V $ of $ \mathfrak{S}^5 \setminus \{Q \}  $ in $ \C^3 $. Let $ \nu: V \to GL^+ (6, \R) $ denote this framing (or more precisely, $ \nu $ is the transition function from the standard framing inherited from $ \R^6 $ to the one just
constructed).

The target is the standard $ S^5 \subset \R^6 $. We can choose a point $ P \in S^5 \setminus f( \mathfrak{S}^3 ) $ and a coordinate system on $ S^5 \setminus \{P\} $ compatibly with the orientation. The coordinate system induces a framing $ \eta' $ of the tangent bundle $ T ( S^5 \setminus \{P\} ) $
of $ S^5 \setminus \{P\} $. In the points of the target of $ \tilde{F} $ the vectors of $ \eta' $ and $ d \tilde{F} ( \nu_6 ) $ are linearly independent, that is,
$d \tilde{F} ( \nu_6 ) $ behaves like a normal framing
(this follows from the transversality property of
\ref{ss:link}).
We can extend it to a normal framing $\eta_6$
of $ S^5 \setminus \{P\}  $ in $ \R^6 $. In this way we get a framing of the tangent bundle of a neighbourhood $ V' $ of $ S^5 \setminus \{P\} $ in $ \R^6 $. Let $ \eta : V' \to GL^+(6, \R ) $ denote the transition from the framing on $ V' $ inherited from $ \R^6 $ to the framing just defined.

The Smale invariant $\Omega(f)$ is constructed in the following way, cf. (\ref{eq:Smale}).
Take
\[ \mathcal{J}_{( \nu', \eta')} ( \tilde{F}|_{\mathfrak{T}}),  \]
the Jacobian of $ \tilde{F} $ restricted to $ \mathfrak{T}=F^{-1}(S^5_\epsilon) $ prescribed in the framings $ \nu' $ and $ \eta' $. The homotopy class of this matrix restricted to $ \mathfrak{S}^3 $ (as a map $ \mathfrak{S}^3 \to GL^+(6, \R) $) is equal to $ \Omega(f) $. (Since $ \tilde{F} $ preserves the orientation, $ \tilde{F}|_{\mathfrak{T}} $ does as well.)
\[ \mathcal{J}_{ (\nu, \eta)} ( \tilde{F})|_{ \mathfrak{T} } = j ( \mathcal{J}_{ (\nu', \eta' )} ( \tilde{F}|_{ \mathfrak{T}})) \]
because $ d \tilde{F} (\nu_6) = \eta_6 $, thus the homotopy class of $ \mathcal{J}_{ (\nu, \eta ) } ( \tilde{F} )|_{ \mathfrak{S}^3} $ equals  $ \pi_3 (j) ( \Omega(f)) $.

On the other hand $ \mathcal{J}_{ ( \nu, \eta ) } ( \tilde{F}) = ( \eta^{-1} \circ \tilde{F}) \cdot d \tilde{F} \cdot \nu $. As maps $ \mathfrak{S}^3 \to GL^+(6, \R) $, $ ( \eta^{-1} \circ \tilde{F})|_{\mathfrak{S}^3} $ and $ \nu|_{\mathfrak{S}^3} $ are nullhomotopic because
 the vector fields are defined on the contractible spaces $\mathfrak{S}^5\setminus \{Q\}$ and
 $S^5\setminus \{P\}$. Therefore (cf. Remark~\ref{re:lie})
\[ [ \mathcal{J}_{ ( \nu, \eta ) } ( \tilde{F})|_{\mathfrak{S}^3} ]=
[( \eta^{-1} \circ \tilde{F})|_{\mathfrak{S}^3}] + [d \tilde{F}|_{\mathfrak{S}^3}] + [ \nu|_{\mathfrak{S}^3} ] = [d \tilde{F}|_{\mathfrak{S}^3}] \mbox{ .}
\]
The left hand side of this identity is $ \pi_3 (j) ( \Omega(f)) $, while the right
hand side  $ \pi_3 ( \tau ) ( \Omega_{ \C } ( \Phi ) ) $.
\end{proof}

\begin{cor}\label{cor:C}
Assume that the analytic germs $ \Phi $ and $ \Phi' :( \C^2, 0) \to ( \C^3, 0)$ are $ \mathcal{ C}^{\infty}$ left-right equivalent, that is, $ \Phi'= \Lambda \circ \Phi \circ \psi $ holds for some germs of orientation preserving diffeomorphisms $ \psi: (\R^4, 0) \to ( \R^4, 0) $ and $ \Lambda: (\R^6, 0) \to (\R^6, 0) $. Then $ C( \Phi) = C( \Phi') $.
\end{cor}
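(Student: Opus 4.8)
The plan is to reduce the statement to the regular homotopy invariance of the Smale invariant, exploiting that Theorem~\ref{TH:MAIN}(b) already expresses $C$ in purely topological terms. First I would record that $\Phi'$ is again singular only at the origin: since $d\Phi'_x = d\Lambda_{\Phi(\psi(x))}\circ d\Phi_{\psi(x)}\circ d\psi_x$ and $d\Lambda$, $d\psi$ are isomorphisms, one has $\rk(d\Phi'_x)=\rk(d\Phi_{\psi(x)})$, so $C(\Phi')$ is finite and $\Phi'$ admits an associated immersion $f'=\Phi'|_{\mathfrak S^3}$ by Theorem~\ref{th:Csum} and Lemma~\ref{cor:epsilon}. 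By Theorem~\ref{th:Comp} together with Proposition~\ref{pr:main} and the sign identification of Proposition~\ref{pr:homo}, one has $C(\Phi)=-\Omega(f)$ and $C(\Phi')=-\Omega(f')$, where $f=\Phi|_{\mathfrak S^3}$. Hence it suffices to prove $\Omega(f)=\Omega(f')$, i.e.\ that $f$ and $f'$ are regular homotopic.

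Next I would identify $f'$, up to regular homotopy, with $\Lambda_S\circ f\circ\psi_S$, where $\psi_S$ and $\Lambda_S$ are the diffeomorphisms of $\mathfrak S^3$ and $S^5$ induced by $\psi$ and $\Lambda$ at the level of links. Concretely, using the defining function $\rho'=|\Phi'|^2$ for the link of $\Phi'$, the diffeomorphism $\psi$ carries the link sphere $(\Phi')^{-1}(S^5_\epsilon)$ onto $\Phi^{-1}(\Lambda^{-1}(S^5_\epsilon))$, so that $f'=\Lambda\circ\bigl(\Phi|_{\Phi^{-1}(\Lambda^{-1}(S^5_\epsilon))}\bigr)\circ\psi$. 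Since $\Lambda^{-1}(S^5_\epsilon)$ is a smooth sphere about the origin of the target, the independence of the link construction from the choice of Milnor sphere (Lemma~\ref{cor:epsilon} and the isotopy invariance cited there) lets me replace $\Phi|_{\Phi^{-1}(\Lambda^{-1}(S^5_\epsilon))}$ by the standard associated immersion $f$, at the cost of the link-level diffeomorphisms $\psi_S$ and $\Lambda_S$.

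The decisive point is then that $\psi_S$ and $\Lambda_S$ are isotopic to the identity, and here the hypothesis that $\psi$ and $\Lambda$ are orientation preserving is essential. Each germ is isotopic, through germs of diffeomorphisms fixing the origin, to its linear part $d\psi_0\in GL^+(4,\R)$, resp.\ $d\Lambda_0\in GL^+(6,\R)$ (via the standard family $x\mapsto\psi(ux)/u$ as $u\to 0^+$). As $GL^+(4,\R)$ and $GL^+(6,\R)$ are connected, these linear parts are in turn isotopic to the identity, and restricting the whole isotopy to link level gives $\psi_S\simeq\mathrm{id}_{\mathfrak S^3}$ and $\Lambda_S\simeq\mathrm{id}_{S^5}$. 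Precomposing $f$ with the first isotopy and postcomposing with the second are regular homotopies through immersions, so $\Omega(\Lambda_S\circ f\circ\psi_S)=\Omega(f)$ by Proposition~\ref{prop:Smale}. Combined with the previous paragraph this yields $\Omega(f')=\Omega(f)$, and hence $C(\Phi')=C(\Phi)$.

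I expect the main obstacle to lie in the bookkeeping of the middle paragraph: making the identification $f'\simeq\Lambda_S\circ f\circ\psi_S$ rigorous requires carefully tracking both the source and the target Milnor spheres through the two diffeomorphisms and invoking the choice-independence of the link as an immersion of \emph{abstract} spheres rather than of embedded ones. The orientation hypothesis is precisely what prevents a sign flip at this stage: an orientation reversing factor would contribute a reflection, consistent with the fact noted in Section~\ref{s:co} that precomposition with $(s,t)\mapsto(s,\bar t)$ reverses the sign of the Smale invariant.
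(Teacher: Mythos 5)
Your outer steps are sound: both germs are analytic, so Theorem~\ref{TH:MAIN} (via Theorem~\ref{th:Comp}, Propositions~\ref{pr:main} and \ref{pr:homo}) applies to each, and the corollary does reduce to showing $\Omega(f)=\Omega(f')$. The gap is exactly where you suspect it, in the middle paragraph, and it is a genuine gap rather than bookkeeping. What the hypothesis actually gives is $f'=(\Lambda|_T)\circ\bigl(\Phi|_{\Phi^{-1}(T)}\bigr)\circ\bigl(\psi|_{\mathfrak{S}'^3}\bigr)$ with $T=\Lambda^{-1}(S^5_\epsilon)$: the middle factor is an immersion of the sphere $\Phi^{-1}(T)$ into the sphere $T$, and neither of these is a metric sphere. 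To replace that factor by $f$ you invoke ``independence of the link construction from the choice of Milnor sphere (Lemma~\ref{cor:epsilon})'', but that lemma, and the Looijenga results behind it, concern level sets of \emph{real analytic} functions $\rho$ with $\rho^{-1}(0)=\{0\}$. Since $\Lambda$ is only $\mathcal{C}^{\infty}$, the function $|\Lambda\circ\Phi|^2$ is not real analytic, and in the smooth category there is no uniform Milnor radius to appeal to: critical values may accumulate at $0$, and one cannot guarantee that a connecting family of spheres $T_u$ (say $T_u=\Lambda_u^{-1}(S^5_\epsilon)$ with $\Lambda_u(z)=\Lambda(uz)/u$) pulls back under $\Phi$ to a smooth family of $3$--spheres, since transversality of $\Phi$ to $T_u$ can fail at some parameters. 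For the same reason ``restricting the whole isotopy to link level'' is not defined: $\psi_u$ and $\Lambda_u$ do not preserve the link spheres, so they induce no isotopies of self-maps of a fixed $S^3$ and $S^5$; indeed $\psi_S$ and $\Lambda_S$ only become self-maps after identifications whose existence is essentially what is being proved. Your core step thus carries the full difficulty of the corollary (already in the special case $\psi=\mathrm{id}$), so as written the argument is circular there.

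The paper's proof is built precisely to avoid moving spheres. It keeps the holomorphically defined link $\mathfrak{S}'^3=(\Phi')^{-1}(S^5_\epsilon)$ (legitimate because $\Phi'$ itself is analytic), extends $\psi$ trivially to $\Psi:(\R^6,0)\to(\R^6,0)$, and notes that $\Lambda\circ\tilde{F}\circ\Psi$ --- where $\tilde{F}$ is the real extension of $\Phi$ used in the proof of Proposition~\ref{pr:main} --- is a real extension of $f'$ to which that proof applies, so $[\,d(\Lambda\circ\tilde{F}\circ\Psi)|_{\mathfrak{S}'^3}\,]=\pi_3(j)(\Omega(f'))$. Then the chain rule finishes the argument: the factors $d\Psi$ and $d\Lambda\circ\tilde{F}\circ\Psi$, as $GL^+(6,\R)$--valued maps, extend over the ball $\mathfrak{B}'^4_\epsilon$ bounded by $\mathfrak{S}'^3$, hence are nullhomotopic on $\mathfrak{S}'^3$, and (using the group structure of homotopy classes, Remark~\ref{re:lie}) they drop out, giving $[\,d(\Lambda\circ\tilde{F}\circ\Psi)|_{\mathfrak{S}'^3}\,]=[\,d\tilde{F}|_{\mathfrak{S}^3}\,]=\pi_3(j)(\Omega(f))$. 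If you want to keep your geometric route, you must supply an argument of this kind (or an independent proof that the link is invariant under smooth, non-analytic changes of the defining sphere); with such an argument in hand, the isotopies of $\psi$ and $\Lambda$ to the identity become unnecessary.
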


\begin{proof}
For a sufficiently small $ \epsilon $ take $ \mathfrak{S}'^3 = ( \Phi')^{-1} (S^5_{ \epsilon}) \simeq S^3 $ and let $ f': \mathfrak{S}'^3 \hookrightarrow S^5 $ be the immersion associated with the germ $  \Phi' $ (cf. \ref{ss:link}).

 As in the proof of Proposition~\ref{pr:main}, let $ \tilde{F}: W \to \R^6 $ be the extension of $ \Phi $ viewed as a real function. Then, by that proof,  $ [ d \tilde{F} |_{\mathfrak{S} ^3}]=\pi_3(j)(\Omega(f)) \in \pi_3 (GL^+(6, \R)) $, which is  $ -C( \Phi ) $ under the above identification.

Let us define $  \Psi : ( \R^6, 0) \to (\R^6, 0)$ by
\[  \Psi  (x_1, x_2, x_3, x_4, x_5, x_6) = (\psi (x_1, x_2, x_3, x_4), x_5,  x_6) \mbox{.} \]
$  \Psi $ is a germ of orientation preserving diffeomorphism extending $ \psi $.

Then $ \Lambda \circ \Phi \circ \Psi $ restricted on some neighbourhood $W' $ of $ \mathfrak{S}' $plays the role of a real extension of $ \Phi'|_{\mathfrak{S}'^3} =f'$. One can verify that the proof of Proposition~\ref{pr:main} works for this extension as well, since it sends the normal vector of $ \mathfrak{S}'^5 $ into a non-tangent vector of $ S^5$.
Therefore,
\[ [ d ( \Lambda \circ \Phi \circ \Psi ) |_{ \mathfrak{S}'^3 } ] = \pi_3 (j) ( \Omega (f')) \mbox{.} \]

Finally, note that $ [ d ( \Lambda \circ \Phi \circ \Psi ) |_{ \mathfrak{S}'^3 } ] =
 [ d \tilde{F} |_{\mathfrak{S} ^3}] $. This follows from the fact that the functions
 $ d \Lambda \circ \Phi \circ \Psi $ and $ d \Psi $ (with images in $ (GL^+(6, \R) $)
 extend to the ball $ \mathfrak{B}'^4_{ \epsilon} $.
\end{proof}

\subsection{Proof of Theorem \ref{TH:EMBINTRO}.} Part (a) follows from Theorem \ref{TH:MAIN} and
\cite{HM}.

In part (b),
the implications (1) $\Rightarrow$ (2,3,4), and (4) $\Rightarrow$ (3) are clear.

The proof of (2) $\Rightarrow$ (1): (2) implies $C(\Phi)=0$ by Theorem \ref{TH:MAIN}, while
this vanishing implies (1) via Mond's Theorem \ref{th:CT}.
For (3) $\Rightarrow$ (1) we provide three proofs, each of them emphasize a different
geometrical/topological aspect.

{\it (A) (Based on Mumford's Theorem.)} \ If $f$ is an embedding then the image
$(X,0)$ of $\Phi$ is an
isolated hypersurface singularity in $(\C^3,0)$. Moreover, its link is $S^3$, hence by Mumford's
theorem \cite{mumford} $(X,0)$ is smooth. Hence its normalization $\Phi$ is an isomorphism.

{\it (B) (Based on Mond's Theorem.)} \   Let us take the generic deformation $\Phi_\lambda$,
and consider the closure $D$ of the
preimage  of the the set of double values. It is a 1--dimensional  closed
complex analytic subspace of the disc in $\C^2$. The preimages of cross cap and triple points
are interior points of the closure of $D$, while its boundary is $D\cap S^3$ is the preimage of the
double points of the immersion of $f:S^3\looparrowright S^5$. If $f$ is an embedding then
$\partial D=\emptyset$, hence $D$ is a compact analytic curve in (the disc of) $\C^2$,
hence it should be empty.  This shows that $\Phi_\lambda$ has no cross cap and triple points either.
Hence $C(\Phi)=0$, which implies (1) by \ref{th:CT} as before.

{\it (C) (Based on Ekholm--Sz\H{u}cs Theorem.)} \ As above, we get that $\Phi_\lambda$ is an
embedding. Since $\Phi|_{S^3}$ is an embedding, this embedding is regular homotopic to
$\Phi_\lambda|_{S^3}$, hence they have the same Smale invariant. In the second case it
 can be determined by an
Ekholm--Sz\H{u}cs formula \cite{ESz} (recalled as Theorem \ref{th:ESz}): since ${\rm im}(\Phi_\lambda)$
is an embedded Seifert surface with signature zero we get $\Omega(f)=0$.
This basically proves (3) $\Rightarrow$ (2). Then we continue with the already shown
(2) $\Rightarrow$ (1).

\vspace{2mm}

In fact, the main point of this last proof is already coded in Hughes--Melvin Theorem
\cite{HM} (\ref{th:HM} here),
but in that statement the Seifert surface is in $\R^5$ and not in $\R^6_+$ (or in the
6--ball). But \ref{th:ESz} shows that that Hughes--Melvin Theorem is true even if the
4--manifold $M^4$ with boundary in $\R^5$
is embedded in $\R^6_+$ (instead of $\R^5$).

\section{Examples}\labelpar{s:ex}

\begin{ex}[$S_{k-1}$ from Mond's list \cite{Mond2}]\label{ex:1}  
 $ \Phi_{-k}(s, t) = (s, t^2, t^3 + s^k t ) $ ($k \in \Z_{\geq 0} $).
The ramification ideal $ J( \Phi_{-k}) $ is generated by  $ (2t, 3t^2 + s^k, -2kt^2 s^{k-1})= ( t, s^k ) $, cf. Subsection~\ref{ss:CTN} and Example~\ref{ex:cor1}.
Hence $  \Omega (f_{-k}) = - C( \Phi_{-k} ) = -k $.

 This family gives representatives for every regular homotopy class with non-positive
 sign--refined Smale invariant. Furthermore, we can represent any regular homotopy class with
 Smale invariant $ k $ in the form $ \Phi_{-k} \circ \tau $, where $ \tau $ is the reflection
  $ \tau (z, w) = (z, \bar{w}) $ (c.f. \cite[Lemma 3.4.2.]{ekholm3}).
  \end{ex}
  
  \begin{ex}[Quotient singularities]\label{ex:quot} The covering map germs of the quotient singularities of type $A$, $D$, $E$ are discussed in examples \ref{ex:A}, \ref{ex:A2}, \ref{ex:D} and \ref{ex:E}, where the $C$ invariant of them is calculated. Their associated immersions decompose in the form $ S^3 \to S^3/G \hookrightarrow S^5 $, cf. Example~\ref{ex:ade}. The list of their Smale invariants $ \Omega( f ) = -C( \Phi) $ is:
  \begin{equation}\labelpar{eq:adesmale} \begin{split}
  A_{n-1}: \ \ \Omega( f ) = -(n^2-1), \ \ \ D_{n+2}: \ \ \Omega( f ) = -(4 n^2+12n-1), \\
  E_6: \ \ \Omega( f ) = -167, \ \ \ E_7: \ \ \Omega( f ) = -383,
  \ \ \ E_8: \ \ \Omega( f ) = -1079 \mbox{.}
  \end{split}
  \end{equation}
  The Smale invariant of the $A$ and $D$ types agrees up to sign with the Smale invariant of the immersions constructed by Kinjo \cite{kinjo}, its discussion is in Subsection~\ref{ss:implumb}. Finding a direct relation between our immersions and Kinjo's constructions would enable us to determine the Smale invariant of the immersions associated with the graphs $E_6$, $E_7$, $E_8$. 
\end{ex}

\section{Ekholm-Sz\H{u}cs formulae for holomorphic germs $\Phi$}\labelpar{s:eszcomp}

\subsection{} In this section, from a stabilization of $\Phi$ we
construct a singular Seifert surface,
and we express the topological
 summands of (\ref{eq:hurk}) in terms of
holomorphic invariants. 

The topological formulae (\ref{eq:hm}), (\ref{eq:cuspos}) and (\ref{eq:hurk}) targeting the
Smale invariant in terms of the geometry of oriented Seifert surfaces are stated and proved only
 up to a sign ambiguity. In this next section we will  show that
the sign--refined Smale invariant
appears in all these expressions with a unique
well--defined sign, and we determine
it simultaneously for all  formulae. The discussion  has an extra output as well:
the topological ingredients in the formulae below
will get reinterpretations in terms of complex analytic invariants, provided that the immersion is
induced by a holomorphic germ $\Phi$.

First of all we have to reinterpret the formulae (\ref{eq:hm}), (\ref{eq:cuspos}) and (\ref{eq:hurk}) in the spirit of the discussion of Subsection \ref{ss:absor}. 
We replace $S^3 $ (which is the unit sphere in $ \R^4 $) with ${\bf S}^3$, the `oriented abstract $S^3$', and we also need to fix a `boundary convention', in order to have the notion
of oriented $\partial M^4$. Then $\partial M^4={\bf S}^3$ has a well-defined meaning, and $\partial B^4={ S}^3$ inherits an orientation, hence the boundary convention determines whether a diffeomorphism $  S^3 \to {\bf S}^3 $ is orientation preserving.
$\Omega(f)$ denotes
the Smale invariant (given by any of its definitions, still having its sign--ambiguity).
 Nevertheless, the sign--corrected formulae will be `boundary convention' free,
cf. Theorem \ref{thm:new}.

\subsection{Singular Seifert surface associated with a stabilization}\labelpar{ss:assoc} 

Let $ \Phi: (\C^2, 0) \to (\C^3, 0) $ be a holomorphic germ singular only at the origin and let $ f: S^3 \looparrowright S^5 $ be the immersion associated with $ \Phi $. We take an $ \epsilon $ as in Corollary~\ref{cor:epsilon}, that is, we fix in the target a ball $B^6_\epsilon$.
We also consider a holomorphic  stabilization $ \Phi_{\lambda} $ of $ \Phi_0 = \Phi $ (cf. Section~\ref{ss:stab}),
and we fix $\lambda$ sufficiently small,  $ 0 < |\lambda| \ll \epsilon $, such that
 the cross caps and (if $ T( \Phi) < \infty$) the triple points of $ \Phi_{\lambda} $ sit in $ B^6_{\epsilon} $.
We set
$ \mathfrak{B}^4_{\epsilon, \lambda} := \Phi_{\lambda}^{-1} (B^6_{\epsilon}) $, it is a $ \mathcal{C}^{\infty}$
 non--metric ball in $\C^2$. Its  boundary is  $ \mathfrak{S}^3_{\epsilon, \lambda} :=
 \Phi_{\lambda}^{-1} (S^5_{\epsilon}) $,  it is canonically diffeomorphic to $ S^3 $.

\vspace{2mm}


%
The map $\Phi_\lambda$ is stable as a holomorphic map, but it is not
stable as a $\mathcal{C}^{\infty}$ map, cf. Example~\ref{ex:r46}. The $\mathcal{C}^{\infty}$ stability is obstructed by its cross cap points.
We will modify $\Phi_\lambda$ in the neighbourhood of these points according to the
following local model.

Let us fix local holomorphic coordinate systems in the source and the target  such that
$\Phi_\lambda$ in the neighbourhood of a cross cap has local equation $ \Phi^{loc} (s, t) = (s^2, st, t)$.
We consider its real smooth deformation (with $0\leq \tau \ll |\lambda|$):
\begin{equation}\label{eq:whpert}
\Phi^{loc}_\tau(s, t)=(s^2 + 2 \tau \bar{s}, st + \tau \bar{s}, t).
\end{equation}
Since the restriction of $\Phi^{loc}$ near the boundary of the local 4-ball is stable,
by a $ \mathcal{C}^{\infty}$ bump function the local deformation can be glued to the trivial deformation of
$\Phi_\lambda$ outside of local neighbourhoods of the cross caps. This gives a $ \mathcal{C}^{\infty}$ global
deformation $\Phi_{\lambda,\tau}$ of $\Phi_\lambda$ and $\Phi$.
The map
$ \tilde{f}=\Phi_{\lambda,\tau}:  (\mathfrak{B}^4_{\epsilon, \lambda}, \mathfrak{S}^3_{\epsilon, \lambda} )
  \to (B^6_\epsilon, S^5_\epsilon) $ is the  singular Seifert surface we will consider.
Its restriction,  $ f_{\lambda} = \Phi_{\lambda,\tau} |_{ \mathfrak{S}^3_{\epsilon, \lambda}}
 =\Phi_{\lambda}|_{ \mathfrak{S}^3_{\epsilon, \lambda}}$ is
  the immersion associated with $ \Phi_{\lambda} $.

\begin{prop}\label{l:gen} \

(a)  $ \tilde{f} : {\mathfrak B}^4_{\epsilon,\lambda} \to \C^3 $
is a stable smooth map, nonsingular near the boundary.

 (b) $ f_{\lambda} $ is a stable immersion and it is regular homotopic to $ f$.

 (c) If $ f $ is a stable immersion, then $ f_{\lambda} $ is regular homotopic to $f$ through generic immersions. In this case $ L(f) = L ( f_{\lambda}) $.
\end{prop}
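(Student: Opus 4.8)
The plan is to treat the three parts in turn, reducing each to the local model (\ref{eq:whpert}) together with the openness of stability and the fact that the singularities of $\Phi_\lambda$ are concentrated in the interior ball $B^6_\epsilon$. For part (a) I would first check by a direct local computation that, for $0<\tau\ll|\lambda|$, the deformed model $\Phi^{loc}_\tau$ is a stable $\mathcal{C}^{\infty}$ map: its singular set (where the real Jacobian of $\Phi^{loc}_\tau$ drops rank) is a smooth $1$-dimensional submanifold of fold points in the normal form recalled in Example~\ref{ex:r46}, and off this curve the map is an immersion with only regular double and triple values. This is precisely the smooth stabilization of the complex Whitney umbrella carried out explicitly in Section~\ref{s:calc}. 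Away from the cores of the cross caps $\Phi_\lambda$ is holomorphically stable, hence as a real map $\R^4\to\R^6$ has only regular simple, double and triple multigerms, all $\mathcal{C}^{\infty}$-stable by Example~\ref{ex:r46}; since the deformation is trivial there, $\Phi_{\lambda,\tau}$ stays stable. In the transition annuli the boundary germ of $\Phi^{loc}$ is already a stable immersion, so for small $\tau$ both the local model and $\Phi_\lambda$ are immersions and their bump-function interpolation remains an immersion by the openness of the immersion condition. Nonsingularity near the boundary is then immediate: by the choice of $\lambda$ all cross caps lie in the interior, so near $\mathfrak{S}^3_{\epsilon,\lambda}$ the map $\tilde f$ equals the immersion $\Phi_\lambda$.

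For part (b), the restriction $f_\lambda=\Phi_\lambda|_{\mathfrak{S}^3_{\epsilon,\lambda}}$ is a stable immersion because, near the boundary sphere, $\Phi_\lambda$ is holomorphically stable with only immersive and transverse-double multigerms (the cross caps and triple values being interior), so its link restriction has only single and transverse double values, cf. Example~\ref{ex:c23}(c) and Theorem~\ref{th:fin-stab}. To produce the regular homotopy to $f$ I would use the holomorphic family itself: along a short path $\lambda'\in[0,\lambda]$ with $|\lambda'|\ll\epsilon$ throughout, the cross caps of $\Phi_{\lambda'}$ never reach $S^5_\epsilon$, so each $\Phi_{\lambda'}$ restricts to an immersion of the canonically identified sphere $\mathfrak{S}^3_{\epsilon,\lambda'}\simeq S^3$ (cf. Lemma~\ref{cor:epsilon}). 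The restricted differentials $d\Phi_{\lambda'}|_{T\mathfrak{S}^3_{\epsilon,\lambda'}}$ then form a continuous path of bundle monomorphisms from $df$ to $df_\lambda$, and by Hirsch's Theorem~\ref{th:hirsch1} (note $3<5$) such a homotopy of bundle monomorphisms realizes a regular homotopy between $f=f_0$ and $f_\lambda$.

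For part (c), assume $f$ is a stable immersion; then $f$ and $f_\lambda$ are both stable and carry a well-defined invariant $L$ (Proposition~\ref{pr:Leq}, Remark~\ref{re:notstabL}), and they are regular homotopic by part (b). By Proposition~\ref{pr:Leq}(b), $\pm(L(f)-L(f_\lambda))$ is the algebraic number of triple values of the track $H(x,t)=(h(x,t),t)$ of a stable regular homotopy $h$. The crux is that the homotopy coming from the holomorphic family has no such triple values: a triple value of $H$ in $S^5\times[0,1]$ forces the three preimages to occur at one and the same time $t$, hence is a triple point of the immersion $f_{\lambda_t}=\Phi_{\lambda_t}|_{\mathfrak{S}^3}$; but the triple values of $\Phi_{\lambda_t}$ stay in the open ball $\{|z|<\epsilon\}$ and never meet $S^5_\epsilon$, so no $f_{\lambda_t}$ has a triple point. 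A small generic perturbation of the family (keeping triple values off the sphere, which is an open condition) yields a stable regular homotopy through generic immersions whose only codimension-one transitions are self-tangency moves, under which $L$ is unchanged. Hence the triple-value count is zero and $L(f)=L(f_\lambda)$.

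I expect the main obstacle to lie in part (a): beyond verifying that $\Phi^{loc}_\tau$ is stable with exactly a curve of folds, the delicate point is to ensure that the bump-function gluing does not spoil global stability, i.e.\ that the newly created fold locus together with its double and triple values meets the pre-existing double and triple loci of $\Phi_\lambda$ in a stable (transverse) fashion. Controlling this will require a genericity argument for $\tau$ and smallness estimates keeping the whole map inside the stable stratum, which is exactly where the openness of stability (Remark~\ref{re:globstab}) is doing the work.
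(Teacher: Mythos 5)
Your proposal is correct, and for parts (a) and (c) it follows essentially the paper's own route: stability of the local model $\Phi^{loc}_\tau$ (the fold computation of Section~\ref{s:calc}), agreement with the holomorphically stable $\Phi_\lambda$ outside the local balls, and Mather's multigerm criterion (Remark~\ref{re:globstab}) for (a); and, for (c), the fact that $L$ can only jump at triple point moves while the holomorphic family never produces a triple value on $S^5_\epsilon$ -- the paper compresses this to two citations (openness of the stable--immersion condition and invariance of $L$ along generic regular homotopies), so there you are filling in the intended argument rather than diverging from it. The genuine difference is in (b): the paper restricts the \emph{whole} family to the single fixed sphere $\mathfrak{S}^3_{\epsilon,\lambda}$, so that $h \mapsto \Phi_{h\lambda}|_{\mathfrak{S}^3_{\epsilon,\lambda}}$ is already a regular homotopy (each restriction is an immersion because the singular values of $\Phi_{h\lambda}$ stay in the open ball), avoiding both the continuously varying spheres and any appeal to the h-principle; the mild cost is that at $h=0$ one obtains $\Phi|_{\mathfrak{S}^3_{\epsilon,\lambda}}$ rather than $f$ itself, which is handled by the isotopy of the two spheres from Lemma~\ref{cor:epsilon}. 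Your route -- varying spheres $\mathfrak{S}^3_{\epsilon,\lambda'}$, a family of identifications with $S^3$, and then Theorem~\ref{th:hirsch1} to promote the path of bundle monomorphisms to a regular homotopy -- is valid but heavier than necessary: once you have the continuous family of identifications, the composed maps $S^3 \to S^5$ already constitute a regular homotopy, so Hirsch's theorem is doing no work. Two small points: your "main obstacle" in (a) (the gluing annulus) is resolved in the paper simply by smallness of $\tau$, since near the boundary of the local ball $\Phi^{loc}$ has only stable multigerms and stability is open on compact sets; and note that Remark~\ref{re:globstab} is Mather's statement that multigerm stability implies global stability, not the openness statement (which the paper cites from Ekholm--Sz\H{u}cs), so your attribution there is slightly off even though the substance of your fix is right.
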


\begin{proof}
(a) First one checks that the local $\Phi^{loc}_\tau$
 is stable. This follows from the  computation from
 Section~\ref{ss:lwh}. 
  Its most complicated singularities are $ \Sigma^{1, 0} $ (fold) points,
  the singular values constitute  an $ S^1 $, which -- together with the double values of the image
  of the boundary of the local ball -- bounds the $2$-manifold of the double values. Cf.
  \cite[2.3.]{ESz}.

In the complement of local balls $\Phi_{\lambda,\tau} $ agrees with $ \Phi_{\lambda} $, hence it has only
simple points, self-transverse double points and isolated triple points. All of them are stable.
Hence $\tilde{f}$ has all the local properties (multigerms) of a stable map, thus it is stable by Remark~\ref{re:globstab}.

(b) $\Phi_\lambda|_{ \mathfrak{S}^3_{\epsilon, \lambda}} $ is stable in real sense too:
 it has only simple points and self-transverse double points.
 $\Phi_{h\lambda}|_{ \mathfrak{S}^3_{\epsilon, \lambda}} $
 is a  regular homotopy between $ f $ and $ f_{\lambda} $ ($ h \in [0, 1] $).

(c) Being a stable immersion is an open condition (cf. \cite[2.1.]{ESz}).
Furthermore, $ L $ is constant along a regular homotopy through generic immersions, cf. \cite{ekholm3}.
\end{proof}

Next,  we return back to the formula (\ref{eq:hurk}), applied for $\tilde{f}$.
  Clearly, $\sigma(M^4)=0$.

\begin{thm}\label{thm:L} Let $ \Phi_{\lambda} $ be a holomorphic stabilization of $ \Phi $ with fixed $ \lambda \neq 0 $ and the corresponding maps $ \tilde{f} $ and $ f_{\lambda} $ as above. Then the following statements hold.

(a) $ t( \tilde{f}) = T ( \Phi_{\lambda}) $  (cf. \ref{pr:trip}).

(b)  $ l(  \tilde{f} ) = C (\Phi) $.

(c) $ L(f_{\lambda}) = C( \Phi) - 3 T ( \Phi_{\lambda}) $.

\end{thm}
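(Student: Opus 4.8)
The plan is to derive all three identities by applying the Ekholm--Sz\H{u}cs formula (\ref{eq:hurk}) to the singular Seifert surface $\tilde f=\Phi_{\lambda,\tau}$ constructed in \ref{ss:assoc}, whose underlying $4$-manifold is the ball $M^4=\mathfrak B^4_{\epsilon,\lambda}$, so that the signature term vanishes: $\sigma(M^4)=0$. Parts (a) and (b) identify the two remaining topological summands $t(\tilde f)$ and $l(\tilde f)$ with holomorphic data, and part (c) becomes a short substitution once the sign in (\ref{eq:hurk}) is pinned down. By Proposition~\ref{l:gen} we already know $\tilde f$ is a stable smooth map, nonsingular near the boundary, and that $\tilde f|_{\partial M^4}=f_\lambda$ is a stable immersion regular homotopic to $f$.

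For (a) I would first use that, by construction, $\Phi_{\lambda,\tau}$ coincides with the holomorphic map $\Phi_\lambda$ outside small balls around the cross caps, while inside those balls the model (\ref{eq:whpert}) has only fold ($\Sigma^{1,0}$) singularities and hence, for $0\leq\tau\ll|\lambda|$, no triple values of its own and no new triple values with distant sheets. Thus the triple values of $\tilde f$ are exactly the complex triple values of $\Phi_\lambda$, whose number is $T(\Phi_\lambda)$ (cf. \ref{pr:trip}). It remains to check signs: at each such point three complex sheets meet transversally, and a transverse intersection of complex-linear subspaces carries the canonical positive orientation, so every triple value contributes $+1$ to the algebraic count. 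Hence $t(\tilde f)=T(\Phi_\lambda)$.

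For (b) the singular locus $\Sigma(\tilde f)$ is the disjoint union of the $C(\Phi)$ fold circles produced by smoothing the cross caps (recall $\Phi_\lambda$ has $C(\Phi)$ cross caps by Theorem~\ref{th:Csum}), and $l(\tilde f)$ is the linking number of $\tilde f(M^4)$ with the push-off $\Sigma'(\tilde f)$ of these circles, as in Definition~\ref{d:l}. Since $\tau\ll|\lambda|\ll\epsilon$ the computation localizes one circle at a time to the model (\ref{eq:whpert}); I would carry out this local linking computation explicitly (the computation of \ref{ss:lwh}) and show that each fold circle links the image exactly once, again positively by the complex orientation, giving $l(\tilde f)=C(\Phi)$. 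I expect this local orientation bookkeeping to be the main obstacle, since it is the one place where the sign of a \emph{linking} number, rather than of a complex intersection, has to be controlled directly.

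Finally, for (c) I would substitute $\sigma(M^4)=0$, $t(\tilde f)=T(\Phi_\lambda)$ and $l(\tilde f)=C(\Phi)$ into (\ref{eq:hurk}), together with $\Omega(f_\lambda)=\Omega(f)=-C(\Phi)$, the last equality being Theorem~\ref{TH:MAIN} combined with the regular homotopy of \ref{l:gen}(b). This yields $-C(\Phi)=\pm\tfrac12\bigl(3T(\Phi_\lambda)-3C(\Phi)+L(f_\lambda)\bigr)$, whose two solutions are $L(f_\lambda)=C(\Phi)-3T(\Phi_\lambda)$ for the $+$ sign and $L(f_\lambda)=5C(\Phi)-3T(\Phi_\lambda)$ for the $-$ sign. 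To select the correct branch I would compute $L(f_\lambda)$ directly on a single concrete stabilization where $C$ and $T$ are known---for instance the $\mathcal C^\infty$ stabilization of the Whitney umbrella, resp. the covering of $A_1$, treated in \ref{s:calc}; since the sign in (\ref{eq:hurk}) is a universal constant, this one evaluation fixes it as $+$, simultaneously establishing (c) in general and the correct sign of the Ekholm--Sz\H{u}cs formula.
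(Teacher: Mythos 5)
Your architecture is in fact the paper's own: (a) via the complex-orientation convention for triple values plus the observation that the local model (\ref{eq:whpert}) creates no triple values, (b) via localization of the singular circles to the cross caps, (c) via substitution into (\ref{eq:hurk}) with the sign fixed by a concrete example. The genuine weak point is the sign claim in (b). You assert that each fold circle links the image ``positively by the complex orientation,'' but there is no complex orientation to invoke: the perturbation $\Phi^{loc}_\tau(s,t)=(s^2+2\tau\bar s,\, st+\tau\bar s,\, t)$ is only real-smooth (it involves $\bar s$), so the fold circle $\tilde\Sigma$, the surface of double values it bounds, and the membrane entering Definition~\ref{d:l} are not complex submanifolds; the intersection sign is well defined but must be extracted from the orientation conventions of \cite{ESz} by hand, and nothing forces it to be positive. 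This is exactly why the paper, in \ref{ss:pro} and \ref{ss:lwh}, records only $l=\pm1$ (and $L=\pm1$) for the cross-cap model and explicitly declines to resolve these signs directly. The gap then propagates into your step (c): since you fix only the sign of (\ref{eq:hurk}) while assuming $l(\tilde f)=+C(\Phi)$, a single evaluation does not close the argument. Concretely, if in truth $l(\tilde f)=-C(\Phi)$ and the formula carried the $-$ sign, one would get $L=-C-3T$, which on the cross cap predicts $L=-1$ --- equally consistent with the computed $L=\pm1$ as your preferred branch $L=+1$.

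The repair stays entirely inside your framework and is what the paper actually does in \ref{ss:pro}: keep the local cross-cap computation only up to sign, so $l(\tilde f)=\pm C(\Phi)$ with one coherent sign, and carry that sign as a second unknown $\epsilon_2$ alongside the sign $\epsilon_1$ of (\ref{eq:hurk}). The decisive example is then the $A_1$ stabilization of \ref{ss:A}, because its value $L(f_\lambda)=0$ has no sign ambiguity: substituting $C=3$, $T(\Phi_\lambda)=1$, $L=0$ into $-C=\epsilon_1\tfrac12\bigl(3T(\Phi_\lambda)-3\epsilon_2 C+L\bigr)$ forces $\epsilon_2=+1$ and then $\epsilon_1=+1$, proving (b), (c) and Theorem~\ref{thm:new} in one stroke. (The paper organizes this slightly differently, first using the cross-cap data (\ref{EQ:CRC})(i) to reduce to a single ambiguity and then killing it with $A_1$, but the content is identical.) With this one correction your proposal coincides with the paper's proof.
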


For the proof see \ref{ss:pro}.

Note that $ L(f_{\lambda}) $ is an \emph{analytic} invariant of $ \Phi $, since it is defined as a (topological) invariant of an \emph{analytic} deformation.
Recall from Proposition~\ref{pr:trip} that if $T(\Phi)<\infty$, then $ T ( \Phi_{\lambda}) $ is independent of the deformation
$ \Phi_{\lambda}$ and is equal to $ T( \Phi )$.

\begin{cor}\label{cor:Lfug} If $T(\Phi)<\infty$, then $ t( \tilde{f}) = T ( \Phi) $
 and $ L(f_\lambda ) = C( \Phi) - 3 T ( \Phi)$ is also independent of the holomorphic stabilization $ \Phi_{ \lambda} $ of $ \Phi$.
 \end{cor}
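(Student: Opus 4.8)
The plan is to deduce the corollary directly by chaining Theorem~\ref{thm:L} with Proposition~\ref{pr:trip}, the finiteness hypothesis $T(\Phi)<\infty$ being exactly what turns the triple--point count of a stabilization into a well--defined invariant of the germ. Recall from the setup of Subsection~\ref{ss:assoc} that $\Phi$ is singular only at the origin, so by Theorem~\ref{th:Csum} the invariant $C(\Phi)$ is finite; thus both $C(\Phi)$ and (under the present hypothesis) $T(\Phi)$ are genuine finite invariants of the germ.

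First I would record that part (a) of Theorem~\ref{thm:L} gives $t(\tilde{f}) = T(\Phi_\lambda)$, where $T(\Phi_\lambda)$ denotes the number of isolated (stable) triple values of the holomorphic stabilization $\Phi_\lambda$. Since we assume $T(\Phi)<\infty$, Proposition~\ref{pr:trip} applies and asserts that every stabilization of $\Phi$ carries the same number of triple points, namely $T(\Phi)$. Hence $T(\Phi_\lambda)=T(\Phi)$, and combining the two equalities yields $t(\tilde{f})=T(\Phi)$, the first assertion.

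For the second assertion I would substitute this identification into part (c) of Theorem~\ref{thm:L}, obtaining $L(f_\lambda) = C(\Phi) - 3T(\Phi_\lambda) = C(\Phi) - 3T(\Phi)$. The right--hand side no longer refers to $\lambda$: here $C(\Phi)$ is the codimension of the ramification ideal $J(\Phi)$ in $\mathcal{O}_{(\C^2,0)}$ (cf.\ Subsection~\ref{ss:CTN}), an $\mathscr{A}$--invariant of the germ itself, and $T(\Phi)$ is likewise intrinsic to $\Phi$; neither depends on the chosen holomorphic stabilization. Therefore $L(f_\lambda)=C(\Phi)-3T(\Phi)$ is independent of $\Phi_\lambda$, as claimed.

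The only step requiring any care — and the place where the hypothesis genuinely enters — is the passage $T(\Phi_\lambda)=T(\Phi)$. Without the finiteness assumption $T(\Phi)<\infty$, Proposition~\ref{pr:trip} is unavailable: the germ could possess a line of triple points, distinct stabilizations might display distinct numbers of isolated triple values, and the expression in part (c) would retain its dependence on $\lambda$. With finiteness in force the argument is a direct composition of the two cited results, so I expect no substantive obstacle beyond checking that the triple--point count appearing in Theorem~\ref{thm:L}(a) is precisely the quantity controlled by Proposition~\ref{pr:trip}.
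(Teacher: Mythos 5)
Your proposal is correct and follows exactly the paper's intended derivation: the corollary is obtained by combining Theorem~\ref{thm:L} (parts (a) and (c)) with Proposition~\ref{pr:trip}, which under the hypothesis $T(\Phi)<\infty$ identifies $T(\Phi_\lambda)$ with $T(\Phi)$ for every holomorphic stabilization. Your remark that the finiteness hypothesis is needed precisely for this substitution, and that $C(\Phi)$ and $T(\Phi)$ are intrinsic to the germ, matches the discussion the paper places immediately before the corollary.
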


 \begin{rem}\label{rem:T} 
 
 (a) Assume that $ \Phi $ is finitely determined, that is, $ \Phi |_{\mathfrak{S}^3_{ \epsilon}} = f $ is a \emph{stable} immersion, cf. Theorem~\ref{th:fin-stab}. Then $T(\Phi)$ is finite, cf. Theorem~\ref{th:CT}, and for any holomorphic stabilization $ \Phi_{\lambda} $ one has $ L( f_{ \lambda}) = L(f) $ (cf. Proposition~\ref{l:gen}), hence
 $ C( \Phi) - 3 T( \Phi) = L(f) $.

 (b) Note that if $ \Phi $ is finitely determined
  and the holomorphic germ $ \Phi' $ is $ {C}^{\infty}$ left-right equivalent with $ \Phi $ (see Corollary~\ref{cor:C} for precise definition), then the immersion $ f'$ associated with $ \Phi' $ is also stable immersion and $ L(f)= L(f') $. Therefore by Corollaries~\ref{cor:C} and \ref{cor:Lfug} we have $ T( \Phi) = T ( \Phi') $ too.

(c) More generally, $\Phi$ and $\Phi'$ are topological left--right equivalent and both of them are finitely $ \mathscr{A}$-determined (so $f$ and $f'$ are stable immersions), then $L(f)=L(f')$, hence Corollary
\ref{cor:NEW} follows too.

(d) If $ C( \Phi ) $ and $ T( \Phi ) $ are finite, then $f$ is not necessarily stable immersion, but it does not have triple points. Then by Remark~\ref{re:notstabL} $ L(f) $ is well-defined as $L$ of any small stable perturbation of $ f$ with a regular homotopy.
This provides an other proof for Corollary~\ref{cor:Lfug}. However it is not clear, how can  $ L(f) $ be determined from the topology of $f$ itself, without stable perturbation. That obstructs the generalization of part (b) and (c) for germs with finite $C$ and $T $.
 \end{rem}

\begin{thm}\label{thm:new} With our sign--convention, if in the left hand side of the formulae
(\ref{eq:hm}), (\ref{eq:cuspos}) and (\ref{eq:hurk}) we put the sign--refined Smale invariant
$\Omega^a(f)$, then
the formulae are valid if we put the positive sign on the right hand sides.

In particular, the validity of these sign--corrected formulae (e.g., $\Omega^a(f)=\frac{3}{2}\sigma(M^4)$)
is independent of the `boundary convention': changing the boundary convention changes the sign in both
sides of the formulae simultaneously.
\end{thm}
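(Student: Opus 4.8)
The plan is to exploit the fact that the Hughes--Melvin formula~(\ref{eq:hm}) and the two Ekholm--Sz\H{u}cs formulas~(\ref{eq:cuspos}) and~(\ref{eq:hurk}) are each already established \emph{up to a single universal sign}: the $\pm$ occurring in them is one global choice, the same for every immersion, left undetermined only because $\pi_3(SO(5))$ carried no distinguished generator. Having fixed such a generator in~\ref{ss:sign}, it therefore suffices to evaluate each formula on \emph{one} conveniently chosen immersion for which both sides are independently computable, and to read off the sign. That the sign cannot vary from example to example follows from the additivity of $\Omega$, $\sigma$, $t$, $l$ and $L$ under connected sum and disjoint union, together with the fact that $\Omega$ is a group isomorphism onto $\Z$. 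The natural test objects are the holomorphic germs, where Theorem~\ref{TH:MAIN} gives $\Omega(f)=-C(\Phi)$ and Theorem~\ref{thm:L} computes the Seifert-type contributions analytically.

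For~(\ref{eq:hurk}) I would take precisely the singular Seifert surface $\tilde f=\Phi_{\lambda,\tau}\colon(\mathfrak{B}^4_{\epsilon,\lambda},\mathfrak{S}^3_{\epsilon,\lambda})\to(B^6_\epsilon,S^5_\epsilon)$ constructed in~\ref{ss:assoc}, whose boundary immersion $f_\lambda$ is regular homotopic to $f$ by Proposition~\ref{l:gen}. Since $M^4=\mathfrak{B}^4_{\epsilon,\lambda}$ is a ball, $\sigma(M^4)=0$, and Theorem~\ref{thm:L} gives $t(\tilde f)=T(\Phi_\lambda)$, $l(\tilde f)=C(\Phi)$ and $L(f_\lambda)=C(\Phi)-3T(\Phi_\lambda)$. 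Substituting into the bracket on the right of~(\ref{eq:hurk}),
\[
\tfrac12\bigl(3\sigma(M^4)+3t(\tilde f)-3l(\tilde f)+L(f_\lambda)\bigr)
=\tfrac12\bigl(3T(\Phi_\lambda)-3C(\Phi)+C(\Phi)-3T(\Phi_\lambda)\bigr)=-C(\Phi),
\]
while Theorem~\ref{TH:MAIN}, for the boundary convention under which $\Omega^a(f)=\Omega(f)$, gives the left side $\Omega^a(f)=-C(\Phi)$. The two agree with the \emph{positive} sign, settling~(\ref{eq:hurk}).

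Next I would deduce~(\ref{eq:hm}) as the degenerate case of~(\ref{eq:hurk}) in which $\tilde f$ is an honest embedded Seifert surface: then there are no triple or double values and no boundary double points, so $t(\tilde f)=l(\tilde f)=L(\tilde f|_{\partial M^4})=0$ and~(\ref{eq:hurk}) collapses to $\Omega^a(f)=\tfrac32\sigma(M^4)$, which is~(\ref{eq:hm}) with the positive sign; the required upgrade of Hughes--Melvin from $\R^5$ to $\R^6_+$ is exactly Theorem~\ref{th:ESz}. For the $\R^5$-version~(\ref{eq:cuspos}), whose ingredient is the signed cusp count $\#\Sigma^{1,1}$ of a generic map to $\R^5$ rather than $\R^6$-data, I would pin the sign on the single cross cap ($C(\Phi)=1$, $T(\Phi)=0$) by directly computing the algebraic number of $\Sigma^{1,1}$-points of the $\mathcal{C}^\infty$-stabilization~(\ref{eq:whpert}) of the complex Whitney umbrella, as carried out in Section~\ref{s:calc}: this gives $\#\Sigma^{1,1}=-2$, hence $\tfrac12\bigl(3\sigma+\#\Sigma^{1,1}\bigr)=-1=-C(\Phi)=\Omega^a(f)$, again the positive sign.

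Finally I would verify the asserted independence of the `boundary convention'. Reversing the convention reverses the induced orientation of $\partial M^4=\mathbf S^3$; keeping $\mathbf S^3$ fixed this amounts to reversing the orientation of $M^4$, which flips the signs of $t$, $l$ and $L$ (it leaves $\sigma=0$ here, but flips $\sigma$ in general), so the right-hand sides change sign; simultaneously, as explained in~\ref{ss:absor}, the identification $S^3\to\mathbf S^3$ becomes orientation-reversing, so $\Omega^a(f)$ changes sign as well. Hence the positive-sign identities persist for either convention. The main obstacle throughout is the bookkeeping of signs: one must track the complex-to-real switch $\tau$ (Proposition~\ref{pr:homo}), the generator choices $[u]$ and $[L]$ of~\ref{ss:sign}, and the orientation rules defining the signs of cusps and triple values, so that the single integer produced on each side is genuinely the \emph{same} number and not its negative --- which is exactly where all earlier sign computations of the chapter must be invoked coherently.
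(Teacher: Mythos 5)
Your overall strategy --- calibrate the universal signs on holomorphic germs, where Theorem~\ref{TH:MAIN} gives the left-hand side $\Omega(f)=-C(\Phi)$ exactly --- is the same as the paper's, and your deduction of (\ref{eq:hm}) from (\ref{eq:hurk}) via embedded Seifert surfaces matches the paper's remark that the formulas are linked by common examples. But your execution for (\ref{eq:hurk}) is circular. You quote Theorem~\ref{thm:L} as giving $l(\tilde f)=C(\Phi)$ and $L(f_\lambda)=C(\Phi)-3T(\Phi_\lambda)$ \emph{with these exact signs} and substitute. However, in the paper Theorems~\ref{thm:L} and \ref{thm:new} are proved \emph{simultaneously} in \ref{ss:pro}, precisely because the direct computations of $l$ and $L$ in Section~\ref{s:calc} can only be carried out up to sign: the orientation conventions entering their definitions are too delicate to pin down, so the cross cap yields only $l=\pm1$, $L=\pm1$ (see (\ref{EQ:CRC})). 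The exact signs in \ref{thm:L}(b),(c) are an \emph{output} of the simultaneous sign-resolution, not an available input. This also defeats your claim that one well-chosen example suffices: writing $\epsilon_F$ for the formula sign and $\epsilon_l,\epsilon_L$ for the unknown signs of the computed quantities, the cross cap data give $-2=\epsilon_F(-3\epsilon_l+\epsilon_L)$, which has the two solutions $(\epsilon_F,\epsilon_l,\epsilon_L)=(+,+,+)$ and $(-,-,-)$. The paper breaks the tie with a \emph{second} example, $A_1$, where $L=0$ is known exactly (being sign-independent) and $C=3$, $T=1$, $t=T=1$ force $-3=\epsilon_F\cdot\tfrac12(3-9\epsilon_l)$, hence $\epsilon_F=\epsilon_l=+1$. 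Without $A_1$ (or an honest, convention-exact computation of $l$ or $L$, which you do not supply) the sign of (\ref{eq:hurk}) remains undetermined.

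Your treatment of (\ref{eq:cuspos}) contains a factual error as well. You assert that Section~\ref{s:calc} computes $\#\Sigma^{1,1}=-2$ for the smooth stabilization (\ref{eq:whpert}) of the cross cap. It does not, and it could not: that stabilization maps into $\C^3=\R^6$ (it is the Seifert surface for (\ref{eq:hurk})), and the paper shows explicitly in \ref{ss:lwh} that it has \emph{no} cusp points --- its worst singularities are folds. Formula (\ref{eq:cuspos}) concerns generic maps into $\R^5$, a construction nowhere carried out in the thesis; the ``one cross cap corresponds to $-2$ real cusps'' statement is a remark \emph{deduced from} the sign-corrected formulas after the fact, so feeding it back in as an independent computation is again circular. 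The paper instead fixes the sign of (\ref{eq:cuspos}) the same way it fixes (\ref{eq:hm}): by comparison with (\ref{eq:hurk}) on common examples (e.g.\ embedded Seifert surfaces of nonzero signature, where the cusp and double-point corrections all vanish), and that device --- not a phantom cusp count --- is what your argument for (\ref{eq:cuspos}) should have invoked.
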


The proof will appear in  \ref{ss:pro}.

\begin{rem}
The formula (\ref{eq:cuspos}), involving the (algebraic) number of real cusps of maps $g:M^4\to \R^5$
is the real analogue of our theorem $\Omega(f)=-C(\Phi)$, involving the
number of  cross caps of holomorphic deformations. This suggests
that if we replace a holomorphic deformation by a smooth generic map, then we
trade each cross cup by $-2$ real cusps.
\end{rem}

\subsection{}\labelpar{ss:pro} \emph{Proof.} We prove Theorems~\ref{thm:L} and \ref{thm:new} simultanously (see also the discussion from Subsection \ref{ss:absor}).

In the definitions of the invariants $t$, $l$ and $L$ one uses very specific sign/orientation
conventions, based on the orientation of the involved subspaces in their definition.

For a triple value, the sign is determined in such a way that it is $+1$ whenever the triple
value is obtained from a holomorphic triple point (hence the orientations agree with
the complex orientations).

Since in the local deformation $\Phi^{loc}_\tau$ we do not create any new triple value,
see e.g. the computation of Section \ref{s:calc}, all the triple values of $\tilde{f}$
come from the complex triple points of the holomorphic $\Phi_\lambda$, hence (a) follows.

The proof of the remaining parts are based on computations of the invariants
$C(\Phi)$, $T(\Phi)$, $l(\tilde{f})$ and $L(f)$ for two concrete cases.
For the integers $l$ and $L$ the definitions (orientation conventions)
are not immediate even in simple cases. Therefore, in our computation
we determine them only up to a sign.
The point is that computing `sufficiently many' examples, the formula (\ref{eq:hurk}),
even with its sign ambiguity in front of the right hand side, and even with
the (new) sign ambiguities of the integers $l$ and $L$, determine uniquely all these signs.
(This also shows  that, in fact, there is a unique
universal way to fix the orientation conventions and signs in the definitions of
$l$ and $L$ such that  (\ref{eq:hurk})
works universally.)

In Section \ref{s:calc} we will determine the following data:
\begin{equation}\label{EQ:CRC}\begin{split}
 \mbox{(i)}\ \mbox{For cross cup:} \ \ \ C(\Phi)=1, \ T(\Phi)=0, \ l=\pm 1, \ L=\pm 1.\\
\mbox{(ii)} \ \ \ \ \ \ \ \ \ \
\mbox{For $A_1$:} \ \ \ C(\Phi)=3, \ T(\Phi)=1, \ L=0.\ \ \ \ \ \hspace{1cm}
\end{split}
\end{equation}

The singular values of $ \tilde{f} $ are concentrated near the cross caps of $ \Phi_{\lambda} $.
For $\Phi^{loc}_\tau$ the value  $ l $ is $ \pm 1 $, see (i).
Since the sign is the same for all cross caps, $ l( \tilde{f})= \pm C( \Phi ) $.

We introduce the notation
\begin{equation}\label{eq:omvessz}
 \Omega' (f_{\lambda}) :=  \frac{1}{2} ( 3 t(\tilde{f}) - 3 l(\tilde{f}) + L(f_{\lambda})).
\end{equation}

$ \Omega'(f_{\lambda}) $ agrees with $ \Omega(f) $ up to sign, thus $ \Omega'(f_{\lambda}) =
\pm C( \Phi) $. Substituting this and  the data (i) of the cross cap in (\ref{eq:omvessz})
 we conclude that $ l(\tilde{f}) = - \Omega' (f_{\lambda}) $ and
  $ L(f_{\lambda}) = \pm C( \Phi ) -  3 T ( \Phi_{\lambda})$.

Next, using the date (ii) for  $ A_1 $, all the remaining sign ambiguities can be eliminated:
$ L(f_\lambda) = C( \Phi ) -  3 T ( \Phi_\lambda)$, $ l( \tilde{f})= C( \Phi ) $ and
$ \Omega'(f_{\lambda}) = - C( \Phi) = \Omega (f) $.

The universal signs in formulae
(\ref{eq:hm}), (\ref{eq:cuspos}) and (\ref{eq:hurk}) are related by common examples, hence
one of them determines all of them.

\section{Calculations. The proof of (\ref{EQ:CRC}).}\labelpar{s:calc}

We show the main steps of the computations.
Note that if
the germ $ \Phi $ is weighted homogeneous, then $ \epsilon_0 =1 $ can be chosen.

\subsection{The case of cross cap.}\labelpar{ss:lwh}

$T(\Phi)=0$ and 
$C(\Phi)=1$ is clear, cf. examples \ref{ex:cor1} and \ref{ex:Sig10}. Next we compute $l$ and $L$.
Set $ \Phi(s, t)= (s^2, st, t) $ and the  smooth perturbation
$ \tilde{f}(s, t)=(s^2 + 2 \epsilon \bar{s}, st + \epsilon \bar{s}, t)$.
The singular locus is
$\tilde{ \Sigma }=  \{ (s,t) \ | \ s=t \mbox{ , } |s|=|t|= \epsilon \} \simeq S^1$.

$ \tilde{f}|_{\tilde{\Sigma}}$ has no singular point, hence  $ \tilde{f}$  has no cusp points.
The most complicated singularities of $ \tilde{f} $ are $ \Sigma^{1, 0} $ (or fold) points.
The closure of the set of the double points $ \tilde{D} $ of $ \tilde{f} $ is
\[ \mbox{cl} ( \tilde{D} ) =
\{ (s, t) \in \mathbb{C}^2 \ | \ (s - t)t + \epsilon ( \bar{s} - \bar{t}) = 0 \}
\]
with the involution  $(s, t) \mapsto (s', t)= (2t-s, t)$.
The fix point set of the involution is $\{s=t\}$. Thus the set of the double points is
\[ \tilde{D} =
\{ (s, t) \in \mathbb{C}^2 \ | \ (s - t)t + \epsilon ( \bar{s} - \bar{t}) = 0 \}
\setminus \{s=t\} \mbox{.}
\]
Each double point has exactly one pair with the same value, hence
$ \tilde{f} $ has no triple point.

 A parametrization of $ \tilde{D}$ is
$
 (\rho, \alpha) \mapsto ( -\epsilon e^{-2 \alpha  i } + \rho e^{i \alpha} , -\epsilon e^{-2 \alpha  i }),
$
where $ \rho \in \R_+ $, $ \alpha \in [0, 2 \pi) $.

The parametrization shows that the closure of $ \tilde{D} $ is a M\"{o}bius band.
For $ \rho=0 $ we get $ \tilde{\Sigma} $, which is the midline of the M\"{o}bius band.
 The set of double values is
\begin{align*} D= \tilde{f} (\tilde{D}) &= \{ (s^2 + 2 \epsilon \bar{s}, st + \epsilon \bar{s}, t) \ | \ (s, t) \in \tilde{D} \} \\
                            &= \{ ( \rho^2 e^{2 i \alpha}+ \epsilon^2 e^{2 i \alpha } ( e^{-6 i \alpha }-2 ),
\epsilon^2 ( e^{ - 4 i \alpha } - e^{ 2 i \alpha } ) ,
-\epsilon e^{-2 \alpha  i } ) \ | \ \rho \in \R_+ \mbox{ , } \alpha \in [0, 2 \pi) \}.
\end{align*}
Writing $ \rho = 0 $ we get the singular values of $ \tilde{f} $,
\[ \Sigma= \tilde{f} (\tilde{\Sigma} ) =
\{ ( \epsilon^2 e^{2 i \alpha } ( e^{-6 i \alpha }-2 ),
\epsilon^2 ( e^{ - 4 i \alpha } - e^{ 2 i \alpha } ) ,
-\epsilon e^{-2 \alpha  i } ) \} \mbox{ .}
\]

The inward normal field of $ \Sigma  $ in $ D $ is the derivative of the curve
\[ \gamma(t)=  (t e^{2 i \alpha} + \epsilon^2 e^{2 i \alpha } ( e^{-6 i \alpha }-2 ),
\epsilon^2 ( e^{ - 4 i \alpha } - e^{ 2 i \alpha } ) ,
-\epsilon e^{-2 \alpha  i } )  \]
at $ t=0 $, that is $\gamma'(t)|_{t=0} = (e^{2 i \alpha}, 0, 0)$.
The pushing out of $ \Sigma$ (cf. Definition~\ref{d:l}) is
\[ \Sigma'= \Sigma - \delta \cdot \gamma'(t)|_{t=0}
= \{ ( - \delta e^{2 i \alpha } + \epsilon^2 e^{2 i \alpha } ( e^{-6 i \alpha }-2 ),
\epsilon^2 ( e^{ - 4 i \alpha } - e^{ 2 i \alpha } ) ,
-\epsilon e^{-2 \alpha  i } ) \} \mbox{ ,}
\]
where $ 0 < \delta \ll \epsilon $.
By Definition~\ref{d:l} we need the linking number of $ \tilde{f} ( \R^4) $ and $ \Sigma' $ in $ \R^6 $. To calculate it we fill in $ \Sigma' \simeq S^1 $ with a `membrane', which here will be   the disc
\[ H= \{ (- \delta w + \epsilon^2 (\bar{w}^2-2 w), \epsilon^2 (\bar{w}^2-w), - \epsilon \bar{w} ) \ | \ w \in \C \mbox{ , } |w| \leq 1 \}.
\]
$l(\tilde{f})$ is the algebraic number of the intersection points of $ H $ and $ \tilde{f} ( \R^4) $.
The  only solution is $ w=0$, $ (s, t)=(0,0) $, and the
intersection at this point is transverse.
Hence, for the smooth perturbation $ \tilde{f} $ of the cross cap $ l( \tilde{f}) = \pm 1 $.

Next we compute $L$.
The set of the double points of $ \Phi $ is
$\tilde{D}= \{ (s, 0) \ | \ s \neq 0 \} \subset \mathbb{C}^2$.

The set of the double values is
$
D= \Phi(\tilde{D}) = \{ (s^2, 0, 0) \ | \ s \neq 0 \} \subset \mathbb{C}^3$,
and the set of the double values of $ f $ is
$
 D_f = D \cap S^5 = \{ (s^2, 0, 0) \ | \ | s | =1 \} \subset S^5$.

The sum of the normal vectors at $ (s^2, 0, 0 ) $ is $(0,0,\bar{s}^2)$. Hence
the shifted copy of $ D $ along $ N $ is
$D'= D_f + \delta N = \{ (s^2, 0, \delta \bar {s}^2 ) \ | \ | s | =1 \}$.

Since $ D' $ does not intersect $ \Phi (\C^2 ) $ for $ \delta \in (0, 1] $,
 we can choose $ \delta = 1 $.
An injective parametrization of $ D_f + \delta N $ is
$
 D' = \{ (z, 0,  \bar {z} ) \ | \ | z | =1 \},
$
where $ z=s^2 $.
To calculate the linking number of $ \Phi( \C^2 ) $ and $ D' $ in $ \R^6 $, we need a
membrane which fills in $ D $. We take
\[
 H = \{ (z, \sqrt{1-|z|^2},  \bar {z} ) \ | \ | z | \leq 1 \} \simeq D^2 \mbox{ .}
\]
$L(f)$ is the algebraic number of the intersection points of $ \Phi( \C^2 ) $ and $ H $.
But there is only one such point, namely
$P:= \Phi ( \sqrt{\xi}, \xi) = ( \xi, \xi \sqrt{\xi}, \xi )$,
where $\xi$ is the real root of  $ g(z) :=z^3 + z^2 -1 =0 $.
Moreover, this intersection is transverse.

\subsection{The $A_1$ singularity}\labelpar{ss:A} By \ref{ex:A}, \ref{ex:A2} its covering is
given by $ \Phi_{0} (s, t) = (s^2, t^2, st) $. The immersion $ f_0$ associated with $ \Phi_0 $ is not generic, $f_0$ is the $2$-fold covering of the projective space composed with the inclusion. Thus all points of $ S^3 $ are double points of the immersion $ f_0$. Compare $f_0$ with the immersion $i \circ g_1 $ described in the end of Subsection~\ref{ss:implumb}, see also \cite[Section 4]{EkTak}. $ i \circ g_1 $ is regular homotopic to $f_0$ (up to precomposing with a reflection), and has the same structure $ S^3 \to \R \mathbb{P}^3 \hookrightarrow \R^5 $ and vanishing $L$-invariant.

On the other hand, by \ref{ex:A2}, $ C( \Phi_0 ) = 3 $, and the codimension of the second fitting ideal 
shows that $ T( \Phi_0) = 1 $, see \ref{ex:A}. The finiteness of these invariants shows that the number of cross caps and  triple points of a generic deformation of $ \Phi_0 $ are independent
of the chosen deformation. Below we give a concrete deformation $ \Phi_{\epsilon}$ of $ \Phi_0 $ and we calculate the invariant $ L $ of the generic immersion $ f_{ \epsilon} $ associated with $\Phi_{\epsilon}$.

The deformation is $\Phi_{\epsilon}(s, t) = ((s- \epsilon ) s , (t- \epsilon ) t , st ) $, see Figure~\ref{fig:A1fig}.
The vector field
\[ \tilde{N} (s, t) = \overline{  \partial_s \Phi_\epsilon (s, t) \times \partial_t \Phi
_\epsilon (s, t) } =
\left( \begin{array}{c}
\bar{t} (2 \bar{t} - \epsilon ) \\
- \bar{s} (2 \bar{s} - \epsilon ) \\
(2 \bar{s} - \epsilon ) (2 \bar{t} - \epsilon )
\end{array} \right)  \]
is $ 0 $ at the points $(0, \epsilon /2 )$, $( \epsilon/2, 0 ) $ and $( \epsilon/2, \epsilon/2 ) $. These  are the cross caps.

The defining equation $ \Phi_\epsilon (s, t) = \Phi_\epsilon (s', t' ) $ (where $(s, t) \neq (s', t') $) of the double points leads to the system of equations
\[
 (s-s')(s+s'- \epsilon ) = 0, \ \
 (t-t')(t+t' - \epsilon ) = 0, \ \
 st=s't'.
\]
Thus the double locus $ \tilde{D} $  has three parts and these parts correspond to the three cross caps.
The first part comes from the solution $ s'=s $ and $ t'= \epsilon - t $, which implies $ s=0 $,
hence $
 \tilde{D}_1 = \{ (0, t) \ | \ t \neq \epsilon/2 \}$
with $ \Phi_\epsilon (0, t) = \Phi_\epsilon (0, \epsilon - t) $. This provide the double value set
\[
 D_1 = \Phi_\epsilon ( \tilde{D_1} ) = \{ (0, t (t- \epsilon ) , 0) \ | \ t \neq  \epsilon/2 \}.
\]
The second part comes from the solution $ s'= \epsilon - s $ and $ t'=  t $, which implies $ t=0 $,
and $
 \tilde{D}_2 = \{ (s, 0) \ | \ s \neq \epsilon/2 \}$
with $ \Phi_\epsilon (s, 0) = \Phi_\epsilon (\epsilon - s, 0) $. The set of double values is
\[
 D_2 = \Phi_\epsilon ( \tilde{D_2} ) = \{ (s (s- \epsilon ) , 0,  0) \ | \ s \neq  \epsilon/2 \}.
\]
The third part comes from the solution $ s'= \epsilon - s $ and $ t'= \epsilon - t $, which implies $ s + t = \epsilon $, and
$
 \tilde{D}_3 = \{ (s, \epsilon - s)\} \ | \ s \neq \epsilon/2 \}$
with $ \Phi_\epsilon (s, \epsilon - s) = \Phi_\epsilon (\epsilon - s, s) $. The set of double values is
\[
 D_3 = \Phi_\epsilon ( \tilde{D_3} ) = \{ (s (s- \epsilon ) , s (s- \epsilon ),  -s (s- \epsilon )) \ | \ s \neq  \epsilon/2 \}.
\]
$ D_1 $, $ D_2 $ and $ D_3 $ intersect each other in the unique triple value
$ \Phi_\epsilon (0, 0) = \Phi_\epsilon ( \epsilon, 0) = \Phi_\epsilon (0, \epsilon ) = (0, 0, 0) $.

Let $ D_i(f) = D_i \cap S^5 $ ($i=1, 2, 3$) denote the disjoint components of the set of the double
values of $ f$. Clearly $ L(f) = L_1(f) + L_2 (f) + L_3 (f) $, where $ L_i (f) $ is the linking number corresponding to the component $ D_i(f) $.
But $ L_1(f)=L_2(f)=L_3(f)$. Indeed,
$ D_1 $ and $ D_2 $ is interchanged via the
 transformations $ \phi (s, t) = (t, s) $ (of $\C^2$)
  and $ \psi (X, Y, Z) = (Y, X, Z) $ (of $\C^3$), and
   $ D_3 $ and $ D_2 $ via $ \phi(s, t) = ( \epsilon - s - t, t) $ and $ \psi (X, Y, Z) = (X+Y+2Z, Y, -Y-Z)$. Thus, it is enough to calculate $ L_1(f) $.
The needed vector field along $ D_1 $ is
\[ N (0, t (t- \epsilon ), 0) = \tilde{N} (0, t) + \tilde{N} (0, \epsilon - t) =
((2 \bar{t} - \epsilon )^2,0,0).\]
The set of the double values of $f$  corresponding to $D_1$ is
\[
 D_1 (f) = D_1 \cap S^5 = \{ (0, t (t- \epsilon ) , 0) \ | \ |t (t- \epsilon )| = 1 \} \mbox{ .}
\]
The shifted  $ D_1 (f) $ along $ N$ is
\[
D'_1 = D_1 (f) + \delta N = \{ (\delta (2 \bar{t} - \epsilon )^2, t (t- \epsilon ) , 0) \ | \ |t (t- \epsilon )| = 1 \} \mbox{ ,}
\]
where $ \delta $ is small enough. Nevertheless, we can choose $ \delta = 1 $, because
$ D'_1  \cap \Phi ( \C^2 ) = \emptyset $ for any $ \delta \in (0, 1] $.  With the notation $ z = t(t- \epsilon ) $ we give an injective parametrization
$
D'_1 = \{ (4 \bar{z} + \epsilon^2 , z , 0) \ | \ |z  | = 1\}$.
We fill it with the membrane
\[
H = \{ (4 \bar{z} + \epsilon^2 , z , i \sqrt{1- |z|^2} ) \ | \ |z  | \leq 1 \} \mbox{ .}
\]
Computing the intersection points of $ H $ and $ \Phi( \C^2 ) $ leads to the equations
\[
4 \bar{z} + \epsilon^2 = a (a - \epsilon ), \ \
z = b (b - \epsilon ), \ \
i \sqrt{1- |z|^2} = ab,
\]
with $ | z | \leq 1 $ and $ \epsilon $ small.
The first two equations imply that $ |a| < 5 $ and $ |b| < 2 $. Multiplying the first two equations one gets
\[
z (4 \bar{z} + \epsilon^2 ) = a^2 b^2 - a^2 b \epsilon - a b^2 \epsilon + ab \epsilon^2.
\]
From the third equation follows $ a^2 b^2 = |z|^2 - 1 $, hence
\[
3 |z|^2 = -1 -  z \epsilon^2 - a^2 b \epsilon - a b^2 \epsilon + ab \epsilon^2,
\]
and the right hand side is negative if $ \epsilon $ is small enough. Hence $ H \cap \Phi ( \C^2 ) = \emptyset $, and $L(f)=0$.

\section{A note on the real case}\label{s:realcase}

\subsection{Immersions associated with real germs}
 There is a real version of  part (b) of Theorem~\ref{TH:MAIN} which follows directly from the result of Whitney and Smale.

 Let $ \Phi: (\R^{n+1},0)  \to (\R^{2n+1}, 0) $ be a real  analytic germ singular only at $0$.
  With the same method as in the complex case we can associate  an immersion $ f: S^n \looparrowright S^{2n} $ with $ \Phi$ (see \ref{ss:link}). The set of regular homotopy classes is
  \[ \imm(S^n, S^{2n}) = \left\{ \begin{array}{ccc}
\Z & \mbox{if} & n \mbox{ is even,}  \\
\Z_2 & \mbox{if} &  n \mbox{ is odd,} \\
\end{array}
\right. 
  \]
   and the invariant equals to the algebraic number of self-intersection points of  a stable immersion regular homotopic to $f$ (${\rm mod}\ 2 $ if $ n $ is odd). Cf. examples \ref{ex:snr2n}, \ref{ex:s1s2} and \ref{ex:snsq}.

   A stabilization $ \Phi' $ of $ \Phi $ has only cross cap type singularities, i.e.
   locally right--left equivalent with germs of
   the form $ (s, t) \mapsto (s^2, st, t) $, where $ s \in \R $ and $ t \in \R^n $, cf. Example~\ref{ex:r23} and Remark~\ref{rem:whhigh}.
   These cross caps are isolated, and if $n$ is even, we can associate a sign for each of them as a boundary component of the corresponding oriented double point curve. $ \Phi' $ restricted to the boundary is a stable immersion $ f': S^n \looparrowright S^{2n} $. $f'$ and $f$ are regular homotopic, and $f'$ has two kinds of double values:

   (a) double values related to a cross cap (that is, they are connected by a segment consisting
   of double values of $\Phi'$),

   (b) double values not related to a cross cap.

   When $n$ is even, the sign associated to a cross cap agrees with the sign associated with the self intersection point of $ f'$ related to the cross cap. Thus the algebraic number of such
   cross caps is equal to the algebraic number of double values of type (a) ($ {\rm mod}\ 2$ if $ n $ is odd). The double points of type (b) are pairwise joined up by segments of the double values of $ \Phi' $, thus the algebraic number of them is $ 0$.
   Moreover, it can happen that two cross caps are joined by a segment consisting of double values of $\Phi'$,
   but then they will have different algebraic sign, hence they will not contribute in the sum.
   Hence, we proved:

   \begin{prop}
   The Smale invariant of $f$ agrees with the algebraic number of the cross cap points appearing in a stabilization of $ \Phi $  (${\rm mod}\ 2 $ if $ n $ is odd).
   \end{prop}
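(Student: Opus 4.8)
The plan is to reduce the statement to the Smale--Whitney description of $\imm(S^n, S^{2n})$ recalled in examples~\ref{ex:snr2n}, \ref{ex:s1s2} and \ref{ex:snsq}, according to which the regular homotopy invariant of an immersion $S^n \looparrowright S^{2n}$ is the algebraic number of double values of any stable immersion in its regular homotopy class (taken modulo $2$ when $n$ is odd). First I would fix a stabilization $\Phi'$ of $\Phi$; by Example~\ref{ex:r23} and Remark~\ref{rem:whhigh} its only singular points are finitely many isolated cross caps, each locally right--left equivalent with $(s,t)\mapsto (s^2, st, t)$ where $s\in\R$, $t\in\R^n$. Restricting $\Phi'$ to the sphere $\mathfrak{S}^n$ produces a stable immersion $f' = \Phi'|_{\mathfrak{S}^n}$ which is regular homotopic to $f$ via the obvious homotopy, exactly as in the proof of Proposition~\ref{l:gen}(b). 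Thus it suffices to compare the algebraic number of double values of $f'$ with the algebraic number of cross caps of $\Phi'$.

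The geometric heart of the argument is the structure of the double value locus $\Delta := \Phi'(D(\Phi'))$ inside the target ball. A dimension count ($2n+1$ equations $\Phi'(x)=\Phi'(x')$ in $2(n+1)$ variables) shows that the double point locus, and hence $\Delta$, is $1$-dimensional, and I would analyze its ends. A direct computation in the cross cap model gives double points $\{(s,0) : s \neq 0\}$ with $(s,0) \sim (-s,0)$, whose images form a single ray $\{(s^2, 0, 0) : s > 0\}$ terminating at the image of the cross cap; hence each cross cap is an interior endpoint of an arc of $\Delta$. The remaining endpoints of these arcs lie on the boundary sphere, where they are precisely the double values of $f'$. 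Consequently the closure of $\Delta$ is a compact $1$-manifold whose boundary consists of the cross caps together with the double values of $f'$, and it organizes these points into a matching in which each arc joins two of them. This underlies the dichotomy (a)/(b) of the preceding discussion: a double value of $f'$ is of type (a) or (b) according to whether the arc issuing from it ends at a cross cap or at another boundary double value.

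The final step is the sign/parity bookkeeping. When $n$ is even I would orient $\Delta$ by the natural orientation of the double point curve used above. Along an oriented arc the two endpoints receive opposite induced boundary orientations, so: an arc joining two double values of $f'$ (type (b)) contributes a cancelling pair, an arc joining two cross caps contributes a cancelling pair, and an arc of type (a) forces the sign of its boundary double value to coincide with the sign of its cross cap. Summing, $\Omega(f') = \sum_{q \in D(f')} \mathrm{sign}(q)$ reduces to the type (a) contributions, which equal $\sum_c \mathrm{sign}(c)$ over all cross caps. When $n$ is odd the same matching works modulo $2$: type (b) arcs and cross cap--cross cap arcs each carry an even number of boundary points and drop out, while type (a) arcs pair exactly one cross cap with one double value, giving equal parities. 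Either way $\Omega(f) = \Omega(f')$ equals the algebraic (resp.\ parity) count of cross caps.

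The main obstacle I anticipate is precisely this last sign comparison: verifying that the sign attached to a cross cap as a boundary component of the oriented double point curve agrees with the Smale--Whitney sign of the associated self-intersection of $f'$, and that the two endpoints of an arc genuinely carry opposite induced orientations. This is an orientation-convention check local to each cross cap and to each arc; the cleanest route is to treat the oriented double value curve (orientable when $n$ is even) as an oriented cobordism realizing the required pairing, and to read off the boundary signs from the standard normal form, so that the equality $\mathrm{sign}(q) = \mathrm{sign}(c)$ on type (a) arcs becomes the assertion that this cobordism is orientation-coherent at both of its ends.
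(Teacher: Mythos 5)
Your proof is correct and follows essentially the same route as the paper's: both pass to a stabilization $\Phi'$, use the oriented double point curve of $\Phi'$ as a matching between cross caps and the double values of $f'=\Phi'|_{\mathfrak{S}^n}$, and conclude by cancelling the cross-cap--cross-cap and boundary--boundary arcs while identifying signs along the mixed (type (a)) arcs. The orientation-convention check you flag at the end is exactly the point the paper also asserts without detailed verification, so nothing essential is missing relative to the paper's own argument.
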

   
   \begin{rem}
   In \cite{nunodoodle} a complete topological invariant of finitely determined real germs $ \Phi: ( \R^2, 0) \to ( \R^3, 0) $ is introduced and calculated in many cases: the Gauss word of the associated immersion $ \Phi|_{S^1}: S^1 \looparrowright S^2 $.
   \end{rem}
   
   

\chapter{Singular holomorphic spacegerms}\label{ch:iso}

\section{The Milnor fibre}

\subsection{Isolated, complete intersection and normal singularities} This chapter serves as an introduction and background to Chapter~\ref{ch:bound}, where we study the Milnor fibre of non-isolated hypersurfaces.

The aim of this subsection is to give a short introduction of the basic types of singularities, and clarify the position of our mostly interested type of surface singularities in this classification, see Remark~\ref{re:surfacenormal}. For the definitions and basic properties we refer to \cite{looijenga, dejong, Five}.

A \emph{complex analytic spacegerm} $ (X,0) \subset ( \C^{n+k}, 0) $ is the zero set of a holomorphic germ $ f=(f_1, f_2, \dots, f_k ): ( \C^{n+k}, 0) \to ( \C^k, 0) $ \cite[Definition 3.4.2]{dejong}. 
We assume that the ideal $ I:=(f_1, f_2, \dots, f_k ) \subset \mathcal{O}_{ ( \C^{n+k}, 0)}$ is a radical ideal (i.e. $ \sqrt{I}=I$), in this case $I$ provides \emph{reduced analytic structure} of $ (X, 0)$. (In general, by local Nullstellensatz \cite[Theorem 3.4.4.]{dejong}, $\sqrt{I} $ is equal to the set of germs vanishing on $(X, 0)$.)

If $ k=1 $, then $ (X, 0) $ is called \emph{hypersurface} in  $ ( \C^{n+1}, 0)$, see \cite[(1.A)]{looijenga}.  If the rank of $ df_0 $ is equal to $k$, then $ (X, 0) $ is \emph{regular} (or smooth), i.e. it is the germ of a complex manifold.
The \emph{dimension} of $(X, 0) $ is defined as $ (X, 0) = n+k-r $, where $r$ is the rank of the Jacobian $df $ at a generic point $ p \in X$. $(X, 0) $ is called \emph{complete intersection} if $ \dim_{ \C } (X, 0) = n $
\cite[(1.5)]{looijenga}. Any hypersuface singularity is a complete intersection by \cite[(1.1)]{looijenga}. 
  If there is a small enough representative of $ (X, 0)$ such that $ \rk (df_p) =r $ for all $ p \in X \setminus \{ 0 \} $ and $ \rk (df_0) <r $, then $ (X, 0) $ has an \emph{isolated singularity} at $0$ (we also say that $ (X, 0) $ is isolated).

$(X, 0) $ is \emph{irreducible} if for any decomposition $ (X, 0)=(X_1, 0) \cup (X_2, 0) $, where $ (X_1, 0) $ and $ (X_2,0) $  are complex analytic spacegerms, $(X_1, 0)=(X,0)$ or $ (X_2, 0)=(X, 0)$ holds \cite[Definition 3.4.17.]{dejong}.  $ (X, 0) $ is irreducible if and only if $ I \subset \mathcal{O}_{ ( \C^{n+k}, 0)} $
is a prime ideal \cite[Corollary 3.4.18.]{dejong}. For a hypersurface singularity $ (X, 0) \subset ( \C^{n+1}, 0) $ that means $ f$ is irreducible in $ \mathcal{O}_{ ( \C^{n+1}, 0)} $.

The local ring of analytic functions defined on $ (X, 0)$ is $ \mathcal{O}_{(X, 0)}= \mathcal{O}_{( \C^{n+k}, 0)} /\sqrt{I}$, cf. \cite[Definition 3.4.19., Lemma 3.4.20.]{dejong}. An irreducible singularity $ (X, 0) $ is called \emph{normal} if $ \mathcal{O}_{(X, 0)} $ is an integrally closed ring in its field of fractions, or equivalently: if any bounded holomorphic function $ g: X \setminus \{ 0 \} \to \C $ can be extended to an analytic function defined on $ X $, see \cite[Definition 1.3.]{Five}, and also \cite[Theorem 4.4.15.]{dejong}.

The \emph{normalization} of an irreducible germ $ (X, 0) $ is a normal germ $(\bar{X}, 0)$ together with a finite, generically $1$ to $ 1$ map germ $g: (\bar{X}, 0) \to (X, 0)$, cf. \cite[Definition 4.4.5.]{dejong}. Every irreducible germ admits a unique normalization. If $g: (\bar{X}, 0) \to (X, 0)$ is the normalization of $ (X, 0)$, then $ \mathcal{O}_{ ( \bar{X}, 0)} $ is isomorphic with the integral closure of $ \mathcal{O}_{ ( X, 0)} $ \cite[1.5.]{Five}. See \cite[Remark 4.4.6., Theorem 4.4.8.]{dejong}.

\begin{ex}[Curves]\label{ex:curvesnormal} Let $ (X, 0) $ be an irreducible curve singularity, i.e. $ \dim_{ \C } X = 1$. $(X, 0)$ is normal if and only if it is smooth. Hence the normalization of a curve is a parametrization $ p: ( \C, 0) \to (X, 0) $, see \cite[Theorem 4.4.9., 4.4.10.]{dejong}.
\end{ex}

\begin{ex}[Surfaces]\label{ex:isonormal} A  normal $2$-dimensional singularity is always isolated (or regular), and a complete intersection surface germ is normal if and only if it has (at most)   an isolated singularity \cite[1.7 (b), 1.8.]{Five}. For instance, the quotient singularities in Example~\ref{ex:ade} are normal hypersurface singularities, hence their covering germs $ \Phi: ( \C^2, 0) \to (X, 0) \subset ( \C^3, 0) $ are not normalizations.
\end{ex}

\begin{ex}[Finitely determined germs]\label{ex:findetnormal} A finitely determined germ $ \Phi: ( \C^2, 0) \to ( \C^3, 0) $ is the normalization of its image $ (X, 0) = f^{-1}(0) $, where $ f $ is the generator of the $0$-th Fitting ideal $ \mathcal{F}_0 (\Phi_* \mathcal{O}_{ ( \C^n, 0) }) $, cf. Section~\ref{s:Fitting}. It can be illustrated by the Whitney umbrella $ (X, 0) \subset ( \C^3, 0) $ parametrized by $ \Phi(s, t)= (s, t^2, st) $ or given as the zero set of $ f(x, y, z)= x^2y-z^2 $. Since $t^2-y=0$, $ t =z/x $ is an element of the integral closure of $ \mathcal{O}_{ ( X, 0)} $ in the fraction field of $ \mathcal{O}_{ ( X, 0)} $. The extension of $ \mathcal{O}_{ ( X, 0)} $ with $ t$ is  the integral closure of $ \mathcal{O}_{ ( X, 0)} $, and it is isomorphic with $ \mathcal{O}_{ ( \C^2, 0)} $.
See \cite[4.4.7. (5)]{dejong} for details. 
\end{ex}

\begin{ex}[Cuspidal edge]\label{ex:cuspedgenormal} The germ $ \Phi(s, t)=(s, t^2, t^3) $ in Example~\ref{ex:cuspedge} is also the normalization of its image.
\end{ex}

\begin{rem}\label{re:surfacenormal} 
None of the sets
$ \{ \Phi: ( \C^2, 0) \to ( \C^3, 0) \ | \ \Phi \mbox{ is singular only at $0$} \} $  and 
$ \{  \Phi: ( \C^2, 0) \to ( \C^3, 0) \ | \ \Phi \mbox{ is the normalization of its image} \} $
 includes the other one, as it is shown by the examples \ref{ex:isonormal}, \ref{ex:findetnormal}, \ref{ex:cuspedgenormal}. Indeed the second class contains exactly the finite, generically $1$ to $1$ map germs. However the intersection of the two sets includes very important classes of germs, for instance the finitely $ \mathscr{A}$-determined germs.
\end{rem}

\subsection{Milnor fibration}\label{ss:milnfibration} In this subsection we introduce the notion of the Milnor number and Milnor fibration  \cite{MBook, Egri, looijenga}. 

Throughout this subsection let $(X, 0) = f^{-1}(0) \subset ( \C^{n+1}, 0) $ be a hypersurface singularity of dimension $n$, where $f \in \mathcal{O}_{( \C^{n+1}, 0) } $ is square free, i.e. it is a reduced equation of $ (X, 0)$. Let 
$J=(\partial_i f)_{i=1, \dots, n+1} \subset \mathcal{O}_{( \C^{n+1}, 0) } $ be the Jacobian ideal of $f$ generated by the partial derivatives of $f$.  The \emph{Milnor number} of $ (X, 0) $ is $ \mu(X, 0):=\dim_{ \C }  \mathcal{O}_{( \C^{n+1}, 0) } /J $ \cite[(1.4)]{looijenga}.

\begin{thm}\label{th:milnornumber}

(a) \cite[Proposition (1.2)]{looijenga} $ \mu (X, 0) $ is finite if and only if $(X, 0) $ has (at most) an isolated singularity.

(b) \cite[E.1.6.]{Egri} If $ (X, 0) $ is isolated, then any stabilization of $f$ has $ \mu (X, 0) $ Morse-points.


\end{thm}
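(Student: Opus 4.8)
The plan is to treat both parts by comparing the germ of the \emph{critical locus} $V(J)=\{z:\partial_1 f(z)=\cdots=\partial_{n+1}f(z)=0\}$ of $f$ with the singular locus $\Sigma(X)$ of $X=f^{-1}(0)$. I would first recall the standard characterization that $\mu(X,0)=\dim_\C \mathcal{O}_{(\C^{n+1},0)}/J$ is finite if and only if the germ $V(J)$ is zero--dimensional, i.e. $V(J)=\{0\}$: if $V(J)=\{0\}$ then by the Nullstellensatz $\sqrt{J}=\mathfrak{m}_{(\C^{n+1},0)}$, so $\mathfrak{m}^k\subset J$ for some $k$ and $\mathcal{O}/J$ is a quotient of the finite--dimensional algebra $\mathcal{O}/\mathfrak{m}^k$; conversely a positive--dimensional $V(J)$ forces $\dim_\C \mathcal{O}/J=\infty$. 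Thus for part (a) it suffices to prove the set--theoretic equality of germs $V(J)=\Sigma(X)$.

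For this the key step is the containment $V(J)\subset V(f)=X$ near the origin. Let $Z$ be any irreducible component of the germ $(V(J),0)$; then $0\in Z$ and $df$ vanishes identically on $Z$, so the differential of $f|_Z$ vanishes on the connected smooth locus of $Z$, whence $f|_Z$ is constant, equal to $f(0)=0$. Therefore $Z\subset V(f)$, and $V(J)\subset X$. Since $f$ is \emph{reduced}, the singular locus of $X$ is exactly $\Sigma(X)=V(f)\cap V(J)$ (at a point of $X$ where some partial is nonzero, $X$ is a smooth hypersurface), so the containment just proved gives $\Sigma(X)=V(f)\cap V(J)=V(J)$. Combining, $\mu(X,0)<\infty \iff V(J)=\{0\}\iff \Sigma(X)=\{0\}$, which is precisely the statement that $X$ has at most an isolated singularity. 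Reducedness is essential here: for $f=x^2$ one has $\Sigma(X)=\emptyset$ but $\mu=\infty$.

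For part (b), assume $X$ is isolated, so by part (a) the gradient map $\nabla f=(\partial_1 f,\dots,\partial_{n+1}f):(\C^{n+1},0)\to(\C^{n+1},0)$ is a finite map germ with $(\nabla f)^{-1}(0)=\{0\}$, and its local multiplicity (local degree) equals $\dim_\C \mathcal{O}_{(\C^{n+1},0)}/(\partial_1 f,\dots,\partial_{n+1}f)=\mu(X,0)$. Given a stabilization $f_\lambda$ of $f$ (a $1$--parameter unfolding, cf. Subsection~\ref{ss:stab}), the critical points of the representative $f_\lambda$ in a small ball are the zeros of $\nabla f_\lambda$. Since $\nabla f$ has an isolated zero at $0$, for small $\lambda$ the map $\nabla f_\lambda$ has no zero on the boundary sphere $S$, and the number of zeros inside $S$ counted with multiplicity equals the degree of $\nabla f_\lambda/|\nabla f_\lambda|:S\to S^{2n+1}$; this degree is a homotopy invariant, hence independent of $\lambda$ and equal to $\mu(X,0)$ at $\lambda=0$. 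By Example~\ref{ex:findetcnc} a stable function germ is regular or Morse, so every critical point of $f_\lambda$ is a nondegenerate Morse point, at which the Hessian is invertible and $\nabla f_\lambda$ has local multiplicity $1$. Therefore the conserved count equals the number of Morse points, which is $\mu(X,0)$.

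The routine ingredients, the Nullstellensatz characterization of finite colength and the homotopy invariance of the degree, are standard; the step carrying the real content is the lemma $V(J)\subset V(f)$, which forces the critical set of $f$ to concentrate on the zero fibre and is exactly where the reducedness hypothesis enters, together with the resulting description $\Sigma(X)=V(f,\partial_1 f,\dots,\partial_{n+1}f)$. I expect the bookkeeping for part (b), namely checking that $\nabla f_\lambda$ stays zero--free on a fixed small sphere and that the local multiplicities add up correctly, to be the most delicate technical point, though it is entirely parallel to the conservation--of--codimension argument used for the invariants $C$ and $T$ in Subsection~\ref{ss:CTN}.
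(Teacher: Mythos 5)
The paper states this theorem without any proof of its own, citing Looijenga's Proposition (1.2) and N\'emethi's E.1.6., so there is no in-paper argument to compare against; your proof is correct and follows exactly the standard route those references take. In particular your two key steps are sound: for part (a), the containment $V(J)\subset V(f)$ (each irreducible component of the critical germ is connected through its smooth locus, on which $f$ has vanishing differential, hence $f\equiv 0$ there), combined with the Nullstellensatz and the identification $\Sigma(X)=V(f)\cap V(J)$ where the reducedness of $f$ is genuinely needed; and for part (b), conservation of the local degree of $\nabla f$ under small deformation together with the fact that stable function germs are regular or Morse, each Morse point contributing local multiplicity $1$.
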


Since the defining germ $f$ of an isolated singularity $ (X, 0) $ is finitely determined, and $f$ is stable if and only if it is a Morse function, cf. Example~\ref{ex:findetcnc}, Theorem~\ref{th:milnornumber} is analogue with theorems \ref{th:whtrip} and \ref{th:CT}.


By \cite[Corollary 2.9., Remark 2.11.]{MBook}, \cite[Theorem 1.10.]{Egri} there exists an $ 0 < \epsilon_0 $ such that for all $ 0 < \epsilon \leq \epsilon_0 $ the oriented homeomorphism type of the intersection $ K_{ \epsilon} $ of the sphere $ S^{2n+1}_{ \epsilon} \subset \C^{n+1} $ with radius $ \epsilon $ and $ X $ does not depend on $ \epsilon $, that is, the pair $ (S^{2n+1}_{ \epsilon}, K_{ \epsilon}) $ has the same homeomorphism type for all $ 0 < \epsilon \leq \epsilon_0 $. Let us fix an $ 0 < \epsilon \leq \epsilon_0 $. $ K:= X \cap S^{2n+1}_{ \epsilon} $ is the \emph{link} of $ X$. The embedded link determines the embedded topological type of $ (X, 0) \subset ( \C^{n+1}, 0) $, namely the pair $ ( B^{2n+2}_{ \epsilon}, X \cap B^{2n+2}_{ \epsilon}) $ is homeomorphic to the cone over $ (S^{2n+1}_{ \epsilon}, K) $. (In the non-isolated case this fact is proved in \cite{BurgVer}. Note that the link can also be defined when $ (X, 0) \subset (\C^{n+k}, 0) $ is not hypersurface, as the intersection of $ X $ with the sphere in $ \C^{n+k} $ of small enough radius, or as the $ \epsilon $-level set of any real anlytic germ $ \rho: (X, 0) \to ( [0, \infty ), 0 ) $, $ \rho^{-1}(0)=0$ for small enough $ \epsilon $ \cite[(2.4), (2.5)]{looijenga}.)

The Milnor fibre of $ (X, 0) $ can be defined via two different fibre bundles, which happen to be isomorphic.
The first is the \emph{local fibration} of $f$, see \cite[Theorem 2.5.]{Egri}, \cite[Theorem 2.8.]{looijenga}.

\begin{thm}\label{th:locfibration} (a) For $ 0 < \delta \ll \epsilon $
\begin{equation}\labelpar{eq:locfibration}
f: f^{-1} (D^2_{ \delta} \setminus \{ 0 \} ) \cap B^{2n+2}_{ \epsilon} \to D^2_{ \delta} \setminus \{ 0 \}
\end{equation}
is a $ \mathcal{C}^{\infty}$ locally trivial fibration, such that its restriction $ f: f^{-1} (D^2_{ \delta} \setminus \{ 0 \} ) \cap S^{2n+1}_{ \epsilon} \to D^2_{ \delta} \setminus \{ 0 \} $ is also a $ \mathcal{C}^{\infty}$ locally trivial fibration.

(b) If, additionally, $(X, 0) $ is isolated, then the fibration 
$ f: f^{-1} (D^2_{ \delta} \setminus \{ 0 \} ) \cap S^{2n+1}_{ \epsilon} \to D^2_{ \delta} \setminus \{ 0 \} $ extends to a 
$ \mathcal{C}^{\infty}$ locally trivial fibration $ f: f^{-1} (D^2_{ \delta} ) \cap S^{2n+1}_{ \epsilon} \to D^2_{ \delta}  $, which is in fact trivial fibration, since $ D^2_{ \delta} $ is contractible.
\end{thm}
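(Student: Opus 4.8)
The plan is to deduce both statements from Ehresmann's fibration theorem, once the appropriate submersion and transversality properties are secured. The preliminary point is that, after shrinking the ball, $0$ is the only critical value of $f$ near the origin. First I would show that the critical locus $\Sigma = V(\partial_1 f,\dots,\partial_{n+1}f)$ is contained in $X = f^{-1}(0)$: along each irreducible component $\Sigma_i$ (a germ through $0$) the vanishing of $df$ forces $f$ to be locally constant, hence constant on the connected $\Sigma_i$, and since $f(0)=0$ this constant is $0$, so $\Sigma_i\subset f^{-1}(0)$. Consequently, on a small ball $B^{2n+2}_\epsilon$ the map $f$ is a submersion off $f^{-1}(0)$, and in particular $f\colon f^{-1}(D^2_\delta\setminus\{0\})\cap B^{2n+2}_\epsilon \to D^2_\delta\setminus\{0\}$ is a submersion in the interior.

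The decisive step is transversality of the nearby fibres to the sphere. Fixing $\epsilon\le\epsilon_0$ so that every $S^{2n+1}_{\epsilon'}$ ($0<\epsilon'\le\epsilon$) meets $X$ transversally (the choice already made to define the link $K$, cf.\ \cite{MBook, Egri}), I would introduce the Milnor set $M$ of points $z\notin f^{-1}(0)$ at which the fibre $f^{-1}(f(z))$ is tangent to the sphere $S^{2n+1}_{|z|}$; this is a real analytic set cut out by the vanishing of the relevant $2\times 2$ minors built from $z$ and $\mathrm{grad}\,f(z)$. Using the curve selection lemma in the standard way (if $M$ accumulated at a point $z_0\in X$ along a real analytic arc $z(t)\to z_0$, differentiating $|z(t)|^2$ and $\arg f(z(t))$ produces the usual contradiction with monotonicity), one shows that $M$ does not accumulate onto $K$ inside $S^{2n+1}_\epsilon$. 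Hence the closed set of points of $S^{2n+1}_\epsilon$ where $f$ fails to be a submersion is bounded away from $K$, so there is $\delta\ll\epsilon$ such that for every $w$ with $0<|w|\le\delta$ the fibre $f^{-1}(w)$ meets $S^{2n+1}_\epsilon$ transversally; equivalently, $f|_{S^{2n+1}_\epsilon}$ is a submersion on $f^{-1}(D^2_\delta\setminus\{0\})\cap S^{2n+1}_\epsilon$.

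With these in hand, part (a) follows from Ehresmann's theorem applied to manifolds with boundary: the map $f\colon f^{-1}(\overline{D}^2_\delta\setminus\{0\})\cap B^{2n+2}_\epsilon \to D^2_\delta\setminus\{0\}$ is proper (the source sits in the compact $B^{2n+2}_\epsilon$ while its image stays away from $0$), it is a submersion in the interior by the first step, and its restriction to the boundary sphere is a submersion by the transversality step; Ehresmann then yields local triviality of the total map and simultaneously of its restriction to $S^{2n+1}_\epsilon$. For part (b), when $(X,0)$ is isolated the link $K = X\cap S^{2n+1}_\epsilon$ consists entirely of smooth points of $X$, so the transversality $S^{2n+1}_\epsilon\pitchfork X$ combined with the submersivity of $f$ near $K$ (there $df\neq 0$, the only critical point being the origin) shows that $f|_{S^{2n+1}_\epsilon}$ is already a submersion over the full disc $D^2_\delta$, including the value $0$; it is proper, hence a $\mathcal{C}^{\infty}$ locally trivial fibration by Ehresmann, and trivial because $D^2_\delta$ is contractible.

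The main obstacle is the transversality step: unlike the isolated case, for a non-isolated singularity the critical points of $f|_{S^{2n+1}_\epsilon}$ can a priori cluster towards the singular points of $X$ lying on the sphere, and ruling this out is precisely what forces the curve selection lemma (the Milnor set argument) rather than a naive compactness/openness argument. Getting the correct order of quantifiers (choose $\epsilon$ first, then $\delta$) and the uniformity over all $w$ with $0<|w|\le\delta$ is where the care is needed.
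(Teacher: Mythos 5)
Your overall architecture (only critical value $0$ in a small ball, transversality of the nearby fibres to $S^{2n+1}_{\epsilon}$, then Ehresmann's theorem for proper submersions of manifolds with boundary) is the standard skeleton, and several pieces are fine: the containment of the critical locus in $f^{-1}(0)$ after shrinking, the properness argument, the order of quantifiers, and all of part (b), where $f$ is an honest submersion in a whole punctured neighbourhood of the origin and compactness of $K$ plus openness of transversality suffice, with no curve selection needed. Note also that the thesis itself gives no proof of this statement: it is quoted as classical background, cited to \cite{Egri} and \cite{looijenga}, which in the non-isolated case rest on L\^e's tube-fibration theorem rather than on an elementary argument.

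The genuine gap is exactly at the step you yourself flag as decisive, and it is where the non-isolated case differs from the isolated one. Let $z_0\in K\cap\Sigma$ be a singular point of $X$ on the sphere, and suppose $z(t)\subset S^{2n+1}_{\epsilon}$ is a real analytic arc of tangency points with $z(0)=z_0$ and $f(z(t))\neq 0$ for $t>0$; the tangency condition reads $\overline{\mathrm{grad}}\, f(z(t))=\lambda(t)\,z(t)$ for some complex factor $\lambda(t)$. Then $\tfrac{d}{dt}f(z(t))=\overline{\lambda(t)}\langle z'(t),z(t)\rangle$, and the only constraint from $|z(t)|\equiv\epsilon$ is $\mathrm{Re}\langle z'(t),z(t)\rangle=0$; writing $\alpha(t)=\mathrm{Im}\langle z'(t),z(t)\rangle$ one gets $\tfrac{d}{dt}|f(z(t))|^2=2\,\alpha(t)\,\mathrm{Im}\bigl(\lambda(t)f(z(t))\bigr)$, which has no forced sign because $\lambda(t)$ is an arbitrary complex function. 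So nothing contradicts $f(z(t))\to 0$: there is no ``usual monotonicity contradiction''. Milnor's curve-selection lemmas (Lemma 4.3 of \cite{MBook}), which do control $\arg\lambda(t)$, apply to arcs approaching the \emph{origin} and are what give the cone structure and the choice of $\epsilon_0$; they say nothing about arcs approaching a nonzero point of the zero fibre, which is precisely where the non-isolated singular set meets the sphere. The standard way to close this step (L\^e's theorem) is genuinely deeper: take a Whitney stratification of $f^{-1}(0)$ satisfying Thom's $a_f$ condition (existence is Hironaka's theorem), choose $\epsilon$ so that $S^{2n+1}_{\epsilon}$ is transverse to every stratum; if tangency points $z_k\to z_0\in K$ existed, a limit $T$ of the tangent planes $T_{z_k}f^{-1}(f(z_k))$ would satisfy $T_{z_0}(\mathrm{stratum})\subset T\subset T_{z_0}S^{2n+1}_{\epsilon}$, contradicting that transversality. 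Without this input (or an equivalent {\L}ojasiewicz-type inequality), your part (a) is unproven in the non-isolated case --- which is the case the thesis actually needs.
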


Next, we present the second fibration, the so-called 
\emph{Milnor fibration}, see \cite[Theorem 4.8.]{MBook}, \cite[Theorem 3.5.]{Egri}. Let $ U $ be an open tubular neighborhood of $ K \subset S^{2n+1}_{ \epsilon } $, and define $ \phi (z)=f(z)/|f(z)| $ for $ f(z) \neq 0$.

\begin{thm}\label{th:milnfibration} (a)
\begin{equation}\labelpar{eq:milnfibration}
\phi: S^{2n+1}_{ \epsilon } \setminus U \to S^1 
\end{equation}
is a $ \mathcal{C}^{ \infty} $ locally trivial fibration.

(b) The local fibration (\ref{eq:locfibration}) and the Milnor fibration (\ref{eq:milnfibration}) are bundle isomorphic.
\end{thm}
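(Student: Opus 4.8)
The plan is to follow Milnor's classical argument \cite{MBook}, proving part (a) first and then deducing part (b) by comparing the Milnor fibration with the already-established local fibration \ref{th:locfibration}. Throughout I would allow myself to shrink $\epsilon$ as needed, keeping $0 < \delta \ll \epsilon \leq \epsilon_0$.

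First I would show that the map $\phi = f/|f|$ has no critical points on $S^{2n+1}_{\epsilon} \setminus K$. Writing $\arg f = \mathrm{Im}(\log f)$, a point $z$ is critical for $\phi$ precisely when the gradient of $\arg f$ is normal to the sphere at $z$, i.e.\ parallel to the radial direction $z$. The essential tool for ruling this out in a neighbourhood of $K$ is the Curve Selection Lemma: if such critical points accumulated on $K$, one would obtain a real-analytic arc $z(t)$ of critical points with $z(t) \to K$ as $t \to 0$. Expressing $\overline{dz/dt}$ as an (asymptotically) complex multiple of $\overline{\nabla \log f}$ and then differentiating both $\log|f(z(t))|$ and $\arg f(z(t))$ along the arc forces $|f|$ to be monotone while simultaneously tending to $0$, which is a contradiction. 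This critical-point analysis is the technical heart of the theorem, since it is where the hypersurface geometry genuinely enters.

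Next, using the absence of critical points together with the compactness of $S^{2n+1}_{\epsilon} \setminus U$, I would construct a smooth tangential vector field $v$ on $S^{2n+1}_{\epsilon} \setminus K$ along which $\arg f$ increases at unit rate, that is $d(\arg f)(v) \equiv 1$; one obtains $v$ by normalizing the tangential component of $\nabla(\arg f)$ and correcting it so that its flow remains inside the domain. Integrating the flow of $v$ over short arcs of $S^1$ then yields local trivializations of $\phi$, proving (a). This is precisely an Ehresmann-type argument in which the required transverse vector field is supplied by $v$.

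For part (b), I would compare the local fibration (\ref{eq:locfibration}) over $D^2_{\delta} \setminus \{0\}$, granted by \ref{th:locfibration}, with the Milnor fibration (\ref{eq:milnfibration}) over $S^1$. After retracting $D^2_{\delta}\setminus\{0\}$ onto the circle $S^1_{\delta}$ by $w \mapsto w/|w|$, both bases are $S^1$ and both fibres are the Milnor fibre, so it suffices to produce a fibre-preserving diffeomorphism of total spaces conjugating the two projections. I would achieve this with a "straightening" vector field in the $|f|$-direction: flowing the boundary of the Milnor tube $f^{-1}(\partial D^2_{\delta}) \cap B^{2n+2}_{\epsilon}$ out onto the sphere complement $S^{2n+1}_{\epsilon} \setminus U$ while keeping $\arg f$ constant along the flow. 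Since $\arg f$ is exactly the function governing both fibrations, the resulting diffeomorphism carries fibres to fibres and intertwines the projections, giving the bundle isomorphism. The main obstacle is entirely in the first step — the critical-point estimate via the Curve Selection Lemma — because once $\phi$ is known to be a submersion away from $K$, both the fibration property in (a) and the comparison in (b) reduce to standard flow constructions.
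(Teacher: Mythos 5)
The paper gives no proof of this theorem at all: it is background material quoted directly from Milnor \cite[Theorem 4.8]{MBook} and N\'emethi \cite[Theorem 3.5]{Egri}, and your proposal faithfully reconstructs exactly that classical argument (curve selection lemma to exclude critical points of $\phi$, a controlled vector field whose flow gives the local trivializations, and an $\arg f$--preserving, $|f|$--increasing flow identifying the tube fibration with the sphere fibration for part (b)), so your route coincides with the one the paper appeals to. One recall error worth flagging, though it is cosmetic since the rest of the strategy is Milnor's: at a critical point of $\phi|_{S^{2n+1}_{\epsilon}}$ it is the \emph{position} vector $z$ --- not the velocity $dz/dt$ of the curve-selection arc --- that is a real multiple of $i\,\overline{\nabla \log f(z)}$, and the contradiction in Milnor's lemma comes from comparing the rates of change of $\log|f|$ and $\arg f$ along the arc (which forces the proportionality factor to have small argument), not from $|f|$ being monotone while tending to $0$, which by itself is perfectly consistent.
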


The \emph{Milnor fibre} $F$ is the fibre of either of the bundles (\ref{eq:locfibration}) and (\ref{eq:milnfibration}), that is, $ F \simeq f^{-1} ( \delta \theta ) \cap B^{2n+2}_{ \epsilon} \simeq \phi^{-1} ( \theta ) $ with any $ 0 < \delta \ll \epsilon $, $ |\theta|=1 $. The Milnor fibre is well defined up to orientation preserving diffeomorphism, that is, it is independent of the different choices. 

Here we collect some important properties of $ K $ and $ F $.

\begin{thm}
(a) \cite[Theorem 5.1.]{MBook} $ F$ is an oriented, compact, smooth, parallelizable  $2n$-manifold with boundary (in fact, $F$ is complex manifold), and has the homotopy type of a finite CW-complex of dimension $n$.

(b) \cite[Theorem 5.2.]{MBook} $K$ is $(n-2)$-connected.
\end{thm}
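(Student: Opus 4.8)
The plan is to treat the two parts separately, in both cases exploiting the complex-analytic origin of $F$. For part (a), since $(X,0)$ is isolated, Theorem~\ref{th:milnornumber}(a) shows that $f$ is a submersion on a punctured neighbourhood of $0$; hence for $0<\delta\ll\epsilon$ the value $\delta\theta$ is a regular value of $f$ on $B^{2n+2}_\epsilon\setminus\{0\}$, so $f^{-1}(\delta\theta)$ is a closed complex submanifold of $\C^{n+1}$ of complex dimension $n$. The transversality of the fibre to the sphere $S^{2n+1}_\epsilon$ built into the local and Milnor fibrations (Theorems~\ref{th:locfibration} and \ref{th:milnfibration}) shows that $F=f^{-1}(\delta\theta)\cap B^{2n+2}_\epsilon$ is a compact complex manifold of real dimension $2n$ with boundary $\partial F=f^{-1}(\delta\theta)\cap S^{2n+1}_\epsilon$, and as a complex manifold it carries a canonical orientation. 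To approach parallelizability I would first establish \emph{stable} triviality: restricting $T\C^{n+1}=\underline{\C}^{\,n+1}$ to $F$ gives $TF\oplus N_F\cong\underline{\C}^{\,n+1}$, and since $F$ is a regular level set of $f$ its normal line bundle $N_F$ is spanned by the nowhere-vanishing section $\mathrm{grad}\,f$, hence is trivial; thus $TF\oplus\underline{\C}\cong\underline{\C}^{\,n+1}$.

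Next I would bound the homotopy dimension. The interior of $F$ is a Stein manifold, being a closed complex submanifold of $\C^{n+1}$, so applying Morse theory to the strictly plurisubharmonic exhaustion $z\mapsto|z|^2$ (the Andreotti--Frankel argument) all critical points have index $\le n$; together with compactness of the closed fibre this yields that $F$ has the homotopy type of a finite CW complex of dimension $\le n$. Finally, to pass from stable triviality to genuine triviality of $TF$, I note that $TF$ is a complex vector bundle of rank $n$ over a space of homotopy dimension $\le n<2n$, i.e.\ in the stable range, where the stable isomorphism class determines the bundle; hence $TF$ is trivial and $F$ is parallelizable. This completes (a).

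For part (b) I would use the Milnor fibration (\ref{eq:milnfibration}), $\phi:S^{2n+1}_\epsilon\setminus K\to S^1$, whose fibre is homotopy equivalent to $F$. Writing $E=S^{2n+1}_\epsilon\setminus K$ and feeding $H_i(F)=0$ for $i>n$ (from part (a)) into the Wang exact sequence of $E\to S^1$, one gets $H_i(E)=0$ for $i>n+1$ with arbitrary coefficients. Alexander duality in $S^{2n+1}$ gives $\widetilde H^{\,j}(K)\cong\widetilde H_{2n-j}(E)$, which therefore vanishes whenever $j\le n-2$. Carrying this out with every field of coefficients $\mathbb F$ and using that over a field cohomology is dual to homology, I obtain $\widetilde H_i(K;\mathbb F)=0$ for all $i\le n-2$; assembling the rational and mod-$p$ information then yields $\widetilde H_i(K;\Z)=0$ for $i\le n-2$.

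It remains to upgrade homological to homotopical connectivity. For $n=1,2$ the assertion reduces to $K$ being nonempty, respectively connected, which is immediate. For $n\ge 3$ one has $\widetilde H_i(K)=0$ for $i\le n-2$, so by the Hurewicz theorem it suffices to show that $K$ is simply connected, after which $\pi_i(K)=0$ for all $i\le n-2$ follows. I expect the simple connectivity of $K$ to be the main obstacle, since duality delivers only homology and $\pi_1$ must be controlled directly. The plan here is a general-position and van Kampen argument in the decomposition $S^{2n+1}_\epsilon=N(K)\cup_{\partial}E$, using $\dim K=2n-1\ge 5$ to contract loops off $K$, as in Milnor's original treatment; once $\pi_1(K)=0$ is established the proof of (b) is complete.
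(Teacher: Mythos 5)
The thesis itself contains no proof of this statement: it is quoted verbatim from Milnor's book (\cite{MBook}, Theorems 5.1 and 5.2), so your attempt has to be measured against Milnor's arguments. Doing so reveals two genuine gaps.

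First, your proof of (a) begins ``since $(X,0)$ is isolated\dots'', but isolatedness is not a hypothesis here. The theorem is stated — and is later needed in Chapter~\ref{ch:bound} precisely for the \emph{non-isolated} singularities arising as images of finitely determined germs — for an arbitrary hypersurface singularity; the isolated case is only the subsequent Theorem~\ref{th:milnfiso}. In the non-isolated case $f$ is not a submersion on a punctured neighbourhood of the origin (its critical locus is the positive-dimensional $\Sigma$), so Theorem~\ref{th:milnornumber}(a) is unavailable. Smoothness of $F$ follows instead from the observation that $f$ is locally constant, hence identically $0$, on the regular part of every irreducible component of its critical set through the origin, so that $0$ is the only critical value of $f|_{B^{2n+2}_\epsilon}$ for small $\epsilon$. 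The rest of your part (a) (trivial normal bundle, Stein interior plus Andreotti--Frankel, stable-range triviality of $TF$) does not use isolatedness and is essentially Milnor's argument, so this gap is repairable; but as written your proof does not cover the stated generality.

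Second, the real content of part (b) for $n\geq 3$ is $\pi_1(K)=0$, which is exactly the step you leave open, and the method you sketch cannot close it. Your Wang-sequence/Alexander-duality computation of the homology is correct (and is Milnor's), but $K$ has real \emph{codimension} $2$ in $S^{2n+1}_\epsilon$, so a $2$-disc bounding a loop can never be pushed off $K$ by general position, no matter how large $\dim K$ is — codimension, not dimension, governs this, which is the same reason knot complements in $S^3$ have nontrivial fundamental group. Likewise, van Kampen applied to $S^{2n+1}_\epsilon=N(K)\cup_\partial E$ only says that the pushout of $\pi_1(N(K))\leftarrow\pi_1(\partial N(K))\rightarrow\pi_1(E)$ is trivial, which does not force $\pi_1(N(K))\cong\pi_1(K)$ to vanish (a tubular neighbourhood of a knot in $S^3$ is again a counterexample to that formal reasoning). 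In the non-isolated case one cannot even push discs off the singular set inside $X$, since $\Sigma$ has real codimension $2$ there as well. Milnor's proof of simple connectivity is not a general-position argument: it exploits holomorphy through Morse-theoretic index bounds (the same Andreotti--Frankel-type computation you invoked in (a)), showing in effect that $S^{2n+1}_\epsilon$ is obtained from a neighbourhood of $K$ by attaching cells of dimension $\geq n$, whence $\pi_i(K)\cong\pi_i(S^{2n+1}_\epsilon)=0$ for all $i\leq n-2$. Without an input of this kind, your part (b) establishes only the homological statement, not $(n-2)$-connectedness.
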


Isolated singularities have more restrictive properties for $ K $ and $ F $.

\begin{thm}\label{th:milnfiso} Let $ (X, 0 )$ be an isolated hypersurface singularity in $ ( \C^{n+1}, 0) $. Then

(a) \cite[Corollary 2.9.]{MBook} $ K $ is a smooth, oriented, closed $ (2n-1)$-submanifold of $ S^{2n+1}$.

(b) (Corollary of Theorem~\ref{th:locfibration}, part (b).) The boundary $ \partial F $ of $ F$ is diffeomorphic to $ K $, actually, they are isotopic submanifolds of $ S^{2n+1}$.

(c) \cite[Theorem 6.5.]{MBook} $ F $ is homotopically equivalent with a bouquet of $n$-spheres. 

(d) \cite[Theorem 7.2.]{MBook} The number of spheres in the bouquet is equal to the Milnor number $ \mu (X, 0) $, i.e. $ \rk (H_n (F, \Z ))= \mu (X, 0) $.

\end{thm}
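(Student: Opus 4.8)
The plan is to follow Milnor's original treatment and establish the four assertions in order, since the later ones rely on the geometric picture set up by the earlier ones. For part (a), the first step is to show that for all sufficiently small $\epsilon$ the sphere $S^{2n+1}_\epsilon$ meets $X$ transversally. This is the content of Milnor's key lemma: the real analytic function $z \mapsto |z|^2$ restricted to $X \setminus \{0\}$ has no critical points in a small enough punctured ball, which one proves by applying the curve selection lemma to the set of such critical points. Transversality then exhibits $K = X \cap S^{2n+1}_\epsilon$ as a smooth closed $(2n-1)$-submanifold; its orientation is the one induced from the canonical complex orientation of the smooth locus $X \setminus \{0\}$, regarding $K$ as the boundary of the compact piece $X \cap B^{2n+2}_\epsilon$.

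For part (b), I would use part (b) of Theorem~\ref{th:locfibration}: in the isolated case the fibration over $D^2_\delta \setminus \{0\}$ extends to a (necessarily trivial) fibration over the full disc $D^2_\delta$ on the sphere. The Milnor fibre is $F \simeq f^{-1}(\delta\theta) \cap B^{2n+2}_\epsilon$, whose boundary lies on $S^{2n+1}_\epsilon$ and equals $f^{-1}(\delta\theta) \cap S^{2n+1}_\epsilon$. A trivialization of the extended fibration over the radial segment joining $\delta\theta$ to $0$ then provides an ambient isotopy inside $S^{2n+1}_\epsilon$ carrying $\partial F$ onto $K = f^{-1}(0) \cap S^{2n+1}_\epsilon$; this yields both the diffeomorphism $\partial F \simeq K$ and the stronger isotopy statement.

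The heart of the theorem, and the step I expect to be the main obstacle, is part (c). The strategy is to prove that $F$ is $(n-1)$-connected; combined with the fact (recorded just before the theorem) that $F$ has the homotopy type of an $n$-dimensional CW complex, the standard fact that an $(n-1)$-connected CW complex of dimension $n$ is homotopy equivalent to a wedge of $n$-spheres finishes the argument. To obtain the connectivity I would run Morse theory exactly as Milnor does, using a suitable distance-type function on the fibre to show that $F$ can be assembled from a point by attaching cells only in dimensions $\geq n$, forcing $\pi_i(F) = 0$ for $i < n$. Making the Morse estimates rigorous — controlling the indices of the critical points and the behaviour near the boundary and near the singular locus — is the technically demanding part.

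Finally, for part (d), I would pass to a Morsification. By part (b) of Theorem~\ref{th:milnornumber} a generic stabilization $f_\delta$ of $f$ has exactly $\mu(X,0)$ nondegenerate critical points inside $B^{2n+2}_\epsilon$. Since the Milnor fibre is diffeomorphic to the smooth fibre of $f_\delta$, Morse theory builds $F$ from a disc by attaching one $n$-handle per critical point, each handle contributing one $n$-sphere (a vanishing cycle) to the bouquet of part (c). Hence $\rk (H_n(F,\Z)) = \mu(X,0)$, as claimed.
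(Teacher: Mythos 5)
The paper itself supplies no proof of this theorem: parts (a), (c), (d) are imported from \cite{MBook}, and part (b) is recorded as an immediate consequence of Theorem~\ref{th:locfibration}(b), so your proposal has to stand on its own against Milnor's arguments. Your (a) and (b) do: the curve selection lemma argument and the radial trivialization of the extended fibration (plus isotopy extension) are exactly the intended proofs. The genuine gap is in part (c), precisely in the step you single out as the heart of the matter. You propose to obtain $(n-1)$-connectivity by Morse theory for ``a suitable distance-type function on the fibre'', assembling $F$ from a point with cells of dimension $\geq n$ only, and you describe this as what Milnor does. It is not, and the index bounds run against you: on a complex $n$-manifold, distance-type functions have critical points of index $\leq n$ (the Andreotti--Frankel estimate), which is how Milnor proves the \emph{other} half of (c), namely that $F$ has the homotopy type of an $n$-dimensional complex (his Theorem 5.1). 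Reversing such a function produces critical points of index $\geq n$, but then Morse theory builds $F$ from a collar of $\partial F \simeq K$, not from a point; since $K$ is only $(n-2)$-connected, this yields $\pi_i(F)=0$ for $i\leq n-2$ and exhibits $\pi_{n-1}(F)$ merely as a quotient of $\pi_{n-1}(K)$ --- one dimension short. (The case $n=1$ makes the failure vivid: there $K$ is in general disconnected, while the claim is that $F$ is connected.) A Morse function on $F$ with one minimum and all other critical points of index $\geq n$ does exist, but its existence is essentially equivalent to the theorem being proved. Milnor's actual proof of $(n-1)$-connectivity (Lemma 6.4 of \cite{MBook}) is ambient: the complement in $S^{2n+1}_{\epsilon}$ of the closure of one fibre deformation retracts onto another fibre, whose homotopy dimension is $\leq n$; Alexander duality then kills $\widetilde{H}_i(F)$ for $i<n$; simple connectivity (for $n\geq 2$) comes from a van Kampen-type argument on the open book decomposition of the sphere into the fibre closures; Hurewicz and the $n$-dimensionality finish. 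This ambient input is exactly what your sketch is missing.

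Part (d) is different: your Morsification route is correct in outline and is a genuinely different proof from the one the paper cites (Milnor's Theorem 7.2 identifies the middle Betti number with the degree of the normalized gradient map $z \mapsto \mbox{grad}\, f(z)/|\mbox{grad}\, f(z)|$), and it meshes better with the paper, since Theorem~\ref{th:milnornumber}(b) supplies exactly the $\mu(X,0)$ Morse points. However, the mechanism must be restated, because Morse--Lefschetz theory does not directly hand you a handle decomposition of $F$ with one $n$-handle per critical point. What it gives is a statement about the total space: $E=f_{\delta}^{-1}(D)\cap B^{2n+2}_{\epsilon}$ is contractible and deformation retracts onto $F$ with $\mu$ Lefschetz thimbles (that is, $(n+1)$-cells) attached, one along each vanishing cycle. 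Excision computes $H_{n+1}(E,F)\cong \Z^{\mu}$, and the exact sequence of the pair together with $H_{*}(E)=0$ gives $H_n(F)\cong \Z^{\mu}$, which is (d). The handle decomposition of $F$ that you invoke is true, but it is deduced \emph{from} (c) and (d) by handle trading, so using it as the engine of the proof of (d) would be circular.
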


\section{Resolution and plumbing}

\subsection{Resolution graph} The topological information of a resolution of the normal surface singularity $ (X, 0) $ can be encoded in the resolution graph, which also serves as a plumbing graph for the link. We refer to \cite{Five, NSz, neumann1}.

Let $ (X, 0) $ be normal singularity of dimension $2$, recall that $ (X, 0) $ is isolated. A \emph{resolution} of $ (X, 0) $ is a complex analytic map $ p: ( \tilde{X}, E) \to (X, 0) $, where $ \tilde{X} $ is a smooth complex manifold of dimension $2$ with boundary, $ p^{-1} (0) = E \subset \tilde{X} $ is a complex analytic curve and 
$ p|_{ \tilde{X} \setminus E}: \tilde{X} \setminus E \to X \setminus \{0 \} $ is a biholomorphism \cite[Definition 1.19.(a)]{Five}. Let $ E= \bigcup_{v \in \mathcal{V}} E_v $ be the irreducible decomposition of $ E $, where $ \mathcal{V} $ is a finite set. The components $E_v$ of $E$ are called \emph{exceptional divisors}. The resolution is called \emph{good} if each $ E_v $ is smooth curve, they intersect each other transversally, in particular the intersection of any distinct three of them is empty. A good resolution always exists and it is not unique.

$ \partial \tilde{X} $ is diffeomorphic with the link $ K$ of $ (X, 0) $. Hence the resolution $ \tilde{X} $ and the Milnor fibre $ F$ are two complex fillings of $K$, which fact allows to compare their invariants, see for instance the formulas of Laufer \cite{Laufer77b} or Durfee \cite{Du} and their generalizations, e.g. \cite{Wa}.

The tubular neighbourhood $N(E_v) $ of $ E_v \subset \tilde{X} $ is diffeomorphic with the total space of (the disc bundle of) the normal bundle of $E_v $, which is a $ \mathcal{C}^{ \infty} $ $D^2 $-bundle over the Riemannian surface $ E_v $. Thus the genus $g_v$ of $ E_v $ and the Euler number $ e_v $ of the normal bundle of $ E_v \subset \tilde{X} $ determines $ N(E_v) $ with its $ D^2$-bundle structure. Furthermore $ \tilde{X} $ is diffeomorphic to $ \bigcup_{v \in \mathcal{V}} N(E_v) $. The topology of $ \tilde{X} $, hence the topology of $K$ too, can be encoded in a decorated graph $ G$, called the \emph{resolution graph}, whose vertex set is $ \mathcal{V} $, and the vertexes $ v$ and $ w $ are joined  with $k$ edges if $ |E_v \cap E_w|=k $. Each vertex $v$ is decorated by two weights, the genus $g_v$ and the Euler number $e_v$ \cite[Definition 1.19.(b)]{Five}. We omit the genus if it is $0$.

Define $ E_v \cdot E_w = |E_v \cap E_w|$ for $ v \neq w $, and $ E_v \cdot E_v =e_v $. $H_2 (\tilde{X}, \Z ) \cong H_2( E, \Z) \cong \Z^{| \mathcal{V} |} $ is generated by the classes of $ E_v $, and $ E_v \cdot E_w $ is the intersection number of the corresponding cycles. These numbers form the \emph{intersection matrix}, which contains the same information as the resolution graph, whenever $g_v=0$ for all $v \in \mathcal{V}$. A decorated graph $ G $ occurs as a resolution graph of a normal surface singularity if and only if the corresponding intersection matrix is negative definite \cite[1.23.(a)]{Five}.

$ \tilde{X} $ and $ K $ can be recovered from $ G $ by plumbing, see below. On the other hand, $ K$, or equivalently, the homeomorphism type of $ (X, 0) $ determines the graph $ G $, hence the diffeomorphism type of $ \tilde{X} $ (up to plumbing operations) by \cite[Theorem 2]{neumann1}.

\subsection{Plumbing}\label{ss:plumbingcalc} In Chapter~\ref{ch:bound} we use the plumbing construction in a more general context than it occurs for the resolution of normal singularities. Let $G$ be a connected graph which can have multiple edges and loops, and each vertex $v$ is decorated by the weights ($[g_v],e_v$), $g_v \geq 0$. Furthermore each edge is decorated with a sign $ \oplus $ or $ \ominus $. Then the plumbing construction associates two manifolds to $G$, a compact oriented smooth $ 4$-manifold $ M^4(G) $, and its boundary, the closed oriented smooth $3$-manifold $M^3(G) $, as follows.  For each vertex $v$ fix a smooth, oriented $D^2$-bundle $ p_v: N_v \to E_v $ with Euler number $ e_v$, whose base space $ E_v $ is a closed oriented surface with genus $g_v$. If $v$ is the endpoint of $s_v$ edges, then fix $s_v$ basepoints in $E_v$, fix disc-neighborhoods $ \{ D_{v, i}^2 \}_{i=1, \dots , s_v} $ of these points, as well as orientation preserving local trivializations $ p_v^{-1} (D^2_{v, i}) \simeq D_{v, i}^2 \times D^2 $. Any edge between $v$ and $w$ determines a pair of multidiscs $D_{v, i}^2 \times D^2$ and $D_{w, i}^2 \times D^2$ in $N_v$ and $N_w$ respectively. Identify them by $(x, y) \sim (y, x) $, if the edge has $ \oplus $ sign, and by $ (x, y) \sim (\bar{y}, \bar{x}) $, if the edge has $ \ominus $ sign (where $\bar{x}$ is the complex conjugate of $ x$). After these gluings we obtain a $4$-manifold with corners, but these corners can be smoothed out. Cf. \cite[1.25.]{Five}, \cite{neumann1}, \cite[4.1.]{NSz}.

The resolution graph $G$ corresponding to a good resolution $ \tilde{X} $ of a normal surface singularity $ (X, 0) $ does not have loops and each edge decoration is $ \oplus $. The associated $4$-manifold $M^4(G)$ is diffeomorphic to $ \tilde{X} $, whose boundary $ M^3 (G) $ is diffeomorphic with the link $ K $. 

The $3$-manifolds in the form $ M^3(G) $ associated with a plumbing graph $G$ are called \emph{plumbed $3$-manifolds}.

The plumbing graph of a plumbed $3$-manifold is not unique. The possible operations of a plumbing graph which do not change the oriented diffeomorphism type of $M(G)$ are listed in \cite{neumann1}, \cite[4.2.]{NSz}. We mention here some of them, which will be used in the examples of Chapter~\ref{ch:bound}. We follow the notations of \cite{neumann1}.

The graph operation [R0.(a)] is reversing the signs on all edges other than loops
adjacent to any fixed  vertex. By [R0.(a)] the sign of the edges can be reversed preserving the only relevant information, the parity of the number of $ \ominus $--edges along the cycles of the graph.

[R1] corresponds to the (inverse of the) blowing up of a point $q \in E_v \subset M^4(G) $. This operation is the key tool in the construction of embedded resolution of plane curve singularities, see Subsection~\ref{ss:embres}. $q$ can be a generic point, an intersection point of $E_v$ and $E_w$, or the self intersection point of $E_v$. The three cases of [R1] are shown in the picture, where $ \epsilon = \pm 1 $ and the edge signs $ \epsilon_0 $, $ \epsilon_1 $, $ \epsilon_2 $ are related by $  \epsilon_0=- \epsilon  \epsilon_1  \epsilon_2 $.

\begin{picture}(110,50)(5,0)

\put(40,20){\circle*{4}} \put(100,20){\circle*{4}} 
\put(40,20){\line(1,0){60}}                        
\put(40,20){\line(-2,-1){30}}                      
\put(40,20){\line(-2, 1){30}}

\put(20,25){\makebox(0,0){$\cdot$}}                
\put(20,20){\makebox(0,0){$\cdot$}}
\put(20,15){\makebox(0,0){$\cdot$}}

\put(42,28){\makebox(0,0){$e_i$}}  
\put(42,10){\makebox(0,0){$[g_i]$}}
\put(100,29){\makebox(0,0){$\epsilon$}}

\put(160,20){\vector(1,0){30}}                     

\put(240,20){\circle*{4}}                    
\put(244,28){\makebox(0,0){$e_i-\epsilon$}}  
\put(244,10){\makebox(0,0){$[g_i]$}}

\put(240,20){\line(-2,-1){30}}                      
\put(240,20){\line(-2, 1){30}}

\put(220,25){\makebox(0,0){$\cdot$}}                
\put(220,20){\makebox(0,0){$\cdot$}}
\put(220,15){\makebox(0,0){$\cdot$}}
\end{picture}

\vspace{2mm}

\begin{picture}(100,40)(5,10)

\put(40,20){\circle*{4}} \put(80,20){\circle*{4}}
\put(120,20){\circle*{4}}                           

\put(40,20){\line(1,0){80}}                        

\put(40,20){\line(-2,-1){30}}                      
\put(40,20){\line(-2, 1){30}}
\put(20,25){\makebox(0,0){$\cdot$}}                
\put(20,20){\makebox(0,0){$\cdot$}}
\put(20,15){\makebox(0,0){$\cdot$}}

\put(120,20){\line(2,-1){30}}                      
\put(120,20){\line(2, 1){30}}
\put(135,25){\makebox(0,0){$\cdot$}}                
\put(135,20){\makebox(0,0){$\cdot$}}
\put(135,15){\makebox(0,0){$\cdot$}}

\put(42,28){\makebox(0,0){$e_i$}}                  
\put(42,10){\makebox(0,0){$[g_i]$}}
\put(80,29){\makebox(0,0){$\epsilon$}}
\put(118,28){\makebox(0,0){$e_j$}}
\put(118,10){\makebox(0,0){$[g_j]$}}

\put(60,26){\makebox(0,0){$\epsilon_1$}}      
\put(100,26){\makebox(0,0){$\epsilon_2$}}

\put(160,20){\vector(1,0){30}}                     

\put(240,20){\circle*{4}}                    
\put(242,28){\makebox(0,0){$e_i-\epsilon$}}  
\put(242,10){\makebox(0,0){$[g_i]$}}

\put(240,20){\line(-2,-1){30}}                      
\put(240,20){\line(-2, 1){30}}

\put(220,25){\makebox(0,0){$\cdot$}}                
\put(220,20){\makebox(0,0){$\cdot$}}
\put(220,15){\makebox(0,0){$\cdot$}}

\put(300,20){\circle*{4}} 
\put(298,28){\makebox(0,0){$e_j-\epsilon$}}  
\put(298,10){\makebox(0,0){$[g_j]$}}

\put(300,20){\line(2,-1){30}}                      
\put(300,20){\line(2, 1){30}}
\put(315,25){\makebox(0,0){$\cdot$}}                
\put(315,20){\makebox(0,0){$\cdot$}}
\put(315,15){\makebox(0,0){$\cdot$}}

\put(240,20){\line(1,0){60}} 
\put(270,26){\makebox(0,0){$\epsilon_0$}}      

\end{picture}

\vspace{2mm}

\begin{picture}(150,60)(5,-5)
\put(40,20){\circle*{4}} \put(120,20){\circle*{4}}   

\put(40,20){\line(-2,-1){30}}                      
\put(40,20){\line(-2, 1){30}}
\put(20,25){\makebox(0,0){$\cdot$}}                
\put(20,20){\makebox(0,0){$\cdot$}}
\put(20,15){\makebox(0,0){$\cdot$}}

\put(42,28){\makebox(0,0){$e_i$}}                  
\put(42,10){\makebox(0,0){$[g_i]$}}
\put(120,29){\makebox(0,0){$\epsilon$}}

\qbezier(40,20)(80,30)(120,20)
\qbezier(40,20)(80,10)(120,20)

\put(80,35){\makebox(0,0){$\epsilon_1$}}
\put(80,5){\makebox(0,0){$\epsilon_2$}}


\put(160,20){\vector(1,0){30}}                     


\put(240,20){\line(-2,1){30}}       
\put(240,20){\line(-2,-1){30}}
\put(240,20){\circle*{4}}
\put(220,24){\makebox(0,0){$\vdots$}}
\put(240,32){\makebox(0,0){$e_i-2\epsilon$}}
\put(240,10){\makebox(0,0){$[g_i]$}}

\put(240,20){\line(2,1){20}} \put(240,20){\line(2,-1){20}}
\qbezier(260,30)(290,40)(293,20) \qbezier(260,10)(290,0)(293,20)

\put(300,20){\makebox(0,0){$\epsilon_0$}}

\end{picture}

We also use the $0$-chain absorption [R3] and the oriented handle absorption [R5].
The edge signs
$\epsilon_i'$ ($i=1,...,s$) are related by
$\epsilon_i'=-\epsilon\overline{\epsilon}\epsilon_i$ provided that
the edge sign in question is not on a loop, and
$\epsilon_i'=\epsilon_i$, if it is on a loop.

\begin{picture}(100,50)(-5,0)

\put(40,20){\circle*{4}} \put(80,20){\circle*{4}}
\put(120,20){\circle*{4}}                           

\put(40,20){\line(1,0){80}}                        

\put(40,20){\line(-2,-1){30}}                      
\put(40,20){\line(-2, 1){30}}
\put(20,25){\makebox(0,0){$\cdot$}}                
\put(20,20){\makebox(0,0){$\cdot$}}
\put(20,15){\makebox(0,0){$\cdot$}}

\put(120,20){\line(2,-1){30}}                      
\put(120,20){\line(2, 1){30}}
\put(135,25){\makebox(0,0){$\cdot$}}                
\put(135,20){\makebox(0,0){$\cdot$}}
\put(135,15){\makebox(0,0){$\cdot$}}

\put(42,28){\makebox(0,0){$e_i$}}                  
\put(42,10){\makebox(0,0){$[g_i]$}}
\put(80,29){\makebox(0,0){$0$}}
\put(118,28){\makebox(0,0){$e_j$}}
\put(118,10){\makebox(0,0){$[g_j]$}}

\put(60,26){\makebox(0,0){$\epsilon$}}      
\put(100,26){\makebox(0,0){$\overline{\epsilon}$}}

\put(138,35) {\makebox(0,0){$\epsilon_1$}}
\put(138,5){\makebox(0,0){$\epsilon_s$}}

\put(165,20){\vector(1,0){30}}                     

\put(240,20){\circle*{4}}                    
\put(240,30){\makebox(0,0){$e_i+e_j$}}  
\put(240,5){\makebox(0,0){$[g_i+g_j]$}}

\put(240,20){\line(-2,-1){30}}                      
\put(240,20){\line(-2, 1){30}}

\put(220,25){\makebox(0,0){$\cdot$}}                
\put(220,20){\makebox(0,0){$\cdot$}}
\put(220,15){\makebox(0,0){$\cdot$}}

\put(240,20){\line(2,-1){30}}                      
\put(240,20){\line(2, 1){30}}
\put(260,25){\makebox(0,0){$\cdot$}}                
\put(260,20){\makebox(0,0){$\cdot$}}
\put(260,15){\makebox(0,0){$\cdot$}}

\put(268,40) {\makebox(0,0){$\epsilon_1'$}}
\put(268,0){\makebox(0,0){$\epsilon_s'$}}

\end{picture}

\vspace{4mm}


\vspace{3mm}

\begin{picture}(100,40)(-5,0)
\put(40,20){\circle*{4}} \put(120,20){\circle*{4}}   

\put(40,20){\line(-2,-1){30}}                      
\put(40,20){\line(-2, 1){30}}
\put(20,25){\makebox(0,0){$\cdot$}}                
\put(20,20){\makebox(0,0){$\cdot$}}
\put(20,15){\makebox(0,0){$\cdot$}}

\put(42,28){\makebox(0,0){$e_i$}}                  
\put(42,10){\makebox(0,0){$[g_i]$}}
\put(120,29){\makebox(0,0){$0$}}

\qbezier(40,20)(80,30)(120,20)
\qbezier(40,20)(80,10)(120,20)

\put(80,35){\makebox(0,0){$\ominus $}}
\put(80,5){\makebox(0,0){$ \oplus $}}


\put(165,20){\vector(1,0){30}}                     


\put(240,20){\circle*{4}}                    
\put(242,28){\makebox(0,0){$e_i$}}  
\put(250,10){\makebox(0,0){$[g_i+1]$}}

\put(240,20){\line(-2,-1){30}}                      
\put(240,20){\line(-2, 1){30}}

\put(220,25){\makebox(0,0){$\cdot$}}                
\put(220,20){\makebox(0,0){$\cdot$}}
\put(220,15){\makebox(0,0){$\cdot$}}
\end{picture}

\vspace{2mm}

We also use plumbing graphs with arrowhead vertexes. An arrowhead vertex based on the vertex $v$ denotes a generic (oriented) fibre $D^2$ of $ N_v \subset M^4 (G) $. Its boundary is an oriented knot $ S^1 \subset M^3 $. Additionally, the $S^1$-bundle structure of $ \partial N_v $ determines a trivialization $ S^1 \times D^2 $ of the tubular neighbourhood of the knot.

In \cite{neumann1} there is a third decoration, the non-negative integer $h_v$, which denotes the number of boundary components of the genus--$g_v$ surface. This concept is closely related to the arrowhead vertexes. In fact, the resulted $3$-manifold has $ h_v $ boundary components for each vertex $v$, and it can be obtained from the closed $3$-manifold by removing the open tubular neighbourhoods of $h_v$ knots correspond to $h_v $ arrowhead vertexes based on $v$.


\subsection{Embedded resolution of plane curves}\label{ss:embres} Let $ (D, 0) \subset ( \C^2, 0) $ be a plane curve singularity defined as the zero set of the reduced germ $ d: ( \C^2, 0) \to ( \C, 0) $.  Let $ d= \Pi_{i=1}^l d_i $ be the irreducible decomposition of $ d\in \mathcal{O}_{( \C^2, 0)}$, and let $ (D, 0) = \bigcup_{i=1}^l (D_i, 0) $ be the irreducible decomposition of $ (D, 0)$, where $ (D_i, 0) = d_i^{-1} (0) $.

A \emph{good embedded resolution} of $ (D, 0) $ is a good resolution $\pi: (\widetilde{\C^2}, E) \to ( \C^2, 0) $ of $( \C^2, 0)$ with the additional property, that $ (d \circ \pi)^{-1} (0) $ is a normal crossing divisor in $ \widetilde{\C^2} $, that is, its irreducible components are smooth and intersects each other transversally \cite[1.28.]{Five}. Note that each component $ E_v $ of the exceptional divisor $E$ of any good resolution of $ ( \C^2, 0) $ is diffeomorphic to $ \C {\mathbb P}^1 \simeq S^2 $.



The irreducible components of $ (d \circ \pi)^{-1} (0) $ are the exceptional divisors $ E_v $ and the strict transforms corresponding to the components of $ (D, 0) $. That is, the components  $ \tilde{D}_i  $ of the \emph{strict transform}
$\tilde{D} =\overline{(d \circ \pi)^{-1} (0) \setminus E} \subset \widetilde{ \C^2}$
 of $ D $.
 Each $ \tilde{D}_i  $ intersects (transversally)
 only one exceptional divisor, say $ E_{v(i)} $.
 
 The \emph{total transform} of $ D_i $ is the divisor
 $ {\rm div}(d_i \circ \pi)= \Sigma_{v \in \mathcal{V}} m_i(v)
 \cdot E_v + \tilde{D}_i $,
where the multiplicity $ m_i(v) \in \Z_{>0} $ is the
vanishing order of  $d_i \circ \pi $ along $ E_v $.

Let $ \Gamma $ be the embedded resolution graph of $ (D, 0) \subset ( \C^2, 0) $ associated with the resolution ${\pi}$, i.e. the resolution graph of $ \pi $, where the arrowhead vertexes $\{a_i\}_{i=1}^l$ codify the strict transforms
$ \{\tilde{D}_i\}_{i=1}^l  $. Note that $ M^3( \Gamma ) \simeq S^3 $ since it is the link of $ ( \C^2, 0 ) $, and the link $ \bigsqcup_{i=1}^l S^1 \subset S^3$ determined by the arrowhead vertexes is the link of $ (D, 0)$.

The multiplicities $m_i(v)$ ($v \in \mathcal{V} $, $1\leq i\leq l$) are determined by $ \Gamma $ via the identities (see e.g. \cite{Lauferbook,Five})
\begin{equation}\labelpar{eq:mult}
 \Sigma_{v \in \mathcal{V}} m_i(v) (E_v \cdot E_w)  + (\tilde{D}_i\cdot E_w) = 0 \ \ \mbox{for all $w\in \mathcal{V}$.}
 \end{equation}

A good embedded resolution of a plane curve can be obtained by a sequence of blowing-ups, whose local model is the blowing-up of the origin in $ \C^2 $. That is the complex manifold $ \mathcal{B} = \{ (p, l) \in \C^2 \times \C \mathbb{P}^1 \ | \ p \in l \} $ with its natural projection $ \pi: \mathcal{B} \to \C^2 $. The exceptional divisor $ \pi^{-1}(0) $ is $ \C \mathbb{P}^1 $, and $ \mbox{pr}_2: \mathcal{B} \to \C \mathbb{P}^1 $ defines a complex line bundle over it with Euler number $( -1)$.

$ \mathcal{B} $ admits a natural atlas with two charts whose domains are $ U_x = \pi^{-1} (\{x \neq 0 \}) $ and $ U_y = \pi^{-1} (\{y \neq 0 \}) $, where $ x $ and $y$ denote the coordinates in $ \C^2 $. The coordinates $ (s, t) $ of $ U_x $ are chosen such that $ \pi (s, t) = (s, st) $ holds, similarly, $ \pi(u, v) = (uv, v) $ holds in the coordinates $ (u, v) $ of $ U_y $. The transition map on $ U_x \cap U_y 
$ is given by $ t=1/u $ and $ s= uv $.

\subsection{Examples} 

\begin{ex}[The quotient singularities $A$-$D$-$E$] The link of the normal hypersurface singularity described in Example~\ref{ex:ade} is the quotient of $ S^3 \subset \C^2 $ with the group action of $ G $. For these singularities the Milnor fibre $F$ is diffeomorphic to the resolution $ \tilde{X} $, however they are not biholomorphic, since $ \tilde{X} $ contains closed analytic curves (namely, the exceptional divisors $ E_i $), while $ F \subset \C^3 $ does not contain (in fact, $ F $ is always a Stein manifold). Their resolution graphs are the $A$-$D$-$E$ graphs with $ g_v =0 $ and $ e_v= -2 $ for each vertex, see \cite[1.20.(b)-(f)]{Five}.
\end{ex}

\begin{ex}[The plane curve $ A_1$]\label{ex:A1planecurve}
Let $ (X, 0) \subset ( \C^2, 0) $ be the zero set of $ f= xy $. Its link is $ S^1 \sqcup S^1 \subset S^3 $, the two components correspond to the irreducible components of $ f $. Each $ S^1 $ is unknotted in $ S^3 $ and their linking number is $ 1 $, they form a Hopf-link. Since $\mu(X, 0)=1 $, the Milnor fibre is diffeomorphic to $ S^1 \times I $. In fact, the Milnor fibre $ F = f^{-1} ( \delta ) \cap B^4_{ \epsilon } $ ($ 0 <  \delta  \ll \epsilon $) can be parametrized with $ S^1 \times [r_0, r_1] $ as $ x = \sqrt{ \delta} r e^{ \alpha i } $, $ y = \sqrt{ \delta} r^{-1} e^{- \alpha i } $, $ - \pi \leq \alpha \leq  \pi $, $ r_0 \leq r \leq r_1 $, where $ r_0 r_1 =1 $ and $ \delta (r_0^2 + r_1^2) = \epsilon^2 $. 
We use this parametrization in Chapter~\ref{ch:bound}, where $A_1 $ appears as the transverse type of the non-isolated surfaces we study.

The embedded resolution of $ (X, 0) $ can be obtained by blowing up $ ( \C^2, 0) $ once. It is easier to show this blowing up with the germ $ x^2 + y^2= (x+iy)(x-iy) $, which is equivalent with $f$, since we need only one chart for it. The total transform on the $ U_x $-chart is $ (s+ist)(s-ist)=s^2 (1+it)(1-it) $, where $ \{ s=0 \}$ is the exceptional divisor, $ \{ t= \pm i\} $ are the strict transforms of the components of $ (X, 0) $. The resolution graph is

\begin{picture}(300,60)(-100,40)
 \put(0,60){\circle*{4}}
                         \put(0,60){\vector(2,1){30}}
                         \put(0,60){\vector(2,-1){30}}
                         \put(-5,70){\makebox(0,0){$-1$}}
                          \put(-5,50){\makebox(0,0){$v$}}
                          \end{picture}
                          
\noindent with the multiplicities $ m_1 (v) = m_2 (v) =1 $.
\end{ex}

\begin{ex}\label{ex:milnex} The curve $ (D, 0) = \{ d(x, y) = xy^2 + x^k =0 \} \subset ( \C^2, 0) $ (where $k \geq 1$) appears as the double point locus of the family $ C_k $ of finitely determined germs, see Subsection~\ref{s:milnex}. We present here the calculation of its embedded resolution graph, it serves as a model also for the other examples.

The irreducible decomposition of $ d $ depends on the parity of $ k $: it is $ d(x, y) = x (y^2 + x^{k-1}) $ for $ k= 2n $ and $ d(x, y) = x (iy + x^n)(-iy + x^n) $ for $ k= 2n+1 $. After a blowing up the total transform of $ d $ is 
$ (f \circ \pi)(s, t)= s((st)^2 + s^{k-1} )= s^3 (t^2 + s^{k-3} ) $ on the $ U_x $-chart, and 
$ (f \circ \pi)(u, v)= uv(v^2 + (uv)^{k-1} )= v^3 u (1 + u^{k-1}v^{k-3} ) $ on $ U_y $. The exceptional divisor $ E_1 $ is defined by $ \{ s=0 \} $ and $ \{ v=0 \} $ on the charts, the Euler number of its normal bundle is $ (-1) $. The strict transform of $ D_1=\{ x=0 \} $ is $ \tilde{D}_1=\{ u=0 \} $ on $ U_y $, and it is not visible on $ U_x $, although one can write it as $ \tilde{D}_1 =\{ t= \infty \} $. $ \tilde{D}_1 $ is smooth and intersects the exceptional divisor $E_1 $ transversally.

The strict transform of the other component(s) $ \{ y^2 + x^{k-1} =0 \} $ is $ \{ t^2 + s^{k-3} =0 \} $ on $U_x$ (and is not visible on $U_y$). It is not smooth, if $ k > 4 $, and for $ k=4 $ it is smooth, but its intersection with $E_1$ is not transverse. We continue with the blow up of the point $ s=t=0 $. Since it is on $ E_1$, the Euler number of $E_1$ changes by $(-1)$, hence it will be $(-2)$ after the second blow up. 

The strict transform of $ \{ y^2 + x^{k-1} =0 \} $ after $ m$ blowing-ups is $ \{ t_m^2 + s_m^{k-1-2m} =0 \} $, where $ (s_m, t_m ) $ denotes the coordinates of the first chart of the $m$-th blow up. Moreover the strict transform intersects only the $m$-th exceptional divisor $ E_m $, if $ m < n$. The sequence of $m$ blow-ups results a line of $(-2)$-vertexes (a `bamboo') with a $(-1)$-vertex at the end ($ m < n$).

\vspace{2mm}

 \emph{Case 1}: $k=2n+1$.

 \vspace{2mm}
 
For $k=2n+1$ the process reaches to the strict transform $  \{ t_n^2 +1 = 0 \} = \{ t_n= \pm i \} $ after $n$ steps, which is smooth, the components are disjoint and intersect $ E_n $ transversally. Hence the resolution graph is

\begin{picture}(300,30)(-200,50)
                         \put(100,60){\circle*{4}}
                          \put(60,60){\circle*{4}}
                           \put(0,60){\circle*{4}}
                           \put(60,60){\line(1,0){40}}
                             \put(40,60){\line(1,0){20}}
                             \put(0,60){\line(1,0){20}}
                              \put(-40,60){\circle*{4}}
                              \put(-40,60){\line(1,0){40}}
                         \put(100,60){\vector(2,1){30}}
                       \put(-40,60){\vector(-1,0){40}}
                         \put(100,60){\vector(2,-1){30}}
                         \put(95,70){\makebox(0,0){$-1$}}
                          \put(57,70){\makebox(0,0){$-2$}}
                           \put(-3,70){\makebox(0,0){$-2$}}
                            \put(-43,70){\makebox(0,0){$-2$}}
                          \put(29,57){\makebox(0,0){$\dots$}}
                         \end{picture}

\end{ex}

\noindent where the number of $ (-2)$-vertices in the middle is $n-1$. By (\ref{eq:mult}) the multiplicities are $m_1(E_i) =  1 $ for all $i= 1 \dots n $ and $ m_2 (E_i)= m_3 (E_i)= i $.

 \vspace{2mm}

 \emph{Case 2}: $k=2n$.

 \vspace{2mm}
 
 For $k=2n$ the process reaches to the strict transform $  \{ t_{n-1}^2 +s_{n-1} = 0 \} $ after $n-1 $ steps, which is smooth, but its intersection with $E_{n-1} $ is not transverse. The $n$-th blowing up results 
 the strict transform $ \{ u_n + v_n =0\} $ on the second chart, which is smooth and intersects $ E_n = \{ v_n=0 \} $ transversally, but it intersects also the strict transform $ \{ u_n=0 \} $ of $ E_{n-1 } = \{ s_{n-1} = 0 \}  $. One more blow-up is needed at $ u_n=v_n=0 $, which is the intersection point of $ E_n $ and the strict transform of $ E_{n-1} $. The new exceptional divisor $ E_{n+1} $ intersects (the strict transforms of) $ E_n $ and $ E_{n-1} $, and their Euler number changes by $(-1)$, hence $ e_{n-1}=-3 $, $e_n= -2 $, $e_{n+1}=-1$. The resolution graph is 

\begin{picture}(300,70)(-160,20)
 \put(140,60){\circle*{4}}
                         \put(100,60){\circle*{4}}
                          \put(60,60){\circle*{4}}
                           \put(0,60){\circle*{4}}
                            \put(140,30){\circle*{4}}
                           \put(60,60){\line(1,0){40}}
                             \put(40,60){\line(1,0){20}}
                             \put(0,60){\line(1,0){20}}
                              \put(-40,60){\circle*{4}}
                              \put(-40,60){\line(1,0){40}}
                              \put(100,60){\line(1,0){40}}
                              \put(140,60){\line(0,-1){30}}
                         \put(140,60){\vector(1,0){40}}
                         \put(137,70){\makebox(0,0){$-1$}}
                         \put(97,70){\makebox(0,0){$-3$}}
                          \put(57,70){\makebox(0,0){$-2$}}
                           \put(-3,70){\makebox(0,0){$-2$}}
                            \put(-43,70){\makebox(0,0){$-2$}}
                          \put(29,57){\makebox(0,0){$\dots$}}
                  \put(152,30){\makebox(0,0){$-2$}}
                  \put(-40,60){\vector(-1,0){30}}
                         \end{picture}

\noindent where the number of $(-2)$-vertices in the middle is $n-2$. By (\ref{eq:mult}) the multiplicities are $ m_1(E_i)= 1 $ for $ i \neq n+1 $ and $ m_1 (E_{n+1})=2 $, $ m_2 (E_i )= 2 i $ for $ i < n $ and $ m_2 (E_n )= 2n-1$, $ m_2 (E_{n+1}) = 4n-2 $.

\section{Non-isolated case} For a non-isolated singularity $ (X, 0)$ the link $ K$, the boundary $ \partial F $ of the Milnor fibre and the boundary $ \partial \tilde{X} $ of a resolution are three different spaces. Indeed, $ \partial F $ and $ \partial \tilde{X} $ are smooth manifolds, while $ K$ is not smooth. Moreover, any resolution $ \tilde{X} $ of $ (X, 0) $ is the resolution of the normalization of $ X$ as well, thus it contains only limited information about $ (X, 0) $.

Let $ (X, 0) = f^{-1}(0) \subset ( \C^3, 0) $ be a surface singularity, whose singular set $ ( \Sigma, 0) = (df)^{-1}( 0) \cap (X, 0) $ has dimension $1 $.
The N\'{e}methi--Szil\'{a}rd book \cite{NSz} provides a general algorithm to determine $ \partial F $ as a plumbed $3$-manifold. Here we just sketch the decomposition of $ \partial F $ as the union of two pieces. For details see \cite[2.3., 3.4.]{NSz}, \cite{Si} and Section~\ref{s:boundmiln} in our special case. 

Let $ F = f^{-1} ( \delta ) \cap B_{ \epsilon }^{6} $ ($ 0 < | \delta | \ll \epsilon $) be the Milnor fibre of $ (X, 0) $, its boundary is $ \partial F = f^{-1} ( \delta ) \cap S_{ \epsilon }^{5} $. Let $ N( \Sigma ) $ be a closed tubular neighbourhood of $ \Sigma \cap S_{ \epsilon }^{5} $ in $ S_{ \epsilon }^{5} $ and let $ N^o( \Sigma ) $ be its interior. Then the two parts mentioned above are $ \partial F \setminus N^o( \Sigma ) $ and $ \partial F \cap N( \Sigma ) $. $ \partial F $ can be obtained from these pieces by gluing them along their common boundary, which is a union of tori.

$ \partial F \setminus N^o( \Sigma ) $ is diffeomorphic with $ K \setminus N^o( \Sigma ) $, where $ K= X \cap S_{ \epsilon }^{5} $ is the link of $ X $. The normalization map $n: (X_{norm}, 0) \to (X, 0) $ induces a diffeomorphism between $ n^{-1} ( K \setminus N^o( \Sigma ) ) \subset K_{norm}$ and $  K \setminus N^o( \Sigma ) $, where $ K_{norm} $ is the link of $ (X_{norm}, 0)$. Hence this piece can be inherited from the resolution graph of the normalization of $ (X, 0) $.

$ \partial F \cap N( \Sigma ) $ is the disjoint union of the pieces $ \{ \partial F \cap N( \Sigma_j ) \}_j $, where $ \Sigma = \bigcup_j \Sigma_j $ is the irreducible decomposition of $ \Sigma $.  The projection of $ \partial F \cap N( \Sigma_j ) $ to $ \Sigma_j \cap S_{ \epsilon }^{5} \simeq S^1 $ induces a bundle structure on $ \partial F \cap N( \Sigma_j ) $ over $ S^1 $. Its fibre is the Milnor fibre of the \emph{transverse curve singularity} of $ \Sigma_j $. This is the plane curve singularity defined as the zero set of $ f|_{(S, q)} : (S, q) \to ( \C, 0 ) $, where $ (S, q) \subset ( \C^3, q) $ biholomorphic to $ ( \C^2, 0) $ is a $ \Sigma_j $--transverse slice at an arbitrary point $ q \in \Sigma_j \setminus \{ 0 \} $.

\chapter{Boundary of the Milnor fibre}\label{ch:bound}

\section{Preliminaries}

\subsection{} This chapter contains the results of the article \cite{NP2}. We assume that $(X, 0)=(f^{-1}(0),0)$ is the image
of a finitely determined complex analytic germ
$ \Phi: (\C^2, 0) \to ( \C^3, 0) $.
This means that $  \Phi  $ is a
 stable immersion off the origin, cf. Theorem~\ref{th:fin-stab} or \cite{Wall,Mond2}.
 From an other approach $(X, 0)$ is a non-isolated hypersurface singularity in $ (\C^3, 0) $ such that the transverse curve of the singular set has type $ A_1 $ and the normalization of $(X, 0) $ is smooth. Then its normalization map $ \Phi: ( \C^2, 0) \to (X, 0) \subset ( \C^3, 0) $ is a finitely determined germ.
 
The main result of this chapter provides the plumbing graph of the boundary $\partial F$ of the Milnor fibre $F$
as a surgery, starting
from the embedded resolution graph  of the double points $(D,0)\subset
(\C^2,0)$. The needed additional pieces, which will be glued
to this primary object
correspond to certain fibre bundles over $S^1$
with fibres the local Milnor fibre of the
 transverse singularity type (and monodromy the corresponding
vertical monodromy). The surgery itself is
 characterized by some homologically determined integers
 combined with  the newly defined {\it `vertical index'}
associated with the irreducible components of the singular locus of $f^{-1}(0)$.


This chapter is organized as follows. In this section we introduce the notations, in Section~\ref{s:Y} we describe by several characterizations the
surgery pieces associated with the components of the singular locus. In Section~\ref{s:boundmiln} and in Section~\ref{s:vertical}
we describe  the gluing and its invariants, while the last section contains several concrete
examples.

The basic notions and properties of the Milnor fibre, resolution and plumbing construction are introduced in Chapter~\ref{ch:iso}. For the properties of a finitely determined germ $ \Phi $ we refer to Chapter~\ref{ch:germ}.

\subsection{Double point curves} Let $ \Phi: (\C^2, 0) \to ( \C^3, 0) $ be a
 complex analytic germ singular only at the origin.
 We assume that $ \Phi $ is finitely $\mathscr{A}$-determined.
 This happens exactly when  $  \Phi  $ is a {\it
 stable immersion} off  the origin, that is, off the origin it has only single and double values and at
 each double value the intersection of the two smooth branches is
 transverse, cf. Example~\ref{ex:c23}.

Write $(X,0):=({\rm im}(\Phi),0)$ and
let $ f: ( \C^3, 0) \to ( \C, 0) $ be the reduced equation of $(X,0)$. $f$ can be determined by Fitting ideal method, as a generator of $ \mathcal{F}_0 (\Phi_* \mathcal{O}_{ ( \C^n, 0) }) $, cf. Section~\ref{s:Fitting}.
 Note that $ (X, 0) $ is a  non-isolated hypersurface singularity, except when $ \Phi $ is a regular map, see Theorem~\ref{TH:EMBINTRO}.
 We denote by $ (\Sigma,0) = ( \partial_{x_1} f, \partial_{x_2} f, \partial_{x_3} f)^{-1} (0) \subset (\C^3,0) $
 the {\it reduced} singular locus of  $(X,0)$
 (which equals the closure of the set of double values of $\Phi$),
 and by $(D,0)$ the {\it reduced}
 double point curve $ \Phi^{-1} (\Sigma ) \subset (\C^2,0) $.
 (In fact, the finite determinacy of the germ $ \Phi$
 is equivalent with the fact that the double point curve is reduced;  see Theorem~\ref{th:findoub}, \cite{nunodouble}.)

Let $B^6_\epsilon$ be the $\epsilon$--ball in $\C^3$ centred at the origin,
and $ S^5_\epsilon$ its boundary ($\epsilon\in {\mathbb R}_{>0}$). Then for $\epsilon$ sufficiently small $(B_\epsilon^6,0)$ is a Milnor ball for the pair
$(\Sigma,0)\subset (X,0)$, and, furthermore,
$\mathfrak{B}_\epsilon:=\Phi^{-1}(B_\epsilon^6)$
is a (non--metric) $ \mathcal{C}^{\infty}$ ball in $(\C^2,0)$, which might serve as a Milnor
ball for $(D,0)$, cf. \ref{ss:milnfibration}, \cite{looijenga}. We set $ \mathfrak{S}^3 = \Phi^{-1} (S^5_{ \epsilon})=
\partial \mathfrak{B}_\epsilon$, diffeomorphic to $S^3$,
and we treat it as the usual Milnor--ball boundary 3--sphere.
Recall that the  immersion associated with
$ \Phi $ at the level of local neighbourhood boundaries
is $ \Phi |_{ \mathfrak{S}^3} : \mathfrak{S}^3 \to S^5 $, cf. Definition~\ref{de:linkmap} and Theorem~\ref{th:Csum}.

\subsection{Components and links of $\Sigma$ and $D$.}\label{ss:compllinks}
Let $ \Upsilon \subset S^5_\epsilon$ be the link of $\Sigma$.
It is exactly the  set of double values  of $  \Phi |_{ \mathfrak{S}^3} $.
Let $ L = \Phi^{-1}(\Upsilon) \subset \mathfrak{S}^3 $ denote the set of double points of
$  \Phi |_{ \mathfrak{S}^3} $, that is,
$ L\subset  \mathfrak {S}^3  $ is the link of $D$.
Assume that the reduced equation of $D$ is
$ d: (\C^2, 0) \to (\C, 0) $, whose irreducible decomposition is
$ d= \Pi_{i=1}^l d_i $. The irreducible components of $ D$ are denoted by
$ (D_i, 0) = d_i^{-1} (0) $
and their link components in $L$ by $L_i$,
 $1\leq  i\leq  l$.
$ D $ is equipped with an involution $ \iota: D \to D $ which pairs the double points.
 $ \iota|_{ L} $ induces a permutation (pairing) $ \sigma$ of $ \{1, 2, \dots , l \} $, such that
 $ \iota (L_i) = L_{ \sigma(i)} $. Moreover,
 $ \Phi|_{ L }: L \to \Upsilon $ is a double covering with
 $ \Phi (L_i)=\Phi (L_{ \sigma(i) }) $.
 If $ i = \sigma(i) $ for some $i$, then $ \Phi |_{L_i} $ is a nontrivial double covering
  of its image, while  above  the other components the covering  is  trivial.
 Let $ J $ be the set of pairs   $\{i, \sigma (i)\}$ ($1\leq i \leq l$), it
  is the index set of the   components of $ \Upsilon $; they will
  be denoted by $ \{\Upsilon_j\}_{ j \in J}$.

All link components are considered with  their natural orientations.

For the good embedded resolution of $ (D, 0) $ and the corresponding notations see Subsection~\ref{ss:embres}.

Note that $ 2 |J| - l $ is the number of non-trivially covered double point curve components, its parity is the total twist of $ \Phi|_{ \mathfrak{S}^3} $, cf. the end of Subsection~\ref{ss:m3r45}. Since $ 2 |J| - l   \ (\mbox{mod } 2) $ is an additive regular homotopy invariant and its value for the complex Whitney umbrella is $1$, by Theorem~\ref{TH:MAIN} it follows that
\begin{cor}
$ C( \Phi ) \equiv 2 |J| - l   \ (\mbox{mod } 2)$.
\end{cor}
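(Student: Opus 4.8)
The plan is to deduce the congruence from the identity, already announced in the paragraph preceding the statement, that the total twist of $\Phi|_{\mathfrak{S}^3}$ equals the reduction modulo $2$ of its Smale invariant. The structural input is recorded at the end of Subsection~\ref{ss:m3r45}: the total twist is a \emph{group homomorphism} $t\colon \imm(S^3,\R^5)\to\Z_2$, whose value on a stable immersion is the parity of the number of non-trivially covered double point curve components. Since $\Phi$ is finitely determined, $\Phi|_{\mathfrak{S}^3}$ is a stable immersion by Theorem~\ref{th:fin-stab}, so $t(\Phi|_{\mathfrak{S}^3})$ is defined; and by the bookkeeping of $D$, $L$ and the pairing $\sigma$ in Subsection~\ref{ss:compllinks} it equals $2|J|-l\pmod 2$ (indeed, writing $s$ for the number of self-paired components and $p$ for the number of genuine pairs, one has $l=s+2p$, $|J|=s+p$, hence $2|J|-l=s$, the number of non-trivially covered components).

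First I would move everything into the Smale-invariant picture. Because $5=3+2$, Example~\ref{ex:snsq} identifies $\imm(S^3,S^5)$ with $\imm(S^3,\R^5)$, and the sign-refined Smale invariant gives an isomorphism $\Omega\colon \imm(S^3,\R^5)\to\Z$ (Proposition~\ref{prop:Smale}). Under this identification $t$ becomes a homomorphism $\Z\to\Z_2$. Every such homomorphism is either trivial or the reduction modulo $2$, and is determined by its value on a generator; so it remains only to evaluate $t$ on one generator.

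Next I would evaluate on the cross cap. The complex Whitney umbrella has $C=1$ (Example~\ref{ex:cor1}), hence by Theorem~\ref{TH:MAIN} its associated immersion has $\Omega=-C=-1$ and represents a generator of $\Z$. Its double point locus is a single circle carrying the free involution $t\mapsto -t$, i.e. one component ($l=1$) covered non-trivially ($\sigma=\mathrm{id}$, so $|J|=1$), giving total twist $2|J|-l=1$. Thus $t$ carries a generator to the non-zero element of $\Z_2$, so $t$ is reduction modulo $2$; equivalently $t(f)\equiv\Omega(f)\pmod 2$ for every $f\in\imm(S^3,\R^5)$.

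Finally I would specialise to $f=\Phi|_{\mathfrak{S}^3}$: on one hand $t(f)\equiv 2|J|-l\pmod 2$ by definition, and on the other $t(f)\equiv\Omega(f)=-C(\Phi)\equiv C(\Phi)\pmod 2$ by the previous step and Theorem~\ref{TH:MAIN}, so comparison yields $C(\Phi)\equiv 2|J|-l\pmod 2$. The only point that requires genuine verification rather than citation is that the total twist of the cross cap is exactly $1$; I expect this to be the main (though mild) obstacle, and it is settled by reading off the local normal form of the Whitney umbrella as above, confirming a single non-trivially covered double point circle.
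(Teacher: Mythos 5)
Your proof is correct and takes essentially the same route as the paper: the paper likewise observes that $2|J|-l \pmod 2$ is the total twist, an additive regular homotopy invariant (homomorphism to $\Z_2$) whose value on the complex Whitney umbrella is $1$, and concludes via Theorem~\ref{TH:MAIN} that it must agree with $C(\Phi) \pmod 2$. Your write-up only makes explicit the bookkeeping ($l=s+2p$, $|J|=s+p$) and the evaluation on the cross cap that the paper leaves implicit.
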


\section{The manifold $ Y $}\labelpar{s:Y}
In the construction of the boundary of the Milnor fibre we will need a special
3--manifold with torus boundary. Its several realizations and properties
will be discussed in this section.

In the sequel $S^1$ (as the boundary of the unit disc of $\C$) and the real interval
$I:=[-1,1]$ are considered with their natural orientations;
 $\bar{\cdot }$ denotes  the complex conjugation of $\C$.

\subsection{The definition  of $Y$}\labelpar{ss:cY} Consider the $\Z_2$--action on $ S^1 \times S^1 \times I$ defined by the involution
 $(x, y, z) \mapsto (-x, \bar{y}, -z)$, 
and define $Y$ as the quotient
\begin{equation}\labelpar{eq:Y}
 Y= \frac{S^1 \times S^1 \times I}{(x,y,z) \sim (-x, \bar{y}, -z)}.
\end{equation}

$Y$ is a $3$--manifold with a boundary diffeomorphic  with $S^1\times S^1$.
The  projections to different  components provide
 different `realizations' of $Y$.

\vspace{2mm}

(1) The projection  to the first coordinate $x$ gives a fibration
\begin{equation}\labelpar{eq:cyl}
 \begin{array}{ccc}
  S^1 \times I & \rightarrow & Y \\
             &              & \downarrow \\
             &              & S^1
 \end{array}
\end{equation}
where the base space $ S^1=S^1/\{x \sim -x\}  $
is parametrized by $ x^2 $,  and the monodromy
diffeomorphism $ S^1 \times I \to S^1 \times I $
over the base space is  $ (y, z)  \mapsto ( \bar{y} , -z) $.

\vspace{2mm}

(2) The projection  to the first two coordinates $(x,y)$ realizes $Y$ as the total space of a fibration
\begin{equation}\labelpar{eq:klein}
 \begin{array}{ccc}
   I & \rightarrow & Y \\
             &              & \downarrow \\
             &              & \mathcal{K}
 \end{array}
\end{equation}
with  fibre $I$ and  base space
$ \mathcal{K} := (S^1 \times S^1)/\{(x,y) \sim (-x, \bar{y})\}$,
the Klein bottle.
The  factorization $S^1\times S^1\to \mathcal{K}$
 is the orientation double cover of $\mathcal{K} $. In particular,
the fibration (\ref{eq:klein})
is the segment bundle of the orientation line bundle of $ \mathcal{K}$,
hence  the orientation double cover of $ \mathcal{K} $ is realized also
 by the restriction of the bundle map to the boundary
$ \partial Y \simeq S^1 \times S^1 $.

\vspace{2mm}

(3) The projection to the $(x, z)$ coordinates realizes $Y$ as the total space of a fibration
\begin{equation}\labelpar{eq:mob}
 \begin{array}{ccc}
  S^1  & \rightarrow & Y \\
             &              & \downarrow \\
             &              & \mathcal{M}
 \end{array}
\end{equation}
over the base space
$ \mathcal{M} := (S^1 \times I)/\{(x,z) \sim (-x, -z)\}$,
the M\"obius band.
In this way, $Y$ appears as the
tangent circle bundle of $\mathcal{M}$, i.e. as
the sub--bundle of the tangent bundle $ T \mathcal{M} $ consisting of unit tangent vectors.
This follows from the fact that both circle bundles have  the same monodromy map
 along the midline of $ \mathcal{M} $, namely $S^1 \to S^1$, $y \mapsto \bar{y}$.

\vspace{2mm}

(4) The projection  to the $(y,z)$ coordinates
realizes $Y$ as the total space of a projection
\begin{equation}\labelpar{eq:seif}
 \begin{array}{ccc}
   S^1 & \rightarrow & Y \\
             &              & \downarrow \\
             &              & D^2
 \end{array}
 \end{equation}
 to the  base space
 $(S^1 \times I)/\{(y,z) \sim (\bar{y}, -z)\}$,
 the 2--disc $ D^2 $. Although the involution $ (y,z) \mapsto (\bar{y}, -z) $ has two fix points $(-1,0) $ and $(1,0)$,  the quotient $D^2$ can be smoothed.
 However, the projection $Y\to D^2$  is not a locally trivial fibration:
 it is a Seifert fibration with two exceptional fibres sitting
  above $(-1,0) $ and $(1,0)$.   We will refer to this $S^1$--fibration
  as the \emph{canonical Seifert fibration} of $ Y$.

 The Seifert invariants of the exceptional fibres can be calculated as in
  p. 307 of \cite{neumann1}.
The two exceptional fibres of the canonical Seifert fibration \eqref{eq:seif}
can be seen in the projection (3) as well: they correspond to the
tangent vectors of the midline. On the other hand, a generic orbit consists of those
unit tangent vectors of $\mathcal{M}$, which form non--zero
angle $\pm \alpha$ (with $\alpha$ fixed) with the  midline.
Thus,  a generic fibre in a neighbourhood of an exceptional fibre
goes around twice and  both  Seifert invariants are $(2,1)$.
Hence, cf. \cite{neumann1}, a plumbing graph of $Y$ is:

\begin{picture}(300,50)(-100,35)
\put(100,60){\circle*{4}}
\put(130,75){\circle*{4}}
\put(130,45){\circle*{4}}
\put(100,60){\line(2,1){30}}
\put(100,60){\line(2,-1){30}}
\put(82,60){\makebox(0,0){$[0,1]$}}
\put(140,75){\makebox(0,0){$-2$}}
\put(140,45){\makebox(0,0){$-2$}}
\end{picture}

\noindent
Here $[0,1]$ denotes a genus $0$ core-space with one disc removed.
The Euler number of
the $S^1$-bundle corresponding to the middle vertex is irrelevant, the resulted
$3$-manifolds with boundary are diffeomorphic with each other
 (hence with $Y$ too).
However,  the restriction of the canonical Seifert fibration to $ \partial Y
\simeq S^1 \times S^1 $ determines an $S^1$--fibration of the boundary.
Moreover, $Y$ admits a  unique  closed Seifert $3$--manifold $ \bar{Y}$,
from which $Y$ can be
 obtained by omitting a tubular neighbourhood of a generic fibre (that is,
 $ \bar{Y}$ is obtained by extending the fibration of $\partial Y$ to an
 $S^1$--fibration {\it without any  new special Seifert fibres}.
In this way there is a
canonical choice for the Euler number of the `middle' vertex, which is the
`middle' Euler number of the plumbing graph of $ \bar{Y}$.
We will calculate it below.
Using this  Euler number, the graph also determines
a parametrization (framing) of $ \partial Y\simeq S^1\times S^1$.

\subsection{Homotopical properties of $ Y $}\label{ss:homY}
In order to understand better the structure of $Y$
 we consider its fundamental domain  (in coordinates $(s,t,z)$):

\begin{picture}(300,160)(0,-5)
\put(100,10){\vector(1,0){100}}
\put(100,10){\vector(0,1){100}}
\put(100,110){\vector(1,0){100}}
\put(200,110){\vector(0,-1){100}}
\put(100,60){\vector(1,0){100}}
\put(160,140){\vector(-2,-1){60}}
\put(260,140){\vector(-2,-1){60}}
\put(160,140){\vector(1,0){100}}
\put(200,10){\vector(2,1){60}}
\put(260,140){\vector(0,-1){100}}
\dashline{2}(160,40)(160,140)
\dashline{2}(160,40)(260,40)
\dashline{2}(100,10)(160,40)
\put(100,60){\vector(2,1){60}}
\put(260,90){\vector(-2,-1){60}}
\put(160,90){\vector(1,0){100}}
\dashline{4}(130,75)(230,75)
\put(130,75){\vector(1,0){100}}

\put(151,35){\vector(2,1){10}}
\put(160,136){\vector(0,1){4}}
\put(256,40){\vector(1,0){4}}

\put(125,90){\makebox(0,0){$ z $}}
\put(140,70){\makebox(0,0){$ s $}}
\put(140,88){\makebox(0,0){$ t $}}
\put(130,75){\vector(1,0){20}}\thicklines
\put(130,75){\vector(2,1){15}}\thicklines
\put(130,75){\vector(0,1){20}}\thicklines
\put(115,72){\makebox(0,0){$ \mu $}}
\put(250,80){\makebox(0,0){$ \mu $}}
\put(172,53){\makebox(0,0){$ \lambda $}}
\put(220,97){\makebox(0,0){$ \lambda $}}
\put(180,81){\makebox(0,0){$ \bar{\lambda} $}}

\put(222,27){\makebox(0,0){$ m' $}}
\put(137,35){\makebox(0,0){$ m $}}
\put(235,120){\makebox(0,0){$ m $}}
\put(144,124){\makebox(0,0){$ m' $}}

\put(162,15){\makebox(0,0){$ c_1 $}}
\put(212,135){\makebox(0,0){$ c_2 $}}
\put(100,60){\circle*{4}}
\put(90,63){\makebox(0,0){$ P_0 $}}
\end{picture}

\noindent
where $ s \in [0, \pi] $, $ t \in [- \pi, \pi ] $ and $ z \in I= [-1, 1] $. The original coordinates  are $x= e^{si} $ and $ y= e^{ti} $. Then $Y$ is obtained  by the following  identification of the sides of the cube:
\begin{equation*}
 (0, t, z) \sim ( \pi, -t, -z) \mbox{ and } (s, -\pi, z) \sim (s, \pi, z) \mbox{.}
\end{equation*}
The boundary is
\begin{equation*}
 \partial Y = \frac{[0, \pi] \times [- \pi, \pi ] \times \{-1, 1 \}}{(0,t,\pm 1)
 \sim (\pi, -t, \mp 1), \ (s, -\pi, \pm 1 ) \sim (s, \pi, \pm 1)} \simeq S^1 \times S^1 \mbox{.}
\end{equation*}
%
%
The $ S^1 $-action determining the canonical Seifert fibration (\ref{eq:seif}) is
induced by the translation along the $s$--axis. The exceptional fibres are
\begin{equation*}
 \lambda = \{(s, -\pi, 0) \ | \ s \in [0, \pi] \} = \{(s, \pi, 0) \ | \ s \in [0, \pi] \} \ \mbox{ and} \
 \bar{\lambda} = \{(s, 0, 0) \ | \ s \in [0, \pi] \}  \mbox{.}
\end{equation*}
Any fixed $(t,z) \notin \{ (0, 0), (\pm \pi, 0) \} $ determines a generic fibre in the form
\begin{equation*}
 \{(s, t, z) \ | \ s \in [0, \pi] \} \cup \{ (s, -t, -z) \ | \ s \in [0, \pi ] \} \mbox{,}
\end{equation*}
where $ (0, \pm t, \pm z) $ are glued together with $(\pi, \mp t,  \mp z) $.
For example, $c= c_1 \cup c_2 \subset \partial Y$ is a generic fibre.

The base space $D^2$ of the canonical Seifert-fibration can be represented as

\begin{equation*}
 \frac{\{0 \} \times [- \pi, \pi ] \times [-1, 0]}{(0, - \pi, z)
 \sim (0, \pi, z) \mbox{ , } (0, t, 0) \sim (0, -t, 0)} \mbox{.}
\end{equation*}
Its boundary is the class of $m$, a circle.

Next, we describe the fundamental group and the homology of $ Y $. 
$ Y $ is homotopically equivalent with the Klein bottle $ \mathcal{K} $.
Let us choose the base point $ P_0=(0,-\pi, 0) $. All four vertexes of
the rectangle representing $ \mathcal{K} $ represent $ P_0$.
Thus the fundamental group of $ Y $ can be presented as
 \begin{equation}\labelpar{eq:fundY}
  \pi_1 (Y) = \langle \mu, \lambda \ | 	\  \mu \cdot \lambda \cdot \mu   = \lambda \rangle \mbox{,}
  \end{equation}
 where $ \mu $ and $ \lambda $ denote also the class of $ \mu $ and $ \lambda $ in $ \pi_1 (Y) = \pi_1 ( \mathcal{K})$;
 cf. with the description (2) from \ref{ss:cY}). A more precise description can be given via the next diagrams,
 provided by the $\{z=0\}$ subspace of $Y$ (which can be identified with ${\mathcal K}$).

\begin{picture}(300,90)(0,-5)
\put(0,60){\vector(1,0){100}}
\put(0,10){\vector(0,1){50}}
\put(0,10){\vector(1,0){100}}
\put(100,60){\vector(0,-1){50}}
\put(50,70){\makebox(0,0){$ \lambda $}}
\put(50,0){\makebox(0,0){$ \lambda $}}
\put(-10,35){\makebox(0,0){$ \mu $}}
\put(110,35){\makebox(0,0){$ \mu $}}
\put(0,35){\vector(1,0){100}}
\put(50,45){\makebox(0,0){$ \bar{ \lambda} $}}

\put(150,60){\vector(1,0){100}}
\put(150,10){\vector(0,1){50}}
\put(150,10){\vector(1,0){100}}
\put(250,60){\vector(0,-1){50}}
\put(200,70){\makebox(0,0){$ \lambda $}}
\put(200,0){\makebox(0,0){$ \lambda $}}
\put(140,35){\makebox(0,0){$ \mu $}}
\put(260,35){\makebox(0,0){$ \mu $}}
\dashline{2}(152,35)(248,35)
\dashline{2}(248,35)(248,60)
\dashline{2}(152,10)(152,35)
\put(248,56){\vector(0,1){4}}
\put(200,45){\makebox(0,0){$ \bar{ \lambda} $}}

\put(300,60){\vector(1,0){100}}
\put(300,10){\vector(0,1){50}}
\put(300,10){\vector(1,0){100}}
\put(400,10){\vector(0,1){50}}
\put(350,70){\makebox(0,0){$ \bar{\lambda} $}}
\put(350,0){\makebox(0,0){$ \lambda $}}
\put(290,35){\makebox(0,0){$ \bar{\lambda} $}}
\put(410,35){\makebox(0,0){$ \lambda $}}
\dashline{2}(300,60)(400,10)
\put(390,15){\vector(2,-1){10}}
\put(355,40){\makebox(0,0){$ \mu $}}
\end{picture}

\noindent
The first diagram shows homological cycles.
 In order to rewrite the fundamental group,
 let $ \bar{ \lambda} $ be the closed path  shown in the second diagram
  by  the dashed line. Then $ \bar{\lambda} = \mu \cdot  \lambda $ in $ \pi_1 ( Y) $. Note that
 $ \lambda^2 = \mu \cdot \lambda \cdot \mu \cdot \lambda =  \lambda \cdot \mu \cdot \lambda \cdot \mu $,
 thus $ \lambda^2 = \bar{\lambda}^2 = \mu^{-1} \cdot \lambda^2 \cdot \mu $
 and this element  commutes with $ \mu $.
The fundamental group can be also presented as
 \begin{equation}\labelpar{eq:fundY2}
  \pi_1 (Y) = \langle \lambda, \bar{\lambda} \ | 	\  \bar{\lambda}^2    = \lambda^2 \rangle \mbox{,}
 \end{equation}
according to the third  picture above.


 On the other hand, the fundamental group of the boundary is
\begin{equation}\labelpar{eq:pY}  \pi_1 ( \partial Y ) = H_1 (\partial Y , \Z ) \cong \Z \langle m \rangle
\oplus \Z \langle c \rangle \mbox{.} \end{equation}
The $ \partial Y \simeq S^1\times S^1 \hookrightarrow Y $ embedding (which is homotopically the same as the orientation covering
$S^1\times S^1\to \mathcal{K} $) induces a monomorphism $ \pi_1 (\partial Y) \to \pi_1 (Y) $. It is determined by the images of the generators, which are
\begin{equation}\label{eq:cmu}
 m \mapsto \mu \mbox{ and } c \mapsto \lambda^2= \bar{ \lambda}^2 \mbox{.}
\end{equation}
A direct computation shows that $ [ \pi_1 (Y) , \pi_1(Y) ] = \Z \langle \mu^2 \rangle $ and
\[
 H_1 (Y, \Z ) \cong \Z \langle \lambda \rangle \oplus \Z_2 \langle \mu \rangle \mbox{.}
\]
Note that $m=m'$ in $H_1(\partial Y,\Z)$, and analysing  $\{s=0\}\subset Y$ one obtains that  $m=-m'$ in $H_1(Y,\Z)$.
Hence the class of $m$ in $H_1(Y,\Z)$ has order 2, it is exactly $\mu$.

The next Lemma shows that the classes $m$ and $c$ in $H_1(\partial Y,\Z)$ have certain universal
properties with respect to the inclusion $\partial Y\subset Y$.

\begin{lem}\label{lem:UNIV}
(a) $\pm m$ are the unique primitive elements of $H_1(\partial Y,\Z)$
with the property that their doubles vanish in $H_1(Y,\Z)$.

(b) $\pm c$ are the unique primitive elements of $H_1(\partial Y,\Z)=\pi_1(\partial Y)$
whose images in $\pi_1(Y)$ are in the center of $\pi_1(Y)$.
\end{lem}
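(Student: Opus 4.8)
The plan is to reduce both statements to one structural fact, namely an identification of the center of $\pi_1(Y)$, and then to read off the rest from the group data already computed: the presentations \eqref{eq:fundY} and \eqref{eq:fundY2} of $\pi_1(Y)$, the groups $H_1(Y,\Z)\cong\Z\langle\lambda\rangle\oplus\Z_2\langle\mu\rangle$ and $\pi_1(\partial Y)=H_1(\partial Y,\Z)\cong\Z\langle m\rangle\oplus\Z\langle c\rangle$, and the inclusion-induced formulas $m\mapsto\mu$, $c\mapsto\lambda^2=\bar\lambda^2$ from \eqref{eq:cmu}. The claim I would establish first is that $Z(\pi_1(Y))$ is the infinite cyclic subgroup generated by $\lambda^2=\bar\lambda^2$, i.e. exactly the image of $c$.

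To prove this I would use presentation \eqref{eq:fundY2}, $\pi_1(Y)=\langle\lambda,\bar\lambda\mid\bar\lambda^2=\lambda^2\rangle$. That $\lambda^2$ is central is immediate: it commutes with $\lambda$, and $\bar\lambda\cdot\lambda^2=\bar\lambda\cdot\bar\lambda^2=\bar\lambda^2\cdot\bar\lambda=\lambda^2\cdot\bar\lambda$. For the converse I quotient by the central subgroup $\langle\lambda^2\rangle$: imposing $\lambda^2=\bar\lambda^2=1$ turns \eqref{eq:fundY2} into $\langle\lambda,\bar\lambda\mid\lambda^2=\bar\lambda^2=1\rangle\cong\Z_2*\Z_2=D_\infty$. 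Writing $\lambda,\bar\lambda$ also for their order-$2$ images, the translation $\bar\lambda\lambda$ has infinite order, and conjugation by $\lambda$ sends it to its inverse, so $D_\infty$ has trivial center. Hence any central element of $\pi_1(Y)$ maps to $1\in D_\infty$ and therefore lies in $\langle\lambda^2\rangle$, giving $Z(\pi_1(Y))=\langle\lambda^2\rangle$.

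Part (b) then follows at once. Since $\mu$ and $\lambda^2$ commute (the latter being central), the inclusion sends a general class $am+bc\in\pi_1(\partial Y)$ to $\mu^a\lambda^{2b}$. Under the projection $\pi_1(Y)\to D_\infty$ the image of $\mu=\bar\lambda\lambda^{-1}$ is the infinite-order translation $\bar\lambda\lambda$, while $\lambda^2\mapsto1$, so $am+bc\mapsto(\bar\lambda\lambda)^{a}$. By the center claim $\mu^a\lambda^{2b}$ is central iff this image is trivial, i.e. iff $a=0$; among the elements $bc$ the primitive ones are precisely $\pm c$. Part (a) is the parallel homological computation: abelianizing $c\mapsto\lambda^2$ gives $H_1(\partial Y,\Z)\to H_1(Y,\Z)$, $m\mapsto\mu$, $c\mapsto2\lambda$, so the image of the double of $\alpha=am+bc$ is $2(a\mu+2b\lambda)=4b\lambda$ using $2\mu=0$; since $\lambda$ has infinite order in $H_1(Y,\Z)$ this vanishes iff $b=0$, and the primitive elements of the form $am$ are exactly $\pm m$.

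The only step needing real care is the center claim, specifically the identification of the quotient with $D_\infty$ and the verification that $D_\infty$ is centerless; everything after that is bookkeeping with the already-established group data. I would present the center computation as the core of the lemma, noting that it simultaneously yields the existence half of (b) (the image of $c$ \emph{is} central) and the uniqueness half (nothing else primitive is).
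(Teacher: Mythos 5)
Your proof is correct, and it shares the paper's overall architecture: everything reduces to the identification $Z(\pi_1(Y))=\langle\lambda^2\rangle$, after which (a) is the abelianized bookkeeping (which the paper simply declares clear) and (b) follows from \eqref{eq:cmu}. Where you differ is in how that center claim is verified. The paper works with the Klein-bottle presentation \eqref{eq:fundY}: it records that every element of $\pi_1(Y)$ has the normal form $\lambda^k\mu^l$ and then checks against the relation $\mu\lambda\mu=\lambda$ that such an element commutes with both generators exactly when $l=0$ and $k$ is even. You instead use \eqref{eq:fundY2}: after observing that $\lambda^2=\bar\lambda^2$ is central, you pass to the quotient $\pi_1(Y)/\langle\lambda^2\rangle\cong\Z_2*\Z_2=D_\infty$, note that $D_\infty$ is centerless, and conclude -- since central elements have central images under surjections -- that the center is contained in, hence equal to, $\langle\lambda^2\rangle$. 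The two verifications cost about the same; yours trades the normal-form lemma for recognition of the infinite dihedral quotient, which has the mild advantage of making the containment $Z(\pi_1(Y))\subseteq\langle\lambda^2\rangle$ a structural consequence rather than a case check, and it dovetails with your treatment of (b), where the same projection to $D_\infty$ detects the exponent of $\mu$ and so isolates the multiples of $c$ in one stroke.
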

\begin{proof} (a) is clear. For (b) first note
 that any element of $\pi_1(Y)$ can be written in the form $\lambda^k\mu^l$ for some $k,l\in\Z$, and then using this one
 verifies that
 the center of $\pi_1(Y)$ is $\langle \lambda^2 \rangle$.
\end{proof}
\subsection{Homological properties of  $\bar{Y}$}\label{ss:bary} The closed Seifert
3--manifold  $\bar{Y}$ considered in \ref{ss:cY}(4) is constructed  as follows.
First we consider a new disc $D_{new}^2$ and the trivial fibration $D_{new}^2\times S^1$.
Then we paste  $m$ with the boundary of $D_{new}^2$
 and we extend the canonical Seifert--fibration of $Y$ above this  disc (as base space)
of the  trivial fibration $D_{new}^2\times S^1$. This leads to the Seifert fibred closed manifold
\begin{equation}\labelpar{eq:barY}
 \bar{Y} = \frac{Y \cup
  (D_{new}^2 \times S^1)}{\partial Y\sim \partial D_{new}^2 \times S^1,\  m\sim \partial D_{new}^2 \times *, \
  c\sim *\times  S^1 } \mbox{.}
\end{equation}
$ H_1(\bar{Y},\Z) $ can be determined by  the Mayer-Vietoris sequence of the decomposition (\ref{eq:barY}):
\[
 \begin{array}{ccccccc}
  H_1( \partial Y, \Z) & \to & H_1 (Y, \Z) \oplus H_1 (D_{new}^2 \times S^1, \Z) & \to & H_1 ( \bar{Y}, \Z) & \to & 0 \\
  \|            &     & \parallel                                   &   &  \parallel         &      &  \\
  \Z \langle m \rangle \oplus \Z \langle c \rangle & \to & \Z \langle \lambda \rangle \oplus \Z_2 \langle \mu \rangle \oplus \Z \langle c' \rangle & \to & H_1 ( \bar{Y}, \Z) & \to & 0 \\
 \end{array}
\]
where $ m \mapsto \mu$ and $c\mapsto  2 \lambda + c'$.
Thus \begin{equation}\label{eq:Z}
 H_1 ( \bar{Y}, \Z) \cong \Z \langle \lambda \rangle .\end{equation}

\subsection{A plumbing graphs of $Y$ and $\bar{Y}$}\label{ss:plY}
By \cite{neumann2},  $ \bar{Y} $ has a plumbing graph $ G $ of the form

 \begin{picture}(300,50)(-100,35)
 \put(100,60){\circle*{4}}
 \put(130,75){\circle*{4}}
 \put(130,45){\circle*{4}}
 \put(100,60){\line(2,1){30}}
 \put(100,60){\line(2,-1){30}}
 \put(90,60){\makebox(0,0){$ e $}}
 \put(145,75){\makebox(0,0){$-2$}}
 \put(145,45){\makebox(0,0){$-2$}}
 \end{picture}

\noindent
(see also the discussion from \ref{ss:cY}(4))
and the Euler number $e$ should be chosen such that $ H_1 (M^3(G), \Z ) \cong \Z$
(cf. (\ref{eq:Z})).
Here $ M^3(G)  $ denotes the plumbed $3$--manifold associated with the graph $G$, it is the boundary of $ M^4 (G) $, the plumbed $4$--manifold associated with $G$. 
This (via the long cohomological exact sequence of the pair
$(M^4(G),M^3(G))$) imposes the degeneracy of the intersection matrix of the plumbing (cf. e.g.
 \cite[15.1.3]{NSz}).  Hence $e=-1$.
 In particular, $\bar{Y}$ (without its Seifert fibration structure) is diffeomorphic to
 $S^1\times S^2$ (which also shows that $\bar{Y}$ admits an orientation reversing diffeomorphism).

 Furthermore,  consider the graph

 \begin{picture}(300,50)(-100,35)
 \put(100,60){\circle*{4}}
 \put(100,60){\vector(-1,0){34}}
 \put(130,75){\circle*{4}}
 \put(130,45){\circle*{4}}
 \put(100,60){\line(2,1){30}}
 \put(100,60){\line(2,-1){30}}
 \put(95,70){\makebox(0,0){$-1$}}
 \put(145,75){\makebox(0,0){$-2$}}
 \put(145,45){\makebox(0,0){$-2$}}
 \end{picture}

\noindent
The arrow denotes a knot $ K \simeq S^1 \subset  \bar{Y} $, which is a generic
 $S^1$--fibre associated with the middle vertex by the plumbing construction of $\bar{Y}$. 
 Let $ N(K)^\circ  $ be an open  tubular neighbourhood of $ K $ in $ \bar{Y} $.
 Then
%
$ Y \simeq \bar{Y} \setminus N(K)^\circ  $, and
the induced (singular/Seifert) $S^1$--fibration associated with the
 middle vertex (by the plumbing construction) on $Y$ agrees with the canonical Seifert fibration of $Y$.
The Euler number $-1$ of the middle vertex 
determines a parametrization (framing)
of $ \partial Y \simeq S^1 \times S^1 $.

We can present $ \pi_1 (Y) $ also from the plumbing graph using the description of \cite{mumford}. Let $ \lambda $, $ \bar{ \lambda} $ and $c$ be
oriented $S^1$--fibres associated with the three vertices provided by the plumbing construction (and extended by convenient
connecting paths to  a base point as in \cite{mumford}).
Next, let $m$ be the meridian of $K$  corresponding to the arrowhead
(and extended by a convenient path to the base point).

\begin{picture}(300,65)(-100,26)
\put(100,60){\circle*{4}}
\put(100,60){\vector(-1,0){34}}
\put(130,75){\circle*{4}}
\put(130,45){\circle*{4}}
\put(100,60){\line(2,1){30}}
\put(100,60){\line(2,-1){30}}
\put(95,70){\makebox(0,0){$-1$}}
\put(145,75){\makebox(0,0){$-2$}}
\put(145,45){\makebox(0,0){$-2$}}
\put(95,50){\makebox(0,0){$c$}}
\put(130,85){\makebox(0,0){$ \lambda $}}
\put(130,35){\makebox(0,0){$ \bar{ \lambda} $}}
\put(66,50){\makebox(0,0){$m$}}
\end{picture}

\noindent
Then, by \cite{mumford},  there is a choice of the connecting paths such that
$ \pi_1 (Y) $ is generated by $\lambda, \ \bar{\lambda}, \ c$ and $m$, and they satisfies the relations $\lambda^2=\bar{\lambda}^2=c$, and $c=m\lambda\bar{\lambda}$.
This is compatible with the description from Subsection \ref{ss:homY},
cf. (\ref{eq:fundY}) and (\ref{eq:cmu}).

Note that by plumbing calculus (cf. \cite{neumann1})
one has the equivalence of plumbed manifolds (where $\widetilde{\Gamma}$
is any graph):

\begin{picture}(300,60)(-30,30)
\put(20,40){\framebox(50,40)}
\put(40,60){\makebox(0,0){$\widetilde{\Gamma}$}}
 \put(100,60){\circle*{4}}
 \put(100,60){\line(-1,0){34}}
 \put(130,75){\circle*{4}}
 \put(130,45){\circle*{4}}
 \put(100,60){\line(2,1){30}}
 \put(100,60){\line(2,-1){30}}
 \put(95,70){\makebox(0,0){$-1$}}
 \put(145,75){\makebox(0,0){$-2$}}
 \put(145,45){\makebox(0,0){$-2$}}

 \put(220,40){\framebox(50,40)}
\put(240,60){\makebox(0,0){$\widetilde{\Gamma}$}}
 \put(300,60){\circle*{4}}
 \put(300,60){\line(-1,0){34}}
 \put(330,75){\circle*{4}}
 \put(330,45){\circle*{4}}
 \put(300,60){\line(2,1){30}}
 \put(300,60){\line(2,-1){30}}
 \put(295,70){\makebox(0,0){$1$}}
 \put(345,75){\makebox(0,0){$2$}}
 \put(345,45){\makebox(0,0){$2$}}

 \put(185,60){\makebox(0,0){$\simeq$}}
 \end{picture}

Hence, $Y$ or $-Y$ spliced along $K$ to any 3--manifold
give rise to  diffeomorphic manifolds.

%

 \section{The boundary of the Milnor fibre}\label{s:boundmiln}

\subsection{} Let $ F = f^{-1} ( \delta ) \cap B^6_{ \epsilon} $ be the
Milnor fibre of $ f $, where $ \delta \in \C^* $, $| \delta | \ll \epsilon $.
We wish to construct the $3$--manifold $ \partial F = f^{-1} ( \delta )
\cap S^5_{ \epsilon} $ as a surgery of $ S^3 $ along the link $ L $.

Let $ N_i $ be a sufficiently small
tubular neighbourhood of $ L_i $ in $ S^3 $.
For each $j=\{i, \sigma(i) \} $ we define $X_j $ as
\begin{equation}\labelpar{eq:xj}
 X_j =   \left\{ \begin{array}{ccc} S^1\times S^1 \times I & \mbox{ if} &  i \neq \sigma(i), \\
                  Y & \mbox{ if} &  i = \sigma(i), \\
                 \end{array} \right.
                 \end{equation}
where $ Y $ is the $3$--manifold described in Section \ref{s:Y}. Recall
 that $ \partial Y \simeq S^1\times S^1$.

\begin{prop}\label{pr:gl} One has an orientation preserving diffeomorphism
 \begin{equation}\labelpar{eq:mb}  \partial F \simeq \left(\mathfrak{S}^3 \setminus
 \bigcup_{i=1}^l {\rm int}(N_i)  \right)\cup_{ \phi} \left(\bigcup_{j \in J} X_j \right) \mbox{,} \end{equation}
 where $ \phi : \partial (\mathfrak{S}^3\setminus \cup_i\,{\rm int}(N_i))\to
  -\partial (\cup_{j\in J}X_j)$ is a collection $( \phi_j)_{j \in J}$ of diffeomorphisms
 \[ \phi_{\{i, \sigma(i)\} }: \left\{ \begin{array}{ccc} -\partial N_i \cup -\partial N_{\sigma(i)}
  \to -\partial
 (S^1\times S^1 \times I)  & \mbox{if} & i \neq \sigma(i), \\
- \partial N_i \to -\partial Y \hspace{1cm} & \mbox{if} & i = \sigma(i). \\
                                \end{array} \right.    \]
\end{prop}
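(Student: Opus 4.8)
The plan is to realize $\partial F$ via the general two--piece decomposition of the boundary of the Milnor fibre of a non--isolated hypersurface singularity (cf. \cite{NSz,Si}), and then to identify each piece concretely using that the transverse type is $A_1$ and that the normalization of $(X,0)$ is $\Phi$ itself. First I would fix the Milnor data $0<|\delta|\ll\epsilon$, set $\partial F=f^{-1}(\delta)\cap S^5_\epsilon$, and choose a closed tubular neighbourhood $N(\Sigma)=\bigsqcup_{j\in J}N(\Sigma_j)$ of the link $\Upsilon=\Sigma\cap S^5_\epsilon$ in $S^5_\epsilon$, with $N^o(\Sigma)$ its interior. This splits $\partial F$ as the union of $\partial F\setminus N^o(\Sigma)$ and $\partial F\cap N(\Sigma)$, glued along $\partial F\cap\partial N(\Sigma)$, which is a disjoint union of tori. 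The whole argument then reduces to identifying these two pieces together with the gluing.

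For the exterior piece, away from $\Sigma$ the level set $f^{-1}(\delta)$ is transverse to $X$, and $\partial F\setminus N^o(\Sigma)$ is isotopic in $S^5_\epsilon\setminus N^o(\Sigma)$ to the link $K\setminus N^o(\Sigma)$, exactly as on the smooth part of an isolated singularity. Since $\Phi$ is the normalization of $(X,0)$ and the normalization $(\C^2,0)$ is smooth (cf. Example~\ref{ex:findetnormal}), the normalization map restricts to a diffeomorphism $\mathfrak{S}^3\setminus\Phi^{-1}(N^o(\Sigma))\to K\setminus N^o(\Sigma)$. By the pairing description of \ref{ss:compllinks}, $\Phi^{-1}(N(\Sigma))=\bigcup_{i=1}^l N_i$, where over a trivially covered component $\Upsilon_j$ ($i\ne\sigma(i)$) the preimage is the two tubes $N_i\sqcup N_{\sigma(i)}$, while over a nontrivially covered one ($i=\sigma(i)$) it is the single tube $N_i$ mapping as a connected double cover. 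Hence $\partial F\setminus N^o(\Sigma)\simeq\mathfrak{S}^3\setminus\bigcup_{i=1}^l\mathrm{int}(N_i)$, which is the first factor in \eqref{eq:mb}.

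For the tubular piece, I would project $\partial F\cap N(\Sigma_j)$ to $\Upsilon_j\simeq S^1$. By the transverse--slice description the fibre is the Milnor fibre of the transverse singularity, which here is of type $A_1$, hence the annulus $S^1\times I$ of Example~\ref{ex:A1planecurve}; thus $\partial F\cap N(\Sigma_j)$ is an $(S^1\times I)$--bundle over $S^1$. The crux is its vertical monodromy, which I would read off from the covering $\Phi|_{L_i}\colon L_i\to\Upsilon_j$. When $i\ne\sigma(i)$ the two ends of the transverse annulus lie on the two distinct branches $L_i,L_{\sigma(i)}$, so one full loop around $\Upsilon_j$ returns each end to itself; the monodromy therefore preserves the two boundary circles and the bundle is, orientation--preservingly, the trivial one $S^1\times S^1\times I$ (any residual Dehn twist of the annulus affects only the boundary framing, recorded in the later sections). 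When $i=\sigma(i)$ the map $\Phi|_{L_i}$ is a connected double cover, so traversing $\Upsilon_j$ once is a half--loop of $L_i$: the two ends of the transverse annulus are interchanged while its circle direction is reflected, i.e. the monodromy is $(y,z)\mapsto(\bar y,-z)$. This is precisely realization \eqref{eq:cyl} of the manifold $Y$ of \ref{ss:cY}; hence $\partial F\cap N(\Sigma_j)\simeq Y$, yielding the $X_j$ of \eqref{eq:xj}.

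Finally I would assemble the gluing $\phi=(\phi_j)_{j\in J}$ along the boundary tori, choosing the identifications so that the complex orientation of $\partial F$ is recovered; this forces $\phi$ to map onto the reversed boundary $-\partial(\bigcup_{j\in J} X_j)$, as stated. I expect the main obstacle to be the case $i=\sigma(i)$: rigorously matching the geometric vertical monodromy of the $A_1$--transverse family along a nontrivially covered component with the defining involution of $Y$ in \eqref{eq:Y}, and in particular showing that the branch--swap induced by the double cover $L_i\to\Upsilon_j$ corresponds exactly to the end--reflection $z\mapsto -z$ together with the complex conjugation $y\mapsto\bar y$ of the transverse annulus. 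The precise gluing maps (the framings) are not pinned down at this stage and are deferred to the homological and vertical--index computations of the subsequent sections.
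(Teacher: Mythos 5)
Your proposal is correct and follows essentially the same route as the paper's own proof: the two-piece decomposition of $\partial F$ from \cite{Si} and \cite[2.3]{NSz}, identification of the exterior piece with $\mathfrak{S}^3\setminus\bigcup_i{\rm int}(N_i)$ via $\Phi$ (using transversality of $\partial N(\Upsilon_j)$ with $K$ and then with $\partial F$), and identification of each tube $\partial F\cap N(\Upsilon_j)$ as an $(S^1\times I)$-bundle over $\Upsilon_j$ whose geometric vertical monodromy is trivial when $i\neq\sigma(i)$ and is the reflection $(\alpha,t)\mapsto(\bar\alpha,-t)$ when $i=\sigma(i)$, giving $S^1\times S^1\times I$ or $Y$ respectively via realization \eqref{eq:cyl}. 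The step you flag as the main obstacle is handled in the paper exactly as you suggest: an orientation-preserving self-diffeomorphism of the annulus permuting the two boundary circles must reverse both the $I$- and $S^1$-directions, hence is isotopic to the stated reflection, and the deferral of the precise framings to the homological/vertical-index sections matches the paper's organization.
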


\begin{proof} The decomposition follows from the general decomposition proved in \cite{Si}, see also
 \cite[2.3]{NSz}. For the convenience of the reader we sketch the construction.
 Recall that $\Upsilon_j\subset S^5_\epsilon$ is the link of the component $\Sigma_j$ of $\Sigma$,
 $j=\{i,\sigma(i)\}\in J$.
 Consider a sufficiently small tubular  neighbourhood $N(\Upsilon_j)$ of it in $S^5_\epsilon$.
 We can assume that $\Phi^{-1}(N(\Upsilon_j))=N_i\cup N_{\sigma(i)}$. Furthermore, for $\epsilon$ small,
 the intersection of $\partial N(\Upsilon_j)$ with $K:=X\cap S^5_\epsilon$ is transverse.
 Therefore, for $0<|\delta|\ll \epsilon$, the intersection of $\partial N(\Upsilon_j)$ with $\partial F$ is
 still transverse in $S^5_\epsilon$, and, in fact, $K\setminus \cup_j N(\Upsilon_j)$ is diffeomorphic with
 $\partial F\setminus
 \cup_j N(\Upsilon_j)$. But, the former space can be identified via $\Phi$ by $\mathfrak{S}^3\setminus \cup_i
 (N_i\cup N_{\sigma(i)})$.
 This is the space in the first parenthesis of (\ref{eq:mb}).

 The second one  is a union of spaces of type $X_j:=\partial F\cap N(\Upsilon_j)$,
 which fibres over $\Upsilon_j\simeq S^1$.  The fibre of the fibration is the
 Milnor fibre of the corresponding transverse plane curve singularity (of $\Sigma_j$).
 Since the transverse type is $A_1$, this fibre is $F_j:=S^1\times I$. The monodromy of
 the fibration is the so-called {\it geometric vertical monodromy } of the transverse type, it is orientation
 preserving  self-diffeomorphism of $ S^1 \times I $. If it does not permute
 the two components of $\partial F_j$ then it preserves the orientation of
 $I$, hence of $S^1$ too, hence up to isotopy it is the identity. If it
 permutes the components of $\partial F_j$ then
up to isotopy it is
 $ (\alpha, t) \to (\bar{ \alpha}, - t)$, where $ (\alpha, t ) \in S^1 \times I $.
The two types of vertical monodromies provide the two choices of  $ X_j $ in formula (\ref{eq:xj}), cf. description (1) of $Y$  in
Subsection \ref{ss:cY}.
\end{proof}

\subsection{Preliminary discussion regarding the gluing}\label{ss:gluinguj}
Our next aim is to describe the gluing functions $ \phi_j $. In both cases two tori must be glued:  if $ i\neq\sigma(i)$
then basically one should identify  $ \partial N_i $ and $ -\partial N_{\sigma (i)}$,
 otherwise $\partial Y$ and $\partial N_i$.
Up to diffeotopy an orientation reversing diffeomorphism between tori is given by an invertible
 $2 \times 2 $ matrix over $ \Z $ with determinant $-1$.
 It turns out that in our cases all these gluing matrices have the form
\begin{equation}\label{eq:glue}
\left( \begin{array}{cc}
  -1 & n_j\\
  0 & 1 \\
 \end{array} \right)
\end{equation}
 \noindent hence its only relevant entry is
 the off--diagonal one.

 This integer will be determined by a newly
 introduced invariant, the {\it vertical index},  associated with each
$j\in J$.
 This is done using a special germ $H:(\C^3,0)\to (\C,0)$,
which will have a double role.
First, it provides some kind of framing along $\Sigma\setminus \{0\}$,
and also helps to
identify generators from the boundaries of  $K\setminus \cup_j N(\Upsilon_j)$ and  $\mathfrak{S}^3\setminus \cup_i (N_i\cup N_{\sigma(i)})$
respectively (constructed in two different levels: in the target and in the source of $\Phi$).

We will use three parametrizations of $ \partial N_i \simeq S^1 \times S^1 $ with the same meridian but different longitudes.
The topological longitude is the usual knot--theoretical Seifert--framing of
 $ L_i \subset S^3 $. The \emph{resolution longitude}
 is determined via  a good embedded resolution of $ (D, 0) \subset ( \C^2, 0 ) $,
  it creates the bridge with the decorations and the combinatorics of
  the resolution graph. Finally,
the \emph{sectional longitude} depends on $ H$ and it will be used for describing the gluing. In fact, the sectional longitude allows us to compare the source and the target of $ \Phi $: being defined by the geometry of $H$, the function $H$ and
its pull-back $ \Phi^* H $  plays the role of transportation of the invariants from
$\C^3 $ level to $\C^2$ level.

\subsection{The local form of $f$ along $ \Sigma $}\label{ss:local}
In the sequel we will use the notation $\Sigma^*=\Sigma\setminus \{0\}$ and $\Sigma^*_j=\Sigma_j\setminus \{0\}$.
 Recall that in a small neighbourhood (in $\C^3$) of any point $p\in \Sigma^*$ the
 space $(X,0)$ has two local
 components, both smooth and  intersecting each other transversally.

A more precise local description along $\Sigma^*$  is the following.
Let us fix a point $p_0\in \Sigma_j^*$ and let $U_0$ be a small
neighbourhood of $p_0$ in $\C^3$. In $U_0$ the function $f$ is a product $f_1\cdot f_2$,
where both $f_n$ are holomorphic, $\{f_n=0\}$ are smooth
 and intersect each other transversally.
 (The intersection is $\Sigma^*\cap U_0$; later the fact that at $p_0$ the local parametrization  of
  $\Sigma^*\cap U_0$ together with $f_1$ and $f_2$ might serve as local coordinates will be exploited further.)

 $f_1$ and $f_2$  are well--defined up to a multiplication
 by an invertible holomorphic  function $\iota$ of
 $U_0$; that is, $(f_1,f_2)$ can be replaced by $(\iota  f_1, \iota^{-1}f_2)$.
 At any point $p\in \Sigma^*_j\cap U_0$ the linear term of $f_n$, $n\in\{1,2\}$,
(say, in the Taylor expansion) is $T_1(f_n)= \sum _{k=1}^3 u_{nk}(p)(x_k-p_k)$, where
$(x_1,x_2,x_3)$ are the fixed coordinates of $(\C^3,0)$.
Let us code this in the non--zero vectors $u_n(p):=(u_{n1}(p),u_{n2}(p), u_{n3}(p))$.
Hence, at any $p\in \Sigma^*_j\cap U_0$ we have two vectors
$u_1(p)$ and $u_{2}(p)$ well--defined up to
multiplication by $\iota|_{\Sigma_j^*\cap U_0}$ (in the sense described above).
Their classes $ [u_n(p)]\in
\C \mathbb {P} ^2$ for $p\in \Sigma^*_j\cap U_0 $
 are independent of the $\iota$--ambiguity, hence are well--defined elements.
In particular, they determine a global pair of elements
$[u_1(p)]$ and  $[u_2(p)]\in
\C \mathbb {P} ^2$ for $p\in \Sigma^*_j $, well--defined whenever $i\not=\sigma(i)$, and
well--defined up to permutation whenever $i=\sigma(i)$.

In fact, we can do even more: there exists a splitting of $f$ into product $f_1\cdot f_2$ along $\Sigma^*_j$ without any invertible element ambiguity (but preserving the
permutation ambiguity whenever $i=\sigma(i)$).

Indeed, assume that we are in the trivial covering ($i\not=\sigma(i)$) case, and
let us cover $\Sigma_j^*$ by small discs $\{U_\alpha\}_\alpha$ such that
on each $U_{\alpha}$ we can fix a splitting $f(p)=f_{1,\alpha}(p)
\cdot f_{2,\alpha}(p)$, $p\in U_\alpha$. For any intersection $U_{\alpha\beta}=
U_{\alpha}\cap U_{\beta}$ the two splittings can be compared: we define
$\iota_{\alpha\beta}\in \calO(U_{\alpha\beta})^*$ by $f_{1,\alpha}|_{U_{\alpha\beta}}=\iota_{\alpha\beta}\cdot
f_{1,\beta}|_{U_{\alpha\beta}}$. From this definition follows that
$\{\iota_{\alpha\beta}\}_{\alpha, \beta}$ form a \v{C}ech 1--cocycle.
\begin{lem}
$H^1(\Sigma^*_j,\calO_{\Sigma^*_j}^*)=0$.
\end{lem}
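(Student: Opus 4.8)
The plan is to exploit that $\Sigma^*_j$ is an open Riemann surface and to apply the exponential sheaf sequence. First I would record the two structural facts about $\Sigma^*_j$ that the whole argument rests on. Since $\Sigma_j$ is a reduced irreducible curve germ, its singular locus is concentrated at the origin, so $\Sigma^*_j=\Sigma_j\setminus\{0\}$ is a smooth $1$--dimensional complex manifold. Because $\Sigma_j$ is irreducible, its normalization $n\colon(\tilde{\Sigma}_j,0)\to(\Sigma_j,0)$ has a smooth source, is a homeomorphism, and restricts to a biholomorphism off the origin (cf. Example~\ref{ex:curvesnormal}); hence $\Sigma^*_j$ is biholomorphic to a punctured disc $D^*=\{\,0<|z|<\epsilon\,\}$. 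Being an open subset of $\C$, $\Sigma^*_j$ is Stein, and it is homotopy equivalent to the circle $S^1$ (the deformation retract of $D^*$ onto a small concentric circle).

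Next I would write down the exponential sequence of sheaves on $\Sigma^*_j$,
\begin{equation*}
0\to \Z \to \calO_{\Sigma^*_j} \xrightarrow{\exp(2\pi i\,\cdot\,)} \calO_{\Sigma^*_j}^* \to 0,
\end{equation*}
and pass to the associated long exact cohomology sequence. The relevant portion is
\begin{equation*}
H^1(\Sigma^*_j,\calO_{\Sigma^*_j}) \to H^1(\Sigma^*_j,\calO_{\Sigma^*_j}^*) \to H^2(\Sigma^*_j,\Z).
\end{equation*}
By Cartan's Theorem B applied to the Stein space $\Sigma^*_j$ the left--hand group vanishes, $H^1(\Sigma^*_j,\calO_{\Sigma^*_j})=0$, and since $\Sigma^*_j\simeq S^1$ the right--hand group vanishes as well, $H^2(\Sigma^*_j,\Z)=H^2(S^1,\Z)=0$. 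Therefore $H^1(\Sigma^*_j,\calO_{\Sigma^*_j}^*)$ is squeezed between two trivial groups and must itself vanish, which is exactly the claim of the lemma.

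I do not expect a genuine obstacle here: the argument is a standard application of the exponential sequence on a Stein curve, and the two inputs (Steinness and the $S^1$ homotopy type) are immediate consequences of the local structure of an irreducible reduced curve germ. The one point deserving a careful word is the identification of the complex structure of $\Sigma^*_j$ with that of a punctured disc, i.e.\ that the normalization is a homeomorphism which is biholomorphic away from the origin; this is precisely where irreducibility of $\Sigma_j$ is used, since it guarantees a single branch and hence a single puncture.
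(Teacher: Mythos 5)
Your proof is correct and follows essentially the same route as the paper: the exponential sheaf sequence on $\Sigma^*_j$ reduces the claim to $H^1(\Sigma^*_j,\calO_{\Sigma^*_j})=0$, which is Cartan's Theorem B since $\Sigma^*_j$ is Stein. The paper leaves the topological input $H^2(\Sigma^*_j,\Z)=0$ and the identification of $\Sigma^*_j$ with a punctured disc implicit, whereas you spell them out; this is just a more detailed write-up of the same argument.
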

\begin{proof}
From the exponential exact sequence $0\to \Z \to \calO\to \calO^*\to 0$ over $\Sigma_j^*$,
we get that it is enough to prove the vanishing
$H^1(\Sigma^*_j,\calO_{\Sigma^*_j})=0$, a fact which follows from Cartan's Theorem, since $\Sigma_j^*$ is Stein.
\end{proof}

Since $H^1(\Sigma^*_j,\calO_{\Sigma^*_j}^*)=0$, the cocycle
$\{\iota_{\alpha\beta}\}_{\alpha, \beta}$  is a coboundary.
This means that we can find invertible functions $\iota_\alpha $ on
each $U_\alpha$ such that on $U_{\alpha\beta}$ one has
$\iota_{\alpha}|_{U_{\alpha\beta}}=\iota_{\alpha\beta}\cdot
\iota_{\beta}|_{U_{\alpha\beta}}$. This means that  the local functions
 $\widetilde{f}_{1, \alpha}:=f_{1,\alpha }\cdot \iota_{\alpha}^{-1}$,
  $\widetilde{f}_{2, \alpha}:=f_{2,\alpha }\cdot \iota_{\alpha}$ on $U_{\alpha}$
  provide a splitting (that is, $f(p)=\widetilde{f}_{1,\alpha}(p)
\cdot \widetilde{f}_{2,\alpha}(p)$, $p\in U_\alpha$), but in this new situation
the local splittings glue globally: $\widetilde{f}_{1,\alpha}|_{U_{\alpha\beta}}=
\widetilde{f}_{1,\beta}|_{U_{\alpha\beta}}$.
If $i=\sigma(i)$ then we repeat the proof on $D_i$.

 \subsection{The special germ $H$} Next, we treat the `aid'--germ $H$.

\begin{defn}\label{def:H} Let us fix $\Phi, \ f, \ \Sigma$ as above.
 A germ $ H: ( \C^3, 0) \to ( \C ,0) $ is called \emph{transverse section} along $ \Sigma $ if
 $ \Sigma \subset H^{-1} (0) $, $H^{-1}(0)$ at any point of $\Sigma^*$ is smooth and intersects both local
 components of $(X,0)$ transversally.
\end{defn}

We claim that  transverse sections  always exist.

\begin{prop}\labelpar{pr:H}
 There exist  complex numbers  $a_1, a_2, a_3 $ such that
 $H= a_1 \partial_{x_1} f + a_2 \partial_{x_2} f + a_3 \partial_{x_3} f $
 is a transverse section.
\end{prop}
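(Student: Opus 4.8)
The plan is to show that a generic linear combination of the partial derivatives of $f$ cuts out a hypersurface that is transverse to $\Sigma^*$ and to both local branches of $(X,0)$ at every point of $\Sigma^*$. First I would record the two basic inclusions that hold for any such $H=\sum_k a_k\partial_{x_k}f$: since every partial derivative $\partial_{x_k}f$ vanishes on the reduced singular locus $\Sigma=(\partial_{x_1}f,\partial_{x_2}f,\partial_{x_3}f)^{-1}(0)$ by definition, we automatically get $\Sigma\subset H^{-1}(0)$, no matter what the coefficients are. So the containment condition of Definition~\ref{def:H} is free, and the entire content of the proposition is the \emph{transversality} at points of $\Sigma^*$.

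Next I would localize. Fix $p_0\in\Sigma_j^*$ and work in the neighbourhood $U_0$ of Subsection~\ref{ss:local}, where $f=f_1\cdot f_2$ with $\{f_1=0\}$ and $\{f_2=0\}$ smooth and transverse, meeting along $\Sigma^*\cap U_0$. The key local computation is the product rule: at a point $p\in\Sigma^*\cap U_0$ one has $f_1(p)=f_2(p)=0$, so $df_p=f_2(p)\,d(f_1)_p+f_1(p)\,d(f_2)_p=0$ (this recovers $\Sigma\subset\{df=0\}$), but the \emph{second derivatives} do not all vanish. Writing the linear terms of $f_1,f_2$ at $p$ as the covectors $u_1(p),u_2(p)$ from Subsection~\ref{ss:local}, a direct expansion gives that the differential of $\partial_{x_k}f$ at $p$, restricted to the relevant directions, is governed by the symmetric expression built from $u_1(p)$ and $u_2(p)$. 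Concretely, $d(\partial_{x_k}f)_p$ pairs, against a tangent vector $v$, essentially as $u_{1k}(p)\,\langle u_2(p),v\rangle+u_{2k}(p)\,\langle u_1(p),v\rangle$. Hence the differential $dH_p=\sum_k a_k\,d(\partial_{x_k}f)_p$ is, up to the transverse-intersection normalization, the covector whose value on $v$ is $\langle a,u_1(p)\rangle\langle u_2(p),v\rangle+\langle a,u_2(p)\rangle\langle u_1(p),v\rangle$, where $a=(a_1,a_2,a_3)$.

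From this formula the transversality conditions become linear-algebraic genericity statements on $a$. The hypersurface $\{H=0\}$ is smooth and transverse to the branch $\{f_n=0\}$ at $p$ precisely when $dH_p$ is nonzero and not proportional to $u_n(p)$; transversality to $\Sigma^*$ (an intersection of the two branches) follows similarly once $dH_p$ restricts nondegenerately to the normal $2$-plane spanned by $u_1(p),u_2(p)$. Since $u_1(p)$ and $u_2(p)$ are linearly independent (the branches meet transversally), the pairings $\langle a,u_1(p)\rangle$ and $\langle a,u_2(p)\rangle$ are two independent linear functionals of $a$; requiring both to be nonzero, and their ratio to avoid the finitely many bad values forcing proportionality to $u_1(p)$ or $u_2(p)$, excludes only a proper analytic subset of the $a$-space for each fixed $p$. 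I would then globalize over $\Sigma^*$: the coefficients $a$ for which transversality \emph{fails} at some point of $\Sigma^*$ form a proper closed subvariety of $\C^3_a$ (one checks this is an analytically constructible condition, using that $p\mapsto([u_1(p)],[u_2(p)])$ is a well-defined holomorphic map to $\C\mathbb{P}^2\times\C\mathbb{P}^2$ on $\Sigma^*$, so the incidence set over a compact representative link is closed and of positive codimension). Its complement is nonempty, and any $a$ in it yields a transverse section $H$.

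The main obstacle I anticipate is controlling the behaviour uniformly as $p\to 0$ along $\Sigma^*$, i.e.\ verifying that the `bad' locus in $a$-space stays a \emph{proper} subvariety rather than swelling to everything near the origin, where $u_1(p)$ and $u_2(p)$ may degenerate in direction. I would handle this by passing to a projectivization / the link picture: restrict attention to a small enough Milnor representative so that the closure of the graph $\{(p,[u_1(p)],[u_2(p)]):p\in\Sigma^*\}$ is an analytic subset, invoke that a finite union of proper analytic conditions (one per irreducible component $\Sigma_j$, and a generic-point argument by irreducibility) cannot exhaust $\C^3_a$, and conclude existence of admissible $a$. The permutation ambiguity in the $i=\sigma(i)$ case is harmless here, since the transversality conditions are symmetric in $u_1,u_2$.
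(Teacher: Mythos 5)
Your local reduction is exactly the first step of the paper's proof: from $f=f_1f_2$ and $f_1(p)=f_2(p)=0$ one gets
$dH_p=\langle a,u_1(p)\rangle\,u_2(p)+\langle a,u_2(p)\rangle\,u_1(p)$,
and since $u_1(p),u_2(p)$ are linearly independent, the transverse-section condition at $p$ is \emph{equivalent} to the two nonvanishings $\langle a,u_n(p)\rangle\neq 0$, $n=1,2$ (your extra caveat about the ratio of the two pairings is not needed: $\beta_1\neq 0$ and $\beta_2\neq 0$ already force $dH_p$ nonzero and non-proportional to either $u_n$). The observation that $\Sigma\subset H^{-1}(0)$ is automatic is also correct. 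So far you and the paper agree.

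The gap is in the globalization near the origin, which you correctly flag as the main obstacle but do not actually close. First, the dimension count you offer fails: the bad set is the union of the hyperplanes $\{a:\langle a,u_n(p)\rangle=0\}$ over the \emph{complex} one-dimensional set $\Sigma^*$, and a one-complex-parameter family of hyperplanes in $\C^3$ sweeps out a full-dimensional set; the compactness argument only works over the real one-dimensional link $\Sigma\cap S^5_\epsilon$, which gives transversality at radius exactly $\epsilon$ and says nothing about the points of $\Sigma^*$ near $0$, which is what Definition~\ref{def:H} requires. Second, your proposed fix --- that for a small enough representative the closure of the graph of $p\mapsto([u_1(p)],[u_2(p)])$ is analytic --- is precisely the nontrivial input, and it is \emph{not} a consequence of shrinking the representative: $u_1,u_2$ are only defined on $\Sigma^*$ via the Stein splitting of Subsection~\ref{ss:local}, so a priori $\tau\mapsto[u_n(p(\tau))]$ could have an essential singularity at the puncture; if, say, $u_n(p(\tau))$ behaved like $(\sin(1/\tau),\cos(1/\tau),0)$, then \emph{every} $a$ would fail on \emph{every} punctured neighbourhood, and likewise ``not identically zero by irreducibility'' does not prevent zeros of $\langle a,u_n(p(\tau))\rangle$ from accumulating at $\tau=0$. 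The paper supplies exactly this missing input via the Nash transform of $X$ (citing \cite{LeTeissier}): $\gamma^{-1}(0)$ is compact analytic, hence the limits $[\ell_n]=\lim_{p\to 0}[u_n(p)]$ exist; one then chooses $a$ with $\langle a,\ell_n\rangle\neq 0$ for the finitely many limit directions (over all $j$ and $n$), and only \emph{afterwards} shrinks the representative so that $[u_n(p)]$ stays near $[\ell_n]$ --- note the quantifier order: $a$ first, then $\epsilon$. In the present setting you could alternatively obtain the limits from the parametrization itself: the conormal of the sheet of $X$ through $\Phi(z)$ is $[M_1(z):M_2(z):M_3(z)]$, the projectivized $2\times 2$ minors of $d\Phi$, which are holomorphic on $(\C^2,0)$ and whose projectivization extends over the puncture along the normalization of each $D_i$. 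Either way, this step must be invoked as a theorem, not assumed; as written, your argument presupposes what has to be proved.
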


\begin{proof} For such $H$ one has
$  T_1(H)  = \sum_{k=1}^3 a_k u_{1k}\cdot T_1(f_2)+
  \sum_{k=1}^3 a_k u_{2k}\cdot T_1(f_1)$.
In particular, we have to show that for certain coefficients $\{a_k\}_k$
the expressions $ \beta_n(p)=\sum_{k=1}^3 a_k u_{nk} ( p )$ (for $n\in\{1,2\}$)
 have no zeros for $p\in \Sigma^*$.

If $\gamma : X^N\to X$ is the Nash transform of $X$, then $\gamma^{-1}(0)$
is the set of limits of tangent spaces of $X\setminus \{0\}$, it is an
algebraic set of  $\C \mathbb {P} ^2$ of dimension $\leq 1$
(for details see e.g. \cite{LeTeissier} and references therein).
The existence of Nash transform guarantees that
$ [\ell_n]=\lim_{p\to 0}[u_n(p)] \in \C\mathbb{P}^2$  exist.
Indeed, the set $\{[l_1],[l_2]\}$ is the intersection of
$\gamma^{-1}(0)$ with the strict transform of $\Sigma_j$.
Then
let $[a_1:a_2:a_3]$ be generic such that
$\sum_ka_k\ell_{nk}\not=0$ for $n\in\{1,2\}$.
With this choice  $\sum_{k=1}^3 a_k u_{nk}(p) \not=0$ for $p\not=0$ and
in a small representative of $\Sigma_j$.
\end{proof}

Fix  again
$j=\{i, \sigma(i)\}\in J $, and let  $ p: (\C, 0) \to (\Sigma_j,0)\subset (\C^3, 0) $,
$\tau\mapsto p(\tau)$,  be a
parametrization (normalization) of  $ \Sigma_j $.
  For any point  $p_0=p(\tau_0)$  and neighbourhood  $\Sigma^*_j\cap U_0 \ni p(\tau)$ the
discussion from the second paragraph of \ref{ss:local}   can be repeated, in particular
we have the holomorphic
 vectors $u_n(p(\tau))$ ($n\in\{1,2\}$) (with the choice ambiguities described there).
Additionally, choose some $H$ as in Definition \ref{def:H}.
The assumption regarding $H$   guarantees that
$T_1(H)(p(\tau))=\beta_1(\tau)T_1(f_1)(p(\tau))+\beta_2(\tau)T_1(f_2)(p(\tau)) $
for some holomorphic functions $\beta_1$ and $\beta_2$ on $p^{-1}(\Sigma_j^*\cap U_0)$. If we replace
$(u_1,u_2)$ by $(\iota u_1,\iota^{-1}u_2)$ then $(\beta_1,\beta_2)$ will be replaced by
$(\iota^{-1}\beta_1,\iota\beta_2)$,
hence the product $\beta_1\beta_2$ is independent of all the $\iota$ and permutation ambiguities. It is a holomorphic function
on $p^{-1}(\Sigma_j^*\cap U_0)$ depending only on the equations $f$ and $H$. This uniqueness also guarantees
that taking different points of $\Sigma^*_j$ and repeating the construction, the output glues to a unique
holomorphic function $\mathfrak{b}_j(\tau)$ on a small punctures disc of $(\C,0)$.
Usually $\mathfrak{b}_j(\tau)$ has no analytic extension to the origin, however
one has the following.
\begin{lem}\label{lem:Laurent} $\mathfrak{b}_j(\tau)$  is
a  Laurent series on $(\C,0)$.
\end{lem}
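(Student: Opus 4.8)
The plan is to reduce the statement to meromorphy at the origin: a function holomorphic on a small punctured disc is a Laurent series (in the sense intended here, with a finite principal part) precisely when the origin is not an essential singularity, i.e. when the function is the quotient of two functions holomorphic at $0$. So the goal is to exhibit $\mathfrak{b}_j(\tau)=\beta_1(\tau)\beta_2(\tau)$ as such a quotient. First I would make the defining linear relation explicit: along $p(\tau)$ the three linear forms $T_1(f_1)$, $T_1(f_2)$, $T_1(H)$ all annihilate the tangent line $T_{p(\tau)}\Sigma_j^*$, and since $\{f_1=0\}$ and $\{f_2=0\}$ are smooth and transverse their gradient vectors $u_1(p(\tau))$, $u_2(p(\tau))$ span the $2$-dimensional conormal space of $\Sigma_j^*$ at $p(\tau)$. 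Writing $u_H(\tau):=\nabla H(p(\tau))$, the transversality hypothesis on $H$ forces $u_H=\beta_1 u_1+\beta_2 u_2$, a system solvable for $\tau\neq 0$ because $u_1,u_2$ are independent there.

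Next I would solve for the $\beta_n$ by Cramer's rule and form the product. Working with any two of the three coordinates, one gets
\[
\mathfrak{b}_j=\beta_1\beta_2=\frac{\det[u_H,u_2]\cdot\det[u_1,u_H]}{\bigl(\det[u_1,u_2]\bigr)^2}\mbox{,}
\]
and the point is that both numerator and denominator are invariant under the swap $f_1\leftrightarrow f_2$ (hence under the whole $\iota$-ambiguity of \ref{ss:local}): swapping $u_1,u_2$ multiplies numerator and denominator each by a sign which squares away. This symmetry is exactly what lets me re-express everything in terms of data that extends holomorphically across $\tau=0$.

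The key observation, which removes the obstruction that $f_1,f_2$ individually do not extend across the origin, is the identity $\mathrm{Hess}(f)|_{\Sigma}=u_1\otimes u_2+u_2\otimes u_1$ (from $f=f_1f_2$ with $f_1=f_2=0$ on $\Sigma$, so that $\nabla f$ vanishes and the second-order data is symmetric in $u_1,u_2$). Consequently every symmetric combination of the entries of $u_1,u_2$ occurring above is a polynomial in the entries of $\mathrm{Hess}(f)$ and in $u_H$: for instance $\bigl(\det[u_1,u_2]\bigr)^2$ and the analogous products in the numerator rewrite, coordinatewise, via $u_{1a}u_{2b}+u_{1b}u_{2a}=\mathrm{Hess}(f)_{ab}$ and $2u_{1a}u_{2a}=\mathrm{Hess}(f)_{aa}$, as polynomials in $\mathrm{Hess}(f)_{ab}(p(\tau))$ and $u_{H}(\tau)=\nabla H(p(\tau))$. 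Since $f$ and $H$ are honest germs at $0\in\C^3$ and $p(\tau)$ is a convergent parametrization, all these building blocks are holomorphic in $\tau$ including at $\tau=0$. Therefore $\mathfrak{b}_j$ is a ratio of two functions holomorphic at the origin, i.e. meromorphic there, which is the claim.

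The main obstacle I anticipate is the bookkeeping around the $\iota$- and permutation-ambiguities together with the $i=\sigma(i)$ case: one must check that the symmetric rewriting is compatible with the global gluing produced by $H^1(\Sigma_j^*,\calO_{\Sigma_j^*}^*)=0$, and that when $p$ is the normalization of a nontrivially self-paired component (so $p$ is essentially a double cover) the same Hessian identity still delivers a single-valued meromorphic function in the uniformizing parameter $\tau$. A secondary, milder point is choosing the coordinate pair so that $\det[u_1,u_2]\not\equiv 0$ (always possible since $u_1,u_2$ are independent generically on $\Sigma_j^*$), so that the denominator is not identically zero and the quotient is genuinely defined.
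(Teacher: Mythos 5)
Your proof is correct, and it takes a genuinely different route from the paper's. The paper disposes of this lemma in one line, deducing the finiteness of the poles from the homological identities of Theorem~\ref{th:mainth} (or Corollary~\ref{cor:identities} combined with (\ref{eq:alpha})): the would-be vertical index appears there as the meridian coefficient of an explicit curve class in $H_1$ of a torus, hence is a finite integer. That argument is indirect and forward-pointing --- the proof of Theorem~\ref{th:mainth} itself invokes Definition~\ref{de:vert}, i.e.\ the Laurent expansion, when it homotopes $x+\mathfrak{b}_j(\tau)y$ to $x+\tau^{\mathfrak{v}_j}y$ --- and, strictly speaking, homology only controls the winding number of the non-vanishing function $\mathfrak{b}_j$ on small circles, which by itself does not exclude an essential singularity (the winding number of $e^{1/\tau}$ is $0$). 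Your argument settles the actual analytic statement directly: Cramer's rule gives $\mathfrak{b}_j=\beta_1\beta_2=\det[u_H,u_2]\det[u_1,u_H]/\bigl(\det[u_1,u_2]\bigr)^2$, and the identity $\mathrm{Hess}(f)=u_1\otimes u_2+u_2\otimes u_1$ along $\Sigma_j^*$ (valid since $f=f_1f_2$ with $f_1=f_2=0$ there) rewrites numerator and denominator as polynomials in the entries of $\mathrm{Hess}(f)(p(\tau))$ and $\nabla H(p(\tau))$, all holomorphic at $\tau=0$; hence $\mathfrak{b}_j$ is meromorphic at the origin. This buys three things: it is self-contained, breaking the circular-looking dependence between the lemma and Theorem~\ref{th:mainth}; it proves the stronger, genuinely analytic fact that homology alone cannot see; and it yields a closed formula for $\mathfrak{b}_j$ in terms of the second-order jet of $f$ along $\Sigma_j$, which is exactly the datum the paper computes by hand in the $H_k$ example of Subsection~\ref{ss:Hk}. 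Two of your worries can be discharged quickly: the $i=\sigma(i)$ case is a non-issue precisely because your final expression is symmetric in $u_1\leftrightarrow u_2$ and built only from the single-valued data $f$, $H$, $p$, so no monodromy of the splitting can affect it; and for the denominator, since at each $\tau\neq 0$ some $2\times 2$ minor of $[u_1,u_2]$ is non-zero, not all three minors can vanish identically, so a suitable coordinate pair exists, and the resulting meromorphic quotient agrees with $\mathfrak{b}_j$ off a discrete set, hence on the whole punctured disc by the identity theorem.
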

\begin{proof}
The finiteness of the poles follows e.g. from the homological identities from
Theorem \ref{th:mainth}, or from Corollary \ref{cor:identities}  combined with (\ref{eq:alpha}).
\end{proof}

We wish to emphasize that $ \mathfrak{b}_j(\tau) $ and its pole order usually depends on the choice of
$H$.

\begin{defn}\label{de:vert} Let $a \tau^{\mathfrak{v}_j} $ (for some $a\in\C^*$) be the non--zero
monomial with smallest power of
$\tau $ in the Laurent  series of $ \mathfrak{b}_j(\tau) $.
The integer $ \mathfrak{v}_j $ is called the \emph{vertical index
of $ f$ along $ \Sigma_j $ with respect to $ H$} (or, the $H$--vertical index).
\end{defn}

\subsection{Computation of the gluing functions $ \phi_j $}

We fix a transverse section  $H$ (cf. \ref{def:H}).
Then the divisor  $ \Phi^{*} (H) $ is $ H\circ \Phi=d\cdot d_\sharp $ for some (not necessarily reduced)
germ $d_\sharp:(\C^2,0)\to (\C,0)$ (such that $d$ and $d_\sharp $ have no common components).
Let
$(D_\sharp,0)$ be the (non--reduced) divisor associated with $d_\sharp$, and
$N(L_\sharp)$ be a small tubular neighbourhood of the reduced link
$L_\sharp:={\rm red}(D_\sharp)\cap \mathfrak{S}^3$ in $\mathfrak{S}^3$.

Let the Milnor fibre of $ \Phi^{*} (H) $ (in $\mathfrak{S}^3$)  be
\[ F_{\Phi^{*} (H)} = \{ (s, t) \in \partial \mathfrak{B}_\epsilon=\mathfrak{S}^3
 \ | \  H ( \Phi  (s, t) )>0 \}. \]
Let $ \Lambda_i $ and $ \Lambda_\sharp $
denote  the components of the oriented intersection
$F_{\Phi^{*} (H)}\cap\, \partial N_i$ and $F_{\Phi^{*} (H)}\cap \partial N(L_\sharp)$
with the tubular neighborhood boundaries
  of  $ L_i $ and $L_\sharp$ respectively.
 ($\Lambda_\sharp$ might have several components, in this notation we collect all of them.)

 Furthermore,
let $ M_i \subset \partial N_i $ be an oriented
 meridian of $ \partial N_i $ such that $ \lk (M_i, L_i) = 1 $
and fix  also (the oriented Seifert framing of $D_i $)
 $ L'_i \subset \partial N_i $ with  $ \lk (L'_i, L_i) =0 $.
 (Here the linking numbers are considered in oriented 3--sphere $\mathfrak{S}^3$.)

\begin{defn} We call $ L'_i $ the \emph{topological longitude}
 of the torus $ \partial N_i $, while
$ \Lambda_i $ the \emph{sectional longitude} of $\partial N_i$
 associated with the transverse section  $H$.
\end{defn}
Clearly we have the following facts (where $ [\cdot ] $ denotes the corresponding homology class)
\begin{equation}
\begin{split}
(a) & \ \ H_1 (N_i,\Z) \cong \Z \langle [\Lambda_i]\rangle  \cong \Z \langle [L'_i] \rangle, \\
(b) & \ \  H_1 ( \partial N_i, \Z ) \cong \Z \langle [ \Lambda_i ] \rangle \oplus \Z \langle [ M_i ] \rangle \cong
 \Z \langle [ L'_i ] \rangle \oplus \Z \langle [ M_i ] \rangle.\\
\end{split}
\end{equation}
We want to express $  [ \Lambda_i ] $ in terms of $ [ M_i ] $ and $ [L'_i ] $.

\begin{lem}\label{lem:bases}
 Define
 $\lambda_i = - \Sigma_{k \not=i } D_k \cdot D_i -D_\sharp \cdot D_i$,
 where $ C_1 \cdot C_2 $ denotes the intersection multiplicity of
 $ (C_1, 0) $ and $ (C_2, 0)  $ at $0\in \C^2$.
 Then $ [ \Lambda_i ] = [L'_i ] + \lambda_i \cdot [M_i ] $
  in $ H_1 ( \partial N_i, \Z )$.
\end{lem}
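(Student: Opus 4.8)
The plan is to compute the two relevant linking numbers, since once I know $\lk(\Lambda_i, L_i)$ and $\lk(\Lambda_i, L_i')$ (equivalently $\lk(\Lambda_i, L_k)$ and $\lk(\Lambda_i, L_\sharp)$ for the various components), the expression for $[\Lambda_i]$ in the basis $\{[L_i'],[M_i]\}$ of $H_1(\partial N_i,\Z)$ is forced. Concretely, writing $[\Lambda_i] = a[L_i'] + b[M_i]$, the coefficient $a$ is determined by pairing with $M_i$ (using $\lk(M_i,L_i)=1$, $\lk(L_i',L_i)=0$) and $b$ is determined by pairing with $L_i$, i.e. by computing $\lk(\Lambda_i, L_i)$. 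So the whole lemma reduces to the single computation $\lk(\Lambda_i, L_i) = \lambda_i$, together with the a priori remark that $\Lambda_i$ is homologous in $N_i$ to a single copy of the core $L_i$ (so $a=1$), which follows from part (a) of the displayed identities: $\Lambda_i$ and $L_i'$ both generate $H_1(N_i,\Z)\cong\Z$, and the sectional longitude, being a boundary component of the Milnor fibre of a function whose divisor meets $D_i$ with positive multiplicity, runs once around $L_i$.

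The heart of the matter is therefore to identify $\lk_{\mathfrak{S}^3}(\Lambda_i, L_i)$ with the stated intersection number $-\sum_{k\neq i} D_k\cdot D_i - D_\sharp\cdot D_i$. First I would recall the classical fact that for a plane curve germ $(D_i,0)$ the Seifert framing $L_i'$ is characterized by $\lk(L_i',L_i)=0$, and that the zero locus of a holomorphic germ $g$ meeting $D_i$ transversally off the origin cuts out on $\partial N_i$ a longitude whose linking number with $L_i$ equals the total intersection multiplicity $(\operatorname{div} g)\cdot D_i$ of the two curves at $0$. Here the relevant function is $\Phi^*(H) = d\cdot d_\sharp = \big(\prod_k d_k\big)\cdot d_\sharp$, and its Milnor fibre boundary component $\Lambda_i$ sits on $\partial N_i$. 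Because $d_i$ itself vanishes on $D_i$ (the very curve whose tube we are on), that factor contributes the meridional winding and does \emph{not} enter the linking with $L_i$; the contribution to $\lk(\Lambda_i,L_i)$ comes precisely from the \emph{other} factors $d_k$ ($k\neq i$) and $d_\sharp$. The sign is the subtle point: orienting $\Lambda_i$ as the boundary of $F_{\Phi^*(H)}=\{H\circ\Phi>0\}$ reverses the naive orientation, which is what produces the overall minus sign in $\lambda_i=-\sum_{k\neq i}D_k\cdot D_i - D_\sharp\cdot D_i$.

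The main obstacle I anticipate is keeping the orientation conventions consistent: the orientation of $\Lambda_i$ is induced from that of the Milnor fibre $F_{\Phi^*(H)}$ and of $\partial N_i$, and matching these against the convention $\lk(M_i,L_i)=+1$ and $\lk(L_i',L_i)=0$ requires care. I would handle this by passing to the good embedded resolution $\pi$ of $(D,0)$ from Subsection \ref{ss:embres}: on $\widetilde{\C^2}$ the strict transform $\tilde D_i$ meets a unique exceptional divisor $E_{v(i)}$, and the linking/intersection numbers become readable from the multiplicities $m(v)$ via the identities \eqref{eq:mult}. In this resolution picture the claimed equality $\lk(\Lambda_i,L_i)=\lambda_i$ becomes the statement that the intersection number of the total transform of $d_\sharp \prod_{k\neq i}d_k$ with $\tilde D_i$ equals $\sum_{k\neq i}D_k\cdot D_i + D_\sharp\cdot D_i$, which is just additivity of local intersection multiplicity (each $D_k\cdot D_i$ being computed as in Lemma \ref{lem:inter}). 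The sign is then fixed once and for all by comparing orientations on a single model computation, e.g. the Hopf link case $D=\{xy=0\}$ of Example \ref{ex:A1planecurve}, after which the general formula follows by the additivity just described.
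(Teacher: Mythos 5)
Your proposal is correct in substance and shares the paper's first step---reducing everything to the single identity $\lk(\Lambda_i, L_i) = \lambda_i$ via $[\Lambda_i] = [L_i'] + \lk(\Lambda_i,L_i)\cdot[M_i]$---but the way you compute that linking number is genuinely different. The paper's proof is a one-line global argument: setting $F'_{\Phi^{*}(H)} := F_{\Phi^{*}(H)} \setminus {\rm int}\bigl(\cup_k N_k \cup N(L_\sharp)\bigr)$, this surface lies in the complement of $L_i$, so its entire boundary $\sum_k \Lambda_k + \Lambda_\sharp$ has total linking number $0$ with $L_i$; since $\lk(\Lambda_k, L_i) = D_k\cdot D_i$ for $k \neq i$ and $\lk(\Lambda_\sharp, L_i) = D_\sharp \cdot D_i$ (these curves are parallel copies of $L_k$ and $L_\sharp$, and linking numbers of plane-curve link components equal intersection multiplicities), the $i$-th term is forced, minus sign included, with no orientation convention ever pinned down by hand. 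You instead work locally on the torus $\partial N_i$, splitting $\Phi^{*}(H) = d_i \cdot \prod_{k\neq i} d_k \cdot d_\sharp$ and tracking the winding of each factor: $d_i$ winds once around $M_i$ (giving the coefficient $1$ on $[L_i']$), while the remaining factors wind $\sum_{k\neq i}D_k\cdot D_i + D_\sharp\cdot D_i$ times along $L_i'$ (giving, after the level-set sign flip, the coefficient $\lambda_i$ on $[M_i]$). This is viable---it essentially re-derives the Eisenbud--Neumann description of a Milnor fibre page near a binding component---but it buys that explicitness at the cost of exactly the orientation bookkeeping you flag as the main obstacle, which your plan resolves only by detouring through the resolution and a Hopf-link model computation; the paper's Seifert-surface trick avoids all of this.

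One misstatement to repair before a write-up: your ``classical fact'' is false as phrased. For a germ $g$ not vanishing on $D_i$, the zero locus of $g$ does not meet $\partial N_i$ at all (its link is disjoint from a thin tube around $L_i$); the curve on $\partial N_i$ is cut out by the page $\{g>0\}$, and that curve is a multiple of the \emph{meridian}---homologous to $\pm\bigl(({\rm div}\, g)\cdot D_i\bigr)[M_i]$---not a longitude. Your subsequent use of the fact (the factors $d_k$, $k\neq i$, and $d_\sharp$ contribute their intersection multiplicities to $\lk(\Lambda_i,L_i)$, while $d_i$ accounts for the longitudinal coefficient) is consistent with the correct statement, so this is a wording error rather than a gap in the argument.
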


\begin{proof} First note that
$ [\Lambda_i ] = [L'_i ] + \lk ( \Lambda_i, L_i ) \cdot [M_i ] $.
Write $F'_{\Phi^{*} (H)}:=F_{\Phi^{*} (H)}\setminus (\mbox{int}(\cup_i N_i\cup N(L_\sharp)))$.
Then
$0=  \lk ( \partial F'_{\Phi^{*} (H)} , L_i )=\sum_k \lk (\Lambda_k,L_i)+
\lk(\Lambda_\sharp,L_i)=\lk(\Lambda_i,L_i)+\sum_{k\not=i}D_k\cdot D_i+D_\sharp\cdot D_i= \lk(\Lambda_i,L_i)-\lambda_i$.
\end{proof}

\begin{thm}\label{th:mainth} For any $j=\{i,\sigma(i)\}\in J$, the gluing functions $ \phi_j $ from
Proposition~\ref{pr:gl}
is characterized up to homotopy by the following identities

{\bf Case 1:} \ $ i \neq \sigma (i) $. Identify the homology groups $  H_1 (S^1\times S^1 \times \{ 1 \}, \Z ) $,
$ H_1 (S^1\times S^ 1\times \{ -1 \}, \Z ) $ and $  H_1 (S^1\times S^1 , \Z ) $ via the natural homotopies
$$S^1\times S^1\times\{-1\}\stackrel{h}\sim S^1\times S^1\times [-1,1]\stackrel{h}\sim
S^1\times S^1\times\{1\}.$$ Then in this homology group one has
\[
 \phi_{j*} ([M_i]) = - \phi_{j*} ([M_{\sigma(i)}]) \ \  \mbox{ and }  \]
 \[ \phi_{j*} ([\Lambda_i])  = \phi_{j*} ([\Lambda_{\sigma(i)}] +  \mathfrak{v}_j
 \cdot 
 [M_{\sigma(i)}]). \]

{\bf  Case 2:} \ $ i = \sigma (i) $.
 \[ \phi_{j*} ([M_i] ) = -m \ \ \mbox{ and } \] 
 \[ \phi_{j*} ([\Lambda_i] ) = c+ \mathfrak{v}_j \cdot m, \]
 where $m $ and $c $ are the two generators of $ H_1( \partial Y, \Z)$, see \ref{ss:homY} (especially
 (\ref{eq:pY}) and (\ref{eq:cmu})).
\end{thm}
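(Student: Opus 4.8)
The plan is to compute the gluing $\phi_j$ by exhibiting, on both sides of the torus to be identified, a canonical basis $\{\text{meridian}, \text{longitude}\}$ of the first homology, and then reading off the change-of-basis matrix. The transverse section $H$ and its pull-back $\Phi^*H$ serve as the bridge between the target side, where the pieces $X_j$ live fibred over $\Upsilon_j$, and the source side, where $\partial N_i \subset \mathfrak{S}^3$ sits: because $H$ is defined in the target and $\Phi^*H$ in the source, the sectional longitude $\Lambda_i$ on the source should match, up to meridians, the natural longitude produced by $H$ cutting the transverse Milnor fibre in the target. I expect the matrix to be (\ref{eq:glue}), so that the whole content reduces to showing that (i) the meridian is sent to $\mp$ the meridian and (ii) the longitude is sent to the longitude plus $\mathfrak{v}_j$ meridians.

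First I would fix $j=\{i,\sigma(i)\}$, a parametrization $p(\tau)$ of $\Sigma_j$, and the global splitting $f=f_1 f_2$ along $\Sigma_j^*$ constructed in \ref{ss:local}. Using $p(\tau)$ together with $(f_1,f_2)$ as coordinates transverse to $\Sigma_j^*$ (they are coordinates precisely because the two branches are smooth and meet transversally), the restriction of $f$ to a transverse slice is $f_1 f_2$, an ordinary node, whose Milnor fibre $F_j\simeq S^1\times I$ I would parametrize as in Example~\ref{ex:A1planecurve} by $f_1=\sqrt{\delta}\,r e^{i\theta}$, $f_2=\sqrt{\delta}\,r^{-1}e^{-i\theta}$. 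The piece $X_j=\partial F\cap N(\Upsilon_j)$ is then the total space of the bundle $F_j\hookrightarrow X_j\to\Upsilon_j$ whose monodromy is the geometric vertical monodromy: the identity when $i\neq\sigma(i)$, and the component-swapping map $(e^{i\theta},t)\mapsto(e^{-i\theta},-t)$ when $i=\sigma(i)$, the latter being exactly the involution defining $Y$ in \ref{ss:cY}. This identifies $X_j$ with $S^1\times S^1\times I$ or with $Y$, and fixes on $\partial X_j$ the fibre class (the $S^1$ of $F_j$) and the base class. I would next establish the meridian identity: the meridian $M_i$ of $L_i$ is the oriented boundary of a small transverse disc to $L_i$, and under $\Phi$ its image is a fibre $S^1$ of $F_j$, i.e. the meridian of $\Upsilon_j$. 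In Case 1 the two components $L_i,L_{\sigma(i)}$ are the two boundary circles of the same cylinder $F_j$, so their meridians are sent to the same fibre class with opposite orientations, giving $\phi_{j*}([M_i])=-\phi_{j*}([M_{\sigma(i)}])$; in Case 2 the single meridian is sent to $-m$, the sign being forced by the requirement that $\phi_j$ reverse orientation so that (\ref{eq:mb}) carries the correct orientation of $\partial F$. I would carry out this orientation bookkeeping against the conventions of \ref{ss:cY} for $\partial Y$ and \ref{ss:compllinks} for the link orientations.

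The longitude identity is the heart. On the source side the sectional longitude is $\Lambda_i=F_{\Phi^*H}\cap\partial N_i$, the boundary of $\{H\circ\Phi>0\}$; on the target side, cutting $F_j$ by $\{H=0\}$ produces a longitude of $X_j$, and I must show the two differ by exactly $\mathfrak{v}_j$ meridians. To leading order along $\Sigma_j^*$ one has $H=\beta_1(\tau)f_1+\beta_2(\tau)f_2+(\text{higher order})$, as in the proof of Proposition~\ref{pr:H}; substituting the parametrization of $F_j$ gives $H=\sqrt{\delta}\,(\beta_1 r e^{i\theta}+\beta_2 r^{-1}e^{-i\theta})$, so on $\partial F_j$ the locus $\{H=0\}$ is where $e^{2i\theta}=-\beta_2/(\beta_1 r^2)$. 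As $\tau$ traverses $\Upsilon_j$ once, the argument of this expression winds by the winding number of $\beta_1\beta_2=\mathfrak{b}_j(\tau)$, which by Definition~\ref{de:vert} and Lemma~\ref{lem:Laurent} equals $\mathfrak{v}_j$; hence the zero locus, and with it $\Lambda_i$, picks up $\mathfrak{v}_j$ extra meridians relative to the natural longitude $c$ (Case 2), giving $\phi_{j*}([\Lambda_i])=c+\mathfrak{v}_j m$, and the analogous statement $\phi_{j*}([\Lambda_i])=\phi_{j*}([\Lambda_{\sigma(i)}]+\mathfrak{v}_j[M_{\sigma(i)}])$ in Case 1, where the natural longitude is $\Lambda_{\sigma(i)}$ transported through the product homotopy $S^1\times S^1\times\{-1\}\sim S^1\times S^1\times\{1\}$.

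The main obstacle I anticipate is making the winding-number computation rigorous rather than heuristic: I must justify that the higher-order terms of $H$ and of the coordinate change do not affect the homology class of $\{H=0\}$, via a deformation and genericity argument exploiting that $\mathfrak{b}_j$ has a well-defined leading order, and I must track all orientation and sign conventions — the orientation of $\partial Y$, of the meridians, of the Seifert framing, and the orientation-reversing requirement on $\phi_j$ — so that the integer appearing is exactly $+\mathfrak{v}_j$ and the off-diagonal sign is as claimed. Reconciling the two index conventions for $(\beta_1,\beta_2)$ used in \ref{ss:local} and in Proposition~\ref{pr:H}, and checking that only the invariantly defined product $\beta_1\beta_2$ enters the final winding number (so that the permutation ambiguity in Case 2 is harmless), is the delicate bookkeeping at the core of the argument.
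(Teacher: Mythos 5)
Your outline does follow the paper's proof in structure: the transverse coordinates coming from the global splitting $f=f_1f_2$ of \ref{ss:local}, the identification of $X_j$ with $S^1\times S^1\times I$ or with $Y$ via the vertical monodromy, the meridian identity read off from $\partial(\{\tau\}\times F_j)=\tilde{M}_i\cup-\tilde{M}_{\sigma(i)}$, and a winding computation with $H$ for the longitudes are all exactly the paper's steps. However, the longitude computation --- which you correctly call the heart --- contains a genuine error. The winding of $-\beta_2/(\beta_1r^2)$ as $\tau$ traverses $\Upsilon_j$ is $w(\beta_2)-w(\beta_1)$, the winding of the \emph{quotient} $\beta_2/\beta_1$, not of the product $\beta_1\beta_2=\mathfrak{b}_j$; the two agree only when $w(\beta_1)=0$. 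Worse, $w(\beta_2/\beta_1)$ is not even well defined: under the allowed re-splittings $(f_1,f_2)\mapsto(\iota f_1,\iota^{-1}f_2)$ one has $(\beta_1,\beta_2)\mapsto(\iota^{-1}\beta_1,\iota\beta_2)$, so $w(\beta_2/\beta_1)$ changes by $2w(\iota)$, and $\iota=\tau^k$ is admissible on the punctured disc; only $w(\beta_1\beta_2)=\mathfrak{v}_j$ is invariant. (The two windings do agree mod $2$, which is why the slip is easy to miss, but the theorem is an integer statement.) The paper eliminates this problem by the normalization you skipped: replacing the transverse coordinates by $(\beta_1x,\beta_1^{-1}y)$, so that $f=xy$ and $T_1H=x+\mathfrak{b}_j(\tau)y$, i.e.\ forcing $\beta_1\equiv1$; only after that does ``the winding of the $y$-coefficient'' equal $\mathfrak{v}_j$.

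There is a second, related defect: you compute the wrong geometric object. The zero locus $\{H=0\}$ lies in the interior of the fibre $F_j$ (at the radius $r^2=|\beta_2/\beta_1|$), whereas $\Lambda_i$ is by definition the intersection of $\{H>0\}$ with the boundary torus $\partial N_i$. The interior zero locus $Z$ is the relative boundary of $\{H\geq 0\}\cap X_j$, so homologically it only controls the \emph{sum} $[\tilde{\Lambda}_i]+[\tilde{\Lambda}_{\sigma(i)}]=\pm[Z]$; this cannot yield the identity of Case 1, $[\tilde{\Lambda}_i]=[\tilde{\Lambda}_{\sigma(i)}]+\mathfrak{v}_j[\tilde{M}_{\sigma(i)}]$, which is a statement about how the two longitudes differ. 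What the proof needs is the class of each sectional longitude separately: in the normalized coordinates, on $\partial_i$ (where $y=0$) the condition $\{H>0\}$ reduces to $\{x>0\}$ and gives the constant section, while on $\partial_{\sigma(i)}$ (where $x=0$) it reduces to $\{\mathfrak{b}_j(\tau)y>0\}$ and gives a section winding $-\mathfrak{v}_j$ times; combining these with $[\tilde{M}_i]=-[\tilde{M}_{\sigma(i)}]$ gives the theorem. Equivalently, without normalizing, the two longitudes wind $-w(\beta_1)$ and $-w(\beta_2)$ on their respective tori, and the meridian sign flip converts this pair into the invariant $w(\beta_1)+w(\beta_2)=w(\mathfrak{b}_j)=\mathfrak{v}_j$ --- this is how the product, rather than the quotient, genuinely enters. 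The rest of your proposal (the identification of the pieces, the meridian identities, the reduction to the leading term by homotopy, and the Case 2 bookkeeping in $Y$) matches the paper's argument.
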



\begin{proof}
Recall that $\tau\mapsto p(\tau)$ is the normalization of $\Sigma_j$.
At any point $p(\tau_0)$
of $\Sigma^*_j$ one can consider $\tau$ as a local complex coordinate, which
can be completed with two other  local complex coordinates $(x,y)$ (local coordinates in a
transverse slice of $\Sigma_j$ at $p(\tau_0)$)   such that $(\tau,x,y)$
form a local coordinate system of $(\C^3,p(\tau))$, and  locally
$f(x,y)=xy $.
These two local coordinates correspond to a splitting $f=f_1\cdot f_2 $ of $f$.
According to the discussion from \ref{ss:local}, the splitting of $f$ can be done globally
along the whole $\Sigma_j^*$, hence these coordinated $(x,y)$ (corresponding to the
components $f_1$ and $f_2$) can also be chosen globally along $\Sigma_j^*$ (with the permutation ambiguity
whenever $i=\sigma(i)$, a fact which will be handled below).

 Furthermore, in these coordinates, $T_1 H=\beta_1(\tau) x+\beta_2(\tau) y$.
Since $\beta_1(\tau)$ have no zeros and poles in the small representative of $\Sigma_j^*$, $(\tau, \beta_1x, \beta_1^{-1}y)$ are also local coordinates, and in these coordinates the equations
 transform into   $f(x,y)=xy$ and $T_1 H(x,y)=x+\mathfrak{b}_j(\tau) y$.

If we concentrate on the points of  $\Upsilon_j=\Sigma_j\cap S^5_\epsilon$, and its neighbourhood in $S^5_\epsilon$, then similarly as above, we have the real
coordinate $\tau \in \Upsilon_j$, and the two complex (transverse) local
coordinates $(x,y)$, with equations  $f(x,y)=xy$ and $T_1 H(x,y)=x+\mathfrak{b}_j(\tau) y$ as before.

{\bf Case 1.}
The above  local description  globalises as follows
(compare also with the first part of the proof of Proposition \ref{pr:gl}).
 The space $\partial F\cap N(\Upsilon_j)$
has a product decomposition
$\Upsilon_j\times F_j=S^1\times S^1 \times I$, where $\Upsilon_j=S^1$
is the parameter space of $\tau$,
and $F_j$ is the local Milnor fibre $F_j=\{xy=\delta\}\cap B^4_\varepsilon$,
$0 <\delta\ll \varepsilon\ll \epsilon $,
diffeomorphic to $S^1\times I$.
In other words, $\partial F\cap N(\Upsilon_j)$ is the space
$\{(\tau,x,y)\in S^1\times B^4_\varepsilon\,:\, xy=\delta\}$,
$0 <\delta\ll \varepsilon\ll \epsilon$ (here we will use the same $\tau$ notation for the
parameter of $S^1$). Note that for $\delta\ll \varepsilon$,
the boundary of $F_j$ is `very close' to the two circles
$\{|x|=\varepsilon, \, y=0\}$ and $\{x=0, \, |y|=\varepsilon\}$ of $\partial B^4_\varepsilon$.
Using isotopy in the neighbourhoods of these two circles,
$\partial F_j$ can be identified with these two circles
(similarly as we identify via Ehresmann's fibration theorem the boundary of the
Milnor fibre of an isolated singularity with the link),
and in order to
simplify the presentation, we will make this identification.
Hence, the boundary components of $\partial F\cap N(\Upsilon_j)$ can be identified (by isotopy in $ \partial N( \Upsilon_j ) $) with
 $\partial _i:=\{(\tau,x,y)\,:\,
\tau\in S^1,\, |x|=\varepsilon, \, y=0\}$ and $\partial_{\sigma(i)}:=
\{(\tau,x,y)\,:\, \tau\in S^1,\, x=0, \, |y|=\varepsilon\}$.
(The choice of indices $i$ and $\sigma(i)$ is arbitrary and symmetric.)
These two tori are identified homologically  since they are boundary
components of $S^1\times F_j$.
In $\partial_i$ we have the meridian
$\tilde{M}_i=\{(\tau,x,y)\,:\, \tau=1, \,|x|=\varepsilon , \, y=0\}$, while in
$\partial_{\sigma(i)}$ we have the meridian
$\tilde{M}_{\sigma(i)}=\{(\tau,x,y)\,:\, \tau=1, \,x=0, \, |y|=\varepsilon\}$
(both naturally oriented as the complex unit circle).
Since $\partial (\{\tau=1\}\times F_j)
=\tilde{M}_i\cup -\tilde{M}_{\sigma(i)}$, the first wished
identity follows.

Now, we would like to study the intersection curve of $ \partial_i $ and the Milnor fibre of $ H $ associated with a positive argument, that is
$ \tilde{\Lambda}_i :=  \partial_i \cap \{ H > 0 \} $. This curve is homotopic with $ \partial_i \cap \{ T_1 H=x+\mathfrak{b}_j(\tau) y > 0 \} $ and also with $ \partial_i \cap \{ x+\tau^{\mathfrak{v}_j}y > 0 \}$ in $ \partial_i $, cf. the Definition \ref{de:vert}. Similarly, $ \tilde{\Lambda}_{\sigma (i)} :=  \partial_{\sigma (i)} \cap \{ H > 0 \} $ is homotopic with  $ \partial_{\sigma (i)} \cap \{ x+\tau^{\mathfrak{v}_j}y > 0 \}$ in $ \partial_{\sigma (i)} $.


Thus $\tilde{\Lambda}_i $ is homotopic with $
\{(\tau ,x,y)\,:\, \tau\in S^1, \, |x|=\varepsilon,\, y=0\}\cap \{ x+\tau^{\mathfrak{v}_j}y>0\}=
\{(\tau ,x,y)\,:\, \tau\in S^1,\, x=\varepsilon, \, y=0\}$ and
 $\tilde{\Lambda}_{\sigma(i)} $ with $
\{(\tau ,x,y)\,:\, \tau\in S^1,\, x=0, \, y=\varepsilon \tau^{-\mathfrak{v}_j}\}$.
Hence homologically $\tilde{\Lambda}_{\sigma(i)}+\mathfrak{v}_j \tilde{M}_{\sigma(i)}$
is represented by  the circle
$\{(\tau ,x,y)\,:\, \tau\in S^1,\, x=0, \, y=\varepsilon\}$, which is
homologous in $S^1\times F_j$ with $\tilde{\Lambda}_i$. This is the second identity.
Obviously, these identities can be transferred from  the boundary
$\partial _i\cup \partial _{\sigma(i)} $ of $S^1\times F_j$
into similar identities in $\partial N_i\cup \partial N_{\sigma(i)}$, via the diagram, where all the maps are orientation preserving
 diffeomorphisms (cf. the proof of \ref{pr:gl}):
\begin{equation}\labelpar{eq:dia}
 \begin{array}{cccc}
  \Phi: & \mathfrak{S}^3 \setminus \cup_i N_i & \to  & K \setminus \cup_j N( \Upsilon_j ) \\
          & \partial N_i \sqcup \partial N_{ \sigma(i) } & \to & \partial_i \sqcup \partial_{ \sigma(i) } \\
          & \Lambda_i    , \Lambda_{ \sigma(i) }                                & \to & \tilde{ \Lambda}_i,  \tilde{\Lambda}_{ \sigma(i) } \\
          & M_i    , M_{ \sigma(i) }                                & \to & \tilde{ M}_i,  \tilde{M}_{ \sigma(i)}. \\
 \end{array}
\end{equation}

\vspace{1mm}

\noindent {\bf Case 2.}  We use similar notations and conventions as in Case 1.
Let us parametrize $\Upsilon_j$ as $\tau=e^{2is}$,
$s\in[0,\pi]$. Then $ \partial F\cap N(\Upsilon_j)$ is
$([0,\pi]\times F_j)/\sim $,
where by $\sim $ we identify $(0,x,y)\sim (\pi,y,x)$ for all $(x,y)\in F_j$.
Let us parametrize $ F_j $ as $ x= \sqrt{ \delta} r e^{it} $ and $ y = \sqrt{ \delta} r^{-1} e^{-it} $, where $ t \in [- \pi , \pi] $ and
$ r \in [r_0 , r_1] $ such that $ r_0 r_1=1$ and $\delta (r_0^2 + r_1^2) = \varepsilon^2 $. Denote $ z = \log_{r_1} r $.
Then we can parametrize $ \partial F\cap N(\Upsilon_j)$ by $ (s, t, z) $, thus
$  \partial F\cap N(\Upsilon_j) $ is just $([0,\pi]\times [- \pi, \pi ] \times [-1, 1])/\sim $,
where by $\sim $ we identify
$  (0, t, z) \sim ( \pi, -t, -z) \mbox{ and } (s, -\pi, z) \sim (s, \pi, z) $.
 We regard this as parametrization of
 $  \partial F\cap N(\Upsilon_j) $ by $ Y $, cf.  \ref{ss:homY}.


Set
\[ \tilde{M}_{j1} = \{ ( \tau = 1,\ x= \sqrt{ \delta} r_1 e^{it},\ y = \sqrt{ \delta} r_0 e^{-it}  ) \} \mbox{ and }\]
\[ \tilde{M}_{j2}  = \{ ( \tau =1,\ x= \sqrt{ \delta} r_0 e^{-it},\ y = \sqrt{ \delta} r_1 e^{it}  ) \}. \]
The  are the two oriented meridians of $ \partial F\cap N(\Upsilon_j)$,
parametrized by $ t \in [ - \pi, \pi ] $. In terms of  $ (s, t, z) $ they are
 \[ \tilde{M}_{j1}=\{(s=0,\, t , \, z=1)\} = \{(s=\pi,-t, \, z=-1)\} \subset \partial Y \mbox{ and } \]
\[ \tilde{M}_{j2}=\{(s=0,\, -t , \, z=-1)\} = \{(s=\pi,t, \, z=1)\} \subset \partial Y \mbox{,} \]
thus with the notations of \ref{ss:homY}, $\tilde{M}_{j1} = -m'$ and $ \tilde{M}_{j2}= -m $.

Similarly as in Case 1, by an isotopy in $ \partial N( \Upsilon_j ) $ the boundary $ \partial_j $ of $ \partial F\cap N(\Upsilon_j)$ can be identified with $ \partial_{j1} \cup \partial_{j2} $, where
\[\partial _{j1}:=\{(s,x,y)\,:\,
s \in [0, \pi],\, |x|=\varepsilon, \, y=0\} \mbox{ and }\]
\[ \partial_{j2}:=
\{(s,x,y)\,:\, s \in [0, \pi],\, x=0, \, |y|=\varepsilon\} \mbox{.} \]
The two parts of $ \partial_j $ are glued together along the image of the oriented meridians
\[ \tilde{M}_{j1}=\{(s=0,\, x= \varepsilon e^{it}, \, y=0)\} = \{(s=\pi,\, x=0, \, y=\varepsilon e^{it})\}  \mbox{ and} \]
\[ \tilde{M}_{j2} = \{(s=0,\, x=0,\, y= \varepsilon e^{it})\} = \{(s= \pi,\, x= \varepsilon e^{it}, \, y=0)\} \mbox{.} \]
(See also the description in \ref{ss:homY}.)
Since the oriented boundary $ \partial ( \{ \tau = 1 \} \times F_j ) $ is $ \tilde{M}_{j1} \sqcup \tilde{M}_{j2} $, $ \tilde{M}_{j1}$ is homologous with $ - \tilde{M}_{j2} $ in $ F_j \subset Y$. On the other hand $ \tilde{M}_{j1} $ is homologous with $ \tilde{M}_{j2} $ in $ \partial Y \subset Y $, thus $ [\tilde{M}_{j1}]= [\tilde{M}_{j2}]$ is an order--$2$ element in $H_1 ( Y, \Z ) $.

Consider the closed curve $C$ obtained as union of
$C_{1}$ and $C_{2}$, where
\[C_{1}=\{(s,x=\varepsilon, y=0)\,:\, s\in [0,\pi]\} = \{(s,t=0, z=1)\,:\, s\in [0,\pi]\}  \mbox{, and} \]
  \[ C_{2}=\{(s,x=0, y= \varepsilon)\,:\, s\in [0,\pi]\} = \{(s, t=0, z=-1) \,:\, s\in [0,\pi]\}\mbox{.} \]
  Note that
$C_{1}$ connects the points $A_1=(s=0, x=\varepsilon,y=0)=(s=0, t=0, z=1)$ with
$B_1=(s= \pi, x=\varepsilon, y=0)= (s=\pi, t=0, z=1)$,
while
$C_{2}$ connects the points $A_2=(s=0,x=0,y=\varepsilon)= (s=0, t=0, z=-1)$ with
$B_2=(s= \pi, x= 0,y=\varepsilon)= (s= \pi, t= 0, z=-1)$. Since $A_1\sim B_2$ and $A_2\sim B_1$,
they form a closed curve. Note that $ [C]=[c] $ in $ H_1 ( \partial Y, \Z ) $, see \ref{ss:homY}.

Similarly as in Case 1, the function $H$ can be replaced by
$x+\tau^{\mathfrak{v}_j}y$, hence its level set associated with a positive value determines the curve
$ \tilde{\Lambda}_j = \{ x+\tau^{\mathfrak{v}_j}y >0 \} \cap \partial_j $. This consists of two parts, $\tilde{\Lambda}_{j1}$ and $\tilde{\Lambda}_{j2}$, where
\[ \tilde{\Lambda}_{j1}=
\{(s,\, x=\varepsilon,\, y=0)\,:\, s\in [0,\pi]\}
= \{(s,t=0, z=1)\,:\, s\in [0,\pi]\} \subset \partial_{j1},  \]
while
$ \tilde{\Lambda}_{j2}=
\{(s,\, x=0,\, y=\varepsilon e^{-2is\mathfrak{v}_j})\, :\, s\in[0,\pi]\} $ equals
 \[ \{(s,t=2 \mathfrak{v}_j s \ (\mbox{mod } [- \pi, \pi ]), z=-1)\,:\, s\in [0,\pi]\} \subset \partial_{j2}. \]
$ \tilde{\Lambda}_{jn}$ has the same end--points as $C_{n}$, hence
$\tilde{\Lambda}_{j1}$ and $\tilde{\Lambda}_{j2}$ form together a closed curve, as we expect.
Furthermore, $\tilde{\Lambda}_j+\mathfrak{v}_j \tilde{M}_{j2}$ is homologous in $\partial Y$ with $C$.

The source and the target are connected by the restriction of $ \Phi $, which gives the orientation preserving  diffeomorphisms:
\begin{equation}\labelpar{eq:dia2}
 \begin{array}{cccc}
  \Phi: & \mathfrak{S}^3 \setminus \cup_i N_i & \to  & K \setminus \cup_j N( \Upsilon_j ) \\
          & \partial N_i  & \to & \partial_j  \\
          & \Lambda_i      & \to & \tilde{ \Lambda}_j \\
          & M_i    & \to & \tilde{ M}_{j1} \mbox{ homologous with } \tilde{ M}_{j2} \\
 \end{array}
\end{equation}

Since $C$ identifies with $c$ and $ \tilde{M}_{j1}$ and $ \tilde{M}_{j2}$ with $-m$,
$\tilde{\Lambda}_i=c+\mathfrak{v}_j\cdot m$ follows.
\end{proof}

\subsection{The $H$--independent description of the gluing. The `vertical index'}\label{ss:topverindex}\

Recall that the sectional longitudes $ \Lambda_i $  and the corresponding
$H$--vertical indexes  $ \mathfrak{v}_j$ depend on the choice of $H$.
 The goal of this paragraph is to replace $(M_i, \Lambda_i)$
 by the $H$--independent $(M_i, L_i')$ and  $ \mathfrak{v}_j$
 by an $H$--independent number.

\begin{defn}
 For any $ j= \{ i, \sigma (i) \} $ define $\mathfrak{vi}_j$ by

\begin{equation}\labelpar{eq:alphauj}
  \mathfrak{vi}_j := \left\{ \begin{array}{ccc}
                       \lambda_i  + \lambda_{ \sigma (i)} +  \mathfrak{v}_j  & \mbox{ if } & i \neq \sigma (i) \mbox{,} \\
                     \lambda_i  + \mathfrak{v}_j  & \mbox{ if } & i = \sigma (i) \mbox{.} \\
                     \end{array} \right.
                 \end{equation}
We call $ \mathfrak{vi}_j $ the \emph{vertical index} of $ \Sigma_j $.
\end{defn}

The next statement follows
from \ref{lem:bases} and \ref{th:mainth}  (see also  \ref{ss:gluinguj}).

\begin{cor}
For $ i \neq \sigma (i) $
\[
 \phi_{j*} ([M_i]) = - \phi_{j*} ([M_{\sigma(i)}]) \ \  \mbox{ and }  \]
\[ \phi_{j*} ([L_i']) = \phi_{j*} ([L_{ \sigma_i}']) + \mathfrak{vi}_j \cdot [ M_{ \sigma(i)}] \mbox{,}
\]
and for $ i = \sigma (i) $
\[ \phi_{j*} ([M_i] ) = -m \ \ \mbox{ and } \]
\[ \phi_{j*} ([L_i']) = c + \mathfrak{vi}_j \cdot m
\]
hold in the sense described in \ref{th:mainth}.
\end{cor}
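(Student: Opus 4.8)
The plan is to deduce the corollary directly from Theorem~\ref{th:mainth} by performing the change of basis in $H_1(\partial N_i,\Z)$ that replaces the sectional longitude $\Lambda_i$ by the topological longitude $L'_i$, and to carry this substitution through the two cases of the gluing exactly as they are formulated. The key algebraic input is Lemma~\ref{lem:bases}, which records the identity $[\Lambda_i]=[L'_i]+\lambda_i\cdot[M_i]$ in $H_1(\partial N_i,\Z)$, with $\lambda_i=-\sum_{k\neq i}D_k\cdot D_i-D_\sharp\cdot D_i$. Since $\phi_{j*}$ is a homomorphism, I can simply apply it to this relation and obtain $\phi_{j*}([\Lambda_i])=\phi_{j*}([L'_i])+\lambda_i\cdot\phi_{j*}([M_i])$, so that $\phi_{j*}([L'_i])=\phi_{j*}([\Lambda_i])-\lambda_i\cdot\phi_{j*}([M_i])$. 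Everything then reduces to substituting the known values of $\phi_{j*}([\Lambda_i])$ and $\phi_{j*}([M_i])$ from Theorem~\ref{th:mainth}.

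First I would treat the case $i\neq\sigma(i)$. Here Theorem~\ref{th:mainth} gives $\phi_{j*}([M_i])=-\phi_{j*}([M_{\sigma(i)}])$ and $\phi_{j*}([\Lambda_i])=\phi_{j*}([\Lambda_{\sigma(i)}])+\mathfrak{v}_j\cdot[M_{\sigma(i)}]$, working in the homology group obtained by identifying the two boundary tori via the natural homotopies of the product $S^1\times S^1\times I$. The first displayed assertion of the corollary, $\phi_{j*}([M_i])=-\phi_{j*}([M_{\sigma(i)}])$, is literally unchanged, since the meridians are common to both parametrizations. For the longitude assertion I substitute $[\Lambda_i]=[L'_i]+\lambda_i[M_i]$ and $[\Lambda_{\sigma(i)}]=[L'_{\sigma(i)}]+\lambda_{\sigma(i)}[M_{\sigma(i)}]$ into the $\Lambda$-identity of Theorem~\ref{th:mainth}; using $\phi_{j*}([M_i])=-\phi_{j*}([M_{\sigma(i)}])$ to collect the meridian contributions, the coefficient of $[M_{\sigma(i)}]$ becomes precisely $\lambda_i+\lambda_{\sigma(i)}+\mathfrak{v}_j=\mathfrak{vi}_j$ as defined in~(\ref{eq:alphauj}), yielding $\phi_{j*}([L'_i])=\phi_{j*}([L'_{\sigma(i)}])+\mathfrak{vi}_j\cdot[M_{\sigma(i)}]$. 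The bookkeeping of signs is the only thing that needs care, and it is forced by the meridian relation.

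For the case $i=\sigma(i)$ the argument is the same in spirit but now the target torus is $\partial Y$ with generators $m$ and $c$ (cf.~(\ref{eq:pY}), (\ref{eq:cmu})). Theorem~\ref{th:mainth} gives $\phi_{j*}([M_i])=-m$ and $\phi_{j*}([\Lambda_i])=c+\mathfrak{v}_j\cdot m$. Applying Lemma~\ref{lem:bases} I get $\phi_{j*}([L'_i])=\phi_{j*}([\Lambda_i])-\lambda_i\cdot\phi_{j*}([M_i])=c+\mathfrak{v}_j\cdot m-\lambda_i\cdot(-m)=c+(\lambda_i+\mathfrak{v}_j)\cdot m=c+\mathfrak{vi}_j\cdot m$, again matching the definition~(\ref{eq:alphauj}) of $\mathfrak{vi}_j$ in this case, and $\phi_{j*}([M_i])=-m$ is immediate. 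The only genuine subtlety I anticipate is making sure the sign conventions and the homology identifications used in Lemma~\ref{lem:bases} are compatible with those of Theorem~\ref{th:mainth} — in particular that the meridian $M_i$ and the orientations underlying the intersection numbers $\lambda_i$ agree with the orientations used in the gluing — so that the $-\lambda_i\cdot\phi_{j*}([M_i])$ term carries the correct sign; but this compatibility is exactly what is ensured by the phrase ``in the sense described in \ref{th:mainth}'', so no new geometric work is required beyond the linear substitution.
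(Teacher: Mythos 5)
Your proposal is correct and is exactly the argument the paper intends: the paper derives this corollary by the one-line remark that it "follows from Lemma~\ref{lem:bases} and Theorem~\ref{th:mainth}", i.e.\ by substituting $[\Lambda_i]=[L'_i]+\lambda_i[M_i]$ into the identities of Theorem~\ref{th:mainth} and collecting meridian terms via $\phi_{j*}([M_i])=-\phi_{j*}([M_{\sigma(i)}])$ (resp.\ $\phi_{j*}([M_i])=-m$), which is precisely your computation and yields the coefficients $\lambda_i+\lambda_{\sigma(i)}+\mathfrak{v}_j$ and $\lambda_i+\mathfrak{v}_j$ defining $\mathfrak{vi}_j$. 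No gap: your linear bookkeeping, including the sign handling, matches the paper's definitions and conventions.
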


\begin{cor} The integer
$ \mathfrak{vi}_j $ does not depend on the choice of $H$, thus it is an invariant of $f$ and $ \Sigma_j$.
\end{cor}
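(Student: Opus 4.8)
The strategy is to observe that the integer $\mathfrak{vi}_j$ has already been exhibited, in the previous Corollary, as a matrix entry of the gluing isomorphism $\phi_{j*}$ expressed in bases that make no reference to $H$; hence it suffices to show that $\phi_j$ itself is intrinsic to $\partial F$, and therefore to $f$ and $\Phi$ alone. First I would recall that the decomposition of Proposition~\ref{pr:gl}, $\partial F \simeq (\mathfrak{S}^3 \setminus \bigcup_i {\rm int}(N_i)) \cup_\phi (\bigcup_{j} X_j)$, is produced from the geometry of the Milnor fibration of $f$ and of the singular locus $\Sigma$ (following \cite{Si,NSz}); the auxiliary germ $H$ is nowhere used in that construction. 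Consequently the gluing $\phi_j$ is determined up to isotopy by $\partial F$, and its induced map $\phi_{j*}$ on first homology is an $H$-independent isomorphism. The function $H$ entered only in Theorem~\ref{th:mainth}, as a device to compute this fixed $\phi_{j*}$ in the $H$-dependent frames built from the sectional longitudes $\Lambda_i$ and the $H$-vertical indices $\mathfrak{v}_j$.

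Next I would check that the bases appearing in the previous Corollary are $H$-free. On the source side the oriented meridian $M_i$ and the topological (Seifert) longitude $L_i'$ of $\partial N_i$ depend only on the oriented link $L_i \subset \mathfrak{S}^3$, i.e. only on $\Phi$. On the target side, in the case $i=\sigma(i)$ the generators $m$ and $c$ of $H_1(\partial Y,\Z)$ are built from $Y$ in Section~\ref{s:Y} with no reference to $H$, and by Lemma~\ref{lem:UNIV} they are moreover pinned down up to sign purely by the inclusion $\partial Y \subset Y$; in the case $i\neq\sigma(i)$ the target piece is the product $S^1\times S^1\times I$ and the relevant classes $\phi_{j*}([M_{\sigma(i)}])$, $\phi_{j*}([L_{\sigma(i)}'])$ are images of the same intrinsic source classes. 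Reading the two identities of the previous Corollary, $\mathfrak{vi}_j$ is exactly the coefficient of $\phi_{j*}([M_{\sigma(i)}])$ (resp. of $m$) in the expansion of the intrinsic element $\phi_{j*}([L_i'])$; since both $\phi_{j*}$ and the bases are $H$-independent, so is this coefficient.

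The only point requiring a little care is that $\mathfrak{vi}_j$ is a well-defined integer rather than merely defined up to sign. Here I would use that all orientations are fixed once and for all: $M_i$ and $L_i'$ are oriented, and $m,c$ carry the orientations chosen in Subsection~\ref{ss:homY}. In the case $i=\sigma(i)$ the identity $\phi_{j*}([M_i])=-m$ fixes $m$ in terms of the intrinsic class $\phi_{j*}([M_i])$, while $\phi_{j*}([L_i'])$ is a fixed element of $H_1(\partial Y,\Z)=\Z\langle m\rangle\oplus\Z\langle c\rangle$ whose $c$-coefficient is forced to be $+1$ (the longitude projects to a generator of the central direction $\lambda^2$); its $m$-coefficient, namely $\mathfrak{vi}_j$, is thus determined without ambiguity, and identically in the case $i\neq\sigma(i)$. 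The main obstacle in making this airtight is precisely the first step --- a clean justification that the gluing $\phi_j$ is canonical up to isotopy independently of $H$ --- after which the statement is bookkeeping in the two fixed homology bases.
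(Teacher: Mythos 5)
Your proposal is correct and takes essentially the same route as the paper: there the corollary is deduced immediately from the preceding one, since $\mathfrak{vi}_j$ appears as the coefficient of $\phi_{j*}([L_i'])$ expressed in the $H$-free data $([M_i],[L_i'])$ and $(m,c)$, while the gluing $\phi_j$ itself comes from the $H$-independent decomposition of Proposition~\ref{pr:gl}. Your extra care in pinning down $m$ and $c$ (up to the harmless sign) via Lemma~\ref{lem:UNIV} merely makes explicit what the paper leaves implicit.
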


\subsection{The plumbing graph of $ \partial F $} We construct a plumbing graph for
$ \partial F$ by modifying a good embedded resolution graph of
$ (D,0)\subset (\C^2,0) $. (For  notations see Subsection \ref{ss:embres}.)
The gluing of the plumbing construction uses  different set of longitudes
(cf. \cite{neumann1}). First we define them and then we rewrite the
above established identities regarding $\phi_{j*}$ in this language.

We choose small tubular neighbourhoods $ N(E_v)$ of $E_v\subset\widetilde{\C^2}$ such that
\[  \pi^{-1} (\mathfrak{S}^3)\simeq
\partial \Big( \bigcup_{ v \in V} N(E_v) \Big) \ \ (\mbox{diffeomorphic
with $\mathfrak{S}^3$ via $\pi$)}\]
after smoothing the corners of $\bigcup_{ v \in V} N(E_v) $.
The tubular neighbourhood $ N(E_v)$ of $ E_v \subset \widetilde{ \C^2 } $ is a $D^2$ (real 2--disk) bundle over $ E_v$ with Euler number $ e_v $.
We can choose this bundle structure in such a way
 that $ \tilde{D}_i$ is one of the  fibres of
$ N(E_{v(i)}) $ for each $ i= 1, \dots , l$.

We choose another generic fibre $\mathcal{F}_i \simeq D^2 $ of the bundle $ N(E_{v(i)}) $ near $ \tilde{D}_i $, and we set
$ L^{\pi}_i = \pi (\partial \mathcal{F}_i) \subset \mathfrak{S}^3$.
By the choice of the bundle structure of $ N(E_{v(i)}) $ and by the choice of the fibre $ \mathcal{F}_i$ we can assume that $ L^{\pi}_i \subset \partial N_i $.
\begin{defn} $L_i^\pi \subset \partial N_i\subset \mathfrak{S}^3 $ is called the
 \emph{resolution longitude}  of
$ L_i\subset \mathfrak{S}^3 $ associated with  the resolution $\pi$.
\end{defn}

We fix a resolution longitude $ L^{\pi}_i $ for each $ L_i $.
Clearly,
\begin{equation}
  H_1 (N_i, \Z )  = \Z \langle [ L^{\pi}_i ] \rangle \ \ \mbox{and} \ \
   H_1 ( \partial N_i, \Z ) =
 \Z \langle [ L^{\pi}_i ] \rangle \oplus \Z \langle [ M_i ] \rangle.
\end{equation}
The following facts are well--known (cf. \cite{EN}).
\begin{prop}
(a) $ \lk (L^{ \pi}_i , L_i ) = m_i(v(i)) $.

 (b) $  [ L^{ \pi}_i ] = [L'_i] + m_i(v(i)) \cdot [M_i]$ holds in $ H_1 ( \partial N_i, \Z )$.
\end{prop}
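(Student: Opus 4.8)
The plan is to reduce everything to part (a), from which (b) follows by standard homological bookkeeping, and to prove (a) by interpreting the linking number as a winding number of the defining equation $d_i$.

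First I would record the easy reduction. Since the meridian $M_i$ bounds a fibre disc of $N_i$, it is null-homologous in $N_i$, while both $L_i'$ and $L_i^\pi$ are longitudes, i.e. they generate $H_1(N_i,\Z)\cong\Z\langle[L_i]\rangle$ and meet a meridian disc once. Hence in $H_1(\partial N_i,\Z)$ one may write $[L_i^\pi]=[L_i']+k\,[M_i]$ for a unique $k\in\Z$. Pairing with $L_i$ via the linking form in $\mathfrak{S}^3$ and using $\lk(L_i',L_i)=0$ and $\lk(M_i,L_i)=1$ gives $k=\lk(L_i^\pi,L_i)$; so (b) is exactly (a) together with this remark.

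For (a) I would use the following well-known description of linking numbers with the link of a plane curve. Since $D_i$ is irreducible and $d_i$ is reduced, $L_i=d_i^{-1}(0)\cap\mathfrak{S}^3$ is a knot, and the map $d_i/|d_i|\colon\mathfrak{S}^3\setminus L_i\to S^1$ represents the generator of $H^1(\mathfrak{S}^3\setminus L_i,\Z)\cong\Z$ dual to $L_i$; concretely its restriction to the meridian $M_i$ has degree $1$ because $d_i$ is reduced. Consequently, for any oriented curve $\gamma\subset\mathfrak{S}^3$ disjoint from $L_i$ one has $\lk(\gamma,L_i)=\deg\!\big((d_i/|d_i|)|_\gamma\big)$. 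Applying this with $\gamma=L_i^\pi$ and pulling back through $\pi$ (which is an orientation preserving diffeomorphism off $E$, cf.\ \ref{ss:embres}, and identifies $\pi^{-1}(\mathfrak{S}^3)$ with $\mathfrak{S}^3$) turns the computation into $\deg\!\big((g/|g|)|_{\partial\mathcal{F}_i}\big)$, where $g:=d_i\circ\pi$ and $L_i^\pi=\pi(\partial\mathcal{F}_i)$. It then remains to evaluate this winding number on the fibre disc $\mathcal{F}_i$. The divisor of $g$ is the total transform $\mathrm{div}(g)=\sum_v m_i(v)E_v+\tilde D_i$ (see \ref{ss:embres}). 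Choosing $\mathcal{F}_i$ as a generic fibre of the disc bundle $N(E_{v(i)})$ close to $\tilde D_i$, it meets $E_{v(i)}$ transversally in a single smooth point $q$ lying on no other component of $\mathrm{div}(g)$ (here the good-resolution hypothesis is used: off the finitely many special points, only $E_{v(i)}$ passes through $q$). Hence $g|_{\mathcal{F}_i}$ has a single zero, of order equal to the vanishing order $m_i(v(i))$ of $g$ along $E_{v(i)}$, and therefore $\deg\!\big((g/|g|)|_{\partial\mathcal{F}_i}\big)=m_i(v(i))$, which proves (a).

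The only genuinely delicate points I expect are the orientation conventions — one must check that the complex orientations of $L_i$, $L_i^\pi$, $M_i$ and of the fibre $\mathcal{F}_i$ are mutually compatible, so that the relevant degrees and linking numbers come out with sign $+1$, consistent with the normalization $\lk(M_i,L_i)=1$ — together with the careful choice of $\mathcal{F}_i$ ensuring $\partial\mathcal{F}_i\subset\partial N_i$ and that it is isotopic to a longitude. Both are routine but must be stated precisely; the substance of the argument is the identification of $\lk(L_i^\pi,L_i)$ with the order of vanishing $m_i(v(i))$, which is the classical relation of Eisenbud--Neumann cited as \cite{EN}.
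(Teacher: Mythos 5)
Your proof is correct, but there is nothing in the paper to compare it with step by step: the paper does not prove this proposition at all, it simply introduces the resolution longitude, records the homological splitting of $H_1(\partial N_i,\Z)$, and states the two identities as ``well--known (cf. \cite{EN})''. What you have written is essentially the standard argument hiding behind that citation, and it is sound. The reduction of (b) to (a) is valid: both $[L_i^\pi]$ and $[L_i']$ push forward to the generator $[L_i]$ of $H_1(N_i,\Z)$, so they differ in $H_1(\partial N_i,\Z)$ by an integer multiple of $[M_i]$, and pairing with $L_i$ via the linking form (using $\lk(L_i',L_i)=0$, $\lk(M_i,L_i)=1$) identifies that integer with $\lk(L_i^\pi,L_i)$. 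For (a), the chain of identifications $\lk(L_i^\pi,L_i)=\deg\bigl((d_i/|d_i|)|_{L_i^\pi}\bigr)=\deg\bigl((g/|g|)|_{\partial\mathcal{F}_i}\bigr)$ with $g=d_i\circ\pi$, followed by the argument principle on the fibre disc, is exactly right: $\mathcal{F}_i$ is disjoint from $\tilde{D}_i$ and from every $E_w$ with $w\neq v(i)$, so $g|_{\mathcal{F}_i}$ has a unique zero, at the transverse intersection point with $E_{v(i)}$, of order equal to the vanishing order $m_i(v(i))$ of $d_i\circ\pi$ along $E_{v(i)}$, which is precisely the paper's definition of the multiplicity in Subsection~\ref{ss:embres}. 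The orientation issues you flag are indeed the only delicate points, and they resolve themselves because every object involved ($\mathcal{F}_i$, $L_i$, $L_i^\pi$, $M_i$, and $\pi$) carries its complex/natural orientation, so all local intersection and winding numbers come out $+1$, consistently with the normalization $\lk(M_i,L_i)=1$ used in the paper. In short, the paper buys brevity by outsourcing the statement to Eisenbud--Neumann, while your argument makes the thesis self-contained at the cost of routine orientation bookkeeping; both are legitimate, and yours would be a reasonable proof to include.
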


%

\begin{cor}\label{cor:REL}
 $  [ L^{ \pi}_i ] = [\Lambda_i] + ( m_i(v(i)) - \lambda_i ) \cdot [M_i]$ holds in $ H_1 ( \partial N_i, \Z )$.
\end{cor}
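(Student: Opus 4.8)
The plan is to obtain this identity as a one-line consequence of two already-established facts, both of which express a longitude of $\partial N_i$ against the common reference frame given by the topological longitude $L_i'$ and the meridian $M_i$. Recall from the displayed facts accompanying Lemma~\ref{lem:bases} that $H_1(\partial N_i,\Z)=\Z\langle[L_i']\rangle\oplus\Z\langle[M_i]\rangle$ is free abelian of rank $2$, so every class on the torus is uniquely a $\Z$-combination of $[L_i']$ and $[M_i]$; the whole statement is therefore pure bookkeeping in this lattice once the two geometric inputs are in hand.

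First I would invoke Lemma~\ref{lem:bases}, which gives
\begin{equation*}
[\Lambda_i]=[L_i']+\lambda_i\cdot[M_i],\qquad\text{equivalently}\qquad [L_i']=[\Lambda_i]-\lambda_i\cdot[M_i].
\end{equation*}
Next I would use part (b) of the Proposition immediately preceding the Corollary, namely $[L^\pi_i]=[L_i']+m_i(v(i))\cdot[M_i]$. Substituting the expression for $[L_i']$ into this relation yields
\begin{equation*}
[L^\pi_i]=[\Lambda_i]-\lambda_i\cdot[M_i]+m_i(v(i))\cdot[M_i]=[\Lambda_i]+\bigl(m_i(v(i))-\lambda_i\bigr)\cdot[M_i],
\end{equation*}
which is exactly the claimed identity in $H_1(\partial N_i,\Z)$.

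I do not expect any genuine obstacle here: all the geometry has already been done upstream. The two nontrivial linking-number computations — $\lk(\Lambda_i,L_i)=\lambda_i$, proved in Lemma~\ref{lem:bases} by computing $\lk(\partial F'_{\Phi^*(H)},L_i)=0$ and summing the contributions of the boundary components, and $\lk(L^\pi_i,L_i)=m_i(v(i))$, the standard resolution fact (a) of the preceding Proposition — are precisely what let us place $\Lambda_i$ and $L^\pi_i$ relative to the Seifert-framed $L_i'$. The Corollary merely records the transition between the two framings by eliminating the auxiliary class $[L_i']$, so the only care needed is to keep the signs and the meridian coefficients consistent, which the rank-$2$ freeness guarantees is unambiguous.
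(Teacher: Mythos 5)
Your proof is correct and is exactly the argument the paper intends: Corollary~\ref{cor:REL} is stated without proof precisely because it follows by eliminating $[L_i']$ between Lemma~\ref{lem:bases} and part~(b) of the preceding Proposition, which is what you do. The bookkeeping in the free rank-$2$ lattice $H_1(\partial N_i,\Z)=\Z\langle [L_i']\rangle\oplus\Z\langle [M_i]\rangle$ is carried out with the correct signs, so there is nothing to add.
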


\begin{defn}\label{de:alpha}
For any $ j=\{i,\sigma(i)\}$ define $ \alpha_j $ by:
 \begin{equation}\labelpar{eq:alpha}
  \alpha_j = \left\{ \begin{array}{ccc}
                      - m_i(v(i)) + \lambda_i -m_{ \sigma (i)} ( v( \sigma (i))) + \lambda_{ \sigma (i)} +  \mathfrak{v}_j  & \mbox{ if } & i \neq \sigma (i), \\
                     \lambda_i -  m_i(v(i)) + \mathfrak{v}_j & \mbox{ if } & i = \sigma (i). \\
                     \end{array} \right.
 \end{equation}
\end{defn}
 Then Theorem \ref{th:mainth} and Corollary \ref{cor:REL} give the following.

\begin{cor}\label{cor:identities} {\bf Case 1:} \
 For $ i \neq \sigma (i) $ in $ H_1 (S^1\times S^1, \Z )$
 the following identities  hold:
  \[ \phi_{j*} ([M_i]) = - \phi_{j*} ([M_{\sigma(i)}])\ \ \mbox{ and } \ \
  \phi_{j*} ([L^{ \pi}_i ]) =
 \phi_{j * } ( [L^{ \pi}_{ \sigma (i)}] + \alpha_j  \cdot [M_{ \sigma (i) }]).
 \]
 {\bf Case 2:} \
  For $ i= \sigma (i) $ in $ H_1 (\partial Y, \Z) $ the following
  identities hold:
 \[ \phi_{j*} ([M_i] ) = -m  \ \ \mbox{and} \ \
 \phi_{j *} ( [L^{ \pi }_i] ) = c + \alpha_j \cdot m.
 \]
\end{cor}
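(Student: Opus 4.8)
The plan is to derive the stated identities from Theorem~\ref{th:mainth} by a purely formal change of basis, passing from the sectional longitudes $\Lambda_i$ and the $H$--vertical indices $\mathfrak{v}_j$ to the resolution longitudes $L^{\pi}_i$ and the combinatorial integer $\alpha_j$ of Definition~\ref{de:alpha}. The only substantive input is Corollary~\ref{cor:REL}, which I will use in the form $[\Lambda_i]=[L^{\pi}_i]-(m_i(v(i))-\lambda_i)[M_i]$ in $H_1(\partial N_i,\Z)$. Note first that in both cases the meridian identities to be proved are literally the meridian identities of Theorem~\ref{th:mainth} (the $\phi_j$ and the $M_i$ are unchanged), so there is nothing to do for those.

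For the longitude identity in Case~1, I would apply $\phi_{j*}$ to the Corollary~\ref{cor:REL} relation for both $i$ and $\sigma(i)$ and substitute into the theorem's identity $\phi_{j*}([\Lambda_i])=\phi_{j*}([\Lambda_{\sigma(i)}])+\mathfrak{v}_j\,\phi_{j*}([M_{\sigma(i)}])$. Using the meridian relation $\phi_{j*}([M_i])=-\phi_{j*}([M_{\sigma(i)}])$ to express every $\phi_{j*}([M_i])$ through $\phi_{j*}([M_{\sigma(i)}])$ and collecting terms, the coefficient of $\phi_{j*}([M_{\sigma(i)}])$ becomes $-m_i(v(i))+\lambda_i-m_{\sigma(i)}(v(\sigma(i)))+\lambda_{\sigma(i)}+\mathfrak{v}_j$, which is exactly $\alpha_j$ by Definition~\ref{de:alpha}. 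This gives $\phi_{j*}([L^{\pi}_i])=\phi_{j*}([L^{\pi}_{\sigma(i)}]+\alpha_j[M_{\sigma(i)}])$, as required.

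For Case~2 the computation is shorter: substituting $[\Lambda_i]=[L^{\pi}_i]-(m_i(v(i))-\lambda_i)[M_i]$ into $\phi_{j*}([\Lambda_i])=c+\mathfrak{v}_j\cdot m$ and then using $\phi_{j*}([M_i])=-m$, I would solve for $\phi_{j*}([L^{\pi}_i])$ to obtain $c+(\mathfrak{v}_j-m_i(v(i))+\lambda_i)\,m$; the coefficient $\lambda_i-m_i(v(i))+\mathfrak{v}_j$ is again precisely $\alpha_j$ in the case $i=\sigma(i)$.

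The argument is routine linear algebra in the relevant first homology groups, and no new geometry enters; the one place requiring care — which I would spell out — is the sign and torus bookkeeping. In Case~1 the identifications $H_1(S^1\times S^1\times\{-1\})\cong H_1(S^1\times S^1\times[-1,1])\cong H_1(S^1\times S^1\times\{1\})$ of Theorem~\ref{th:mainth} must be applied consistently so that the relation of Corollary~\ref{cor:REL}, which a priori lives in $H_1(\partial N_i,\Z)$, is transported correctly under $\phi_{j*}$; and it is exactly the sign flip $\phi_{j*}([M_i])=-\phi_{j*}([M_{\sigma(i)}])$ that fuses the two separate correction terms $m_i(v(i))-\lambda_i$ and $m_{\sigma(i)}(v(\sigma(i)))-\lambda_{\sigma(i)}$ into the single integer $\alpha_j$.
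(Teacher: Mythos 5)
Your proposal is correct and is exactly the derivation the paper intends: the paper states the corollary as an immediate consequence of Theorem~\ref{th:mainth} and Corollary~\ref{cor:REL}, and your computation spells out precisely that substitution, with the meridian sign flip merging the two correction terms $m_i(v(i))-\lambda_i$ and $m_{\sigma(i)}(v(\sigma(i)))-\lambda_{\sigma(i)}$ into $\alpha_j$ in Case~1 and the single term into $\alpha_j$ in Case~2. Nothing is missing.
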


\subsection{The construction of the plumbing graph.} From the
embedded resolution graph
$ \Gamma $ of $ (D, 0) \subset ( \C^2, 0) $ associated with
the resolution ${\pi}$ we construct a plumbing graph $\widehat{\Gamma }$.

Recall that $\Gamma$ has $l$ arrowhead  vertices representing the strict transforms
of $\{D_i\}_{i=1}^l$, and the $i$-th arrowhead is supported (via a unique edge) by
the vertex $v(i)\in V$. We obtain the plumbing graph $\widehat{\Gamma}$ from $\Gamma$ as follows.

(1) Fix $j=\{i,\sigma(i)\}$ and assume that $i\not=\sigma(i)$. Then we identify the two
arrowheads and we replace it by a single new vertex of $\widehat{\Gamma}$.
 We define the Euler number of the new  vertex by
$\alpha_j$. Both two edges (which in $\Gamma$ supported  the arrowheads) will survive as edges of this new vertex
(connecting it with $v(i)$ and $v(\sigma(i))$ respectively),
however one of them will have a negative sign, the other one a
positive sign.  By plumbing calculus, cf. \cite[Prop. 2.1. R0(a)]{neumann1},
the choice of the edge which has negative sign -- denoted by  $\ominus$ --  is irrelevant. (The edges without any decorations, by convention, are edges
 with positive sign.)

(2) Fix $j=\{i,\sigma(i)\}$ such that $i=\sigma(i)$. Then the arrowhead associated with
$i=\sigma(i)$ will be replaced by a new vertex and it will be decorated by Euler number $\alpha_j$.
Furthermore, to this new vertex
we attach the plumbing graph of $Y$ (cf. \ref{ss:plY}) as indicated below.
The  edge connecting the
new vertex and the graph of $Y$  will have a negative sign $\ominus$.

(3) We do all these modification for all $j\in J$, otherwise we keep the shape and decorations of $\Gamma$.

\vspace{2mm}

More precisely,
if the schematic picture of $\Gamma$ is the following,

\begin{picture}(300,100)(-20,0)
\put(0,10){\framebox(130,80)}
\put(120,20){\vector(1,0){40}}
\put(120,28){\makebox(0,0){$e_{v(i')} $}}
\put(120,20){\circle*{4}}
\put(160,40){\makebox(0,0){$ \vdots $}}
\put(190,20){\makebox(0,0){$(\tilde{D}_{i'})$}}
\put(40,50){\makebox(0,0){$\Gamma$}}

\put(120,50){\vector(1,0){40}}
\put(115,58){\makebox(0,0){$e_{v(\sigma(i))} $}}
\put(120,50){\circle*{4}}
\put(190,50){\makebox(0,0){$(\tilde{D}_{\sigma(i)})$}}
\put(120,70){\vector(1,0){40}}
\put(117,78){\makebox(0,0){$e_{v(i)} $}}
\put(120,70){\circle*{4}}
\put(190,70){\makebox(0,0){$(\tilde{D}_i)$}}

\put(250,20){\makebox(0,0)[l]{$(j'=\{i',\sigma(i')\}, \ i'=\sigma(i'))$}}

\put(250,60){\makebox(0,0)[l]{$(j=\{i,\sigma(i)\}, \ i\not=\sigma(i))$}}

\end{picture}

 \noindent then the schematic picture of $\widehat{\Gamma}$ is

 \begin{picture}(300,105)(-20,-5)
\put(0,10){\framebox(130,80)}
\put(120,20){\line(1,0){80}}
\put(120,28){\makebox(0,0){$e_{v(i')} $}}
\put(120,20){\circle*{4}}

\put(160,20){\circle*{4}}
\put(200,20){\circle*{4}}
\put(230,35){\circle*{4}}
\put(230,5){\circle*{4}}
\put(200,20){\line(2,1){30}}
\put(200,20){\line(2,-1){30}}
\put(200,10){\makebox(0,0){$-1$}}
\put(242,5){\makebox(0,0){$-2$}}
\put(242,35){\makebox(0,0){$-2$}}
\put(160,10){\makebox(0,0){$ \alpha_{j'}$}}
\put(181,25){\makebox(0,0){$\ominus$}}
\put(160,40){\makebox(0,0){$ \vdots $}}

\put(120,50){\line(4,1){40}}
\put(115,58){\makebox(0,0){$e_{v(\sigma(i))} $}}
\put(120,50){\circle*{4}}\put(160,60){\circle*{4}}
\put(160,70){\makebox(0,0){$\alpha_j$}}
\put(120,70){\line(4,-1){40}}
\put(117,78){\makebox(0,0){$e_{v(i)} $}}
\put(120,70){\circle*{4}}

\put(-20,50){\makebox(0,0){$\widehat{\Gamma}:$}}

\put(145,50){\makebox(0,0){$\ominus$}}

\end{picture}

In fact, by plumbing calculus (cf. Subsection~\ref{ss:plumbingcalc} or \cite[Prop. 2.1. R0(a)]{neumann1}),  whenever $i=\sigma(i)$
the edge sign from the newly created `branch' (subtree) can be omitted, however at this point we put it since this is
the graph provided by the proof (which reflects properly the corresponding base changes).

Note also that usually the graph $\widehat{\Gamma}$ (that is, the associated intersection form) is not negative definite (or,  it is not
even equivalent via plumbing calculus by a negative definite graph).

Above (when $i\not=\sigma(i)$) the coincidence $v(i)=v(\sigma(i))$ might happen, in fact, in all the cases we
analysed this coincidence (on minimal graph) does happen.

\begin{thm}
 The plumbing $3$-manifold associated with the plumbing graph
 $ \widehat{\Gamma} $ is orientation preserving diffeomorphic with $ \partial F $.
\end{thm}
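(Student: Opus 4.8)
The goal is to identify the plumbed 3-manifold $M^3(\widehat{\Gamma})$ with $\partial F$ as oriented manifolds. The entire machinery has already been assembled: Proposition~\ref{pr:gl} gives the decomposition of $\partial F$ as $(\mathfrak{S}^3\setminus\bigcup_i \mathrm{int}(N_i))\cup_\phi(\bigcup_{j\in J}X_j)$, and Corollary~\ref{cor:identities} describes the gluing maps $\phi_{j*}$ in terms of the resolution longitudes $L^\pi_i$, the meridians $M_i$, and the integers $\alpha_j$. So the plan is essentially to \emph{translate} these gluing identities into the language of plumbing calculus, and verify that the graph modifications (1)--(3) in the construction of $\widehat{\Gamma}$ realize exactly these gluings. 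I would first recall that, by Subsection~\ref{ss:embres}, $\mathfrak{S}^3\simeq\partial(\bigcup_{v\in\mathcal V}N(E_v))$ via $\pi$, so that the first piece $\mathfrak{S}^3\setminus\bigcup_i\mathrm{int}(N_i)$ is precisely the plumbed manifold of $\Gamma$ \emph{minus} the solid-torus neighbourhoods corresponding to the $l$ arrowhead vertices. The resolution longitude $L^\pi_i$ is, by construction, the $S^1$-fibre framing attached to the vertex $v(i)$ supporting the $i$-th arrowhead; this is the canonical framing used by the plumbing construction of \cite{neumann1}.

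\textbf{The two cases.} The heart of the argument is then case-by-case, matching the two types of $X_j$ against the two graph operations. For $i\neq\sigma(i)$ (Case 1), I would verify that attaching a single new vertex with Euler number $\alpha_j$, joined by two edges (one $\oplus$, one $\ominus$) to $v(i)$ and $v(\sigma(i))$, realizes the gluing of $S^1\times S^1\times I$ whose boundary tori are identified by $\phi_{j*}([M_i])=-\phi_{j*}([M_{\sigma(i)}])$ and $\phi_{j*}([L^\pi_i])=\phi_{j*}([L^\pi_{\sigma(i)}]+\alpha_j[M_{\sigma(i)}])$. The point is that $S^1\times S^1\times I$ with these boundary identifications is the plumbing of a single $0$-framed vertex between the two supporting vertices, and the off-diagonal shift by $\alpha_j$ together with the sign data is absorbed by a blow-down / base-change, which by plumbing calculus (R0(a), and the reduction discussed in the $0$-chain absorption [R3] of Subsection~\ref{ss:plumbingcalc}) yields precisely the decorated single vertex of Euler number $\alpha_j$. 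For $i=\sigma(i)$ (Case 2), I would instead invoke the results of Section~\ref{s:Y}: the gluing is by $Y$, whose plumbing graph was computed in \ref{ss:plY} to be the middle-$(-1)$ vertex with two $(-2)$-legs, and whose boundary framing $(m,c)$ was matched in Lemma~\ref{lem:UNIV} with the universal homological generators. The identities $\phi_{j*}([M_i])=-m$ and $\phi_{j*}([L^\pi_i])=c+\alpha_j m$ are then exactly the prescription for splicing the $Y$-graph onto a new vertex of Euler number $\alpha_j$ along a $\ominus$-edge.

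\textbf{Assembling and orientations.} Having dealt with each $j\in J$ locally, the global statement follows because all modifications are disjoint (they touch distinct arrowheads, hence distinct boundary tori of the first piece), so one may perform them simultaneously and keep the rest of $\Gamma$ unchanged; this is exactly step (3). I would then argue that the plumbing $3$-manifold $M^3(\widehat\Gamma)$, assembled from the $N(E_v)$ and the new pieces along the specified framings, is diffeomorphic to the right-hand side of \eqref{eq:mb}, since the gluing matrices \eqref{eq:glue} coincide up to the homotopy class of diffeomorphisms of the torus with the plumbing gluing data. Throughout, I must track that the diffeomorphisms in the diagrams \eqref{eq:dia} and \eqref{eq:dia2} are orientation-preserving (as asserted in the proof of Theorem~\ref{th:mainth}), so that $M^3(\widehat\Gamma)\simeq\partial F$ as \emph{oriented} manifolds.

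\textbf{Main obstacle.} The delicate part will be the bookkeeping of orientations and edge signs, rather than any single hard idea. One must confirm that the $\ominus$-edges inserted in the construction correctly encode the orientation-reversing identification forced by $\phi_{j*}([M_i])=-\phi_{j*}([M_{\sigma(i)}])$ and $\phi_{j*}([M_i])=-m$, and that the passage from the sectional/topological longitudes to the resolution longitude $L^\pi_i$ (via Corollary~\ref{cor:REL}, absorbing the shift $m_i(v(i))-\lambda_i$ into the definition of $\alpha_j$) is consistent with the framing conventions of \cite{neumann1}. In particular, one must check that the plumbing-calculus reductions used to collapse the intermediate $S^1\times S^1\times I$ or $Y$ pieces to the displayed decorated vertices do not silently change the off-diagonal gluing integer; verifying that $\alpha_j$ emerges unchanged (and independent of $H$, by the vertical-index invariance of \ref{ss:topverindex}) is where the real care is required.
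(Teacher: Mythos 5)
Your proposal follows essentially the same route as the paper's proof: both reduce the statement to Corollary~\ref{cor:identities} and interpret the resulting torus gluings via Neumann's plumbing calculus, inserting one new vertex with Euler number $\alpha_j$ for each $j\in J$ (together with the plumbing graph of $Y$ from \ref{ss:plY} when $i=\sigma(i)$). The sign/framing bookkeeping you flag as the main obstacle is exactly what the paper settles by the explicit base-change decomposition (\ref{eq:matrix}), which by \cite[pages 318--320]{neumann1} is interpreted as gluing by a string of length one, i.e. the new vertex of $\widehat{\Gamma}$.
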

\begin{proof}
This follows from Corollary \ref{cor:identities} and the relationships between this base--change
and the plumbing construction as it is described  in \cite{neumann1}, pages 318--319.
In the case $i=\sigma(i)$ the plumbing graph of $Y$ from \ref{ss:plY} should be also used.
Note that in both cases Corollary \ref{cor:identities} provides a base change matrix
from the left hand side of (\ref{eq:matrix}), this decomposes as a product as in the right hand side.
\begin{equation}\label{eq:matrix}
\begin{pmatrix}-1 & \alpha_j\\ 0& 1\end{pmatrix}=
\begin{pmatrix}0 & 1\\ 1& 0\end{pmatrix}
\begin{pmatrix}-1 & 0\\ -\alpha_j& 1\end{pmatrix}
\begin{pmatrix} 0 & -1 \\ -1& 0\end{pmatrix}.
\end{equation}
This, according to \cite[pages 318--320]{neumann1} is interpreted as a `gluing' by a string (with length one),
this is the new vertex (for each $j$) given by the construction of $\widehat{\Gamma}$.
\end{proof}





\section{The vertical index for $\Sigma^{1,0}$ type germs}\label{s:vertical}

\subsection{$\Sigma^{1,0}$ type germs} Assume that $\Phi(s, t) = ( s, t^2, t d(s,t) ) $, where $d(s,t) = g(s, t^2) $ for  some germ $g$, such that $d(s,t)$  is not divisible by $t$. In this case the equation of the image is $f= yg^2(x,y) - z^2 =0 $. These germs (more precisely, their $ \mathscr{A}$-equivalence classes) are labelled by the Boardman symbol $ \Sigma^{1,0}$, and they are exactly the finitely determined corank--$1$ map germs with no triple points in their stabilization, cf. Example~\ref{ex:Sig10},
see \cite{Mond0,nunodoodle} for details.

 The set of double points is $D = \{ (s, t) \in \C^2 \ | \  d(s,t)= 0 \}$. It is
 equipped with the involution $ \iota: D \to D $, $ \iota (s, t) = (s, -t) $.
 The set of double values is
 \[ \Sigma (f) = \{ (x, y, z) \in \C^3 \ | \  z=0 \mbox{ and } g(x,y) =0 \} \mbox{.}\]

There are several options for the choice of the  transverse section. E.g.,
Proposition \ref{pr:H} works with $a_1=a_2=a_3=1$. Furthermore,  one can also take
$H_2 (x, y, z) = z $ or $  H_3 (x, y, z) = g(x, y)$.

If we take $H(x,y,z)=z$ then the $H$--vertical indexes
$ \mathfrak{v}_j$ are zero. This fact  follows directly from the product decomposition of $f$.
(However, for different other choices it can be nonzero as well.)

\begin{prop} For $ \Sigma^{1,0} $--type germs
$ \Sigma_j \mathfrak{vi}_j = -  \Sigma_{i \neq k} D_i \cdot D_k - C( \Phi ) $, where $ C(\Phi) $ is the number of crosscaps of a stabilization of $ \Phi$, cf. Subsection~\ref{ss:CTN} or \cite{NP, Mond2}.
\end{prop}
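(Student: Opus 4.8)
The plan is to compute the left-hand side directly from the defining data, exploiting that for $\Sigma^{1,0}$-type germs there is a particularly convenient transverse section. First I would take $H(x,y,z)=z$. Since $f=yg^2-z^2$ factors along $\Sigma^*$ as $-(z-\sqrt{y}\,g)(z+\sqrt{y}\,g)$, the two local branches $f_1,f_2$ satisfy $T_1(f_1)+T_1(f_2)=2\,dz=2\,T_1(H)$ at points of $\Sigma=\{z=0,\,g=0\}$ (the $dy$-contribution of $\sqrt{y}\,g$ vanishes because $g=0$ there). Hence $H$ is indeed a transverse section in the sense of Definition~\ref{def:H}, and in the coordinates of the proof of Theorem~\ref{th:mainth} one finds $\beta_1=\beta_2=\tfrac12$, so $\mathfrak{b}_j(\tau)\equiv\tfrac14$ is a nonzero constant with neither zero nor pole. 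Therefore $\mathfrak{v}_j=0$ for every $j\in J$, as already asserted in the text preceding the statement.

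\textbf{Collapsing the sum of vertical indices.}
With $\mathfrak{v}_j=0$, the defining formula \eqref{eq:alphauj} gives $\mathfrak{vi}_j=\lambda_i+\lambda_{\sigma(i)}$ when $i\neq\sigma(i)$ and $\mathfrak{vi}_j=\lambda_i$ when $i=\sigma(i)$. Since $J$ is precisely the set of pairs $\{i,\sigma(i)\}$, every index $i\in\{1,\dots,l\}$ occurs in exactly one class $j$, so summing telescopes to
\[
\sum_{j\in J}\mathfrak{vi}_j=\sum_{i=1}^{l}\lambda_i.
\]
Next I would substitute $\lambda_i=-\sum_{k\neq i}D_k\cdot D_i-D_\sharp\cdot D_i$ from Lemma~\ref{lem:bases} and use additivity of the intersection multiplicity over the components of $D$ to obtain
\[
\sum_{j\in J}\mathfrak{vi}_j=-\sum_{i\neq k}D_i\cdot D_k-D_\sharp\cdot D.
\]
Comparing with the claimed identity, it remains only to prove $D_\sharp\cdot D=C(\Phi)$.

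\textbf{Identifying $D_\sharp$ and matching $C(\Phi)$.}
Here I would compute $\Phi^{*}(H)=\Phi^{*}(z)=t\,d(s,t)$. Because $d$ is the reduced equation of $D$ (finite determinacy, Theorem~\ref{th:findoub}) and $d=g(s,t^2)$ is not divisible by $t$, the factor $t$ and $d$ share no component, so $d_\sharp=t$ up to a unit and $D_\sharp=\{t=0\}$. Thus $D_\sharp\cdot D$ is the intersection multiplicity of $\{t=0\}$ with $D=\{d=0\}$, i.e.\ $\mathrm{ord}_s\,d(s,0)$. On the other hand, by the corank--$1$ description of $C(\Phi)$ in Example~\ref{ex:cor1} applied to $\Phi(s,t)=(s,t^2,t\,d)$, the invariant $C(\Phi)$ is the intersection multiplicity of $\{\partial_t(t^2)=0\}=\{t=0\}$ and $\{\partial_t(t\,d)=0\}=\{d+t\,\partial_t d=0\}$; restricting the second equation to $\{t=0\}$ gives $d(s,0)$, so this multiplicity is again $\mathrm{ord}_s\,d(s,0)$. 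Hence $D_\sharp\cdot D=C(\Phi)$, completing the proof.

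\textbf{Main obstacle.}
The computations themselves are short; the delicate point is the last step, where I must be careful that $d$ really is the \emph{reduced} equation of $D$ and that replacing $\{d+t\,\partial_t d=0\}$ by $\{d=0\}$ along $\{t=0\}$ preserves the intersection multiplicity with $\{t=0\}$ (this is immediate once one observes both equal $\mathrm{ord}_s d(s,0)$, but it should be stated cleanly). A secondary point worth spelling out is the bookkeeping in the telescoping sum over $J$, including the case $i=\sigma(i)$, so that each $\lambda_i$ is counted exactly once; beyond that the argument is a direct substitution using results already established in the excerpt.
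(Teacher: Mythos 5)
Your proof is correct and follows essentially the same route as the paper's: choose $H=z$ so that the product decomposition of $f$ forces $\mathfrak{v}_j=0$, sum the $\lambda_i$ via Lemma~\ref{lem:bases} to get $-\Sigma_{i\neq k}D_i\cdot D_k - D_\sharp\cdot D$ with $D_\sharp=\{t=0\}$, and identify $D\cdot\{t=0\}=\dim_\C \mathcal{O}_{(\C^2,0)}/(t,d)$ with $C(\Phi)$ through the Jacobian ideal $(2t,\,d+t\partial_t d)=(t,d)$ of Example~\ref{ex:cor1}. If anything, your bookkeeping through the definition $\mathfrak{vi}_j=\lambda_i+\lambda_{\sigma(i)}+\mathfrak{v}_j$ is slightly cleaner than the paper's intermediate display for the case $i\neq\sigma(i)$, which omits the cross terms $-2D_i\cdot D_{\sigma(i)}$ that your telescoping sum correctly retains.
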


\begin{proof}
 With the choice of $ H=z$, $ \Phi^*(H)= td(s,t) $ and $ \mathfrak{v}_j=0$. If $i \neq \sigma(i)$, then
\[ \mathfrak{vi}_j= -\Sigma_{k \not \in \{i,\sigma(i)\}}
( D_i + D_{\sigma(i)}) \cdot D_k - (D_i+D_{\sigma(i)}) \cdot \{t=0\}, \]
while for  $i= \sigma(i) $  (see \ref{ss:topverindex}) one has
\[ \mathfrak{vi}_j= -\Sigma_{k \neq i} D_i \cdot D_k - D_i \cdot \{t=0\}.
 \]
Taking the sum for all $j$ we get the identity, once we verify
 \[ \Sigma_{i=1}^l D_i \cdot \{t=0 \} = D \cdot \{t=0\} =
  \dim_{ \C} \frac{ \mathcal{O}_{ ( \C^2, 0)}}{ ( t, d(s,t) )} \mbox{.}
 \]
 Here, the last identity follows from the algebraic definition of $C( \Phi)$, as the codimension of the Jacobian ideal of $ \Phi $, see Subsection~\ref{ss:CTN} and Example~\ref{ex:cor1}
 . In our case this ideal is genereted by $ t$ and $d(s, t) $.
 \end{proof}

\section{Examples}\label{s:milnex}

In the next paragraphs we provide some concrete examples. The families are taken from
D. Mond's list  of simple germs  \cite[Table 1]{Mond2}. In the sequel
we provide the resolution graph of $D$ and the plumbing graph of $\partial F$. 
The  computations are left to the reader. Example~\ref{ex:milnex} serves as a model to determine the resolution graph of $D$. By Section~\ref{s:vertical} the equation of $D$ and all the gluing invariants can be determined for $ \Sigma^{1, 0} $ type germs.

The last two examples are more special than the previous ones: they are not of type
$\Sigma^{1,0}$. In the first family ($ H_k $ from the list \cite[Table 1]{Mond2})
the calculation  has some nontrivial steps, hence
 we provide more details. The last example \ref{ss:2cor} is a corank
 $2$ map germ from \cite{marar}, cf. Example~\ref{ex:cor2}.

\subsection{Whitney umbrella, or cross--cap} $\Phi (s, t) = (s, t^2, ts) $ and
$ f(x, y, z) = x^2y - z^2 =0$.
 The graph of $D$ is on the left, while
 our algorithm provides the  graph from the right for
$ \partial F$

\begin{picture}(300,70)(-160,25)
 \put(-100,60){\circle*{4}}
                         \put(-100,60){\vector(1,0){30}}
                         \put(-105,70){\makebox(0,0){$-1$}}

                        \put(100,60){\circle*{4}}
                           \put(20,60){\circle*{4}}
                        \put(60,60){\circle*{4}}
                         \put(130,75){\circle*{4}}
                         \put(130,45){\circle*{4}}
                         \put(100,60){\line(2,1){30}}
                         \put(20,60){\line(1,0){40}}
                         \put(60,60){\line(1,0){40}}
                         \put(100,60){\line(2,-1){30}}
                         \put(95,70){\makebox(0,0){$-1$}}
                          \put(15,70){\makebox(0,0){$-1$}}
                          \put(143,75){\makebox(0,0){$-2$}}
                          \put(58,70){\makebox(0,0){$ -2 $}}
                           \put(143,45){\makebox(0,0){$-2$}}
                           \end{picture}

\noindent which after plumbing calculus (blowing down twice followed by a $0$-chain absorption, see Subsection~\ref{ss:plumbingcalc}) transforms into
                           \begin{picture}(20,10)(50,58)
                        \put(60,60){\circle*{4}}
                          \put(58,70){\makebox(0,0){$ -4 $}}
                           \end{picture}
(Compare also with Example 10.4.2 from \cite{NSz}.)

\subsection{\bf{$S_1$}}
Set
$ \Phi(s, t) = ( s, t^2, t^3 + s^2t ) $, hence
$f(x, y, z) = y(y+x^2)^2 - z^2$.
The graph of $D$ is the first diagram, while the other two equivalent
graphs represent $\partial F$

\begin{picture}(300,60)(-50,40)
 \put(0,60){\circle*{4}}
                         \put(0,60){\vector(2,1){30}}
                         \put(0,60){\vector(2,-1){30}}
                         \put(-5,70){\makebox(0,0){$-1$}}
                          \put(-5,50){\makebox(0,0){$v$}}

                         \put(100,60){\circle*{4}}
                       \put(140,60){\circle*{4}}
                        \put(120,60){\circle{40}}
                         \put(92,70){\makebox(0,0){$-1$}}
                             \put(148,70){\makebox(0,0){$-6$}}
                              \put(120,88){\makebox(0,0){$ \ominus $}}
 \put(170,60){\makebox(0,0){$ \mbox{or} $}}
                       \put(240,60){\circle*{4}}
                          \put(220,60){\circle{40}}
                             \put(248,70){\makebox(0,0){$-4$}}
                              \put(195,60){\makebox(0,0){$ \ominus $}}
                         \end{picture}

\subsection{The family $S_{k-1} $, $k\geq 2$}
 One has $ \Phi(s, t) = ( s, t^2, t^3 + s^k t )$ and
$ f(x, y, z) = y(y+x^k)^2 - z^2 =0 $. Cf. Examples \ref{ex:cor1}, \ref{ex:1}.

 \vspace{2mm}

 \emph{Case 1}: $k=2n$.

 \vspace{2mm}

\begin{picture}(300,40)(-150,45)
                         \put(100,60){\circle*{4}}
                          \put(60,60){\circle*{4}}
                           \put(0,60){\circle*{4}}
                           \put(60,60){\line(1,0){40}}
                             \put(40,60){\line(1,0){20}}
                             \put(0,60){\line(1,0){20}}
                              \put(-40,60){\circle*{4}}
                              \put(-40,60){\line(1,0){40}}
                         \put(100,60){\vector(2,1){30}}
                         \put(100,60){\vector(2,-1){30}}
                         \put(95,70){\makebox(0,0){$-1$}}
                          \put(57,70){\makebox(0,0){$-2$}}
                           \put(-3,70){\makebox(0,0){$-2$}}
                            \put(-43,70){\makebox(0,0){$-2$}}
                          \put(29,57){\makebox(0,0){$\dots$}}
                           \put(-100,60){\makebox(0,0){$D:$}}
                         \end{picture}

\begin{picture}(300,60)(-150,40)
                         \put(100,60){\circle*{4}}
                          \put(60,60){\circle*{4}}
                           \put(0,60){\circle*{4}}
                           \put(140,60){\circle*{4}}
                           \put(120,60){\circle{40}}
                           \put(60,60){\line(1,0){40}}
                             \put(40,60){\line(1,0){20}}
                             \put(0,60){\line(1,0){20}}
                              \put(-40,60){\circle*{4}}
                              \put(-40,60){\line(1,0){40}}
                         \put(95,70){\makebox(0,0){$-1$}}
                          \put(57,70){\makebox(0,0){$-2$}}
                           \put(-3,70){\makebox(0,0){$-2$}}
                            \put(-43,70){\makebox(0,0){$-2$}}
                             \put(146,70){\makebox(0,0){$-3k$}}
                               \put(120,88){\makebox(0,0){$\ominus$}}
                             \put(29,57){\makebox(0,0){$\dots$}}
\put(-100,60){\makebox(0,0){$\partial F$:}}
                         \end{picture}

\noindent Here the number of $ (-2)$-vertices is $n-1$.

 \vspace{2mm}

 \emph{Case 2}: $k=2n+1$.

 \vspace{2mm}

\begin{picture}(300,50)(-100,40)
 \put(140,60){\circle*{4}}
  \put(-100,60){\makebox(0,0){$D:$}}
                         \put(100,60){\circle*{4}}
                          \put(60,60){\circle*{4}}
                           \put(0,60){\circle*{4}}
                            \put(140,30){\circle*{4}}
                           \put(60,60){\line(1,0){40}}
                             \put(40,60){\line(1,0){20}}
                             \put(0,60){\line(1,0){20}}
                              \put(-40,60){\circle*{4}}
                              \put(-40,60){\line(1,0){40}}
                              \put(100,60){\line(1,0){40}}
                              \put(140,60){\line(0,-1){30}}
                         \put(140,60){\vector(1,0){40}}
                         \put(137,70){\makebox(0,0){$-1$}}
                         \put(97,70){\makebox(0,0){$-3$}}
                          \put(57,70){\makebox(0,0){$-2$}}
                           \put(-3,70){\makebox(0,0){$-2$}}
                            \put(-43,70){\makebox(0,0){$-2$}}
                          \put(29,57){\makebox(0,0){$\dots$}}
                  \put(152,30){\makebox(0,0){$-2$}}
                         \end{picture}

\begin{picture}(300,100)(-100,10)
\put(-100,60){\makebox(0,0){$\partial F$:}}
\put(140,60){\circle*{4}}
                         \put(100,60){\circle*{4}}
                          \put(60,60){\circle*{4}}
                           \put(0,60){\circle*{4}}
                            \put(140,30){\circle*{4}}
                           \put(60,60){\line(1,0){40}}
                             \put(40,60){\line(1,0){20}}
                             \put(0,60){\line(1,0){20}}
                              \put(-40,60){\circle*{4}}
                              \put(-40,60){\line(1,0){40}}
                              \put(100,60){\line(1,0){40}}
                              \put(140,60){\line(0,-1){30}}
                         \put(140,60){\line(1,0){40}}
                         \put(137,70){\makebox(0,0){$-1$}}
                         \put(97,70){\makebox(0,0){$-3$}}
                          \put(57,70){\makebox(0,0){$-2$}}
                           \put(-3,70){\makebox(0,0){$-2$}}
                            \put(-43,70){\makebox(0,0){$-2$}}
                          \put(29,57){\makebox(0,0){$\dots$}}
                  \put(152,30){\makebox(0,0){$-2$}}
                  \put(220,60){\circle*{4}}
                        \put(180,60){\circle*{4}}
                         \put(250,75){\circle*{4}}
                         \put(250,45){\circle*{4}}
                         \put(220,60){\line(2,1){30}}
                         \put(180,60){\line(1,0){40}}
                         \put(220,60){\line(2,-1){30}}
                         \put(210,70){\makebox(0,0){$-1$}}
                          \put(262,75){\makebox(0,0){$-2$}}
                          \put(178,70){\makebox(0,0){$ -3k $}}
                           \put(262,45){\makebox(0,0){$-2$}}
                         \end{picture}

\noindent where the number of $(-2)$-vertices on the left is $n-1$.

\subsection{The family $B_{k} $ ($k\geq 1$)}
$ \Phi(s, t) = ( s, t^2, s^2t + t^{2k+1} )$ and
$ f= y(x^2+y^k)^2 - z^2 =0 $.

The graph of $D$ is

\begin{picture}(300,60)(-150,30)
\put(-100,60){\makebox(0,0){$ D $:}}
                         \put(100,60){\circle*{4}}
                          \put(60,60){\circle*{4}}
                           \put(0,60){\circle*{4}}
                           \put(60,60){\line(1,0){40}}
                             \put(40,60){\line(1,0){20}}
                             \put(0,60){\line(1,0){20}}
                              \put(-40,60){\circle*{4}}
                              \put(-40,60){\line(1,0){40}}
                         \put(100,60){\vector(2,1){30}}
                         \put(100,60){\vector(2,-1){30}}
                         \put(95,70){\makebox(0,0){$-1$}}
                          \put(57,70){\makebox(0,0){$-2$}}
                           \put(-3,70){\makebox(0,0){$-2$}}
                            \put(-43,70){\makebox(0,0){$-2$}}
                          \put(29,57){\makebox(0,0){$\dots$}}
                         \end{picture}

\noindent  where the number of $( -2)$-vertices is $k-1$.

Note that the double point curve $ D $ does not depend on the parity of $k$,
however $ \Sigma $ has one component, when $ k$ is odd, and two components,
when $k$ is even. Thus the pairing $ \sigma $ changes the components of
$ D$ in the odd case, and $ \sigma $ is the identity in the even case.

\color{black}

 \vspace{2mm}

 \emph{Case 1}: $k=2n+1$.

 \vspace{2mm}


\begin{picture}(300,65)(-150,30)
\put(-100,60){\makebox(0,0){$\partial F$:}}
                         \put(100,60){\circle*{4}}
                          \put(60,60){\circle*{4}}
                           \put(0,60){\circle*{4}}
                           \put(140,60){\circle*{4}}
                           \put(120,60){\circle{40}}
                           \put(60,60){\line(1,0){40}}
                             \put(40,60){\line(1,0){20}}
                             \put(0,60){\line(1,0){20}}
                              \put(-40,60){\circle*{4}}
                              \put(-40,60){\line(1,0){40}}
                         \put(95,70){\makebox(0,0){$-1$}}
                          \put(57,70){\makebox(0,0){$-2$}}
                           \put(-3,70){\makebox(0,0){$-2$}}
                            \put(-43,70){\makebox(0,0){$-2$}}
                             \put(158,70){\makebox(0,0){$-4k-2$}}
                               \put(120,88){\makebox(0,0){$\ominus$}}
                             \put(29,57){\makebox(0,0){$\dots$}}
                         \end{picture}

\noindent
where the number of $( -2)$-vertices is $k-1$.

 \vspace{2mm}

 \emph{Case 2}: $k=2n$.

 \vspace{2mm}


\begin{picture}(300,140)(-150,-20)
\put(-100,60){\makebox(0,0){$\partial F$:}}
                         \put(100,60){\circle*{4}}
                          \put(60,60){\circle*{4}}
                           \put(0,60){\circle*{4}}
                           \put(60,60){\line(1,0){40}}
                             \put(40,60){\line(1,0){20}}
                             \put(0,60){\line(1,0){20}}
                              \put(-40,60){\circle*{4}}
                              \put(-40,60){\line(1,0){40}}
                         \put(100,60){\line(0,1){40}}
                         \put(100,60){\line(0,-1){40}}
                         \put(100,100){\circle*{4}}
                         \put(100,20){\circle*{4}}
                         \put(90,70){\makebox(0,0){$-1$}}
                          \put(57,70){\makebox(0,0){$-2$}}
                           \put(-3,70){\makebox(0,0){$-2$}}
                            \put(-43,70){\makebox(0,0){$-2$}}
                          \put(29,57){\makebox(0,0){$\dots$}}
                             \put(140,100){\circle*{4}}
                         \put(140,20){\circle*{4}}
                          \put(100,100){\line(1,0){40}}
                           \put(100,20){\line(1,0){40}}
                            \put(140,100){\line(2,1){30}}
                               \put(140,100){\line(2,-1){30}}
                                  \put(140,20){\line(2,1){30}}
                                     \put(140,20){\line(2,-1){30}}
                                      \put(170,115){\circle*{4}}
                                       \put(170,85){\circle*{4}}
                                        \put(170,35){\circle*{4}}
                                         \put(170,5){\circle*{4}}
                             \put(90,110){\makebox(0,0){$-2k-1$}}
                              \put(77,25){\makebox(0,0){$-2k-1$}}
                               \put(137,110){\makebox(0,0){$-1$}}
                              \put(137,30){\makebox(0,0){$-1$}}
                               \put(90,110){\makebox(0,0){$-2k-1$}}
                              \put(185, 115){\makebox(0,0){$-2$}}
                           \put(185, 85){\makebox(0,0){$-2$}}
                            \put(185, 35){\makebox(0,0){$-2$}}
                             \put(185,5){\makebox(0,0){$-2$}}
                         \end{picture}

\noindent
where the number of $(-2)$-vertexes on the left is again $k-1$.

\subsection{The family $C_k $ ($k\geq 1$)}
 $\Phi(s, t) = ( s, t^2, st^3 + s^k t )$ and
$ f = y(xy+x^k)^2 - z^2 =0$. The resolution of $ D$ is calculated in Example~\ref{ex:milnex}.

 \vspace{2mm}

 \emph{Case 1}: $k=2n+1$.

 \vspace{2mm}

\begin{picture}(300,50)(-200,40)
 \put(-180,60){\makebox(0,0){$D:$}}
                         \put(100,60){\circle*{4}}
                          \put(60,60){\circle*{4}}
                           \put(0,60){\circle*{4}}
                           \put(60,60){\line(1,0){40}}
                             \put(40,60){\line(1,0){20}}
                             \put(0,60){\line(1,0){20}}
                              \put(-40,60){\circle*{4}}
                              \put(-40,60){\line(1,0){40}}
                         \put(100,60){\vector(2,1){30}}
                       \put(-40,60){\vector(-1,0){40}}
                         \put(100,60){\vector(2,-1){30}}
                         \put(95,70){\makebox(0,0){$-1$}}
                          \put(57,70){\makebox(0,0){$-2$}}
                           \put(-3,70){\makebox(0,0){$-2$}}
                            \put(-43,70){\makebox(0,0){$-2$}}
                          \put(29,57){\makebox(0,0){$\dots$}}
                         \end{picture}

\begin{picture}(300,70)(-200,40)
\put(-180,60){\makebox(0,0){$\partial F$:}}
                         \put(100,60){\circle*{4}}
                          \put(60,60){\circle*{4}}
                           \put(0,60){\circle*{4}}
                           \put(140,60){\circle*{4}}
                           \put(120,60){\circle{40}}
                           \put(60,60){\line(1,0){40}}
                             \put(40,60){\line(1,0){20}}
                             \put(0,60){\line(1,0){20}}
                              \put(-40,60){\circle*{4}}
                              \put(-40,60){\line(1,0){40}}
                         \put(95,70){\makebox(0,0){$-1$}}
                          \put(57,70){\makebox(0,0){$-2$}}
                           \put(-3,70){\makebox(0,0){$-2$}}
                            \put(-43,70){\makebox(0,0){$-2$}}
                             \put(160,70){\makebox(0,0){$-3k+1$}}
                               \put(120,88){\makebox(0,0){$\ominus$}}
                             \put(29,57){\makebox(0,0){$\dots$}}
                              \put(-40,60){\line(-1,0){40}}
                                \put(-120,60){\circle*{4}}
                                \put(-120,60){\line(-2,1){30}}
                                \put(-120,60){\line(-2,-1){30}}
                                 \put(-150,75){\circle*{4}}
                                 \put(-150,45){\circle*{4}}
                                  \put(-80,60){\circle*{4}}
                                   \put(-80,60){\line(-1,0){40}}
                                    \put(-138,75){\makebox(0,0){$-2$}}
                                          \put(-138,45){\makebox(0,0){$-2$}}
                                           \put(-80,70){\makebox(0,0){$-4$}}
                                            \put(-115,70){\makebox(0,0){$-1$}}
                         \end{picture}

\noindent where the number of $ (-2)$-vertices in the middle is $n-1$.

 \vspace{2mm}

 \emph{Case 2}: $k=2n$.

 \vspace{2mm}

\begin{picture}(300,50)(-160,30)
 \put(-140,60){\makebox(0,0){$D:$}}
 \put(140,60){\circle*{4}}
                         \put(100,60){\circle*{4}}
                          \put(60,60){\circle*{4}}
                           \put(0,60){\circle*{4}}
                            \put(140,30){\circle*{4}}
                           \put(60,60){\line(1,0){40}}
                             \put(40,60){\line(1,0){20}}
                             \put(0,60){\line(1,0){20}}
                              \put(-40,60){\circle*{4}}
                              \put(-40,60){\line(1,0){40}}
                              \put(100,60){\line(1,0){40}}
                              \put(140,60){\line(0,-1){30}}
                         \put(140,60){\vector(1,0){40}}
                         \put(137,70){\makebox(0,0){$-1$}}
                         \put(97,70){\makebox(0,0){$-3$}}
                          \put(57,70){\makebox(0,0){$-2$}}
                           \put(-3,70){\makebox(0,0){$-2$}}
                            \put(-43,70){\makebox(0,0){$-2$}}
                          \put(29,57){\makebox(0,0){$\dots$}}
                  \put(152,30){\makebox(0,0){$-2$}}
                  \put(-40,60){\vector(-1,0){30}}
                         \end{picture}

\noindent and the graph of $\partial F$ is

\begin{picture}(300,70)(-160,25)
\put(140,60){\circle*{4}}
                         \put(100,60){\circle*{4}}
                          \put(60,60){\circle*{4}}
                           \put(0,60){\circle*{4}}
                            \put(140,30){\circle*{4}}
                           \put(60,60){\line(1,0){40}}
                             \put(40,60){\line(1,0){20}}
                             \put(0,60){\line(1,0){20}}
                              \put(-40,60){\circle*{4}}
                              \put(-40,60){\line(1,0){40}}
                              \put(100,60){\line(1,0){40}}
                              \put(140,60){\line(0,-1){30}}
                         \put(140,60){\line(1,0){40}}
                         \put(137,70){\makebox(0,0){$-1$}}
                         \put(97,70){\makebox(0,0){$-3$}}
                          \put(57,70){\makebox(0,0){$-2$}}
                           \put(-3,70){\makebox(0,0){$-2$}}
                            \put(-43,70){\makebox(0,0){$-2$}}
                          \put(29,57){\makebox(0,0){$\dots$}}
                  \put(150,30){\makebox(0,0){$-2$}}
                  \put(220,60){\circle*{4}}
                        \put(180,60){\circle*{4}}
                         \put(250,75){\circle*{4}}
                         \put(250,45){\circle*{4}}
                         \put(220,60){\line(2,1){30}}
                         \put(180,60){\line(1,0){40}}
                         \put(220,60){\line(2,-1){30}}
                         \put(213,70){\makebox(0,0){$-1$}}
                          \put(237,75){\makebox(0,0){$-2$}}
                          \put(176,70){\makebox(0,0){$ -3k +1 $}}
                           \put(237,45){\makebox(0,0){$-2$}}
                             \put(-120,60){\circle*{4}}
                                \put(-120,60){\line(-2,1){30}}
                                \put(-120,60){\line(-2,-1){30}}
                                 \put(-80,60){\line(1,0){40}}
                                 \put(-150,75){\circle*{4}}
                                 \put(-150,45){\circle*{4}}
                                  \put(-80,60){\circle*{4}}
                                   \put(-80,60){\line(-1,0){40}}
                                    \put(-138,75){\makebox(0,0){$-2$}}
                                          \put(-138,45){\makebox(0,0){$-2$}}
                                           \put(-80,70){\makebox(0,0){$-4$}}
                                            \put(-115,70){\makebox(0,0){$-1$}}
                         \end{picture}

\noindent
where the number of $(-2)$-vertices in the middle is $n-2$.

 \subsection{$F_{4} $}
 $ \Phi(s, t) = ( s, t^2, s^3t + t^5 )$ and
$ f= y(x^3+y^2)^2 - z^2 =0$.

 \begin{picture}(300,70)(-200,30)

                           \put(-150,60){\circle*{4}}
                         \put(-110,60){\circle*{4}}
                             \put(-150,60){\line(1,0){40}}
                              \put(-110,60){\line(0,-1){30}}
                              \put(-190,60){\circle*{4}}
                                \put(-110,30){\circle*{4}}
                              \put(-190,60){\line(1,0){40}}
                         \put(-110,60){\vector(1,0){40}}
                           \put(-153,70){\makebox(0,0){$-2$}}
                             \put(-113,70){\makebox(0,0){$-1$}}
                            \put(-193,70){\makebox(0,0){$-2$}}
                           \put(-123,30){\makebox(0,0){$-4$}}

                           \put(0,60){\circle*{4}}
                         \put(40,60){\circle*{4}}
                         \put(80,60){\circle*{4}}
                         \put(120,60){\circle*{4}}
                         \put(150,75){\circle*{4}}
                         \put(150,45){\circle*{4}}
                             \put(0,60){\line(1,0){40}}
                              \put(40,60){\line(0,-1){30}}
                              \put(-40,60){\circle*{4}}
                                \put(40,30){\circle*{4}}
                              \put(-40,60){\line(1,0){40}}
                         \put(40,60){\line(1,0){40}}
                          \put(80,60){\line(1,0){40}}
                          \put(120,60){\line(2,1){30}}
                          \put(120,60){\line(2,-1){30}}
                           \put(-3,70){\makebox(0,0){$-2$}}
                             \put(37,70){\makebox(0,0){$-1$}}
                            \put(-43,70){\makebox(0,0){$-2$}}
                             \put(77,70){\makebox(0,0){$-15$}}
                             \put(113,70){\makebox(0,0){$-1$}}
                             \put(136,75){\makebox(0,0){$-2$}}
                              \put(136,45){\makebox(0,0){$-2$}}
                           \put(27,30){\makebox(0,0){$-4$}}
                         \end{picture}

  \subsection{The family $H_{k} $, $k\geq 1$}\labelpar{ss:Hk}
  In this case
$ \Phi(s, t) = ( s, st + t^{3k-1}, t^3 ) $ and the equation of the image is calculated as the $0^{th}$ fitting ideal of $ \Phi_* ( \mathcal{O}_{ ( \C^2, 0 ) } ) $ in Example~\ref{ex:Hk}, $ f(x, y, z) = z^{3k-1} - y^3 + x^3 z + 3 xyz^k =0 $.

When $ k>1$, the local form of $ T^2 f $ along $ p ( \tau ) = ( \tau^{3k-2}, - \tau^{3k-1}, \tau^3 ) \in \Sigma^* $ is
\[
 \frac{\tau^{3k-1}}{12 k^2 - 12k + 4}
 \big[-\big((3k-3)- (3k-1) \sqrt{3} i \big) \tau x' +
 \big(3k +(3k-2) \sqrt{3} i\big) y' +
 (6 k^2 - 6k +2 ) \tau^{3k-4} z'
 \big] \cdot \]
\[ \cdot \big[-\big((3k-3)+ (3k-1) \sqrt{3} i\big) \tau x' +
 \big(3k-(3k-2) \sqrt{3} i\big) y' +
 (6 k^2 - 6k +2 ) \tau^{3k-4} z'
 \big]
 \mbox{,}
\]
where $ x'= x-\tau^{3k-2} $, $y'= y+ \tau^{3k-1} $, $ z'= z- \tau^3 $. With the choice
 $ H(x, y, z) = \partial_x f (x, y, z)+\partial_y f(x, y, z)+\partial_z f(x, y, z)$,
 one has $\mathfrak{v} = 3k-1$.

For $ k=1 $ one gets
 \[
 T^2(f) = \big[z' + \big( \frac{3}{2} + \frac{ \sqrt{3}}{2} i \big)
 \tau y' + \sqrt{3} i \tau^2 x' \big] \cdot
 \big[z' + \big( \frac{3}{2} - \frac{ \sqrt{3}}{2} i \big) \tau y' -
 \sqrt{3} i \tau^2 x' \big] \mbox{,}
\]
where $ x'= x-\tau $, $y'= y+ \tau^2 $, $ z'= z- \tau^3 $.
 In this case  $ \mathfrak{v} = 0 $.

In all cases  $ \lambda_1 + \lambda_2 + \mathfrak{v} = -3k-1 $.

\begin{picture}(300,60)(-150,40)
 \put(-140,60){\makebox(0,0){$D$:}}
                         \put(100,60){\circle*{4}}
                          \put(60,60){\circle*{4}}
                           \put(0,60){\circle*{4}}
                           \put(60,60){\line(1,0){40}}
                             \put(40,60){\line(1,0){20}}
                             \put(0,60){\line(1,0){20}}
                              \put(-40,60){\circle*{4}}
                              \put(-40,60){\line(1,0){40}}
                         \put(100,60){\vector(2,1){30}}
                         \put(100,60){\vector(2,-1){30}}
                         \put(95,70){\makebox(0,0){$-1$}}
                          \put(57,70){\makebox(0,0){$-2$}}
                           \put(-3,70){\makebox(0,0){$-2$}}
                            \put(-43,70){\makebox(0,0){$-2$}}
                          \put(29,57){\makebox(0,0){$\dots$}}
                         \end{picture}

\begin{picture}(300,70)(-150,40)
 \put(-140,60){\makebox(0,0){$\partial F$:}}
                         \put(100,60){\circle*{4}}
                          \put(60,60){\circle*{4}}
                           \put(0,60){\circle*{4}}
                           \put(140,60){\circle*{4}}
                           \put(120,60){\circle{40}}
                           \put(60,60){\line(1,0){40}}
                             \put(40,60){\line(1,0){20}}
                             \put(0,60){\line(1,0){20}}
                              \put(-40,60){\circle*{4}}
                              \put(-40,60){\line(1,0){40}}
                         \put(95,70){\makebox(0,0){$-1$}}
                          \put(57,70){\makebox(0,0){$-2$}}
                           \put(-3,70){\makebox(0,0){$-2$}}
                            \put(-43,70){\makebox(0,0){$-2$}}
                             \put(160,70){\makebox(0,0){$-9k +3$}}
                               \put(120,88){\makebox(0,0){$\ominus$}}
                             \put(29,57){\makebox(0,0){$\dots$}}
                         \end{picture}

 \noindent where the number of ($-2$)-vertexes is $ 3k-3$.

In the special case $k=1$ the germs
 $ H_1 $ and $S_1 $ are analytic $ \mathscr{A}$-equivalent.
%

 \subsection{A corank $2$ map germ}\labelpar{ss:2cor} In this case
$ \Phi (s, t) = (s^2, t^2, s^3 + t^3+ st) $ and $ f(x, y, z) = x^6-2x^4y + x^2y^2 - 2x^3y^3-2xy^4 + y^6 - 8x^2y^2z - 2x^3z^2-2xyz^2-2y^3z^2+z^4 =0 $.

\begin{picture}(300,70)(-50,10)
\put(-40,60){\makebox(0,0){$ D $:}}
                         \put(100,60){\circle*{4}}
                         \put(100,60){\vector(1,0){50}}
                          \put(100,60){\vector(-1,0){50}}
                           \put(100,60){\vector(-1,-1){40}}
                           \put(100,60){\vector(1,-1){40}}
                           \put(100,60){\vector(0,-1){50}}
                         \put(95,70){\makebox(0,0){$-1$}}
                         \end{picture}

                         \begin{picture}(300, 250)(-70,-150)
                         \put(-50,100){\makebox(0,0){$\partial F$:}}
                         \put(100,60){\circle*{4}}
                         \put(150,60){\circle*{4}}
                         \put(100,10){\circle*{4}}
                         \put(50,60){\circle*{4}}
                         \put(60,20){\circle*{4}}
                         \put(140,20){\circle*{4}}
                         \put(200,60){\circle*{4}}
                       \put(100,-40){\circle*{4}}
                       \put(0,60){\circle*{4}}
                       \put(20,-20){\circle*{4}}
                       \put(180,-20){\circle*{4}}
                       \put(250,60){\circle*{4}}
                       \put(200,10){\circle*{4}}
                       \put(60,-80){\circle*{4}}
                       \put(140,-80){\circle*{4}}
                       \put(-50,60){\circle*{4}}
                       \put(0,10){\circle*{4}}
                        \put(-30,-20){\circle*{4}}
                         \put(20,-70){\circle*{4}}
                          \put(230,-20){\circle*{4}}
                           \put(180,-70){\circle*{4}}
                         \put(100,60){\line(1,0){50}}
                          \put(100,60){\line(-1,0){50}}
                           \put(100,60){\line(-1,-1){40}}
                           \put(100,60){\line(1,-1){40}}
                           \put(100,60){\line(0,-1){50}}
                          \put(150,60){\line(1,0){50}}
                          \put(100,10){\line(0,-1){50}}
                          \put(50,60){\line(-1,0){50}}
                          \put(60,20){\line(-1,-1){40}}
                          \put(140,20){\line(1,-1){40}}
                              \put(200,60){\line(1,0){50}}
                              \put(200,60){\line(0,-1){50}}
                                  \put(100,-40){\line(-1,-1){40}}
                                   \put(100,-40){\line(1,-1){40}}
                                    \put(0,60){\line(-1,0){50}}
                                    \put(0,60){\line(0,-1){50}}
                                       \put(20, -20){\line(0,-1){50}}
                                          \put(20, -20){\line(-1,0){50}}
                                          \put(180,-20){\line(1,0){50}}
                                          \put(180,-20){\line(0,-1){50}}
                                              \put(95,70){\makebox(0,0){$-1$}}
                                                \put(145,70){\makebox(0,0){$-5$}}
                                                \put(45,70){\makebox(0,0){$-5$}}
                                                 \put(55,30){\makebox(0,0){$-5$}}
                                                  \put(138,30){\makebox(0,0){$-5$}}
\put(107,20){\makebox(0,0){$-5$}}
\put(-5,70){\makebox(0,0){$-1$}}
 \put(195,70){\makebox(0,0){$-1$}}
\put(15,-10){\makebox(0,0){$-1$}}
 \put(179,-10){\makebox(0,0){$-1$}}
 \put(108,-30){\makebox(0,0){$-1$}}
 \put(-55,70){\makebox(0,0){$-2$}}
 \put(-10,20){\makebox(0,0){$-2$}}
\put(-35,-10){\makebox(0,0){$-2$}}
 \put(10,-60){\makebox(0,0){$-2$}}
 \put(55,-70){\makebox(0,0){$-2$}}
\put(138,-70){\makebox(0,0){$-2$}}
\put(245,70){\makebox(0,0){$-2$}}
\put(207,20){\makebox(0,0){$-2$}}
 \put(230,-10){\makebox(0,0){$-2$}}
\put(192,-60){\makebox(0,0){$-2$}}
                         \end{picture}

\backmatter

\def\cprime{$'$}

\newpage
\thispagestyle{empty}
\mbox{}

\newpage
\phantomsection
\thispagestyle{plain}
\addcontentsline{toc}{chapter}{Summary}

{\bf {\Large Summary}}
\medskip

{\setstretch{1.0} 
In the thesis we study holomorphic germs from $ \C^2 $ to $ \C^3 $, which are singular only at the origin, with a special emphasis on the distinguished class of finitely determined germs. The results are published in two articles, joint with Andr\'{a}s N\'{e}methi, they are contained in Chapter 3 and 5.

Stability and finite determinacy of germs are discussed in Chapter 1. Here we review several theorems of Mather and Gaffney following the articles of Wall, Mond, Marar and Nu\~{n}o-Ballesteros. Furthermore, we review the analytic invariants of germs introduced by Mond, namely the number of Whitney umbrellas and triple values of a stabilization.

In Chapter 2 we introduce the Smale invariant, which classifies the immersions of spheres up to regular homotopy. We review formulas of Hughes--Melvin and Ekholm--Sz\H{u}cs, which express the Smale invariant of an immersion from the $3$-sphere to the $5$-sphere using singular Seifert surfaces. We introduce Ekholm's invariants for stable immersions. Here we mainly refer to the articles of Hughes, Sz\H{u}cs, Ekholm and Takase.

In Chapter 3 we identify the Smale invariant of immersions associated with holomorphic germs with an analytic invariant of the germs, namely with the number of Whitney umbrellas of a stabilization, answering a question of Mumford from 1961. We give representatives for all regular homotopy classes of immersions from the $3$-sphere to the $5$-sphere using the simple germs from Mond's list, answering Smale's question in this dimension. We give a new proof for Mond's theorem for corank--$1$ germs about the number of Whitney umbrellas via the newly defined complex Smale invariant. We identify the correct sign in the Hughes--Melvin and Ekholm--Sz\H{u}cs formulas. We express the Ekholm invariant of stable immersions associated with finitely determined germs in terms of the analytic invariants of Mond. We conclude that these analytic invariants are $ \mathcal{C}^{ \infty} $ invariants, and their combination expressing the Ekholm invariant is a topological invariant of the finitely determined germs.

The image of a finitely determined germ provides a non-isolated hypersurface singularity in $ \C^3 $. In Chapter 5 we determine the boundary of the Milnor fibre of these singularities. We summarize in Chapter 4 the main properties of the Milnor fibre and the resolution of isolated singularities, in which case the boundary of the Milnor fibre is diffeomorphic to the link and to the boundary of any resolution of the singularity as well. Here we use the de Jong--Pfister book and the articles of Milnor, Looijenga and N\'{e}methi.
In the non-isolated case the N\'{e}methi--Szil\'{a}rd book provides a general algorithm to determine the Milnor fibre boundary, however for concrete examples it is preferable to find a more direct procedure.

In Chapter 5 the Milnor fibre boundary is constructed as a surgery of the $ 3$-sphere along the double point curves of the immersion associated with the holomorphic germ. The gluing of the pieces along their torus boundaries is described by the newly defined vertical indices and homological invariants. For a certain class of germs the sum of these invariants is expressed with the number of Whitney umbrellas. In practice our algorithm provides a plumbing graph of the Milnor fibre boundary by modifying a good embedded resolution graph of the double point locus of the finitely determined germ. The examples we discuss are the simple germs from Mond's list and a corank--$2$ germ from an article of Marar and Nu\~{n}o-Ballesteros.

}

\newpage
\thispagestyle{empty}
\mbox{}

\newpage
\phantomsection
\thispagestyle{plain}
\addcontentsline{toc}{chapter}{\"{O}sszefoglal\'{a}s (Summary in Hungarian)}

{\bf {\Large \"{O}sszefoglal\'{a}s (Summary in Hungarian)}}
\smallskip

{\setstretch{1.0}
Az \'{e}rtekez\'{e}sben olyan, $ \C^2 $-b\H{o}l $ \C^3$-be k\'{e}pez\H{o} holomorf
lek\'{e}pez\'{e}scs\'{i}r\'{a}kkal foglalkozunk, melyek csak az orig\'{o}ban
szingul\'{a}risak, illetve ezeknek egy speci\'{a}lis
oszt\'{a}ly\'{a}val, a v\'{e}gesen
determin\'{a}lt cs\'{i}r\'{a}kkal. Az eredm\'{e}nyeket k\'{e}t,
N\'{e}methi Andr\'{a}ssal
k\"{o}z\"{o}s cikkben publik\'{a}ltuk, ezeket a 3. \'{e}s 5.
fejezet tartalmazza.

Az 1. fejezetben cs\'{i}r\'{a}k stabilit\'{a}s\'{a}t \'{e}s v\'{e}ges
determin\'{a}lts\'{a}g\'{a}t t\'{a}rgyaljuk Mather \'{e}s Gaffney
eredm\'{e}nyei, Wall, Mond, Marar \'{e}s Nu\~{n}o-Ballesteros cikkei
alapj\'{a}n.
Ismertetj\"{u}k Mond analitikus invari\'{a}nsait, a
stabiliz\'{a}l\'{a}s sor\'{a}n megjelen\H{o} Whitney-eserny\H{o}k ill.
h\'{a}romszoros
pontok sz\'{a}m\'{a}t.

A 2. fejezetben bevezetj\"{u}k a g\"{o}mb\"{o}k immerzi\'{o}it
regul\'{a}ris homot\'{o}pia erej\'{e}ig
teljesen oszt\'{a}lyoz\'{o} Smale-invari\'{a}nst. Kimondjuk a $3$
dimenzi\'{o}s g\"{o}mb $5$ dimenzi\'{o}s t\'{e}rbe men\H{o}
immerzi\'{o}ir\'{o}l sz\'{o}l\'{o} Hughes--Melvin \'{e}s
Ekholm--Sz\H{u}cs formul\'{a}kat, melyek szingul\'{a}ris Seifert-fel\"{u}let
seg\'{i}ts\'{e}g\'{e}vel fejezik ki a Smale-invari\'{a}nst. Ismertetj\"{u}k a
stabil immerzi\'{o}k Ekholm-f\'{e}le invari\'{a}nsait. F\H{o}
forr\'{a}saink Hughes, Takase, Ekholm \'{e}s Sz\H{u}cs cikkei.

A 3. fejezetben holomorf cs\'{i}r\'{a}hoz tartoz\'{o} immerzi\'{o}
Smale-invari\'{a}ns\'{a}t azonos\'{i}tjuk a cs\'{i}ra egyik analitikus invari\'{a}ns\'{a}val, a  Whitney-eserny\H{o}k sz\'{a}m\'{a}val,
v\'{a}laszolva
ezzel Mumford 1961-es k\'{e}rd\'{e}s\'{e}re.
Mond list\'{a}j\'{a}r\'{o}l sz\'{a}rmaz\'{o} cs\'{i}r\'{a}khoz
tartoz\'{o} immerzi\'{o}kkal reprezent\'{a}ljuk a $3$ dimenzi\'{o}s
g\"{o}mbb\H{o}l $5$ dimenzi\'{o}s g\"{o}mbbe
k\'{e}pez\H{o} immerzi\'{o}k minden regul\'{a}ris
homot\'{o}piaoszt\'{a}ly\'{a}t, v\'{a}laszolva Smale
k\'{e}rd\'{e}s\'{e}re ebben a dimenzi\'{o}ban. 
\'{U}j bizony\'{i}t\'{a}st adunk Mond
Whitney-eserny\H{o}k sz\'{a}m\'{a}r\'{o}l
sz\'{o}l\'{o} t\'{e}tel\'{e}re $1$--korang\'{u} cs\'{i}r\'{a}k
eset\'{e}n az \'{a}ltalunk bevezetett komplex Smale-invari\'{a}ns seg\'{i}ts\'{e}g\'{e}vel. Azonos\'{i}tjuk a Hughes--Melvin \'{e}s az
Ekholm--Sz\H{u}cs formul\'{a}k el\H{o}jel\'{e}t.
V\'{e}gesen
determin\'{a}lt cs\'{i}r\'{a}khoz tartoz\'{o}
stabil immerzi\'{o}kra az Ekholm-invari\'{a}nst kifejezz\"{u}k a
Mond-f\'{e}le analitikus invari\'{a}nsokkal. Megmutatjuk, hogy az
eml\'{i}tett analitikus invari\'{a}nsok $
\mathcal{C}^{\infty} $ invari\'{a}nsok is, \'{e}s az Ekholm-invari\'{a}nst
kifejez\H{o} kombin\'{a}ci\'{o}juk a v\'{e}gesen
determin\'{a}lt cs\'{i}r\'{a}k
tolopol\'{o}gikus invari\'{a}nsa.

V\'{e}gesen determin\'{a}lt cs\'{i}r\'{a}k k\'{e}pei speci\'{a}lis
nem-izol\'{a}lt
fel\"{u}letszingularit\'{a}sokat hat\'{a}roznak meg $ \C^3 $-ban. Az 5.
fejezetben
ezek
Milnor-fibrum\'{a}nak a perem\'{e}t
hat\'{a}rozzuk meg. Ehhez a 4. fejezetben
\"{o}sszefoglaljuk az izol\'{a}lt fel\"{u}letszingularit\'{a}sok
Milnor-fibrum\'{a}val \'{e}s rezol\'{u}ci\'{o}j\'{a}val
kapcsolatos eredm\'{e}nyeket a de Jong--Pfister k\"{o}nyv, Milnor
\'{e}s Looijenga cikkei \'{e}s
N\'{e}methi \"{o}sszefoglal\'{o} cikkei alapj\'{a}n. Izol\'{a}lt
fel\"{u}letszingularit\'{a}s
Milnor-fibrum\'{a}nak a pereme diffeomorf a szingularit\'{a}s
linkj\'{e}vel, mely
egyben a szingularit\'{a}s rezol\'{u}ci\'{o}j\'{a}nak a pereme is.
Nem-izol\'{a}lt esetben a
N\'{e}methi--Szil\'{a}rd k\"{o}nyvben tal\'{a}lhat\'{o} algoritmus a
Milnor-fibrum perem\'{e}nek meghat\'{a}roz\'{a}s\'{a}ra, konkr\'{e}t
csal\'{a}dokra azonban \'{e}rdemes k\"{o}zvetlenebb
elj\'{a}r\'{a}sokat keresni.

Az 5. fejezetben a $3$-dimenzi\'{o}s g\"{o}mbb\H{o}l kiindulva, a
holomorf cs\'{i}r\'{a}hoz tartoz\'{o} immerzi\'{o}
kett\H{o}spont-g\"{o}rb\'{e}i ment\'{e}n v\'{e}gzett m\H{u}t\'{e}ttel
szerkesztj\"{u}k meg a
Milnor-fibrum
pe\-re\-m\'{e}t. A t\'{o}ruszok menti
ragaszt\'{a}sokat az \'{a}ltalunk bevezetett vertik\'{a}lis
indexekkel \'{e}s homol\'{o}gikus
invari\'{a}nsokkal \'{i}rjuk le,
melyek \"{o}sszeg\'{e}t kifejezz\"{u}nk a
Whitney-eserny\H{o}k sz\'{a}m\'{a}val a cs\'{i}r\'{a}k egy
oszt\'{a}ly\'{a}ra. Technikailag a v\'{e}gesen determin\'{a}lt
cs\'{i}ra kett\H{o}spont-g\"{o}rb\'{e}j\'{e}nek
be\'{a}gyazott rezol\'{u}ci\'{o}s gr\'{a}fj\'{a}t m\'{o}dos\'{i}tva
adjuk meg a cs\'{i}ra \'{a}ltal
meghat\'{a}rozott nem-izol\'{a}lt fel\"{u}letszingularit\'{a}s
Milnor-fibrum\'{a}nak
perem\'{e}t plumbing gr\'{a}f form\'{a}j\'{a}ban. A p\'{e}ld\'{a}k
Mond list\'{a}j\'{a}r\'{o}l \'{e}s egy Marar--Nu\~{n}o-Ballesteros
cikkb\H{o}l sz\'{a}rmaznak.

}

\end{document}